\definecolor{myblue}{rgb}{0.1 0.1 0.6}
\newenvironment{sd}{\begin{array}{c} \begin{tikzpicture}}{\end{tikzpicture} \end{array}}
\def\hsep{1cm}
\theoremstyle{plain}
\newtheorem{proposition}{Proposition}[section]
\newtheorem{theorem}[proposition]{Theorem}
\newtheorem{corollary}[proposition]{Corollary}
\newtheorem{lemma}[proposition]{Lemma}
\newtheorem{definition}[proposition]{Definition}
\newtheorem{example}[proposition]{Example}
\newtheorem{question}[proposition]{Question}
\newenvironment{namedtheorem}[1]
 {\namedtheoremx}
 {\endnamedtheoremx}
\theoremstyle{remark}
\newenvironment{remark}
  {\pushQED{\qed}\remarkx}
  {\popQED\endremarkx}
\newcommand{\mc}[1]{\mathcal{#1}}
\newcommand{\mf}[1]{\mathfrak{#1}}
\newcommand{\ip}[2]{{\langle {#1} , {#2} \rangle}}
\newcommand{\lin}{\operatorname{lin}}
\newcommand{\id}{\operatorname{id}}
\newcommand{\vnten}{\bar\otimes}
\newcommand{\proten}{\widehat\otimes}
\newcommand{\wh}{\widehat}
\newcommand{\op}{{\operatorname{op}}}
\newcommand{\Tr}{{\operatorname{Tr}}}
\newcommand{\im}{{\operatorname{Im}}}
\newcommand{\KMS}{\textrm{KMS}}
\newcommand{\clos}{\overline{\phantom{d}}}
\newcommand{\closhs}{\clos^{\|\cdot\|_{HS}}}
\newcommand{\floatinghat}{\hat{\phantom{x}}}
\newcommand{\rankone}[2]{|{#1}\rangle \langle{#2}|}
\newcommand{\rd}{\,\mathrm{d}}
\begin{document}

\title{Quantum graphs in infinite-dimensions: Hilbert--Schmidts and Hilbert modules}
\author{Matthew Daws}
\maketitle

\begin{abstract}
We develop two approaches to Quantum (or Non-commutative) Graphs based on arbitrary von Neumann algebras $M\subseteq\mc B(H)$: one looking at operator bimodules of Hilbert--Schmidt (instead of bounded) operators, and the second looking at Quantum Adjacency Operators.  Hilbert--Schmidt Quantum Graphs relate to Weaver's picture of Quantum Graphs in a complex way: by defining certain hull operations, we find a bijection between certain subsets of both objects.  Given a nfs weight $\varphi$ on $M$ the operator-valued weight $\varphi^{-1}$ can be defined, as considered by Wasilewski for direct sums of matrix algebras.  We show how to build a natural self-dual Hilbert $C^*$-module from this, which mediates a bijection between HS Quantum Relations and projections $e\in M\vnten M^\op$.  When $e$ is integrable for the slice-map $\id\otimes\varphi^\op$ there is a related normal CP map $A\colon M\to M$: this is a Quantum Adjacency Operator, which has a Kraus operator representation built from the HS Quantum Relation.  When $e$ and its tensor swap map are both integrable, we find certain symmetries of $A$.  We illustrate our theory by a careful consideration of certain examples, including detailed links with the finite-dimensional setting.
\end{abstract}

\tableofcontents

\section{Introduction}

A \emph{Quantum (or non-commutative) graph} is a non-commutative topology generalisation of a graph: that is, an object modelled on a C$^*$-algebra or von Neumann algebra which for a commutative algebra reduces to the notion of a graph.  The concept arose from work of Weaver \cite{Weaver_QuantumRelations} on Quantum Relations (a simple graph with a loop at each vertex can be regarded as a reflexive and symmetric relation on the vertex set), namely given a von Neumann algebra $M\subseteq \mc B(H)$, a quantum relation is a weak$^*$-closed $\mc S\subseteq\mc B(H)$ which is a bimodule over the commutant $M'$, meaning that $a,b\in M', x\in \mc S \implies axy\in\mc S$.  Essentially the same concept also arose in the study of quantum communication channels \cite{DSW_ZeroError}, where here $M = \mathbb M_n$ is a finite matrix algebra, and in \cite{MRV_Compositional} from a categorical approach to finite-dimensional $C^*$-algebras and quantum information theory.  This latter reference also introduced the idea of a \emph{Quantum adjacency operator} which directly generalises the notion of the adjacency matrix of a graph.  Initially the link between subspaces $\mc S$ and adjacency operators $A$ was mediated by use of a trace on the $C^*$-algebra in question (say $B$, here always finite-dimensional and so the sum of matrix algebras).

Subsequent work, for example \cite{BCEHPSW_Bigalois, daws_quantum_graphs, matsuda_class_m2}, explored how the theory changes if a general (faithful) state on $B$ is used instead of a trace.  In particular, there are a number of axioms on $A$ which in the commutative case correspond to the graph being undirected ($A$ is self-adjoint, or ``symmetric'', or ``real'', see \cite{matsuda_class_m2} for a careful discussion).  Taken together, these imply a hidden symmetry: $A$ and/or $\mc S$ must commute with the modular automorphism group of the state (see \cite[Section~5]{daws_quantum_graphs}) which could be considered to be a rather strong condition.  The paper \cite{CW_RandomQGraphs} instead considered the axiom that $A$ is completely positive (equivalent to $A$ being ``real'', \cite{matsuda_class_m2}) which was further explored in \cite[Section~5.4]{daws_quantum_graphs} and \cite{Wasilewski_Quantum_Cayley}.  We believe that this is perhaps the most natural axiom: see Section~\ref{sec:fd_summary} for a self-contained summary of this approach.

The approach via adjacency operators is finite-dimensional, while Weaver's approach allows for a completely general von Neumann algebra.  A step towards considering infinite-dimensional adjacency operators was taken in \cite{Wasilewski_Quantum_Cayley}, by looking at the case when $M$ is the possibly infinite direct sum of matrix algebras.  Another generalisation is \cite{BH_QuantumGraphs_Subfactors} where 2-$C^*$-categories are used, but in many ways this is still a finite-dimensional setting, due to the use of $Q$-systems which have a notion of a basis.  This paper continues this study of (possibly) infinite-dimensional quantum graphs, where we look at arbitrary von Neumann algebras equipped with a nfs weight.

It is our contention that finite-dimensional quantum graphs are a fundamentally \emph{Hilbert space} theory.  In particular, our approach in \cite{daws_quantum_graphs} used in an essential way when $H$ is a finite-dimensional Hilbert space, the bounded operators $\mc B(H)$ coincide with the Hilbert--Schmidt operators $HS(H)$, which in turn may be identified with the Hilbert space tensor product $H \otimes \overline H$.  See Section~\ref{sec:fd_summary} for a refinement of this idea.  When $M \subseteq \mc B(H)$ is a von Neumann algebra (of course, here just a finite direct sum of matrix algebras) we can naturally represent $M \otimes M^\op$ on $H\otimes\overline H$, and quickly obtain a bijection between projections $e \in M \otimes M^\op$ and $M' \otimes (M')^\op$-invariant subspaces (just the image of $e$) of $H\otimes\overline H$, equivalently, $M'$-bimodules of $HS(H)$.  By ``bimodule'' we again mean $V \subseteq HS(H)$ a subspace such that $x\in V, a,b\in M' \implies axb\in V$.

All of this works verbatim for an infinite-dimensional Hilbert space, excepting that $HS(H)$ is of course no longer equal to $\mc B(H)$.  This motivated us to define in Section~\ref{sec:HS} the notion of a \emph{Hilbert--Schmidt (HS) quantum relation}, namely a subspace $V\subseteq HS(H)$ which is closed for the Hilbert--Schdmit norm, and which is an $M'$-bimodule, where here $M\subseteq\mc B(H)$ is a von Neumann algebra.  Here we are considering general graphs (so relations which might be neither reflexive nor symmetric) and so hew to Weaver's terminology of ``relation''.  These objects are seen to biject with projections $e\in M\vnten M^\op$.  Letting $\mc S$ be the weak$^*$-closure of $V$, we obtain a quantum relation in Weaver's sense; conversely, starting with $\mc S$ we can let $V = \mc S \cap HS(H)$ (which is closed, Lemma~\ref{lem:hs_facts}) to obtain a Hilbert--Schmidt quantum relation.  These operations are not inverses: it can easily be that $\mc S \cap HS(H) = \{0\}$ with $\mc S$ non-trivial, for example.  However, we show that there are natural ``closure'' or ``hull'' operations, and this gives a bijection between a subclass of quantum relation and a subclass of HS quantum relation, Theorem~\ref{thm:nice_subclass}.  In Weaver's theory, it is important that when $M$ is represented on two Hilbert spaces $H_1, H_2$ then quantum relations over $H_1$ biject with those over $H_2$.  We show that the same holds for HS quantum relations, and that our natural operations (of weak$^*$-closure, taking hulls, and so forth) respect this bijection, see Section~\ref{sec:inv_reps}.  We make some brief comments about the ``pullback'' construction in Section~\ref{sec:pullbacks}.  We give many examples to show some of the complications with this idea; however, if $M$ has a sufficiently rich finite-dimensional structure, in particular, if $M$ is the (possibly infinite) direct sum of matrix algebras, then there is in fact a bijection between HS quantum relations, and Weaver quantum relations.

Our aim in the rest of this paper is to explore what can be said about the quantum adjacency operator picture, in infinite-dimensions.  Here we are strongly influenced by Wasilewski's work in \cite{Wasilewski_Quantum_Cayley}, as well as Matsuda's notion of a ``real'' adjacency operator \cite{matsuda_class_m2}, and Yamashita's lecture notes \cite{Yamashita_QG_Notes}.  In Section~\ref{sec:fd_summary} we give a summary of the finite-dimensional picture, through the lens of these references.  In particular, compared to our paper \cite{daws_quantum_graphs}, we make some comments about the diagrammatic calculus, and follow the idea of \cite{Yamashita_QG_Notes} to introduce a non-trivial inner-product on $\mc B(L^2(B))$.  Here $B$ is our finite-dimensional $C^*$-algebra with a state $\varphi$; that we no longer use the canonical trace on $\mc B(L^2(B))$ means that the association between $V\subseteq H\otimes \overline H$ and $S\subseteq\mc B(L^2(B))$ becomes ``twisted'' by the modular operator.  Rather than working with many possible links between adjacency operators and projections (the family of maps $\Psi_{s,t}$ and $\Psi'_{s,t}$ which we defined and explored in \cite{daws_quantum_graphs}) we instead follow \cite[Section~5.4]{daws_quantum_graphs} and take as an axiom that quantum adjacency operators are \emph{completely positive} (equivalently, \emph{real}, \cite{matsuda_class_m2}).  This suggests using the bijection $\Psi'_{0,1/2}$, which we show agrees with the work in \cite{Wasilewski_Quantum_Cayley} which is motivated by KMS-inner-products.  With these conventions in place, we obtain a very pleasing collection of results linking the different pictures of quantum graphs, Section~\ref{sec:adj_proj}.

One key difference with \cite{Wasilewski_Quantum_Cayley} compared to other work is the use of KMS-inner-products, Section~\ref{sec:KMS_IPs}, which motivates introducing what we call a ``twist'': not looking at the subspace $S\subseteq\mc B(L^2(B))$ but rather $S_{i/4}$, Section~\ref{sec:diff}.  We provide some different motivation for this in Section~\ref{sec:op_sys_A} by considering the operation of the tensor swap map on $B\otimes B^\op$, which maps $e$ to a (possibly) different projection $\tau(e)$.  When $\varphi$ is a trace, this corresponds to taking the adjoint on $S$ (which in Weaver's picture is the natural operation corresponding to ``reversing the direction of edges'' in the graph).  This no longer holds for non-tracial $\varphi$, but it does work for $S_{i/4}$.  Section~\ref{sec:fd_summary} is party a survey, but it forms key motivation for the technical infinite-dimensional results to follow.  We are fairly convinced now that taking a quantum adjacency matrix $A$ to be completely positive is the ``correct'' axiom (instead of assuming $A$ is self-adjoint and/or ``symmetric''); we believe that it will be an aid to the reader to have a short, and self-contained, summary of what this entails, while also making links with ideas such as the diagrammatic calculus, and KMS inner-products.

There is an issue here which may puzzle the reader, as it did the author.  For any state $\varphi$ on $B$ we have established bijections between projections $e\in B\otimes B^\op$ (or equivalently, in our language, HS quantum relations on $L^2(B)$), and Weaver quantum relations $S$ (or, $S_{i/4}$ which seems the ``better'' choice).  Neither of these objects make any use of $\varphi$: only the definition of the bijection does.  The adjacency operator $A$ does ``see'' $\varphi$ (in the definition of what it means for $A$ to be Schur--idempotent).  However, we now have the following procedure for reducing the study of any quantum graph to that of a tracial quantum graph: start with $A$ (or $S_{i/4}$) and then form $e$ using the bijection from $\varphi$.  Now use the bijection coming from a trace on $B$ to move back to a tracial adjacency operator, say $A_{\Tr}$, or a new Weaver quantum graph.  We explore this in Section~\ref{sec:move_tracial_case}.  Have we reduced all quantum graphs to the tracial case?  In one sense yes, but we look at an example-- the notion of ``connectivity'' from \cite{courtney2025connectivityquantumgraphsquantum}-- to show that natural properties of $A$ are \emph{not} transported to natural properties of $A_{\Tr}$.  This is as it should be, for the richness of the theory.  Nevertheless, we suspect that this idea of moving the tracial case will find applications in the future.

Our task in the main body of the paper is to marry these finite-dimensional ideas with that of HS quantum relations, a task we start in Section~\ref{sec:hilb_mods}.  Fix a von Neumann algebra $M$ and a nfs weight $\varphi$: we make no assumptions about $\varphi$ at all.  Due to the technical nature of many of the ideas in these later sections, and our desire to appeal to a readership perhaps coming from the finite-dimensional picture of quantum graphs, we have collected both technical results, and background reading, into some appendices.  For weights, see Section~\ref{sec:notation} below, and Appendix~\ref{sec:weights}.  Using $\varphi$ we form the GNS space, and almost always regard $M$ as acting on $H = L^2(M)$.

We are motivated by \cite{Wasilewski_Quantum_Cayley} to consider the operator-valued weight (see Appendix~\ref{sec:op_valued_weights} and Appendix~\ref{sec:inv_weight}) $\varphi^{-1}$ from $\mc B(H)$ to $M' \subseteq\mc B(H)$.  As $H=L^2(M)$ there is a canonical isomorphism $M' \cong M^\op$, and a canonical choice of weight $\varphi'$ on $M'$ coming from $\varphi$ and/or $\varphi^\op$.  We define $\tilde\varphi = \varphi' \circ \varphi^{-1}$ which by general theory is a nfs weight on $\mc B(H)$.  Indeed, in the finite-dimensional situation, this is \emph{exactly} the state which was introduced in Section~\ref{sec:ip}: this is the first of many ways we find ``shadows'' of the finite-dimensional situation occurring in the more technical infinite-dimensional picture.  Any weight on $\mc B(H)$ is given by a ``density'', Appendix~\ref{sec:weights_bh}, and we show carefully in Appendix~\ref{sec:inv_weight} that the density of $\tilde\varphi$ is $\nabla^{-1}$ the inverse of the modular operator.  As hinted at in, for example, \cite{BDH_IndexCondExp}, the definition ideal $\mf n_{\varphi^{-1}}$ of $\varphi^{-1}$ is a pre-Hilbert $C^*$-module over $M'$.  We explore this idea more fully in Section~\ref{sec:hilb_mods}.  Any Hilbert $C^*$-module over a von Neumann algebra has a ``self-dual completion'': this work, going back to \cite{paschke_inner_prod_mods, Rieffel_mortia_cstar_wstar}, is summarised in Appendix~\ref{sec:selfdual_mods}.  We apply this to $\mf n_{\varphi^{-1}}$, and use the various concrete isomorphisms we have developed to show that the self-dual completion is $E_{\varphi^{-1}} = \mc B_{M'}(H, HS(H))$.  This is the (concrete) Hilbert $C^*$-module of maps $H \to HS(H)$ which are right module maps for the $M'$ action.  In particular, we see the natural occurrence of $HS(H)$ here.  As $\varphi^{-1}$ is actually a bimodule map, $\mf n_{\varphi^{-1}}$ also has a natural left $M'$-action, which extends to $E_{\varphi^{-1}}$.  Indeed, the algebra of adjointable operators on $E_{\varphi^{-1}}$ is isomorphic to $M \vnten M^\op$, and we obtain exactly one half of our picture from before, and notice the natural appearance of HS quantum relations:

\begin{namedtheorem}{{\ref{thm:comp_submods}}}
There is a bijection between: (1) projections $e\in M\vnten M^\op$; (2) complemented $M'$-sub-bimodules of $E_{\varphi^{-1}}$; (3) $M'$-bimodules in $HS(H)$.
\end{namedtheorem}

In Section~\ref{sec:CB_maps} we turn our attention to adjacency operators: this means a completely bounded normal map $A$ on $M$, with additional properties.  As it is hard to see how to formulate directly the idea that $A$ is Schur--idempotent, when $M$ is infinite-dimensional, we instead look for a bijection between $A$ and $e$.  We re-formulate the finite-dimensional bijection from \cite[Theorem~5.36]{daws_quantum_graphs} (also Section~\ref{sec:adj_proj}) using slice maps, and show how ``integrable'' elements $f,g \in N \vnten M$ give rise to normal CB maps $A = A_{f,g} \colon M \to N$.  Here we think that actually the abstract theory leads to a clearer picture than the concrete formulae we wrote down in \cite{daws_quantum_graphs}.  In particular, integrable projections $e$ (these correspond to the ``bounded degree'' terminology of \cite{Wasilewski_Quantum_Cayley}) give rise to normal CP maps $A \colon M \to M$, which seem like a natural candidate for quantum adjacency operators.  We then proceed to give a very close link between these ideas and the Hilbert module constructions in Section~\ref{sec:hilb_mods}.  In particular, any self-dual Hilbert $C^*$-module has an ``orthogonal basis'', Theorem~\ref{thm:selfdual_is_weak_direct_sum}, and when $e$ is integrable, this basis always arises from $\mf n_{\varphi^{-1}}$ and not its completion, in such a way that a ``Kraus operator'' representation of $A$ is obtained:

\begin{namedtheorem}{{\ref{thm:Kraus_rep}}}
Let $e\in M\vnten M^\op$ be an integrable projection, and form $A = A_{e,e}$ the quantum adjacency operator.  Let $(t_i)$ be an orthogonal basis for $\mc B_{M'}(L^2(M'), \im(e))$.  Then each $t_i = \hat\alpha_i$ for some $\alpha_i \in \mf n_{\varphi^{-1}}$, and $A(x) = \sum_i \alpha_i x \alpha_i^*$ for $x\in M$.
\end{namedtheorem}

Here for $\alpha\in \mf n_{\varphi^{-1}}$ we write $\hat\alpha$ for the image in the self-dual completion $E_{\varphi^{-1}}$.
Letting $V=\im(e)$, we find that $V\subseteq HS(H)$ is an HS quantum relation, and the image of $e$ acting on $E_{\varphi^{-1}}$ is $\mc B_{M'}(L^2(M'), V)$, as seen in the theorem.  It is natural to consider $S \subseteq \mc B(H)$, the intersection of this with $\mf n_{\varphi^{-1}}$, which in the finite-dimensional case is exactly the $\mc S$ from Section~\ref{sec:ip}.  In this general case, $S$ can unfortunately be a lot ``smaller'' than $V$.  In \cite{Wasilewski_Quantum_Cayley}, it is always assumed that $\varphi^{-1}$ is bounded (in fact, normalised to be a conditional expectation) and when this occurs, $S$ is weak$^*$-closed, Proposition~\ref{prop:S_in_case_varphiinv_bdd}; furthermore, when $M$ is the direct sum of matrix algebras, $S$ does ``remember'' all the information about $V$, see Section~\ref{sec:eg_matrix_algs}.  Indeed, in \cite{Wasilewski_Quantum_Cayley}, $S$ itself is sometimes considered as a Hilbert module, and a link is shown between $S$ being self-dual, and $e$ being of bounded degree.  We find the same relation is general:

\begin{namedtheorem}{{\ref{thm:S_bdd_below_e_int}}}
Let $V\subseteq HS(H)$ be an $M'$-bimodule, and let $S = \{ \alpha\in\mf n_{\varphi^{-1}} : \im\hat\alpha \subseteq V \}$.  Then $e$ is integrable if and only if the map $S \to \mc B_{M'}(H,V); \alpha \mapsto \hat\alpha$ is surjective.
\end{namedtheorem}

Here it is important that $M$ acts on $H=L^2(M)$.  In Section~\ref{sec:change_space} we briefly look at more general $H$, and in particular, Example~\ref{eg:matrix_sd_notint} shows that when $H$ is allowed to be ``small'', we can have $S$ self-dual, but $e$ not integrable, hence answering a question from \cite{Wasilewski_Quantum_Cayley}.

In Section~\ref{sec:hilb_ops}, we consider when $A\colon M\to M$ induces a bounded operator $L^2(M)\to L^2(M)$; this occurs naturally when both $e$ and its tensor swap are integrable.  Hence the tensor swap map forms a symmetric in this case, and we show how the KMS inner product arises (in a disguised way, see Proposition~\ref{prop:int_and_coint} and Lemma~\ref{lem:real_J_ad_KMS_ad}).  When we have this extra symmetry, Section~\ref{sec:considerations_of_S} shows that the space $S$ is large enough, and that we can form something like $S_{i/4}$.  This is achieved by showing a direct analogue of Proposition~\ref{prop:S_is_bimod_A}:

\begin{namedtheorem}{\ref{thm:e_int_co_int_A0_generates_S}}
Let $e\in M\vnten M^\op$ be a projection in $\mf n_{\id\otimes\varphi^\op} \cap \mf n_{\varphi\otimes\id}$.  Form $A = A_{e,e}\colon M\to M$ and the associated $A_0\in\mc B(L^2(M))$, and let $V \subseteq HS(H)$ be the image of $e$.  Then $A_0$ is a member of $S = \{ \alpha\in\mf n_{\varphi^{-1}} : \im\hat\alpha \subseteq V \}$.  Furthermore, $\{ \im\hat\alpha : x,y\in M', \alpha = x A_0 y \}$ is dense in $V$, so $\lin \{ \hat\alpha : x,y\in M', \alpha = x A_0 y \}$ is weak$^*$-dense in $e E_{\varphi^{-1}} = \mc B_{M'}(H,V)$.
\end{namedtheorem}

We have said little about Weaver quantum graphs/relations: namely, taking the weak$^*$-closure of $S$ (or perhaps $S_{i/4}$).  This is discussed in Section~\ref{sec:weakstar_closures}.  Later in examples we show that all of the behaviour seen for HS quantum relations occurs in this general situation: it remains for future work to find a fully satisfactory way of passing to weak$^*$-closures.  In Section~\ref{sec:cb_projs} we consider the possibility of the map $\theta_A$ from Section~\ref{sec:adj_proj} being generalised to the infinite-dimensional case: our conclusion is that this seems rather unlikely.

In Section~\ref{sec:egs} we develop a number of examples.  Throughout the development of the theory we have motivated our constructions from the finite-dimensional situation.  In Section~\ref{sec:ellinfty} we quickly look at the commutative atomic case: as expected, we recover the usual notion of a relation / simple graph on a possibly infinite set.  In Section~\ref{sec:eg_matrix_algs} we look at (infinite) direct sums of matrix algebras, making extensive links with \cite{Wasilewski_Quantum_Cayley}, and as already discussed, in Section~\ref{sec:change_space} we consider algebras $M$ acting on Hilbert spaces which are not $L^2(M)$.  In Section~\ref{sec:example_BH} we look at $M=\mc B(K)$ endowed with a general weight.  When the weight is the canonical trace, we show how we exactly recover the ideas of HS quantum relations from Section~\ref{sec:HS}, thus coming a full circle in our narrative.  For general weights, we show how the general theory is rather similar to HS quantum relations, Example~\ref{eg:BH_get_HS_QRs}.

\subsection{Background and notation}\label{sec:notation}

Our inner-products will be linear in the 2nd variable, and written as $(\xi|\eta)$.  Sometimes we consider a Banach space $E$ and its dual $E^*$, and write $\ip{\mu}{x} = \mu(x)$ for the dual pairing between $\mu\in E^*$ and $x\in E$.  We find it helpful to write rank-one operators on a Hilbert space using bra-ket notation: so for $\xi,\eta\in H$ we write $|\xi\rangle\langle\eta|$ for the rank-one operator $\alpha \mapsto (\eta|\alpha) \xi$; in \cite{daws_quantum_graphs} we denoted this by $\theta_{\eta,\xi}$.

We frequently use the conjugate Hilbert space $\overline H = \{ \overline\xi : \xi\in H \}$.  For $x\in\mc B(H)$ we define $x^\top\in\mc B(\overline H)$ as $x^\top\overline\xi = \overline{x^*\xi}$.  For an algebra $A$ let $A^\op$ be the $A$ with the opposite multiplication.  Writing elements of $A^\op$ as $a^\op$ for $a\in A$, we have $a^\op b^\op = (ba)^\op$.  Then $\mc B(H)^\op \cong \mc B(\overline H)$ via $x^\op \mapsto x^\top$.
We will always unitarily identify the Hilbert--Schmidt operators $HS(H)$ with $H\otimes\overline{H}$ for $\rankone{\xi}{\eta} \leftrightarrow \xi\otimes\overline\eta$.  

The early parts of the paper do not have a heavy technical requirement, although as in \cite[Section~5]{daws_quantum_graphs}, we use aspects of modular theory, though only in finite dimensions, where one can easily concretely write down all the maps.  In the later parts of the paper, we make extensive use of modular theory, and Connes spatial theory of von Neumann algebras, for which the books \cite{TakesakiII, StratilaZsido2nd, Stratila_ModTheoryBook2} are our references.  The appendices give summaries of useful results from the literature, as well as giving (sketch) proofs of some results we have not found explicit references for, along with some new results which seem too technical to include in the main body of the paper.  At points, we use the theory of unbounded operators on Hilbert spaces, for which \cite{Schmudgen_UnboundedBook}, for example, is a useful reference.

\subsection{Acknowledgments}

Initial parts of this paper were written at the Isaac Newton Institute for Mathematical Sciences, Cambridge, during the programme Quantum information, quantum groups and operator algebras; this programme was supported by EPSRC grant EP/Z000580/1.  Parts of this work were also supported by EPSRC grant EP/T030992/1.
For the purpose of open access, the author has applied a CC BY public copyright licence to any Author Accepted Manuscript version arising.
No data were created or analysed in this study.

I thank Mateusz Wasilewski and Makoto Yamashita for helpful correspondence.  The TikZ code for the diagrams was adapted from source-code for the paper \cite{matsuda_class_m2}.

\section{Finite-dimensional quantum graphs}\label{sec:fd_summary}

We give a quick summary of the finite-dimensional theory, picking out points which provide motivation for ideas we consider in the infinite-dimensional case.  We follow our paper \cite{daws_quantum_graphs}, but make explicit links with ideas from \cite{Wasilewski_Quantum_Cayley}.  We also make a little use of the diagrammatic calculus, see for example \cite{matsuda_class_m2}, here being inspired in part by the presentation of Yamashita in his notes \cite{Yamashita_QG_Notes}.\footnote{Some of this section was previously informally available on the author's website \url{https://github.com/MatthewDaws/Mathematics/tree/master/Quantum-Graphs}.}

Let $B$ be a finite-dimensional $C^*$-algebra and $\varphi$ a faithful state on $B$.  We do not assume that $\varphi$ is a $\delta$-form.  Choosing a trace on $B$, there is an invertible $Q\in B$ with $\varphi(a) = \Tr(Qa)$ for $a\in B$.  Then the modular data associated to $\varphi$ can be expressed using $Q$, see \cite[Section~5]{daws_quantum_graphs}.

Using $\varphi$ we form the GNS space $L^2(B)$ with GNS map $\Lambda\colon B \to L^2(B)$, which is a linear isomorphism of vector spaces, as $B$ is finite-dimensional.  This allows us to view the multiplication map $m\colon B\otimes B \to B$ as a map $L^2(B)\otimes L^2(B) \to L^2(B)$, and hence to form the Hilbert space adjoint map $m^*$.  Similarly the unit map $\eta \colon \mathbb C \to L^2(B); \lambda \mapsto \lambda\Lambda(1)$ which has adjoint $\eta^* \colon L^2(B) \to \mathbb C; \Lambda(a) \mapsto \varphi(a)$.

\subsection{An inner product on operators}\label{sec:ip}

Using the diagrammatical calculus, introduce an inner-product on $\mc B(L^2(B))$ by
\[ (T_1|T_2) = \begin{sd}
  \node[circle,draw] (T2) {$T_2$}; 
  \node[circle,draw,above=0.6 of T2.center] (T1) {$T_1^*$};
  \draw (T1)--(T2);
  \draw (T1) to[out=90,in=90] coordinate[pos=1/2] (m) ([xshift=-\hsep]T1.north);
  \draw (T2) to[out=-90,in=-90] coordinate[pos=1/2] (m*) ([xshift=-\hsep]T2.south);
  \draw (m)--++(0,\hsep/3) arc(-90:270:0.1);
  \draw (m*)--++(0,-\hsep/3) arc(90:450:0.1);
  \draw ([xshift=-\hsep]T1.north) -- ([xshift=-\hsep]T2.south);
  \end{sd}
  = \eta^* m (1 \otimes T_1^* T_2) m^* \eta
  = \varphi(m(1 \otimes T_1^* T_2)m^*(1))
  \qquad (T_1, T_2 \in \mc B(L^2(B))). \]
We compute this more explicitly.  To avoid notational clutter, we identify $B$ and $L^2(B)$ and suppress the GNS map $\Lambda$.  Let $m^*(1) = \sum_i e_i \otimes f_i$, so that $\varphi(b^*a^*) = (ab|1) = (a\otimes b|m^*(1)) = \sum_i (a|e_i)(b|f_i) = \sum_i \varphi(a^*e_i) \varphi(b^*f_i)$ for each $a,b\in B$.  Let $T_j = |t_j\rangle\langle s_j|$ for $j=1,2$ be rank-one operators.  Then
\begin{align*}
(T_1|T_2) &= \varphi\Big( \sum_i e_i T_1^*T_2(f_i) \Big)
= \sum_i \varphi(e_i s_1)  (t_1|t_2) (s_2|f_i).
\end{align*}
With $\varphi$ having density $Q$, we see that $\varphi(e_is_1) = \Tr(Qe_is_1) = \Tr(QQ^{-1}s_1Qe_i) = (Qs_1^*Q^{-1}|e_i)$.  Thus
\begin{align*}
(T_1|T_2) &= (t_1|t_2) \sum_i (Qs_1^*Q^{-1}|e_i) (s_2|f_i)
= (t_1|t_2) (Qs_1^*Q^{-1}s_2|1)
= (t_1|t_2) \varphi(s_2^* Q^{-1} s_1 Q) \\
&= (t_1|t_2) \varphi(s_2^* \sigma_i(s_1))
= (t_1|t_2) (s_2 | \sigma_i(s_1)),
\end{align*}
where $(\sigma_t)$ is the modular automorphism group.
We briefly stop suppressing the GNS map $\Lambda$, and recall that the modular operator is given by $\nabla \Lambda(a) = \Lambda(\sigma_{-i}(a))$.  Thus
\[ (T_1|T_2) = (\Lambda(t_1)|\Lambda(t_2)) (\Lambda(s_2)|\nabla^{-1}\Lambda(s_1))
= (\Lambda(t_1)|\Lambda(t_2)) (\nabla^{-1/2}\Lambda(s_2)|\nabla^{-1/2}\Lambda(s_1)). \]
Consequently, with $T_j = |\xi_j\rangle\langle\eta_j|$ for $j=1,2$, we find that
\[ (T_1|T_2) = (\xi_1|\xi_2) (\nabla^{-1/2}\eta_2|\nabla^{-1/2}\eta_1)
= \big( \xi_1 \otimes \overline{\nabla^{-1/2}\eta_1} \big| \xi_2 \otimes \overline{\nabla^{-1/2}\eta_2} \big), \]
the final inner-product being the natural one on $L^2(B) \otimes \overline{L^2(B)}$.  Thus the map
\begin{equation}
\mc B(L^2(B)) \to L^2(B) \otimes \overline{L^2(B)}, \quad
|\xi\rangle\langle\eta| \mapsto \xi\otimes\overline{\nabla^{-1/2}\eta}
\label{eq:twistedGNS}
\end{equation}
extends linearly to a unitary.

In \cite{daws_quantum_graphs}, we similarly identified $\mc B(L^2(B))$ with $L^2(B) \otimes \overline{L^2(B)}$, but for the ``untwisted map'' $|\xi\rangle\langle\eta| \mapsto \xi\otimes\overline{\eta}$, not the unitary \eqref{eq:twistedGNS}.  We continue to let $B\otimes B^\op$ act on $\mc B(L^2(B))$ as $(a\otimes b) \cdot T = aTb$, for $a\in B, b\in B^\op, T\in\mc B(L^2(B))$; similarly for $B'\otimes (B')^\op$.  We now see what this action becomes on $L^2(B) \otimes \overline{L^2(B)}$, under the isomorphism \eqref{eq:twistedGNS}.

For $a,b\in B$, when $T = |\xi\rangle\langle\eta|$ we have that $aTb = |a\xi\rangle\langle b^*\eta|$, and so
\[ (a\otimes b) \cdot (\xi\otimes \overline{\nabla^{-1/2}\eta})
 \cong aTb = |a\xi\rangle\langle b^*\eta|
 \cong a\xi\otimes \overline{\nabla^{-1/2}b^*\eta}. \]
Equivalently,
\begin{equation} \notag
(a\otimes b) \cdot (\xi\otimes \overline{\eta})
= a\xi\otimes \overline{\nabla^{-1/2}b^*\nabla^{1/2}\eta}
= a\xi \otimes \sigma_{-i/2}(b)^\top \overline{ \eta }
\qquad (a\otimes b\in B\otimes B^\op, \xi,\eta\in L^2(B)).
\end{equation}
This uses that $\sigma_{-i/2}(b) = \nabla^{1/2} b \nabla^{-1/2}$.  Exactly the same calculations shows that 
\[ (a\otimes b) \cdot (\xi\otimes\overline\eta) = a\xi \otimes (\nabla^{1/2} b \nabla^{-1/2})^\top\overline\eta \qquad (a,b\in B'). \]
Hence the natural actions of $B\otimes B^\op$, and of $B'\otimes (B')^\op$, on $L^2(B) \otimes \overline{L^2(B)}$ are ``twisted''.  The following lemma shows that in both cases, we can think of this as a twist on the algebra ($B\otimes B^\op$ or $B'\otimes (B')^\op$ respectively) followed by the ``natural'' action on $L^2(B) \otimes \overline{L^2(B)}$.

\begin{lemma}\label{lem:mod_aut_bcomm}
For any $b\in B' \subseteq \mc B(L^2(B))$ and $z\in\mathbb C$, we have that $\nabla^z b \nabla^{-z} \in B'$, and so $b\mapsto \nabla^z b \nabla^{-z}$ defines an automorphism of $B'$ (in general not $*$-preserving).
\end{lemma}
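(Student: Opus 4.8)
The plan is to work in the explicit finite-dimensional GNS model, where $\nabla^z$ can be written as a conjugation by a power of the density $Q$, after which the statement reduces to a one-line computation. First I would identify the commutant concretely: since $B$ acts on $L^2(B)$ by left multiplication $\pi(x)\Lambda(a)=\Lambda(xa)$, every right multiplication $\rho(y)\colon\Lambda(a)\mapsto\Lambda(ay)$ lies in $B'$, and since $\varphi$ is faithful (so $\Lambda$ is a bijection) the map $\rho$ is injective with $\dim\rho(B)=\dim B=\dim B'$; hence $B'=\{\rho(y):y\in B\}$.

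Next I would pin down the modular operator. Following the finite-dimensional modular data of \cite[Section~5]{daws_quantum_graphs} --- or re-deriving it from $S=J\nabla^{1/2}$ together with $(\Lambda(a)|\Lambda(b))=\Tr(Qa^*b)$ --- one obtains $\nabla\Lambda(a)=\Lambda(QaQ^{-1})=\Lambda(\sigma_{-i}(a))$. Because $Q$ is positive and invertible, this exponentiates cleanly to $\nabla^z\Lambda(a)=\Lambda(Q^z a Q^{-z})$ for every $z\in\mathbb{C}$, where $Q^z=e^{z\log Q}$.

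With these two ingredients the lemma becomes a direct calculation: for $y\in B$,
\[ \nabla^z\rho(y)\nabla^{-z}\Lambda(a) = \nabla^z\rho(y)\Lambda(Q^{-z}aQ^{z}) = \nabla^z\Lambda(Q^{-z}aQ^{z}y) = \Lambda(a\,Q^z y Q^{-z}) = \rho(Q^z y Q^{-z})\Lambda(a), \]
so that $\nabla^z\rho(y)\nabla^{-z}=\rho(Q^z y Q^{-z})\in B'$, proving the first assertion. The induced map on $B'$ therefore corresponds, under $\rho$, to $y\mapsto Q^z y Q^{-z}$ on $B$, i.e.\ conjugation by the invertible element $Q^z$; this is visibly an algebra automorphism, invertible via the substitution $z\mapsto -z$. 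It fails to be $*$-preserving precisely because $Q^z$ is not a scalar multiple of a unitary unless $z\in i\mathbb{R}$, which matches the parenthetical caveat.

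The only genuine work is the middle step --- getting the formula $\nabla^z=\operatorname{Ad}(Q^z)$ on $L^2(B)$ exactly right, including the left/right placement of $Q$ and the signs, the usual source of errors in modular computations. An alternative, more conceptual route avoids computing $\nabla$ at all: invoke the standard Tomita--Takesaki fact that $\sigma_t=\operatorname{Ad}(\nabla^{it})$ preserves both $B$ and $B'$ for real $t$, then analytically continue. In finite dimensions $\nabla$ has finite spectrum, so $z\mapsto\nabla^z b\nabla^{-z}$ is an entire $\mc B(L^2(B))$-valued function; pairing with any linear functional annihilating the (closed) subspace $B'$ yields an entire scalar function vanishing on the imaginary axis, hence identically zero, forcing $\nabla^z b\nabla^{-z}\in B'$ for all $z$. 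There the main obstacle is just the base case (that the modular group preserves the commutant), after which analyticity is routine. I would present the explicit computation as primary, since the density $Q$ is already in play.
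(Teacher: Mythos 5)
Your proof is correct, but it is a genuinely different argument from the paper's. The paper never describes $B'$ concretely: it uses only that $\nabla^z a \nabla^{-z} = \sigma_{-iz}(a)\in B$ for $a\in B$, and then verifies by an algebraic commutation trick that $\nabla^z b\nabla^{-z}$ commutes with every $a\in B$, hence lies in $B'$ by the very definition of the commutant. You instead work in the concrete GNS model: you identify $B'$ with the right multiplications $\rho(B)$, use $\nabla^z\Lambda(a)=\Lambda(Q^zaQ^{-z})$ (a formula the paper records in Remark~\ref{rem:Q_or_nabla_to_twist}), and compute $\nabla^z\rho(y)\nabla^{-z}=\rho(Q^zyQ^{-z})$. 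This is precisely the route the paper's proof alludes to and deliberately sidesteps (``this is related to identifying $B'$ and $B^\op$ \dots\ however, a simple calculation suffices''). Your version buys an explicit formula for the automorphism --- it is conjugation by $Q^z$ in the right-multiplication picture --- at the cost of needing the commutation theorem $B'=\rho(B)$. On that point, your dimension count invokes $\dim B'=\dim B$, which is itself a nontrivial fact usually derived from $B'=JBJ$; it is cleaner and self-contained to note that any $T\in B'$ satisfies $T\Lambda(a)=\pi(a)T\Lambda(1)=\Lambda(ay)$, where $\Lambda(y)=T\Lambda(1)$, so that $T=\rho(y)$ directly. Your alternative route (Tomita--Takesaki invariance of $B'$ for real parameters plus analytic continuation off the imaginary axis) is also sound, and is closest in spirit to the paper's reliance on $\sigma_{-iz}$, though it is heavier machinery than the finite-dimensional setting requires.
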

\begin{proof}
This is related to identifying $B'$ and $B^\op$, compare \cite[Lemma~5.32]{daws_quantum_graphs}.  However, a simple calculation suffices.  As $\nabla^z a \nabla^{-z} = \sigma_{-iz}(a)\in B$ for each $a\in B$, for $a\in B, b\in B'$ we see that
\[ \nabla^z b \nabla^{-z} a = \nabla^z b \nabla^{-z} a \nabla^z \nabla^{-z}
= \nabla^z b \sigma_{iz}(a) \nabla^{-z}
= \nabla^z \sigma_{iz}(a) b \nabla^{-z}
= \nabla^z \nabla^{-z} a \nabla^z b \nabla^{-z}
= a \nabla^z b \nabla^{-z}, \]
and so $\nabla^z b \nabla^{-z}$ commutes with $a$, for each $a\in B$, and hence is in $B'$.
\end{proof}

While it might seem odd to twist the action, notice that by doing so we maintain the bijections we used in \cite{daws_quantum_graphs}, namely between:
\begin{enumerate}[(1)]
  \item\label{im:1} $B'$-bimodules $\mc S \subseteq \mc B(L^2(B))$;
  \item\label{im:2} $B'\otimes (B')^\op$-invariant subspaces $V \subseteq L^2(B) \otimes \overline{L^2(B)}$;
  \item\label{im:3} projections $e\in B\otimes B^\op$.
\end{enumerate}

To be explicit, here we use the unitary from equation \eqref{eq:twistedGNS} to link $\mc S$ and $V$.  The twisted action of $B'\otimes (B')^\op$ on $L^2(B) \otimes \overline{L^2(B)}$ is not a $*$-homomorphism, but that does not matter when we are showing that \ref{im:1} and \ref{im:2} biject; all that matters is the compatibility of the $B'$ actions.

When showing that \ref{im:2} and \ref{im:3} biject, we do need a $*$-homomorphism, and so here we consider the usual action of $B\otimes B^\op$ on $L^2(B) \otimes \overline{L^2(B)}$.  We now have two actions of $B'\otimes (B')^\op$ on $L^2(B) \otimes \overline{L^2(B)}$, but they have the same invariant subspaces.  Henceforth, we shall only consider the ``natural'' action.

\subsection{Linking adjacency operators and projections}\label{sec:adj_proj}

We follow \cite[Section~5.4]{daws_quantum_graphs}.  Let $A\in\mc B(L^2(B))$ be a quantum adjacency operator, which here means that:
\begin{itemize}
  \item the associated linear map $B \to B$ is completely positive (equivalently, is ``real'', see \cite[Proposition~2.23]{matsuda_class_m2});
  \item $A$ is idempotent for the Schur product: $m^*(A\otimes A)m = A$.
\end{itemize}
As in \cite[Theorem~5.36]{daws_quantum_graphs}, it is then natural to use the linear bijection
\[ \Psi' = \Psi'_{0,1/2} \colon \mc B(L^2(B)) \to B \otimes B^\op; 
|b\rangle \langle a| \mapsto b \otimes \sigma_{i/2}(a)^*. \]

\begin{theorem}
The map $\Psi'$ gives a bijection between quantum adjacency operators $A$ and (self-adjoint) projections $e = \Psi'(A) \in B\otimes B^\op$.
\end{theorem}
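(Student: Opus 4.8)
The plan is to verify three properties of the linear map $\Psi'$: that it is a bijection, that it intertwines the Schur product on $\mc B(L^2(B))$ with the algebra product of $B\otimes B^\op$, and that it intertwines the ``reality'' involution with the $*$-operation of $B\otimes B^\op$. Granting these, $A$ is a quantum adjacency operator exactly when $e=\Psi'(A)$ is both idempotent and self-adjoint, i.e.\ a (self-adjoint) projection. For the bijectivity I would first observe that $\Psi'$ is the unitary \eqref{eq:twistedGNS} in disguise: identifying $B^\op$ with $\overline{L^2(B)}$ through $c^\op\leftrightarrow\overline{\Lambda(c^*)}$, and using $\nabla^{-1/2}\Lambda(a)=\Lambda(\sigma_{i/2}(a))$, one checks that $\Psi'(|b\rangle\langle a|)=b\otimes\sigma_{i/2}(a)^*$ corresponds to $\Lambda(b)\otimes\overline{\nabla^{-1/2}\Lambda(a)}$. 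Since \eqref{eq:twistedGNS} is unitary and, $B$ being finite-dimensional with $\varphi$ faithful, $\sigma_{i/2}$ is a linear automorphism of $B$, the map $\Psi'$ is a linear bijection.

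Next I would prove multiplicativity: for the Schur product $m^*(A_1\otimes A_2)m$ on the source and the product of $B\otimes B^\op$ on the target, $\Psi'(m^*(A_1\otimes A_2)m)=\Psi'(A_1)\Psi'(A_2)$. It is enough to check this on rank-one operators $A_j=|b_j\rangle\langle a_j|$, which reduces to a computation with the explicit form of $m^*$ recorded in Section~\ref{sec:ip}. The point of the $1/2$-twist is precisely to absorb the modular operator that $m^*$ produces, so that the factors $\sigma_{i/2}(a_j)^*$ compose correctly under the opposite product of $B^\op$. Given this, the Schur-idempotent condition $m^*(A\otimes A)m=A$ is equivalent to $e^2=e$.

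Then I would match reality with self-adjointness. Transporting the involution $(a\otimes b^\op)^*=a^*\otimes(b^*)^\op$ of $B\otimes B^\op$ back through $\Psi'$ yields an antilinear involution on $\mc B(L^2(B))$, and checking on rank-ones that its fixed points are exactly the ``real'' maps of \cite{matsuda_class_m2} is, modulo the same twist bookkeeping, routine; with a trace the twist is trivial and this is simply self-adjointness of $A$, the $\sigma_{i/2}$ supplying the non-tracial correction. By \cite[Proposition~2.23]{matsuda_class_m2}, being real is equivalent to being completely positive, so $A$ is CP if and only if $e=e^*$. Combining with the previous step, $A$ is CP and Schur-idempotent precisely when $e$ is a self-adjoint idempotent, i.e.\ a projection.

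I expect the multiplicativity computation to be the main obstacle, since one must propagate the twist $\sigma_{i/2}$ through the comultiplication $m^*$ and see the modular factors cancel exactly; this cancellation is the technical reason the index $(0,1/2)$ is the forced choice (cf.\ \cite[Theorem~5.36]{daws_quantum_graphs}). The identification of the transported $*$-operation with Matsuda's reality involution is the delicate conceptual point, but it is governed by the very same twist and so should follow once the multiplicativity bookkeeping is in hand.
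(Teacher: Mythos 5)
Your outline is sound, but note that the paper itself gives no self-contained argument for this theorem: its proof is a citation to \cite[Theorem~5.37]{daws_quantum_graphs} (with a warning that $\Psi'(A)$ is denoted $f$ there). So your proposal is best read as a reconstruction, inside the paper, of the outsourced argument: linearity and bijectivity of $\Psi'$ (your identification of $\Psi'$ with the unitary \eqref{eq:twistedGNS} under $c^\op\leftrightarrow\overline{\Lambda(c^*)}$ is correct), multiplicativity from the Schur product to the product of $B\otimes B^\op$, and matching of the adjoint of $B\otimes B^\op$ with the reality involution $A\mapsto A((\,\cdot\,)^*)^*$, followed by Matsuda's equivalence. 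All three steps are true and reduce to rank-one computations with the data of Section~\ref{sec:ip}; what your route buys is self-containment, and part of the bookkeeping is in any case recorded later in Proposition~\ref{prop:thetaA}.

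Two of your expectations need correcting, though neither breaks the argument. First, multiplicativity is not where the twist matters, and it is easier than you predict: for rank-ones the Schur product is simply $|b_1\rangle\langle a_1|\bullet|b_2\rangle\langle a_2| = |b_1b_2\rangle\langle a_1a_2|$ (no modular factors appear), so multiplicativity follows from $\sigma_{i/2}$ being an algebra homomorphism, as $\Psi'(A_1)\Psi'(A_2) = b_1b_2\otimes\big(\sigma_{i/2}(a_2)^*\sigma_{i/2}(a_1)^*\big)^\op = b_1b_2\otimes\sigma_{i/2}(a_1a_2)^{*\op}$. The untwisted map $|b\rangle\langle a|\mapsto b\otimes a^{*\op}$ is multiplicative by the identical computation, so the index $(0,1/2)$ is \emph{not} forced at this step; it is forced at your third step. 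Indeed $\Psi'(A)^* = \sum_j b_j^*\otimes\sigma_{i/2}(a_j)^\op = \Psi'\big(\sum_j |b_j^*\rangle\langle\sigma_{-i}(a_j^*)|\big)$, whose associated map is $x\mapsto\sum_j\varphi(\sigma_i(a_j)x)b_j^* = \sum_j\varphi(xa_j)b_j^* = A(x^*)^*$ by the KMS property (Lemma~\ref{lem:sigmaiswap}); the untwisted map would instead produce $\sum_j\varphi(a_jx)b_j^*$, which agrees with the reality involution only for tracial $\varphi$. Second, the equivalence real $\Leftrightarrow$ CP of \cite[Proposition~2.23]{matsuda_class_m2} requires Schur-idempotency (the transpose map is real but not CP), so your sentence ``$A$ is CP if and only if $e=e^*$'' needs the hypothesis $e^2=e$; this costs nothing, since in both directions of the theorem idempotency is established before that equivalence is invoked, but as written the unconditioned claim is false.
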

\begin{proof}
This follows from \cite[Theorem~5.37]{daws_quantum_graphs} and its proof.  In reading \cite[Theorem~5.37]{daws_quantum_graphs} note that there we denote $\Psi'(A)$ by ``$f$'', and use ``$e$'' for a related element.
\end{proof}

Henceforth, we continue with the notation that $A$ is linked with $e = \Psi'_{0,1/2}(A)$.
Letting $B\otimes B^\op$ act ``naturally'' on $L^2(B) \otimes \overline{L^2(B)}$, we obtain the operator $b \otimes (\sigma_{i/2}(a)^*)^\top$.  Considering the unitary given by formula \eqref{eq:twistedGNS}, we have
\begin{align*}
\mc B(L^2(B)) \ni  |\xi\rangle \langle\eta| \mapsto \xi \otimes \overline{\nabla^{-1/2}\eta}
\ \xrightarrow{e} \ 
b\xi \otimes \overline{\sigma_{i/2}(a) \nabla^{-1/2} \eta}
= b\xi \otimes \overline{\nabla^{-1/2} a \eta}
\mapsto |b\xi\rangle \langle a\eta|
= b |\xi\rangle \langle\eta| a^*.
\end{align*}
Thus the associated action on $\mc B(L^2(B))$ is simply $T\mapsto bTa^*$; compare with Proposition~\ref{prop:thetaA} below.

We now find the associated subspaces $V$ and $\mc S$ associated to $e$, in terms of $A$.
Consider $A = \sum_{j=1}^k | b_j \rangle \langle a_j |$ assumed to be Schur idempotent and completely positive, so that $e = \Psi'_{0,1/2}(A) = \sum_j b_j \otimes \sigma_{i/2}(a_j)^*$ is a (self-adjoint) projection.  Hence
\[ V = \im (e) = \lin \Big\{ \sum_j b_j\xi \otimes (\sigma_{i/2}(a_j)^*)^\top\overline\eta : \xi,\eta\in L^2(B) \Big\} \subseteq L^2(B) \otimes \overline{L^2(B)}, \]
and given the bijections just established, we have that
\begin{equation}
\mc S = \Big\{ \sum_j b_j T a_j^* : T\in\mc B(L^2(B)) \Big\}.
\label{eq:S_from_A}
\end{equation}

\begin{remark}
One way to think here is that the ``twist'' introduced by $\Psi'_{0,1/2}$ is cancelled out by the unitary given by \eqref{eq:twistedGNS}, and so the relation between $A$, and for the formula for $\mc S$, looks ``untwisted''.
\end{remark}

We can nicely write much of this using diagrams.  Consider the map
\begin{equation} \label{eq:defn_thetaA}
\theta_A \colon \mc B(L^2(B)) \to \mc B(L^2(B)); \qquad
T \mapsto 
  \begin{sd}
  \node[circle, draw] (A) {$A$};
  \node[circle, draw, right=\hsep*0.7 of A.center] (T) {$T$};
  \draw (A) to [out=90, in=90] coordinate[pos=1/2] (m) (T);
  \draw (A) to [out=-90, in=-90] coordinate[pos=1/2] (m*) (T);
  \draw (m) -- ++(0,\hsep/3);
  \draw (m*) -- ++(0,-\hsep/3);
  \end{sd}
  = A \bullet T.
\end{equation}
This defines an idempotent, which can be seen diagrammatically, using that the multiplication on $B$ is associative and that $A$ is Schur idempotent:
\[ \theta_A(\theta_A(T)) = 
  \begin{sd}
  \node[circle, draw] (A) {$A$};
  \node[circle, draw, right=\hsep*0.7 of A.center] (T) {$T$};
  \draw (A) to [out=90, in=90] coordinate[pos=1/2] (m) (T);
  \draw (A) to [out=-90, in=-90] coordinate[pos=1/2] (m*) (T);
  \draw (m) -- ++(0,\hsep/3) coordinate (mt);
  \draw (m*) -- ++(0,-\hsep/3) coordinate (m*t);

  \node[circle, draw, left=\hsep*0.7 of A.center] (A1) {$A$};
  \draw (mt) to [out=90, in=90] coordinate[pos=1/2] (mm) (A1);
  \draw (m*t) to [out=-90, in=-90] coordinate[pos=1/2] (mm*) (A1);
  \draw (mm) -- ++(0,\hsep/3);
  \draw (mm*) -- ++(0,-\hsep/3);
  \end{sd}
  =
  \begin{sd}
    \node[circle, draw] (A) {$A$};
    \node[circle, draw, right=\hsep*0.7 of A.center] (T) {$T$};
    \node[circle, draw, left=\hsep*0.7 of A.center] (A1) {$A$};
    \draw (A) to [out=90, in=90] coordinate[pos=1/2] (m) (A1);
    \draw (m) to [out=90, in=90] coordinate[pos=1/2] (mm) (T);
    \draw (mm) -- ++(0,\hsep/3);
    \draw (A) to [out=-90, in=-90] coordinate[pos=1/2] (mb) (A1);
    \draw (mb) to [out=-90, in=-90] coordinate[pos=1/2] (mmb) (T);
    \draw (mmb) -- ++(0,-\hsep/3);
  \end{sd} 
  =
  \begin{sd}
  \node[circle, draw] (A) {$A$};
  \node[circle, draw, right=\hsep*0.7 of A.center] (T) {$T$};
  \draw (A) to [out=90, in=90] coordinate[pos=1/2] (m) (T);
  \draw (A) to [out=-90, in=-90] coordinate[pos=1/2] (m*) (T);
  \draw (m) -- ++(0,\hsep/3);
  \draw (m*) -- ++(0,-\hsep/3);
  \end{sd}
  = \theta_A(T). \]
  
\begin{proposition}\label{prop:thetaA}
With $A = \sum_{j=1}^k | b_j \rangle \langle a_j |$, we have that $\theta_A(T) = \sum_j b_j T a_j^*$ for $T\in\mc B(L^2(B))$.  Hence, with $\mc S$ given by \eqref{eq:S_from_A} using $A$, we have that $\mc S$ is the image of the idempotent $\theta_A$.
\end{proposition}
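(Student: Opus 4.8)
The plan is to unwind the diagrammatic definition \eqref{eq:defn_thetaA} into the algebraic Schur product $\theta_A(T) = A\bullet T = m(A\otimes T)m^*$ (reading the picture bottom-to-top: comultiply by $m^*$, act by $A\otimes T$, multiply by $m$), and then compute this directly on $A = \sum_{j=1}^k |b_j\rangle\langle a_j|$. The second assertion will then be immediate: since \eqref{eq:S_from_A} \emph{defines} $\mc S = \{\sum_j b_j T a_j^* : T\in\mc B(L^2(B))\}$, once the formula $\theta_A(T) = \sum_j b_j T a_j^*$ is established we will have $\mc S = \im(\theta_A)$, and $\theta_A$ has already been shown to be idempotent by the diagram preceding the statement.

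The computational heart is a single co-unit/Frobenius-type identity for the comultiplication $m^*$. First I would record how $m$ and $m^*$ interact with left multiplication: for $b\in L^2(B)\cong B$ one has $m(b\otimes\,\cdot\,) = L_b$ (left multiplication), directly from $m(\Lambda(b)\otimes\Lambda(y))=\Lambda(by)$; dually, for $a\in L^2(B)$ I claim $(\langle a|\otimes\id)m^* = L_{a^*}$. I would prove the latter by pairing $(\langle a|\otimes\id)m^*\Lambda(c)$ against an arbitrary $\Lambda(y)$: using the elementary adjunction $((\langle a|\otimes\id)\omega\,|\,\Lambda(y)) = (\omega\,|\,a\otimes\Lambda(y))$ together with $m^*$ being the adjoint of $m$, both sides collapse to $\varphi(c^* a y)$, which is exactly $(\Lambda(a^*c)|\Lambda(y)) = (L_{a^*}\Lambda(c)|\Lambda(y))$. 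This step is insensitive to the modular data: only left multiplication is involved, and $L_a^* = L_{a^*}$ holds for any faithful $\varphi$ (the modular operator would intervene only for right multiplication), so no $\nabla$ appears here.

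With these two identities in hand the main formula falls out by a short calculation. Writing $m^*\zeta = \sum_k \zeta'_k\otimes\zeta''_k$, for each rank-one summand we get $m(|b_j\rangle\langle a_j|\otimes T)m^*\zeta = \sum_k (a_j|\zeta'_k)\, b_j(T\zeta''_k) = L_{b_j} T \big(\sum_k (a_j|\zeta'_k)\zeta''_k\big) = L_{b_j}TL_{a_j^*}\zeta$, the last equality being the co-unit identity applied on the first leg; summing over $j$ gives $\theta_A(T) = \sum_j b_j T a_j^*$. Comparing with \eqref{eq:S_from_A} then yields $\mc S = \im(\theta_A)$, completing both assertions.

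I expect the only genuine obstacle to be bookkeeping rather than mathematics: keeping the (anti)linearity conventions of the bra-ket and of the GNS inner product straight, and making sure that the $a_j^*$ in the target formula is literally the operator adjoint $L_{a_j}^* = L_{a_j^*}$ of left multiplication, which is precisely what makes the result hold verbatim for non-tracial $\varphi$. A fully diagrammatic alternative is available, and is perhaps more in the spirit of the surrounding text: one expands each $|b_j\rangle\langle a_j|$ as a cap--cup and slides the resulting strings through the multiplication and comultiplication using the Frobenius relations to read off $L_{b_j}(-)L_{a_j^*}$; this is simply the pictorial incarnation of the two identities above.
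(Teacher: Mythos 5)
Your proof is correct. The co-unit identity $(\langle a|\otimes\id)m^* = L_{a^*}$ is exactly right (both sides of your pairing computation collapse to $\varphi(c^*ay)$), applying it on the first leg does give $\theta_A(T) = \sum_j b_j T a_j^*$ for every $T$ at once, and the identification of $a_j^*$ with the operator adjoint $L_{a_j}^* = L_{a_j^*}$ is legitimate because the GNS representation is a $*$-representation, so no modular twist enters. The route differs from the paper's in organization rather than substance: the paper fixes a rank-one $T = |c\rangle\langle d|$, computes the matrix coefficients $(b|\theta_A(T)a)$ by expanding $m^*(a) = \sum_k c_k\otimes d_k$ and using the adjointness identity $\sum_k (a_1|c_k)(a_2|d_k) = (a_1a_2|a)$, reads off $\theta_A(T) = \sum_j |b_jc\rangle\langle a_jd| = \sum_j b_jTa_j^*$, and then extends by linearity (sufficient in finite dimensions, where every operator is a finite sum of rank-ones). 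You instead decompose only $A$, keep $T$ arbitrary, and package the adjointness computation once and for all as an operator identity. What your version buys: no rank-one reduction of $T$, no linearity step, and a reusable structural lemma (the Frobenius ``cap'' identity, which matches the diagrammatic spirit of the section). What the paper's version buys: the entire computation happens at the level of scalars, with no operator-valued bookkeeping. Both arguments rest on the same two facts, namely the adjointness of $m$ and $m^*$ with respect to the GNS inner product, and $L_a^* = L_{a^*}$, so the mathematical content is the same; your packaging is arguably the cleaner of the two.
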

\begin{proof}
Let $a\in B$ and let $m^*(a) = \sum_k c_k \otimes d_k$, so $\sum_k (a_1|c_k) (a_2|d_k) = (a_1a_2|a)$ for each $a_1,a_2\in B$.  Then, for $b\in A$, and with $T = |c\rangle\langle d|$, we have
\begin{align*}
(b|\theta_A(T)a)
&= (m^*(b)|(A\otimes T)m^*(a))
= \sum_{j,k} (m^*(b)|b_j\otimes T(d_k)) (a_j|c_k)
= \sum_{j,k} (m^*(b)|b_j\otimes c) (a_j|c_k) (d|d_k) \\
&= \sum_{j,k} (b|b_j c) (a_j|c_k) (d|d_k)
= \sum_j (b|b_j c) (a_jd|a),
\end{align*}
from which it follows that
\[ \theta_A(T) = \sum_j |b_jc\rangle\langle a_jd|
= \sum_j b_j T a_j^*. \]
By linearity, this holds for all $T$, as claimed.  It is now clear that $\mc S$ is the image of $\theta_A$.
\end{proof}

Diagrammatically, we see that
\[ \begin{sd}
  \coordinate (v1);
  \coordinate[right=\hsep of v1.center] (v2);
  \draw (v1) to [in=90,out=90] coordinate[pos=1/2] (m) (v2) arc(90:450:0.1);
  \draw (m) -- ++(0,\hsep/3);
  \end{sd}
  =
  \begin{sd}
  \coordinate (v1);
  \draw (v1) -- ++(0,\hsep);
  \end{sd}
  \qquad\implies\qquad
  \theta_A(|1\rangle\langle1|) =
  \begin{sd}
  \node[circle, draw] (A) {$A$};
  \coordinate[right=\hsep of A.north] (v1);
  \draw (A) to [in=90,out=90] coordinate[pos=1/2] (m) (v1) arc(90:450:0.1);
  \draw (m) -- ++(0,\hsep/3);
  \coordinate[right=\hsep of A.south] (v2);
  \draw (A) to [in=-90,out=-90] coordinate[pos=1/2] (mb) (v2) arc(-90:270:0.1);
  \draw (mb) -- ++(0,-\hsep/3);
  \end{sd}
  =
  \begin{sd}
    \node[circle, draw] (A) {$A$};
    \draw (A) -- ++(0,\hsep);
    \draw (A) -- ++(0,-\hsep);
  \end{sd}    
  =  A
  \]
Thus we can recover $A$ from $\theta_A$.

\begin{proposition}\label{prop:S_is_bimod_A}
We have that $\mc S = \lin B' A B'$, the $B'$-bimodule generated by the operator $A$.
\end{proposition}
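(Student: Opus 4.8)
The plan is to realise $\mc S$ as the image of the idempotent $\theta_A$ and then to exploit that $\theta_A$ is a module map over the commutant. By Proposition~\ref{prop:thetaA} together with \eqref{eq:S_from_A}, we have $\mc S = \im\theta_A$ and $\theta_A(T) = \sum_j b_j T a_j^*$ for $T\in\mc B(L^2(B))$, so the whole argument can be run off this concrete formula. First I would record that both inclusions $\lin B'AB' \subseteq \mc S$ and $\mc S \subseteq \lin B'AB'$ will follow at once from the single structural fact below, rather than treating them separately.

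The key observation is that $\theta_A$ is a $B'$-bimodule map. Indeed, each $a_j,b_j$ lies in $B$ and hence commutes with every element of $B'$, so for $x,y\in B'$ and $T\in\mc B(L^2(B))$ we get $\theta_A(xTy) = \sum_j b_j x T y a_j^* = x\big(\sum_j b_j T a_j^*\big)y = x\,\theta_A(T)\,y$. Combined with the diagram computation immediately preceding the statement, which gives $A = \theta_A(\rankone{1}{1})$, this yields $\theta_A(x\rankone{1}{1}y) = xAy$ for all $x,y\in B'$.

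Next I would check that $\rankone{1}{1}$ generates all of $\mc B(L^2(B))$ as a $B'$-bimodule. Since $\varphi$ is faithful, the cyclic vector $\Lambda(1)$ is also separating, hence cyclic for $B'$; in finite dimensions this says $B'\Lambda(1) = L^2(B)$. As $B'$ is $*$-closed, the identity $x\rankone{1}{1}y = \rankone{x\Lambda(1)}{y^*\Lambda(1)}$ shows that, as $x,y$ range over $B'$, the operators $x\rankone{1}{1}y$ run over all rank-one operators $\rankone{\xi}{\eta}$ with $\xi,\eta\in L^2(B)$, whose span is all of $\mc B(L^2(B))$. Thus $\mc B(L^2(B)) = \lin\{x\rankone{1}{1}y : x,y\in B'\}$.

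Finally I would apply the linear surjection $\theta_A\colon\mc B(L^2(B))\to\mc S$ to this identity, obtaining
\[ \mc S = \theta_A(\mc B(L^2(B))) = \lin\{\theta_A(x\rankone{1}{1}y) : x,y\in B'\} = \lin\{xAy : x,y\in B'\} = \lin B'AB', \]
as required. I expect the only step needing genuine care to be the claim that $\rankone{1}{1}$ is a cyclic generator of the $B'$-bimodule $\mc B(L^2(B))$; this is precisely where faithfulness of $\varphi$ (equivalently, the separating property of $\Lambda(1)$, giving cyclicity for $B'$) is used. Everything else is a routine consequence of $\theta_A$ being a $B'$-bimodule map and the definition $\mc S = \im\theta_A$.
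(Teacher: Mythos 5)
Your proof is correct, and it reaches the conclusion by a genuinely different mechanism than the paper, even though both arguments start from the same fact that $\mc S = \im\theta_A$ with $\theta_A(T) = \sum_j b_j T a_j^*$. The paper's proof works inside modular theory: it parametrizes $B' = JBJ$ and uses the formula $JaJ\Lambda(b) = \Lambda(b\sigma_{-i/2}(a^*))$ to compute $(Ja^*J)A(Jb^*J)$ explicitly, finding it equals $\theta_A\bigl(\rankone{\sigma_{-i/2}(a)}{\sigma_{-i/2}(b^*)}\bigr)$; spanning then follows because $\sigma_{-i/2}$ is a bijection of $B$. You instead isolate two structural facts --- that $\theta_A$ is a $B'$-bimodule map (immediate since the $a_j, b_j$ lie in $B$) and that $A = \theta_A(\rankone{1}{1})$ --- and combine them with the observation that $\rankone{1}{1}$ generates $\mc B(L^2(B))$ as a $B'$-bimodule, which you correctly derive from faithfulness of $\varphi$: the vector $\Lambda(1)$ is separating for $B$, hence cyclic for $B'$, and in finite dimensions $B'\Lambda(1) = L^2(B)$. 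Your route buys a cleaner, more conceptual proof that never touches $J$ or the modular automorphism group, and it would transfer to any situation with a bimodule map and a cyclic bimodule generator. What the paper's computation buys in exchange is the explicit identification of which rank-one operator maps to $xAy$, with the $\sigma_{-i/2}$-twist visible; that twist is not incidental, since the same kind of computation reappears when the paper studies the twisted spaces $\mc S_{i/4}$ and the KMS adjoint (Proposition~\ref{prop:swap_e_KMS_ad_A}), so the paper's version doubles as a warm-up for those later arguments.
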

\begin{proof}
The previous proposition shows that $\mc S = \{ \sum_j b_j T a_j^* : T\in\mc B(L^2(B)) \}$ where $A = \sum_{j=1}^k | b_j \rangle \langle a_j |$.  Let $J$ be the modular conjugation, which satisfies $JaJ \Lambda(b) = \Lambda(b \sigma_{-i/2}(a^*))$, for $a,b\in B$, and we have $JBJ=B'$.
Thus
\begin{align*}
(Ja^*J) A (Jb^*J)
&= \sum_j | Ja^*J\Lambda(b_j) \rangle \langle JbJ \Lambda(a_j) |
= \sum_j | \Lambda(b_j \sigma_{-i/2}(a)) \rangle \langle \Lambda(a_j \sigma_{-i/2}(b^*)) | \\
&= \sum_j b_j |\sigma_{-i/2}(a)\rangle\langle \sigma_{-i/2}(b^*)| a_j^*
= \theta_A(|\sigma_{-i/2}(a)\rangle\langle \sigma_{-i/2}(b^*)|).
\end{align*}
Taking the linear span as $a,b$ vary, it follows that $B' A B' = \theta_A(\mc B(L^2(B))) = \mc S$, as claimed.
\end{proof}

In particular, $\mc S$ is a $B'$-bimodule, and so a ``quantum relation'' in the sense of Weaver, \cite{Weaver_QuantumRelations}.  We shall see shortly what it means for $\mc S$ to be a quantum graph in Weaver's sense.

\subsection{KMS Inner products}\label{sec:KMS_IPs}

We now make links with \cite{Wasilewski_Quantum_Cayley}.  Here Wasilewski works with the ``KMS inner-product'', which we shall denote by
\begin{equation}
(a|b)_K = \varphi(a^* \sigma_{-i/2}(b)) \qquad (a,b\in B).
\label{eq:KMS_defn}
\end{equation}
A simple calculation shows that $(a|b)_K = (\sigma_{-i/4}(a)|\sigma_{-i/4}(b))$.  We continue to identify $B$ with $L^2(B)$, and so the rank-one operator $|a\rangle\langle b|$ can be considered as the map $B\to B; c \mapsto (b|c) a = \varphi(b^*c) a$.  Similarly, we define $|a\rangle\langle b|_K$ using the KMS inner-product, so
\[ |a\rangle\langle b|_K \colon B \to B; \quad c \mapsto (b|c)_K a =  \varphi(b^* \sigma_{-i/2}(c)) a. \]
Hence $|a\rangle\langle b|_K = |a\rangle\langle b| \circ \sigma_{-i/2}$.  As $\varphi(b^* \sigma_{-i/2}(c)) = \varphi(\sigma_{-i/2}(b)^* c)$, also $|a\rangle\langle b|_K = |a\rangle\langle\sigma_{-i/2}(b)|$.

Then \cite[Lemma~3.3]{Wasilewski_Quantum_Cayley} defines a map $\Psi^{\KMS} \colon \mc B(L^2(B)) \to B\otimes B^\op$ by, in our notation,
\[ \Psi^{\KMS} \colon  |a\rangle\langle b|_K  \mapsto  a \otimes b^*. \]
Consequently $\Psi^{\KMS}(|a\rangle\langle b|) = \Psi^{\KMS}(|a\rangle\langle \sigma_{i/2}(b)|_K)= a \otimes \sigma_{i/2}(b)^* = \Psi'_{0, 1/2}(|a\rangle\langle b|)$.  Thus $\Psi^{\KMS} = \Psi'_{0,1/2}$ in our notation, the map we considered in \cite[Section~5.4]{daws_quantum_graphs}.

Note that \cite[Proposition~3.7]{Wasilewski_Quantum_Cayley} nicely separates out the correspondence between properties of $\Psi^{\KMS}(A)$ to those of $A$.

In Section~\ref{sec:adj_proj}, we established bijections between $A\in\mc B(L^2(B))$ with $e\in B\otimes B^\op$, and with $\theta_A \in \mc B(\mc B(L^2(B)))$.  These are
\begin{align*}
A &= |b\rangle \langle a| 
&\leftrightarrow&&
e &= b \otimes \sigma_{i/2}(a)^*
&\leftrightarrow&&
\theta_A &\colon T \mapsto bTa^*.
\end{align*}
We can write this alternatively as
\begin{align}  \label{eq:correspondence}
A &= |b\rangle \langle \sigma_{-i/2}(c^*) | 
&\leftrightarrow&&
e &= b \otimes c
&\leftrightarrow&&
\theta_A &\colon T \mapsto bT \sigma_{i/2}(c).
\end{align}
This does not agree verbatim with \cite[Proposition~3.14]{Wasilewski_Quantum_Cayley}, but private communication with Wasilewski verifies that this is a typo (compare with the paragraph before \cite[Proposition~3.14]{Wasilewski_Quantum_Cayley}, where $\sigma_{-i/2}^{\varphi^{-1}}$ is used: when restricted to $B$, as is occurring in this context, this map agrees with $\sigma_{i/2}$).  Thus \cite[Proposition~3.14]{Wasilewski_Quantum_Cayley} uses the same map $\theta_A$.

\subsection{Twisted subspaces}\label{sec:diff}

An important difference between the identifications used so far and \cite{Wasilewski_Quantum_Cayley} is that the relation between $\mc S$ and $A$ (and/or $e$) is different, compare \cite[Theorem~3.15]{Wasilewski_Quantum_Cayley}.  (Again, there is a sign convention here, whether one uses $i/4$ or $-i/4$ below, and we have chosen the convention that seems to work with our other conventions, and which correctly recreate key formulae from \cite{Wasilewski_Quantum_Cayley}.)

For $z\in\mathbb C$ and a $B'$-bimodule $\mc S\subseteq\mc B(L^2(B))$, define $\mc S_z = Q^{-iz} \mc S Q^{iz}$ where $Q\in B$ is the density of $\varphi$ as in Section~\ref{sec:ip}.

\begin{lemma}
We have that $\mc S_z$ is a $B'$-bimodule.
\end{lemma}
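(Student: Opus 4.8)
The plan is to reduce the bimodule property of $\mc S_z$ to that of $\mc S$ by exploiting that the conjugating element $Q^{iz}$ lies in $B$, and hence commutes with everything in $B'$.

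First I would record that $Q\in B$ is positive and invertible, so that $\log Q\in B$ and therefore $Q^{iz}=\exp(iz\log Q)\in B$ for every $z\in\mathbb C$; this uses only that $B$ is a finite-dimensional $C^*$-algebra, closed under the functional calculus applied to its own (bounded, invertible positive) elements. In particular $Q^{iz}$ and $Q^{-iz}$ are bounded invertible members of $B$. The key point is then that $B$ and $B'$ are mutual commutants, so $Q^{\pm iz}$ commutes with every element of $B'$. Hence, for $a,b\in B'$ and $x\in\mc S$,
\[ a\,(Q^{-iz} x Q^{iz})\,b = Q^{-iz}\,(a x b)\,Q^{iz}. \]
Since $\mc S$ is a $B'$-bimodule we have $axb\in\mc S$, and so the right-hand side lies in $Q^{-iz}\mc S Q^{iz}=\mc S_z$. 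As $\mc S_z$ is visibly a linear subspace (being the image of $\mc S$ under the linear map $T\mapsto Q^{-iz}TQ^{iz}$), this gives $B'\mc S_z B'\subseteq\mc S_z$, as required. Working in finite dimensions, there is no closedness condition to verify separately: every subspace of $\mc B(L^2(B))$ is automatically weak$^*$-closed, and in any case conjugation by the invertible $Q^{iz}$ is a weak$^*$-homeomorphism and so preserves weak$^*$-closed subspaces.

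There is essentially no obstacle here: the entire content is the observation that the twist $T\mapsto Q^{-iz}TQ^{iz}$ is implemented by an element of $B$, which therefore commutes past the $B'$-action without disturbing the module structure. The only step meriting explicit mention is the claim $Q^{iz}\in B$ for complex $z$, which is immediate from the finite-dimensional functional calculus, but is worth stating since it is precisely the feature that ceases to be automatically harmless in the infinite-dimensional analogues developed in the later sections.
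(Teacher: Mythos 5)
Your proof is correct. The mechanism is the same as the paper's — commute the twist past the $B'$-action and invoke the bimodule property of $\mc S$ — but your justification is genuinely more direct: you observe that $Q^{\pm iz}\in B$ (finite-dimensional functional calculus applied to the positive invertible $Q$), so $Q^{\pm iz}$ commutes with every element of $B'$, and hence conjugation by $Q^{iz}$ is literally the \emph{identity} on $B'$; the containment $B'\mc S_z B'\subseteq\mc S_z$ then drops out in one line. The paper instead runs the argument of Lemma~\ref{lem:mod_aut_bcomm}: it shows only that $Q^{iz}xQ^{-iz}\in B'$ for $x\in B'$ (i.e.\ that conjugation restricts to an automorphism of $B'$), and then inserts $Q^{iz}Q^{-iz}$ around the $B'$-element. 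For the lemma as stated your shortcut is cleaner, and your closing remark correctly pinpoints why the paper phrases things the longer way: the paper's argument applies verbatim when the conjugating element is $\nabla^{iz}$, which lies in neither $B$ nor $B'$, and this uniformity is exactly what is needed in Lemma~\ref{lem:mod_aut_bcomm} itself and in Remark~\ref{rem:Q_or_nabla_to_twist}, where it is shown that twisting by $Q$ and by $\nabla$ produce the same spaces $\mc S_z$. So your argument buys brevity at the cost of being special to conjugators lying in $B$ — which, as you note, is precisely the feature that is lost in the infinite-dimensional setting.
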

\begin{proof}
This is similar to the proof of Lemma~\ref{lem:mod_aut_bcomm}.  As $\sigma_z(a) = Q^{iz} a Q^{-iz}$ for each $a\in B$, exactly the same argument as before shows that $Q^{iz} x Q^{-iz} \in B'$ for each $x\in B'$.  Hence, given $T\in\mc S$, so that $Q^{-iz} T Q^{iz} \in \mc S_z$, and given $x\in B'$, we see that
\[ Q^{-iz} T Q^{iz} x = Q^{-iz} T (Q^{iz} x Q^{-iz}) Q^{iz}
\in Q^{-iz} \mc S Q^{iz} = \mc S_z, \]
as $Q^{iz} x Q^{-iz} \in B'$ and using that $\mc S$ is a $B'$-bimodule.  Similarly $B' \mc S_z \subseteq \mc S_z$.
\end{proof}

Given $A$ and hence $\theta_A$ as before, we now define $\mc T = \mc S_{i/4}$ where $\mc S$ is the image of the idempotent $\theta_A$.  From Proposition~\ref{prop:thetaA}, we see that
\begin{align}
T\in\mc T = \mc S_{i/4}
&\quad\Leftrightarrow\quad
T \in Q^{1/4} \mc S Q^{-1/4}
\quad\Leftrightarrow\quad
Q^{-1/4} T Q^{1/4} \in \mc S   \notag \\
&\quad\Leftrightarrow\quad
\sum_j b_j Q^{-1/4} T Q^{1/4} a_j^* = Q^{-1/4} T Q^{1/4}  \notag \\
&\quad\Leftrightarrow\quad
\sum_j Q^{1/4} b_j Q^{-1/4} T Q^{1/4} a_j^*Q^{-1/4} = T    \notag \\
&\quad\Leftrightarrow\quad
\sum_j \sigma_{-i/4}(b_j) T \sigma_{i/4}(a_j)^* = T
\label{eq:twistedT}
\end{align}
This may look strange, but we'll see below in Section~\ref{sec:op_sys_A} that in some senses it is quite natural.

\begin{remark}\label{rem:Q_or_nabla_to_twist}
We could also use $\nabla$ in place of $Q$, when defining $\mc S_z$.  Remember that, with $\Lambda \colon B \to L^2(B)$ the GNS map, we have that $\nabla^z\Lambda(a) = \Lambda(Q^zaQ^{-z})$ for $a\in B$.  Hence $\nabla^z Q^{-z} \Lambda(a) = \Lambda(a Q^{-z}) = Q^{-z} \nabla^z \Lambda(a)$ for $a\in B$, and hence $\nabla^z Q^{-z} = Q^{-z} \nabla^z \in B'$.  So $\nabla^z Q^{-z} \mc S Q^z \nabla^{-z} \subseteq B' \mc S B' \subseteq \mc S$ for all $z$, and hence $\mc S_z = Q^{-iz} \mc S Q^{-iz} \subseteq \nabla^{-iz} \mc S \nabla^{iz}$.  Reversing the roles of $Q$ and $\nabla$ shows the other inclusion, and so we have equality.
\end{remark}

Using Lemma~\ref{lem:mod_aut_bcomm} (and the above remark) and Proposition~\ref{prop:S_is_bimod_A}, the following is an easy calculation.

\begin{proposition}
For $z\in\mathbb C$ we have that $\mc S_z = B' (Q^{-iz}AQ^{iz}) B' = B' (\nabla^{-iz}A\nabla^{iz}) B'$.
\end{proposition}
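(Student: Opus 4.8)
The plan is to start from the description $\mc S = \lin B' A B'$ established in Proposition~\ref{prop:S_is_bimod_A} and simply conjugate. First I would treat the $\nabla$-version, using Remark~\ref{rem:Q_or_nabla_to_twist} to write $\mc S_z = \nabla^{-iz} \mc S \nabla^{iz}$. Since conjugation by the invertible operator $\nabla^{-iz}$ is linear, inserting $\nabla^{iz}\nabla^{-iz} = 1$ between consecutive factors gives
\[ \mc S_z = \nabla^{-iz} (\lin B' A B') \nabla^{iz} = \lin \big( \nabla^{-iz} B' \nabla^{iz} \big) \big( \nabla^{-iz} A \nabla^{iz} \big) \big( \nabla^{-iz} B' \nabla^{iz} \big). \]
By Lemma~\ref{lem:mod_aut_bcomm}, conjugation by $\nabla^{-iz}$ maps $B'$ into $B'$, and being an algebra automorphism it maps $B'$ onto $B'$; thus $\nabla^{-iz} B' \nabla^{iz} = B'$ and the displayed span collapses to $B' (\nabla^{-iz} A \nabla^{iz}) B'$.

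For the $Q$-version I would argue identically, now starting from the definition $\mc S_z = Q^{-iz} \mc S Q^{iz}$. The only input needed is that conjugation by $Q^{-iz}$ preserves $B'$; but this is exactly the fact $Q^{iz} x Q^{-iz} \in B'$ for $x \in B'$ that was proved (from $\sigma_z(a) = Q^{iz} a Q^{-iz} \in B$) in the proof of the Lemma immediately preceding this Proposition, showing $\mc S_z$ to be a $B'$-bimodule. Hence $Q^{-iz} B' Q^{iz} = B'$, and conjugating $\lin B' A B'$ yields $\mc S_z = B' (Q^{-iz} A Q^{iz}) B'$.

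There is essentially no obstacle here, which is why the paper bills it as an easy calculation: the entire content is that conjugation by $Q^{\pm iz}$ and $\nabla^{\pm iz}$ restricts to an automorphism of the commutant $B'$, and both of these statements are already recorded in the cited lemmas. The one point worth a moment's care is that $z$ is complex, so $Q^{iz}$ and $\nabla^{iz}$ are generally not unitary; but in the present finite-dimensional setting $Q$ and $\nabla$ are positive invertible operators, so $Q^{iz}, \nabla^{iz}$ are bona fide bounded invertibles for every $z \in \mathbb C$ and may be pulled through the linear span without any domain or closure subtleties.
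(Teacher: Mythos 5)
Your proof is correct and follows the same route the paper intends: combine Proposition~\ref{prop:S_is_bimod_A} ($\mc S = \lin B'AB'$) with the fact that conjugation by $Q^{\pm iz}$ (proof of the lemma in Section~\ref{sec:diff}) and by $\nabla^{\pm iz}$ (Lemma~\ref{lem:mod_aut_bcomm}) restricts to an automorphism of $B'$, together with Remark~\ref{rem:Q_or_nabla_to_twist} to pass between the $Q$- and $\nabla$-twists. This is exactly the "easy calculation" the paper alludes to, and your observation that $Q^{iz},\nabla^{iz}$ remain bounded invertibles for complex $z$ in finite dimensions correctly disposes of the only potential subtlety.
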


\subsection{Operator systems and Weaver's quantum graphs}\label{sec:op_sys_A}

In \cite{Weaver_QuantumRelations}, Weaver terms a ``quantum graph'' to be an operator system $\mc S$ which is also a $B'$-bimodule.  That $\mc S$ is self-adjoint corresponds to the ``graph'' being undirected, and that $1\in\mc S$ corresponds to having a loop at every ``vertex''.
Here we consider these extra properties.
As in \cite[Definition~5.11]{daws_quantum_graphs}, we write
\[ J_0 \colon L^2(B) \otimes \overline{L^2(B)} \to L^2(B) \otimes \overline{L^2(B)}; \xi \otimes \overline\eta \mapsto \eta \otimes \overline\xi \]
for the anti-linear unitary ``tensor swap map''.

When is $\mc S$ self-adjoint?  Due to the relation given by formula \eqref{eq:twistedGNS}, if $\mc S$ and $V$ are related, then as $|\xi\rangle\langle\eta| \mapsto \xi\otimes\overline{\nabla^{-1/2}\eta}$, we see that
\[ |\eta\rangle\langle\xi| \mapsto \eta\otimes\overline{\nabla^{-1/2}\xi}
= (\nabla^{1/2} \otimes (\nabla^{-1/2})^\top)(\nabla^{-1/2}\eta \otimes \overline\xi)
= (\nabla^{1/2} \otimes (\nabla^{-1/2})^\top) J_0 (\xi\otimes\overline{\nabla^{-1/2}\eta}). \]
Hence $\mc S^*$ corresponds to $(\nabla^{1/2} \otimes (\nabla^{-1/2})^\top) J_0(V) = V_a$ (where ``a'' is chosen for ``adjoint'').  Let $e$ be the orthogonal projection onto $V$, and $e_a$ the orthogonal projection onto $V_a$.  It seems hard to write down $e_a$ in terms of $e$, as $V_a^\perp = J_0 (\nabla^{1/2} \otimes (\nabla^{-1/2})^\top) (V^\perp)$.

We shall not pursue this more, but instead make links with the idea explored in Section~\ref{sec:diff}.  Set $\mc T = \mc S_{i/4}$, and to avoid notational clashes, we write $\tau \colon B\otimes B^\op \to B\otimes B^\op$ for the tensor swap map, an anti-$*$-homomorphism.

The expression $A^*_K = \nabla^{-1/2} A^* \nabla^{1/2}$ occurring in the following is the KMS adjoint, that is, satisfies
\[ (a|A^*_K(b))_K = (A(a)|b)_K  \qquad (a,b\in B), \]
a fact easily verified from the definition of the KMS inner-product \eqref{eq:KMS_defn}.
For the following, compare \cite[Theorem~A]{Wasilewski_Quantum_Cayley}.

\begin{proposition}\label{prop:swap_e_KMS_ad_A}
Let $\mc T$ correspond to $e$ and $A$.  Then $\mc T^*$ corresponds with $\tau(e)$ and $A^*_K = \nabla^{-1/2} A^* \nabla^{1/2}$.
\end{proposition}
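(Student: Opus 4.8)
The plan is to fix a decomposition $A = \sum_{j=1}^k |b_j\rangle\langle a_j|$, so that by Proposition~\ref{prop:S_is_bimod_A} we have $\mc S = \lin B' A B'$, with $\mc T = \mc S_{i/4}$, and $e = \Psi'_{0,1/2}(A) = \sum_j b_j \otimes \sigma_{i/2}(a_j)^*$. First I would compute the adjoint of the twisted space straight from the definition $\mc S_z = Q^{-iz}\mc S Q^{iz}$. Taking Hilbert-space adjoints and using that $Q^{\pm 1/4}$ is self-adjoint gives $\mc T^* = (\mc S_{i/4})^* = Q^{-1/4}\mc S^* Q^{1/4} = (\mc S^*)_{-i/4}$, so that the adjoint simply flips the sign of the twist parameter. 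Since $B'$ is $*$-closed, $\mc S^* = \lin B' A^* B'$.

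Next I would identify $(\mc S^*)_{-i/4}$ as the twisted bimodule attached to $A^*_K = \nabla^{-1/2}A^*\nabla^{1/2}$. Writing $\mc S_K := \lin B' A^*_K B'$, Lemma~\ref{lem:mod_aut_bcomm} shows that conjugation by $\nabla^{\pm 1/2}$ is an automorphism of $B'$, so $\mc S_K = \nabla^{-1/2}(\lin B' A^* B')\nabla^{1/2} = \nabla^{-1/2}\mc S^*\nabla^{1/2}$, which by Remark~\ref{rem:Q_or_nabla_to_twist} equals $(\mc S^*)_{-i/2}$. As the twists compose additively, $(\mc X_z)_w = \mc X_{z+w}$, applying the twist once more yields $(\mc S_K)_{i/4} = (\mc S^*)_{-i/4} = \mc T^*$. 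By Proposition~\ref{prop:S_is_bimod_A} this exhibits $\mc T^*$ as the space associated to the adjacency operator $A^*_K$, which is the second assertion. A short check using $\nabla^z\Lambda(x) = \Lambda(\sigma_{-iz}(x))$ confirms the formula $A^*_K = \sum_j |\sigma_{i/2}(a_j)\rangle\langle\sigma_{-i/2}(b_j)|$. Note that $A^*_K$ is automatically a genuine quantum adjacency operator: since $\tau$ is an anti-$*$-homomorphism and $e$ is a self-adjoint projection, $\tau(e)$ is again a self-adjoint projection and hence corresponds to one under the bijection of Section~\ref{sec:adj_proj}.

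Finally I would match the projections. Computing $e' = \Psi'_{0,1/2}(A^*_K)$ from the displayed form of $A^*_K$ and simplifying $\sigma_{i/2}(\sigma_{-i/2}(b_j))^* = b_j^*$ gives $e' = \sum_j \sigma_{i/2}(a_j)\otimes b_j^*$, whereas $\tau(e) = \sum_j \sigma_{i/2}(a_j)^*\otimes b_j$. Rather than argue termwise, I would invoke self-adjointness of $e$: the identity $e = e^*$ reads $\sum_j b_j\otimes\sigma_{i/2}(a_j)^* = \sum_j b_j^*\otimes\sigma_{i/2}(a_j)$, and applying the swap $\tau$ to both sides delivers exactly $\tau(e) = e'$, completing the identification.

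I expect the main obstacle to be purely the bookkeeping of modular twists: keeping straight how the Hilbert-space adjoint interacts with the $Q^{\pm 1/4}$ conjugation (it reverses the twist parameter), the passage between $Q$- and $\nabla$-twists via Remark~\ref{rem:Q_or_nabla_to_twist}, and the relation $\sigma_z(x)^* = \sigma_{\bar z}(x^*)$ together with the distinction between the $*$-operation and the opposite structure on $B^\op$. The conceptual skeleton---adjoint commutes with the twist up to a sign, and $\tau$ is realised by the swap once the $\nabla$-twist is absorbed into $B'$---is clean; every place an error could creep in lives in these index computations.
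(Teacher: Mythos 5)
Your proof is correct, but it reaches the key identification by a different mechanism than the paper's own argument. The paper never leaves the elementwise picture: it starts from the fixed-point characterisation \eqref{eq:twistedT} of $\mc T$, namely $T = \sum_j \sigma_{-i/4}(b_j)\,T\,\sigma_{i/4}(a_j)^*$, takes adjoints, and re-indexes the modular parameters ($\sigma_{i/4}(a_j) = \sigma_{-i/4}(\sigma_{i/2}(a_j))$, $\sigma_{-i/4}(b_j) = \sigma_{i/4}(\sigma_{-i/2}(b_j))$) to recognise the resulting equation as the \eqref{eq:twistedT}-equation of $\sum_j|\sigma_{i/2}(a_j)\rangle\langle\sigma_{-i/2}(b_j)| = A^*_K$; it needs neither Proposition~\ref{prop:S_is_bimod_A} nor any a priori knowledge that $A^*_K$ is an adjacency operator. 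You instead work wholesale with subspaces: $(\mc S_{i/4})^* = (\mc S^*)_{-i/4}$, while $\lin B'A^*_KB' = \nabla^{-1/2}\mc S^*\nabla^{1/2} = (\mc S^*)_{-i/2}$ by Lemma~\ref{lem:mod_aut_bcomm}, Remark~\ref{rem:Q_or_nabla_to_twist} and Proposition~\ref{prop:S_is_bimod_A}, so additivity of twists gives $(\lin B'A^*_KB')_{i/4} = (\mc S^*)_{-i/2+i/4} = \mc T^*$. Your route buys conceptual clarity---the statement becomes ``the adjoint negates the twist parameter, and the KMS adjoint is the ordinary adjoint composed with a $-i/2$ twist, so the two bookkeepings coincide exactly at twist $i/4$''---whereas the paper's computation is more self-contained and produces along the way the rank-one expansion of $A^*_K$, which both arguments need for the final step; that final step (computing $\Psi'_{0,1/2}(A^*_K) = \sum_j\sigma_{i/2}(a_j)\otimes b_j^*$ and invoking $e=e^*$ to identify it with $\tau(e)$) is identical in the two proofs. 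One presentational caveat: your second paragraph invokes Proposition~\ref{prop:S_is_bimod_A} for $A^*_K$ and calls it an ``adjacency operator'' before you have justified this, since the justification rests on $\Psi'_{0,1/2}(A^*_K)=\tau(e)$ being a self-adjoint projection, which you only establish in the third paragraph. Either compute $\Psi'_{0,1/2}(A^*_K)=\tau(e)$ first, or observe that the proof of Proposition~\ref{prop:S_is_bimod_A} uses only the rank-one decomposition and not Schur idempotency, so $\lin B'A^*_KB'$ is the image of $\theta_{A^*_K}$ regardless; with that reordering the argument is complete.
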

\begin{proof}
Recall \eqref{eq:twistedT}, so with $A = \sum_{j=1}^k | b_j \rangle \langle a_j |$ we have that $T\in\mc T$ if and only if $\sum_j \sigma_{-i/4}(b_j) T \sigma_{i/4}(a_j)^* = T$.  Hence $T\in\mc T^*$ if and only if $T^* = \sum_j \sigma_{-i/4}(b_j) T^* \sigma_{i/4}(a_j)^*$, equivalently,
\begin{align*}
T = \sum_j \sigma_{i/4}(a_j) T \sigma_{-i/4}(b_j)^*
= \sum_j \sigma_{-i/4}(\sigma_{i/2}(a_j)) T \sigma_{i/4}(\sigma_{-i/2}(b_j))^*  .
\end{align*}
Thus $\mc T^*$ is associated to the operator
\[ \sum_{j=1}^k | \sigma_{i/2}(a_j) \rangle \langle \sigma_{-i/2}(b_j) |
= \sum_{j=1}^k \nabla^{-1/2} |a_j\rangle \langle b_j| \nabla^{1/2} = \nabla^{-1/2} A^* \nabla^{1/2} = A^*_K, \]
as claimed.  In turn, this corresponds to
\[ \Psi'_{0,1/2}( A^*_K ) = \sum_j \sigma_{i/2}(a_j) \otimes \sigma_{i/2}(\sigma_{-i/2}(b_j))^* =  \sum_j \sigma_{i/2}(a_j) \otimes b_j^*
= \tau \Big( \sum_j b_j \otimes \sigma_{i/2}(a_j)^* \Big)^*
= \tau(e)^*, \]
which of course equals $\tau(e)$, as $e=e^*$.
\end{proof}

In particular, $\mc T$ is self-adjoint if and only if $e = \tau(e)$.  The second condition to be an operator system, that $1\in\mc S$, is easier to study.  We have that $1\in\mc S$ if and only if $\theta_A(1)=1$ if and only if $m(A\otimes 1)m^* = 1$, which is the usual axiom; compare \cite[Definition~2.4]{daws_quantum_graphs}.  Notice that $1\in\mc T$ if and only if $1\in\mc S$.

We haven't mentioned the other possible axioms for $A$, the ``undirected'' axiom, or that $A$ is self-adjoint.  As shown in \cite[Section~5]{daws_quantum_graphs}, either one of these (together with $A$ being real) implies that $V$ (and/or $\mc S$) commute with the modular operator, which is a strong condition, outside of the case when $\varphi$ is a trace.

\section{Moving to the tracial case}\label{sec:move_tracial_case}

In the previous section, we showed how to obtain a bijection between quantum adjacency operators $A$, projections $e\in B\otimes B^\op$, and $B'$-invariant subspaces $V$ and $\mc S$ (or $\mc T$).  The choice of state $\varphi$ affects the computation of $m^*$, and hence the definition of $A$, and also the bijection between $V$ and $\mc S$, and between $\mc S$ and $\mc T$.  However, projections $e$ (and the correspondence to $V$) are independent of the choice of $\varphi$, and so quantum graphs do not seem to actually depend on the state $\varphi$.  Here we make this statement more precise, but also show that it does not reduce the non-tracial case to a triviality.

We fix $e\in B \otimes B^\op$ a projection, and see what $A$ this corresponds to as $\varphi$ varies.  In particular, we consider our fixed reference trace $\Tr$, and the state $\varphi =\Tr(Q\cdot)$.  Then we have the bijection, as in \eqref{eq:correspondence},
\[ e = \sum_j b_j \otimes c_j
\quad\leftrightarrow\quad
A_\varphi = \sum_j |b_j\rangle \langle \sigma_{-i/2}(c_j^*)|
\quad\leftrightarrow\quad
A_\varphi = \sum_j \Tr(Q^{1/2}c_jQ^{1/2}(\cdot)) b_j. \]
Of course $A_\Tr$ has the same form, with $Q$ removed, and hence $A_\varphi = A_\Tr(Q^{1/2}(\cdot)Q^{1/2})$, and it's now readily apparent that $A_\varphi$ is (completely) positive if and only if $A_\Tr$ is.

The GNS map $\Lambda \colon B \to L^2(B)$ gives a linear bijection between $B$ and $L^2(B)$, and so a linear bijection between $\mc B(L^2(B))$ and $\hom(B) = \{ \text{linear maps }B\to B\}$.  It also gives rise to a bijection
\[ B' \cong \hom_B(B) = \{ T\colon B\to B : T(ab) = aT(b) \ (a,b\in B) \}. \]
This allows us to regard a $B'$-bimodule $\mc S \subseteq \mc B(L^2(B))$ as a $\hom_B(B)$-bimodule in $\hom(B)$, so removing reference to $L^2(B)$.

As in \eqref{eq:correspondence}, $e$ also corresponds to a map $\theta \in \hom(B)$.  For $a,b\in B$ let us write $\theta_{a,b}\in \hom(B)$ for the map $c \mapsto \Tr(bc)a$.  Then $e$ corresponds to
\[ \theta_{A_\varphi} : \hom(B)\to\hom(B); T \mapsto  \sum_j b_j T \sigma_{i/2}(c_j)
= \sum_j b_j T Q^{-1/2}c_jQ^{1/2}. \]
As $\mc S = \mc S_\varphi$ is the image of $\theta_{A_\varphi}$, at least in principle we see how to reconstruct $\mc S_\varphi$ from $e$.  Notice that $\mc S_{\Tr} = \{ \sum_j b_j T c_j : T\in\hom(B) \}$, and that then
\[ \mc S_\varphi
= \Big\{ \sum_j b_j TQ^{-1/2} c_j Q^{1/2} : T\in\hom(B) \Big\}
= \Big\{ \sum_j b_j T c_j : T\in\hom(B) \Big\} Q^{1/2}. \]
Hence $\mc S_\varphi = \mc S_\Tr Q^{1/2}$ for each $\varphi = \Tr(Q\,\cdot\,)$.

Also consider $\mc T = \mc S_{i/4} = Q^{1/4} \mc S Q^{-1/4}$, so that
\[ \mc T_\varphi
= \Big\{ \sum_j Q^{1/4}b_j TQ^{-1/2} c_j Q^{1/4} : T\in\hom(B) \Big\}
= Q^{1/4} \Big\{ \sum_j b_j T c_j  : T\in\hom(B) \Big\} Q^{1/4}. \]
We have that $\mc S_{\Tr} = \mc T_{\Tr}$ and that $\mc T_\varphi = Q^{1/4} \mc T_{\Tr} Q^{1/4}$.

Suppose we start with $\mc S \subseteq \hom(B)$ a $\hom_B(B)$-bimodule.  Given our state $\varphi$, which $e$ corresponds to $\mc S$?  We want $\mc S = \mc S_{\varphi}$ so it is natural to set $\mc S_{\Tr} = \mc S Q^{-1/2}$.  Then $\mc S_{\Tr}$ directly corresponds to $V\subseteq L^2(B,\Tr) \otimes \overline{L^2(B,\Tr)}$, and $e$ is the orthogonal projection onto $V$.  The following results are simple calculations.

\begin{proposition}
Under these bijections, we have that:
\begin{enumerate}[(1)]
  \item $1\in \mc S_{\varphi}$ if and only if $1\in\mc T_{\varphi}$ if and only if $Q^{-1/2} \in \mc S_{\Tr}$;
  \item $\mc S_\varphi = \mc S_\varphi^*$ if and only if $\mc S_\Tr Q^{1/2} = Q^{1/2} \mc S_\Tr^*$ if and only if $\mc S_\Tr^* = Q^{-1/2} \mc S_\Tr Q^{1/2}$;
  \item $\mc T_\varphi^* = \mc T_\varphi$ if and only if $Q^{1/4} \mc S_{\Tr}^* Q^{1/4} = Q^{1/4} \mc S_{\Tr} Q^{1/4}$ if and only if $\mc S_{\Tr}^* = \mc S_{\Tr}$.
\end{enumerate}
\end{proposition}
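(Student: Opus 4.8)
The plan is to verify each of the three equivalences by direct substitution, exploiting the relations $\mc S_\varphi = \mc S_\Tr Q^{1/2}$ and $\mc T_\varphi = Q^{1/4}\mc S_\Tr Q^{1/4}$ already established in the preceding discussion, together with the fact (Lemma~\ref{lem:mod_aut_bcomm}) that conjugation by powers of $Q$ moves $B'$ into $B'$. These are flagged by the authors as ``simple calculations'', so the role of the proof is really to record the algebraic manipulations cleanly rather than to overcome any conceptual obstacle.

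For (1), I would use $1\in\mc S_\varphi \Leftrightarrow 1 \in \mc T_\varphi$, which was already noted in Section~\ref{sec:op_sys_A} (the comment that $1\in\mc T$ iff $1\in\mc S$). For the second equivalence, since $\mc S_\varphi = \mc S_\Tr Q^{1/2}$, the condition $1\in\mc S_\varphi$ reads $1 \in \mc S_\Tr Q^{1/2}$, i.e.\ $Q^{-1/2}\in\mc S_\Tr$; this is immediate upon right-multiplying by $Q^{-1/2}$. For (2), I would substitute $\mc S_\varphi = \mc S_\Tr Q^{1/2}$ into $\mc S_\varphi = \mc S_\varphi^*$. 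The left side is $\mc S_\Tr Q^{1/2}$ and, since $Q$ is self-adjoint and positive, $(\mc S_\Tr Q^{1/2})^* = Q^{1/2}\mc S_\Tr^*$, giving the first stated equivalence $\mc S_\Tr Q^{1/2} = Q^{1/2}\mc S_\Tr^*$. The second equivalence then follows by left-multiplying by $Q^{-1/2}$ to get $Q^{-1/2}\mc S_\Tr Q^{1/2} = \mc S_\Tr^*$, which is the displayed condition.

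For (3), I would substitute $\mc T_\varphi = Q^{1/4}\mc S_\Tr Q^{1/4}$ and take adjoints: since $Q^{1/4}$ is self-adjoint, $\mc T_\varphi^* = Q^{1/4}\mc S_\Tr^* Q^{1/4}$, so $\mc T_\varphi = \mc T_\varphi^*$ becomes $Q^{1/4}\mc S_\Tr Q^{1/4} = Q^{1/4}\mc S_\Tr^* Q^{1/4}$, the first equivalence. Because $Q^{1/4}$ is invertible, I can cancel it on both sides to obtain $\mc S_\Tr = \mc S_\Tr^*$, the second equivalence. The key point making all three clean is that $Q$ (and its powers) are invertible and self-adjoint, so every manipulation is a reversible multiplication by an invertible operator; I should make sure to note that $\mc S_\Tr$ is a $B'$-bimodule and that the various sets are honestly invariant under these operations rather than merely contained in one another.

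The only mild subtlety I anticipate is bookkeeping with adjoints of sets: for a subspace $\mc S\subseteq\mc B(L^2(B))$ we need $(\mc S Q^{1/2})^* = Q^{1/2}\mc S^*$, which holds because the adjoint is an anti-multiplicative involution and $Q^{1/2}$ is self-adjoint, but one should state this explicitly to avoid confusion over the order of factors. No genuine obstacle arises; the entire proof amounts to substituting the two structural identities and cancelling invertible self-adjoint factors, so I expect it to occupy only a few lines.
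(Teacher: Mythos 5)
Your proposal is correct and is exactly the argument the paper intends: the paper states this proposition with the remark ``The following results are simple calculations,'' and those calculations are precisely your substitutions of $\mc S_\varphi = \mc S_\Tr Q^{1/2}$ and $\mc T_\varphi = Q^{1/4}\mc S_\Tr Q^{1/4}$, the identity $(\mc S Q^{1/2})^* = Q^{1/2}\mc S^*$, and cancellation of the invertible self-adjoint factors $Q^{\pm 1/4}$, $Q^{\pm 1/2}$. No gap: each manipulation is reversible, so all stated equivalences follow.
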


We now consider a more non-trivial property which a quantum graph may have, looking at the proposed definition of ``connectivity'' from \cite{courtney2025connectivityquantumgraphsquantum}.  \cite[Lemma~3.5]{courtney2025connectivityquantumgraphsquantum} shows that $\mc T_\varphi$ generates $\mc B(L^2(B))$ as an algebra if and only if there is not a non-trivial projection $p$ with $(1-p)\mc T_\varphi p = 0$.  Here we can take $p$ in $\mc B(L^2(B))$, or equivalently, in $B$ (see the referenced lemma for a proof).
\cite[Propositions~3.7, 4.8]{courtney2025connectivityquantumgraphsquantum} show this is equivalent to $A_\varphi$ being \emph{irreducible}, meaning that there is not a non-trivial $p\in B$ with $A_\varphi(p) \leq M p$ for some $M>0$.  As \cite[Definition~2.1]{courtney2025connectivityquantumgraphsquantum} and the comments after show, here we only assume $A_\varphi$ is CP.  In the \emph{undirected case} (meaning $A_\varphi$ is KMS-self-adjoint, if and only if $\mc T_\varphi$ is self-adjoint) this is the definition of the quantum graph being \emph{connected}.

\begin{example}
Let $B = \mathbb M_2$ and set
\[ S = \mc S_\Tr = \lin\Big\{
  \begin{pmatrix} 1 & 0 \\ 0 & 2 \end{pmatrix}, 
  \begin{pmatrix} 0 & 1 \\ 1 & 1 \end{pmatrix}    \Big\}
= \Big\{ \begin{pmatrix} a & b \\ b & 2a+b \end{pmatrix} : a,b\in\mathbb C \Big\}. \]
Then $\mc S_\Tr$ is self-adjoint, so $\mc T_\varphi$ is self-adjoint for any $\varphi$.  We shall show that the tracial quantum graph is connected, but there is a choice of $\varphi$ so that the quantum graph represented by $A_{\varphi}$ is not connected.

Consider the condition $(1-p)Sp=0$.  As $B = \mathbb M_2$ the only non-trivial projections are the rank-one projections, say $p$ is the projection onto the span of $\xi\not=0$.  Then $(1-p)Sp=0$ if and only if $x\xi \in \mathbb C\xi$ for each $x\in S$.  If $\xi = (\xi_1, \xi_2)$ is such a common eigenvector, then
\[ \begin{pmatrix} 1 & 0 \\ 0 & 2 \end{pmatrix}
\begin{pmatrix} \xi_1 \\ \xi_2 \end{pmatrix}
= \lambda \begin{pmatrix} \xi_1 \\ \xi_2 \end{pmatrix}\quad\implies\quad
\begin{pmatrix} \xi_1 \\ \xi_2 \end{pmatrix}\in\mathbb C\begin{pmatrix} 1 \\ 0 \end{pmatrix}
\text{ or }
\begin{pmatrix} \xi_1 \\ \xi_2 \end{pmatrix}\in\mathbb C\begin{pmatrix} 0 \\ 1 \end{pmatrix}. \]
However, neither of these standard basis vectors are eigenvectors for the other generator of $S$.  Hence $S$ is irreducible, and so $A_\Tr$ is irreducible, that is, the tracial quantum graph is connected.  (Notice that direct calculation shows that the algebra generated by $S$ is indeed all of $\mathbb M_2$).

Now define
\[ Q^{-1/2} = \begin{pmatrix} 1 & 1 \\ 1 & 3 \end{pmatrix}. \]
This is positive and invertible (the eigenvalues are $2\pm\sqrt 2$) and so $Q$ exists.  Use this $Q$ to define $\varphi$, so that $\mc T_\varphi = Q^{1/4} S Q^{1/4}$.  Set
\[ \xi' = Q^{-1/4} \begin{pmatrix} 1 \\ 1 \end{pmatrix}
\quad \Leftrightarrow \quad \begin{pmatrix} 1 \\ 1 \end{pmatrix} = Q^{1/4}\xi'. \]
We claim that for every $x\in S$ we have $Q^{1/4} x Q^{1/4} \xi' \in\mathbb C \xi'$, showing that $\mc T_\varphi$ has a common eigenvector, and so $A_\varphi$ is reducible, and hence the quantum graph is not connected.

To show the claim, notice that this is equivalent to $x Q^{1/4}\xi' \in \mathbb C Q^{-1/4}\xi'$ for every $x\in S$.  Notice that
\[ Q^{-1/4}\xi' = Q^{-1/2} Q^{1/4}\xi' = Q^{-1/2}  \begin{pmatrix} 1 \\ 1 \end{pmatrix} 
=  \begin{pmatrix} 2 \\ 4 \end{pmatrix}. \]
We test for $x$ either of the two generators:
\[ \begin{pmatrix} 1 & 0 \\ 0 & 2 \end{pmatrix} \begin{pmatrix} 1 \\ 1 \end{pmatrix} 
= \begin{pmatrix} 1 \\ 2 \end{pmatrix}  = \tfrac12 \begin{pmatrix} 2 \\ 4 \end{pmatrix},
\quad
\begin{pmatrix} 0 & 1 \\ 1 & 1 \end{pmatrix} \begin{pmatrix} 1 \\ 1 \end{pmatrix}
= \begin{pmatrix} 1 \\ 2 \end{pmatrix}. \]
\end{example}

In conclusion, while this correspondence between tracial quantum graphs and general quantum graphs allows trivial enumeration of all quantum graphs, deciding if these graphs have non-trivial properties will still require (possibly careful) consideration of the state $\varphi$.

\section{Hilbert--Schmidt quantum graphs}\label{sec:HS}

We consider $M \subseteq \mc B(H)$ to be a general von Neumann algebra, also with $H$ general.  Weaver's definition,  \cite{Weaver_QuantumRelations}, of a quantum graph, is that of an operator system $\mc S\subseteq\mc B(H)$ which is an operator bimodule over $M'$, meaning that $x\in\mc S, a,b\in M' \implies axb\in \mc S$.  This notion is essentially independent of the choice of $H$.  For the moment, we consider just the bimodule property, which defines the notion of a \emph{quantum relation}.

A key idea behind the bijections in Section~\ref{sec:fd_summary} is that, when $H$ is finite-dimensional, we may identify $\mc B(H)$ with $H\otimes\overline H$, and hence identify $M'$-bimodules with $M'\otimes (M')^\op$-invariant subspaces $V\subseteq H\otimes\overline H$.  (That, in the non-tracial situation, this gets ``twisted'' as in \eqref{eq:twistedGNS} is a matter we come back to in the rest of the paper.)  Of course, $H\otimes\overline H$ can be identified, as a Hilbert space, with the Hilbert--Schmidt operators.  This notion of course makes sense for general $H$:

\begin{definition}
With $M\subseteq\mc B(H)$, we define a \emph{Hilbert--Schmidt (HS) quantum relation} to be a subspace $V\subseteq HS(H)$ with $M' V M' \subseteq V$, and such that $V$ is closed for the Hilbert--Schmidt norm.
\end{definition}

As $HS(H)$ is an operator ideal, we have $M' HS(H) M' \subseteq HS(H)$.  There is a canonical identification of $HS(H)$ with $H\otimes\overline H$ which associates a rank-one operator $\rankone{\eta}{\xi}$ with the tensor $\eta\otimes\overline\xi$.  Under this, the $\mc B(H)$-bimodule structure is mapped to $\mc B(H) \times \mc B(H)^\op$, namely the map $x \mapsto axb$ becomes $a \otimes b^\top$.  Thus an HS quantum relation is a closed subspace $V\subseteq H\otimes\overline H$ which is invariant for the action of $M' \otimes (M')^\op$.  As $(M'\otimes (M')^\op)' = M \vnten M^\op$, we see that such subspaces biject with projections $e \in M \vnten M^\op$.
In later sections we shall see how to bring an operator $A$ into this picture.

Given a quantum relation $\mc S$, as $HS(H)$ is an operator ideal, $\mc S \cap HS(H)$ will be a $M'$-bimodule, and so $\mc S_{HS} = (\mc S \cap HS(H))\closhs$ will be an HS quantum relation.  Similarly, given $V\subseteq HS(H)$ an HS quantum relation, taking the weak$^*$-closure in $\mc B(H)$, say $V_{w^*} = V\clos^{w^*}$, gives a quantum relation.

Recall from \cite{Weaver_QuantumRelations} that $\mc S$ is:
\begin{enumerate}
  \item \emph{symmetric} when $\{x^*:x\in \mc S\} = \mc S$.  We make the analogous definition for HS quantum relations, which makes sense as the adjoint is an isometry on $HS(H)$, 
  and note that $\mc S\mapsto \mc S_{HS}$ and $V\mapsto V_{w^*}$ will preserve this property.
  \item \emph{reflexive} when $M' \subseteq \mc S$, equivalently, $1\in \mc S$.  It is not so clear what this means for HS quantum relations.
  \item \emph{transitive} when $\mc S^2=\{xy:x,y\in \mc S\} \subseteq \mc S$.  This definition makes sense for HS quantum relations, and again $\mc S\mapsto \mc S_{HS}$ and $V\mapsto V_{w^*}$ preserve this property.
\end{enumerate}

This final claim is maybe not obvious.  As we work with Hilbert spaces and Banach spaces in this section, given a Banach space $E$ (typically the trace-class operators $\mc B(H)_*$) and a subset $V\subseteq E^*$ define
\[ {}^\circ V = \{ x\in E : \ip{\mu}{x}=0 \ (\mu\in V) \}, \]
the pre-annihilator of $V$, which is a norm-closed subspace of $E$.  Similarly, for a subset $X\subseteq E$ define the annihilator $X^\circ = \{ \mu\in E^* : \ip{\mu}{x}=0 \ (x\in X) \}$ a weak$^*$-closed subspace of $E^*$.  By Hahn--Banach, we have that $({}^\circ V)^\circ = (\lin V) \clos^{w^*}$ and ${}^\circ (X^\circ) = (\lin X)\clos^{\|\cdot \|}$.  We use the perp symbol $\perp$ just for Hilbert spaces.

\begin{lemma}
When $\mc S$ is a transitive quantum relation, also $\mc S_{HS}$ is transitive.
When $V$ is a transitive HS quantum relation, also $V_{w^*}$ is transitive.
\end{lemma}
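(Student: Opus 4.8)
The statement has two parts, which I would handle separately but by parallel arguments. For the first part, suppose $\mc S$ is a transitive quantum relation, so $\mc S^2\subseteq\mc S$. The plan is to show $\mc S_{HS}^2\subseteq\mc S_{HS}$, where $\mc S_{HS}=(\mc S\cap HS(H))\closhs$. First I would work at the level of $\mc S\cap HS(H)$ before taking closures: given $x,y\in\mc S\cap HS(H)$, the product $xy$ lies in $\mc S$ (by transitivity of $\mc S$) and also in $HS(H)$ (since $HS(H)$ is an ideal), hence $xy\in\mc S\cap HS(H)$. So $(\mc S\cap HS(H))^2\subseteq\mc S\cap HS(H)$. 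The remaining issue is to push this through the Hilbert--Schmidt closure: I would take $x,y\in\mc S_{HS}$, approximate them in $\|\cdot\|_{HS}$ by $x_n,y_n\in\mc S\cap HS(H)$, and argue $x_ny_n\to xy$. The subtlety here is that multiplication is not jointly continuous for the HS norm, so I would instead use boundedness: $\|x_ny_n-xy\|_{HS}\le\|x_n\|_{HS}\|y_n-y\|_{op}+\|x_n-x\|_{HS}\|y\|_{op}$, using $\|ab\|_{HS}\le\|a\|_{HS}\|b\|_{op}$ and $\|ab\|_{HS}\le\|a\|_{op}\|b\|_{HS}$. Since HS convergence implies operator-norm convergence (as $\|\cdot\|_{op}\le\|\cdot\|_{HS}$), each factor can be controlled, and $x_ny_n\in\mc S\cap HS(H)$ converges in $\|\cdot\|_{HS}$ to $xy$, placing $xy\in\mc S_{HS}$.

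For the second part, suppose $V$ is a transitive HS quantum relation, so $V^2\subseteq V$, and I must show the weak$^*$-closure $V_{w^*}=V\clos^{w^*}$ in $\mc B(H)$ is transitive. Here the natural approach is via the annihilator/pre-annihilator machinery recalled just before the lemma. The product of two weak$^*$-limits need not be a weak$^*$-limit of products, so a direct density argument is awkward; instead I would use separate weak$^*$-continuity of left and right multiplication. Fixing $x\in V$ and letting $y$ range over $V$, we have $xy\in V\subseteq V_{w^*}$; since right multiplication $z\mapsto xz$ is weak$^*$-continuous on $\mc B(H)$, the set $\{z\in\mc B(H):xz\in V_{w^*}\}$ is weak$^*$-closed and contains $V$, hence contains $V_{w^*}$. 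Thus $xV_{w^*}\subseteq V_{w^*}$ for every $x\in V$. Now fix $z\in V_{w^*}$ and let $x$ range over $V$: by the previous step $xz\in V_{w^*}$, and since left multiplication $x\mapsto xz$ is weak$^*$-continuous, the set $\{x:xz\in V_{w^*}\}$ is weak$^*$-closed and contains $V$, hence contains $V_{w^*}$. Therefore $V_{w^*}z\subseteq V_{w^*}$ for all $z\in V_{w^*}$, which is precisely $V_{w^*}^2\subseteq V_{w^*}$.

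The main obstacle in both parts is the failure of joint continuity of multiplication in the relevant topologies. I expect the resolution to be the standard one: exploit that multiplication is \emph{separately} continuous (for weak$^*$) and that one-sided multiplication by a fixed element is bounded and continuous (for HS), reducing a two-variable limit to two successive one-variable limits. A minor point worth checking is that elements of $V_{w^*}$ are genuinely bounded operators so that multiplication makes sense and the one-sided maps are weak$^*$-continuous; this is automatic since $V_{w^*}\subseteq\mc B(H)$. I would write the two parts concisely, as the ideal property of $HS(H)$ and the separate weak$^*$-continuity of multiplication do essentially all the work.
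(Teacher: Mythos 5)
Your proposal is correct and follows essentially the same route as the paper: part one is identical (ideal property plus transitivity of $\mc S$ gives $(\mc S\cap HS(H))^2\subseteq \mc S\cap HS(H)$, then joint HS-norm continuity of the product handles the closure, an estimate you usefully spell out), and part two rests on the same key fact of separate weak$^*$-continuity of multiplication on $\mc B(H)$. The paper implements the second part dually, via pre-annihilators (showing $\omega x\in{}^\circ V$, then $s\omega\in{}^\circ V$, then $ts\in V_{w^*}$), but it explicitly remarks that this is just separate weak$^*$-continuity, which you use directly through inverse images of $V_{w^*}$ under left and right multiplication.
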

\begin{proof}
When $\mc S$ is transitive, for $x,y\in \mc S \cap HS(H)$ we have $xy\in HS(H)$ and $xy\in\mc S$, so $xy \in \mc S \cap HS(H)$.  Taking the closure shows that also $(\mc S_{HS})^2 \subseteq \mc S_{HS}$ by continuity of the product for the HS norm.

Now let $V$ be transitive.  Given $\omega \in {}^\circ V$ and $x,y\in V$, as $xy\in V$ we have $0 = \ip{xy}{\omega} = \ip{y}{\omega x}$, and so $\omega x \in {}^\circ V$.  Hence for $s\in V_{w^*}$ we have $0 = \ip{s}{\omega x} = \ip{x}{s \omega}$ for all $x\in V$, so $s \omega \in {}^\circ V$.  Finally, for $s,t \in V_{w^*}$ we hence have $\ip{ts}{\omega} = \ip{t}{s \omega} = 0$, for any $\omega \in {}^\circ V$, so $ts\in V_{w^*}$ and $V_{w^*}$ is hence transitive.  Notice that this argument is essentially using that the product on $\mc B(H)$ is separately weak$^*$-continuous.
\end{proof}

\begin{example}\label{eg:0}
Let $H$ be infinite-dimensional, and set $\mc S = \mathbb C 1$ thought of as a quantum relation over $\mc B(H)$ (so that $\mc B(H)' = \mathbb C1$).  Then $\mc S \cap HS(H) = \{0\}$ so $\mc S_{HS} = \{0\}$ even though $\mc S$ is non-zero.
\end{example}

\begin{example}\label{eg:1}
There can be multiple HS quantum relations giving rise to the same $\mc S$.  Again we work with $M=\mc B(H)$.

Let $H = \ell^2$, let $(e_{ij})$ be the matrix units in $\mc B(H)$.  Then we can view elements of $HS(H)$ as matrices $x=(x_{ij}) = \sum_{i,j} x_{ij} e_{ij}$ with $\|x\|_{HS}^2 = \sum_{i,j} |x_{ij}|^2 < \infty$.  Set
\[ V_2 = \Big\{ \sum_n x_n e_{nn} : \sum_n x_n / n = 0, \sum_n |x_n|^2<\infty \Big\} \subseteq HS(H). \]
Let $z = (z_n) = (1/n) \in \ell^2$, so $x=\sum_n x_n e_{nn} \in V_2$ exactly when $(z|x)=0$, and hence for any $x=(x_n)\in\ell^2$ we have that $x' = x - \|z\|_2^{-2} (z|x) z$ satisfies that $\sum_n x'_n e_{nn} \in V_2$.  This also shows that $V_2$ is closed in $HS(H)$.

Given $\xi,\eta\in H$ and a diagonal $x\in\mc B(H)$,
\[ \ip{x}{\omega_{\xi,\eta}} = (\xi|x\eta) = \sum_n x_{nn} \overline{\xi_n} \eta_n = \sum_n x_{nn} a_n, \]
say, where $a = (a_n) = (\overline{\xi_n} \eta_n) \in \ell^1$.  Thus on the diagonal matrices, the predual $\mc B(H)_*$ gives $\ell^\infty$-$\ell^1$ duality.  For $a=(a_n)\in \ell^1$ if $\ip{x}{a}=0$ for all $x\in V_2$, then regarding $\overline a$ as an element of $\ell^2$, we have that $(\overline a|x)=0$ for all $x\in V_2$.  By the above,
\[ 0 = (\overline a|x - \|z\|_2^{-2} (z|x) z)
=(\overline a|x) - \|z\|_2^{-2} (z|x) (\overline a|z)
= (\overline a - \|z\|_2^{-2} (z|\overline a)z |x)
\qquad(x\in\ell^2), \]
which means that $\overline a = \|z\|_2^{-2} (z|\overline a)z$ but $a\in\ell^1$ and $z\not\in\ell^1$, so $a=0$ is the only possibility.  So ${}^\circ V_2 \cap \ell^1 = \{0\}$, and we conclude that $V_2$ is weak$^*$-dense in the space of all diagonal matrices, $\ell^\infty \subseteq \mc B(H)$.

If we now set $V_1$ to be all diagonal matrices in $HS(H)$ then also $V_1$ is weak$^*$-dense in $\ell^\infty$.  As $V_1, V_2$ are $\|\cdot\|_{HS}$-closed, we conclude that different HS quantum relations can have the same weak$^*$-closure.
\end{example}

The underlying von Neumann algebra makes a big difference to these considerations: we give two cases where considering HS quantum relations is the same as considering (usual) quantum relations.

\begin{example}\label{eg_nice_vn_1}
Again let $H=\ell^2$, and now let $M=\ell^\infty$.  Any quantum relation $\mc S\subseteq\mc B(H)$ is the weak$^*$-closed linear span of the matrix units it contains, and so $\mc S$ corresponds to a relation on $\mathbb N$.  In the same way, any HS quantum relation $V\subseteq HS(H)$ is the norm-closed linear span of the matrix units it contains.  So the maps $\mc S \mapsto \mc S_{HS}$ and $V \mapsto V_{w^*}$ are mutual inverses.
\end{example}

\begin{example}\label{eg_nice_vn_2}
Let $M = \prod \mathbb M_{n(i)}$ be the product of matrix algebras, each with dimension $n(i)$.  (Recall that this is the setting of \cite{Wasilewski_Quantum_Cayley}.)  If $M$ acts on $H$ then $H$ is of the form $H = \bigoplus H_i$ where $H_i = \mathbb C^{n(i)} \otimes K_i$ for each $i$, with $K_i$ some Hilbert space.  Each matrix block acts as $x\otimes 1$ on $H_i$, for $x\in\mathbb M_{n(i)}$.  Then $M' = \prod \mc B(K_i)$ acting on each $H_i$ in the natural way.

A quantum relation $\mc S\subseteq\mc B(H)$, being an $M'$-bimodule, is the weak$^*$-closed linear span of the subspaces $\mc S_{i,j} = \mc S \cap \mc B(H_j,H_i)$.  Furthermore, $\mc S_{i,j} = \mc S'_{i,j} \otimes \mc B(K_j, K_i)$ for some subspace $\mc S'_{i,j} \subseteq \mc B(\mathbb C^{n(j)}, \mathbb C^{n(i)})$.  There are no conditions on which subspaces $\mc S'_{i,j}$ arise, for different quantum relations.

The same analysis applies to a HS quantum relation $V$, which must be the norm closed linear span of the $V_{i,j} = V \cap HS(H_j, H_i)$, and further $V_{i,j} = V'_{i,j} \otimes HS(K_j, K_i)$.  Again, any $V'_{i,j}$ can arise.

As $\mc B(\mathbb C^{n(j)}, \mathbb C^{n(i)})$ is finite-dimensional, given $\mc S$, we can set $V'_{i,j}$ to be $\mc S'_{i,j}$ thought of as a subspace of $HS(\mathbb C^{n(j)}, \mathbb C^{n(i)})$.  Let $(e_1,\cdots,e_m)$ be an orthonormal basis for $V'_{i,j}$, so any $x \in \mc S_{i,j}$ is of the form $x = \sum_{i=1}^m e_i \otimes x_i$ for some $x_i\in\mc B(K_j, K_i)$.  Thus $\|x\|_{HS}^2 = \sum_i \|x_i\|_{HS}^2$, and so $x\in \mc S \cap HS(H)$ if and only if $x\in V'_{i,j} \otimes HS(K_j,K_i)$.  Hence $V_{i,j} = \mc S_{i,j} \cap HS(H_j, H_i)$ is weak$^*$-dense in $\mc S_{i,j}$, and the map $\mc S \mapsto V; \mc S_{i,j} \mapsto V_{i,j}$ is a bijection.

In conclusion, quantum relations over $M$ and HS quantum relations over $M$ canonically biject.
\end{example}

We regard $\mc B(H)_*$ as the trace-class operators, so we have natural norm-decreasing inclusions $\mc B(H)_* \subseteq HS(H) \subseteq \mc B(H)$.  The pairings are also compatible: for $a\in\mc B(H)_*$ and $b\in HS(H)$ we have
\[ (a|b)_{HS} = \Tr(a^*b) = \ip{b}{a^*}_{\mc B(H), \mc B(H)_*}, \qquad (b|a)_{HS} = \ip{b^*}{a}_{\mc B(H), \mc B(H)_*}. \]
Thus, for $U\subseteq\mc B(H)_* \subseteq HS(H)$ we have $U^\circ = \{ x\in\mc B(H) : \Tr(xa)=0 \ (a\in U) \}$ and so, in $HS(H)$ we have
\[ U^\perp = \{ b\in HS(H) : \Tr(b^*a)=0 \ (a\in U) \} = \{ b\in HS(H) : b^*\in U^\circ \}, \]
and similarly, for $V \subseteq HS(H)$ we have ${}^\circ V = \{ a\in \mc B(H)_* : a^* \in V^\perp \}$.  Furthermore, we see that $\{ a^*: a\in V \}^\perp = \{ b\in HS(H) : 0 = (b|a^*) = \Tr(b^*a^*) \ (a\in V) \} = \{ b \in HS(H) : 0 = \Tr(ba) = (b^*|a) \ (a\in V) \} = \{ b^* : b \in V^\perp \}$.  Given the remarks above about (pre-)annihilators we see that the collection of weak$^*$-closed subspaces of $\mc B(H)$ injects with the collection of norm-closed subspaces of $\mc B(H)_*$, say $\mc S \leftrightarrow {}^\circ \mc S = X$ with inverse $X \leftrightarrow X^\circ = \mc S$.  This restricts to a bijection between $M'$-bimodules.

\begin{lemma}\label{lem:hs_facts}
For any weak$^*$-closed subspace $\mc S\subseteq\mc B(H)$, we have that $\mc S_{HS} = (\mc S \cap HS(H))\closhs$ is equal to $\mc S \cap HS(H)$, which is also equal to $\{ a^* : a\in {}^\circ \mc S \}^\perp$.
\end{lemma}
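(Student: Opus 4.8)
The plan is to prove the set-theoretic equality $\mc S \cap HS(H) = \{ a^* : a\in {}^\circ \mc S \}^\perp$ first, and then read off the remaining assertions for free. The right-hand side is an orthogonal complement inside the Hilbert space $HS(H)$, hence automatically $\|\cdot\|_{HS}$-closed; so once the equality is established we immediately conclude that $\mc S \cap HS(H)$ coincides with its own $\|\cdot\|_{HS}$-closure, that is, $\mc S_{HS} = (\mc S \cap HS(H))\closhs = \mc S \cap HS(H)$. Thus both statements of the lemma reduce to this single identity.

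To obtain the identity I would set $U = \{ a^* : a \in {}^\circ\mc S \} \subseteq \mc B(H)_*$ and compute $U^\perp$ using the identity $U^\perp = \{ b \in HS(H) : b^* \in U^\circ \}$ recorded just before the lemma. So $b \in U^\perp$ precisely when $\Tr(b^* a^*) = 0$ for every $a \in {}^\circ\mc S$. The one manipulation required is $\Tr(b^* a^*) = \Tr((ab)^*) = \overline{\Tr(ab)}$, whence the condition is equivalent to $\Tr(ab) = \Tr(ba) = \ip{b}{a} = 0$ for all $a \in {}^\circ\mc S$, where $\ip{\cdot}{\cdot}$ denotes the duality between $\mc B(H)$ and $\mc B(H)_*$. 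This says exactly that $b \in ({}^\circ\mc S)^\circ$. Since $\mc S$ is weak$^*$-closed, the Hahn--Banach identity $({}^\circ\mc S)^\circ = (\lin\mc S)\clos^{w^*} = \mc S$ then gives that $b \in U^\perp$ if and only if $b \in HS(H)$ and $b \in \mc S$, i.e.\ $U^\perp = \mc S \cap HS(H)$, as required.

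The only delicate point, and the place where care is needed, is the bookkeeping of adjoints and complex conjugation as one passes between the Hilbert--Schmidt inner product $(\cdot|\cdot)_{HS}$ (which carries an adjoint) and the bilinear trace pairing $\ip{\cdot}{\cdot}$ (which does not). Defining $U$ through the map $a \mapsto a^*$ is precisely what absorbs the adjoint built into the $HS$-complement, so that the orthogonality condition collapses to the plain annihilator condition $b \in ({}^\circ\mc S)^\circ$ with no surviving adjoints; this is what permits the clean appeal to $({}^\circ\mc S)^\circ = \mc S$. Once the equality is in hand, the $\|\cdot\|_{HS}$-closedness of $\mc S \cap HS(H)$ and the identity $\mc S_{HS} = \mc S \cap HS(H)$ follow with no further work.
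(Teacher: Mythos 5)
Your proof is correct and follows essentially the same route as the paper's: both identify $\mc S \cap HS(H)$ with $\{a^* : a \in {}^\circ\mc S\}^\perp$ by unwinding the Hilbert--Schmidt pairing against the trace duality, handling the adjoint/conjugation bookkeeping, and invoking $({}^\circ\mc S)^\circ = \mc S$ for the weak$^*$-closed subspace $\mc S$, with closedness then automatic since orthogonal complements are $\|\cdot\|_{HS}$-closed. The only cosmetic difference is that you characterise membership of $U^\perp$ directly, whereas the paper first computes $U^\circ$ and then applies the identity $U^\perp = \{b : b^* \in U^\circ\}$; these are the same computation.
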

\begin{proof}
Set $X = \{ a^* : a\in{}^\circ\mc S \}$ so by the above, $X^\perp = \{ b\in HS(H) : b^* \in X^\circ \}$, and note that $x\in X^\circ$ if and only if $0 = \ip{x}{a^*} = \Tr(xa^*) = \overline{\Tr(x^*a)}$ for $a\in {}^\circ\mc S$, which is equivalent to $x^* \in ({}^\circ\mc S)^\circ = \mc S$.  So $X^\perp = \{ b\in HS(H) : b \in \mc S \} = \mc S \cap HS(H)$.  In particular, $\mc S \cap HS(H)$ is closed, and the claim follows.
\end{proof}

\begin{definition}
Given $\mc S \subseteq\mc B(H)$ weak$^*$-closed, set $\mc S^{hsh} = (\mc S_{HS})_{w^*}$, the \emph{Hilbert--Schmidt hull} of $\mc S$.
\end{definition}

Clearly $\mc S^{hsh}$ is an $M'$-bimodule when $\mc S$ is.
We next show that this is indeed a ``hull (or closure) operation'' (although for the reversed inclusion partial ordering).

\begin{proposition}\label{prop:hsh_props}
Let $\mc S$ be weak$^*$-closed.  Then $\mc S^{hsh} \subseteq \mc S$, $(\mc S^{hsh})_{HS} = \mc S_{HS}$ and so $(\mc S^{hsh})^{hsh} = \mc S^{hsh}$.  If $\mc S \subseteq \mc T$ then $\mc S^{hsh} \subseteq \mc T^{hsh}$.  We have that $\mc S = \mc S^{hsh}$ if and only if ${}^\circ \mc S = \mc B(H)_* \cap ({}^\circ\mc S)\closhs$.
\end{proposition}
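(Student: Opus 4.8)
The plan is to dispatch the formal assertions first and then concentrate on the final equivalence, which carries all the content. For $\mc S^{hsh}\subseteq\mc S$, I would note that $\mc S\cap HS(H)\subseteq\mc S$ and that, $\mc S$ being weak$^*$-closed, its weak$^*$-closure $\mc S^{hsh}=(\mc S\cap HS(H))\clos^{w^*}$ remains inside $\mc S$; monotonicity $\mc S\subseteq\mc T\Rightarrow\mc S^{hsh}\subseteq\mc T^{hsh}$ is immediate since both $(\cdot)\cap HS(H)$ and $(\cdot)\clos^{w^*}$ preserve inclusions. For $(\mc S^{hsh})_{HS}=\mc S_{HS}$ I would invoke Lemma~\ref{lem:hs_facts}, so that on weak$^*$-closed sets the operation $(\cdot)_{HS}$ is just intersection with $HS(H)$ (and $\mc S^{hsh}$ is weak$^*$-closed). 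One inclusion then follows from $\mc S^{hsh}\subseteq\mc S$, and the reverse from $\mc S_{HS}=\mc S\cap HS(H)\subseteq\mc S^{hsh}\cap HS(H)$. Idempotency is then formal: $(\mc S^{hsh})^{hsh}=((\mc S^{hsh})_{HS})\clos^{w^*}=(\mc S_{HS})\clos^{w^*}=\mc S^{hsh}$.

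The substance is the final ``if and only if''. Since $\mc S$ and $\mc S^{hsh}$ are both weak$^*$-closed, the Hahn--Banach bijection $\mc S\leftrightarrow{}^\circ\mc S$ reduces $\mc S=\mc S^{hsh}$ to the identity ${}^\circ\mc S={}^\circ(\mc S^{hsh})$. So the plan is to compute ${}^\circ(\mc S^{hsh})$ and recognise it as $\mc B(H)_*\cap({}^\circ\mc S)\closhs$. First, elements of $\mc B(H)_*$ are weak$^*$-continuous, so pre-annihilation is unaffected by weak$^*$-closure, giving ${}^\circ(\mc S^{hsh})={}^\circ(\mc S\cap HS(H))$.

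Next I would convert this pre-annihilator into a Hilbert-space orthogonal complement inside $HS(H)$. For $b\in HS(H)$ and $a\in\mc B(H)_*\subseteq HS(H)$ the dual pairing is $\ip{b}{a}=\Tr(ba)=(b^*|a)_{HS}$, whence ${}^\circ(\mc S\cap HS(H))=\mc B(H)_*\cap W^\perp$, where $W=(\mc S\cap HS(H))^*$ (writing $V^*=\{v^*:v\in V\}$) and $\perp$ is taken in $HS(H)$. By Lemma~\ref{lem:hs_facts}, $\mc S\cap HS(H)=(({}^\circ\mc S)^*)^\perp$; using that the adjoint is an anti-unitary on $HS(H)$, under which $(V^\perp)^*=(V^*)^\perp$, I obtain $W=({}^\circ\mc S)^\perp$. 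Hence $W^\perp=(({}^\circ\mc S)^\perp)^\perp=({}^\circ\mc S)\closhs$, the $HS$-norm closure of ${}^\circ\mc S$, so ${}^\circ(\mc S^{hsh})=\mc B(H)_*\cap({}^\circ\mc S)\closhs$, which is exactly the asserted equivalence.

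The delicate points will be purely bookkeeping: keeping straight which closure (trace-norm, $HS$-norm, or weak$^*$) is in force at each step---in particular that ${}^\circ\mc S$ is trace-norm closed but its $HS$-closure is genuinely larger in general---and verifying the small identity $(V^\perp)^*=(V^*)^\perp$, i.e.\ that the adjoint anti-unitary commutes with orthogonal complementation in $HS(H)$. Neither is hard. The step I would single out as conceptually central is the clean formula ${}^\circ(\mc S^{hsh})=\mc B(H)_*\cap({}^\circ\mc S)\closhs$: it makes transparent that $\mc S=\mc S^{hsh}$ can fail precisely when the $HS$-closure of ${}^\circ\mc S$ picks up trace-class operators not already in ${}^\circ\mc S$, the phenomenon behind Example~\ref{eg:1}.
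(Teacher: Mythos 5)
Your proof is correct, and for the part that carries the content it is in essence the paper's argument: both proofs culminate in the formula ${}^\circ(\mc S^{hsh}) = \mc B(H)_* \cap ({}^\circ\mc S)\closhs$ (the paper's equation~\eqref{eq:perp_S_hsh}), obtained from Lemma~\ref{lem:hs_facts} together with the observation that trace-class functionals cannot distinguish a set from its weak$^*$-closure, and both then settle the final equivalence via the Hahn--Banach bijection between weak$^*$-closed subspaces and their pre-annihilators. Where you genuinely depart from the paper is in the structural claims. The paper proves $\mc S^{hsh}\subseteq\mc S$ by an annihilator argument (${}^\circ(\mc S^{hsh}) = {}^\circ\mc S_{HS} \supseteq {}^\circ\mc S$, then take annihilators), and establishes $(\mc S^{hsh})_{HS}=\mc S_{HS}$ through a further chain of perp/adjoint identities, including the auxiliary equality $({}^\circ\mc S)^{\perp} = (\mc B(H)_* \cap ({}^\circ \mc S)\closhs)^\perp$. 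You instead note that $\mc S$ and $\mc S^{hsh}$ are both weak$^*$-closed, so Lemma~\ref{lem:hs_facts} reduces $(\cdot)_{HS}$ to plain intersection with $HS(H)$, and then the sandwich $\mc S_{HS}\subseteq\mc S^{hsh}\subseteq\mc S$ (the first inclusion because a set lies in its weak$^*$-closure, the second because $\mc S$ is weak$^*$-closed) yields both $\mc S^{hsh}\subseteq\mc S$ and $(\mc S^{hsh})_{HS}=\mc S_{HS}$ by intersecting with $HS(H)$. That is a genuine simplification: it dispenses with the perp calculus for those claims, and the auxiliary identity the paper derives there is not needed anywhere else. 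The one small fact your route leans on, that the adjoint anti-unitary commutes with orthogonal complementation in $HS(H)$, i.e.\ $\{a^*:a\in V\}^\perp = \{b^*:b\in V^\perp\}$, is exactly the identity verified in the paper in the discussion preceding Lemma~\ref{lem:hs_facts}, so nothing is missing.
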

\begin{proof}
When $\mc S \subseteq \mc T$ we have $\mc S_{HS} \subseteq \mc T_{HS}$ and so $\mc S^{hsh} = (\mc S_{HS})_{w^*} \subseteq (\mc T_{HS})_{w^*} = \mc T^{hsh}$.  By definition,
\[ \mc S^{hsh} = (\mc S_{HS})\clos^{w^*} = ({}^\circ \mc S_{HS})^\circ
\quad\implies\quad
{}^\circ (\mc S^{hsh})
= {}^\circ ( ({}^\circ \mc S_{HS})^\circ )
= {}^\circ \mc S_{HS}. \]
As $\mc S_{HS} \subseteq \mc S$ we have ${}^\circ \mc S \subseteq {}^\circ \mc S_{HS}$, and so $\mc S^{hsh} \subseteq \mc S$ by taking the annihilators.

By the lemma, $\mc S_{HS} = \{ a^* : a\in {}^\circ\mc S \}^\perp$ and so $(\mc S_{HS})^\perp = \{ a^* : a\in {}^\circ \mc S \}\closhs$.  As the adjoint is an isometry on $HS(H)$, we see that $\{ a^* : a\in (\mc S_{HS})^\perp \}$ is the closure of ${}^\circ \mc S$ in $HS(H)$.  Thus, from above,
\begin{align}
{}^\circ (\mc S^{hsh})
&= {}^\circ \mc S_{HS}
= \{ a\in \mc B(H)_* : a^* \in (\mc S_{HS})^\perp \}
= \{ a\in \mc B(H)_* : a \in ({}^\circ \mc S)\closhs \}.
\label{eq:perp_S_hsh}
\end{align}
As $\mc B(H)_* \cap ({}^\circ \mc S)\closhs \subseteq ({}^\circ \mc S)\closhs$ we also have $(\mc B(H)_* \cap ({}^\circ \mc S)\closhs)\closhs \subseteq ({}^\circ \mc S)\closhs$, but similarly ${}^\circ\mc S \subseteq \mc B(H)_* \cap ({}^\circ \mc S)\closhs$ so also $({}^\circ\mc S)\closhs \subseteq (\mc B(H)_* \cap ({}^\circ \mc S)\closhs)\closhs$ and hence we have equality.  As the closure is the double perp, we equivalently have that
\[ ({}^\circ\mc S)^{\perp} = (\mc B(H)_* \cap ({}^\circ \mc S)\closhs)^\perp. \]
Applying the lemma to $\mc S^{hsh}$ gives
\begin{align*}
(\mc S^{hsh})_{HS}
&= \{ a^* : a\in {}^\circ(\mc S^{hsh}) \}^\perp
= \{ a^* : a \in \mc B(H)_* \cap ({}^\circ \mc S)\closhs \}^\perp  \\
&= \{ a^* : a\in (\mc B(H)_* \cap ({}^\circ \mc S)\closhs)^\perp \}
= \{ a^*: a \in ({}^\circ\mc S)^{\perp} \}
= \{ a^* : a \in {}^\circ\mc S \}^\perp
= \mc S_{HS}.
\end{align*}
Taking weak$^*$-closures gives $(\mc S^{hsh})^{hsh} = \mc S^{hsh}$.  We see that $\mc S = \mc S^{hsh}$ if and only if ${}^\circ \mc S = {}^\circ(\mc S^{hsh}) = ({}^\circ \mc S)\closhs \cap \mc B(H)_*$ from \eqref{eq:perp_S_hsh}.
\end{proof}

We make an analogous definition for Hilbert--Schmidt quantum relations.

\begin{definition}
Let $V \subseteq HS(H)$ be a HS quantum relation.  Define $V^{hsc} = (V_{w^*})_{HS}$, the \emph{Hilbert--Schmidt closure of $V$}.
\end{definition}

Again, this is a ``closure'' operation, this time for the usual ordering (and so we feel more confident in the ``closure'' terminology).

\begin{proposition}\label{prop:hsc_props}
Let $V$ a HS quantum relation.  Then $V \subseteq V^{hsc}$ and $(V^{hsc})_{w^*} = V_{w^*}$ so that $(V^{hsc})^{hsc} = V^{hsc}$.  If $V \subseteq U$ then $V^{hsc} \subseteq U^{hsc}$.  We have that $V = V^{hsc}$ if and only if $V = (\mc B(H)_* \cap V^\perp)^\perp$.
\end{proposition}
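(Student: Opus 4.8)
The plan is to mirror the proof of Proposition~\ref{prop:hsh_props}, of which this is the exact dual statement. The one observation that drives everything is that, because $V_{w^*}$ is weak$^*$-closed, Lemma~\ref{lem:hs_facts} applied to $\mc S = V_{w^*}$ gives
\[ V^{hsc} = (V_{w^*})_{HS} = V_{w^*} \cap HS(H), \]
so that $V^{hsc}$ is simply the intersection of the weak$^*$-closure of $V$ with $HS(H)$. All four assertions then reduce to tracking the two operations ``weak$^*$-closure'' and ``intersect with $HS(H)$'', both of which are monotone for inclusion.

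First I would dispatch the three structural claims. Since $V \subseteq V_{w^*}$ and $V \subseteq HS(H)$, we get $V \subseteq V_{w^*} \cap HS(H) = V^{hsc}$; combined with the evident inclusion $V^{hsc} \subseteq V_{w^*}$ this sandwiches $V \subseteq V^{hsc} \subseteq V_{w^*}$. Taking weak$^*$-closures and using $(V_{w^*})_{w^*} = V_{w^*}$ then forces $(V^{hsc})_{w^*} = V_{w^*}$, whence idempotency is immediate, $(V^{hsc})^{hsc} = ((V^{hsc})_{w^*})_{HS} = (V_{w^*})_{HS} = V^{hsc}$. Monotonicity is clear from the same two operations: $V \subseteq U$ yields $V_{w^*} \subseteq U_{w^*}$, hence $V^{hsc} = V_{w^*} \cap HS(H) \subseteq U_{w^*} \cap HS(H) = U^{hsc}$.

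The substantive point is the final characterisation, and here I would prove the stronger identity $V^{hsc} = (\mc B(H)_* \cap V^\perp)^\perp$, after which the stated equivalence is immediate (both conditions assert that $V$ equals one fixed subspace). Applying Lemma~\ref{lem:hs_facts} to $V_{w^*}$ gives $V^{hsc} = \{ a^* : a \in {}^\circ(V_{w^*}) \}^\perp$, and two bookkeeping facts finish the computation. First, ${}^\circ(V_{w^*}) = {}^\circ V$: every normal functional is weak$^*$-continuous and $V$ is weak$^*$-dense in $V_{w^*}$, so the pre-annihilators in $\mc B(H)_*$ coincide. Second, using the compatibility of the pairings recorded before Lemma~\ref{lem:hs_facts} (namely ${}^\circ V = \{ a \in \mc B(H)_* : a^* \in V^\perp \}$) together with the fact that the adjoint preserves the trace class, the set $\{ a^* : a \in {}^\circ V \}$ is precisely $\mc B(H)_* \cap V^\perp$. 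Substituting yields $V^{hsc} = (\mc B(H)_* \cap V^\perp)^\perp$, as required.

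The only real obstacle is the careful bookkeeping in these last two facts: one must keep straight which annihilator lives in which space ($\mc B(H)_*$ versus $HS(H)$) and track the involutions introduced by the mismatch between the trace pairing $\langle b,c\rangle = \Tr(cb)$ and the Hilbert--Schmidt inner product $(a|b)_{HS} = \Tr(a^*b)$. Nothing here is deep, but it is exactly the kind of manipulation where a stray adjoint can slip in unnoticed; the useful safeguard is that the outcome must be the precise dual of the corresponding statement in Proposition~\ref{prop:hsh_props}.
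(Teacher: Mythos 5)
Your proof is correct, and its skeleton matches the paper's: both rest on Lemma~\ref{lem:hs_facts} applied to the weak$^*$-closed space $V_{w^*}$, and both culminate in the identity $V^{hsc} = (\mc B(H)_* \cap V^\perp)^\perp$, from which the final equivalence is immediate. The genuine difference is in the claim $(V^{hsc})_{w^*} = V_{w^*}$. The paper proves the non-trivial inclusion $(V^{hsc})_{w^*} \subseteq V_{w^*}$ by duality: it reduces the inclusion to ${}^\circ V \subseteq {}^\circ(V^{hsc})$, derives \eqref{eq:V_hsc}, computes ${}^\circ(V^{hsc})$, and uses that the adjoint is an isometry on $\mc B(H)_*$. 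You instead note that $V^{hsc} = V_{w^*} \cap HS(H) \subseteq V_{w^*}$ with $V_{w^*}$ weak$^*$-closed, so the sandwich $V \subseteq V^{hsc} \subseteq V_{w^*}$ gives the equality simply by taking weak$^*$-closures. Your route is shorter and decouples the topological statement from the duality bookkeeping, which you then invoke only once, for the characterisation of $V = V^{hsc}$. The two derivations of $V^{hsc} = (\mc B(H)_* \cap V^\perp)^\perp$ are essentially the same computation in different clothing: the paper writes $V^{hsc} = V_{w^*} \cap HS(H)$ with $V_{w^*} = ({}^\circ V)^\circ$ and unwinds the trace pairing, while you use the second formula of Lemma~\ref{lem:hs_facts} together with ${}^\circ(V_{w^*}) = {}^\circ V$; both hinge on the identity $\{a^* : a \in {}^\circ V\} = \mc B(H)_* \cap V^\perp$, which you state and justify correctly.
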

\begin{proof}
When $V\subseteq U$ clearly $V^{hsc} \subseteq U^{hsc}$.  As $V \subseteq V_{w^*}$ and $V\subseteq HS(H)$, obviously $V \subseteq V^{hsc}$, and so also $V_{w^*} \subseteq (V^{hsc})_{w^*}$.  We aim to show the other inclusion, $(V^{hsc})_{w^*} \subseteq V_{w^*}$, which is equivalent to ${}^\circ V \subseteq {}^\circ(V^{hsc})$.  By Lemma~\ref{lem:hs_facts} we have that $V^{hsc} = V_{w^*} \cap HS(H)$, and so
\begin{align}
V^{hsc} &= \{ a\in HS(H) : \Tr(ax)=0 \ (x\in {}^\circ V) \}   \notag \\
&= \{ a\in HS(H) :  \Tr(ax^*)=0 \ (x\in \mc B(H)_* \cap V^\perp) \}
= (\mc B(H)_* \cap V^\perp)^\perp.
\label{eq:V_hsc}
\end{align}
We just used that ${}^\circ V = \{ x \in \mc B(H)_* : x^* \in V^\perp \}$ and so ${}^\circ(V^{hsc}) = \{ x\in\mc B(H)_* : x^* \in (\mc B(H)_* \cap V^\perp)\closhs \}$.
As the adjoint is an isometry on $\mc B(H)_*$, given $x\in {}^\circ V$ we have that $x^*\in V^\perp$ and $x\in\mc B(H)_*$, so clearly $x^* \in (\mc B(H)_* \cap V^\perp)\closhs$, so $x\in {}^\circ (V^{hsc})$.  Hence ${}^\circ V \subseteq {}^\circ(V^{hsc})$ as required.
From \eqref{eq:V_hsc} it follows that $V = V^{hsc}$ if and only if $V = (\mc B(H)_* \cap V^\perp)^\perp$.
\end{proof}

\begin{example}
We might wonder if the condition $V = (\mc B(H)_* \cap V^\perp)^\perp$ is the same as $V = (\mc B(H)_* \cap V)\closhs$, but this is not the case, as we now show.

Let $V = V_2$ as in Example~\ref{eg:1}.  Then any $x\in HS(H)$ supported off the diagonal is in $V^\perp$, and so $y\in V^\perp$ exactly when the diagonal part of $y$ is a scalar multiple of the vector $z = (1/n)$, and the off-diagonal part is arbitrary.  Suppose $a \in \mc B(H)_* \cap V^\perp$.  As the idempotent map of projecting onto the diagonal is contractive, the diagonal part of $a$ is again a multiple of $(1/n)$ which is not in $\ell^1$; so the diagonal part of $a$ is $0$.  We conclude that $\mc B(H)_* \cap V^\perp$ is the space of all off-diagonal matrices in $\mc B(H)_*$.  In particular, this contains all finitely-supported off-diagonals, and so $(\mc B(H)_* \cap V^\perp)^\perp$ is all diagonal matrices in $HS(H)$, namely all of $\ell^2$.  So $V \not= V^{hsc}$.

However, $\mc B(H)_* \cap V$ consists of all $a\in\ell^1$ with $\sum_n a_n/n = 0$.  This is the annihilator of $\mathbb C(1/n) \subseteq c_0$.  Let $x\in\ell^2$ with $(a|x)=0$ for all such $a$, equivalently, $(\overline a|x)=0$.  Thinking of $x$ as a $c_0$ vector, Hahn--Banach shows that $x\in \mathbb C(1/n)$.  So $(\mc B(H)_* \cap V)^\perp$ consists of those $x\in HS(H)$ whose off-diagonal is arbitrary, and whose diagonal is a multiple of $z$.  That is, $(\mc B(H)_* \cap V)\closhs = (\mc B(H)_* \cap V)^{\perp\perp} = V$.
\end{example}

These operations commute: for a quantum relation $\mc S$ we have
\[ (\mc S^{hsh})_{HS} = \mc S^{hsh} \cap HS(H) = ((\mc S_{HS})_{w^*})_{HS}
= (\mc S_{HS})^{hsc}, \]
and for any HS quantum relation $V$, similarly
\[ (V^{hsc})_{w^*} = ((V_{w^*})_{HS})_{w^*} = (V_{w^*})^{hsh}. \]

\begin{theorem}\label{thm:nice_subclass}
The maps $\mc S \mapsto (\mc S)_{HS}$ and $V \mapsto V_{w^*}$ restrict to give mutually inverse bijections between the sets
\[ \{ \mc S_{HS} : \mc S \text{ a quantum relation} \} = \{ V : V = V^{hsc} \},  \quad
\{ V_{w^*} : V \text{ a HS quantum relation} \} = \{ \mc S : \mc S=\mc S^{hsh} \}. \]
\end{theorem}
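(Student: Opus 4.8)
The plan is to reduce everything to the two fixed-point identities $(\mc S^{hsh})_{HS} = \mc S_{HS}$ and $(V^{hsc})_{w^*} = V_{w^*}$ already recorded in Propositions~\ref{prop:hsh_props} and~\ref{prop:hsc_props}, together with the two commuting relations $(\mc S^{hsh})_{HS} = (\mc S_{HS})^{hsc}$ and $(V^{hsc})_{w^*} = (V_{w^*})^{hsh}$ displayed just above the theorem. No genuinely new estimate is required; all the content lies in matching the correct operation to the correct type of object and invoking the right one of these identities at each step.

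First I would verify the two set equalities. For the first, given any quantum relation $\mc S$, combining the commuting relation with Proposition~\ref{prop:hsh_props} gives $(\mc S_{HS})^{hsc} = (\mc S^{hsh})_{HS} = \mc S_{HS}$, so every member of $\{\mc S_{HS}\}$ satisfies $V = V^{hsc}$. Conversely, if $V = V^{hsc}$ then $\mc S := V_{w^*}$ is a quantum relation with $\mc S_{HS} = (V_{w^*})_{HS} = V^{hsc} = V$, exhibiting $V$ in the left-hand set. The second equality is the mirror image: for an HS quantum relation $V$ the commuting relation and Proposition~\ref{prop:hsc_props} give $(V_{w^*})^{hsh} = (V^{hsc})_{w^*} = V_{w^*}$, while if $\mc S = \mc S^{hsh}$ then $V := \mc S_{HS}$ is an HS quantum relation with $V_{w^*} = (\mc S_{HS})_{w^*} = \mc S^{hsh} = \mc S$.

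Next I would confirm that the two maps exchange the two sets and then compute the round-trips. The map $\mc S \mapsto \mc S_{HS}$ sends any quantum relation into the first set by definition (and the elements of the second set are quantum relations, being weak$^*$-closed $M'$-bimodules), while $V \mapsto V_{w^*}$ sends any HS quantum relation into the second set; hence both restrict to maps between the two sets. For the compositions, starting from $V$ with $V = V^{hsc}$ we obtain $(V_{w^*})_{HS} = V^{hsc} = V$, and starting from $\mc S$ with $\mc S = \mc S^{hsh}$ we obtain $(\mc S_{HS})_{w^*} = \mc S^{hsh} = \mc S$. Thus each round-trip is the identity, so the two maps are mutually inverse bijections.

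There is no substantial obstacle here: the only point requiring care is the bookkeeping, namely remembering that $(\cdot)_{HS}$ and the $hsc$-operation act on HS quantum relations whereas $(\cdot)_{w^*}$ and the $hsh$-operation act on quantum relations, and invoking the correct fixed-point identity at each stage. Everything else is immediate from the definitions $\mc S^{hsh} = (\mc S_{HS})_{w^*}$ and $V^{hsc} = (V_{w^*})_{HS}$.
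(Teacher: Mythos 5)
Your proposal is correct and follows essentially the same route as the paper: both arguments are pure bookkeeping from Propositions~\ref{prop:hsh_props} and~\ref{prop:hsc_props} together with the commuting identities $(\mc S^{hsh})_{HS} = (\mc S_{HS})^{hsc}$ and $(V^{hsc})_{w^*} = (V_{w^*})^{hsh}$, with the same fixed-point checks and the same round-trip computations establishing mutual inverses.
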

\begin{proof}
Using Proposition~\ref{prop:hsh_props} twice we see that
\[ \{ \mc S_{HS} : \mc S \text{ a quantum relation} \} = \{ (\mc S^{hsh})_{HS} : \mc S \} = \{ \mc S_{HS} : \mc S = \mc S^{hsh} \}. \]
Given $\mc S = \mc S^{hsh}$ set $V = \mc S_{HS}$, so $V_{w^*} = \mc S^{hsh} = \mc S$, and $V^{hsc} = (\mc S^{hsh})_{HS} = V$.  Conversely, when $V = V^{hsc}$, if we set $\mc S = V_{w^*}$ then $\mc S_{HS} = V^{hsc} = V$ and $\mc S^{hsh} = (V^{hsc})_{w^*} = \mc S$.  So we have shown the first equality.  Using Proposition~\ref{prop:hsc_props} twice shows that
\[ \{ V_{w^*} : V \text{ a HS quantum relation} \} = \{ (V^{hsc})_{w^*} : V \} = \{ V_{w^*} : V = V^{hsc} \} \]
and we've now shown that this set equals $\{ \mc S : \mc S = \mc S^{hsh} \}$, giving the second equality.  Our argument also shows that we have bijections as claimed.
\end{proof}

\begin{question}
We know that HS quantum relations $V$ over $M$ biject with projections $e\in M\vnten M^\op$.  Thus we obtain an operation on such projections: given $e$, form the associated $V$, then form $V^{hsc}$, then form the associated projection, say $e^{hsc}$.  Theorem~\ref{thm:hs_indep_rep} below shows that the operation $V \mapsto V^{hsc}$ does not depend upon the embedding $M\subseteq\mc B(H)$, and so the map $e\mapsto e^{hsc}$ is well-defined, independent of the choice of $H$.  We have $e \leq e^{hsc}$.  How can we describe this map at the level of the von Neumann algebra $M\vnten M^\op$?
\end{question}

\subsection{Invariance under representation}\label{sec:inv_reps}

Weaver shows in \cite[Theorem~2.7]{Weaver_QuantumRelations}, which we explored further in \cite[Section~7]{daws_quantum_graphs}, that given a von Neumann algebra $M$ with $M\subseteq\mc B(H)$ and $M\subseteq\mc B(H_1)$, there is an order-preserving bijection between quantum relations $\mc S\subseteq\mc B(H)$ and quantum relations $\mc S_1\subseteq\mc B(H_1)$.

Firstly, given $M\subseteq\mc B(H)$, for a Hilbert space $K$ we have that $M \cong M\otimes 1 \subseteq \mc B(H\otimes K)$, with $(M\otimes 1)' = M' \vnten \mc B(K)$.  Then quantum relations over $M\otimes 1$ are all of the form $\mc S \vnten \mc B(K)$, and this gives the bijection in this case.  We will show the same for Hilbert--Schmidt quantum relations (this alone, of course, also follows by considering the associated projection $e\in M\vnten M^\op$), and show that all the operations considered in the previous section are preserved.

In the following, we use the operator space project tensor product, \cite[Chapter~7]{EffrosRuanBook} for example, so that $(\mc B(H) \vnten \mc B(K))_* = \mc B(H\otimes K)_* = \mc B(H\otimes K)_* = \mc B(H)_* \proten \mc B(K)_*$.  We will also use slice maps: for $\omega\in\mc B(K)_*$ there is a (completely) bounded map $\id\otimes\omega \colon \mc B(H\otimes K) \to \mc B(H)$ satisfying $(\id\otimes\omega)(x\otimes y) = \ip{y}{\omega} x$ for $x\in\mc B(H), y\in\mc B(K)$.  For $\omega\in\mc B(H)_*$ and $\tau\in\mc B(K)_*$ we have that $\ip{x}{\omega\otimes\tau} = \ip{(\id\otimes\tau)(x)}{\omega}$ for $x\in\mc B(H\otimes K)$.  See for example \cite{Kraus_SliceMap} where in particular (see Theorem~1.9 and Remark~1.5) it is shown that when $\mc S\subseteq\mc B(H)$ is a weak$^*$-closed subspace, we have that
\begin{equation}
\mc S \vnten \mc B(K) = \{ x\in\mc B(H\otimes K) : (\id\otimes\omega)(x) \in \mc S \ (\omega\in\mc B(K)_*) \}.
\label{eq:Slice_against_BK}
\end{equation}
Here the left-hand-side is defined as the weak$^*$-closure of the algebraic tensor product $\mc S \odot \mc B(K)$.  It follows that
\begin{align*}
({}^\circ \mc S \proten \mc B(K)_*)^\circ
&= \{ x\in\mc B(H\otimes K) : \ip{x}{\omega\otimes\tau}=0 \ (\omega\in {}^\circ \mc S, \tau\in\mc B(K)_*) \} \\
&= \{ x\in\mc B(H\otimes K) : \ip{(\id\otimes\tau)(x)}{\omega}=0 \ (\omega\in {}^\circ \mc S, \tau\in\mc B(K)_*) \} \\
&= \{ x\in\mc B(H\otimes K) : (\id\otimes\tau)(x) \in ({}^\circ \mc S)^\circ = \mc S \ (\tau\in\mc B(K)_*) \}
= \mc S\vnten\mc B(K).
\end{align*}

\begin{proposition}\label{prop:HS_dilation}
The map $V \mapsto V \otimes HS(K)$ gives a bijection between HS quantum relations over $M$ and over $M\otimes 1$.  For a quantum relation $\mc S$ we have that $(\mc S\vnten\mc B(K))_{HS} = \mc S_{HS} \otimes HS(K)$, and for HS quantum relations $V$ we have that $(V\otimes HS(K))_{w^*} = V_{w^*} \vnten \mc B(K)$.  As such, $(\mc S \vnten \mc B(K))^{hsh} = \mc S^{hsh} \vnten \mc B(K)$ and $(V\otimes HS(K))^{hsc} = V^{hsc} \otimes HS(K)$.
\end{proposition}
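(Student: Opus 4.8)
The plan is to prove the three displayed identities, observing first that the last two (for $(\cdot)^{hsh}$ and $(\cdot)^{hsc}$) are formal consequences of the first two once we unwind the definitions $\mc S^{hsh} = (\mc S_{HS})_{w^*}$ and $V^{hsc} = (V_{w^*})_{HS}$. Indeed, I would simply compose: $(\mc S \vnten \mc B(K))^{hsh} = ((\mc S \vnten \mc B(K))_{HS})_{w^*} = (\mc S_{HS} \otimes HS(K))_{w^*} = (\mc S_{HS})_{w^*} \vnten \mc B(K) = \mc S^{hsh} \vnten \mc B(K)$, and dually for $hsc$. So all the genuine work lies in the two atomic identities, together with the verification that $V \mapsto V \otimes HS(K)$ is a well-defined bijection.

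For the bijection, well-definedness requires $V \otimes HS(K)$ to be invariant under $M' \vnten \mc B(K)$ on both sides. On elementary tensors this is transparent, since $(a_1 \otimes a_2)(v \otimes w)(b_1 \otimes b_2) = a_1 v b_1 \otimes a_2 w b_2$ lies in $V \otimes HS(K)$ because $V$ is an $M'$-bimodule and $HS(K)$ is an ideal. To pass from $M' \odot \mc B(K)$ to all of $M' \vnten \mc B(K)$ I would fix the Hilbert--Schmidt element and use that the maps $a \mapsto ay$ and $b \mapsto yb$ are continuous from the weak$^*$-topology on $\mc B(H\otimes K)$ to the weak topology on $HS(H\otimes K)$ (as $\Tr(a^* z y^*)$ pairs a weak$^*$-convergent net against a trace-class operator); the relevant stabiliser sets are then weak$^*$-closed and contain the weak$^*$-dense set $M' \odot \mc B(K)$, and one closes up in the $HS$-norm in the element. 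For injectivity and surjectivity I would exploit that $1 \in M'$, so any such bimodule $W$ is invariant under the full action $w \mapsto awb$ of $\mc B(K)\times\mc B(K)$ on the second leg; this action on $HS(K) = K \otimes \overline K$ is irreducible, forcing $W = V \otimes HS(K)$ for a unique closed $V$, which the $M'$-invariance of $W$ then makes an $M'$-bimodule. (As the authors remark, one may instead just invoke the bijection with projections $e \in M \vnten M^\op$.)

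The crux is $(\mc S \vnten \mc B(K))_{HS} = \mc S_{HS} \otimes HS(K)$. The inclusion $\supseteq$ is easy: for $v \in \mc S_{HS} = \mc S \cap HS(H)$ and $w \in HS(K)$ we have $v \otimes w \in HS(H\otimes K)$ and $v \otimes w \in \mc S \odot \mc B(K) \subseteq \mc S \vnten \mc B(K)$, so by Lemma~\ref{lem:hs_facts} these elementary tensors lie in $(\mc S \vnten \mc B(K)) \cap HS(H\otimes K) = (\mc S \vnten \mc B(K))_{HS}$; taking the closed span gives the inclusion. The reverse inclusion is where the difficulty lies, and the key device I would use is to reconcile the Hilbert-space coefficient picture with the operator-space slice picture. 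Fix an orthonormal basis $(f_j)$ of $HS(K)$ consisting of finite-rank operators (matrix units for an ONB of $K$) and expand $x \in (\mc S \vnten \mc B(K)) \cap HS(H\otimes K)$ as $x = \sum_j x_j \otimes f_j$ in $HS(H) \otimes HS(K) = HS(H\otimes K)$, with $x_j \in HS(H)$ and $\sum_j \|x_j\|_{HS}^2 < \infty$. The essential point is that the Hilbert-space coefficient $x_k$ is computed by the operator slice $(\id \otimes f_k^*)(x)$: here $f_k^*$ is finite-rank, hence trace-class, so it is a legitimate element of $\mc B(K)_*$, and $\Tr(f_j f_k^*) = (f_k | f_j)_{HS} = \delta_{jk}$ together with normality of $\id \otimes f_k^*$ gives $(\id \otimes f_k^*)(x) = x_k$. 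By the slice characterisation \eqref{eq:Slice_against_BK} each such slice lies in $\mc S$, and plainly $x_k \in HS(H)$, so $x_k \in \mc S \cap HS(H) = \mc S_{HS}$ and $x = \sum_k x_k \otimes f_k \in \mc S_{HS} \otimes HS(K)$.

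For $(V \otimes HS(K))_{w^*} = V_{w^*} \vnten \mc B(K)$, the inclusion $\subseteq$ is immediate from $V \subseteq V_{w^*}$, $HS(K) \subseteq \mc B(K)$, and weak$^*$-closedness of the right-hand side. For $\supseteq$ I would sidestep all boundedness subtleties by using separate weak$^*$-continuity of the tensor embeddings: for fixed $v \in V$ the normal map $b \mapsto v \otimes b$ carries the weak$^*$-dense finite-rank operators (which lie in $HS(K)$) into $(V \otimes HS(K))_{w^*}$, giving $v \otimes b \in (V \otimes HS(K))_{w^*}$ for all $b \in \mc B(K)$; then for fixed $b$ the normal map $v \mapsto v \otimes b$ has weak$^*$-closed preimage of $(V \otimes HS(K))_{w^*}$ containing $V$, hence containing $V_{w^*}$. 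Thus $V_{w^*} \odot \mc B(K) \subseteq (V \otimes HS(K))_{w^*}$, and the weak$^*$-closed span yields the claim. The hard part throughout is the bridge in the $(\cdot)_{HS}$ identity between the two genuinely different tensor structures, $HS(H)\otimes HS(K)$ versus $\mc S \vnten \mc B(K)$; once the finite-rank slicing identity linking the Hilbert coefficient to the operator slice is secured, everything else is either an easy inclusion or a formal composition.
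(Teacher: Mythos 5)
Your proof is correct, but it takes a genuinely different route from the paper's in all three substantive steps, so let me compare. For the bijection, the paper shuffles tensor legs and runs an explicit partial-inner-product computation to extract the subspace $X_V$; you instead invoke the structural fact that the left/right multiplications of $\mc B(K)$ on $HS(K)$ generate all of $\mc B(HS(K))$ (trivial commutant), so the projection onto an invariant closed subspace must lie in $\mc B(HS(H))\otimes 1$. This is shorter and cleaner; the one point worth making explicit is that norm-closed invariance under a self-adjoint \emph{set} of operators passes to the generated von Neumann algebra (i.e.\ the projection onto the subspace lies in the commutant), which is what licenses the jump from invariance under $\{(1\otimes a)\,\cdot\,(1\otimes b)\}$ to invariance under $1\otimes\mc B(HS(K))$. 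For $(\mc S\vnten\mc B(K))_{HS} = \mc S_{HS}\otimes HS(K)$, the paper works with the pre-annihilator description $({}^\circ\mc S\proten\mc B(K)_*)^\circ$ and an orthogonality argument inside $HS(H\otimes K)$, whereas you expand along a matrix-unit orthonormal basis of $HS(K)$ and observe that the Hilbert-space coefficients coincide with slices against trace-class functionals, then cite \eqref{eq:Slice_against_BK} directly; your coefficient-equals-slice identity is exactly the bridge that the paper's duality computation encodes, but your version is more elementary and avoids the projective tensor product of preduals entirely. For $(V\otimes HS(K))_{w^*} = V_{w^*}\vnten\mc B(K)$, the paper compresses by a net of finite-rank projections $1\otimes p_\alpha$ and takes a weak$^*$ limit, whereas you use separate weak$^*$-continuity of $b\mapsto v\otimes b$ and $v\mapsto v\otimes b$ together with weak$^*$-density of the finite-rank operators; both are sound, and yours dispenses with the compression net. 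What the paper's route buys is that the duality formula $({}^\circ\mc S\proten\mc B(K)_*)^\circ = \mc S\vnten\mc B(K)$ is established anyway and reused later (for instance in the proof of Lemma~\ref{lem:HS_wstar_pullback}); what yours buys is a self-contained argument resting only on \eqref{eq:Slice_against_BK} and Lemma~\ref{lem:hs_facts}. Your formal composition for the $hsh$/$hsc$ claims agrees with the paper's (implicit) treatment.
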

\begin{proof}
As $(M\otimes 1)' = M' \vnten \mc B(K)$ we see that $V\otimes HS(K)$ is a $(M\otimes 1)'$-bimodule.  Let $U \subseteq HS(H\otimes K) = HS(H) \otimes HS(K)$ be a bimodule over $M' \vnten \mc B(K)$.  By shuffling tensor factors, equivalently, we have a subspace $X_U \subseteq H \otimes \overline H \otimes K \otimes \overline K$ which is invariant for operators of the form $a\otimes b^\top \otimes x \otimes y^\top$ where $a,b\in M', x,y\in\mc B(K)$.  Let $\xi\in X_U$, so for $\eta_0\in K\otimes\overline K$, there exists $\hat\xi\in H\otimes\overline H$ with $(\alpha|\hat\xi) = (\alpha\otimes\eta_0|\xi)$ for each $\alpha\in H\otimes\overline H$.  Letting $x,y$ range over all rank-one operators shows that $\hat\xi \otimes \eta \in X_U$, for any $\eta\in K\otimes\overline K$, and any such $\hat\xi$.  Let $X_V$ be the collection of such $\hat\xi$, which is seen to be an $M'\otimes(M')^\top$-invariant subspace of $H\otimes\overline H$, and is hence equivalent to a HS quantum relation $V$.  Any element $\xi\in X_U$ can be expanded using an orthonormal basis for $K\otimes\overline K$ and so is a member of $X_V \otimes K\otimes\overline K$.  We have hence shown that $U = V \otimes HS(K)$, and obviously any $V\otimes HS(K)$ is a bimodule over $M'\vnten\mc B(K)$.

As above, $({}^\circ \mc S\proten \mc B(K)_*)^\circ = \mc S\vnten\mc B(K)$.
Let $x\in (\mc S\vnten\mc B(K))_{HS}$ so $x\in ({}^\circ \mc S \proten \mc B(K)_*)^\circ \cap (HS(H)\otimes HS(K))$ which is thus equivalent to
\[ 0 = \ip{x}{\omega\otimes\tau} = (x^*|\omega\otimes\tau) \qquad (\omega\in {}^\circ \mc S, \tau\in\mc B(K)_*). \]
By continuity, this also holds for each $\tau\in HS(K)$, and so, taking an orthonormal basis in $HS(K)$ for example, we see that equivalently $x^* \in ({}^\circ \mc S \odot HS(K))^\perp = ({}^\circ \mc S)^\perp \otimes HS(K)$.  By Lemma~\ref{lem:hs_facts}, $y^*\in ({}^\circ \mc S)^\perp$ if and only if $y\in \mc S_{HS}$, and so we conclude that $(\mc S\vnten\mc B(K))_{HS} = \mc S_{HS} \otimes HS(K)$.

Now let $V\subseteq HS(H)$ be a HS quantum relation.  Let $K' \subseteq K$ be a finite-dimensional subspace, and $p \colon K\to K'$ the orthogonal projection.  Then
\[ (1\otimes p) (V_{w^*} \vnten \mc B(K)) (1\otimes p) = V_{w^*} \vnten \mc B(K') = V_{w^*} \odot \mc B(K'), \]
which is obviously equal to $(V\otimes HS(K'))_{w^*}$, which in turn is the weak$^*$ closure of $(1\otimes p)(V\otimes HS(K))(1\otimes p)$.  Let $(p_\alpha)$ be a net of such projections, with $p_\alpha\to 1$ strongly (simply order such $K'$ by inclusion).  Then $(1\otimes p_\alpha)x(1\otimes p_\alpha) \to x$ weak$^*$ for any $x\in\mc B(H\otimes K)$, and so we conclude that
\[ V_{w^*} \vnten \mc B(K) \subseteq (V\otimes HS(K))_{w^*}. \]
The reverse inclusion is clear, because $V\otimes HS(K) \subseteq V_{w^*} \vnten \mc B(K)$, and hence we have equality.
\end{proof}

We now follow Weaver's argument in \cite[Theorem~2.7]{Weaver_QuantumRelations}.  Given an injective normal unital $*$-homomorphism $\pi\colon M \to \mc B(H_1)$, if $K$ is a sufficiently large Hilbert space, there is a unitary $u\colon H\otimes K \to H_1\otimes K$ with $u(x\otimes 1)u^* = \pi(x)\otimes 1$ for $x\in M$.  Then a quantum relation $\mc S$ over $M$ bijects with a quantum relation $\mc S_1$ over $M_1 = \pi(M)$ exactly when $u(\mc S\vnten\mc B(K))u^* = \mc S_1\vnten\mc B(K)$.  (Of course, every quantum relation over $M\otimes 1$, respectively, $M_1\otimes 1$, is of the form $\mc S\vnten\mc B(K)$, respectively, $\mc S_1\vnten\mc B(K)$.)  We show the same for HS quantum relations.

\begin{theorem}\label{thm:hs_indep_rep}
Let $\pi\colon M \to \mc B(H_1)$ as above, with $M_1=\pi(M)$.  There is an order preserving bijection between HS quantum relations over $M$ and over $M_1$, say $V \leftrightarrow V_1$.  Similarly denote the bijection between quantum relations as $\mc S \leftrightarrow \mc S_1$.  This satisfies that $(V_{w^*})_1 = (V_1)_{w^*}$, that $(\mc S_1)_{HS} = (\mc S_{HS})_1$, and hence that $V^{hsc}$ bijects with $V_1^{hsc}$, and that $\mc S^{hsh}$ bijects with $\mc S_1^{hsh}$.
\end{theorem}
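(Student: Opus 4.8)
The plan is to reduce everything to the amplification case already handled by Proposition~\ref{prop:HS_dilation}, and then transport across the intertwining unitary $u$. The one genuinely structural observation is that conjugation $T\mapsto uTu^*$ is a normal $*$-isomorphism $\mc B(H\otimes K)\to\mc B(H_1\otimes K)$ which restricts to an isometric bijection $HS(H\otimes K)\to HS(H_1\otimes K)$ (conjugation by a unitary preserves the Hilbert--Schmidt norm), and that, since $u(M\otimes 1)u^* = M_1\otimes 1$, we have $u(M'\vnten\mc B(K))u^* = M_1'\vnten\mc B(K)$, so that conjugation by $u$ carries $(M\otimes 1)'$-bimodules to $(M_1\otimes 1)'$-bimodules. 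Every one of the three operations in play (dilation by $HS(K)$, weak$^*$-closure, intersection with $HS(H\otimes K)$) commutes with $u(\cdot)u^*$ essentially for free, and the whole proof is bookkeeping built on this.

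First I would define the bijection. Given an HS quantum relation $V$ over $M$, form $V\otimes HS(K)$, which is an HS quantum relation over $M\otimes 1$ by Proposition~\ref{prop:HS_dilation}; then $u(V\otimes HS(K))u^*$ is, by the remarks above, an HS quantum relation over $M_1\otimes 1$, hence of the form $V_1\otimes HS(K)$ for a \emph{unique} HS quantum relation $V_1$ over $M_1$ (Proposition~\ref{prop:HS_dilation} applied to $M_1$). Set $V\leftrightarrow V_1$. As $V\mapsto V\otimes HS(K)$ and conjugation by $u$ are both bijective and inclusion-preserving, this is an order-preserving bijection, with inverse obtained by conjugating with $u^*$.

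Next I would verify the two compatibility identities. Applying the quantum-relation bijection to $\mc S=V_{w^*}$, the relation $(V_{w^*})_1$ is characterised by $u(V_{w^*}\vnten\mc B(K))u^* = (V_{w^*})_1\vnten\mc B(K)$. By Proposition~\ref{prop:HS_dilation}, $V_{w^*}\vnten\mc B(K) = (V\otimes HS(K))_{w^*}$, and since conjugation by $u$ is weak$^*$-continuous it commutes with weak$^*$-closure; hence $u(V_{w^*}\vnten\mc B(K))u^* = (u(V\otimes HS(K))u^*)_{w^*} = (V_1\otimes HS(K))_{w^*} = (V_1)_{w^*}\vnten\mc B(K)$, using Proposition~\ref{prop:HS_dilation} once more. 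Comparing gives $(V_{w^*})_1 = (V_1)_{w^*}$. The dual statement is proved identically: by Proposition~\ref{prop:HS_dilation}, $(\mc S\vnten\mc B(K))_{HS} = \mc S_{HS}\otimes HS(K)$, while conjugation by $u$ commutes with $\mc X\mapsto \mc X\cap HS(H\otimes K)$ because $u(\cdot)u^*$ bijects $HS(H\otimes K)$ with $HS(H_1\otimes K)$; applying $u(\cdot)u^*$ to both descriptions of $(\mc S\vnten\mc B(K))_{HS}$ and cancelling the $\otimes HS(K)$ factor yields $(\mc S_{HS})_1 = (\mc S_1)_{HS}$.

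Finally the hull and closure statements follow formally. Since $V^{hsc} = (V_{w^*})_{HS}$, chaining the weak$^*$-closure compatibility with the HS-intersection compatibility gives $(V^{hsc})_1 = ((V_{w^*})_1)_{HS} = ((V_1)_{w^*})_{HS} = (V_1)^{hsc}$, and symmetrically $\mc S^{hsh} = (\mc S_{HS})_{w^*}$ gives $(\mc S^{hsh})_1 = (\mc S_1)^{hsh}$. I expect no serious obstacle: the argument is bookkeeping on top of Proposition~\ref{prop:HS_dilation}, the only real input being that $u(\cdot)u^*$ is a normal, HS-isometric $*$-isomorphism intertwining the two commutants. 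The single point requiring a little care is confirming that $u(V\otimes HS(K))u^*$ really is a bimodule over $(M_1\otimes 1)'$, which is exactly where the commutant identity $u(M'\vnten\mc B(K))u^* = M_1'\vnten\mc B(K)$ is used.
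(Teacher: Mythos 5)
Your proposal is correct and follows essentially the same route as the paper's own proof: both define $V_1$ by $V_1\otimes HS(K) = u(V\otimes HS(K))u^*$, justify the bimodule property via the intertwining of commutants, and derive the compatibilities from Proposition~\ref{prop:HS_dilation} together with the facts that $u(\cdot)u^*$ is weak$^*$-continuous and maps $HS(H\otimes K)$ onto $HS(H_1\otimes K)$. The only difference is one of exposition: the paper writes out the $(\mc S_1)_{HS} = (\mc S_{HS})_1$ computation and declares the remaining claims "similar," whereas you spell out the weak$^*$-closure compatibility as well.
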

\begin{proof}
The same argument works for HS quantum relations: with $K,u$ as above, we define $V_1\otimes HS(K) = u(V\otimes HS(K))u^*$, which makes sense by Proposition~\ref{prop:HS_dilation}.  Notice that for $a\in M'$ we have that $a' = u(a\otimes 1)u^* \in (M_1\otimes 1)'$ as $u$ intertwines $M\otimes 1$ and $M_1\otimes 1$, and from this it follows that $u(V\otimes HS(K))u^*$ will indeed be an $(M_1\otimes 1)'$-bimodule.

As $u$ is unitary, we have that $u(HS(H_1\otimes K))u^* = HS(H \otimes K)$, and from this it follows that $\mc S_1\vnten\mc B(K) \cap HS(H_1\otimes K) = u(\mc S \vnten \mc B(K) \cap HS(H\otimes K)) u^* = u(\mc S_{HS} \otimes HS(K))u^* = (\mc S_{HS})_1 \otimes HS(K)$ so $(\mc S_1)_{HS} = (\mc S_{HS})_1$.  The other claims are similar.
\end{proof}

\subsection{Pullbacks}\label{sec:pullbacks}

There is a more general construction here, that of the \emph{pullback} of a quantum relation along a UCP map, \cite{Weaver_QuantumGraphs} and \cite[Section~7]{daws_quantum_graphs}.  Any normal UCP map $\theta \colon M \to N\subseteq \mc B(H_N)$ has a Stinespring dilation: a normal unital $*$-homomorphism $\pi\colon M \to \mc B(K)$, and an isometry $u\colon H_N \to K$ with $\theta(x) = u^*\pi(x)u$ for $x\in M$.  That $\theta$ maps into $N$ is equivalent to there being a normal unital $*$-homomorphism $\rho \colon N' \to \pi(M)'$ with $u y = \rho(y) u$ for $y\in N'$, see \cite[Lemma~7.2]{daws_quantum_graphs} for example.  We can always arrange that $K = H\otimes K'$ with $\pi(x) = x\otimes 1$ for $x\in M$.  We can then take \cite[Theorem~7.5]{daws_quantum_graphs} as a definition: for a quantum relation $\mc S$ the pullback is $\overleftarrow{\mc S}$, defined to be the weak$^*$-closure of $u^*(\mc S \vnten\mc B(K')) u$.  This is a quantum relation over $N$, and is independent of the choice of Stinespring dilation.

\begin{definition}
Let $V\subseteq HS(H_M)$ be a HS quantum relation over $M$, and let $\theta \colon M \to N$ be a normal UCP map.  Define $\overleftarrow{V}$ to be the HS-norm closure of $u^*(V\otimes HS(K')) u$ where $u,K$ are as above.
\end{definition}

We immediately show that this is well-defined.

\begin{proposition}
We have that $\overleftarrow{V}$ is a HS quantum relation, with the definition independent of the choice of Stinespring dilation.
\end{proposition}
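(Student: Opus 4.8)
The plan is to verify the two assertions in turn: that $\overleftarrow V$ is an $N'$-bimodule closed for the Hilbert--Schmidt norm, and that it is insensitive to the choice of dilation. For the first, recall $K = H_M\otimes K'$ with $\pi(x)=x\otimes1$, so $\pi(M)' = (M\otimes1)' = M'\vnten\mc B(K')$, and by \cite[Lemma~7.2]{daws_quantum_graphs} there is a normal unital $*$-homomorphism $\rho\colon N'\to M'\vnten\mc B(K')$ with $uy=\rho(y)u$ for $y\in N'$; taking adjoints of $uy^*=\rho(y^*)u$ gives $yu^*=u^*\rho(y)$. Since $HS(K)$ is an ideal, $u^*(V\otimes HS(K'))u\subseteq HS(H_N)$, and as $HS(H_N)$ is complete the closure $\overleftarrow V$ remains inside it. For $y\in N'$ and $w\in V\otimes HS(K')$ we compute $y\,(u^*wu)=u^*\rho(y)w\,u$ and $(u^*wu)\,y=u^*w\rho(y)\,u$; by Proposition~\ref{prop:HS_dilation} the space $V\otimes HS(K')$ is an HS quantum relation over $M\otimes1$, hence an $M'\vnten\mc B(K')$-bimodule, so $\rho(y)w$ and $w\rho(y)$ again lie in it. As the $N'$-action is contractive for the Hilbert--Schmidt norm, passing to the closure shows $\overleftarrow V$ is an $N'$-bimodule, and therefore an HS quantum relation over $N$.

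For independence I would mirror the quantum-relation argument of \cite[Theorem~7.5]{daws_quantum_graphs}, with Proposition~\ref{prop:HS_dilation} replacing the weak$^*$-tensor facts. Given two dilations $(\pi_i,u_i,K_i=H_M\otimes K_i')$ of $\theta$, set $L'=K_1'\oplus K_2'$ and pass to the single dilation $H_M\otimes L'$ with $\pi(x)=x\otimes1$ and the two isometries $u_1\oplus0$ and $0\oplus u_2$; compressing $V\otimes HS(L')$ to the appropriate corner shows that each of these gives exactly the original pullback $u_i^*(V\otimes HS(K_i'))u_i$. Thus it suffices to treat two isometries $u,u'$ dilating $\theta$ against the \emph{same} $\pi$. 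Stinespring uniqueness then provides a partial isometry $W$ on $H_M\otimes L'$ lying in $\pi(M)'=M'\vnten\mc B(L')$, with $Wu=u'$ and $W^*Wu=u$. The crux is the identity $W^*(V\otimes HS(L'))W=V\otimes HS(L')$: approximating $W$ by sums $\sum_j a_j\otimes\rankone{f_j}{g_j}$ with $a_j\in M'$, $f_j,g_j\in L'$, one has for $v\in V$, $T\in HS(L')$ the elementary computation
\[ (a_j\otimes\rankone{f_j}{g_j})^*\,(v\otimes T)\,(a_k\otimes\rankone{f_k}{g_k}) = (f_j|Tf_k)\,(a_j^* v a_k)\otimes\rankone{g_j}{g_k}, \]
which lies in $V\otimes HS(L')$ precisely because $a_j^*va_k\in M'VM'\subseteq V$; the reverse inclusion is the symmetric computation for $W^*$ combined with $W^*Wu=u$. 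Granting this, $u'^*(V\otimes HS(L'))u' = u^*W^*(V\otimes HS(L'))Wu = u^*(V\otimes HS(L'))u$, so the pre-closure sets, and hence the Hilbert--Schmidt closures, coincide.

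The step I expect to be the main obstacle is making the bimodule identity rigorous when $W$ is not a finite tensor sum, together with the bookkeeping of corners and the partial isometry. The analytic point is that if $W_\lambda\to W$ strong-$*$ with $\sup_\lambda\|W_\lambda\|<\infty$, then for fixed $w\in HS$ one has $W_\lambda^* w W_\lambda\to W^* w W$ in the Hilbert--Schmidt norm — this uses the compactness of Hilbert--Schmidt operators and dominated convergence on the singular-value expansion — so that the limit lands in the \emph{closed} space $V\otimes HS(L')$ (closed by Proposition~\ref{prop:HS_dilation}). The remaining organisational work, namely verifying that the Stinespring intertwiner genuinely lands in $M'\vnten\mc B(L')$ (so that the $M'$-bimodule property of $V$ can be exploited) and handling the summand of $u$ orthogonal to the minimal dilation space, runs exactly parallel to the quantum-relation treatment in \cite{daws_quantum_graphs}.
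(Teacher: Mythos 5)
Your proof of the first assertion is essentially the paper's own: produce $\rho\colon N'\to\pi(M)'=M'\vnten\mc B(K')$ with $uy=\rho(y)u$, use that $V\otimes HS(K')$ is a bimodule over $\pi(M)'$ (Proposition~\ref{prop:HS_dilation}), and pass to the HS-norm closure.

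For independence of the dilation you take a genuinely different route. The paper compares an arbitrary tensor-form dilation with a \emph{minimal} dilation $(\pi_0,u_0,K_0)$ (which need not have the form $H\otimes K'$): it builds intertwining isometries $w_0\colon K_0\to H\otimes K'$ and $w_1\colon K_0\to H\otimes K''$, observes $w_1w_0^*\in M'\vnten\mc B(K',K'')$, deduces $w_0^*(V\otimes HS(K'))w_0=w_1^*(V\otimes HS(K''))w_1$, and concludes the pullback is computable from the minimal dilation alone. You instead embed two tensor-form dilations into a common one $H_M\otimes L'$ via a direct sum (the corner-compression step checks out), and invoke Stinespring uniqueness to get a partial isometry $W\in\pi(M)'=M'\vnten\mc B(L')$ with $Wu=u'$, $W^*Wu=u$ and $WW^*u'=u'$. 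This strategy is sound, and has the mild advantage of never leaving tensor-form dilations.

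Two corrections are needed, however. First, your ``crux identity'' $W^*(V\otimes HS(L'))W=V\otimes HS(L')$ is false in general: the bimodule property gives only the inclusion $\subseteq$, and the left-hand side equals the $W$-transport of the compression of $V\otimes HS(L')$ by the projection $WW^*$, typically a proper subspace. What is true, and what suffices, is equality of the \emph{compressed} spaces. Indeed, for $w\in V\otimes HS(L')$ one has $u'^*wu'=u^*(W^*wW)u\in u^*(V\otimes HS(L'))u$, since $W^*wW\in V\otimes HS(L')$; conversely, for $v\in V\otimes HS(L')$ the element $WvW^*$ lies in $V\otimes HS(L')$ and $u'^*(WvW^*)u'=u^*(W^*W)v(W^*W)u=u^*vu$, using $W^*Wu=u$. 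Hence $u'^*(V\otimes HS(L'))u'=u^*(V\otimes HS(L'))u$, which is the equality you actually need before taking closures. You list exactly these ingredients (``the symmetric computation for $W^*$ combined with $W^*Wu=u$''), so the repair is immediate, but the identity as stated should not appear. Second, the analytic obstacle you flag---approximating $W$ by finite sums of elementary tensors and controlling HS-norm limits---is a non-issue: by Proposition~\ref{prop:HS_dilation} the space $V\otimes HS(L')$ is an HS quantum relation over $M\otimes1$, i.e.\ a norm-closed bimodule over the von Neumann algebra $(M\otimes1)'=M'\vnten\mc B(L')$, and $W$ is an element of that algebra, so $W^*wW\in V\otimes HS(L')$ holds with no limiting argument whatsoever.
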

\begin{proof}
This follows much as the proof of \cite[Theorem~7.5]{daws_quantum_graphs}.  There is a unital normal $*$-homomorphism $\rho\colon N' \to \pi(M)' = M'\vnten\mc B(K')$ with $uy = \rho(y)u$ for each $y\in N'$.  Thus for $x\in V\otimes HS(K')$ we have that $x\rho(y) \in V\otimes HS(K')$ for any $y\in N'$, as $V$ is bimodule over $M'$, and so $u^*xuy = u^*x\rho(y)u \in u^*(V\otimes HS(K'))u$.  Similarly for $y u^*xu$, showing that $u^*(V\otimes HS(K'))u$ is an $N'$-bimodule, and so its closure is an HS quantum relation.

As in the proof of \cite[Theorem~7.5]{daws_quantum_graphs} we pick a minimal Stinespring dilation, say $\pi_0,K_0,u_0$, so $K_0$ is the closed linear span of $\{ \pi_0(x)u_0\xi : \xi\in H_N, x\in M \}$.  (Note that $K_0$ need not be of the form $H\otimes K'$.)  There is always an isometry $w_0 \colon K_0 \to H\otimes K'$ satisfying $w_0 \pi_0(x)u_0\xi = \pi(x)u\xi$ for $x\in M, \xi\in H_N$, which follows as we have two dilations of the same UCP map.  In particular, $w_0 \pi_0(x) = (x\otimes 1) w_0$ for each $x\in M$.

Suppose we also have an isometry $w_1 \colon K_0 \to H\otimes K''$, for some $K''$, with $w_1 \pi_0(x) = (x\otimes 1)w_1$.  Then $w_1 w_0^*(x\otimes 1) = w_1\pi(x)w_0^* = (x\otimes 1)w_1w_0^*$ so $w_1w_0^* \in M' \vnten \mc B(K', K'')$.  Compare here with the proof of \cite[Lemma~7.4]{daws_quantum_graphs} where a $2\times 2$ matrix argument is used to make this a little more precise.  As $V$ is an $M'$ bimodule, we conclude that $w_1w_0^* (V \otimes HS(K')) w_0 w_1^* \in V \otimes HS(K'')$, and so $w_0^* (V \otimes HS(K')) w_0 \subseteq w_1^* (V \otimes HS(K'')) w_1$ as $w_1$ is an isometry.  By symmetry we have the other inclusion, and so we have equality.  We may hence define
\[ V_0 = u_0^* w_0^* (V\otimes HS(K')) w_0 u_0 \subseteq HS(H_N). \]
this definition being independent of the choice of $w_0, K'$.  However, of course $w_0u_0 = u$ and so $V_0 = u^*(V\otimes HS(K'))u$.  This shows that $u^*(V\otimes HS(K'))u$, and hence also $\overleftarrow{V}$, can be defined just using the minimal dilation, and so in particular that it is independent of the choice of Stinespring dilation chosen.
\end{proof}

\begin{remark}\label{rem:any_dilation_closed}
The proof shows slightly more, that $u^*(V\otimes HS(K'))u$ is an $N'$-bimodule, independent of the choice of Stinespring dilation.  The proof of \cite[Theorem~7.5]{daws_quantum_graphs} shows the same for quantum relations.  In Example~\ref{eg:pullback_not_weakstar_closed} we give an example where 
$u^*(\mc S\vnten\mc B(K')) u$ is not weak$^*$-closed, and an example of $V$ with $u^* (V\otimes HS(K')) u$ not HS-norm closed.  This is independent of the dilation chosen, and so we do need to take the relevant closure when defining the pullback.
\end{remark}

One might now hope that $(\overleftarrow{\mc S})_{HS} = \overleftarrow{\mc S_{HS}}$ and so forth, but this turns out not to be the case.  We do have the following.

\begin{lemma}\label{lem:HS_wstar_pullback}
We have $\overleftarrow{\mc S_{HS}} \subseteq (\overleftarrow{\mc S})_{HS}$ and 
$(V_{w^*})^{\leftarrow} = (\overleftarrow{V})_{w^*}$.  Also $\overleftarrow{\mc S^{hsh}} \subseteq (\overleftarrow{\mc S})^{hsh}$ and $\overleftarrow{V^{hsc}} \subseteq (\overleftarrow{V})^{hsc}$.
\end{lemma}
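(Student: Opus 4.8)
The plan is to prove the first two assertions directly and then obtain the last two by bookkeeping. The inclusion $\overleftarrow{\mc S_{HS}} \subseteq (\overleftarrow{\mc S})_{HS}$ is the easy one; the equality $(V_{w^*})^{\leftarrow} = (\overleftarrow{V})_{w^*}$ is the substantive statement, and once it is in hand the two hull/closure inclusions follow by combining it with the first inclusion and Proposition~\ref{prop:HS_dilation}. Throughout I fix the Stinespring data $\pi(x)=x\otimes 1$, $K=H_M\otimes K'$ and the isometry $u\colon H_N \to H_M\otimes K'$, and use repeatedly that compression $x\mapsto u^*xu$ sends Hilbert--Schmidt operators to Hilbert--Schmidt operators and is weak$^*$-continuous.

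For the first inclusion I would argue as follows. Since $\mc S_{HS}\subseteq\mc S$ and $HS(K')\subseteq\mc B(K')$, and since an HS-closure is contained in the corresponding weak$^*$-closure, we get $\mc S_{HS}\otimes HS(K') \subseteq \mc S\vnten\mc B(K')$. Compressing by $u$ then gives $u^*(\mc S_{HS}\otimes HS(K'))u \subseteq \overleftarrow{\mc S} \cap HS(H_N)$, where the left-hand elements are Hilbert--Schmidt because $u$ is an isometry. By Lemma~\ref{lem:hs_facts} the right-hand side equals $(\overleftarrow{\mc S})_{HS}$ and is already HS-norm closed, so taking the HS-closure of the left-hand side yields $\overleftarrow{\mc S_{HS}}\subseteq(\overleftarrow{\mc S})_{HS}$.

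For the equality, write $W = V\otimes HS(K')$ and $W_0 = u^*Wu$. First I would note that $(\overleftarrow{V})_{w^*}$ is simply $W_0\clos^{w^*}$: indeed $\overleftarrow{V}$ is the HS-closure of $W_0$, and since HS-convergence implies weak$^*$-convergence, forming the weak$^*$-closure of an HS-closure changes nothing. On the other side, Proposition~\ref{prop:HS_dilation} gives $V_{w^*}\vnten\mc B(K') = (V\otimes HS(K'))_{w^*} = W\clos^{w^*}$, so $(V_{w^*})^{\leftarrow}$ is the weak$^*$-closure of $u^*(W\clos^{w^*})u$. The two inclusions then come from: (i) $W\subseteq W\clos^{w^*}$, which after compression and weak$^*$-closure gives $W_0\clos^{w^*}\subseteq (u^*(W\clos^{w^*})u)\clos^{w^*}$; and (ii) weak$^*$-continuity of $x\mapsto u^*xu$, which gives $u^*(W\clos^{w^*})u \subseteq (u^*Wu)\clos^{w^*} = W_0\clos^{w^*}$, whence the reverse inclusion after a further weak$^*$-closure. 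The main (if mild) obstacle is exactly this commutation of compression with the weak$^*$-closure, resting on normality of $x\mapsto u^*xu$; everything else is set-theoretic manipulation of closures.

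Finally the last two claims reduce to the first two. For $\overleftarrow{\mc S^{hsh}}\subseteq(\overleftarrow{\mc S})^{hsh}$, recall $\mc S^{hsh} = (\mc S_{HS})_{w^*}$; applying the equality with $V=\mc S_{HS}$ gives $\overleftarrow{\mc S^{hsh}} = (\overleftarrow{\mc S_{HS}})_{w^*}$, and then the first inclusion, after taking weak$^*$-closures, yields $(\overleftarrow{\mc S_{HS}})_{w^*}\subseteq((\overleftarrow{\mc S})_{HS})_{w^*} = (\overleftarrow{\mc S})^{hsh}$. For $\overleftarrow{V^{hsc}}\subseteq(\overleftarrow{V})^{hsc}$, set $\mc S=V_{w^*}$ so that $V^{hsc}=\mc S_{HS}$; the first inclusion gives $\overleftarrow{V^{hsc}} = \overleftarrow{\mc S_{HS}}\subseteq(\overleftarrow{\mc S})_{HS}$, while the equality gives $\overleftarrow{\mc S} = \overleftarrow{V_{w^*}} = (\overleftarrow{V})_{w^*}$, so the right-hand side is $((\overleftarrow{V})_{w^*})_{HS} = (\overleftarrow{V})^{hsc}$, as required.
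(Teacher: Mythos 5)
Your proof is correct and takes essentially the same route as the paper: the same reliance on Proposition~\ref{prop:HS_dilation} and Lemma~\ref{lem:hs_facts} for the two main claims, with your appeal to weak$^*$-continuity of the compression $x\mapsto u^*xu$ being exactly what the paper establishes by hand via pre-annihilators. Your explicit reduction of the two hull/closure inclusions to the first two claims correctly fills in what the paper leaves as ``the remaining claims now follow.''
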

\begin{proof}
Let our UCP map $\theta$ have dilation $(u,\pi,K)$ where $\pi\colon M \to \mc B(H\otimes K)$ is of the form $\pi(x)=x\otimes 1$.  Then $\overleftarrow{\mc S_{HS}} = (u^*(\mc S_{HS}\otimes HS(K))u)\closhs = (u^* (\mc S\vnten\mc B(K))_{HS} u)\closhs$ using Proposition~\ref{prop:HS_dilation}.  As $u^* HS(H\otimes K)u \subseteq HS(H_N)$, it follows that $u^* (\mc S\vnten\mc B(K))_{HS} u \subseteq \overleftarrow{\mc S} \cap HS(H_N) = (\overleftarrow{\mc S})_{HS}$ and so $\overleftarrow{\mc S_{HS}} \subseteq (\overleftarrow{\mc S})_{HS}$.

Let $\omega \in {}^\circ(u^*(V\otimes HS(K))u)$ which means that $0 = \ip{u^*yu}{\omega} = \ip{y}{u\omega u^*}$ for each $y\in V \otimes HS(K)$, that is, $u\omega u^* \in {}^\circ(V\otimes HS(K))$.  Given $x\in u^* (V\otimes HS(K))_{w^*} u$, say $x = u^*zu$ for some $z\in (V\otimes HS(K))_{w^*} = V_{w^*} \vnten \mc B(K)$, again by Proposition~\ref{prop:HS_dilation}, we have $\ip{z}{u\omega u^*} = 0$ and so $\ip{x}{\omega}=0$.  So $u^* (V\otimes HS(K))_{w^*} u \subseteq (u^*(V\otimes HS(K))u)_{w^*}$ and hence $(V_{w^*})^\leftarrow \subseteq (\overleftarrow{V})_{w^*}$.

However, also $u^*(V\otimes HS(K))u \subseteq u^*(V\otimes HS(K))_{w^*} u = u^*(V_{w^*}\vnten\mc B(K))u \subseteq \overleftarrow{V_{w^*}}$ which is weak$^*$-closed by definition, so $(\overleftarrow{V})_{w^*} \subseteq (V_{w^*})^\leftarrow$ and so we have equality.
The remaining claims now follow.
\end{proof}

We now present a series of examples to show that these conclusions are optimal.

\begin{example}\label{eg:pullback_not_weakstar_closed}
We give an example of a UCP map $\theta \colon M \to N$ so that the pullback of the trivial quantum graph $M'$ is not equal to $u^*(M'\vnten\mc B(K))u$ for some (hence any, see Remark~\ref{rem:any_dilation_closed}) dilation of $\theta$.  In particular, the weak$^*$-closure needs to be taken.  Notice also that $\overleftarrow{M'}$ is a ``quantum confusability graph'', namely a quantum graph arising from a quantum channel, see \cite[Section~6.2]{daws_quantum_graphs} for example.

Working in $\mathbb C^3$ we set
\[ e_1 = \begin{pmatrix} 1/\sqrt2 \\ 1/\sqrt 2 \\ 0 \end{pmatrix}, \quad
e_2 = \begin{pmatrix} a \\ -a \\ b \end{pmatrix}, \]
where $a,b$ are both non-zero, with $2|a|^2 + |b|^2=1$.  So $\{e_1,e_2\}$ are orthonormal, and hence there is an isometry $u\colon \mathbb C^2 \to \mathbb C^3$ sending the standard basis of $\mathbb C^2$ to $e_1,e_2$.  Let $x,y,z\in\mathbb C$ and let $T\in\ell^\infty_3$ be the diagonal matrix with entries $(x,y,z)$.  Then
\begin{align*} (e_1|Te_1) &= \frac12 (x+y), \quad
(e_1|Te_2) = \frac{a}{\sqrt 2}(x-y), \\
(e_2|Te_1) &= \frac{\overline a}{\sqrt 2}(x-y), \quad
(e_2|Te_2) = |a|^2(x+y) + |b|^2z.
\end{align*}
These give the entries of $u^*Tu \in\mathbb M_2$.  Let $V = \{ u^*Tu : T\in\ell^\infty_3 \}$ so we see that $\ell^\infty_3 \to V, T\mapsto u^*Tu$ is injective, and of course contractive, as $u$ is an isometry.  We see that $e_{22} \in V$ is mapped to by $(0,0,1/|b|^2) \in \ell^\infty_3$.

Let $(b_n)$ be a sequence in $(0,1)$ converging to $0$, let $H = \mathbb C^2 \otimes \ell^2 \cong \ell^2(\mathbb C^2)$ and $K = \mathbb C^3 \otimes \ell^2$ with $M = \ell^\infty_3 \vnten \ell^\infty \cong \ell^\infty(\ell^\infty_3)$ acting on $K$.  Let $u = (u_{b_n})$ with each $u_{b_n} \colon \mathbb C^2 \to \mathbb C^3$ as above (so $a_n$ is to be chosen to satisfy the condition imposed by $b_n$).  Then for $T = (T_n) \in M = M'$ we see that $u^*Tu = (u_{b_n}^* T_n u_{b_n}) \in \ell^\infty(\mathbb M_2) \subseteq \mc B(H)$ and so $T\mapsto u^*Tu$ is injective.  If $T_n = (0,0,z_n)$ for each $n$ then $u^*Tu = (|b_n|^2 z_n e_{22})$, and so a sequence $(c_n e_{22})$ is in $u^*Mu$ if and only if $\sup_n |c_n| |b_n|^{-2} < \infty$.

The map $\ell^\infty \ni (z_n) \mapsto (|b_n|^2z_n) \in \ell^\infty$ is injective but not bounded below, so by the Open Mapping Theorem, does not have closed range.  Let $(c_n)$ be in the closure but not the range.  Thus $(c_n e_{22}) \not\in u^*Mu$ but for each $\epsilon>0$ there is $(z_n)\in\ell^\infty$ with $\| (|b_n|^2z_n) - (c_n) \|_\infty < \epsilon$, and hence $(c_n e_{22})$ is in the norm closure of $u^*Mu$.

Let $\theta$ be the UCP map $\ell^\infty \cong \ell^\infty(\ell^\infty_3) \to \ell^\infty(\mathbb M_2); T \mapsto u^*Tu$.  Then $u^*(M')u = u^*Mu$ is not norm closed, so certainly not weak$^*$-closed.

We can adapt this example to HS quantum relations.  Let $V = \ell^2(\ell^2_3) \subseteq M$, so each element of $V$ is a Hilbert--Schmidt operator on $K$.  Clearly $V$ is a bimodule over $M'=M$ and so $V$ is a HS quantum relation.  Again, the map $\ell^2 \ni (z_n) \mapsto (|b_n|^2z_n) \in \ell^2$ is injective but not bounded below, and so the same argument shows that $u^* V u$ is not closed.  Hence $u^* V u$ is only dense in $\overleftarrow{V}$.
\end{example}

\begin{example}
We use the idea of Example~\ref{eg:1} to find an $V$ with $\overleftarrow{V^{hsc}} \subsetneq (\overleftarrow{V})^{hsc}$.

Let $X\subseteq\ell^2$ be the subspace of those $(x_n)$ with $\sum_n \frac1n x_n = 0$ and $\sum_n n^2|x_n|^2 < \infty$.  Define $T\colon X\to\ell^2$ by $T((x_n)) = (n x_n)$ so $T$ is linear.  We claim that
\[ T(X) = \big\{ y = (y_n) \in \ell^2 : \sum_n n^{-2} y_n=0 \big\}. \]
Indeed, if $y=T(x)$ for some $x\in X$ then $\sum_n n^{-2} y_n = \sum_n n^{-2} n x_n = 0$, and conversely, if $y\in\ell^2$ with $\sum_n n^{-2}y_n=0$ then set $x_n = \frac1n y_n$ for each $n$, so $\sum_n n^2|x_n|^2 = \sum_n |y_n|^2 < \infty$ and hence in particular $(x_n)\in\ell^2$, and $\sum_n \frac1n x_n = \sum_n n^{-2}t_n = 0$, so $(x_n)\in X$.

Let $V = \{ (x,T(x)) : x\in X \} \subseteq \ell^2 \oplus \ell^2$ be the graph of $T$.  We shall compute the weak$^*$-closure of $V$ in $\ell^\infty \oplus \ell^\infty$.

Let $(x_\alpha)$ be a net in $X$ with $x_\alpha\to x$ and $T(x_\alpha) \to y$, both weak$^*$ in $\ell^\infty$.  For each $n$, let $e_n\in\ell^1$ be the $n$th standard unit vector basis element, so $x_n = \ip{x}{e_n} = \lim_\alpha x^{(\alpha)}_n$ and $y_n = \lim_\alpha (T(x_\alpha))_n = \lim_\alpha nx^{(\alpha)}_n = nx_n$.
As $(n^{-2}) \in \ell^1$ we have that
\[ \sum_n \frac1n x_n = \sum_n \frac{1}{n^2} y_n = \lim_\alpha \ip{(n^{-2})}{y_\alpha}
= \lim_\alpha \ip{(n^{-2})}{T(x_\alpha)} = 0, \]
in the final step using the characterisation of $T(X)$ given above.

Let $a=(a_i) \in \ell^1$ be such that $\ip{T(x)}{a} = 0$ for each $x\in X$.  So $\sum_k a_k y_k = 0$ for all $y\in\ell^2$ with $\sum_k k^{-2} y_k = 0$.  In particular, $a_1 - k^2a_k = 0$ for each $k\geq 2$, so $a = a_1(1,2^{-2},3^{-3},\cdots)$.  Clearly $(1/k^2)$ does annihilate all of $T(X)$, and so ${}^\circ T(X) = \mathbb C(1/k^2) \subseteq \ell^1$, showing that $\overline{T(X)}^{w^*} = \{ y=(y_n) \in\ell^\infty : \sum_k k^{-2} y_k = 0 \}$.

Given any $y=(y_n) \in\ell^\infty$ with $\sum_k k^{-2} y_k = 0$, pick a net $(y_\alpha) = (Tx_\alpha)$ in $T(X)$ with $y_\alpha\to y$ weak$^*$ in $\ell^\infty$.  Then for $a\in\ell^1$ we have
\[ \ip{x_\alpha}{a} = \sum_n a_n n^{-1} y^{(\alpha)}_n \to \sum_n a_n n^{-1} y_n, \]
as $(a_n/n)\in\ell^1$.  So $x_\alpha \to x$ weak$^*$, where $x = (n^{-1}y_n)$.  We conclude that
\[ \overline{V}^{w^*} = \Big\{ (x,y) \in \ell^\infty\oplus\ell^\infty : y = (nx_n), \sum_n n^{-2} y_n = 0 \Big\}. \]

Now let $(x,y) \in \overline{V}^{w^*} \cap (\ell^2\oplus\ell^2)$.  Then $x\in\ell^2$ and $\sum_n n^{-1}x_n = \sum_n n^{-2}y_n = 0$, so also $x\in X$, because $\sum_n n^2|x_n|^2 = \sum_n |y_n|^2 < \infty$.  Thus $x\in X$ and $y=T(x)$, so $\overline{V}^{w^*} \cap (\ell^2\oplus\ell^2) = V$.  This also shows that $V$ is norm closed in $\ell^2\oplus\ell^2$.

Finally, notice that $X \subseteq (\frac1n)^\perp$ and so the norm closure of $X$ in $\ell^2$ is also contained in $(\frac1n)^\perp$, and so is a proper subspace.  As in Example~\ref{eg:1}, we have however that $X$ is weak$^*$-dense in $\ell^\infty$.

As in Example~\ref{eg:1}, by using the diagonal, we can convert this example to find $X\subseteq HS(H)$ and a linear map $T\colon X\to HS(H)$ so that, with
\[ V = \Big\{ \begin{pmatrix} x & 0 \\ 0 & T(x) \end{pmatrix} : x\in X \Big\} \subseteq HS(K)  \cong M_2(HS(H)), \]
we have that $V$ is norm closed in $HS(K)$, and $V^{hsc} = \overline{V}^{w^*} \cap HS(K) = V$.  Let $u\colon H \to K$ be inclusion onto the first coordinate, so $u^*Vu = X$ and hence $\overleftarrow{V^{hsc}} = \overleftarrow{V} = (u^*Vu)\closhs = X\closhs \subseteq (\frac1n)^\perp$.  However, as $\overleftarrow{V} = X$ and $\overline{X}^{w^*} = \ell^\infty$ thought of as diagonal matrices in $\mc B(H)$, we hence see that $(\overleftarrow{V})^{hsc} = \ell^2$ and we have $\overleftarrow{V}^{hsc} \subsetneq (\overleftarrow{V})^{hsc}$.
\end{example}

As a brief conclusion, we see that HS quantum relations are reasonably well-behaved, especially those with $V=V^{hsc}$.  However, interactions with constructions like the pullback are a little complicated.  We have not discussed examples coming from UCP maps (aka ``quantum channels'') but Example~\ref{eg:pullback_not_weakstar_closed} suggests that this is likely to be complicated.  Of course, as Examples~\ref{eg_nice_vn_1} and~\ref{eg_nice_vn_2} show, for some von Neumann algebras there is essentially no difference between HS quantum relations and (usual) quantum relations.

\section{Hilbert modules}\label{sec:hilb_mods}

We now turn our attention to the situation of a general von Neumann algebra $M$ with a (normal, semifinite, faithful) weight $\varphi$.  We are motivated by some of the ideas of \cite{Wasilewski_Quantum_Cayley}, and will make explicit links to Section~\ref{sec:fd_summary}, and eventually to Section~\ref{sec:HS} (where HS quantum relations will arise as a special case of ideas explored here).

Our first step is to consider the operator-valued weight $\varphi^{-1}$.  We follow \cite[Chapter~IX.4]{TakesakiII}.  We recall the \emph{extended positive cone} of $\wh M_+$, and hence the notion of an operator-valued weight $T \colon M_+ \to \wh N_+$ where $N\subseteq M$ is a sub-von Neumann algebra.  We also find \cite{Haagerup_OpValuedWeightsI,Haagerup_OpValuedWeightsII} to be very readable, although the techniques are different.  By \cite[Corollary~IX.4.25]{TakesakiII} there is a bijection between nfs weights $\varphi$ on $M$ and operator-valued weights $\varphi^{-1} \colon \mc B(H) \to \wh{M'}$ (denoted $T_\varphi$ in \cite{TakesakiII}); see also \cite[Corollary~16]{Connes_SpatialVN}.

We shall always regard $M$ as acting on the GNS space for $\varphi$, which means $M$ is in \emph{standard position}, \cite[Chapter~IX.1]{TakesakiII}.  As all standard forms are isomorphic, we write $L^2(M)$ for $L^2(M,\varphi)$.  We want to find a fairly concrete way to express $\varphi^{-1}$, for which we use the theory of Hilbert $C^*$-module, which we now explore.  We use \cite{Lance_HilbModsBook} as a general reference.  A \emph{Hilbert $C^*$-module} $E$ is a right $M$-module with a compatible $M$-valued inner-product.

\begin{example}
Let $H$ be a Hilbert space which is a right $M$-module: this means there is a normal unital $*$-homomorphism $\rho \colon M^\op \to \mc B(H)$; we write $\xi\cdot x =\pi(x^\op)\xi$ for $\xi\in H, x\in M$.  One such example is $L^2(M)$ where, using the modular conjugation, we define $\rho(x^\op) = Jx^*J \in M'$ for $x\in M$; in this special case, $\rho$ is an isomorphism.  Given a general $\rho$, let $E = \mc B_M(L^2(M), H)$ be the space of bounded linear operators $\alpha \colon L^2(M) \to H$ which are right module homomorphisms: $\alpha(\xi\cdot x) = \alpha(\xi) \cdot x$ for $x\in M$, equivalently, $\alpha Jx^*J = \rho(x^\op) \alpha$ for $x\in M$.

We defined an operator-valued inner-product on $E$ by $(\alpha|\beta) = \alpha^*\beta$.  Then $\alpha^*\beta Jx^*J = \alpha^* \rho(x^\op) \beta = Jx^*J \alpha^*\beta$ for each $x\in M$ so $\alpha^*\beta \in M'' = M$.  Define $\alpha\cdot x = \alpha x$ for $x\in M$, and note then that $\alpha x Jy^*J = \alpha Jy^*J x = \rho(y^\op) \alpha x$ for each $y\in M$, so $\alpha x\in E$ as well.  One easily checks the axioms that $E$ is a Hilbert $C^*$-module over $M$.
\end{example}

In fact, we will essentially only be interested in modules which arise in this way.  Recall that the \emph{adjointable operators} $T\in\mc L(E)$ are those linear maps $T\colon E\to E$ for which there is $T^*\colon E\to E$ with $(T^*x|y) = (x|Ty)$ for each $x,y\in E$.  Then $T$ is bounded and an $M$-module map, and $\mc L(E)$ becomes a $C^*$-algebra.  A Hilbert $C^*$-module $E$ is \emph{self-dual} when every module map $T\colon E\to M$ is of the form $T(x) = (x_0|x)$ for some $x_0\in E$.

\begin{example}
We continue the example.  
Given $t\in\rho(M^\op)'$, for $\alpha\in\mc B_M(L^2(M), H)$ and $x\in M$ we see that $t\alpha Jx^*J = t \rho(x) \alpha = \rho(x) t\alpha$ so also $t\alpha\in \mc B_M(L^2(M), H)$.  This map $T(\alpha) = t\circ\alpha$ is adjointable, with $T^*(\alpha) = t^*\circ\alpha$, and so $T\in\mc L(E)$.  We shall shortly see that every member of $\mc L(E)$ arises in this way.
\end{example}

We now relegate some results to the appendix: this avoids breaking the flow of the text with results which are either minor extensions of results in the literature, or which are technical and away from our main interest.  The following is a summary of results from Appendix~\ref{sec:selfdual_mods}.

\begin{theorem}
Modules of the form $E=\mc B_M(L^2(M), H)$ are self-dual, and every self-dual module over a $M$ is of this form.  Indeed, $H = E \otimes_M L^2(M)$, the interior tensor product.  There is an isomorphism $\mc L(E) \cong \rho(M^\op)'$.
\end{theorem}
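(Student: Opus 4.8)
The plan is to treat the four assertions in the order: the evaluation map $E\otimes_M L^2(M)\to H$ is unitary; the concrete modules $\mc B_M(L^2(M),H)$ are self-dual; $\mc L(E)\cong\rho(M^\op)'$; and finally the converse. The engine for all of this is the \emph{density} claim that $\lin\{\alpha\xi:\alpha\in E,\ \xi\in L^2(M)\}$ is dense in $H$, so I would establish this first. Let $p$ be the projection onto the closed span. Since $\rho(x^\op)\alpha\xi=\alpha(Jx^*J\xi)$ the span is $\rho(M^\op)$-invariant, and since $t\alpha\in E$ for $t\in\rho(M^\op)'$ (as in the preceding example) it is also $\rho(M^\op)'$-invariant; hence $p$ lies in the centre of $\rho(M^\op)'$. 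To see $p=1$ I would invoke the structure theory of normal representations collected in the appendix: the map $\rho_0\colon x^\op\mapsto Jx^*J$ realises $M^\op$ in standard form on $L^2(M)$, and any normal unital $\rho$ embeds isometrically, via some $V\colon H\to L^2(M)\otimes K$, as a subrepresentation of the amplification $\rho_0\otimes1$. Each $\zeta\in K$ gives an intertwiner $\xi\mapsto\xi\otimes\zeta$, and composing with $V^*$ produces $\alpha_\zeta\in E$ with $\alpha_\zeta\xi=V^*(\xi\otimes\zeta)$; as $\xi,\zeta$ vary these vectors span the range of $V^*$, which is all of $H$, forcing $p=1$. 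Granting density, the evaluation map is isometric by the computation $(\alpha\otimes\xi\,|\,\beta\otimes\eta)=(\xi\,|\,\alpha^*\beta\eta)=(\alpha\xi\,|\,\beta\eta)_H$ and has dense range, so it extends to the unitary $H\cong E\otimes_M L^2(M)$.

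For self-duality of $E=\mc B_M(L^2(M),H)$, given a bounded module map $T\colon E\to M$ I would define $\tilde T\colon H\to L^2(M)$ on the dense span by $\tilde T(\alpha\xi)=T(\alpha)\xi$. Boundedness is the one genuinely analytic point: it follows from the matrix inequality $[T(\alpha_i)^*T(\alpha_j)]\le\|T\|^2[\alpha_i^*\alpha_j]$ in $M_n(M)$, i.e.\ from the standard fact that bounded module maps are completely bounded with $\|T\|_{cb}=\|T\|$. One then checks that $\tilde T$ intertwines the $M^\op$-actions (using that $T(\alpha)\in M$ commutes with $Jx^*J\in M'$), so its adjoint $\beta:=\tilde T^*$ lies in $\mc B_M(L^2(M),H)=E$; unwinding $\tilde T(\alpha\xi)=\beta^*(\alpha\xi)=(\beta|\alpha)\xi$ then gives $T(\alpha)=(\beta|\alpha)$, proving $E$ self-dual. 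The isomorphism $\mc L(E)\cong\rho(M^\op)'$ is now formal: $t\mapsto(\alpha\mapsto t\alpha)$ is an injective $*$-homomorphism (injectivity is again density), and it is onto because any $S\in\mc L(E)$ transports under the unitary above to the operator acting on the $E$-leg, which commutes with $\rho(x^\op)$ (acting on the $L^2(M)$-leg) and satisfies $(S\otimes\id)(\alpha\xi)=S(\alpha)\xi$.

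For the converse, I would start with a self-dual $E$, form the Hilbert space $H=E\otimes_M L^2(M)$ with the normal right action $\rho(x^\op)(\alpha\otimes\xi)=\alpha\otimes Jx^*J\xi$, and consider $j\colon E\to\mc B_M(L^2(M),H);\ x\mapsto(\xi\mapsto x\otimes\xi)$. A short computation shows $j$ is an inner-product-preserving module map, and under the unitary already constructed $j\otimes\id_{L^2(M)}$ is the identity of $H$, hence unitary. Self-duality of $E$ yields an adjoint $j^*$ with $j^*j=1$, so $j(E)$ is orthogonally complemented; its complement $F$ satisfies $F\otimes_M L^2(M)=0$, which forces $F=0$ because $\|f\otimes\xi\|^2=(\xi|(f|f)\xi)$ is nonzero for suitable $\xi$ whenever $(f|f)\ne0$. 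Therefore $j$ is onto and $E\cong\mc B_M(L^2(M),H)$.

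The main obstacle throughout is the density step: once the structure theorem for normal representations supplies enough intertwiners $\alpha_\zeta$, every other assertion reduces to routine Hilbert-module manipulations, the only remaining non-formal ingredient being the complete boundedness of module maps used to bound $\tilde T$.
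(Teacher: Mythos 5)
Your proposal is correct. Note first that it does more work than the paper itself does at this point: the theorem is explicitly a summary of Appendix~\ref{sec:selfdual_mods}, where both self-duality of $\mc B_M(L^2(M),H)$ and the converse are quoted from Rieffel \cite{Rieffel_mortia_cstar_wstar} (Propositions~\ref{prop:std_mod_self_dual} and~\ref{prop:all_self_dual_mods}); the only parts the paper proves are the isomorphism $\mc L(E)\cong\rho(M^\op)'$ (Proposition~\ref{prop:adjointables_extend_sdc}) and the density of $\lin\{\alpha\xi : \alpha\in E,\ \xi\in L^2(M)\}$ (Proposition~\ref{prop:wstar_dense_nondeg}, and the vector-state argument after Theorem~\ref{thm:selfdual_is_weak_direct_sum}). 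Where you and the paper both argue, the routes differ as follows. (1) For density, you invoke the structure theorem for normal representations (an isometry $V\colon H\to L^2(M)\otimes K$ intertwining $\rho$ with an amplification of the standard form, producing the intertwiners $\alpha_\zeta=V^*\iota_\zeta$), whereas the paper uses either the standard-position fact that every normal state of $M^\op\cong M'$ is a vector state — which yields, for each $\eta\in H$, a partial isometry in $\mc B_M(L^2(M),H)$ with $\eta$ in its range — or a linking algebra plus Kaplansky density; these are of comparable depth (the structure theorem is itself proved by such vector-state arguments), but be aware it is \emph{not} ``collected in the appendix'' of this paper, so you would have to cite it externally. (2) For self-duality, your construction of $\tilde T$ and $\beta=\tilde T^*$ rests on the matrix inequality $[T(\alpha_i)^*T(\alpha_j)]\le\|T\|^2[(\alpha_i|\alpha_j)]$; this is correct and standard — the scalar case is Paschke's Cauchy--Schwarz inequality \cite{paschke_inner_prod_mods}, and the matrix case follows by applying it to $x=\sum_i\alpha_i c_i$ and testing positivity in $M_n(M)$ against columns from $M$ — and this is essentially Rieffel's own route; the paper also records a slicker alternative, namely that $\mc B_M(L^2(M),H)$ is weak$^*$-closed with separately weak$^*$-continuous inner product, so the criterion of \cite[Lemma~8.5.4(1)]{BlecherLeMerdy_Book} applies in one line. (3) Your converse — complement $j(E)$ using the adjoint $j^*$ supplied by self-duality, then kill the complement because it tensors to zero against $L^2(M)$ — is a clean self-contained replacement for the citation of \cite{Rieffel_mortia_cstar_wstar}; and your treatment of $\mc L(E)\cong\rho(M^\op)'$ ($t\mapsto t\circ(\cdot)$ one way, $S\mapsto S\otimes\id$ the other, injectivity from density) coincides with the paper's Proposition~\ref{prop:adjointables_extend_sdc}. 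In short, your route buys self-containedness at the cost of importing two standard external facts (the representation structure theorem and Paschke's inequality); the paper's buys brevity, and its operator-space argument makes self-duality essentially immediate.
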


Given an operator-valued weight $T \colon M_+ \to \wh N_+$, following the notation of \cite[Chapter~IX.4]{TakesakiII}, we define
\[ \mf n_T=\{ x\in M : \|T(x^*x)\|<\infty \}, \quad \mf m_T = \lin \mf n_T^*\mf n_T. \]
As for weights, $\mf m_T =\lin \mf m_T^+$ where $\mf m_T^+ = \mf m_T \cap M_+ = \{ x\in M_+ : \|T(x)\|<\infty \}$, we have that $\mf m_T$ and $\mf n_T$ are bimodules over $N$, and that $\mf n_T$ is a left ideal in $M$.  We extend $T$ to $\mf m_T$, so $T$ becomes an $N$-bimodule map.  From this we can construct a pre-Hilbert module: this is defined in \cite[Proposition~2.8]{BDH_IndexCondExp}, but no theory (except when $T$ is finite) is expanded.  We shall take the study a little further.

\begin{definition}\label{defn:module_from_opvalweight}
Let $T$ be a (semifinite, faithful, normal) operator-valued weight and let $E^0_T = \mf n_T$ with the $N$-valued inner-product $(\alpha|\beta) = T(\alpha^*\beta)$.  Let $E_T \cong \mc B_N(L^2(N), E^0_T \otimes_N L^2(N))$ be the self-dual completion.  Set $H_T = E^0_T \otimes_N L^2(N)$.
\end{definition}

See Theorem~\ref{thm:self_dual_completion} for more on self-dual completions.  As shown in \cite[Proposition~2.8]{BDH_IndexCondExp}, each $x\in M$ gives $\pi(x)\in\mc L(E_T^0)$ by $\pi(x)\alpha = x\alpha$ for $\alpha\in E_T^0=\mf n_T$.  Then $\pi$ is a normal $*$-homomorphism, and as we assume $T$ is faithful, $\pi$ is injective.  As in Proposition~\ref{prop:adjointables_extend_sdc}, we extend $\pi(x)$ to $E_T=\mc B_N(L^2(N), H_T)$ and so regard $\pi(x)\in \rho(N^\op)' \subseteq \mc B(H)$ where $\rho\colon N^\op\to\mc B(H_T)$ is the right action.  For each $\alpha\in \mf n_T$ let $\hat\alpha \in \mc B_N(L^2(N), H_T)$ be the operators $\xi \mapsto \alpha\otimes\xi$; compare after Theorem~\ref{thm:self_dual_completion}.

We seek to understand $E_T$, which means understanding $H_T$ and $\hat\alpha$ for $\alpha\in\mf n_T$.  We develop the technical theory in Appendix~\ref{sec:op_valued_weights}.  Recall that we can compose operator-valued weights, \cite[Proposition~IX.4.16]{TakesakiII} and so in particular can compose an operator-valued weight and a weight: this procedure is used extensively in the theory, compare \cite[Lemma~IX.4.21]{TakesakiII} for one example.

\begin{theorem}[{Proposition~\ref{prop:HT_is_L2_tildephi}} and discussion after]
Let $\phi$ be a (nfs) weight on $N$, and set $\tilde\phi = \phi\circ T$ a (nfs) weight on $M$.
The Hilbert space $H_T = E^0_T \otimes_N L^2(N)$ is isomorphic to $L^2(M, \tilde\phi)$ for the map $\alpha\otimes\Lambda_\phi(x) \mapsto \Lambda_{\tilde\phi}(\alpha x)$, for $\alpha\in \mf n_T, x\in\mf n_{\phi}$.

Under this isomorphism, for $\alpha\in \mf n_T$ we have that $\hat\alpha \Lambda_\phi(x) = \Lambda_{\tilde\phi}(\alpha x)$, and the homomorphism $\pi \colon M \to \mc L(E_T) \subseteq\mc B(H_T)$ becomes the natural left action of $M$ on $L^2(M,\tilde\phi)\cong H_T$.
The right action of $N$ on $H_T = L^2(M,\tilde\phi)$ is the restriction of the natural right action of $M$ on $L^2(M, \tilde\phi)$.
\end{theorem}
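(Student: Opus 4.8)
The plan is to construct the asserted map $U\colon H_T\to L^2(M,\tilde\phi)$ directly on elementary tensors by $U(\alpha\otimes\Lambda_\phi(x))=\Lambda_{\tilde\phi}(\alpha x)$ for $\alpha\in\mf n_T$ and $x\in\mf n_\phi$, verify it is a well-defined isometry by matching inner products, show it has dense range, and then read off the remaining three assertions from the construction. The first thing I would record is that $\Lambda_{\tilde\phi}(\alpha x)$ even makes sense: since $\mf n_T$ is an $N$-bimodule we have $\alpha x\in\mf n_T$, while $\tilde\phi(x^*\alpha^*\alpha x)=\phi\big(x^*T(\alpha^*\alpha)x\big)\le\|T(\alpha^*\alpha)\|\,\phi(x^*x)<\infty$ shows $\alpha x\in\mf n_{\tilde\phi}$; thus in fact $\alpha x\in\mf n_T\cap\mf n_{\tilde\phi}$.

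The isometry identity is the heart of the correspondence and is a short computation. Expanding the interior-tensor-product inner product and using that $T(\alpha^*\beta)\in N$ (as $\alpha^*\beta\in\mf m_T$) together with $n\Lambda_\phi(y)=\Lambda_\phi(ny)$ for $n\in N$, I obtain
\[
\big(\alpha\otimes\Lambda_\phi(x)\,\big|\,\beta\otimes\Lambda_\phi(y)\big)_{H_T}=\phi\big(x^*\,T(\alpha^*\beta)\,y\big).
\]
The decisive step is the $N$-bimodularity of $T$: since $x,y\in N$ this gives $x^*T(\alpha^*\beta)y=T\big((\alpha x)^*(\beta y)\big)$, so the right-hand side equals $\phi\big(T((\alpha x)^*(\beta y))\big)=\tilde\phi\big((\alpha x)^*(\beta y)\big)=\big(\Lambda_{\tilde\phi}(\alpha x)\,\big|\,\Lambda_{\tilde\phi}(\beta y)\big)$. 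Hence $U$ preserves inner products on the algebraic span of elementary tensors, and extends to an isometry of $H_T$ into $L^2(M,\tilde\phi)$.

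The main obstacle is surjectivity, i.e.\ showing $\{\Lambda_{\tilde\phi}(\alpha x):\alpha\in\mf n_T,\,x\in\mf n_\phi\}$ spans a dense subspace of $L^2(M,\tilde\phi)$. I would reduce this to the (standard, but genuinely technical) fact that for $\tilde\phi=\phi\circ T$ with $T$ and $\phi$ both semifinite, $\mf n_T\cap\mf n_{\tilde\phi}$ is a core for $\Lambda_{\tilde\phi}$, so that $\Lambda_{\tilde\phi}(\mf n_T\cap\mf n_{\tilde\phi})$ is dense; this core property is proved by an approximate-unit argument for $T$ together with a spectral truncation of $T(\alpha^*\alpha)$, and I would place it in Appendix~\ref{sec:op_valued_weights}. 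Granting it, it suffices to approximate $\Lambda_{\tilde\phi}(\alpha)$ for $\alpha\in\mf n_T\cap\mf n_{\tilde\phi}$ by vectors in our span: choosing $(f_k)\subseteq\mf n_\phi$ with $0\le f_k\nearrow 1$ (semifiniteness of $\phi$), the $\alpha f_k$ are of the required form $\alpha x$, and the estimate $\|\Lambda_{\tilde\phi}(\alpha(1-f_k))\|^2=\phi\big((1-f_k)T(\alpha^*\alpha)(1-f_k)\big)\to 0$ (valid since $T(\alpha^*\alpha)\in\mf m_\phi^+$) gives $\Lambda_{\tilde\phi}(\alpha f_k)\to\Lambda_{\tilde\phi}(\alpha)$. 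This makes $U$ a unitary.

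With $U$ a unitary the remaining claims are bookkeeping. By construction $\hat\alpha$ is the map $\xi\mapsto\alpha\otimes\xi$, so $\hat\alpha\Lambda_\phi(x)=\alpha\otimes\Lambda_\phi(x)$ which $U$ sends to $\Lambda_{\tilde\phi}(\alpha x)$, giving the stated formula. For the left action, $\pi(m)$ acts on the first leg, $\pi(m)(\alpha\otimes\Lambda_\phi(x))=m\alpha\otimes\Lambda_\phi(x)$, and $U$ sends this to $\Lambda_{\tilde\phi}(m\alpha x)=m\,\Lambda_{\tilde\phi}(\alpha x)$, so $\pi(m)$ becomes left multiplication by $m$, the natural action of $M$ on $L^2(M,\tilde\phi)$. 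The right action is the second subtle point: the interior tensor product makes $b\in N$ act by $(\alpha\otimes\xi)\cdot b=\alpha\otimes J_\phi b^*J_\phi\,\xi$, while the natural right action of $M$ is $\zeta\cdot m=J_{\tilde\phi}m^*J_{\tilde\phi}\,\zeta$. Comparing the two on $\xi=\Lambda_\phi(x)$ (for $b$ analytic) via the right-multiplication formulas $J_\phi b^*J_\phi\Lambda_\phi(x)=\Lambda_\phi(x\,\sigma^\phi_{-i/2}(b))$ and $J_{\tilde\phi}b^*J_{\tilde\phi}\Lambda_{\tilde\phi}(\alpha x)=\Lambda_{\tilde\phi}(\alpha x\,\sigma^{\tilde\phi}_{-i/2}(b))$, they agree under $U$ precisely because $\sigma^{\tilde\phi}_t|_N=\sigma^\phi_t$, the compatibility of the modular groups of $\phi$ and $\tilde\phi=\phi\circ T$ supplied by the theory of operator-valued weights, whence $\sigma^{\tilde\phi}_{-i/2}(b)=\sigma^\phi_{-i/2}(b)$ for $b\in N$. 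This identifies the right $N$-action on $H_T$ with the restriction to $N$ of the right $M$-action on $L^2(M,\tilde\phi)$, completing the proof.
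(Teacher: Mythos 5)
Your overall plan — the isometry via $N$-bimodularity of $T$, then density of the span of $\Lambda_{\tilde\phi}(\alpha x)$, then reading off the three action statements — is the same as the paper's, and your well-definedness remark, the isometry computation, and both action computations (including the use of $\sigma^{\tilde\phi}_t|_N=\sigma^\phi_t$ for the right action) are correct. The genuine gap is in the density step. You choose $0\le f_k\nearrow 1$ in $\mf n_\phi$ and assert $\phi\big((1-f_k)T(\alpha^*\alpha)(1-f_k)\big)\to 0$ ``since $T(\alpha^*\alpha)\in\mf m_\phi^+$''. For a non-tracial weight this is false. Writing $g_k=1-f_k$ and $z=T(\alpha^*\alpha)$, the operators $g_kzg_k$ are \emph{not} decreasing (nothing commutes), so normality of $\phi$ gives nothing; and since $\phi(g_kzg_k)=\|\Lambda_\phi(z^{1/2})-\Lambda_\phi(z^{1/2}f_k)\|^2$, your claim is exactly that right multiplication by $f_k$ converges in the GNS norm — which requires control of $\sigma^\phi_{i/2}(f_k)$, not merely $f_k\nearrow 1$. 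A concrete counterexample: take $M=\mc B(\ell^2)$, $\phi=\Tr_h$, pair up basis vectors $\delta_{a_n},\delta_{b_n}$ with $h\delta_{a_n}=\delta_{a_n}$ and $h\delta_{b_n}=\mu_n\delta_{b_n}$, $\mu_n\to\infty$ fast; let $\xi=\sum_n c_n\delta_{a_n}$ be a unit vector and $z=\rankone{\xi}{\xi}\in\mf m_\phi^+$; let $f_k$ be the finite-rank projections onto the first $k$ vectors of the rotated orthonormal basis $\eta_{a_n}=\cos\theta_n\,\delta_{a_n}+\sin\theta_n\,\delta_{b_n}$, $\eta_{b_n}=-\sin\theta_n\,\delta_{a_n}+\cos\theta_n\,\delta_{b_n}$ (these lie in $\mf n_\phi$ and increase to $1$). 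Then $\phi(g_kzg_k)=\|h^{1/2}g_k\xi\|^2$, and at the odd stages $g_k\xi$ has a component of size $\approx c_n\theta_n$ along $\delta_{b_n}$, so choosing $\theta_n=(c_n\mu_n^{1/2})^{-1}\to 0$ keeps $\|h^{1/2}g_k\xi\|$ bounded away from $0$ along a subsequence. (Your argument is fine when $\phi$ is a trace, since then $\phi(g_kzg_k)=\phi(z^{1/2}g_k^2z^{1/2})$ and $z^{1/2}g_k^2z^{1/2}\searrow 0$ is dominated by $z$; this is precisely the step where non-traciality forces modular theory into the proof.)

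The repair is the device the paper uses, and which you already invoke for the right action: since $\sigma^{\tilde\phi}_t$ and $\sigma^\phi_t$ agree on $N$, take the net $(a_i)$ of Proposition~\ref{prop:tomita_nice_bai} for $\phi$, so that $a_i\in\mf n_\phi\cap D(\sigma^\phi_{i/2})\subseteq\mf n_\phi\cap D(\sigma^{\tilde\phi}_{i/2})$ and $\sigma_{i/2}(a_i)^*\to 1$ strongly; then Lemma~\ref{lem:action_J} gives $\Lambda_{\tilde\phi}(\alpha a_i)=J\sigma^{\tilde\phi}_{i/2}(a_i)^*J\,\Lambda_{\tilde\phi}(\alpha)\to\Lambda_{\tilde\phi}(\alpha)$, with each $\alpha a_i$ of the required form $\alpha x$, $x\in\mf n_\phi$. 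Your deferred ``core'' claim — that $\Lambda_{\tilde\phi}(\mf n_T\cap\mf n_{\tilde\phi})$ is dense, via the spectral truncations $y_n=y\,E([0,n])$ of $T(y^*y)$ — is correct and is the other half of the paper's proof; note that it is unproblematic exactly because there the truncating projections commute with $T(y^*y)$, so the monotonicity that fails in your $f_k$-step is available in that one.
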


We can also characterise $\mf n_T \subseteq M$ using the Hilbert module structure.

\begin{theorem}[{Theorem~\ref{thm:nT_vs_module}}]
For $\alpha\in M$ the following are equivalent:
\begin{enumerate}[(a)]
  \item $\alpha\in\mf n_T$;
  \item $\alpha x \in \mf n_{\tilde\phi}$ for each $x\in \mf n_{\phi}$, and the resulting map $\Lambda_\phi(x) \mapsto \Lambda_{\tilde\phi}(\alpha x)$ is bounded.
\end{enumerate}
The map $\mf n_T \to E_T; \alpha \mapsto \hat\alpha$ is closed for the $\sigma$-strong topology on $\mf n_T$ and the SOT on $E_T$.
\end{theorem}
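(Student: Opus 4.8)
The plan is to establish the equivalence (a)$\Leftrightarrow$(b) by a direct computation in one direction and a lower-semicontinuity argument in the other, and then to deduce the closedness of $\alpha\mapsto\hat\alpha$ from this equivalence together with the standard closedness of the GNS map $\Lambda_{\tilde\phi}$. For (a)$\Rightarrow$(b), I would assume $\alpha\in\mf n_T$, so $\|T(\alpha^*\alpha)\|<\infty$. For $x\in\mf n_\phi\subseteq N$, using that $T$ is an $N$-bimodule map I compute $\tilde\phi(x^*\alpha^*\alpha x)=\phi(T(x^*\alpha^*\alpha x))=\phi(x^*T(\alpha^*\alpha)x)$. Since $T(\alpha^*\alpha)\leq\|T(\alpha^*\alpha)\|1$, this is at most $\|T(\alpha^*\alpha)\|\,\phi(x^*x)<\infty$, so $\alpha x\in\mf n_{\tilde\phi}$ and $\|\Lambda_{\tilde\phi}(\alpha x)\|^2\leq\|T(\alpha^*\alpha)\|\,\|\Lambda_\phi(x)\|^2$, which gives the asserted boundedness (and identifies the bounded extension with $\hat\alpha$ via the previous theorem).

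For (b)$\Rightarrow$(a), let $V\in\mc B(L^2(N),H_T)$ be the bounded extension of $\Lambda_\phi(x)\mapsto\Lambda_{\tilde\phi}(\alpha x)$ and put $c=\|V\|^2$ and $m=T(\alpha^*\alpha)\in\wh N_+$. The key identity, again via the (extended) bimodule property $T(x^*\alpha^*\alpha x)=x^*T(\alpha^*\alpha)x$, is that for $x\in\mf n_\phi$ we have $\phi(x^*mx)=\tilde\phi(x^*\alpha^*\alpha x)=\|V\Lambda_\phi(x)\|^2\leq c\,\phi(x^*x)$. Recalling that for $x\in\mf n_\phi$ the normal functional $\omega_x=\phi(x^*\,\cdot\,x)$ is the vector functional $\omega_{\Lambda_\phi(x)}$, and that $\phi(x^*mx)=m(\omega_x)$ under the pairing of $\wh N_+$ with $N_*^+$, this inequality reads $m(\omega_x)\leq c\,\omega_x(1)$. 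Because $N$ acts in standard form on $L^2(N)$, every $\omega\in N_*^+$ is a vector functional $\omega_\xi$, and $\Lambda_\phi(\mf n_\phi)$ is dense in $L^2(N)$, so the $\omega_x$ are norm-dense in $N_*^+$; since $m$ is lower semicontinuous on $N_*^+$ while $\omega\mapsto\omega(1)$ is norm-continuous, passing to the limit yields $m(\omega)\leq c\,\omega(1)$ for all $\omega\in N_*^+$, i.e. $T(\alpha^*\alpha)\leq c1\in N_+$, whence $\alpha\in\mf n_T$. I expect this lower-semicontinuity-plus-density step to be the main obstacle, as it is the one point where the extended-positive-cone formalism must be handled carefully (in particular the identity $\phi(x^*mx)=m(\omega_x)$ and the standard-form density of the $\omega_x$).

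For the closedness statement, suppose $\alpha_i\to\alpha$ $\sigma$-strongly in $M$ and $\hat\alpha_i\to t$ in the SOT of $E_T$ for some $t\in E_T$. Fixing $x\in\mf n_\phi$, I have $\alpha_i x\to\alpha x$ $\sigma$-strongly while $\Lambda_{\tilde\phi}(\alpha_i x)=\hat\alpha_i\Lambda_\phi(x)\to t\Lambda_\phi(x)$ in norm. Invoking the standard fact that $\Lambda_{\tilde\phi}$ is closed for the $\sigma$-strong topology on $M$ and the norm topology on $L^2(M,\tilde\phi)$, I conclude $\alpha x\in\mf n_{\tilde\phi}$ with $\Lambda_{\tilde\phi}(\alpha x)=t\Lambda_\phi(x)$. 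As this holds for every $x\in\mf n_\phi$ and $t$ is bounded, the map $\Lambda_\phi(x)\mapsto\Lambda_{\tilde\phi}(\alpha x)$ is exactly the bounded operator $t$; by the equivalence (b)$\Rightarrow$(a) just proved, $\alpha\in\mf n_T$, and then $\hat\alpha\Lambda_\phi(x)=\Lambda_{\tilde\phi}(\alpha x)=t\Lambda_\phi(x)$ on the dense set $\Lambda_\phi(\mf n_\phi)$ forces $\hat\alpha=t$, proving the graph is closed.
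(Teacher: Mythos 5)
Your proposal is correct and takes essentially the same route as the paper: the identical bimodule computation for (a)$\Rightarrow$(b); for (b)$\Rightarrow$(a) the identical combination of the pairing identity $\hat\phi(x^*mx)=m(\omega_{\Lambda_\phi(x)})$ (which the paper proves as Lemma~\ref{lem:conj_weight_extension}), lower semicontinuity of elements of $\wh N_+$, and norm-density of the vector functionals $\omega_{\Lambda_\phi(x)}$ in $N_*^+$ via the standard form; and the same reduction of closedness to (b)$\Rightarrow$(a). The only cosmetic difference is that you cite the $\sigma$-strong--norm closedness of $\Lambda_{\tilde\phi}$ as a standard fact, whereas the paper proves it inline using the dominated functionals $\Phi_{\tilde\phi}$ and the contractions $t_{\tilde\omega}$ --- which is exactly the standard proof of that fact.
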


Our aim now is to understand $\varphi^{-1}$ using this construction: the outcome is that we can very concretely describe $E_{\varphi^{-1}}$ without ever really having to understand $\varphi^{-1}$.  Recall that we fix $M$ acting on $H = L^2(M,\varphi)$; we write $\Lambda = \Lambda_\varphi$.  We choose a weight on $M'$, namely $\varphi'$ defined by $\varphi'(Jx^*J) = \varphi(x)$ for $x\in M_+$.

\begin{lemma}\label{lem:com_weight}
We have that $\mf n_{\varphi'} = \{ JxJ : x\in\mf n_\varphi\}$.  We identify $L^2(M', \varphi')$ with $L^2(M)$ via the unitary $u \colon \Lambda'(JxJ) = J\Lambda(x)$ for $x\in\mf n_{\varphi}$.  Then $\pi' \colon M' \to L^2(M')$ satisfies that $u \pi(\cdot)u^*$ is the identity representation of $M'$ on $L^2(M)$.  The modular operators satisfy $u\nabla'u^* = \nabla^{-1}, uJ'u^*=J, uS'u^*=F, uF'u^*=S$.
\end{lemma}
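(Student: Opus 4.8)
The plan is to verify the four assertions in turn. The structural fact underpinning everything is that reading the defining relation $\varphi'(Jx^*J)=\varphi(x)$ with $x^*$ in place of $x$ gives $\varphi'(JzJ)=\varphi(z^*)$ for $z\in\mf m_\varphi$ (the map $x\mapsto JxJ$ being conjugate-linear), and it is this conjugation that balances the two inner products below. For the first claim, given $y=JxJ$ I would compute $y^*y=Jx^*J\,JxJ=J(x^*x)J$, whence $\varphi'(y^*y)=\varphi'(J(x^*x)J)=\varphi(x^*x)$; thus $y\in\mf n_{\varphi'}$ if and only if $x\in\mf n_\varphi$, and since $M'=JMJ$ every element of $\mf n_{\varphi'}$ arises this way.

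Next, to see that $u$ is a well-defined unitary I would compare, for $x,y\in\mf n_\varphi$, the inner products $(\Lambda'(JxJ)|\Lambda'(JyJ))=\varphi'((JxJ)^*(JyJ))=\varphi'(Jx^*yJ)=\varphi(y^*x)$ and $(J\Lambda(x)|J\Lambda(y))=\overline{(\Lambda(x)|\Lambda(y))}=\overline{\varphi(x^*y)}=\varphi(y^*x)$, the first using $\varphi'(JzJ)=\varphi(z^*)$ and the second the antiunitarity of $J$. As these agree, $u$ is isometric on the dense subspace $\Lambda'(\mf n_{\varphi'})$, and it is onto since $J\Lambda(\mf n_\varphi)$ is dense in $L^2(M)$. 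For the intertwining property, writing a general element of $M'$ as $a=JwJ$ with $w\in M$, I would check on the dense set that $u\pi'(a)u^*\,J\Lambda(x)=u\Lambda'(J(wx)J)=J\Lambda(wx)$, whereas $a\,J\Lambda(x)=JwJ\,J\Lambda(x)=J\Lambda(wx)$; hence $u\pi'(\cdot)u^*$ is the identity representation of $M'$ on $L^2(M)$.

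For the modular data the crux is the single identity $uS'u^*=F$, from which the remaining three follow by polar-decomposition bookkeeping. I would verify it on the core $J\Lambda(\mf n_\varphi\cap\mf n_\varphi^*)$: since $u\Lambda'(JxJ)=J\Lambda(x)$ and $S'\Lambda'(JxJ)=\Lambda'((JxJ)^*)=\Lambda'(Jx^*J)$, we get $uS'u^*\,J\Lambda(x)=J\Lambda(x^*)$; and using $S\Lambda(x)=\Lambda(x^*)$ with $S=J\nabla^{1/2}$ gives $J\Lambda(x^*)=\nabla^{1/2}\Lambda(x)=\nabla^{1/2}J\,(J\Lambda(x))=F(J\Lambda(x))$, where $F=S^*=\nabla^{1/2}J$. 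Taking adjoints of antilinear operators then yields $uF'u^*=S$, and hence $u\nabla'u^*=(uF'u^*)(uS'u^*)=SF=\nabla^{-1}$ (recall $\nabla'=F'S'$) and $uJ'u^*=(uS'u^*)(u\nabla'u^*)^{-1/2}=F\nabla^{1/2}=J$, using $\nabla=FS$, $SF=\nabla^{-1}$ and $\nabla^{1/2}J\nabla^{1/2}=J$.

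The main obstacle is twofold. First, one must get the conjugate-linearity of $x\mapsto JxJ$ right: it is precisely the identity $\varphi'(JzJ)=\varphi(z^*)$ (rather than $\varphi(z)$) that makes the isometry balance, and it is easy to slip here. Second, passing from agreement on a core to the genuine operator equality $uS'u^*=F$ of unbounded antilinear operators needs care: the computation above shows only that $uS'u^*$ and $F$ agree on $J\Lambda(\mf n_\varphi\cap\mf n_\varphi^*)$, which is a core for $uS'u^*$ (being the image under the unitary $u$ of the defining core of $S'$), giving $uS'u^*\subseteq F$. I would obtain the reverse inclusion by the symmetric argument with the roles of $M$ and $M'$ interchanged, equivalently by establishing $uF'u^*\subseteq S$ and taking adjoints; alternatively one may invoke the commutation theorem, these identities being the standard expression of the symmetry of the standard form under Tomita--Takesaki theory.
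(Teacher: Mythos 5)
Your calculations are all correct, and they are essentially what the paper intends: the paper's own ``proof'' of this lemma is literally the sentence ``This is a calculation'' with a footnote pointing to informal notes, so there is no competing written argument to compare against. The determination of $\mf n_{\varphi'}$, the isometry computation for $u$ (including getting the conjugate-linearity of $x\mapsto JxJ$ and the identity $\varphi'(JzJ)=\varphi(z^*)$ right), the intertwining $u\pi'(\cdot)u^*=\operatorname{id}_{M'}$, the verification $uS'u^*\,J\Lambda(x)=J\Lambda(x^*)=F\,J\Lambda(x)$ on $J\Lambda(\mf n_\varphi\cap\mf n_\varphi^*)$, and the polar-decomposition bookkeeping deriving $uF'u^*=S$, $u\nabla'u^*=\nabla^{-1}$ and $uJ'u^*=J$ from $uS'u^*=F$ are all sound.

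The one point that genuinely needs completing is the one you flag: agreement on a core of $uS'u^*$ gives only $uS'u^*\subseteq F$, and this inclusion alone does not force equality — a closed antilinear involution can have proper closed involutive restrictions with dense domain, so nothing formal about ``$A\subseteq B$ with $A^2=1|_{D(A)}$, $B^2=1|_{D(B)}$'' rescues you. Of your two suggested completions, be careful with the ``symmetric argument'': a core for $F'$ is the commutant right Hilbert algebra of $\Lambda'(\mf n_{\varphi'}\cap\mf n_{\varphi'}^*)$, whose description involves $J'$ (equivalently, right-bounded vectors in $L^2(M')$), so establishing $uF'u^*\subseteq S$ is not a verbatim role swap and risks circularity unless you first transport right-bounded vectors through $u$. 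The clean fix is your second option made precise: by \cite[Lemma~VI.1.13]{TakesakiII} — the very fact this paper invokes in its proof of Theorem~\ref{thm:varphiinv}, see also Example~\ref{eg:left_bdd} — the set $\{J\Lambda(a):a\in\mf n_\varphi\cap\mf n_\varphi^*\}$ is a core for $F$. (Equivalently: $\Lambda(\mf n_\varphi\cap\mf n_\varphi^*)$ is a core for $S=J\nabla^{1/2}$, hence for $\nabla^{1/2}$, and $F=\nabla^{1/2}J$ with $J$ bounded and invertible, so the image under $J$ of a core for $\nabla^{1/2}$ is a core for $F$.) Since $uS'u^*$ and $F$ are closed operators agreeing on a set that is a core for both, they are equal, and the rest of your deductions go through unchanged.
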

\begin{proof}
This is a calculation.\footnote{See for example \url{https://github.com/MatthewDaws/Mathematics/tree/master/Weights}}
\end{proof}

\begin{theorem}[{Theorem~\ref{thm:varphiinv}}]
The operator-valued weight $\varphi^{-1} \colon \mc B(H)_+ \to M'$ satisfies
\[ \varphi^{-1}( \rankone{J\Lambda(x)}{J\Lambda(y)}) = Jxy^*J \qquad (x,y\in \mf n_{\varphi}). \]
The weight $\tilde\varphi = \varphi' \circ \varphi^{-1}$ on $\mc B(H)$ has density $\nabla^{-1}$.
\end{theorem}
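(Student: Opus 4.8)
The plan is to prove the two assertions in turn, extracting the rank-one formula first and then reading off the density. Throughout I work on $H=L^2(M)$ and use Lemma~\ref{lem:com_weight}, which gives $\mf n_{\varphi'}=\{JxJ:x\in\mf n_\varphi\}$ and $\Lambda'(JxJ)=J\Lambda(x)$. Writing $a=JxJ$ and $b=JyJ$, the claimed identity $\varphi^{-1}(\rankone{J\Lambda(x)}{J\Lambda(y)})=Jxy^*J$ becomes
\[ \varphi^{-1}\big(\rankone{\Lambda'(a)}{\Lambda'(b)}\big)=ab^* \qquad (a,b\in\mf n_{\varphi'}), \]
since $Jxy^*J=(JxJ)(Jy^*J)=(JxJ)(JyJ)^*=ab^*$. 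As $\varphi^{-1}$ is extended linearly to $\mf m_{\varphi^{-1}}$, both sides are linear in $a$ and conjugate-linear in $b$, so by polarization it suffices to treat $a=b$, i.e.\ to show $\varphi^{-1}(\rankone{\Lambda'(a)}{\Lambda'(a)})=aa^*$.

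First I would recall the Connes--Takesaki description of $\varphi^{-1}$. By \cite[Cor.~IX.4.25]{TakesakiII} (see also \cite[Cor.~16]{Connes_SpatialVN}) the operator-valued weight $\varphi^{-1}\colon \mc B(H)_+\to\wh{M'}_+$ is the unique nfs operator-valued weight associated to $\varphi$; it is an $M'$-bimodule map, and for every nfs weight $\psi$ on $M'$ the composite $\psi\circ\varphi^{-1}$ is the weight on $\mc B(H)$ whose density is the Connes spatial derivative $\tfrac{\rd\varphi}{\rd\psi}$, so that
\[ \psi\big(\varphi^{-1}(T)\big)=\Tr\Big(\big(\tfrac{\rd\varphi}{\rd\psi}\big)^{1/2}T\big(\tfrac{\rd\varphi}{\rd\psi}\big)^{1/2}\Big) \qquad (T\in\mc B(H)_+). \]
Since the vector functionals $\omega_\zeta|_{M'}$ separate $M'$ and are themselves (bounded) weights, to identify $\varphi^{-1}(\rankone{\Lambda'(a)}{\Lambda'(a)})$ with $aa^*$ it is enough to check, for each such $\psi$, that $\|(\tfrac{\rd\varphi}{\rd\psi})^{1/2}\Lambda'(a)\|^2=\psi(aa^*)$. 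Here I would invoke the defining property of the spatial derivative: for a right-$\psi$-bounded vector $\xi$ one has $\|(\tfrac{\rd\varphi}{\rd\psi})^{1/2}\xi\|^2=\varphi\big(R_\psi(\xi)R_\psi(\xi)^*\big)$, where $R_\psi(\xi)\colon\Lambda_\psi(c)\mapsto c\xi$. For $\xi=\Lambda'(a)$ with $a$ in the Tomita algebra of $\varphi'$ the vector is right-$\psi$-bounded and $R_\psi(\Lambda'(a))$ is implemented by right multiplication by $a$; a direct computation then collapses $\varphi(R_\psi(\Lambda'(a))R_\psi(\Lambda'(a))^*)$ to $\psi(aa^*)$. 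Density of the Tomita algebra and normality of both sides give $\varphi^{-1}(\rankone{\Lambda'(a)}{\Lambda'(a)})=aa^*$, and polarization completes the first part. (More conceptually, when $\varphi$ is a state one may write $\rankone{\Lambda'(a)}{\Lambda'(b)}=a\,\rankone{\Omega}{\Omega}\,b^*$ for the cyclic vector $\Omega$ and invoke $M'$-bimodularity with the normalisation $\varphi^{-1}(\rankone{\Omega}{\Omega})=1$; the general case then follows by a localisation argument.)

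For the density I would feed the first part into a short computation. Using $\varphi'(JzJ)=\varphi(z)$ on $M_+$, it gives, for $x$ in the Tomita algebra of $\varphi$,
\[ \tilde\varphi\big(\rankone{J\Lambda(x)}{J\Lambda(x)}\big)
 =\varphi'\big(\varphi^{-1}(\rankone{J\Lambda(x)}{J\Lambda(x)})\big)
 =\varphi'(Jxx^*J)=\varphi(xx^*). \]
Since $\Lambda(x^*)=S\Lambda(x)=J\nabla^{1/2}\Lambda(x)$ we get $\varphi(xx^*)=\|\nabla^{1/2}\Lambda(x)\|^2$, and because $J\nabla J=\nabla^{-1}$ forces $\nabla^{-1/2}J=J\nabla^{1/2}$, this equals $\|\nabla^{-1/2}J\Lambda(x)\|^2$. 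On the other hand the weight on $\mc B(H)$ with density $\nabla^{-1}$ sends $\rankone{\xi}{\xi}$ to $\Tr(\nabla^{-1/2}\rankone{\xi}{\xi}\nabla^{-1/2})=\|\nabla^{-1/2}\xi\|^2$. Thus $\tilde\varphi$ and the density-$\nabla^{-1}$ weight agree on every $\rankone{J\Lambda(x)}{J\Lambda(x)}$; as these $J\Lambda(x)$ form a core for $\nabla^{-1/2}$ and both weights are normal, their densities coincide, so $\tilde\varphi$ has density $\nabla^{-1}$.

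The technical heart---and the step I expect to be the main obstacle---is the spatial-derivative computation in the first part: correctly identifying $R_\psi(\Lambda'(a))$ and tracking the Connes convention so that the quadratic form collapses to $\psi(aa^*)$ rather than $\psi(a^*a)$ (equivalently, pinning down $\tfrac{\rd\varphi}{\rd\varphi'}=\nabla^{-1}$ and not $\nabla$). This rests on the unbounded operators $\nabla^{\pm1/2}$, so care is needed with cores and domains and with the restriction to the Tomita algebra where all elements are analytic; the passage from the dense family of rank-one operators to all of $\mc B(H)_+$ is then a routine normality argument.
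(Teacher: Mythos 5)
Your architecture mirrors the paper's (rank-one formula via spatial theory, then comparison of $\tilde\varphi$ with the weight of density $\nabla^{-1}$ on rank-one operators over $J\Lambda(\mf n_\varphi)$), but both halves contain genuine gaps, and the first is an outright error. You characterise $\varphi^{-1}$ by asserting that $\psi\circ\varphi^{-1}$ has density $d\varphi/d\psi$, where by your own definition $\|(d\varphi/d\psi)^{1/2}\xi\|^2=\varphi\bigl(R_\psi(\xi)R_\psi(\xi)^*\bigr)$ for right-$\psi$-bounded $\xi$. This is the spatial derivative the wrong way up: the correct statement, and the one the paper relies on, is that $\psi\circ\varphi^{-1}$ has density $d\psi/d\varphi$, the operator built from the form $\xi\mapsto\psi\bigl(R^\varphi(\xi)R^\varphi(\xi)^*\bigr)$ on $D(H,\varphi)$ (the commutant weight sits in the numerator). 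Consequently the identity your proof hinges on, $\varphi\bigl(R_\psi(\Lambda'(a))R_\psi(\Lambda'(a))^*\bigr)=\psi(aa^*)$, is false. Take $M=\mathbb M_2$, $\varphi=\Tr(Q\,\cdot\,)$ with $Q=\operatorname{diag}(1,2)$, $\psi=\varphi'$, $x=e_{12}$, $a=JxJ$: then $\Lambda'(a)=Q^{1/2}e_{21}=\sqrt2\,e_{21}$, the operator $R_{\varphi'}(\Lambda'(a))$ is left multiplication by $\sqrt2\,e_{21}$, so $\varphi(RR^*)=2\,\Tr(Qe_{22})=4$, whereas $\psi(aa^*)=\varphi(xx^*)=\Tr(Qe_{11})=1$. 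Equivalently, with your stated convention one computes $d\varphi/d\varphi'=\nabla$, not $\nabla^{-1}$ as you claim in your closing paragraph; so the convention issue you correctly identify as the main obstacle is resolved incorrectly rather than resolved. The paper sidesteps all of this: \cite[Corollary~16]{Connes_SpatialVN} already gives the pointwise operator identity $\varphi^{-1}(\rankone{\xi}{\xi})=R^\varphi(\xi)R^\varphi(\xi)^*$, and Example~\ref{eg:left_bdd} identifies $D(H,\varphi)=J\Lambda(\mf n_\varphi)$ with $R^\varphi(J\Lambda(a))=JaJ$, whence $\varphi^{-1}(\rankone{J\Lambda(a)}{J\Lambda(a)})=Jaa^*J$ in one line, with no auxiliary weight $\psi$ and no density formula at all.

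In the second part your evaluation of $\tilde\varphi$ on rank-one operators is correct (it is the paper's computation), but the final inference---the two weights agree on $\rankone{\xi}{\xi}$ for $\xi$ in a core of $\nabla^{-1/2}$ and both are normal, hence their densities coincide---is precisely the trap discussed before Lemma~\ref{lem:SSTT}. By the counterexample cited there, there exist positive self-adjoint $S\neq T$ with $D(S)\subseteq D(T)$ and $\|S\xi\|=\|T\xi\|$ for $\xi\in D(S)$; then $\Tr_{S^2}$ and $\Tr_{T^2}$ agree on every $\rankone{\xi}{\xi}$ with $\xi\in D(S)$, which is a core for $S$, and yet $S^2\neq T^2$. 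To make your argument work you must also know that $J\Lambda(\mf n_\varphi\cap\mf n_\varphi^*)$ is a core for $k^{1/2}$, where $k$ is the density of $\tilde\varphi$; this is the half you never establish. The paper gets it for free: by the construction of $\varphi^{-1}$, that density \emph{is} the spatial derivative $h=d\varphi'/d\varphi$, and the general theory of quadratic forms says the form domain $\mf A'$ is automatically a core for $h^{1/2}$; combined with $\mf A'$ being a core for $\nabla^{-1/2}$ (\cite[Lemma~VI.1.13]{TakesakiII}), this is exactly the hypothesis that Lemma~\ref{lem:SSTT} requires.
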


Informally, that $\tilde\varphi$ has density $\nabla^{-1}$ means that $\tilde\varphi(x) = \Tr(\nabla^{-1}x)$; see Appendix~\ref{sec:weights_bh} for how to interpret this rigourously, and for technical results about the weight $\tilde\varphi$.

\begin{proposition}[{Propositions~\ref{prop:Trh} and~\ref{prop:mod_Trh}}]
\label{prop:L2BS_is_HSH}
We identify $L^2(\tilde\varphi)$ with $HS(H)$, where $\mf n_{\tilde\varphi} = \{x\in\mc B(H) : x\nabla^{-1/2} \in HS(H)\}$ and $\Lambda_{\tilde\varphi}(x) = x\nabla^{-1/2}$.  The resulting modular operator and conjugation are $\tilde J(x)=x^*$ and $\tilde\nabla(x) = \nabla^{-1} x \nabla$ for $x\in HS(H)$.  The left and right actions of $\mc B(H)$ on $L^2(\tilde\varphi) = HS(H)$ are left/right multiplication.
\end{proposition}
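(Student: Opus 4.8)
The plan is to read this off the general description, developed in Appendix~\ref{sec:weights_bh} (Propositions~\ref{prop:Trh} and~\ref{prop:mod_Trh}), of a weight on $\mc B(H)$ determined by a density, applied to the density $h=\nabla^{-1}$ furnished by Theorem~\ref{thm:varphiinv}. Interpreting $\tilde\varphi(x)=\Tr(\nabla^{-1/2}x\nabla^{-1/2})$ for $x\in\mc B(H)_+$, the first step is to compute the left ideal: for $x\in\mc B(H)$,
\[ \tilde\varphi(x^*x) = \Tr\big(\nabla^{-1/2}x^*x\nabla^{-1/2}\big) = \Tr\big((x\nabla^{-1/2})^*(x\nabla^{-1/2})\big) = \|x\nabla^{-1/2}\|_{HS}^2, \]
so that $x\in\mf n_{\tilde\varphi}$ exactly when the closure of the product $x\nabla^{-1/2}$ is Hilbert--Schmidt, and $\Lambda_{\tilde\varphi}(x)=x\nabla^{-1/2}$, as claimed. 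Because $\nabla^{-1/2}$ is unbounded, I would read $x\nabla^{-1/2}$ as the closure of the densely-defined product, the identity above then being just associativity together with self-adjointness of $\nabla^{-1/2}$, justified on a core for $\nabla^{-1/2}$.

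To identify the GNS space I would note that $x\mapsto x\nabla^{-1/2}$ is, by the above, an isometry from $\mf n_{\tilde\varphi}$ into $HS(H)$, and check that its range is dense: for a bounded rank-one $x=\rankone{\mu}{\lambda}$ with $\lambda$ in the domain of $\nabla^{-1/2}$ one has $x\nabla^{-1/2}=\rankone{\mu}{\nabla^{-1/2}\lambda}$, and as $\mu$ is arbitrary while $\nabla^{-1/2}\lambda$ ranges over a dense subset of $H$, the resulting finite-rank operators are dense in $HS(H)$. Hence $L^2(\tilde\varphi)\cong HS(H)$. The left action is then immediate from the defining GNS relation $\pi(a)\Lambda_{\tilde\varphi}(x)=\Lambda_{\tilde\varphi}(ax)=a\,x\nabla^{-1/2}$, so $\pi(a)$ is left multiplication by $a$ on $HS(H)$.

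For the modular data I would first record that a density weight with density $\nabla^{-1}$ has modular automorphism group $\sigma_t^{\tilde\varphi}(x)=\nabla^{-it}x\nabla^{it}$, verified from the KMS condition. The modular operator is pinned down by $\tilde\nabla^{it}\Lambda_{\tilde\varphi}(x)=\Lambda_{\tilde\varphi}(\sigma_t^{\tilde\varphi}(x))$: since
\[ \Lambda_{\tilde\varphi}\big(\sigma_t^{\tilde\varphi}(x)\big) = \nabla^{-it}x\nabla^{it}\nabla^{-1/2} = \nabla^{-it}\big(x\nabla^{-1/2}\big)\nabla^{it}, \]
we get $\tilde\nabla^{it}(\xi)=\nabla^{-it}\xi\nabla^{it}$, hence $\tilde\nabla(\xi)=\nabla^{-1}\xi\nabla$ for $\xi\in HS(H)$. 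Finally, writing $S_{\tilde\varphi}$ for the closure of $\Lambda_{\tilde\varphi}(x)\mapsto\Lambda_{\tilde\varphi}(x^*)$ and using $S_{\tilde\varphi}=\tilde J\tilde\nabla^{1/2}$, I would compute $\tilde\nabla^{1/2}(x\nabla^{-1/2})=\nabla^{-1/2}x$ and then check $(\nabla^{-1/2}x)^*=x^*\nabla^{-1/2}=S_{\tilde\varphi}\Lambda_{\tilde\varphi}(x)$, giving $\tilde J(\xi)=\xi^*$. The right action $\tilde J\pi(\cdot)^*\tilde J$ is then $\xi\mapsto\xi a$, i.e.\ right multiplication.

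The work here is bookkeeping rather than conceptual, and the main obstacle is precisely the unboundedness of $\nabla^{\pm1/2}$: every displayed product must be interpreted as an operator closure on the correct domain, and each trace and KMS manipulation run first on a suitable core (finite-rank $x$, or $\nabla$-analytic elements) before being extended by continuity to all of $HS(H)$. This is exactly the content discharged by the appendix propositions, so in the body I would simply invoke them for the density $\nabla^{-1}$.
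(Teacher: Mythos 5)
Your proposal is correct and is essentially the paper's own proof: the proposition is established precisely by invoking Propositions~\ref{prop:Trh} and~\ref{prop:mod_Trh} with the density $h=\nabla^{-1}$ supplied by Theorem~\ref{thm:varphiinv}, which is exactly your stated plan (including reading off $\mf n_{\tilde\varphi}$, $\Lambda_{\tilde\varphi}$, $\tilde J$, $\tilde\nabla = \nabla^{-1}\otimes\nabla^{\top}$, and the left/right multiplication actions). The only caveats are that $\Tr_h$ is defined via the regularisation $h_\epsilon = h(1+\epsilon h)^{-1}$ rather than naively as $\Tr(h^{1/2}\,\cdot\,h^{1/2})$, so your opening trace identity is really the content of Proposition~\ref{prop:Trh} rather than mere associativity, and your KMS-first derivation of $\tilde\nabla$ reverses the appendix's order (which identifies $S=\tilde J\tilde\nabla^{1/2}$ directly and deduces $\sigma^{\tilde\varphi}_t$ afterwards) --- but since you explicitly defer these points to the appendix propositions, nothing is missing.
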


\begin{remark}
We make links with the finite-dimensional situation, Section~\ref{sec:fd_summary}.  For $x,y\in\mf n_\varphi$ we have
\[ \tilde\varphi\big( \rankone{J\Lambda(x)}{J\Lambda(y)} \big)
= \varphi'(Jxy*J) = \varphi(yx^*) = (\Lambda(y^*)|\Lambda(x^*)). \]
Writing $\tilde\Lambda$ for the GNS map of $\tilde\varphi$, given $y_1, y_2 \in D(\sigma_{i/2})$ and $\xi_1,\xi_2\in H$, we have
\begin{align*}
& \big( \tilde\Lambda\rankone{\xi_1}{\Lambda(y_1)} \big| \tilde\Lambda\rankone{\xi_2}{\Lambda(y_2)} \big)
= \tilde\varphi\big( \rankone{\Lambda(y_1)}{\xi_1} \rankone{\xi_2}{\Lambda(y_2)} \big) \\
&= (\xi_1|\xi_2) \tilde\varphi\big( \rankone{J\Lambda(\sigma_{-i/2}(y_1^*))}{J\Lambda(\sigma_{-i/2}(y_2^*))} \big)
= (\xi_1|\xi_2) (\Lambda(\sigma_{i/2}(y_2))|\Lambda(\sigma_{i/2}(y_1))) \\
&= (\xi_1 \otimes \overline{\nabla^{1/2}\Lambda(y_1)} | \xi_2 \otimes \overline{\nabla^{1/2}\Lambda(y_2)} ).
\end{align*}
In the finite-dimensional case, this is exactly the inner-product we defined on $\mc B(H)$.  Our identification of $L^2(\tilde\varphi)$ with $HS(H)$ is exactly the infinite-dimensional analogue of \eqref{eq:twistedGNS}.
\end{remark}

We can now form $E_{\varphi^{-1}} = \mc B_{M'}(L^2(M'), L^2(\tilde\varphi))$, using that $H_{\varphi^{-1}} = L^2(\tilde\varphi)$.  The right action of $M'$ on $L^2(\tilde\varphi) = HS(H)$ is the restriction of the right action of $\mc B(H)$ on $HS(H)$, namely operator composition.  The left action of $\mc B(H)$ on $L^2(\tilde\varphi) = HS(H)$ is left multiplication of operators.  The embedding of $\alpha\in\mf n_{\varphi^{-1}}$ into $\mc B_{M'}(L^2(M'), L^2(\tilde\varphi))$ is $\hat\alpha \colon \Lambda'(x) \mapsto \Lambda_{\tilde\varphi}(\alpha x) = \alpha x \nabla^{-1/2} \in HS(H)$ (see Proposition~\ref{prop:Trh} for what precisely this means).  Furthermore, we know that $\alpha\in\mf n_{\varphi^{-1}}$ if and only if this operator is bounded.

\begin{remark}\label{rem:L2M'_vs_L2M}
As $M \to M'; x\mapsto Jx^*J$ is an anti-$*$-isomorphism, and we have identified $L^2(M)$ with $L^2(M')$, one can write the condition for an operator $\alpha \colon L^2(M) \to HS(H)$ to be in $\mc B_{M'}(L^2(M'), L^2(\tilde\varphi))$ purely in terms of $M$.  Indeed, the condition is that $\alpha(\xi\cdot a) = \alpha(\xi) \cdot a$ for $a\in M', \xi\in L^2(M')$.  Equivalently, $\alpha J' a^*J' = (1\otimes a^\top)\alpha$ here identifying $HS(H)$ with $H\otimes\overline H$.  In turn, this is equivalent to $\alpha b = (1\otimes (Jb^*J)^\top) \alpha$ for each $b\in M$.

Similarly, for $\alpha\in\mf n_{\varphi^{-1}}$ we have $\hat \alpha \Lambda'(a) = \alpha a\nabla^{-1/2} \in HS(H)$ for $a\in\mf n_{\varphi'}$.  By Lemma~\ref{lem:com_weight} this becomes $\hat\alpha J\Lambda(b) = \alpha JbJ \nabla^{-1/2}$ for $b\in\mf n_\varphi$.

For simplicity, we shall mostly continue to work with $M'$ and $L^2(M')$, working with $\Lambda', \mf n_{\varphi'}$ and so forth.  However, do remember that $H = L^2(M)$ is identified with $L^2(M')$ throughout.
\end{remark}

In this way, we have identified $E_{\varphi^{-1}}$ in a very concrete way, without having to explicitly work with the operator-valued weight $\varphi^{-1}$.  We think of this as an analogue of the finite-dimensional calculations in Section~\ref{sec:fd_summary}.  Let us now see how the algebra $M\vnten M^\op$ arises naturally.

\begin{proposition}\label{prop:MtenMop_as_adj_ops}
Regarding $HS(H)$ as $H\otimes\overline{H}$, we have that $\mc L(E_{\varphi^{-1}}) = \mc B(H) \vnten M^\top$.  Denote by ${}_{M'} \mc L(E_{\varphi^{-1}})$ the adjointable operators which commute with the (left) action of $M'$ on $L^2(\tilde\varphi)$.  Then ${}_{M'} \mc L(E_{\varphi^{-1}}) = M \vnten M^\top$.
\end{proposition}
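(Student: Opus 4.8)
The plan is to reduce both equalities to the commutation theorem $(A\vnten B)'=A'\vnten B'$ for von Neumann tensor products, once the left and right $M'$-actions on $L^2(\tilde\varphi)=HS(H)$ have been transported to concrete operators on $H\otimes\overline H$. First I would record the two actions explicitly under the identification $HS(H)\cong H\otimes\overline H$, $\rankone{\eta}{\zeta}\leftrightarrow\eta\otimes\overline\zeta$. The right action of $a\in M'$ is operator composition $\xi\mapsto\xi a$; on a rank-one operator $\rankone{\eta}{\zeta}a=\rankone{\eta}{a^*\zeta}\leftrightarrow\eta\otimes\overline{a^*\zeta}=\eta\otimes a^\top\overline\zeta$, so right multiplication by $a$ is $1\otimes a^\top$. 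Hence $\rho((M')^\op)$ generates the von Neumann algebra $1\otimes(M')^\top$. Likewise the left action of $b\in M'$ is $\xi\mapsto b\xi$, which is $b\otimes1$ on $H\otimes\overline H$, so the left $M'$-action generates $M'\otimes 1$.

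For the first claim I would invoke the self-dual module theorem (stated above) in the form $\mc L(E_{\varphi^{-1}})\cong\rho((M')^\op)'$, where adjointables act by left composition and so correspond to genuine operators on $H\otimes\overline H$; here the role of the coefficient algebra is played by $N=M'$ and that of the representing Hilbert space by $HS(H)$. Since $\rho((M')^\op)''=1\otimes(M')^\top$, the commutation theorem gives
\[ \mc L(E_{\varphi^{-1}}) = (\mathbb C 1\vnten (M')^\top)' = \mc B(H)\vnten ((M')^\top)'. \]
It then remains to note that $((M')^\top)'=((M')')^\top=M^\top$, using that $x\mapsto x^\top$ is an anti-isomorphism (so $(A^\top)'=(A')^\top$) together with $(M')'=M$. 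This proves $\mc L(E_{\varphi^{-1}})=\mc B(H)\vnten M^\top$.

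For the second claim the key observation is that each left-action operator $b\otimes1$ with $b\in M'$ is itself adjointable (its second leg is $1\in M^\top$, so it lies in $\mc B(H)\vnten M^\top$), so the $*$-subalgebra of $\mc L(E_{\varphi^{-1}})$ it generates is exactly $M'\otimes1$, and ``commuting with the left action'' means commuting with $M'\otimes1$ inside $\mc B(H\otimes\overline H)$. An adjointable $t$ already commutes with $1\otimes(M')^\top$, so $t\in{}_{M'}\mc L(E_{\varphi^{-1}})$ precisely when $t$ commutes with the von Neumann algebra generated by $M'\otimes1$ and $1\otimes(M')^\top$, namely $M'\vnten(M')^\top$. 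The commutation theorem then yields
\[ {}_{M'}\mc L(E_{\varphi^{-1}}) = (M'\vnten (M')^\top)' = (M')'\vnten((M')^\top)' = M\vnten M^\top, \]
as claimed.

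The main obstacle is not any single deep step, since the commutation theorem does the heavy lifting, but rather the careful bookkeeping of conventions: correctly transporting the right and left module actions through the unitary $HS(H)\cong H\otimes\overline H$ and the opposite-algebra/transpose identifications, and confirming that ``adjointable operator'' (acting by composition on $E_{\varphi^{-1}}$) coincides with ``operator on $H\otimes\overline H$'', so that commutation at the module level and at the Hilbert-space level genuinely agree. In particular one must verify that $b\otimes1$ defines an adjointable operator for $b\in M'$, which is exactly what allows the second claim to be recast as the commutant computation above.
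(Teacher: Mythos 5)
Your proof is correct and follows essentially the same route as the paper: both identify $\mc L(E_{\varphi^{-1}})$ with the commutant of the right action $1\otimes(M')^\top$ via the self-dual module theorem (Proposition~\ref{prop:adjointables_extend_sdc}), and then characterise ${}_{M'}\mc L(E_{\varphi^{-1}})$ by additionally imposing commutation with the left action $M'\otimes 1$.  The only (immaterial) difference is the final step: you take the commutant of the generated algebra, $(M'\vnten(M')^\top)' = M\vnten M^\top$, whereas the paper intersects the two commutants $M\vnten\mc B(\overline H)$ and $\mc B(H)\vnten M^\top$; these formulations are equivalent.
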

\begin{proof}
We have that $\mc L(E_{\varphi^{-1}}) = \rho((M')^\op)'$ where $\rho\colon (M')^\op \to \mc B(HS(H))$ is the right action.  As $\rho$ is just multiplication, we have
$\mc L(E_{\varphi^{-1}}) = \{ x\in\mc B(HS(H)) : x(ay) =x(a)y \ (a\in HS(H), y\in M') \}$.
That is, $x(\rankone{\xi}{y^*\eta}) = x(\rankone{\xi}{\eta} y) = x(\rankone{\xi}{\eta}) y$ for $y\in M'$.  Using $HS(H) = H\otimes\overline H$ this becomes $x (1\otimes y^\top) = (1\otimes y^\top) x$ for each $y\in M'$, that is, $x\in (1\otimes (M')^\top)' = \mc B(H) \vnten M^\top$, as claimed.

The left action of $M'$ on $L^2(\tilde\varphi)$ is just the restriction of the left action of $\mc B(H)$ on $HS(H)$.  So $x$ additionally satisfies $x(y\otimes 1) = (y\otimes 1)x$ for $y\in M'$, that is, $x\in (M'\otimes 1)'$ as well.  So ${}_{M'} \mc L(E_{\varphi^{-1}}) = M\vnten\mc B(\overline H) \cap \mc B(H) \vnten M^\top = M \vnten M^\top$.
\end{proof}

As for Hilbert spaces, given a Hilbert C$^*$-module $E$ and a closed submodule $F$, we define $F^\perp = \{ \alpha\in E : (\alpha|\beta)=0 \ (\beta \in F) \}$ which is also a closed submodule, orthogonal to $F$ by definition.  However, as \cite[page~7]{Lance_HilbModsBook} shows, we need not have that $F \oplus F^\perp = E$; when this does happen we say that $F$ is \emph{complemented}.

\begin{theorem}\label{thm:comp_submods}
There is a bijection between:
\begin{enumerate}[(1)]
  \item\label{prop:comp_submods:1}
  projections $e\in M\vnten M^\op$;
  \item\label{prop:comp_submods:2}
  complemented $M'$-sub-bimodules of $E_{\varphi^{-1}}$;
  \item\label{prop:comp_submods:3} HS quantum relations, that is, $M'$-bimodules in $HS(H)$.
\end{enumerate}
\end{theorem}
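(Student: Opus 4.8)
The plan is to realise all three objects as parametrised by the same projection: I would establish the two edges $(1)\leftrightarrow(3)$ and $(1)\leftrightarrow(2)$ of the triangle and then check they are compatible via the interior tensor product identification $HS(H)=E_{\varphi^{-1}}\otimes_{M'}L^2(M')$ coming from the self-dual completion theory, so that a single projection simultaneously names an HS quantum relation and a complemented sub-bimodule.

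The edge $(1)\leftrightarrow(3)$ is already in hand from Section~\ref{sec:HS}: identifying $HS(H)$ with $H\otimes\overline H$, an HS quantum relation is exactly an $(M'\otimes(M')^\op)$-invariant closed subspace $V$, and since $(M'\otimes(M')^\op)'=M\vnten M^\op$, the assignment $e\mapsto V=\im(e)$, with inverse sending $V$ to the orthogonal projection onto it, is a bijection with the projections in (1). So the only genuinely new work is to connect (2) to these.

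For $(1)\leftrightarrow(2)$ I would invoke Proposition~\ref{prop:MtenMop_as_adj_ops}, which identifies ${}_{M'}\mc L(E_{\varphi^{-1}})$---the adjointable operators commuting with the left $M'$-action $\lambda$ (left multiplication on $HS(H)$)---with $M\vnten M^\op$. Given such a projection $e$, set $F=eE_{\varphi^{-1}}$. Using $e=e^*=e^2$ one checks that $(eE_{\varphi^{-1}})^\perp=(1-e)E_{\varphi^{-1}}$, so $E_{\varphi^{-1}}=eE_{\varphi^{-1}}\oplus(eE_{\varphi^{-1}})^\perp$ and $F$ is complemented; it is a right $M'$-submodule because $e$ is an $M'$-module map, and left $M'$-invariant because $e\in\lambda(M')'$. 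Conversely, given a complemented $M'$-sub-bimodule $F$, the orthogonal projection $p$ onto $F$ along $F^\perp$ is adjointable (the standard fact for complemented submodules recalled just before the theorem), so $p\in\mc L(E_{\varphi^{-1}})$; since $\lambda$ is a $*$-representation, $F^\perp$ is also $\lambda(M')$-invariant, whence $p$ commutes with $\lambda(M')$ and therefore $p\in{}_{M'}\mc L(E_{\varphi^{-1}})=M\vnten M^\op$. These assignments are clearly mutually inverse.

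Finally, to glue the two edges into one three-way bijection, I would note that under $HS(H)=E_{\varphi^{-1}}\otimes_{M'}L^2(M')$ the operator $e$ acts compatibly on both factors: $e(\alpha(\xi))=(e\alpha)(\xi)$, where on the left $e$ is left multiplication and on the right it is the action on $E_{\varphi^{-1}}$. For $\alpha\in E_{\varphi^{-1}}$ and $V=\im(e)$ one then has $e\alpha=\alpha$ iff $\im(\alpha)\subseteq V$, so $F=eE_{\varphi^{-1}}=\{\alpha\in E_{\varphi^{-1}}:\im(\alpha)\subseteq V\}$, and $V=eHS(H)$ is the closed linear span of the vectors $\alpha(\xi)$ with $\alpha\in F$ and $\xi\in L^2(M')$. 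Thus the $F$ attached to $e$ in (2) and the $V$ attached to $e$ in (3) determine one another and the triangle commutes. The step I expect to be most delicate is precisely this coherence check: pinning down that the left $M'$-action defining the sub-bimodule condition is the same left-multiplication action implemented by $e$ on $HS(H)$, together with the bookkeeping between the $M'$ and $L^2(M')$ pictures flagged in Remark~\ref{rem:L2M'_vs_L2M}. Once Proposition~\ref{prop:MtenMop_as_adj_ops} is granted, the algebraic content of $(1)\leftrightarrow(2)$ is routine.
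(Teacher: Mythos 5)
Your proposal is correct and takes essentially the same route as the paper: the edge (1)$\leftrightarrow$(3) via the commutant identity $(M'\otimes (M')^\op)' = M\vnten M^\op$ from Section~\ref{sec:HS}, the edge (1)$\leftrightarrow$(2) via the complemented-submodule/projection correspondence together with Proposition~\ref{prop:MtenMop_as_adj_ops} (including the same check that $F^\perp$ inherits the left $M'$-invariance), and the same compatibility observation that $eE_{\varphi^{-1}} = \mc B_{M'}(L^2(M'), e(HS(H)))$. The only cosmetic differences are that you re-derive the standard submodule/projection correspondence by hand rather than citing it, and that your final coherence check, while a nice confirmation, is not logically required once the two edges are established.
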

\begin{proof}
The discussion at the start of \cite[Chapter~3]{Lance_HilbModsBook} shows a bijection between complemented submodules $F$ and projections $e\in\mc L(E_{\varphi^{-1}})$ where $F^\perp$ is the image of the projection $1-e$.  Suppose $e\in {}_{M'}\mc L(E_{\varphi^{-1}})$ so for $x\in M', \alpha\in F$ we have $x\cdot\alpha = x\cdot e(\alpha) = e(x\cdot \alpha)$, hence showing that $F$ is an $M'$-bimodule.  Conversely, if $F$ is an $M'$-bimodule, then one checks that also $F^\perp$ is an $M'$-bimodule.  For $\alpha\in E$ we have $\alpha = e(\alpha) + (1-e)(\alpha) \in F \oplus F^\perp$, so for $x\in M'$ also $x\cdot\alpha = x\cdot e(\alpha) + x\cdot (1-e)(\alpha) \in F \oplus F^\perp$, hence $x\cdot e(\alpha) = e(x\cdot \alpha)$.  It follows that $e$ commutes with the left $M'$-action, and so the equivalence of \ref{prop:comp_submods:1} and \ref{prop:comp_submods:2} follows from Proposition~\ref{prop:MtenMop_as_adj_ops}.

The action of $M\vnten M^\op \subseteq \mc B(HS(H))$ on $E_{\varphi^{-1}} = \mc B_{M'}(L^2(M'), HS(H))$ is just post-composition.  Hence for a projection $e$,
\[ e(E_{\varphi^{-1}}) = \{ e\circ\alpha : \alpha \in \mc B_{M'}(L^2(M'), HS(H)) \}
= \mc B_{M'}(L^2(M'), e(HS(H))), \]
as $e$ is a projection.  We see that $K = e(HS(H))$ is an $M'$-bimodule, as $e\in M\vnten M^\op$, and so \ref{prop:comp_submods:1} and \ref{prop:comp_submods:3} are equivalent.
\end{proof}

The proposition is the analogue of the bijection between projections $e$ and subspaces $V$ described at the end of Section~\ref{sec:ip}.  Again notice that $\varphi$ really plays no role here.  What about subspaces $\mc S$?
To answer this, we need to look at $\mf n_{\varphi^{-1}} \to E_{\varphi^{-1}}; \alpha \mapsto \hat\alpha$.  In the following, we remind the reader that we consider $M'\subseteq \mc B(H)$ when forming the composition $a\alpha b$, and also $a\in\mc B(H) \subseteq \mc B(HS(H))$ and $b\in\mc B(L^2(M'))$, when forming the composition $a \hat\alpha b$.

\begin{lemma}\label{lem:alpha_hat_bimod}
The subspace $\mf n_{\varphi^{-1}}$ is an $M'$-bimodule (maybe not closed) in $\mc B(H)$.  Indeed, for $\alpha\in \mf n_{\varphi^{-1}}$ and $a,b\in M'$ we have that $(a\alpha b)\floatinghat = a \hat\alpha b$.
\end{lemma}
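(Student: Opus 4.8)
The plan is to read off the bimodule statement from the general theory of operator-valued weights, and then to establish the identity $(a\alpha b)\floatinghat = a\hat\alpha b$ by comparing both sides on a dense subspace, using the explicit description of $\hat\alpha$ recorded above.

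For the first assertion, recall that for any faithful operator-valued weight $T\colon M_+\to\wh N_+$ the set $\mf n_T$ is a left ideal in $M$ and a bimodule over $N$. Specialising to $T=\varphi^{-1}$, with $M$ replaced by $\mc B(H)$ and $N=M'$, this gives $\mc B(H)\,\mf n_{\varphi^{-1}}\subseteq\mf n_{\varphi^{-1}}$ and $\mf n_{\varphi^{-1}}\,M'\subseteq\mf n_{\varphi^{-1}}$. In particular, for $a,b\in M'\subseteq\mc B(H)$ and $\alpha\in\mf n_{\varphi^{-1}}$ we have $a\alpha b\in\mf n_{\varphi^{-1}}$, so that $(a\alpha b)\floatinghat$ is a well-defined (bounded) element of $E_{\varphi^{-1}}$.

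For the identity, I would evaluate both operators on the dense subspace $\Lambda'(\mf n_{\varphi'})\subseteq L^2(M')$. Here the right-hand $b$ acts on $L^2(M')$ as $\pi'(b)\Lambda'(x)=\Lambda'(bx)$, which is legitimate because $\mf n_{\varphi'}$ is a left ideal in $M'$, so $bx\in\mf n_{\varphi'}$; and $a\in\mc B(H)$ acts on $HS(H)=L^2(\tilde\varphi)$ by left multiplication, by Proposition~\ref{prop:L2BS_is_HSH}. Using $\hat\beta\,\Lambda'(x)=\Lambda_{\tilde\varphi}(\beta x)=\beta x\nabla^{-1/2}$ for $\beta\in\mf n_{\varphi^{-1}}$, $x\in\mf n_{\varphi'}$, I then compute
\[ (a\alpha b)\floatinghat\,\Lambda'(x) = \Lambda_{\tilde\varphi}(a\alpha b x) = a\alpha b x\nabla^{-1/2}, \]
and
\[ (a\hat\alpha b)\,\Lambda'(x) = a\cdot\hat\alpha\big(\Lambda'(bx)\big) = a\cdot\Lambda_{\tilde\varphi}(\alpha b x) = a\alpha b x\nabla^{-1/2}. \]
As both $(a\alpha b)\floatinghat$ and $a\hat\alpha b$ are bounded maps $L^2(M')\to HS(H)$ agreeing on the dense set $\Lambda'(\mf n_{\varphi'})$, they coincide.

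I do not expect a genuine obstacle here; the argument is a verification, and the only points needing care are bookkeeping. One must check that $a\alpha b$ really lies in $\mf n_{\varphi^{-1}}$ before $(a\alpha b)\floatinghat$ can be formed, which is precisely the bimodule property, and also explains why $a,b$ are restricted to $M'$: only the left action extends to all of $\mc B(H)$. One must also keep the three actions distinct --- the product in $\mc B(H)$ defining $a\alpha b$, the standard representation $\pi'$ of $M'$ on $L^2(M')$ used for the trailing $b$, and the left-multiplication action of $\mc B(H)$ on $HS(H)$ used for $a$ --- together with the fact (Theorem~\ref{thm:nT_vs_module}) that $\alpha b x\in\mf n_{\tilde\varphi}$, which makes $\Lambda_{\tilde\varphi}(\alpha b x)$ meaningful.
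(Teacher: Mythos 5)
Your proof is correct and follows essentially the same computation as the paper's: both come down to the identity $a\alpha b x\nabla^{-1/2} = a\hat\alpha\Lambda'(bx) = a\hat\alpha b\,\Lambda'(x)$ for $x\in\mf n_{\varphi'}$. The only (minor) difference is that the paper never needs the general left-ideal/bimodule property of $\mf n_{\varphi^{-1}}$ that you invoke first: since $a\hat\alpha b$ is manifestly bounded and Hilbert--Schmidt-valued on $\Lambda'(\mf n_{\varphi'})$, Theorem~\ref{thm:nT_vs_module} delivers the membership $a\alpha b\in\mf n_{\varphi^{-1}}$ and the equality $(a\alpha b)\floatinghat = a\hat\alpha b$ in a single stroke, whereas you establish membership separately and then compare the two bounded operators on a dense subspace.
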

\begin{proof}
Let $\alpha\in \mf n_{\varphi^{-1}}$ and let $a,b\in M'$.  Set $\beta = a\alpha b\in\mc B(H)$.  We know that $\hat\alpha \Lambda'(x) = \alpha x \nabla^{-1/2} \in HS(H)$ for each $x\in \mf n_{\varphi'}$.  Thus for such $x$ we get $\beta x \nabla^{-1/2} = a\alpha bx\nabla^{-1/2} = a \hat\alpha \Lambda'(bx) = a \hat\alpha b \Lambda'(x) \in HS(H)$.  Hence $\hat\beta = a\circ \hat\alpha\circ b$ exists and is bounded, so $\beta\in\mf n_{\varphi^{-1}}$ (by Theorem~\ref{thm:nT_vs_module}).
\end{proof}

\begin{proposition}\label{prop:V_to_S}
Given $V \subseteq HS(H)$ an $M'$-bimodule, the image of a projection $e$, set
\[ S = \{ \alpha \in \mf n_{\varphi^{-1}} : \im(\hat\alpha) \subseteq V \}
= \{ \alpha \in \mf n_{\varphi^{-1}} : e\hat\alpha =\hat\alpha \}. \]
Then $S$ is a (not necessarily closed) $M'$-bimodule in $\mc B(H)$.
\end{proposition}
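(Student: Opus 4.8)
The plan is to verify that the two displayed descriptions of $S$ coincide, and then to check the bimodule property; both reduce quickly to Lemma~\ref{lem:alpha_hat_bimod} together with the fact that $V=\im(e)$ with $e$ the orthogonal projection onto $V$ acting on $HS(H)$. Note first that $S\subseteq\mf n_{\varphi^{-1}}\subseteq\mc B(H)$, so the containment ``in $\mc B(H)$'' is automatic.

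First I would establish the equality $\{\alpha:\im(\hat\alpha)\subseteq V\}=\{\alpha:e\hat\alpha=\hat\alpha\}$. Here $e\hat\alpha$ is the post-composition of $\hat\alpha\colon L^2(M')\to HS(H)$ with the action of $e$ on $HS(H)$. Since $e$ is the orthogonal projection onto $V$, for any $\zeta\in HS(H)$ we have $e\zeta=\zeta$ precisely when $\zeta\in V$. Applying this to $\zeta=\hat\alpha(\xi)$ as $\xi$ ranges over $L^2(M')$ shows that $e\hat\alpha=\hat\alpha$ holds if and only if every vector of $\im(\hat\alpha)$ lies in $V$, giving the claimed equality.

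Next I would check that $S$ is a linear subspace and an $M'$-bimodule. Linearity is immediate because $\alpha\mapsto\hat\alpha$ is linear and $\mf n_{\varphi^{-1}}$ is a linear space: from $e\hat\alpha=\hat\alpha$ and $e\hat\beta=\hat\beta$ one gets $e(\alpha+\beta)\floatinghat=(\alpha+\beta)\floatinghat$, and likewise for scalar multiples. For the bimodule property, take $\alpha\in S$ and $a,b\in M'$. By Lemma~\ref{lem:alpha_hat_bimod} we have $a\alpha b\in\mf n_{\varphi^{-1}}$ with $(a\alpha b)\floatinghat=a\hat\alpha b$, where $a$ acts by left multiplication on the target $HS(H)$ and $b$ acts on the source $L^2(M')$. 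Hence $\im\big((a\alpha b)\floatinghat\big)=\im(a\hat\alpha b)\subseteq a\,\im(\hat\alpha)$. Using the first description of $S$ we know $\im(\hat\alpha)\subseteq V$, and since $V$ is an $M'$-bimodule in $HS(H)$ we have $aV\subseteq V$ (in the identification $HS(H)=H\otimes\overline H$, left multiplication by $a\in M'$ is the action of $a\otimes 1$, which preserves $V$). Therefore $\im\big((a\alpha b)\floatinghat\big)\subseteq V$, so $a\alpha b\in S$, as required.

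There is essentially no serious obstacle here: the content is carried entirely by Lemma~\ref{lem:alpha_hat_bimod} and the identification of the left $M'$-action on $HS(H)$ with left multiplication from Proposition~\ref{prop:L2BS_is_HSH}. The only point needing a moment's care is to keep straight that $a$ acts on the target $HS(H)$ while $b$ acts on the source $L^2(M')$, so that the image of $a\hat\alpha b$ is governed by $a\,\im(\hat\alpha)$ and the factor $b$ plays no role in the invariance argument.
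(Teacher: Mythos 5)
Your proof is correct and takes essentially the same route as the paper: both arguments rest entirely on Lemma~\ref{lem:alpha_hat_bimod}, which gives $a\alpha b\in\mf n_{\varphi^{-1}}$ with $(a\alpha b)\floatinghat = a\hat\alpha b$. The only cosmetic difference is that you verify invariance through the description $\im(\hat\alpha)\subseteq V$ together with $a\,V\subseteq V$, whereas the paper works with the description $e\hat\alpha=\hat\alpha$ and the fact that $e$ commutes with the left $M'$-action; these are equivalent formulations of the same step, since $e$ is the orthogonal projection onto the $M'$-invariant subspace $V$.
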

\begin{proof}
The equivalence of the two definitions of $S$ is clear.  By the lemma, when $\alpha\in S$ and $x,y\in M'$ we have $e (x\alpha y)\floatinghat = e x \hat\alpha y = x e \hat\alpha y = x\hat\alpha y = (x\alpha y)\floatinghat$, as $e$ commutes with the left $M'$-action.  So also $x\alpha y\in S$.
\end{proof}

Of course, in infinite-dimensions, we might then define $\mc S$ to be the weak$^*$-closure of $S$, to obtain a quantum graph in the sense of Weaver.  Furthermore, considerations of an analogue of $\mc S_{i/4}$, section~\ref{sec:diff}, seem even more complicated.  Let us just remark here that it is perfectly possible for $S=\{0\}$ even though $V$ is non-zero, Example~\ref{eg:V_can_give_zero_S}.  We study these ideas further in Section~\ref{sec:considerations_of_S}.

Suppose that $\varphi^{-1}$ is bounded, equivalently, $1 \in \mf n_{\varphi^{-1}}$.  In this case, we see that $\mf n_{\varphi^{-1}} = \mc B(H)$ and for $\alpha\in\mc B(H)$ we have $\|\hat\alpha\|^2 = \|\hat\alpha^*\hat\alpha\| = \|(\alpha|\alpha)\| = \|\varphi^{-1}(\alpha^*\alpha)\| \leq \|\varphi^{-1}\| \|\alpha\|^2$.  Example~\ref{eg:V_can_give_zero_S_varphiinv_bdd} shows that we can still have $S=\{0\}$ with $V$ non-zero in this situation.

\begin{proposition}\label{prop:S_in_case_varphiinv_bdd}
Let $\varphi^{-1}$ be bounded, let $V\subseteq HS(H)$ be an $M'$-bimodule, and let $S$ be as in Proposition~\ref{prop:V_to_S}.  Then $S$ is weak$^*$-closed as a subspace of $\mc B(H)$, and the map $S \to \mc B_{M'}(H,V)$ is weak$^*$-weak$^*$-continuous.
\end{proposition}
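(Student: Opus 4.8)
The plan is to deduce both assertions from a single fact: that the bounded linear map $\Phi\colon\mc B(H)\to E_{\varphi^{-1}}$, $\alpha\mapsto\hat\alpha$, is weak$^*$-weak$^*$-continuous. Here I regard $E_{\varphi^{-1}}=\mc B_{M'}(L^2(M'),HS(H))$ as a weak$^*$-closed subspace of $\mc B(L^2(M'),HS(H))$, carrying the restriction of the usual $\sigma$-weak topology (the predual being the trace-class operators $HS(H)\to L^2(M')$). Boundedness of $\Phi$ is exactly the estimate $\|\hat\alpha\|^2=\|\varphi^{-1}(\alpha^*\alpha)\|\le\|\varphi^{-1}\|\,\|\alpha\|^2$ recorded just before the statement, which holds because $\varphi^{-1}$ is bounded and hence $\mf n_{\varphi^{-1}}=\mc B(H)$.

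First I would establish the weak$^*$-continuity of $\Phi$. Since $\Phi$ is bounded between duals of Banach spaces, and since the collection of predual functionals $S$ for which $S\circ\Phi$ is $\sigma$-weakly continuous is a norm-closed subspace, it suffices to verify $\sigma$-weak continuity of $\alpha\mapsto\langle\hat\alpha,S\rangle$ for $S$ ranging over a norm-total family in the predual, namely the rank-one operators. The rank-one functional attached to $\Lambda'(x)\in L^2(M')$ (with $x\in\mf n_{\varphi'}$) and $\zeta\in HS(H)$ is $\alpha\mapsto(\zeta|\hat\alpha\Lambda'(x))_{HS}$. Using $\hat\alpha\Lambda'(x)=\Lambda_{\tilde\varphi}(\alpha x)=\alpha x\nabla^{-1/2}$ (Proposition~\ref{prop:L2BS_is_HSH}, noting $x\nabla^{-1/2}\in HS(H)$ as $1\in\mf n_{\varphi^{-1}}$) and cyclicity of the trace, this equals $\Tr\big(x\nabla^{-1/2}\zeta^*\,\alpha\big)$. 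The operator $w=x\nabla^{-1/2}\zeta^*$ is trace-class, being a product of the two Hilbert--Schmidt operators $x\nabla^{-1/2}$ and $\zeta^*$, so $\alpha\mapsto\Tr(w\alpha)$ is $\sigma$-weakly continuous. As $x$ ranges over $\mf n_{\varphi'}$ (so that $\Lambda'(x)$ ranges over a dense subset of $L^2(M')$) and $\zeta$ over $HS(H)$, these functionals span a norm-dense subspace of the predual, yielding weak$^*$-continuity of $\Phi$.

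With this in hand, both conclusions follow quickly. The projection $e\in M\vnten M^\op=\mc L(E_{\varphi^{-1}})$ acts on $E_{\varphi^{-1}}$ by post-composition, and left multiplication by a fixed bounded operator is $\sigma$-weakly continuous, so $eE_{\varphi^{-1}}=\mc B_{M'}(L^2(M'),V)$ is weak$^*$-closed. Since $S=\{\alpha:\hat\alpha\in eE_{\varphi^{-1}}\}=\Phi^{-1}(eE_{\varphi^{-1}})$ is the preimage of a weak$^*$-closed set under the weak$^*$-continuous map $\Phi$, it is weak$^*$-closed in $\mc B(H)$. Finally, under the identifications $H\cong L^2(M')$ and $\mc B_{M'}(H,V)=eE_{\varphi^{-1}}$ from the proof of Theorem~\ref{thm:comp_submods}, the map $S\to\mc B_{M'}(H,V)$ of the statement is precisely the restriction of $\Phi$ to $S$, hence weak$^*$-weak$^*$-continuous.

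The main obstacle is the continuity computation in the second paragraph: the vectors $\Lambda'(x)$ and the formula for $\hat\alpha$ involve the unbounded operator $\nabla^{-1/2}$, so one must work on the dense domain $\mf n_{\varphi'}$ and confirm that the relevant products are genuinely trace-class before invoking cyclicity. What makes this go through cleanly is the identification of $L^2(\tilde\varphi)$ with $HS(H)$ (Proposition~\ref{prop:L2BS_is_HSH}), which turns $\hat\alpha\Lambda'(x)$ into the honest Hilbert--Schmidt operator $\alpha x\nabla^{-1/2}$; pairing against $\zeta\in HS(H)$ then produces a trace against a trace-class operator, and $\sigma$-weak continuity in $\alpha$ is immediate. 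A secondary point to check is the reduction to a norm-total family of predual functionals, which rests on the density of $\{\Lambda'(x):x\in\mf n_{\varphi'}\}$ in $L^2(M')$ together with the trace-norm density of finite-rank operators.
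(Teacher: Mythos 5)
Your proof is correct, and its computational core coincides with the paper's: everything turns on the fact that, because $\varphi^{-1}$ is bounded, $1\in\mf n_{\varphi^{-1}}$ and hence $x\nabla^{-1/2}\in HS(H)$ for $x\in\mf n_{\varphi'}$, so pairing $\hat\alpha$ against a rank-one predual functional produces $\Tr(w\alpha)$ with $w$ a product of two Hilbert--Schmidt operators, i.e.\ a normal functional of $\alpha$. Where you differ is in the functional-analytic scaffolding. The paper invokes the Krein--\v{S}mulian theorem to reduce weak$^*$-closedness of $S$ to bounded nets, shows that a bounded net $\alpha_i\to\alpha$ weak$^*$ forces $\hat\alpha_i\to\hat\alpha$ weak$^*$ (using boundedness of $(\hat\alpha_i)$ to pass from a dense set of vectors to all of the predual), and then cites an external lemma to upgrade bounded-net continuity of $\alpha\mapsto\hat\alpha$ to genuine weak$^*$-weak$^*$-continuity. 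You instead build the pre-adjoint of $\Phi\colon\alpha\mapsto\hat\alpha$ directly: the predual functionals pulling back to normal functionals form a norm-closed subspace, the rank-one ones lie in it and are norm-total, so $\Phi$ is the adjoint of a bounded map and is weak$^*$-continuous outright; then $S=\Phi^{-1}(eE_{\varphi^{-1}})$ is weak$^*$-closed as the preimage of the weak$^*$-closed set $\ker(\id-e)$, and the map in the statement is just the restriction of $\Phi$. Your route is self-contained (no Krein--\v{S}mulian, no external citation) and delivers the stronger global statement that $\Phi$ is weak$^*$-continuous on all of $\mc B(H)$ with no boundedness hypotheses on nets; the cost is that you must be slightly careful with the predual of $E_{\varphi^{-1}}$ as a quotient (identifying it via weak$^*$-closedness in $\mc B(L^2(M'),HS(H))$ and checking that images of rank-one functionals remain norm-total there), points you flag and which do go through. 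The paper's route is shorter given the cited lemma and never needs to name the predual of $E_{\varphi^{-1}}$ explicitly.
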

\begin{proof}
By the Krein--Smulian Theorem, \cite[Theorem~12.1]{Conway_FunctionalAnalysisBook} for example, it suffices to show that if $(\alpha_i)$ is a bounded net in $S$ with $\alpha_i\to\alpha$ weak$^*$ in $\mc B(H)$, then $\alpha\in S$.  For $b\in\mf n_\varphi$, as $1\in\mf n_{\varphi^{-1}}$ we know from Remark~\ref{rem:L2M'_vs_L2M} that $\hat 1 J\Lambda(b) = JbJ \nabla^{-1/2} \in HS(H)$.  For any $u\in HS(H)$ the map $\mc B(H) \ni \beta \mapsto (u|\beta(JbJ\nabla^{-1/2}))$ is a normal functional, and so
\[ \lim_i (u|\hat\alpha_i J\Lambda(b))_{HS} = \lim_i (u|\alpha_i (JbJ \nabla^{-1/2}))_{HS}
= (u|\alpha (JbJ \nabla^{-1/2}))_{HS} = (u|\hat\alpha J\Lambda(b)). \]
As $(\hat\alpha_i)$ is a bounded net in $\mc B_{M'}(H, HS(H))$, this shows that $\hat\alpha_i \to \hat\alpha$ weak$^*$.
As $\im(\hat\alpha_i)\subseteq V$ for all $i$, also $\im(\hat\alpha) \subseteq V$, and so $\alpha\in S$, as required.

To verify that $S \to \mc B_{M'}(H,V)$ is weak$^*$-weak$^*$-continuous it suffices to show that if $(\alpha_i)$ is a bounded net in $\mc B(H)$ converging weak$^*$ to $\alpha$, then $\hat\alpha_i \to \hat\alpha$ weak$^*$, see \cite[Lemma~10.1]{Daws_multipliers} for example.  The above argument already shows this.
\end{proof}

We explore the relation between $S$ and its image in $\mc B_{M'}(H,V)$ more in Sections~\ref{sec:self-dual_S} and~\ref{sec:considerations_of_S}.

\begin{example}\label{eg:fd1}
We see how $M$ being finite-dimensional fits into this framework.  We have $E_{\varphi^{-1}} = E_{\varphi^{-1}}^0 = \mc B(H)$.  Indeed, given $t\in E_{\varphi^{-1}} = \mc B_{M'}(L^2(M'), HS(H))$, by Remark~\ref{rem:L2M'_vs_L2M}, $tb = (1\otimes (Jb^*J)^\top)t$ for each $b\in M$.  Set $x = t\Lambda(1) \in HS(H) = \mc B(H)$, so
\begin{align*}
t J \Lambda(b)
&= t \Lambda(\sigma_{-i/2}(b^*))
= t \sigma_{-i/2}(b^*) \Lambda(1)
= (1\otimes (J\sigma_{i/2}(b)J)^\top) t\Lambda(1)
= (1\otimes (J\sigma_{i/2}(b)J)^\top) x \\
&= x \circ J\sigma_{i/2}(b)J
=  x J \nabla^{-1/2} b \nabla^{1/2} J
= x \nabla^{1/2} JbJ \nabla^{-1/2}.
\end{align*}
So $t = \hat\alpha$ where $\alpha = x\nabla^{1/2} \in HS(H)$.

The map $\mc B(H) = \mf n_{\varphi^{-1}} \ni \alpha \mapsto \hat\alpha\Lambda(1) \in HS(H) = H\otimes\overline H$ is $\rankone{\xi}{\eta} \mapsto \rankone{\xi}{\eta} \nabla^{-1/2} = \xi \otimes \overline{\nabla^{-1/2}\eta}$, that is, exactly the unitary in \eqref{eq:twistedGNS}.  Consequently, Proposition~\ref{prop:V_to_S} does extend the bijection between $V, e$ and $S=\mc S$ given in Section~\ref{sec:adj_proj}.
\end{example}

\section{Completely bounded maps}\label{sec:CB_maps}

We have not so far considered quantum adjacency operators $A$.  At a minimum, we want $A \colon M \to M$ to be a normal completely positive map.  To describe when $A$ is ``Schur idempotent'', at this level of generality, seems rather hard, so we proceed instead to find direct links with projections $e\in M \vnten M^\op$.

We will consider an infinite-dimensional version of the construction in \cite[Section~5.4]{daws_quantum_graphs}.  Again, it perhaps aids the presentation to consider two von Neumann algebras $M,N$ and a normal CP map $A \colon M \to N$.  To be consistent, we work with $N \vnten M^\op$, but this means that we consider, for example, $L^2(N) \otimes L^2(M^\op)$ as a right $N$-module, the action of $N$ on the \emph{left} factor $L^2(N)$.  Hopefully this will not cause undue confusion.

In fact, it is more notationally convenient to look at maps $M^\op \to N$ defined by members of $N\vnten M$.  We can interpret $\id\otimes\varphi$ as an operator valued weight from $N \vnten M$ to $N$, see Section~\ref{sec:slicing_with_weights} for the details.  We form $\mf m_{\id\otimes\varphi}$ which becomes an $N$-bimodule, with self-dual completion $\mc B_N(L^2(N), L^2(N) \otimes L^2(M))$.  Here we choose some weight $\psi$ on $N$, with GNS map $\Lambda_\psi$.  Each $x\in \mf m_{\id\otimes\varphi}$ induces $\hat x = \Lambda_{\id\otimes\varphi}(x) \colon L^2(N) \to L^2(N) \otimes L^2(M); \Lambda_\psi(c) \mapsto (\Lambda_\psi\otimes\Lambda)(x(c\otimes 1))$.

For $f \in \mf n_{\id\otimes\varphi}$, from Lemma~\ref{lem:slice_alt} we have 
\begin{equation}
\big( \Lambda_{\id\otimes\varphi}(f) \xi \big| \eta\otimes\Lambda(b) \big)
= \varphi( (\omega_{\xi,\eta}\otimes\id)(f^*) b)
\qquad (b\in\mf n_{\varphi}, \xi,\eta\in L^2(C)).
\label{eq:slicing_restatement}
\end{equation}
Define $\pi^\op \colon M^\op \to \mc B(L^2(M)); b^\op \mapsto Jb^*J$, a unital injection normal $*$-homomorphism.  (This might seem arbitrary; compare with Remark~\ref{rem:other_Mop_GNS} below for some rationale.)  Let $f,g\in \mf n_{\id\otimes\varphi}$ and define
\[ A_{f,g}\colon M^\op \to N; \quad
x \mapsto
\Lambda_{\id\otimes\varphi}(f)^* (1\otimes\pi^\op(x)) \Lambda_{\id\otimes\varphi}(g) \in \mc B_N(L^2(N)) \cong N. \]
By definition, $A_{f,g}$ is a normal completely bounded map.

\begin{example}\label{eg:fd3}
Let $M$ be finite-dimensional, so $\mf n_{\id\otimes\varphi} = N\otimes M$ and for $f = \sum_i a_i \otimes b_i \in N\otimes M$, we have that $\Lambda_{\id\otimes\varphi}(f) = \sum_i a_i \otimes \Lambda(b_i)$, in the sense that $\Lambda_{\id\otimes\varphi}(f) \colon \xi \mapsto \sum_i a_i\xi \otimes \Lambda(b_i)$ for $\xi\in L^2(N)$.  Also $\Lambda_{\id\otimes\varphi}^*(\eta\otimes\Lambda(c)) = \sum_i a_i^*\eta \varphi(b_i^*c)$.  Let $g = \sum_i c_i \otimes d_i$, so that
\begin{align*}
A_{f,g}(x^\op)
&= \sum_{i,j} a_i^*c_j (\Lambda(b_i)|Jx^*J\Lambda(d_j))
= \sum_{i,j} a_i^*c_j (\Lambda(b_i)|\Lambda(d_j\sigma_{-i/2}(x))) \\
&= \sum_k \varphi(y_k \sigma_{-i/2}(x)) x_k
= \sum_k \varphi(\sigma_{i/2}(y_k) x) x_k,
\end{align*}
if $f^*g = \sum_k x_k \otimes y_k$ say.  Thus $A_{f,g}$ depends only on $f^*g$ (compare with Corollary~\ref{corr:Afg_only_fstarg} below) and the map $N\otimes M \to \mc B(M^\op, N); f^*g \mapsto A_{f,g}$ is exactly (the inverse of) the map $\Psi'$ used in \cite[Theorem~5.36]{daws_quantum_graphs} (see also Section~\ref{sec:adj_proj}), so we have generalised the finite-dimensional situation.  Let us also remark that using slice maps and Hilbert modules, while more technical, does seem to lead to a conceptually easier to understand framework than the calculations in \cite{daws_quantum_graphs}.
\end{example}

We now work towards finding a more direct link between $A_{f,g}$ and $f,g$ (in fact, as we'll see, $f^*g$ is all that matters).  In the completely positive case, we also show some sort of converse.

\begin{lemma}\label{lem:slice_f_moved_out}
Let $f\in \mf n_{\id\otimes\varphi}$ and let $b^*\in\mf n_{\varphi}$.  Then
\begin{equation}
(1\otimes\pi^\op(b^\op)) \Lambda_{\id\otimes\varphi}(f) \xi
= f (\xi \otimes J \Lambda(b^*) ) \qquad (\xi\in L^2(N)).
\label{eq:swap_hatb_slice_better}
\end{equation}
\end{lemma}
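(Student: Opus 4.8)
The plan is to verify \eqref{eq:swap_hatb_slice_better} by testing both sides against the total family of vectors $\eta\otimes\Lambda(a)$ with $\eta\in L^2(N)$ and $a\in\mf n_\varphi$ (total since $\Lambda(\mf n_\varphi)$ is dense in $L^2(M)$). Both sides are genuine vectors of $L^2(N)\otimes L^2(M)$ --- the left because $\Lambda_{\id\otimes\varphi}(f)$ and $1\otimes\pi^\op(b^\op)$ are bounded, the right because $f$ is bounded --- so matching all these inner products suffices. The computation runs for arbitrary $\xi,\eta$; no reduction of $\xi$ to a GNS vector $\Lambda_\psi(c)$ is needed.

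First I would record what \eqref{eq:slicing_restatement} says about the second leg of $\Lambda_{\id\otimes\varphi}(f)\xi$. Writing $T=(\omega_{\eta,\xi}\otimes\id)(f)\in M$ for the slice, with $\omega_{\eta,\xi}(x)=(\eta|x\xi)$, and using the standard slice-adjoint identity $(\omega_{\xi,\eta}\otimes\id)(f^*)=T^*$, equation \eqref{eq:slicing_restatement} reads $(\Lambda_{\id\otimes\varphi}(f)\xi|\eta\otimes\Lambda(a))=\varphi(T^*a)$ for $a\in\mf n_\varphi$. Since the left-hand side is bounded by $\|\Lambda_{\id\otimes\varphi}(f)\xi\|\,\|\eta\|\,\|\Lambda(a)\|$, the functional $\Lambda(a)\mapsto\varphi(T^*a)$ is bounded, which forces $T\in\mf n_\varphi$ and $\varphi(T^*a)=(\Lambda(T)|\Lambda(a))$. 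As $\Lambda(\mf n_\varphi)$ is dense and $\zeta\mapsto(\Lambda_{\id\otimes\varphi}(f)\xi|\eta\otimes\zeta)$ is bounded, this upgrades to $(\Lambda_{\id\otimes\varphi}(f)\xi|\eta\otimes\zeta)=(\Lambda(T)|\zeta)$ for every $\zeta\in L^2(M)$.

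Next I would compute the two inner products. On the left, moving the adjoint $(1\otimes\pi^\op(b^\op))^*=1\otimes JbJ$ across gives $((1\otimes\pi^\op(b^\op))\Lambda_{\id\otimes\varphi}(f)\xi|\eta\otimes\Lambda(a))=(\Lambda_{\id\otimes\varphi}(f)\xi|\eta\otimes JbJ\Lambda(a))=(\Lambda(T)|JbJ\Lambda(a))$ by the previous paragraph; it is essential that I apply the identified functional to the vector $JbJ\Lambda(a)$ rather than try to expand it, since $b\notin\mf n_\varphi$ in general. On the right, the slice-map matrix-coefficient formula gives $(f(\xi\otimes J\Lambda(b^*))|\eta\otimes\Lambda(a))=(TJ\Lambda(b^*)|\Lambda(a))$ with the \emph{same} slice $T$. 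Thus the lemma reduces to the purely modular identity $(\Lambda(T)|JbJ\Lambda(a))=(TJ\Lambda(b^*)|\Lambda(a))$ in $L^2(M)$.

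Finally I would prove this last identity by pushing $JbJ$ back, $(\Lambda(T)|JbJ\Lambda(a))=(Jb^*J\Lambda(T)|\Lambda(a))$, and invoking the standard left/right bounded-vector identity $Jb^*J\Lambda(T)=TJ\Lambda(b^*)$: here $\Lambda(T)$ is left bounded with left-multiplier $T$, $J\Lambda(b^*)$ is right bounded with right-multiplier $Jb^*J$ (both because $T,b^*\in\mf n_\varphi$), and the Tomita--Takesaki relation $\pi_l(\mu)\nu=\pi_r(\nu)\mu$ applies. I expect the main obstacle to be precisely this modular step: the hypothesis is only $b^*\in\mf n_\varphi$, \emph{not} analyticity of $b$, so one can neither write $J\Lambda(b^*)=\Lambda(\sigma_{-i/2}(b))$ nor simplify $JbJ\Lambda(a)$; the symmetric left/right-bounded-vector identity is the clean route that sidesteps all analyticity and domain issues. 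A secondary care point is the justification that the slice $T$ lies in $\mf n_\varphi$ and that \eqref{eq:slicing_restatement} extends to all $\zeta\in L^2(M)$, both handled by the boundedness-and-density argument above.
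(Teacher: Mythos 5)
Your proof is correct, but it takes a genuinely different route from the paper's. The paper argues in two stages: it first assumes $b$ is analytic, so that $J\Lambda(b^*)=\Lambda(\sigma_{-i/2}(b))$ by Lemma~\ref{lem:form_J} and $\pi^\op(b^{*\op})=JbJ$ acts as right multiplication by $\sigma_{-i/2}(b^*)$ by Lemma~\ref{lem:action_J}, then matches the two resulting expressions using the KMS-type commutation Lemma~\ref{lem:sigmaiswap}, and finally removes the analyticity hypothesis by smearing (Appendix~\ref{sec:weights}), using continuity of both sides of \eqref{eq:swap_hatb_slice_better} in $b$. You instead treat an arbitrary $b^*\in\mf n_\varphi$ from the start: testing against $\eta\otimes\Lambda(a)$, you funnel both sides through the single slice $T=(\omega_{\eta,\xi}\otimes\id)(f)\in\mf n_\varphi$ and its GNS vector $\Lambda(T)$, and reduce the lemma to the identity $TJ\Lambda(b^*)=Jb^*J\Lambda(T)$. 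That identity is exactly the statement $R^\varphi(J\Lambda(b^*))=Jb^*J$ recorded in Example~\ref{eg:left_bdd}, valid for all $T,b^*\in\mf n_\varphi$ with no analyticity, so your argument needs neither the modular-automorphism lemmas nor the smearing step. What your route buys is a shorter, approximation-free proof at the stated level of generality, with the modular content packaged into one right-bounded-vector fact; what the paper's route buys is that it runs entirely on the Appendix~\ref{sec:weights} toolkit (Lemmas~\ref{lem:form_J}, \ref{lem:action_J}, \ref{lem:sigmaiswap} and smearing), which it reuses in several later arguments.

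One small caution: your claim that GNS-boundedness of $\Lambda(a)\mapsto\varphi(T^*a)$ ``forces'' $T\in\mf n_\varphi$ is circular as written, since $\varphi(T^*a)$ is not defined until one knows $T^*a\in\mf m_\varphi$. This costs you nothing, however: $(\omega_{\xi,\eta}\otimes\id)(f^*)\in\mf n_\varphi^*$, equivalently $T\in\mf n_\varphi$, is part of Lemma~\ref{lem:slice_alt} and is already presupposed in writing \eqref{eq:slicing_restatement}, and then $\varphi(T^*a)=(\Lambda(T)|\Lambda(a))$ is just the GNS inner product. With that attribution corrected, the remaining steps --- the extension to all $\zeta\in L^2(M)$ by boundedness and density, the adjoint move $(1\otimes\pi^\op(b^\op))^*=1\otimes JbJ$, the matrix-coefficient identity $(f(\xi\otimes\mu)|\eta\otimes\nu)=(T\mu|\nu)$, and the final commutation --- are all watertight.
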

\begin{proof}
We first show \eqref{eq:swap_hatb_slice_better} for $b\in D(\sigma_{i/2}) \cap D(\sigma_{-i/2})$ with $b^*, \sigma_{-i/2}(b) \in \mf n_\varphi$.  For such $b$, by Lemma~\ref{lem:form_J} we have $J\Lambda(b^*) = \Lambda(\sigma_{-i/2}(b))$.
For $a\in \mf n_\varphi$, by Lemma~\ref{lem:action_J}, we have $JbJ\Lambda(a) = \Lambda(a \sigma_{-i/2}(b^*))$ and hence $\pi^\op(b^{*\op}) \Lambda(a) = JbJ\Lambda(a)
= \Lambda(a \sigma_{-i/2}(b^*))$.
Thus, for $c\in\mf n_{\psi}$, from \eqref{eq:slicing_restatement} we have
\begin{align*}
\big( (1\otimes\pi^\op(b^\op))\Lambda_{\id\otimes\varphi^\op}(f) \xi \big| \eta\otimes\Lambda(a) \big)
&= \varphi^\op\big( (\omega_{\xi,\eta}\otimes\id)(f^*) a \sigma_{-i/2}(b^*) \big),
\end{align*}
while also
\begin{align*}
\big( f (\xi\otimes\Lambda(\sigma_{-i/2}(b))) \big| \eta\otimes\Lambda(a) \big)
&= \big( \Lambda(\sigma_{-i/2}(b)) \big| (\omega_{\xi,\eta}\otimes\id)(f^*) \Lambda(a) \big) \\
&= \varphi\big( \sigma_{i/2}(b^*) (\omega_{\xi,\eta}\otimes\id)(f^*) a \big).
\end{align*}
From Lemma~\ref{lem:slice_alt}, we have that $(\omega_{\Lambda_C(c),\eta}\otimes\id)(f^*) \in \mf n_{\varphi}^*$ and so $x = (\omega_{\Lambda_C(c),\eta}\otimes\id)(f^*) a \in \mf m_{\varphi}$.  As $b \in D(\sigma_{i/2}) \cap D(\sigma_{-i/2})$ also $b^* \in D(\sigma_{i/2}) \cap D(\sigma_{-i/2})$ and so $\sigma_{i/2}(b^*) \in D(\sigma_{-i})$.  By Lemma~\ref{lem:sigmaiswap}, we conclude that $\varphi(x \sigma_{-i/2}(b^*)) = \varphi(\sigma_{i/2}(b^*) x)$.  So the above expressions are equal, and as vectors of the form $\eta \otimes \Lambda^\op(a^\op)$ are dense, we have
\[ (1\otimes\pi^\op(b^\op))\Lambda_{\id\otimes\varphi}(f) \xi
= f (\xi\otimes\Lambda(\sigma_{-i/2}(b))). \]
This establishes \eqref{eq:swap_hatb_slice_better} in this case.

Now let $b\in\mf n_\varphi^*$.  Smearing $b$ (see Appendix~\ref{sec:weights}) gives a sequence $(b_n)$ analytic with $\pi^\op(b_n^\op) \to \pi^\op(b^\op)$ $\sigma$-strongly.  As smearing interacts as we hope with the GNS map (the inverse argument is before Corollary~2 of \cite[Section~10.21]{StratilaZsido2nd}) we have that $\Lambda(b_n^*) \to \Lambda(b^*)$.  The result follows.
\end{proof}

\begin{corollary}\label{corr:Afg_only_fstarg}
For $f,g \in \mf n_{\id\otimes\varphi}$, the map $A_{f,g} \colon M^\op \to N$ depends only on $f^*g \in N \vnten M$.
\end{corollary}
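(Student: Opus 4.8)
The plan is to exploit the defining formula
\[
A_{f,g}(x) = \Lambda_{\id\otimes\varphi}(f)^* (1\otimes\pi^\op(x)) \Lambda_{\id\otimes\varphi}(g),
\]
and to rewrite the right-hand side so that $\Lambda_{\id\otimes\varphi}(f)$ and $\Lambda_{\id\otimes\varphi}(g)$ appear only through the product $\Lambda_{\id\otimes\varphi}(f)^*\Lambda_{\id\otimes\varphi}(g)$, which I expect to coincide with (the action on $L^2(N)$ of) a slice of $f^*g$. The key tool is Lemma~\ref{lem:slice_f_moved_out}: for $b^*\in\mf n_\varphi$ it lets me move the operator $1\otimes\pi^\op(b^\op)$ through $\Lambda_{\id\otimes\varphi}(g)$ and turn it into a right-action of $J\Lambda(b^*)$ on the tensor factor, i.e.\ $(1\otimes\pi^\op(b^\op))\Lambda_{\id\otimes\varphi}(g)\xi = g(\xi\otimes J\Lambda(b^*))$.

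First I would verify the claim for the dense set of $x = \pi^\op$-images coming from $b^\op$ with $b^*\in\mf n_\varphi$ (or, more precisely, the linear span of such elements, using that $\pi^\op$ is a normal homomorphism and that these $b$ are $\sigma$-weakly total in $M^\op$). For such an $x = b^\op$, applying Lemma~\ref{lem:slice_f_moved_out} to $g$ gives
\[
(1\otimes\pi^\op(b^\op))\Lambda_{\id\otimes\varphi}(g)\xi = g\,(\xi\otimes J\Lambda(b^*)),
\]
so that
\[
\big(A_{f,g}(b^\op)\xi \,\big|\, \eta\big)
= \big( g(\xi\otimes J\Lambda(b^*)) \,\big|\, \Lambda_{\id\otimes\varphi}(f)\eta \big)
\qquad (\xi,\eta\in L^2(N)).
\]
Next I would apply Lemma~\ref{lem:slice_f_moved_out} a second time, now to $f$ with the \emph{same} $b$, to rewrite $\Lambda_{\id\otimes\varphi}(f)\eta$ paired against the extra tensor slot; the point is that both occurrences produce the same vector $J\Lambda(b^*)$ in the $L^2(M)$-factor, so after taking inner products the dependence collapses to the matrix element of $f^*g$ against $J\Lambda(b^*)$. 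Concretely I expect to land on an expression of the form $\varphi\big(\sigma_{\cdot}(b^*)\,(\omega_{\xi,\eta}\otimes\id)(f^*g)\,\cdot\big)$ or similar, which manifestly depends only on $f^*g$.

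Finally, I would pass from the dense collection of elements $b^\op$ to all of $M^\op$ by normality: both $x\mapsto A_{f,g}(x)$ and the candidate expression built from $f^*g$ are normal (the former by the remark after the definition of $A_{f,g}$, the latter because slicing $f^*g\in N\vnten M$ against a normal functional is normal in the remaining variable), and they agree on a $\sigma$-weakly dense subset, hence everywhere. Therefore if $f_1^*g_1 = f_2^*g_2$ then $A_{f_1,g_1} = A_{f_2,g_2}$, which is the assertion. The main obstacle I anticipate is bookkeeping of the modular automorphisms $\sigma_{\pm i/2}$ and the conjugation $J$ when applying Lemma~\ref{lem:slice_f_moved_out} twice: I must make sure the two invocations use exactly matching data so that the $b$-dependence genuinely factors through $f^*g$ and does not leave residual $f$- or $g$-specific terms. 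Handling the domain issues for $\sigma_{i/2}$ (working first with $b$ analytic, then smearing as in the lemma's proof) is the delicate technical point, but it is already packaged inside Lemma~\ref{lem:slice_f_moved_out}, so I would lean on that rather than redo the smearing argument.
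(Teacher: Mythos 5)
Your plan breaks down at the step where you apply Lemma~\ref{lem:slice_f_moved_out} ``a second time, now to $f$ with the same $b$''. After the first application you have
\[
\big(\eta \,\big|\, A_{f,g}(b^\op)\xi\big)
= \big( \Lambda_{\id\otimes\varphi}(f)\eta \,\big|\, g(\xi\otimes J\Lambda(b^*)) \big),
\]
and at this point the operator $1\otimes\pi^\op(b^\op)$ has been consumed: there is no factor of that form left in front of $\Lambda_{\id\otimes\varphi}(f)\eta$ for the lemma to act on. Moreover, the collapse you describe, with the \emph{same} vector $J\Lambda(b^*)$ in both tensor slots, would produce
\[
\big( f(\eta\otimes J\Lambda(b^*)) \,\big|\, g(\xi\otimes J\Lambda(b^*)) \big)
= \big( \eta \,\big|\, (\id\otimes\omega_{J\Lambda(b^*), J\Lambda(b^*)})(f^*g)\, \xi \big),
\]
which is the matrix element of $A_{f,g}$ at $(bb^*)^\op = b^{*\op}b^\op$, not at $b^\op$; note it is quadratic rather than linear in $b$, so it cannot compute the value of the linear map $A_{f,g}$ at $b^\op$.

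The repair is to test $A_{f,g}$ on products rather than on single elements, which is what the paper does. For $b_0,b_1\in\mf n_\varphi^*$, since $\pi^\op$ is a $*$-homomorphism we have $1\otimes\pi^\op(b_1^{*\op}b_0^\op) = \big(1\otimes\pi^\op(b_1^\op)\big)^*\big(1\otimes\pi^\op(b_0^\op)\big)$; moving the first factor across the inner product and applying Lemma~\ref{lem:slice_f_moved_out} once to $f$ (with $b_1$) and once to $g$ (with $b_0$) gives
\[
\big( \xi_1 \big| A_{f,g}(b_1^{*\op} b_0^{\op})\xi_2 \big)
= \big( f(\xi_1 \otimes J\Lambda(b_1^*)) \big| g(\xi_2\otimes J\Lambda(b_0^*)) \big)
= \big( \xi_1 \big| (\id\otimes\omega_{J\Lambda(b_1^*), J\Lambda(b_0^*)})(f^*g)\, \xi_2 \big),
\]
with two \emph{different} vectors, one per leg. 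Since $\lin \mf n_\varphi^*\mf n_\varphi = \mf m_\varphi$ is weak$^*$-dense in $M$ and $A_{f,g}$ is normal, this determines $A_{f,g}$ and exhibits it as a function of $f^*g$ alone. Your anticipated final formula of the shape $\varphi\big((\omega_{\eta,\xi}\otimes\id)(f^*g)\,\sigma_{-i/2}(b)\big)$ can in fact be reached for a single (analytic) $b$, but by a different mechanism: move $g^*$ across the inner product using $\pi(g^*)\Lambda_{\id\otimes\varphi}(f) = \Lambda_{\id\otimes\varphi}(g^*f)$, then apply \eqref{eq:slicing_restatement} together with Lemma~\ref{lem:form_J} --- not by a second application of Lemma~\ref{lem:slice_f_moved_out}.
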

\begin{proof}
Let $b_0, b_1 \in \mf n_\varphi^*$ and let $\xi_1,\xi_2 \in L^2(N)$.  Then
\begin{align*}
\big( \xi_1 \big| A_{f,g}(b_1^{*\op} b_0^{\op})\xi_2 \big)
&= \big( \Lambda_{\id\otimes\varphi}(f) \xi_1 \big| (1\otimes\pi^\op(b_1^{*\op} b_0^{\op}) \Lambda_{\id\otimes\varphi}(g) \xi_2 \big) \\
&= \big( f(\xi_1 \otimes J\Lambda(b_1^*)) \big| g(\xi_2\otimes J\Lambda(b_0^*)) \big)
= \big( \xi_1 \big| (\id\otimes\omega_{J\Lambda(b_1^*), J\Lambda(b_0^*)})(f^*g)  \xi_2 \big).
\end{align*}
As $b_0 b_1^* \in \mf n_\varphi^* \mf n_\varphi$, the linear span of such elements is $\mf m_\varphi$ which is weak$^*$-dense in $M$.  This shows that $A_{f,g}$ only depends upon $f^*g \in N \vnten M$, as elements $b_1^{*\op} b_0^{\op} = (b_0b_1^*)^\op$ have dense span in $M^\op$.
\end{proof}

We now reverse this relation between $A$ and $f,g$, at least in the completely positive case when $f=g$.

\begin{proposition}\label{prop:Abdd_implies_f_int}
Let $A\colon M^\op \to N$ be a bounded weak$^*$-continuous map and let $f\in N\vnten M$ with
\[ A(b_1^{*\op}b_0^\op) = (\id\otimes\omega_{J\Lambda(b_1^*), J\Lambda(b_0^*)})(f^*f) 
\qquad (b_0,b_1\in\mf n_\varphi^*). \]
Then $f\in \mf n_{\id\otimes\varphi}$ and $A = A_{f,f}$.
\end{proposition}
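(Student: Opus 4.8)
The plan is to read the hypothesis as a statement about norms of vectors, and then to construct the Hilbert‑module image $\Lambda_{\id\otimes\varphi}(f)$ by hand, using \emph{normality} of $A$ to supply the crucial bound. First I would rewrite the hypothesis. Writing $\omega_\xi=\omega_{\xi,\xi}$, pairing the given identity against $\omega_{\xi_1,\xi_2}$ on $N$ and unravelling the slice exactly as in the proof of Corollary~\ref{corr:Afg_only_fstarg} gives
\[ \big( \xi_1 \big| A(b_1^{*\op}b_0^\op)\xi_2 \big) = \big( f(\xi_1\otimes J\Lambda(b_1^*)) \big| f(\xi_2\otimes J\Lambda(b_0^*)) \big) \qquad (b_0,b_1\in\mf n_\varphi^*,\ \xi_1,\xi_2\in L^2(N)). \]
In particular, taking $\xi_1=\xi_2=\xi$ and $b_0=b_1=b$, the right‑hand side is $\|f(\xi\otimes J\Lambda(b^*))\|^2$, which is exactly the quantity that would equal $\|\Lambda_{\id\otimes\varphi}(f)\xi\|^2$ were $f$ already known to be integrable. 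Since $f$ is a genuine bounded operator, these right‑hand sides are meaningful without any integrability assumption.

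Then I would fix $\xi$ and choose a contractive approximate identity $(b_\alpha)$ in $\mf n_\varphi\cap\mf n_\varphi^*$ with $b_\alpha\to 1$ $\sigma$-strongly$^*$, and set $\zeta_\alpha=f(\xi\otimes J\Lambda(b_\alpha^*))$. The displayed identity gives $(\zeta_\alpha|\zeta_\beta)=(\xi|A(b_\alpha^{*\op}b_\beta^\op)\xi)=(\xi|A((b_\beta b_\alpha^*)^\op)\xi)$. As $b_\beta b_\alpha^*\to 1$ $\sigma$-strongly (hence weak$^*$, boundedly) when $\alpha,\beta\to\infty$, normality of $A$ yields $A((b_\beta b_\alpha^*)^\op)\to A(1)$ weak$^*$, where $A(1)\ge 0$ since it is a weak$^*$-limit of the positive elements $(\id\otimes\omega_{J\Lambda(b_\alpha^*)})(f^*f)$. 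Thus $(\zeta_\alpha|\zeta_\beta)\to(\xi|A(1)\xi)$ along the product net, so $\|\zeta_\alpha-\zeta_\beta\|^2\to 0$, and the Cauchy net $(\zeta_\alpha)$ converges to a vector $\hat f\xi$ with $\|\hat f\xi\|^2=(\xi|A(1)\xi)\le\|A(1)\|\,\|\xi\|^2$. Linearity in $\xi$ gives a bounded operator $\hat f\colon L^2(N)\to L^2(N)\otimes L^2(M)$ with $\|\hat f\|^2\le\|A(1)\|$. This is the main obstacle: mere boundedness of $A$ is insufficient (the natural monotone approximations of $\varphi$ lead in a circle, since the corresponding arguments of $A$ have unbounded norm), and one must instead exploit weak$^*$-continuity of $A$ together with a net tending to $1$, so that the limiting value is controlled by the single element $A(1)$.

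Next I would identify $\hat f$ with $\Lambda_{\id\otimes\varphi}(f)$. For fixed $b^*\in\mf n_\varphi$, using that $f\in N\vnten M$ commutes with $1\otimes\pi^\op(b^\op)=1\otimes Jb^*J$ and that $\Lambda(b^*b_\alpha^*)=Jb_\alpha J\Lambda(b^*)\to\Lambda(b^*)$ (the right‑module action, as $b_\alpha\to 1$ strongly), passing to the limit gives $(1\otimes\pi^\op(b^\op))\hat f\xi=f(\xi\otimes J\Lambda(b^*))$ for all $\xi$. This is precisely the relation of Lemma~\ref{lem:slice_f_moved_out}, which together with \eqref{eq:slicing_restatement} characterises $\Lambda_{\id\otimes\varphi}(f)$; invoking the boundedness criterion and uniqueness of the module image in Theorem~\ref{thm:nT_vs_module}, I conclude $f\in\mf n_{\id\otimes\varphi}$ with $\Lambda_{\id\otimes\varphi}(f)=\hat f$.

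Finally, with $f$ now integrable the map $A_{f,f}$ is defined, and Corollary~\ref{corr:Afg_only_fstarg} gives $A_{f,f}(b_1^{*\op}b_0^\op)=(\id\otimes\omega_{J\Lambda(b_1^*),J\Lambda(b_0^*)})(f^*f)=A(b_1^{*\op}b_0^\op)$ by hypothesis. Since the elements $b_1^{*\op}b_0^\op=(b_0b_1^*)^\op$ have weak$^*$-dense span in $M^\op$ (the $b_0b_1^*\in\mf n_\varphi^*\mf n_\varphi$ spanning $\mf m_\varphi$), and both $A$ and $A_{f,f}$ are normal, we conclude $A=A_{f,f}$.
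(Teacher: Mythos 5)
Your construction of $\hat f$ is sound: from $(\zeta_\alpha|\zeta_\beta)=(\xi|A((b_\beta b_\alpha^*)^\op)\xi)$, joint $\sigma$-strong convergence $b_\beta b_\alpha^*\to 1$ (bounded) and normality of $A$ do make $(\zeta_\alpha)$ Cauchy with $\|\hat f\xi\|^2=(\xi|A(1^\op)\xi)$. The proof breaks at the identification step. The identity you invoke, $\Lambda(b^*b_\alpha^*)=Jb_\alpha J\Lambda(b^*)$, is false for non-tracial $\varphi$: it conflates right multiplication inside $\Lambda$ with the right module action. By Lemma~\ref{lem:action_J}, the correct statement is $\Lambda(b^*b_\alpha^*)=J\sigma_{i/2}(b_\alpha^*)^*J\Lambda(b^*)=J\sigma_{-i/2}(b_\alpha)J\Lambda(b^*)$, which even to write down requires $b_\alpha^*\in D(\sigma_{i/2})$, and whose convergence to $\Lambda(b^*)$ requires $\sigma_{-i/2}(b_\alpha)\to 1$ strongly. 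A generic contractive approximate identity in $\mf n_\varphi\cap\mf n_\varphi^*$ converging $\sigma$-strong$^*$ to $1$ need not consist of analytic elements at all, and even when it does, analytic continuation gives no control on $\sigma_{-i/2}(b_\alpha)$. This is precisely why the paper's proof uses the special net of Proposition~\ref{prop:tomita_nice_bai}, for which \emph{every} continuation $\sigma_z(a_i)\to 1$ $\sigma$-strong$^*$; combined with Lemma~\ref{lem:form_J} (so $J\Lambda(a_i)=\Lambda(\sigma_{-i/2}(a_i^*))$) and Proposition~\ref{prop:motivation_approx_weight}, that net yields $\lim_i(J\Lambda(a_i)|xJ\Lambda(a_i))=\varphi(x)$ for $x\in M_+$. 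For your net one only gets the one-sided estimate $(J\Lambda(b_\alpha^*)|xJ\Lambda(b_\alpha^*))\leq\varphi(x)$, which points in the wrong direction for proving $\varphi\big((\omega_{\xi}\otimes\id)(f^*f)\big)<\infty$.

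There is a second gap even if the convergence is repaired: citing Theorem~\ref{thm:nT_vs_module} does not close the argument. Its criterion for $f\in\mf n_{\id\otimes\varphi}$ is that $f(c\otimes 1)\in\mf n_{\psi\otimes\varphi}$ for $c\in\mf n_\psi$ with the map $\Lambda_\psi(c)\mapsto\Lambda_{\psi\otimes\varphi}(f(c\otimes 1))$ bounded; what you have established is the intertwining relation of Lemma~\ref{lem:slice_f_moved_out}, which is a \emph{consequence} of integrability, and the paper proves no converse for $\id\otimes\varphi$ (the analogous converse, Proposition~\ref{prop:bounded_to_nvarphiinv}, is proved only for $\varphi^{-1}$, and takes real work). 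The paper's proof sidesteps both problems: using the Tomita net it shows directly that $(\xi|A(1^\op)\xi)=\varphi\big((\omega_{\xi,\xi}\otimes\id)(f^*f)\big)$ for all $\xi$, which is literally the definition of $f^*f\in\mf m^+_{\id\otimes\varphi}$ with $(\id\otimes\varphi)(f^*f)=A(1^\op)$, and then concludes via Corollary~\ref{corr:Afg_only_fstarg} exactly as you do. Your architecture can be salvaged by replacing your approximate identity with the net of Proposition~\ref{prop:tomita_nice_bai} and replacing the appeal to Theorem~\ref{thm:nT_vs_module} with this direct verification of the definition.
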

\begin{proof}
Let $(a_i)$ be the net of analytic elements given by Proposition~\ref{prop:tomita_nice_bai}.  Then $a_i \to 1$ strongly, and so $a_i^*a_i \to 1$ weak$^*$ and hence $A((a_i^*a_i)^\op) \to A(1^\op)$ weak$^*$.

For each $i$ let $b_i = \sigma_{-i/2}(a_i^*) \in \mf n_\varphi$ so $b_i \to 1$ $\sigma$-strong$^*$, and $\sigma_{i/2}(b_i)^* = a_i \to 1$ as well.  Hence for $x\in M_+$ we have
\[ ( J\Lambda(a_i) | x J\Lambda(a_i) )
= ( \Lambda(\sigma_{-i/2}(a_i^*)) | x \Lambda(\sigma_{-i/2}(a_i^*)) )
= \varphi( b_i^* x b_i ) = \varphi(x), \]
by Proposition~\ref{prop:motivation_approx_weight}.

Hence for $\xi\in L^2(C)$,
\[ (\xi|A(1^\op)\xi)
= \lim_i (\xi|A((a_i^*a_i)^\op)\xi)
= \lim_i ( f(\xi\otimes J\Lambda(a_i)) | f(\xi\otimes J\Lambda(a_i)) )
= \varphi( (\omega_{\xi,\xi}\otimes\id)(f^*f) ). \]
This exactly shows that $f^*f \in \mf m^+_{\id\otimes\varphi^\op}$.  We can hence form $A_{f,f}$, and then Corollary~\ref{corr:Afg_only_fstarg} shows that $A = A_{f,f}$.
\end{proof}

The following is a generalisation of \cite[Definition~3.22, Proposition~3.23]{Wasilewski_Quantum_Cayley}, which uses the ``bounded degree'' terminology; compare Section~\ref{sec:ellinfty} below.

\begin{definition}\label{defn:integrable_e_to_cp_A}
Let $e\in M\vnten M^\op$ be a projection.  We say that $e$ is \emph{integrable} or of \emph{bounded degree} if $e \in \mf n_{\id\otimes\varphi^\op}$, equivalently, $(\id\otimes\varphi^\op)(e) < \infty$.  When this holds, the associated quantum adjacency operator is $A = A_{e,e} \colon M \to M$.
\end{definition}

As $e = e^* = e^2$, we have the equivalence given in the definition.  Proposition~\ref{prop:Abdd_implies_f_int} gives a criteria for a CP map $A$ to arise in this way.

Notice that now we are working with $M^\op$, and so we should choose a natural weight and GNS construction for $M^\op$.  We use that $M^\op$ is isomorphic to $M^\top \subseteq \mc B(\overline H) = \mc B(\overline{L^2(M)})$ for $x^\op \mapsto x^\top$.  This leads to the following, which is simply a calculation, compare \cite[Lemma~5.32]{daws_quantum_graphs}.

\begin{lemma}\label{lem:L2Mop_L2M}
Define a weight $\varphi^\op$ on $M^\op$ by $\varphi^\op(x^\op) = \varphi(x)$.  This has natural GNS map $\Lambda^\op(x^\op) = \overline{\Lambda(x^*)}$ for $x\in \mf n_{\varphi^\op} = \{ x^{\op*} : x\in\mf n_\varphi\}$.  Under this isomorphism $L^2(M^\op) \cong \overline{L^2(M)}$ we have that $x^\op$ acts as $x^\top$, and that $J^\op = \overline J, \nabla^\op = \nabla^\top$.
\end{lemma}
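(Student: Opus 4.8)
The plan is to treat the statement as a direct modular-theory computation, organised in three stages: identifying $\varphi^\op$ together with its definition ideal, constructing the GNS map explicitly, and then reading off the modular data from the Tomita operator. First I would check that $\varphi^\op$ is a nfs weight and compute $\mf n_{\varphi^\op}$. Since $(x^\op)^* x^\op = (x^*)^\op x^\op = (xx^*)^\op$, we get $\varphi^\op((x^\op)^* x^\op) = \varphi(xx^*)$, which is finite exactly when $x^*\in\mf n_\varphi$. Normality, semifiniteness and faithfulness all transfer from $\varphi$ because $a^\op\geq 0$ in $M^\op$ if and only if $a\geq 0$ in $M$. This yields $\mf n_{\varphi^\op} = \{ (z^*)^\op : z\in\mf n_\varphi \} = \{ z^{\op*} : z\in\mf n_\varphi \}$, matching the claim.

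Next I would verify that $x^\op\mapsto \overline{\Lambda(x^*)}$ is isometric for the GNS inner product, hence (up to the canonical identification) is the GNS map. Using the conventions $(\overline\xi|\overline\eta)_{\overline H} = (\eta|\xi)_H$ and $a^\op b^\op = (ba)^\op$, for $x^\op, w^\op\in\mf n_{\varphi^\op}$ both sides of
\[ \big( \overline{\Lambda(x^*)} \big| \overline{\Lambda(w^*)} \big)_{\overline H} = \varphi^\op\big( (x^\op)^* w^\op \big) \]
reduce to $\varphi(wx^*)$. As the range $\overline{\Lambda(\mf n_\varphi)}$ is dense in $\overline{L^2(M)}$, this identifies $L^2(M^\op)\cong\overline{L^2(M)}$ with $\Lambda^\op(x^\op) = \overline{\Lambda(x^*)}$. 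The action then follows at once: $\pi^\op(a^\op)\Lambda^\op(x^\op) = \Lambda^\op(a^\op x^\op) = \Lambda^\op((xa)^\op) = \overline{\Lambda(a^*x^*)}$, and since $a^\top\overline\xi = \overline{a^*\xi}$ together with the left action $a^*\Lambda(x^*) = \Lambda(a^*x^*)$ gives $a^\top\overline{\Lambda(x^*)} = \overline{\Lambda(a^*x^*)}$, we conclude $\pi^\op(a^\op) = a^\top$.

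Finally I would extract the modular data from the Tomita operator $S^\op$. On the natural core $\overline{\Lambda(x^*)}$ (with $x^\op$ in the Tomita subalgebra) we have $S^\op\overline{\Lambda(x^*)} = \Lambda^\op((x^\op)^*) = \overline{\Lambda(x)} = \overline{S\Lambda(x^*)}$, so $S^\op$ is the closure of $\overline\xi\mapsto\overline{S\xi}$, that is $\overline J$ applied after the square root in the notation of the statement. Writing $\nabla^\top\overline\xi = \overline{\nabla\xi}$ (legitimate since $\nabla^* = \nabla$) and checking that $\overline J$ is an antiunitary involution, the operator $\overline J(\nabla^\top)^{1/2}$ sends $\overline\xi\mapsto \overline{J\nabla^{1/2}\xi} = \overline{S\xi}$, which agrees with $S^\op$. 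By uniqueness of the polar decomposition of the closed antilinear operator $S^\op$, with antiunitary part $\overline J$ and positive self-adjoint part $(\nabla^\top)^{1/2}$, we read off $J^\op = \overline J$ and $\nabla^\op = \nabla^\top$.

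Every individual computation is routine, so the main obstacle is purely bookkeeping: keeping consistent the conjugate-space convention $(\overline\xi|\overline\eta)_{\overline H} = (\eta|\xi)_H$, the reversed multiplication $a^\op b^\op = (ba)^\op$, and the extension of the transpose notation $x^\top$ from bounded operators to the unbounded positive operator $\nabla$. The one mildly delicate point is justifying that the explicit formula really does describe the \emph{closed} Tomita operator $S^\op$ and that the exhibited decomposition is its canonical polar decomposition, which is exactly where uniqueness of polar decomposition for closed antilinear operators must be invoked.
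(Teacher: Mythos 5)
Your proof is correct, and it is exactly the direct computation the paper has in mind: the paper gives no argument beyond remarking that the lemma "is simply a calculation" (pointing to \cite[Lemma~5.32]{daws_quantum_graphs}), and your three stages — identifying $\mf n_{\varphi^\op}$, checking the map $x^\op\mapsto\overline{\Lambda(x^*)}$ preserves inner products and intertwines the actions, then reading off $J^\op$ and $\nabla^\op$ from $S^\op=\overline J\,(\nabla^\top)^{1/2}$ via uniqueness of the polar decomposition — are precisely that calculation written out. The delicate points you flag (linearity of $\Lambda^\op$ despite the two conjugations, that the bar map carries a core for $S$ to a core for $S^\op$, and $(\nabla^\top)^{1/2}=(\nabla^{1/2})^\top$) are handled correctly, so nothing is missing.
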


\begin{remark}\label{rem:other_Mop_GNS}
We could instead have used that $M'$ and $M^\op$ are isomorphic, via $Jx^*J \mapsto x^\op$ for $x\in M$, and then used Lemma~\ref{lem:com_weight} to let $\varphi'$ define a weight on $M^\op$.  We would obtain the same weight, but GNS map $\Lambda^\op(x^\op) = J\Lambda(x^*)$.  Of course, this is not surprising, as $J$ establishes a linear isomorphism from $L^2(M)$ to $\overline{L^2(M)}$.
\end{remark}
  
In particular, we have $A_{e,e}(x) = \Lambda_{\id\otimes\varphi^\op}(e)^* (1\otimes \hat\pi(x)) \Lambda_{\id\otimes\varphi^\op}(e)^*$ where here $\hat\pi$ is ``$(\pi^\op)^\op$'', that is, $\hat\pi(x) = J^\op x^{*\top} J^\op = (Jx^*J)^\top \in \mc B(\overline H)$, for $x\in M$.

We now wish to make links with the ideas of Section~\ref{sec:hilb_mods}.  Let $\alpha \in E_{\varphi^{-1}} = \mc B_{M'}(L^2(M'), HS(H))$, so $\alpha \alpha^*$ is identified with the ``rank one'' operator $\rankone{\alpha}{\alpha}$ on the Hilbert $C^*$-module $E_{\varphi^{-1}}$, namely $\beta \mapsto \alpha \cdot (\alpha|\beta) = \alpha \alpha^* \beta$.  By Proposition~\ref{prop:MtenMop_as_adj_ops}, $\alpha \alpha^* \in \mc L(E_{\varphi^{-1}}) = \mc B(H) \vnten M^\top \subseteq \mc B(HS(H))$.  We remark that one can also show this directly by considering that $\alpha$ intertwines the $M'$-actions.  Indeed, in this section we shall identify $L^2(M')$ with $L^2(M)$, and then by Remark~\ref{rem:L2M'_vs_L2M}, a bounded map $\alpha \colon H \to H\otimes\overline H$ is in $E_{\varphi^{-1}}$ exactly when $\alpha x = (1\otimes\hat\pi(x))\alpha$ for each $x\in M$.  Then $\alpha \alpha^* \in (1\otimes (M')^\top)' = \mc B(H) \vnten M^\top$.

We are hence motivated to extend our theory from $M\vnten M^\op$ to $\mc B(H) \vnten M^\op$.  Given $f \in \mf n_{\id\otimes\varphi^\op} \subseteq \mc B(H) \vnten M^\op$ we form $\Lambda_{\id\otimes\varphi^\op}(f) \in \mc B_{\mc B(H)}(HS(H), HS(H) \otimes \overline H)$.  Here, and henceforth, we identify $L^2(M^\op)$ with $\overline H$.  However, we don't really want to work with $HS(H)$ here, so instead we use (the complex conjugate of) \eqref{eq:slicing_restatement} directly to define $\Lambda^H_{\id\otimes\varphi^\op} \colon \mf n_{\id\otimes\varphi^\op} \to \mc B(H, H\otimes\overline H)$, say, satisfying
\[ (\xi\otimes\overline{\Lambda(b^*)} | \Lambda^H_{\id\otimes\varphi^\op}(f) \eta)
= \varphi^\op\big( b^{*\op} (\omega_{\xi,\eta}\otimes\id)(f) \big)
\qquad (\xi,\eta\in H=L^2(M), b^\op\in\mf n_{\varphi^\op}). \]
Indeed, as the action of $\mc B(H)$ on $HS(H)$ only ``sees the left-hand side'' of $HS(H) = H\otimes\overline H$, moving to $\Lambda^H_{\id\otimes\varphi^\op}$ hasn't lost any information.  Then the argument of Lemma~\ref{lem:slice_f_moved_out} shows that
\begin{equation}
(1\otimes\hat\pi(b)) \Lambda^H_{\id\otimes\varphi^\op}(f) \xi
= f (\xi \otimes \overline{J\Lambda(b)})
\qquad (b\in\mf n_\varphi, \xi\in H).
\label{eq:swap_hatb_slice_betterH}\tag{\ref*{eq:swap_hatb_slice_better}'}
\end{equation}
As in Corollary~\ref{corr:Afg_only_fstarg} (and Proposition~\ref{prop:Abdd_implies_f_int}) we then see that for $a,b\in\mf n_\varphi, f,g\in\mf n_{\id\otimes\varphi^\op}$,
\begin{equation}
A_{f,g}(a^*b)
= \Lambda^H_{\id\otimes\varphi^\op}(f)^* (1\otimes\hat\pi(a^*b)) \Lambda^H_{\id\otimes\varphi^\op}(g)
= (\id\otimes\omega_{\overline{J\Lambda(a)}, \overline{J\Lambda(b)}})(f^*g).
\label{eq:Afg_slice_prop}
\end{equation}

In the following, to avoid notational confusion, we use $\alpha$ for an element of $\mf n_{\varphi^{-1}}$ and $t$ for elements of $E_{\varphi^{-1}}$.
We shall now show a very close link between $\mf n_{\varphi^{-1}}$ and elements $t\in E_{\varphi^{-1}}$ with $t^*$ integrable.

\begin{proposition}\label{prop:bounded_to_nvarphiinv}
Given $t\in E_{\varphi^{-1}}$ and $\alpha\in\mc B(H)$ we have that $t^*(\xi \otimes \overline{J\Lambda(b)}) = b\alpha^*\xi$ for each $\xi\in H, b\in\mf n_{\varphi}$ if and only if $\alpha \in \mf n_{\varphi^{-1}}$ and $t = \hat \alpha$.
\end{proposition}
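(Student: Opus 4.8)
The plan is to reduce both implications to the concrete description of $\hat\alpha$ from Remark~\ref{rem:L2M'_vs_L2M}, namely $\hat\alpha J\Lambda(c)=\alpha JcJ\nabla^{-1/2}\in HS(H)$ for $\alpha\in\mf n_{\varphi^{-1}}$ and $c\in\mf n_\varphi$, together with the characterisation of $\mf n_{\varphi^{-1}}$ in Theorem~\ref{thm:nT_vs_module}. I identify $HS(H)$ with $H\otimes\overline H$ and use repeatedly that $(\xi\otimes\overline\zeta\mid T)_{HS}=(\xi\mid T\zeta)$, coming from $\xi\otimes\overline\zeta\leftrightarrow\rankone{\xi}{\zeta}$ and $\Tr(\rankone{\zeta}{\xi}T)=(\xi\mid T\zeta)$. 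The engine of the whole argument is a single computation, carried out for analytic $b,c\in\mf n_\varphi\cap\mf n_\varphi^*$. Since $S=J\nabla^{1/2}$ and $J\nabla^{1/2}J=\nabla^{-1/2}$ give $\nabla^{-1/2}J=S$, one has $\nabla^{-1/2}J\Lambda(b)=\Lambda(b^*)$; combined with $JcJ\Lambda(b^*)=\Lambda(b^*\sigma_{-i/2}(c^*))$ (Lemma~\ref{lem:action_J}) this yields
\[ (\xi\otimes\overline{J\Lambda(b)}\mid\alpha JcJ\nabla^{-1/2})_{HS} = (\xi\mid\alpha JcJ\,\nabla^{-1/2}J\Lambda(b)) = (\xi\mid\alpha\,\Lambda(b^*\sigma_{-i/2}(c^*))). \]
On the other hand, using $J\Lambda(c)=\Lambda(\sigma_{-i/2}(c^*))$ (Lemma~\ref{lem:form_J}) and that $b^*\in M$ acts by left multiplication,
\[ (b\alpha^*\xi\mid J\Lambda(c)) = (\xi\mid\alpha b^*J\Lambda(c)) = (\xi\mid\alpha\,\Lambda(b^*\sigma_{-i/2}(c^*))). \]
Thus the two expressions coincide, and this matching drives both directions.

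For the forward implication, assume $\alpha\in\mf n_{\varphi^{-1}}$ and $t=\hat\alpha$. I would test $t^*(\xi\otimes\overline{J\Lambda(b)})$ against $J\Lambda(c)$: the pairing is $(\xi\otimes\overline{J\Lambda(b)}\mid\hat\alpha J\Lambda(c))_{HS}=(\xi\otimes\overline{J\Lambda(b)}\mid\alpha JcJ\nabla^{-1/2})_{HS}$, which by the core computation equals $(b\alpha^*\xi\mid J\Lambda(c))$. As $c$ ranges over analytic elements the vectors $J\Lambda(c)$ are dense, so $t^*(\xi\otimes\overline{J\Lambda(b)})=b\alpha^*\xi$ for analytic $b$. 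I would then extend to all $b\in\mf n_\varphi$ by smearing: choose analytic $b_n$ with $b_n\to b$ $\sigma$-strongly and $\Lambda(b_n)\to\Lambda(b)$, so the left side converges by boundedness of $t^*$ and the right side by $\sigma$-strong convergence.

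For the reverse implication I run the computation backwards. Given the hypothesis, for analytic $c$ and analytic $b$ with arbitrary $\xi$, we have $(\xi\otimes\overline{J\Lambda(b)}\mid tJ\Lambda(c))_{HS}=(t^*(\xi\otimes\overline{J\Lambda(b)})\mid J\Lambda(c))=(b\alpha^*\xi\mid J\Lambda(c))$, which by the core computation equals $(\xi\otimes\overline{J\Lambda(b)}\mid\alpha JcJ\nabla^{-1/2})_{HS}$. Since $\{\xi\otimes\overline{J\Lambda(b)}\}$ is total in $HS(H)$, this forces $tJ\Lambda(c)=\alpha JcJ\nabla^{-1/2}$; in particular $\alpha JcJ\in\mf n_{\tilde\varphi}$ with $\Lambda_{\tilde\varphi}(\alpha JcJ)=tJ\Lambda(c)$. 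Smearing in $c$ and invoking closedness of $\Lambda_{\tilde\varphi}$ upgrades this to all $c\in\mf n_\varphi$, and since every $x\in\mf n_{\varphi'}$ is of the form $JcJ$ with $\Lambda'(JcJ)=J\Lambda(c)$ (Lemma~\ref{lem:com_weight}), the map $\Lambda'(x)\mapsto\Lambda_{\tilde\varphi}(\alpha x)$ coincides with the restriction of the bounded operator $t$ and is hence bounded. By Theorem~\ref{thm:nT_vs_module} this gives $\alpha\in\mf n_{\varphi^{-1}}$, and then $\hat\alpha\Lambda'(x)=\Lambda_{\tilde\varphi}(\alpha x)=t\Lambda'(x)$ on a dense set, so $t=\hat\alpha$.

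The main obstacle I anticipate is bookkeeping the unboundedness of $\nabla^{-1/2}$: the step $\nabla^{-1/2}J\Lambda(b)=\Lambda(b^*)$ and the identification of the bounded Hilbert--Schmidt operator $\alpha JcJ\nabla^{-1/2}$ with the densely-defined product both require $J\Lambda(b)\in D(\nabla^{-1/2})$, which is why the core computation must be done for analytic $b,c$ and only then propagated to general elements by the two limiting arguments (smearing, and closedness of the GNS map $\Lambda_{\tilde\varphi}$). Keeping track of which elements must be analytic at each stage is the only genuinely delicate point.
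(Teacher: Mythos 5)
Your proposal is correct and follows essentially the same route as the paper's proof: the identical core computation (via $\nabla^{-1/2}J\Lambda(b)=\Lambda(b^*)$, Lemma~\ref{lem:action_J}, Lemma~\ref{lem:form_J} and the Hilbert--Schmidt trace pairing), density arguments to pass from analytic elements to all of $\mf n_\varphi$, and Theorem~\ref{thm:nT_vs_module} together with Lemma~\ref{lem:com_weight} to conclude $\alpha\in\mf n_{\varphi^{-1}}$ and $t=\hat\alpha$ in the converse direction. The only difference is presentational -- you isolate the computation as a single identity used in both directions and spell out the limiting steps (smearing, closedness of $\Lambda_{\tilde\varphi}$) that the paper compresses into ``by density''.
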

\begin{proof}
If $t=\hat \alpha$ for some $\alpha\in\mf n_{\varphi^{-1}}$ then, by Remark~\ref{rem:L2M'_vs_L2M} again, $tJ\Lambda(a) = \alpha JaJ\nabla^{-1/2}\in HS(H)$ for $a\in\mf n_{\varphi}$.
Now let $a\in \mf n_\varphi \cap D(\sigma_{i/2})$ with $\sigma_{i/2}(a)^*\in\mf n_{\varphi}$, and let $b\in\mf n_\varphi \cap \mf n_{\varphi}^*$.  Using the inner-product on $HS(H) = H\otimes\overline{H}$, we have
\begin{align*}
&(J\Lambda(a) | t^*(\xi\otimes\overline{J\Lambda(b)}))
= ( \alpha JaJ\nabla^{-1/2} | \xi\otimes\overline{J\Lambda(b)} ) \\
&= \Tr\big( (\alpha JaJ\nabla^{-1/2})^* \rankone{\xi}{J\Lambda(b)} \big)
= ( \alpha JaJ\nabla^{-1/2} J\Lambda(b)) | \xi ).
\end{align*}
As $\nabla^{-1/2}J\Lambda(b) = \Lambda(b^*)$, using that $b\in\mf n_\varphi \cap \mf n_{\varphi}^*$, we continue the calculation as
\begin{align*}
= ( \alpha JaJ\Lambda(b^*) | \xi )
= ( b^*\Lambda(\sigma_{-i/2}(a^*)) | \alpha ^*\xi )
= ( J\Lambda(a) | b\alpha ^*\xi ).
\end{align*}
Here we used Lemmas~\ref{lem:action_J} and~\ref{lem:form_J}.  By density of such $a$, we conclude that $t^*(\xi \otimes \overline{J\Lambda(b)}) = b\alpha^*\xi$, and then by density of such $b$, this holds for all $b\in\mf n_\varphi$.

Conversely, suppose that for some $t,\alpha $ this relation holds.  Under the same assumptions on $a,b$, we reverse the above calculation to see that
\[ ( tJ\Lambda(a) | \xi\otimes\overline{J\Lambda(b)} )
= ( \alpha JaJ\nabla^{-1/2} | \xi\otimes\overline{J\Lambda(b)} ). \]
By density, it follows that $t J\Lambda(a) = \alpha  JaJ \nabla^{-1/2}$ for each $a\in\mf n_\varphi$.  By Theorem~\ref{thm:nT_vs_module}, it follows that $\alpha \in\mf n_{\varphi^{-1}}$ and $t = \hat \alpha $.
\end{proof}

\begin{proposition}\label{prop:int_t_are_fin}
For $t\in E_{\varphi^{-1}}$, we have that $t = \hat\alpha$ for some $\alpha\in\mf n_{\varphi^{-1}}$ if and only if $tt^* \in \mf m_{\id\otimes\varphi^\op}^+$.  In this case, $(\id\otimes\varphi^\op)(tt^*) = \alpha\alpha^*$.  Furthermore, set $f = |t^*| = (tt^*)^{1/2}$, and let $t^* = uf$ be the polar decomposition.  Then $u^*\in E_{\varphi^{-1}}$ and $f\in \mf n_{\id\otimes\varphi^\op}$.  We have that $A_{f,f}(x) = \alpha x \alpha^*$ for $x\in M$.
\end{proposition}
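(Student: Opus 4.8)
The plan is to reduce everything to the polar decomposition of $t^*$, fed into the two tools already available: the characterisation of the maps $\hat\alpha$ in Proposition~\ref{prop:bounded_to_nvarphiinv}, and the slicing identity \eqref{eq:swap_hatb_slice_betterH} for $\Lambda^H_{\id\otimes\varphi^\op}$. First I would form $f=(tt^*)^{1/2}$ and the polar decomposition $t^*=uf$ (so $t=fu^*$). Since $tt^*\in\mc L(E_{\varphi^{-1}})=\mc B(H)\vnten M^\top$ by Proposition~\ref{prop:MtenMop_as_adj_ops}, and this algebra is the commutant of the right $M'$-action $a\mapsto 1\otimes a^\top$ on $HS(H)$, both $f$ and the support projection $u^*u$ lie in $\mc B(H)\vnten M^\top$ as well. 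As $f^*f=f^2=tt^*$, the equivalence $tt^*\in\mf m^+_{\id\otimes\varphi^\op}\iff f\in\mf n_{\id\otimes\varphi^\op}$ is immediate from the definitions of these ideals, which also records the ``furthermore'' claim $f\in\mf n_{\id\otimes\varphi^\op}$. To see $u^*\in E_{\varphi^{-1}}$ I would use that $t$ intertwines the right $M'$-actions and that $f$ commutes with them; uniqueness of the polar decomposition then forces $u$ (hence $u^*$) to intertwine on $\overline{\im f}$ and trivially on its orthogonal complement $\ker f=\ker u$, which is $M'$-invariant. Thus the task collapses to showing $t=\hat\alpha$ for some $\alpha\in\mf n_{\varphi^{-1}}$ if and only if $f\in\mf n_{\id\otimes\varphi^\op}$.

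For the backward implication, assume $f\in\mf n_{\id\otimes\varphi^\op}$, so $\Lambda^H_{\id\otimes\varphi^\op}(f)\colon H\to H\otimes\overline H$ exists and satisfies \eqref{eq:swap_hatb_slice_betterH}. For $b\in\mf n_\varphi$ and $\xi\in H$ I compute
\[ t^*(\xi\otimes\overline{J\Lambda(b)}) = u\,f(\xi\otimes\overline{J\Lambda(b)}) = u\,(1\otimes\hat\pi(b))\Lambda^H_{\id\otimes\varphi^\op}(f)\xi = b\,u\,\Lambda^H_{\id\otimes\varphi^\op}(f)\xi, \]
where the final step uses that $u^*\in E_{\varphi^{-1}}$, which by Remark~\ref{rem:L2M'_vs_L2M} gives $u^*b=(1\otimes\hat\pi(b))u^*$ and hence, on taking adjoints, $u(1\otimes\hat\pi(b))=bu$. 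Setting $\alpha^*=u\,\Lambda^H_{\id\otimes\varphi^\op}(f)\in\mc B(H)$, this is exactly the relation $t^*(\xi\otimes\overline{J\Lambda(b)})=b\alpha^*\xi$, so Proposition~\ref{prop:bounded_to_nvarphiinv} yields $\alpha\in\mf n_{\varphi^{-1}}$ and $t=\hat\alpha$.

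For the forward implication and the value of $(\id\otimes\varphi^\op)(tt^*)$, I would run the approximation argument from the proof of Proposition~\ref{prop:Abdd_implies_f_int}. Assuming $t=\hat\alpha$, Proposition~\ref{prop:bounded_to_nvarphiinv} gives $t^*(\xi\otimes\overline{J\Lambda(b)})=b\alpha^*\xi$. Taking the net $(a_i)$ of analytic elements from Proposition~\ref{prop:tomita_nice_bai} and $b_i=\sigma_{-i/2}(a_i^*)\in\mf n_\varphi$, the vectors $\overline{J\Lambda(b_i)}$ approximate $\varphi^\op$ in the sense of Proposition~\ref{prop:motivation_approx_weight}, so that for each $\xi\in H$
\[ \big(\xi\,\big|\,(\id\otimes\varphi^\op)(tt^*)\,\xi\big) = \lim_i \big\| t^*(\xi\otimes\overline{J\Lambda(b_i)})\big\|^2 = \lim_i \big(\alpha^*\xi\,\big|\,b_i^*b_i\,\alpha^*\xi\big) = \|\alpha^*\xi\|^2. \]
This shows $tt^*\in\mf m^+_{\id\otimes\varphi^\op}$ and $(\id\otimes\varphi^\op)(tt^*)=\alpha\alpha^*$. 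Finally, with $f\in\mf n_{\id\otimes\varphi^\op}$ and $f^*f=tt^*$, the map $A_{f,f}$ is defined, and \eqref{eq:Afg_slice_prop} gives, for $a,b\in\mf n_\varphi$,
\[ A_{f,f}(a^*b) = (\id\otimes\omega_{\overline{J\Lambda(a)},\overline{J\Lambda(b)}})(tt^*) = \alpha\, a^*b\,\alpha^*, \]
the second equality again using $t^*(\xi\otimes\overline{J\Lambda(b)})=b\alpha^*\xi$; since $\lin\{a^*b\}=\mf m_\varphi$ is weak$^*$-dense in $M$ and $A_{f,f}$ is normal, $A_{f,f}(x)=\alpha x\alpha^*$ for all $x\in M$.

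The main obstacle is bookkeeping rather than a single deep step. The delicate points are verifying $u^*\in E_{\varphi^{-1}}$ from the polar decomposition (tracking which $M'$-action is intertwined and on which invariant subspace), and matching the identifications $L^2(M')\cong L^2(M)$ and $L^2(M^\op)\cong\overline H$ so that the conjugate-linear vectors $\overline{J\Lambda(b)}$ and the homomorphism $\hat\pi$ enter \eqref{eq:swap_hatb_slice_betterH} and Proposition~\ref{prop:bounded_to_nvarphiinv} consistently. I expect the approximation step for $(\id\otimes\varphi^\op)(tt^*)$ to need the most care, since this is where the unboundedness of the operator-valued weight genuinely enters.
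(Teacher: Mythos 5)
Your proposal is correct and takes essentially the same route as the paper: the forward direction and the formula $(\id\otimes\varphi^\op)(tt^*)=\alpha\alpha^*$ via Proposition~\ref{prop:bounded_to_nvarphiinv} and the approximating net from Proposition~\ref{prop:tomita_nice_bai}, and the converse via the polar decomposition $t^*=uf$, the fact that $u^*\in E_{\varphi^{-1}}$, equation \eqref{eq:swap_hatb_slice_betterH}, and Proposition~\ref{prop:bounded_to_nvarphiinv} again, with \eqref{eq:Afg_slice_prop} giving $A_{f,f}(x)=\alpha x\alpha^*$. The only cosmetic deviation is that you evaluate $(\id\otimes\varphi^\op)(tt^*)$ against vector states $\omega_\xi$ only, whereas the paper slices against arbitrary $\omega\in\mc B(H)_*^+$; this is harmless since elements of the extended positive cone are countably additive and every normal positive functional on $\mc B(H)$ is a countable sum of vector functionals.
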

\begin{proof}
Let $\alpha\in\mf n_{\varphi^{-1}}$.  Let $(a_i)$ be a net as in Proposition~\ref{prop:tomita_nice_bai}.  By Proposition~\ref{prop:bounded_to_nvarphiinv} we see that for $a,b\in\mf n_\varphi$ we have
\[ (\id\otimes\omega_{\overline{J\Lambda(a)}, \overline{J\Lambda(b)}})(\hat\alpha\hat\alpha^*)
= \alpha a^*b \alpha^*. \]
Given $\omega\in \mc B(H)_*^+$ let $x = (\omega\otimes\id)(\hat\alpha\hat\alpha^*) \in M^\top \cong M^\op$.  Then $x\geq 0$ and
\begin{align*}
\ip{\alpha a_i a_i^*\alpha^*}{\omega}
&= (\overline{J\Lambda(a_i^*)}|x\overline{J\Lambda(a_i^*)})
= (J\Lambda(a_i^*) | x^{\top} J\Lambda(a_i^*))
= (\Lambda(\sigma_{-i/2}(a_i)) | x^\top \Lambda(\sigma_{-i/2}(a_i))) \\
&= \varphi( \sigma_{-i/2}(a_i)^* x^\top \sigma_{-i/2}(a_i))
\xrightarrow[i\to\infty]{} \varphi(x^\top)
= \varphi^\op(x),
\end{align*}
As $a_i \to 1$ $\sigma$-strong$^*$ we see that $a_ia_i^* \to 1$ $\sigma$-weakly, and so
\[ \ip{\alpha\alpha^*}{\omega} = \varphi^\op((\omega\otimes\id)(\hat\alpha\hat\alpha^*))
\qquad (\omega\in\mc B(H)^+_*). \]
That is, $\hat\alpha\hat\alpha^* \in \mf m^+_{\id\otimes\varphi^\op}$ with $(\id\otimes\varphi^\op)(\hat\alpha\hat\alpha^*) = \alpha\alpha^*$.

Conversely, suppose that $tt^* \in \mf m_{\id\otimes\varphi^\op}^+$.  Let $t^* = u|t^*| = uf$ be the polar decomposition.  As $\mc B_{M'}(L^2(M'), HS(H))$ is defined by a commutation relationship, it is easy to see that $t\in E_{\varphi^{-1}}$ means that also $u^*\in E_{\varphi^{-1}}$, compare \cite[Lemma~1.2]{Rieffel_mortia_cstar_wstar}.  As $f^*f = tt^* \in \mf m^+_{\id\otimes\varphi^\op}$ we have that $f\in \mf n_{\id\otimes\varphi^\op}$.  From \eqref{eq:swap_hatb_slice_betterH} we see that
\[ t^*(\xi\otimes \overline{J\Lambda(b)})
= uf(\xi\otimes \overline{J\Lambda(b)})
= u (1\otimes\hat\pi(b))\Lambda^H_{\id\otimes\varphi^\op}(f)\xi
= b u \Lambda^H_{\id\otimes\varphi^\op}(f)\xi, \]
as $u^* \in E_{\varphi^{-1}}$.  So with $\alpha = (u \Lambda^H_{\id\otimes\varphi^\op}(f))^*$ we have verified the condition of Proposition~\ref{prop:bounded_to_nvarphiinv}, and hence $\alpha \in \mf n_{\varphi^{-1}}$ with $t = \hat\alpha$.  Finally, for $x,y\in\mf n_\varphi$, by \eqref{eq:Afg_slice_prop}, we have
\[ A_{f,f}(x^*y)
= (\id\otimes\omega_{\overline{J\Lambda(x)},\overline{J\Lambda(y)}})(f^*f)
= (\id\otimes\omega_{\overline{J\Lambda(x)},\overline{J\Lambda(y)}})(tt^*)
= \alpha x^*y\alpha^*, \] 
so as $A_{f,f}$ is weak$^*$-continuous, by density, $A_{f,f}(x) = \alpha x\alpha^*$ for all $x\in M$.
\end{proof}

Let $e\in M\vnten M^\op$ be a projection, and let $V \subseteq H\otimes\overline H$ be the image of $e$.  We also regard $e$ as acting on $E_{\varphi^{-1}}$, with image $\mc B_{M'}(L^2(M'), V)$, itself a self-dual Hilbert C$^*$-module.  As in Theorem~\ref{thm:selfdual_is_weak_direct_sum} we can find a family of partial isometries $(t_i)$ which forms an orthogonal basis for $\mc B_{M'}(L^2(M'), V)$.  This means that $t_i^* t_j=0$ for $i\not=j$, that $t_i^*t_i = p_i$ is a projection for each $i$, and that $t = \sum_i t_i t_i^*t$ for each $t\in E_{\varphi^{-1}}$ with $et=t$.  This last point is equivalent $\lin \{ \im(t_i) \}$ being dense in $V$, equivalently, $\sum_i t_i t_i^* = e$, noting that $t_it_i^*$ is a projection for each $i$.

\begin{theorem}\label{thm:Kraus_rep}
Let $e\in M\vnten M^\op$ be an integrable projection, and form $A = A_{e,e}$ the quantum adjacency operator.   With $V$ the image of $e$, let $(t_i)$ be an orthogonal basis for $\mc B_{M'}(L^2(M'), V)$.  Then each $t_i = \hat\alpha_i$ for some $\alpha_i \in \mf n_{\varphi^{-1}}$, and $A(x) = \sum_i \alpha_i x \alpha_i^*$ for $x\in M$.
\end{theorem}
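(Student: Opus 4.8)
The plan is to apply Proposition~\ref{prop:int_t_are_fin} to each partial isometry $t_i$ separately, and then to reassemble the pieces using the normality of the slice maps appearing in \eqref{eq:Afg_slice_prop}. First I would observe that, as recorded before the statement, the orthogonal basis $(t_i)$ consists of partial isometries lying in the image $e E_{\varphi^{-1}} = \mc B_{M'}(L^2(M'), V)$, so that $e t_i = t_i$ and hence, as $0 \leq t_i t_i^* \leq 1$, we have $t_i t_i^* = e\, t_i t_i^*\, e \leq e\cdot 1\cdot e = e$, each $t_i t_i^*$ being a projection. Since $e$ is integrable, $e \in \mf m^+_{\id\otimes\varphi^\op}$; as the definition ideal of an operator-valued weight is a hereditary cone (see \cite[Chapter~IX.4]{TakesakiII}), the inequality $0 \leq t_i t_i^* \leq e$ forces $t_i t_i^* \in \mf m^+_{\id\otimes\varphi^\op}$. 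Proposition~\ref{prop:int_t_are_fin} then applies to each $t_i$: it gives $\alpha_i \in \mf n_{\varphi^{-1}}$ with $t_i = \hat\alpha_i$, and, writing $f_i = |t_i^*|$ so that $f_i^* f_i = f_i^2 = t_i t_i^*$, it yields $A_{f_i,f_i}(x) = \alpha_i x \alpha_i^*$ for all $x\in M$.

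It remains to sum these contributions. Because $t_i t_i^* = f_i^* f_i$, the formula \eqref{eq:Afg_slice_prop} reads $A_{f_i,f_i}(a^*b) = (\id\otimes\omega_{\overline{J\Lambda(a)},\overline{J\Lambda(b)}})(t_i t_i^*)$ for $a,b\in\mf n_\varphi$, while the same formula applied to $e = e^*e$ gives $A(a^*b) = (\id\otimes\omega_{\overline{J\Lambda(a)},\overline{J\Lambda(b)}})(e)$. The orthogonal basis property $\sum_i t_i t_i^* = e$ holds as an increasing limit of the mutually orthogonal projections $t_i t_i^*$; since the slice maps $\id\otimes\omega$ are normal (taking $\omega$ positive first and then polarising), they intertwine this supremum with the sum, so that $A(a^*b) = \sum_i A_{f_i,f_i}(a^*b) = \sum_i \alpha_i (a^*b)\alpha_i^*$. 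As the elements $a^*b$ span $\mf m_\varphi$, which is weak$^*$-dense in $M$, we have established $A(x) = \sum_i \alpha_i x\alpha_i^*$ for $x\in\mf m_\varphi$.

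Finally I would pass from $\mf m_\varphi$ to all of $M$. By semifiniteness of $\varphi$, any $x\in M^+$ is the limit of an increasing net in $\mf m^+_\varphi$; normality of $A$ together with the fact that the Kraus sum $\sum_i \alpha_i(\cdot)\alpha_i^*$ is an increasing (extended-positive-valued) limit of normal positive maps lets one take limits on both sides, giving $A(x) = \sum_i \alpha_i x\alpha_i^*$ first for $x\in M^+$ and then, by linearity, for all $x\in M$. The main technical point --- and the step I expect to require the most care --- is precisely the interchange of the infinite sum $\sum_i t_i t_i^* = e$ with the slice maps and with the operator-valued weight: this rests on the normality of $\id\otimes\varphi^\op$ and the hereditary property of its definition ideal, so that no boundedness of $\sum_i \alpha_i\alpha_i^*$ need be assumed beyond what integrability of $e$ already provides.
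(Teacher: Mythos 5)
Your proposal is correct and follows essentially the same route as the paper's own proof: you bound each $t_it_i^*$ by the integrable projection $e$ so that Proposition~\ref{prop:int_t_are_fin} applies, obtain $t_i = \hat\alpha_i$ and the blockwise formula $A_{f_i,f_i}(x) = \alpha_i x\alpha_i^*$, then sum over $i$ using $e = \sum_i t_it_i^*$ together with normality (weak$^*$-continuity) of the slice maps, and finish by density of $\mf m_\varphi$ in $M$. The extra details you supply (the hereditary-cone remark and the explicit increasing-net passage from $\mf m_\varphi$ to $M_+$) merely spell out what the paper compresses into ``hence by density, the result follows.''
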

\begin{proof}
As observed, for each $i$, we have that $t_i t_i^* \leq e$.  Hence $(\id\otimes\varphi^\op)(t_it_i^*) < \infty$, and so Proposition~\ref{prop:int_t_are_fin} shows that $t_i = \hat\alpha_i$ for some $\alpha_i\in \mf n_{\varphi^{-1}}$, and furthermore, $A_{|t_i^*|, |t_i^*|}(x) = \alpha_i x \alpha_i^*$ for $x\in M$, equivalently,
\[ (\id\otimes\omega_{\overline{J\Lambda(x)},\overline{J\Lambda(y)}})(t_it_i^*)
= \alpha_i x^*y\alpha_i^*
\qquad (x,y\in\mf n_\varphi). \]
As slice maps are weak$^*$-continuous, and $e = \sum_i t_i t_i^*$, we obtain
\[ A(x^*y) = (\id\otimes\omega_{\overline{J\Lambda(x)},\overline{J\Lambda(y)}})(e)
= \sum_i (\id\otimes\omega_{\overline{J\Lambda(x)},\overline{J\Lambda(y)}})(t_it_i^*)
= \sum_i \alpha_i x^*y\alpha_i^*, \]
for $x,y\in\mf n_\varphi$, and hence by density, the result follows.
\end{proof}

In the integrable case, we have hence written the quantum adjacency operator $A$ using an ``orthogonal basis for $V$''.  Below in Remark~\ref{rem:same_as_W} we expand upon this in the case of matrix algebras, showing how this generalises \cite[Proposition~3.30]{Wasilewski_Quantum_Cayley}.

\begin{example}\label{eg:fd2}
Again let $M$ be finite-dimensional, continuing Example~\ref{eg:fd1}.  We give some explicit formulae for some of the ideas in this section.

Again with reference to Remark~\ref{rem:L2M'_vs_L2M}, for $\alpha\in \mc B(H)$ and $a,b\in M$ we have $\hat\alpha J\Lambda(a) \in HS(H)$ and so regard it as an operator $H\to H$, and then
\begin{align*}
(\hat \alpha J\Lambda(a))\Lambda(b)
&= \alpha JaJ \nabla^{-1/2} \Lambda(b)
= \alpha JaJ \Lambda(\sigma_{i/2}(b))
= \alpha\Lambda(\sigma_{i/2}(b)\sigma_{-i/2}(a^*))
= \alpha \sigma_{i/2}(b) J \Lambda(a).
\end{align*}
Let $(d_k) \subseteq M$ be an algebraic basis such that $(\Lambda(d_k))$ forms an orthonormal basis for $L^2(M)$.  Then
\[ \hat\alpha J\Lambda(a)
= \alpha JaJ \nabla^{-1/2}
= \sum_k \alpha JaJ \nabla^{-1/2} \rankone{\Lambda(d_k)}{\Lambda(d_k)}
= \sum_k \alpha \sigma_{i/2}(d_k) \rankone{J\Lambda(a)}{\Lambda(d_k)}. \]
So for $\xi,\eta_1,\eta_2\in L^2(M)$,
\begin{align*}
\big( \xi \big| \hat\alpha^*(\rankone{\eta_1}{\eta_2}) \big)
&= \sum_k \big( \alpha \sigma_{i/2}(d_k) \rankone{\xi}{\Lambda(d_k)} \big| \rankone{\eta_1}{\eta_2} \big)_{HS(H)}
= \sum_k (\eta_2|\Lambda(d_k)) (\alpha \sigma_{i/2}(d_k)\xi|\eta_1) \\
\implies\quad &
\hat\alpha^*(\rankone{\eta_1}{\eta_2}) = \sum_k (\eta_2|\Lambda(d_k)) \sigma_{i/2}(d_k)^* \alpha^* \eta_1, \\
\text{and}\quad &
\hat\alpha \hat\alpha^*(\rankone{\eta_1}{\Lambda(a)})
   = \sum_{k,l} (\Lambda(a)|\Lambda(d_k)) \alpha \sigma_{i/2}(d_l) \rankone{\sigma_{i/2}(d_k)^* \alpha^* \eta_1}{\Lambda(d_l)}  \\
& \quad
= \sum_{l} \alpha \sigma_{i/2}(d_l) \sigma_{i/2}(a)^* \alpha^* \rankone{\eta_1}{\Lambda(d_l)}.
\end{align*}
So finally, for $\alpha,\beta\in\mc B(H)$,
\begin{align*}
\hat \beta^* \hat \alpha \xi
&= \hat\beta^* \sum_k \alpha \sigma_{i/2}(d_k) \rankone{\xi}{\Lambda(d_k)}
= \sum_{k,l} (\Lambda(d_k)|\Lambda(d_l)) \sigma_{i/2}(d_l)^* \beta^* \alpha \sigma_{i/2}(d_k) \xi \\
\implies \quad &
\hat\beta^* \hat\alpha = \varphi^{-1}(\beta^*\alpha) = \sum_{k} \sigma_{i/2}(d_k)^* \beta^* \alpha \sigma_{i/2}(d_k).
\end{align*}
We recognise this final formula as some sort of ``averaging'' operation.
One could now continue the calculations to verify the various propositions in this section, but this does not seem very enlightening.

When is $(\hat\alpha_i)$ an orthogonal basis for $\mc B_{M'}(H, V) \cong V$?  We need that:
\begin{enumerate}[(1)]
\item $\hat\alpha_i^* \hat\alpha_j = \delta_{i,j} p_i$ for some projections $p_i\in M'$;
\item $\lin\{ \hat\alpha_i(\xi) : i\in I, \xi\in H \} = V$.
\end{enumerate}
As $\hat\alpha b = (1\otimes (Jb^*J)^\top)\hat\alpha$, condition (2) is equivalent to $\{ \hat\alpha_i\Lambda(1) = \alpha_i\nabla^{-1/2} \in HS(H) : i\in I\}$ generating $V$ as a right $M'$-module.  It seems hard to say more, but again see Remark~\ref{rem:same_as_W} below for some explicit formulae.
\end{example}

\subsection{Self-dual modules}\label{sec:self-dual_S}

Again consider the subspace $V\subseteq HS(H)$, the image of the projection $e$, and form $S \subseteq \mf n_{\varphi^{-1}}$ as in Proposition~\ref{prop:V_to_S}.  By Lemma~\ref{lem:alpha_hat_bimod} we see that $S$ is a (possibly not closed) submodule of $\mf n_{\varphi^{-1}}$, and so we can ask when $S$ is self-dual.  By Proposition~\ref{prop:all_self_dual_mods} and Theorem~\ref{thm:self_dual_completion}, this is equivalent to $\{ \hat\alpha : \alpha\in S \} = \mc B_{M'}(H, S \otimes_{M'} H)$.  The inner-product on $S \otimes_{M'} H$ is
\[ (\alpha\otimes\xi | \beta\otimes\eta) = (\xi|(\alpha|\beta)\eta) = (\xi|\hat\alpha^*\hat\beta\eta) = (\hat\alpha\xi | \hat\beta\eta)
\qquad (\alpha,\beta\in S, \xi,\eta\in H). \]
Hence $S \otimes_{M'} H \to V; \alpha\otimes\xi\mapsto\hat\alpha\xi$ is an isometry.  Example~\ref{eg:V_can_give_zero_S_varphiinv_bdd} below shows that we can have $S=\{0\}$ for a non-zero $V$, even when $\varphi^{-1}$ is bounded.  However, the next proof shows that this cannot occur with $e$ integrable.

Notice that in the following we make no assumptions about $\varphi^{-1}$ (such as it being bounded, as in \cite{Wasilewski_Quantum_Cayley}).  Compare the following to \cite[Propositions~3.26]{Wasilewski_Quantum_Cayley}.

\begin{theorem}\label{thm:int_implies_selfdual}
Let $e\in M\vnten M^\op$ be the projection associated to $V$.  When $e$ is integrable, the associated $S \subseteq \mf n_{\varphi^{-1}}$ is self-dual.
\end{theorem}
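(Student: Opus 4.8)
The plan is to verify the self-duality criterion recalled just before the statement: that $S$ is self-dual precisely when $\{\hat\alpha : \alpha\in S\} = \mc B_{M'}(H, S\otimes_{M'}H)$. Write $W = \overline{\lin}\{\hat\alpha\xi : \alpha\in S, \xi\in H\} \subseteq V$. The isometry $S\otimes_{M'}H \to V$, $\alpha\otimes\xi\mapsto\hat\alpha\xi$, identifies $S\otimes_{M'}H$ with $W$ as a left $M'$-module, so the criterion becomes $\{\hat\alpha : \alpha\in S\} = \mc B_{M'}(H, W)$. The inclusion ``$\subseteq$'' is immediate, since $\im\hat\alpha \subseteq W$ for every $\alpha\in S$; the real content is the reverse inclusion, and this is exactly where integrability of $e$ will be used.

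So I would fix $t\in \mc B_{M'}(H, W)$ and aim to produce $\alpha\in S$ with $t=\hat\alpha$. Since $W\subseteq V$ we have $\im t \subseteq V$, which says precisely that $et=t$ for the post-composition action of $e$ on $E_{\varphi^{-1}}$; taking adjoints gives $t^*e=t^*$. Consequently the positive rank-one adjointable operator $tt^* = \rankone{t}{t}\in\mc L(E_{\varphi^{-1}})$ (Proposition~\ref{prop:MtenMop_as_adj_ops}) satisfies $tt^* = \rankone{et}{et} = e\,\rankone{t}{t}\,e = e\,tt^*\,e$, using $e=e^*$ and the standard module identities $x\rankone{t}{s}=\rankone{xt}{s}$, $\rankone{t}{s}x=\rankone{t}{x^*s}$. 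The Hilbert-module Cauchy--Schwarz inequality gives $tt^* \leq \|(t|t)\|\,1 = \|t\|^2 1$ in $\mc L(E_{\varphi^{-1}})$, and sandwiching by $e$ then yields $tt^* = e\,tt^*\,e \leq \|t\|^2 e$.

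At this point I would invoke integrability. Since $e$ is a projection in $\mf n_{\id\otimes\varphi^\op}$ (Definition~\ref{defn:integrable_e_to_cp_A}) we have $e=e^*e\in\mf m_{\id\otimes\varphi^\op}^+$. Because $\id\otimes\varphi^\op$ is monotone on the positive cone, its definition domain $\mf m_{\id\otimes\varphi^\op}^+$ is hereditary, so $0\leq tt^*\leq\|t\|^2 e$ forces $tt^*\in\mf m_{\id\otimes\varphi^\op}^+$. Proposition~\ref{prop:int_t_are_fin} now applies directly and gives $t=\hat\alpha$ for some $\alpha\in\mf n_{\varphi^{-1}}$. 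Finally $\im\hat\alpha=\im t\subseteq V$ shows $\alpha\in S$ by the defining property of $S$ (Proposition~\ref{prop:V_to_S}), so $t\in\{\hat\alpha:\alpha\in S\}$; this establishes the equality $\{\hat\alpha:\alpha\in S\}=\mc B_{M'}(H,W)$ and hence the self-duality of $S$.

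The only non-formal step — and the one I expect to need care — is the chain $tt^*\leq\|t\|^2 e\in\mf m^+_{\id\otimes\varphi^\op}\Rightarrow tt^*\in\mf m^+_{\id\otimes\varphi^\op}$, which lines up three ingredients that are easy to state but must be matched precisely: the module Cauchy--Schwarz bound for $\rankone{t}{t}$, the identity $tt^*=e\,tt^*\,e$ coming from $et=t$, and the heredity of the definition domain of the operator-valued weight $\id\otimes\varphi^\op$. Once this bound is in hand, the argument reduces to a single application of Proposition~\ref{prop:int_t_are_fin}, with the remaining work being bookkeeping around the identification $S\otimes_{M'}H\cong W\subseteq V$.
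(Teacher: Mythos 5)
Your proof is correct, and the load-bearing step is the same as in the paper: bound $tt^*$ by $\|t\|^2 e$, use heredity of $\mf m^+_{\id\otimes\varphi^\op}$ together with integrability of $e$, and then invoke Proposition~\ref{prop:int_t_are_fin} to produce $\alpha\in\mf n_{\varphi^{-1}}$ with $t=\hat\alpha$, whence $\alpha\in S$ because $\im\hat\alpha\subseteq V$. Where you differ is in the first half. The paper begins by proving that $S\otimes_{M'}H=V$: it takes an orthogonal basis $(\hat\alpha_i)$ of $\mc B_{M'}(H,V)$ from Theorem~\ref{thm:Kraus_rep}, notes each $\alpha_i\in S$, and uses $e=\sum_i\hat\alpha_i\hat\alpha_i^*$ to see that $V=\overline\lin\{\im\hat\alpha:\alpha\in S\}$; only then does it check surjectivity onto $\mc B_{M'}(H,V)$. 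You avoid Theorem~\ref{thm:Kraus_rep} entirely by observing that the self-duality criterion only asks for surjectivity onto $\mc B_{M'}(H,W)$ where $W=S\otimes_{M'}H$, and that the key estimate needs nothing more than $W\subseteq V$ (which is immediate from the definition of $S$ and closedness of $V$). This makes your argument leaner and logically independent of the Kraus representation. What the paper's route buys is the extra conclusion $S\otimes_{M'}H=V$, i.e.\ that the self-dual completion of $S$ is all of $eE_{\varphi^{-1}}=\mc B_{M'}(H,V)$ and not some proper submodule; this is exactly the form of the statement that gets recycled as the ``only if'' direction of Theorem~\ref{thm:S_bdd_below_e_int}. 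That said, your argument recovers this too at no extra cost: since the only property of $W$ you use is $W\subseteq V$, running it verbatim with an arbitrary $t\in\mc B_{M'}(H,V)$ shows every such $t$ is $\hat\alpha$ for some $\alpha\in S$, and hence $W=V$ after the fact. It would be worth stating that strengthened conclusion explicitly, since the paper relies on it later.
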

\begin{proof}
By definition, $S = \{ \alpha\in\mf n_{\varphi^{-1}} : e\hat\alpha = \hat\alpha \}$.  Let $(\alpha_i)$ be as in Theorem~\ref{thm:Kraus_rep} so that $(\hat\alpha_i)$ is an orthogonal basis for $e E_{\varphi^{-1}} = \mc B_{M'}(H, V)$, where of course $V = e(H\otimes\overline H)$.  Then $e = \sum_i \hat\alpha_i \hat\alpha_i^*$, and for each $i$ we have $\alpha_i \in S$.  So for $v\in V$ we have $v = ev = \sum_i \hat\alpha_i \hat\alpha_i^* v$, the sum converging in norm.  Hence $v \in \overline\lin\{ \im \hat\alpha_i \}$, therefore $V = \overline\lin\{ \im\hat\alpha : \alpha\in S\}$, and so by the above discussion, $S\otimes_{M'}H = V$.

Let $t\in \mc B_{M'}(H, V) \subseteq \mc B_{M'}(H, HS(H))$, so $tt^* \leq \|t\|^2 e$ as $e$ is the projection onto $V$.  Hence, much as in the proof of Theorem~\ref{thm:Kraus_rep}, we see that $tt^*\in \mf m^+_{\id\otimes\varphi^\op}$ and so Proposition~\ref{prop:int_t_are_fin} shows that $t = \hat\alpha$ for some $\alpha\in\mf n_{\varphi^{-1}}$.  As then $\im(\hat\alpha) = \im(t) \subseteq V$, we have $\alpha\in S$.  Hence $S$ is self-dual.
\end{proof}

As we can have $S=\{0\}$ with $V\not=\{0\}$, we might impose the non-degeneracy condition that $S \otimes_{M'} H \cong \overline\lin\{ \im\hat\alpha : \alpha \in S \} = V$, equivalently, that $\hat S = \{\hat\alpha:\alpha\in S\}$ is weak$^*$-dense in $E_{\varphi^{-1}}$, see Proposition~\ref{prop:wstar_dense_nondeg}.  We now work towards a converse of the previous theorem.

\begin{lemma}\label{lem:sum_alpha_finite_e_int}
Let $S \otimes_{M'} H=V$, and let $(\hat\alpha_i)$ be an orthogonal basis for $\mc B_{M'}(H,V)$, for some family $(\alpha_i)$ in $\mf n_{\varphi^{-1}}$.  If $\sum_i \alpha_i \alpha_i^* < \infty$ then $e$ is integrable.
\end{lemma}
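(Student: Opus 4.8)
The plan is to exploit that an orthogonal basis packages $e$ as an orthogonal sum of projections to which Proposition~\ref{prop:int_t_are_fin} applies term by term, and then to push the operator-valued weight $\id\otimes\varphi^\op$ through this sum using normality. Writing $t_i = \hat\alpha_i$, the defining properties of an orthogonal basis (as in Theorem~\ref{thm:selfdual_is_weak_direct_sum}) give that each $t_i$ is a partial isometry with $t_i^*t_i$ a projection, that $t_i^*t_j = 0$ for $i\neq j$, and that $\sum_i t_it_i^* = e$. Consequently each $\hat\alpha_i\hat\alpha_i^* = t_it_i^*$ is a projection, and for $i\neq j$ we have $(t_it_i^*)(t_jt_j^*) = t_i(t_i^*t_j)t_j^* = 0$, so these are mutually orthogonal projections. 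Hence the finite partial sums $e_F = \sum_{i\in F} \hat\alpha_i\hat\alpha_i^*$ form an increasing net of projections with $e_F \nearrow e$ (strongly, hence $\sigma$-weakly). Since $\alpha_i\in\mf n_{\varphi^{-1}}$ and $t_i = \hat\alpha_i$, Proposition~\ref{prop:int_t_are_fin} gives $\hat\alpha_i\hat\alpha_i^* \in \mf m^+_{\id\otimes\varphi^\op}$ with $(\id\otimes\varphi^\op)(\hat\alpha_i\hat\alpha_i^*) = \alpha_i\alpha_i^*$; by additivity of the operator-valued weight, $(\id\otimes\varphi^\op)(e_F) = \sum_{i\in F}\alpha_i\alpha_i^*$ for each finite $F$.

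Next I would apply normality. The slice $\id\otimes\varphi^\op$ (regarded as an operator-valued weight on $\mc B(H)\vnten M^\op$, as in Proposition~\ref{prop:int_t_are_fin}) is normal, hence monotone and satisfies $T(x_\lambda)\nearrow T(x)$ whenever $x_\lambda\nearrow x$. Applying this to $e_F\nearrow e$ yields
\[ (\id\otimes\varphi^\op)(e) = \sup_F (\id\otimes\varphi^\op)(e_F) = \sup_F \sum_{i\in F}\alpha_i\alpha_i^* = \sum_i \alpha_i\alpha_i^*. \]
By hypothesis $\sum_i\alpha_i\alpha_i^* < \infty$, i.e.\ this supremum is a bounded positive operator; therefore $(\id\otimes\varphi^\op)(e)$ is bounded, which is exactly the statement $e\in\mf n_{\id\otimes\varphi^\op}$, so $e$ is integrable in the sense of Definition~\ref{defn:integrable_e_to_cp_A}.

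The only delicate point is the interchange in the display: one must be sure the increasing-limit property of $\id\otimes\varphi^\op$ is genuinely available, that is, that the slice operator-valued weight is normal and that $e_F$ increases to $e$ in the topology governing that normality. Both are standard — normality of slice operator-valued weights is part of the general theory, and the $e_F$ are mutually orthogonal projections — but this is where the real content sits. The hypotheses $S\otimes_{M'}H = V$ and that the $\hat\alpha_i$ form an orthogonal basis are used precisely to guarantee $\sum_i\hat\alpha_i\hat\alpha_i^* = e$ with each $\hat\alpha_i$ arising from $\mf n_{\varphi^{-1}}$, so that Proposition~\ref{prop:int_t_are_fin} applies term by term and supplies the values $\alpha_i\alpha_i^*$.
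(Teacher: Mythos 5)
Your proof is correct, but it takes a genuinely different route from the paper's. The paper never evaluates the operator-valued weight on $e$ directly: instead it defines $A(x) = \sum_i \alpha_i x \alpha_i^*$ (weak$^*$-convergent by the hypothesis $\sum_i\alpha_i\alpha_i^*<\infty$), uses the polar-decomposition argument from Proposition~\ref{prop:int_t_are_fin} together with Proposition~\ref{prop:bounded_to_nvarphiinv} to establish the slice identity $\alpha_i a^*b\alpha_i^* = (\id\otimes\omega_{\overline{J\Lambda(a)},\overline{J\Lambda(b)}})(\hat\alpha_i\hat\alpha_i^*)$, sums over $i$ to get $A(a^*b) = (\id\otimes\omega_{\overline{J\Lambda(a)},\overline{J\Lambda(b)}})(e)$, and then invokes Proposition~\ref{prop:Abdd_implies_f_int}, whose approximate-identity argument is where integrability of $e$ is actually extracted. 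You instead compute $(\id\otimes\varphi^\op)(e)$ head-on: term-by-term values $(\id\otimes\varphi^\op)(\hat\alpha_i\hat\alpha_i^*)=\alpha_i\alpha_i^*$ from Proposition~\ref{prop:int_t_are_fin}, additivity on finite sums of the mutually orthogonal projections $\hat\alpha_i\hat\alpha_i^*$, and normality of the slice operator-valued weight applied to $e_F\nearrow e$. Both arguments are sound; the point you flag as delicate (normality of $\id\otimes\varphi^\op$ under increasing nets) is indeed the load-bearing step, and it does hold: for $\omega\in\mc B(H)_*^+$ the functional $x\mapsto\varphi^\op((\omega\otimes\id)(x))$ is a normal weight, and sup-preservation for increasing nets follows from $\sigma$-weak lower semicontinuity plus monotonicity. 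Your route is shorter and yields the sharper conclusion $(\id\otimes\varphi^\op)(e)=\sum_i\alpha_i\alpha_i^*$ explicitly; the paper's route, by contrast, reuses machinery it needs anyway and delivers the additional statement $A=A_{e,e}$, identifying the Kraus-type sum with the canonical quantum adjacency operator, which is in the spirit of Theorem~\ref{thm:Kraus_rep} even though only integrability is used downstream in Theorem~\ref{thm:S_bdd_below_e_int}.
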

\begin{proof}
We can define $A(x) = \sum_i \alpha_i x \alpha_i^*$ for $x\in M$, the sum converging weak$^*$ by our hypothesis.  Letting $\hat\alpha_i^* = u_i f_i$ be the polar decomposition, as in the proof of Proposition~\ref{prop:int_t_are_fin} we find that $u_i \in \mf n_{\varphi^{-1}}$.  By Proposition~\ref{prop:bounded_to_nvarphiinv}, we have that $u_i f_i(\xi\otimes\overline{J\Lambda(b)}) = b \alpha_i^* \xi$ for each $\xi\in H, b\in\mf n_\varphi$.  Thus for $a,b\in\mf n_\varphi$ we have
\[ \alpha_i a^*b \alpha_i^*
= (\id\otimes\omega_{\overline{J\Lambda(a)}, \overline{J\Lambda(b)}})(f_i^*f_i)
= (\id\otimes\omega_{\overline{J\Lambda(a)}, \overline{J\Lambda(b)}})(\hat\alpha_i \hat\alpha_i^*). \]
Summing over $i$ gives $A(a^*b) = (\id\otimes\omega_{\overline{J\Lambda(a)}, \overline{J\Lambda(b)}})(e)$ which also shows that $A$ maps into $M$.  Now apply Proposition~\ref{prop:Abdd_implies_f_int} to conclude that $e$ is integrable, and $A = A_{e,e}$.
\end{proof}

In the following, to be clear, when we write $\|\alpha\|$ for some $\alpha\in\mf n_{\varphi^{-1}}$ (e.g. for $\alpha\in S$) we mean the norm of $\alpha$ as a member of $\mc B(H)$.  Similarly, we regard $M_n(S)$ as a subspace of $M_n(\mc B(H))$, and $M_n(\mc B_{M'}(H,V)) \subseteq \mc B(H^n, V^n)$, when computing norms.

\begin{proposition}\label{prop:inv_to_cb}
Suppose that there is a constant $K>0$ such that for each $t\in\mc B_{M'}(H,V)$ there is $\alpha\in S$ with $\hat\alpha=t$ and $\|\alpha\| \leq K \|t\|$.  For any $n$ and any matrix $(\alpha_{i,j}) \in M_n(S)$ we have $\|(\alpha_{i,j})\|\leq K \|(\hat\alpha_{i,j})\|$.
\end{proposition}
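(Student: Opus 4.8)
The plan is to recast the inequality as the complete boundedness, with $\mathrm{cb}$-norm at most $K$, of a single bounded module map, and then to exploit the self-dual Hilbert module structure of $\mc B_{M'}(H,V)$. First I would record that $\alpha\mapsto\hat\alpha$ is injective on $\mf n_{\varphi^{-1}}$ (indeed $\hat\alpha$ is determined by $\alpha$ via $\hat\alpha J\Lambda(b)=\alpha JbJ\nabla^{-1/2}$), so the hypothesis produces a well-defined linear section $\rho\colon\mc B_{M'}(H,V)\to\mc B(H)$, $\hat\alpha\mapsto\alpha$, with $\|\rho\|\le K$; by Lemma~\ref{lem:alpha_hat_bimod} it is an $M'$-bimodule map. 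The statement to prove is exactly that all amplifications satisfy $\|\rho_n\|\le K$, i.e.\ $\|\rho\|_{\mathrm{cb}}\le K$. Next I would pin down both norms. On the target side, left multiplication $L\colon\mc B(H)\to\mc L(E_{\varphi^{-1}})=\mc B(H)\vnten M^\top$ of Proposition~\ref{prop:MtenMop_as_adj_ops} is a unital normal faithful $*$-homomorphism, hence completely isometric, so $\|(\alpha_{ij})\|=\|(L_{\alpha_{ij}})\|_{M_n(\mc L(E_{\varphi^{-1}}))}$. On the domain side, since $\hat\alpha^*\hat\beta=\varphi^{-1}(\alpha^*\beta)\in M'$, an entrywise computation gives, for $B=(\alpha_{ij})$ and $\widehat B=(\hat\alpha_{ij})$, that $\widehat B^*\widehat B=(\varphi^{-1}\otimes\id_n)(B^*B)\in M_n(M')$, so the norm $\|(\hat\alpha_{ij})\|$ taken in $\mc B(H^n,HS(H)^n)$ agrees with the $M_n(M')$-valued Hilbert module norm. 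Thus the goal becomes $\|B^*B\|\le K^2\,\|(\varphi^{-1}\otimes\id_n)(B^*B)\|$.

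The natural computational engine is an orthogonal basis. By Theorem~\ref{thm:comp_submods} the module $eE_{\varphi^{-1}}=\mc B_{M'}(H,V)$ is self-dual, so by Theorem~\ref{thm:selfdual_is_weak_direct_sum} it admits an orthogonal basis $(t_\lambda)$ with $t_\lambda^*t_\mu=\delta_{\lambda\mu}p_\lambda$ and $\sum_\lambda t_\lambda t_\lambda^*=e$. Expanding $\hat\alpha_{ij}=\sum_\lambda t_\lambda c^{ij}_\lambda$ with $c^{ij}_\lambda=t_\lambda^*\hat\alpha_{ij}\in M'$ and writing $\mathbf c_\lambda=(c^{ij}_\lambda)\in M_n(M')$, orthogonality yields $\widehat B^*\widehat B=\sum_\lambda\mathbf c_\lambda^*\,\mathrm{diag}(p_\lambda)\,\mathbf c_\lambda$. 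Applying $\rho$ entrywise (using that $\rho$ is a right $M'$-module map, together with the closedness in Theorem~\ref{thm:nT_vs_module}) gives $\alpha_{ij}=\sum_\lambda\rho(t_\lambda)c^{ij}_\lambda$, and since $t_\lambda=t_\lambda p_\lambda$ forces $\rho(t_\lambda)=\rho(t_\lambda)p_\lambda$, one obtains $B=\sum_\lambda\mathrm{diag}(\rho(t_\lambda))\,\mathrm{diag}(p_\lambda)\,\mathbf c_\lambda$, whence operator Cauchy--Schwarz produces a bound of the shape $\|B\|\le\big\|\sum_\lambda\rho(t_\lambda)\rho(t_\lambda)^*\big\|^{1/2}\,\|\widehat B\|$.

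The hard part is the constant, and this is where I expect the main obstacle to lie. The Cauchy--Schwarz estimate above is governed by $\big\|\sum_\lambda\rho(t_\lambda)\rho(t_\lambda)^*\big\|^{1/2}$; since each $t_\lambda=\hat\alpha_\lambda$ is individually of the form treated by Proposition~\ref{prop:int_t_are_fin}, that sum equals $(\id\otimes\varphi^\op)(e)$ — precisely the \emph{integrability} constant, which under the present hypotheses may be infinite (cf.\ Example~\ref{eg:matrix_sd_notint}) and is in no way controlled by $K$. So the crude estimate is useless, and the genuine content is to propagate the single bound $\|\alpha\|\le K\|\hat\alpha\|$ to matrices with the \emph{same} constant $K$. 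The correct route is to prove $\|\rho\|_{\mathrm{cb}}=\|\rho\|$ directly: because $eE_{\varphi^{-1}}$ is self-dual over the von Neumann algebra $M'$ and $\rho$ is a \emph{two-sided} $M'$-module map, $L\circ\rho$ into $\mc L(E_{\varphi^{-1}})$ is rigid enough that its amplifications cannot increase its norm. This is exactly where the two-sided module structure is essential — in the one-sided (column) situation boundedness does not imply complete boundedness — and one realizes $\rho$ through an adjointable operator between self-dual modules (adjointables being completely contractive up to their norm) and transports the estimate back along the completely isometric $L$ to conclude $\|(\alpha_{ij})\|\le K\|(\hat\alpha_{ij})\|$. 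Verifying this bounded-implies-$\mathrm{cb}$ statement for the bimodule section $\rho$ is the crux of the proof.
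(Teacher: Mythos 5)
Your reduction of the statement to ``the section $\rho\colon\hat\alpha\mapsto\alpha$ is completely bounded with $\|\rho\|_{\mathrm{cb}}\le K$'' is correct, and your diagnosis of why the orthogonal-basis/Cauchy--Schwarz route fails (the constant it produces is $\|(\id\otimes\varphi^\op)(e)\|^{1/2}$, i.e.\ the integrability constant, which is not controlled by $K$) is also correct. But the proposal stops exactly where the proof has to happen: the final paragraph asserts that the two-sided $M'$-module structure makes $\rho$ ``rigid enough that its amplifications cannot increase its norm'', to be verified by ``realizing $\rho$ through an adjointable operator between self-dual modules''. This is not an argument, and the mechanism it gestures at cannot work as stated. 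Adjointable maps between Hilbert $C^*$-modules are completely bounded \emph{for the module operator space structures}; but the module norm on $S$ is $\|\varphi^{-1}(\alpha^*\alpha)\|^{1/2}=\|\hat\alpha\|$, which is exactly the norm of the domain, so for that structure $\rho$ is trivially a complete isometry and nothing is gained. The norm you must control is the $\mc B(H)$-operator norm on $M_n(S)\subseteq M_n(\mc B(H))$, which is not induced by any Hilbert-module inner product in sight, so ``adjointability'' gives no purchase on it.

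Moreover, any correct proof must use, somewhere, that $M$ acts on $H=L^2(M)$: bounded normal bimodule maps need not be completely bounded for general representations, and the paper stresses (after Theorem~\ref{thm:S_bdd_below_e_int}, with Example~\ref{eg:matrix_sd_notint}) that standard position is used in an essential way precisely in this proposition. Your sketch never invokes it. What the paper actually does is adapt Smith's argument \cite[Theorem~2.1]{Smith_CB_ModMaps}: since $H=L^2(M)$, every finite-dimensional subspace of $H$ lies in $\overline{M'\xi}$ for a single vector $\xi$ (\cite[Lemma~2.3]{Smith_CB_ModMaps}). Supposing a norm-one matrix $(\hat\alpha_{ij})$ had $\|(\alpha_{ij})\|>\|T\|$, one approximates the witnessing vectors by $(a_j\mu)$ and $(b_i\nu)$ with $a_j,b_i\in M'$, renormalises so that $\sum_i {a_i'}^*a_i'\le 1$ and $\sum_j {b_j'}^*b_j'\le 1$, and then uses the bimodule property (Lemma~\ref{lem:alpha_hat_bimod}) to collapse the whole matrix to the single element $\beta=\sum_{i,j}{b_i'}^*\alpha_{ij}a_j'\in S$, whose image $\hat\beta=\sum_{i,j}{b_i'}^*\hat\alpha_{ij}a_j'$ has norm at most $1$; applying the scalar hypothesis to $\beta$ then yields a contradiction. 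This vector-level collapse of a matrix inequality to a scalar one, available only because of the cyclicity coming from standard position, is the missing idea in your proposal.
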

\begin{proof}
We of course always have a map $S \to \mc B_{M'}(H,V); \alpha\mapsto\hat\alpha$, and so the hypothesis is that this map is a bijection, and bounded below.  We shall show that the inverse, say $T\colon\hat\alpha \mapsto \alpha$, is even completely bounded, by adapting the proof of \cite[Theorem~2.1]{Smith_CB_ModMaps} to this situation.

By \cite[Lemma~2.3]{Smith_CB_ModMaps} it follows that for every finite-dimensional subspace $H_0\subseteq H$ there is $\xi\in H$ with $H_0 \subseteq \overline{{M'}\xi}$, because $H=L^2(M)$.  Let $t = (\hat\alpha_{i,j}) \in M_n(\mc B_{M'}(H,V))$ be of norm $1$ and suppose towards a contradiction that $\alpha = (\alpha_{ij}) \in M_n(\mc B(H))$ has norm greater than $\|T\|$.  It follows that we can find vectors $\xi=(\xi_k)$ and $\eta=(\eta_k)$ in $H\otimes\mathbb C^n$ with $\|\xi\|<1, \|\eta\|<1$ and $|\sum_{i,j} (\eta_i|\alpha_{ij}\xi_j)| = |(\eta|\alpha \xi)| > \|T\|$.  By the observation just made, for $\epsilon>0$ we can find $\mu,\nu \in H$ and $a_i, b_i$ in $M'$ with $\sum_j \|\xi_j - a_j\mu\|^2 < \epsilon$ and $\sum_i \|\eta_i - b_i\nu\|^2 < \epsilon$.  Thus
\begin{equation}
\Big| \sum_{i,j} (\nu|b_i^*\alpha_{ij}a_j \mu) \Big|
= \big|\big( (b_i\nu) \big| \alpha (a_j\mu) \big) \big|
> \|T\|, \label{eq:Tnorm}
\end{equation}
if $\epsilon$ is sufficiently small.  Furthermore, setting $\alpha_{i,n+1} = 0 = \alpha_{n+1,j}$ for each $i,j$ so as to obtain a matrix in $M_{n+1}(\mc B(H))$, and setting $a_{n+1} = b_{n+1} = \epsilon 1$, we still have \eqref{eq:Tnorm}.  Then $a = \sum_{i=1}^{n+1} a_i^*a_i = \epsilon^2 + \sum_{i=1}^{n} a_i^*a_i$ is positive and invertible, and similarly for $b = \sum_{i=1}^{n+1} b_i^*b_i$.  Set $\mu' = a^{1/2}\mu, a_i' = a_i a^{-1/2}$ and similarly $\nu' = b^{1/2}\nu, b_j'=b_jb^{-1/2}$, so \eqref{eq:Tnorm} becomes
\[ \Big| \sum_{i,j} (\nu'|{b'_i}^*\alpha_{ij}a'_j \mu') \Big| > \|T\|. \]
We now see that
\[ \|\mu'\|^2 = (a^{1/2}\mu|a^{1/2}\mu) = \sum_{i=1}^{n+1} (\mu|a_i^*a_i\mu)
= \sum_i \|a_i\mu\|^2 < 1, \]
again if $\epsilon>0$ is sufficiently small.  Similarly $\|\nu'\|<1$.

Set $\beta = \sum_{i,j} {b'_i}^*\alpha_{ij}a'_j \in S$ so by Lemma~\ref{lem:alpha_hat_bimod} we have that $\hat\beta = \sum_{i,j} {b'_i}^* \hat\alpha_{ij} a'_j$, so denoting this by $s\in\mc B_{M'}(H,V)$ we see that $T(s) = \beta$ and hence $\|\beta\| \leq \|T\| \|s\|$.  As $|(\nu'|\beta\mu')| > \|T\|$ we have $\|\beta\| > \|T\|$ and so $\|s\|>1$.  However, the matrix $(\hat\alpha_{i,j})$ has norm $1$, and so
\[ \|s\| \leq \Big\| \sum_j {b_j'}^*{b_j'} \Big\|^{1/2}\Big\| \sum_i {a_i'}^*{a_i'} \Big\|^{1/2}
= \Big\| \sum_j b^{-1/2} b_j^* b_j b^{-1/2}\Big\|^{1/2} \Big\| \sum_i a^{-1/2} a_i^* a_i a^{-1/2} \Big\|^{1/2} = 1, \]
contradiction, as required.
\end{proof}

The following is related to \cite[Proposition~3.27]{Wasilewski_Quantum_Cayley}.

\begin{theorem}\label{thm:S_bdd_below_e_int}
Let $V\subseteq HS(H)$ be an $M'$-bimodule, and let $S = \{ \alpha\in\mf n_{\varphi^{-1}} : \im\hat\alpha \subseteq V \}$.  Then $e$ is integrable if and only if the map $S \to \mc B_{M'}(H,V); \alpha \mapsto \hat\alpha$ is surjective.
\end{theorem}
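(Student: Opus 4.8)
The plan is to prove the two implications separately, with the forward direction a short consequence of results already in hand and the reverse direction carrying the bulk of the work.

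For the forward implication, assume $e$ is integrable, so $e\in\mf n_{\id\otimes\varphi^\op}$; since $e=e^*e$ is positive this gives $e\in\mf m^+_{\id\otimes\varphi^\op}$. Given any $t\in\mc B_{M'}(H,V)$, I would note that $tt^*\le\|t\|^2 e$ because $e$ is the orthogonal projection onto $V$ and $\im t\subseteq V$; as $\mf m^+_{\id\otimes\varphi^\op}$ is hereditary this forces $tt^*\in\mf m^+_{\id\otimes\varphi^\op}$. Proposition~\ref{prop:int_t_are_fin} then yields $\alpha\in\mf n_{\varphi^{-1}}$ with $t=\hat\alpha$, and since $\im\hat\alpha=\im t\subseteq V$ we have $\alpha\in S$. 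This is exactly the argument used in the second paragraph of the proof of Theorem~\ref{thm:int_implies_selfdual}, and it shows the map is surjective.

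For the reverse implication, suppose the map $S\to\mc B_{M'}(H,V)$, $\alpha\mapsto\hat\alpha$, is surjective. It is automatically injective (if $\hat\alpha=0$ then $\alpha x\nabla^{-1/2}=0$ for all $x\in\mf n_{\varphi'}$, forcing $\alpha=0$), so it is a linear bijection with inverse $T$. The first key step is to show $T$ is bounded, which I would do by the closed graph theorem between the Banach spaces $\mc B_{M'}(H,V)$ and $\mc B(H)$: if $\hat\alpha_n\to t$ in operator norm and $\alpha_n\to\beta$ in operator norm, then $\alpha_n\to\beta$ $\sigma$-strongly and $\hat\alpha_n\to t$ in the SOT, so the closedness statement of Theorem~\ref{thm:nT_vs_module} gives $\beta\in\mf n_{\varphi^{-1}}$ with $\hat\beta=t$; as $\im t\subseteq V$ we get $\beta\in S$ and $T(t)=\beta$. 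Thus there is $K>0$ with $\|\alpha\|\le K\|\hat\alpha\|$ for all $\alpha\in S$, and Proposition~\ref{prop:inv_to_cb} upgrades this to complete boundedness of $T$ with the same constant.

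The second key step is to manufacture a summable Kraus family. Since $\mc B_{M'}(H,V)=eE_{\varphi^{-1}}$ is a complemented, hence self-dual, submodule, Theorem~\ref{thm:selfdual_is_weak_direct_sum} provides an orthogonal basis $(t_i)$ of partial isometries with $t_i^*t_j=\delta_{ij}p_i$ and $\sum_i t_it_i^*=e$. By surjectivity each $t_i=\hat\alpha_i$ with $\alpha_i=T(t_i)\in S$. For any finite index set $F$ the row $(t_i)_{i\in F}$ has norm $\|\sum_{i\in F}t_it_i^*\|^{1/2}\le 1$, so applying the complete boundedness of $T$ to this row (padded with zeros to a square matrix) gives $\|\sum_{i\in F}\alpha_i\alpha_i^*\|=\|(\alpha_i)_{i\in F}\|^2\le K^2$. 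Hence the increasing net $\sum_{i\in F}\alpha_i\alpha_i^*$ is bounded by $K^2$, so $\sum_i\alpha_i\alpha_i^*<\infty$, and Lemma~\ref{lem:sum_alpha_finite_e_int} concludes that $e$ is integrable. The main obstacle is this reverse direction: both the closed-graph boundedness and, especially, the passage from norm control on individual $\alpha_i$ to weak$^*$-summability of $\sum_i\alpha_i\alpha_i^*$ via the row-matrix trick are the delicate points, and they are exactly where complete (rather than mere) boundedness of $T$ is essential.
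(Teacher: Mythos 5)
Your proof is correct and follows essentially the same route as the paper's: the forward implication via the argument of Theorem~\ref{thm:int_implies_selfdual}, and the converse via the Closed Graph Theorem, the complete-boundedness upgrade of Proposition~\ref{prop:inv_to_cb}, an orthogonal basis $(t_i)$ with $\sum_i t_it_i^*=e$, a uniform bound on the finite sums $\sum_{i\in F}\alpha_i\alpha_i^*$, and finally Lemma~\ref{lem:sum_alpha_finite_e_int}. Your two local variants --- verifying closedness of the graph via Theorem~\ref{thm:nT_vs_module} instead of Proposition~\ref{prop:bounded_to_nvarphiinv}, and bounding $\bigl\|\sum_{i\in F}\alpha_i\alpha_i^*\bigr\|\leq K^2$ directly from the row-matrix norm identity rather than the paper's quadratic-form trick with $a_i=\|\beta\|^{-1/2}\alpha_i^*$ --- are both valid and, if anything, streamline the paper's argument.
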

\begin{proof}
We have shown the ``only if'' case in Theorem~\ref{thm:int_implies_selfdual}.  Suppose that $S \to \mc B_{M'}(H,V); \alpha \mapsto \hat\alpha$ is surjective, hence bijective.  We claim that the inverse $\hat\alpha\mapsto\alpha$ is bounded, which we shall prove using the Closed Graph Theorem.  Let $(\alpha_n)$ be a sequence in $S$ with $\hat\alpha_n \to \hat\alpha$ and $\alpha_n\to\beta$, both in the respective norms.  By Proposition~\ref{prop:bounded_to_nvarphiinv}, for $\xi\in H$ and $b\in\mf n_\varphi$, we have
\[ b\alpha^*\xi = \hat\alpha^*(\xi\otimes\overline{J\Lambda(b)}) = \lim_n \hat\alpha_n^*(\xi\otimes\overline{J\Lambda(b)})
= \lim_n b \alpha_n^* \xi = b \beta^*\xi. \]
It follows that $\alpha = \beta$, as required.  By Proposition~\ref{prop:inv_to_cb} , the inverse map is even completely bounded, say with cb-norm $K$. 

Given $\alpha_1,\cdots,\alpha_n \in S$, considering the matrix $\alpha$ in $M_n(S) \subseteq M_n(\mc B(H))$ whose first row is $(\alpha_i)$, and zero otherwise, we see that $\alpha^*\alpha = (\alpha_i^*\alpha_j) \in M_n(\mc B(H))$.  Then the matrix in $M_n(\mc B_{M'}(H,V))$ with first row $(\hat\alpha_i)$, and zero otherwise, has norm at least $K^{-1} \|\alpha\|$, and so
\[ \| (\hat\alpha_i^* \hat\alpha_j) \|_{M_n(M')} = \| (\hat\alpha_j) \|^2_{M_{1,n}(\mc B_{M'}(H,V))} \geq K^{-2}\|\alpha\|^2 = K^{-2} \| (\alpha_i^*\alpha_j) \|. \]
Let $(a_i)$ in $\mc B(H)$ with $\sum_i a_i^*a_i \leq 1$.  For any unit vector $\xi\in H$ we have that $\sum_i \|a_i\xi\|^2 = \sum_i (\xi|a_i^*a_i\xi) \leq \|\xi\|^2 = 1$, and similarly for a unit vector $\eta$, and hence $|( \eta | \sum_{i,j} a_i^* \alpha_i^* \alpha_j a_j \xi )| \leq \|(\alpha_i^*\alpha_j)\|$.  Thus $\|\sum_{i,j} a_i^* \alpha_i^* \alpha_j a_j \| \leq \|(\alpha_i^*\alpha_j)\|$ and so
\[ \Big\| \sum_{i,j} a_i^*\alpha_i^*\alpha_ja_j \Big\| \leq K^2 \|(\hat\alpha_i^*\hat\alpha_j)\|. \]

Let $(\hat\alpha_i)_{i\in I}$ be an orthogonal basis for $\mc B_{M'}(H,V)$.  For a finite $I_0\subseteq I$ let $\beta = \sum_{i\in I_0} \alpha_i \alpha_i^* \leq \|\beta\| 1$.  Hence if we set $a_i = \|\beta\|^{-1/2} \alpha_i^*$ for $i\in I_0$, then $\sum_{i\in I_0} a_i^*a_i = \|\beta\|^{-1} \sum_{i\in I_0} \alpha_i\alpha_i^* \leq 1$.  Thus 
\[ K^2 \|(\hat\alpha_i^*\hat\alpha_j)\|
\geq \Big\| \sum_{i,j\in I_0} a_i^* \alpha_i^* \alpha_j a_j \Big\|
= \|\beta\|^{-1} \Big\| \sum_{i,j\in I_0} \alpha_i \alpha_i^* \alpha_j \alpha_j^* \Big\|
= \|\beta\|^{-1} \| \beta^*\beta \| = \|\beta\|. \]
However, of course $\hat\alpha_i^*\hat\alpha_j = \delta_{i,j} p_i$ for some projections $p_i\in M'$, and so $\|(\hat\alpha_i^*\hat\alpha_j)\| = 1$.  Thus $\|\beta\| \leq K^2$, and so $\sum_{i\in I} \alpha_i \alpha_i^*$ is bounded above and hence converges.  By Lemma~\ref{lem:sum_alpha_finite_e_int}, $e$ is integrable.
\end{proof}

The previous result works because $M$ acts on $L^2(M)$, which was used in an essential way in the proof of Proposition~\ref{prop:inv_to_cb}.  As in Weaver's work, we could of course think of $M$ acting on other spaces: for example, in \cite{Wasilewski_Quantum_Cayley}, $M = \prod_i \mathbb M_{n(i)}$ acts on $H_0 = \bigoplus \mathbb C^{n(i)}$, which seems natural, but is of course not $L^2(M)$.  \cite[Remark~3.28]{Wasilewski_Quantum_Cayley} essentially asks if an analogue of Theorem~\ref{thm:S_bdd_below_e_int} could hold in this case.  In Example~\ref{eg:self-dual_changes} we show that for this choice of $M$ acting on $H_0$, we can have $S$ being self-dual but $e$ not integrable.  However, this example does not have $\varphi^{-1}$ being bounded, which is the setting of \cite{Wasilewski_Quantum_Cayley}.  A more complicated example, Example~\ref{eg:BH_selfdual_notint}, shows that for $M = \mc B(K)$ acting on $K$ (again, not $L^2(M)$) we can have $\varphi^{-1}$ bounded and a self-dual $S$ with the associated projection not integrable.  We then combine these examples to give a matrix algebra example, with bounded $\varphi^{-1}$, in Example~\ref{eg:matrix_sd_notint}.

\subsection{Hilbert space operators}\label{sec:hilb_ops}

In the finite-dimensional case, we identify $M$ with $L^2(M)$, but so far we have not said much about operators on $L^2(M)$.  We continue with the setting of Section~\ref{sec:CB_maps}, considering two algebras.

\begin{proposition}\label{prop:when_A_bdd_L2}
Let $f,g\in \mf n_{\id\otimes\varphi} \subseteq N\vnten M$ and consider $A = A_{f,g} \colon M^\op \to N$.  Suppose that $(\psi\otimes\id)(g^*g) < \infty$.  For each $x\in \mf n_{\varphi^\op}$ we have that $A(x) \in \mf n_\psi$, and the resulting map $A_0 \colon \Lambda^\op(x) \mapsto \Lambda_\psi(A(x))$ is in $\mc B(L^2(M^\op), L^2(N))$.
\end{proposition}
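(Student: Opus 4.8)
The plan is to bound $\|\Lambda_\psi(A(x))\|$ directly, exploiting that $\hat f := \Lambda_{\id\otimes\varphi}(f)$ is a bounded operator (since $f\in\mf n_{\id\otimes\varphi}$, so $\|\hat f\|^2=\|(\id\otimes\varphi)(f^*f)\|<\infty$) and that the hypothesis controls the $N$-valued partial norm of $g$. First I would write every $x\in\mf n_{\varphi^\op}$ as $x=b^\op$ with $b^*\in\mf n_\varphi$, so that Lemma~\ref{lem:slice_f_moved_out}, applied to $g$, gives for $\xi\in L^2(N)$
\[ A(b^\op)\xi = \hat f^*(1\otimes\pi^\op(b^\op))\Lambda_{\id\otimes\varphi}(g)\xi = \hat f^*\, g(\xi\otimes\zeta), \qquad \zeta := J\Lambda(b^*). \]
Thus $A(b^\op)=\hat f^* R$, where $R\colon L^2(N)\to L^2(N)\otimes L^2(M)$ is the bounded operator $R=g\,(1\otimes|\zeta\rangle)$. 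Note that no analyticity or smearing is needed here, since Lemma~\ref{lem:slice_f_moved_out} already holds for all $b^*\in\mf n_\varphi$, i.e.\ for all $x\in\mf n_{\varphi^\op}$.

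Next I would estimate $A(b^\op)^*A(b^\op)=R^*\hat f\hat f^* R\le\|\hat f\|^2 R^* R$, and identify by a one-line computation $R^* R=(\id\otimes\omega_{\zeta,\zeta})(g^*g)\in N_+$, the slice of $g^*g$ by the vector functional $\omega_{\zeta,\zeta}$ on $\mc B(L^2(M))$. Applying the weight $\psi$ and using its monotonicity on $N_+$ gives
\[ \psi\big(A(b^\op)^*A(b^\op)\big)\le\|\hat f\|^2\,\psi\big((\id\otimes\omega_{\zeta,\zeta})(g^*g)\big). \]
The crux is then a Fubini identity for a weight against a normal functional on a positive element:
\[ \psi\big((\id\otimes\omega_{\zeta,\zeta})(g^*g)\big)=\omega_{\zeta,\zeta}\big((\psi\otimes\id)(g^*g)\big)=\big(\zeta\big|(\psi\otimes\id)(g^*g)\,\zeta\big). \]
I would justify this by writing $\psi=\sup_j\omega_j$ as an increasing supremum of normal positive functionals $\omega_j\le\psi$, applying ordinary Fubini for the normal functionals $\omega_j\otimes\omega_{\zeta,\zeta}$, and passing to the supremum using normality of $\omega_{\zeta,\zeta}$ together with the defining property $(\psi\otimes\id)=\sup_j(\omega_j\otimes\id)$ of the operator-valued weight $\psi\otimes\id$. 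This is exactly where the hypothesis $(\psi\otimes\id)(g^*g)<\infty$ enters: it makes the right-hand side finite and bounded by $\|(\psi\otimes\id)(g^*g)\|\,\|\zeta\|^2$.

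Finally, since $\|\zeta\|=\|J\Lambda(b^*)\|=\|\Lambda(b^*)\|=\|\Lambda^\op(x)\|$ by Remark~\ref{rem:other_Mop_GNS} (equivalently Lemma~\ref{lem:L2Mop_L2M}), the chain of estimates yields $A(x)\in\mf n_\psi$ with
\[ \|\Lambda_\psi(A(x))\|\le\|\hat f\|\,\|(\psi\otimes\id)(g^*g)\|^{1/2}\,\|\Lambda^\op(x)\| \qquad (x\in\mf n_{\varphi^\op}). \]
This proves membership for every $x\in\mf n_{\varphi^\op}$; by linearity of $A$ and of the GNS maps the estimate also forces $\Lambda^\op(x)\mapsto\Lambda_\psi(A(x))$ to be well-defined (if $\Lambda^\op(x)=\Lambda^\op(x')$ then $\Lambda^\op(x-x')=0$, whence $\Lambda_\psi(A(x-x'))=0$) and bounded on the dense subspace $\Lambda^\op(\mf n_{\varphi^\op})\subseteq L^2(M^\op)$, so it extends to $A_0\in\mc B(L^2(M^\op),L^2(N))$ with the displayed norm bound. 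I expect the only genuinely delicate point to be the rigorous justification of the Fubini interchange — the increasing-supremum argument and the bookkeeping for the operator-valued weight $\psi\otimes\id$ on positive elements; the factorization via Lemma~\ref{lem:slice_f_moved_out} and the slice identity $R^*R=(\id\otimes\omega_{\zeta,\zeta})(g^*g)$ are then routine.
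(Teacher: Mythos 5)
Your proof is correct, and its skeleton is the same as the paper's: factor $A(b^\op)=\Lambda_{\id\otimes\varphi}(f)^*(1\otimes\pi^\op(b^\op))\Lambda_{\id\otimes\varphi}(g)$ using Lemma~\ref{lem:slice_f_moved_out}, pull out $\|\Lambda_{\id\otimes\varphi}(f)\|^2$ by an operator inequality, and bound what remains by $\|(\psi\otimes\id)(g^*g)\|\,\|\Lambda^\op(x)\|^2$. There are two genuine differences worth recording. First, where you establish the operator identity $\Lambda_{\id\otimes\varphi}(g)^*(1\otimes\pi^\op(b^{*\op}b^\op))\Lambda_{\id\otimes\varphi}(g)=R^*R=(\id\otimes\omega_{\zeta,\zeta})(g^*g)$ directly in $N_+$ and then apply $\psi$ once, the paper instead evaluates $\psi$ of that element by invoking the net $(c_i)$ of Proposition~\ref{prop:tomita_nice_bai} and Proposition~\ref{prop:motivation_approx_weight} in both directions: it converts $\psi(\,\cdot\,)$ into limits of vector states $\omega_{\Lambda_\psi(c_i)}$, applies Lemma~\ref{lem:slice_f_moved_out} inside the limit, and converts back. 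Your route is a clean simplification: no approximating net is needed, only monotonicity of $\psi$ on $N_+$. Second, the step you single out as the crux --- the interchange $\psi\big((\id\otimes\omega_{\zeta,\zeta})(g^*g)\big)=\ip{(\psi\otimes\id)(g^*g)}{\omega_{\zeta,\zeta}}$ --- requires no proof at all in this paper's framework: Section~\ref{sec:slicing_with_weights} \emph{defines} the operator-valued weight $\psi\otimes\id$ by exactly this duality, $\ip{(\psi\otimes\id)(x)}{\omega}=\psi\big((\id\otimes\omega)(x)\big)$ for $\omega\in M_*^+$ and $x\in(N\vnten M)_+$, and the hypothesis $(\psi\otimes\id)(g^*g)<\infty$ says precisely that this element of $\wh{M}_+$ is a bounded operator, giving the bound by $\|(\psi\otimes\id)(g^*g)\|\,\|\zeta\|^2$. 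Your proposed supremum argument over $\Phi^0_\psi$ is correct but redundant; it amounts to re-deriving the definition (compare the computation in Section~\ref{sec:slicing_with_weights} showing $\varphi(\psi\otimes\id)=\psi(\id\otimes\varphi)$, which uses the same device).
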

\begin{proof}
Let $(c_i) \subseteq N$ be the net given by Proposition~\ref{prop:tomita_nice_bai}, applied to $\psi$.  
For $b\in\mf n_{\varphi}^*$ we have $A(b^\op) = \Lambda_{\id\otimes\varphi}(f)^* (1\otimes\pi^\op(b^\op)) \Lambda_{\id\otimes\varphi}(g)$, and so, setting $K^2 = \|\Lambda_{\id\otimes\varphi}(f)\|^2$, we have
\begin{align*}
\psi(A(b^\op)^* A(b^\op))
&= \psi\big( \Lambda_{\id\otimes\varphi}(g)^* (1\otimes\pi^\op(b^{*\op})) \Lambda_{\id\otimes\varphi}(f) \Lambda_{\id\otimes\varphi}(f)^* (1\otimes\pi^\op(b^\op)) \Lambda_{\id\otimes\varphi}(g) \big) \\
&\leq K^2 \psi( \Lambda_{\id\otimes\varphi}(g)^*(1\otimes\pi^\op(b^{*\op}b^\op)) \Lambda_{\id\otimes\varphi}(g) ) \\
&= K^2 \lim_i \psi( c_i^*\Lambda_{\id\otimes\varphi}(g)^*(1\otimes\pi^\op(b^{*\op}b^\op)) \Lambda_{\id\otimes\varphi}(g) c_i ) \\
&= K^2 \lim_i \| (1\otimes\pi^\op(b^\op))\Lambda_{\id\otimes\varphi}(g)\Lambda_\psi(c_i) \|^2
\end{align*}
By Lemma~\ref{lem:slice_f_moved_out}, this is equal to
\begin{align*}
&= K^2 \lim_i \| g (\Lambda_\psi(c_i) \otimes J \Lambda(b^{*}) ) \|^2
= K^2 \lim_i \psi(c_i^* (\id\otimes\omega)(g^*g) c_i),
\end{align*}
where we set $\omega = \omega_{J\Lambda(b^*)}$.  By Proposition~\ref{prop:motivation_approx_weight}, this in turn equals
\[ K^2 \psi((\id\otimes\omega)(g^*g))
= K^2 (\psi\otimes\id)(g^*g)(\omega)
\leq K^2 \|(\psi\otimes\id)(g^*g)\| \|J\Lambda(b^*)\|^2. \]
Using Remark~\ref{rem:other_Mop_GNS}, we identify $J\Lambda(b^*)$ with $\Lambda^\op(b^\op)$, and so this shows that $A(b^\op) \in \mf n_{\psi}$ and that the map $\Lambda^\op(b^\op) \mapsto \Lambda_\psi(A(b^\op))$ is bounded.  The existence of $A_0$ follows.
\end{proof}

The argument is nearly reversible, except for the first inequality.  We don't know if there is some sort of converse.

Our aim is ultimately a statement generalising Proposition~\ref{prop:S_is_bimod_A}, though for the moment we continue working in the more general situation of two algebras.  We begin with some technical results; recall the discussion about Tomita algebras in Appendix~\ref{sec:weights}.

\begin{proposition}\label{prop:int_coint_HS}
Let $f,g$ as above, and form $A=A_{f,g}$ and $A_0$.  For $a\in \mf n_{\varphi^\op}$ we have that $u = A_0 J^\op a^\op J^\op \nabla^{\op -1/2}$ is a Hilbert--Schmidt operator with $\|u\|_{HS} \leq K \|\Lambda^\op(a^\op)\|$ for some constant $K$.
\end{proposition}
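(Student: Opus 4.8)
The plan is to evaluate $u$ on the dense set of GNS vectors $\Lambda^\op(b^\op)$, to identify $u$ with an explicit Hilbert--Schmidt kernel manufactured from $g$ and $\Lambda^\op(a^\op)$, and then to bound its Hilbert--Schmidt norm by a single weight expression. First I would take $a$ analytic (smearing it as in the proof of Lemma~\ref{lem:slice_f_moved_out}, which is harmless by the usual density argument) so that the modular operators below are everywhere defined. Using $\nabla^{\op-1/2}\Lambda^\op(b^\op) = \Lambda^\op(\sigma^\op_{i/2}(b^\op))$, the $M^\op$-analogue of Lemma~\ref{lem:action_J} giving $J^\op a^\op J^\op\Lambda^\op(y^\op) = \Lambda^\op(y^\op\,\sigma^\op_{-i/2}(a^{\op*}))$, and the defining relation $A_0\Lambda^\op(x^\op) = \Lambda_\psi(A(x^\op))$ of Proposition~\ref{prop:when_A_bdd_L2}, I obtain
\[ u\,\Lambda^\op(b^\op) = \Lambda_\psi\big( A\big(\sigma^\op_{i/2}(b^\op)\,\sigma^\op_{-i/2}(a^{\op*})\big) \big). \]
The factor $\nabla^{\op-1/2}$ is not decorative here: it is exactly the modular twist needed to compensate for the non-invariance of the GNS vectors, and so is what makes the subsequent assembly of the $b$-dependence work out.

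Next I would convert the matrix coefficients of $u$ into slices of $f^*g$. Pairing the displayed vector with $\Lambda_\psi(d)$ and writing the argument of $A$ as a product of the form $b_1^{*\op}b_0^\op$, Corollary~\ref{corr:Afg_only_fstarg} rewrites the inner $A(\cdots)$ as $(\id\otimes\omega_{J\Lambda(b_1^*),J\Lambda(b_0^*)})(f^*g)$; using $J\Lambda(\sigma_{-i/2}(x)) = \Lambda(x^*)$ (Lemma~\ref{lem:form_J}) the fixed element $a$ enters only through a single vector built from $\Lambda(a^*)$, of norm $\|\Lambda^\op(a^\op)\|$ by Lemma~\ref{lem:L2Mop_L2M}, while $b$ enters through the companion GNS vector. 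Equivalently, pushing the $a$-twist back through $g$ with Lemma~\ref{lem:slice_f_moved_out} (in the form \eqref{eq:swap_hatb_slice_better}), $u$ factors as $\Lambda_{\id\otimes\varphi}(f)^*$ composed with the operator $\xi\mapsto g(\xi\otimes\Lambda(a^*))$, up to modular bookkeeping. This exhibits $u$ as the Hilbert--Schmidt operator attached to the vector in $L^2(N)\otimes\overline{L^2(M^\op)}$ obtained from $\Lambda_{\id\otimes\varphi}(g)$ by inserting $\Lambda(a^*)$ into the $M$-leg, and square-integrability of this kernel is precisely the finiteness of $(\psi\otimes\id)(g^*g)$ assumed in Proposition~\ref{prop:when_A_bdd_L2}.

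Finally I would estimate $\|u\|_{HS}$. Writing $\|u\|_{HS}^2$ as the norm-square of this kernel and using the net $(a_i)$ of Proposition~\ref{prop:tomita_nice_bai} together with Proposition~\ref{prop:motivation_approx_weight} to recover the weight $\varphi$, exactly as in the proof of Proposition~\ref{prop:when_A_bdd_L2}, the estimate should reduce to
\[ \|u\|_{HS}^2 \le \|\Lambda_{\id\otimes\varphi}(f)\|^2\,\varphi\big( a\,(\psi\otimes\id)(g^*g)\,a^* \big) \le \|\Lambda_{\id\otimes\varphi}(f)\|^2\,\|(\psi\otimes\id)(g^*g)\|\,\|\Lambda^\op(a^\op)\|^2, \]
so the claim holds with $K = \|\Lambda_{\id\otimes\varphi}(f)\|\,\|(\psi\otimes\id)(g^*g)\|^{1/2}$. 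The main obstacle is this last step. Since $J^\op a^\op J^\op\nabla^{\op-1/2}$ is unbounded, commuting $\nabla^{\op-1/2}$ through $J^\op a^\op J^\op$ and past $A_0$, and then showing that the resulting kernel is a genuine element of $L^2(N)\otimes\overline{L^2(M^\op)}$ with the stated norm bound rather than a merely densely defined sesquilinear form, requires careful analytic-continuation and domain control. It is precisely here that the two integrability hypotheses on $g$---membership in $\mf n_{\id\otimes\varphi}$ and finiteness of $(\psi\otimes\id)(g^*g)$---must be used in tandem, the former to make sense of $\Lambda_{\id\otimes\varphi}(g)$ and the latter to bound the kernel.
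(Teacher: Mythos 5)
Your first half is exactly the paper's argument: for $a,b$ with $\Lambda(a),\Lambda(b)$ in the Tomita algebra, applying $\nabla^{\op-1/2}$, then $J^\op a^\op J^\op$, then $A_0$, and invoking Corollary~\ref{corr:Afg_only_fstarg} and Lemma~\ref{lem:form_J}, gives $u\Lambda^\op(b^\op)=\Lambda_\psi\big((\id\otimes\omega_{\Lambda(b^*),\Lambda(a^*)})(f^*g)\big)$, and your final norm bound is the correct one. The genuine gap is that you never prove $u$ is Hilbert--Schmidt, which is the entire content of the proposition: you translate it into ``square-integrability of the kernel'', assert this ``is precisely'' finiteness of $(\psi\otimes\id)(g^*g)$, and then concede in your last paragraph that upgrading the kernel from a densely defined form to a genuine $L^2$ vector is the main unresolved obstacle. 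Moreover the difficulty is misdiagnosed: once the displayed formula is in hand, no further analytic continuation or domain control is needed. The paper finishes softly: the Cauchy--Schwarz estimate $\|u\Lambda^\op(b^\op)\|^2\le\|f\|^2\,\|(\psi\otimes\id)(g^*g)\|\,\|\Lambda(b^*)\|^2\|\Lambda(a^*)\|^2$ shows $u$ is bounded, so by continuity $u\overline\xi=\Lambda_\psi\big((\id\otimes\omega_{\xi,\Lambda(a^*)})(f^*g)\big)$ for all $\xi\in L^2(M)$; then, for an orthonormal basis $(e_i)$ of $L^2(M)$,
\[
\sum_i\|u(e_i)\|^2=\sum_i\psi\Big((\id\otimes\omega_{\Lambda(a^*),\Lambda(a^*)})\big(g^*f(1\otimes\rankone{e_i}{e_i})f^*g\big)\Big)
=\psi\Big((\id\otimes\omega_{\Lambda(a^*),\Lambda(a^*)})(g^*ff^*g)\Big)
\le\|f\|^2\,\|(\psi\otimes\id)(g^*g)\|\,\|\Lambda(a^*)\|^2,
\]
where the middle equality is \emph{normality} of $\psi$ applied to the increasing partial sums of $\sum_i 1\otimes\rankone{e_i}{e_i}\uparrow 1$. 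That single computation is what is missing from your proposal.

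Two further problems. (i) Your factorization does not typecheck: $\Lambda_{\id\otimes\varphi}(f)^*$ composed with $\xi\mapsto g(\xi\otimes\Lambda(a^*))$ is an operator on $L^2(N)$ --- by \eqref{eq:swap_hatb_slice_better} and Lemma~\ref{lem:form_J} it equals $A_{f,g}(\sigma_{i/2}(a^*)^\op)\in N$ --- whereas $u$ maps $L^2(M^\op)$ to $L^2(N)$, so ``up to modular bookkeeping'' is hiding a real mismatch. If you want an honest kernel, the right object is $w=\Lambda_{\psi\otimes\id}(f^*g)\Lambda(a^*)\in L^2(N)\otimes L^2(M)$: since $g^*ff^*g\le\|f\|^2g^*g$, the hypothesis on $g$ gives $f^*g\in\mf n_{\psi\otimes\id}$, so $\Lambda_{\psi\otimes\id}(f^*g)$ is a bounded operator (Theorem~\ref{thm:nT_vs_module}), and Lemma~\ref{lem:slice_alt} identifies the matrix coefficients of $w$ with those of $u$; that would be a legitimate alternative to the basis computation, close in spirit to Lemma~\ref{lem:action_gives_HS_image}. (ii) Reducing to analytic $a$ at the outset ``by the usual density argument'' is not free: smearing gives $u_n\to u$ only strongly, and the Hilbert--Schmidt property with its norm bound does not automatically pass to strong limits; the paper spends its final paragraph on exactly this, extracting a weak subsequential limit of the bounded sequence $(u_n)$ in $HS(L^2(M^\op),L^2(N))$ and identifying it with $u$ through matrix coefficients. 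Some version of that argument (or of the kernel construction in (i), which works for every $a\in\mf n_{\varphi^\op}$ directly) must appear in a complete proof.
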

\begin{proof}
Let $a^\op,b^\op$ be such that $\Lambda^\op(a^\op), \Lambda^\op(b^\op)$ are in the Tomita algebra of $\varphi^\op$ (which is equivalent to $\Lambda(a),\Lambda(b)$ being in the Tomita algebra for $\varphi$).  Then
\begin{align*}
u \Lambda^\op(b^\op)
&= A_0 \Lambda^\op( \sigma_{i/2}^\op(b^\op) \sigma_{-i/2}^\op(a^{\op*}) )
= \Lambda_\psi\big( A_{f,g}( \sigma_{-i/2}(b)^\op \sigma_{i/2}(a^{*})^\op ) \big) \\
&= \Lambda_\psi\big( (\id\otimes\omega_{ J\Lambda(\sigma_{-i/2}(b)) , J\Lambda(\sigma_{-i/2}(a)) })(f^*g) \big)   \\
&= \Lambda_\psi\big( (\id\otimes\omega_{ \Lambda(b^*) , \Lambda(a^*) })(f^*g) \big),
\end{align*}
again using the calculation in Corollary~\ref{corr:Afg_only_fstarg}, and Lemma~\ref{lem:form_J}.  With Lemma~\ref{lem:L2Mop_L2M} in mind, we now see that
\begin{align*}
\|u \Lambda^\op(b^\op)  &  \|^2
= \psi\big( (\id\otimes\omega_{ \Lambda(a^*) , \Lambda(b^*) })(g^*f) (\id\otimes\omega_{ \Lambda(b^*) , \Lambda(a^*) })(f^*g) \big) \\
&\leq \|\Lambda(b^*)\|^2 \|f\|^2 \psi\big( (\id\otimes\omega_{ \Lambda(a^*) , \Lambda(a^*) })(g^*g) \big)
= \|\Lambda^\op(b^\op)\|^2 \|f\|^2 \|(\psi\otimes\id)(g^*g)\| \|\Lambda^\op(a^\op)\|^2.
\end{align*}
So $u$ is a bounded map $L^2(M^\op) \to L^2(N)$.  We now use Lemma~\ref{lem:L2Mop_L2M} to identify $L^2(M^\op)$ with $\overline{L^2(M)}$.  Then continuity of $\alpha$ shows that
\[ u \overline\xi = \Lambda_\psi\big( (\id\otimes\omega_{ \xi , \Lambda(a^*) })(f^*g) \big) \qquad (\xi\in L^2(M)). \]
Let $(e_i)$ be an orthonormal basis for $L^2(M)$, so adapting the previous calculation gives
\begin{align*}
\sum_i \|u(e_i)\|^2
&= \sum_i \psi\big( (\id\otimes\omega_{ \Lambda(a^*) , \Lambda(a^*) })(g^*f(1\otimes\rankone{e_i}{e_i})f^*g) \big)
= \psi\big( (\id\otimes\omega_{ \Lambda(a^*) , \Lambda(a^*) })(g^*ff^*g) \big) \\
&\leq \|\Lambda(a^*)\|^2 \|f\|^2 \|(\psi\otimes\id)(g^*g)\|
\end{align*}
using here that $\psi$ is normal.  So $u$ is a Hilbert--Schmidt operator with $\|u\|_{HS} \leq K \|\Lambda^\op(a^\op)\|$ for some constant $K$ depending on $f,g$.

Now let $a\in\mf n_{\varphi}^*$ (that is, $a^\op\in\mf n_{\varphi^\op}$), so by \cite[Theorem~VI.1.26]{TakesakiII} there is a sequence $(a_n)$ with $\Lambda(a_n)$ in the Tomita-algebra and $\Lambda^\op(a_n) \to \Lambda^\op(a)$ and $a_n^\op \to a^\op$ strongly.  Form $u_n$ from $a_n$, and $u$ from $a$.  Then $\|u_n\|_{HS} \leq K \|\Lambda^\op(a_n^\op)\| \to K \|\Lambda^\op(a^\op)\|$ so $(u_n)$ is a bounded sequence in the Hilbert space $HS(L^2(M^\op), L^2(N))$.  With $b$ as before,
\[ \|(u_n - u) \Lambda^\op(b^\op)\| = \|\Lambda_\psi\big( (\id\otimes\omega_{ \Lambda(b^*) , \Lambda(a_n^*-a^*) })(f^*g) \big)\|
\leq K \|\Lambda^\op(b^\op)\| \|\Lambda^\op(a_n^\op-a^\op)\| \to 0. \]
So $u_n \to u$ strongly, as $(u_n)$ is also bounded in operator norm.  There is a subset $u_{n(i)}$ say such that $u_{n(i)} \to v$ weakly in the Hilbert space $HS(L^2(M^\op), L^2(N))$ for some Hilbert--Schmidt operator $v$.  Then
\[ (\xi|v(\eta)) = ( \rankone{\xi}{\eta} | v )_{HS} = \lim_i ( \rankone{\xi}{\eta} | u_{n(i)} )_{HS}
= \lim_i (\xi | u_{n(i)}(\eta) ) = \lim_n (\xi | u_n(\eta)) = (\xi | u(\eta) ), \]
for each $\xi,\eta$.  Hence $u = v$, showing that $u$ is Hilbert--Schmidt, with $\|u\|_{HS} =\|v\|_{HS}\leq K \|\Lambda^\op(a^\op)\|$.
\end{proof}

\begin{lemma}\label{lem:action_gives_HS_image}
We continue with the same hypotheses.  Consider $u$ as a member of $L^2(N) \otimes \overline{L^2(M^\op)} \cong L^2(N) \otimes L^2(M)$.  For $x\in D(\sigma^\psi_{i/2}) \cap D(\sigma^\psi_{-i/2}) \cap \mf n_\psi^*$ with $\sigma^\psi_{-i/2}(x)\in\mf n_\psi$, we have $(J_\psi x^*J_\psi\otimes 1)u = f^*g(J_\psi\Lambda_\psi(x^*)\otimes\Lambda(a^*)) \in L^2(N) \otimes L^2(M)$.
\end{lemma}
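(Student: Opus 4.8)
The plan is to verify the asserted equality of two vectors in $L^2(N)\otimes L^2(M)$ by translating it into the Hilbert--Schmidt picture. Recall the canonical unitary $\Theta\colon L^2(N)\otimes L^2(M)\to HS(\overline{L^2(M)},L^2(N))$ determined by $\Theta(\eta\otimes\zeta)=\rankone{\eta}{\overline\zeta}$, under which the action $T\otimes 1$ of $T\in\mc B(L^2(N))$ on the left factor corresponds to post-composition $S\mapsto T\circ S$. Thus $\Theta$ carries $(J_\psi x^*J_\psi\otimes 1)u$ to the operator $\overline\xi\mapsto J_\psi x^*J_\psi(u\overline\xi)$. From the proof of Proposition~\ref{prop:int_coint_HS} we have the explicit form $u\overline\xi=\Lambda_\psi\big((\id\otimes\omega_{\xi,\Lambda(a^*)})(f^*g)\big)$ for $\xi\in L^2(M)$, so everything reduces to an identity between two Hilbert--Schmidt operators evaluated on the dense set of vectors $\overline\xi$.

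First I would apply the commutant action formula. Since $x$ is analytic, Lemma~\ref{lem:action_J} (for $\psi$) gives $J_\psi x^*J_\psi\Lambda_\psi(c)=\Lambda_\psi(c\,\sigma^\psi_{-i/2}(x))$ for $c\in\mf n_\psi$, whence
\[ J_\psi x^*J_\psi(u\overline\xi)=\Lambda_\psi\big((\id\otimes\omega_{\xi,\Lambda(a^*)})(f^*g)\,\sigma^\psi_{-i/2}(x)\big). \]
Next I would move $\sigma^\psi_{-i/2}(x)\in N$ into the first leg, using that the slice map $\id\otimes\omega$ is a right $N$-module homomorphism, $(\id\otimes\omega)(Z)y=(\id\otimes\omega)(Z(y\otimes 1))$; this rewrites the right-hand side as $\Lambda_\psi\big((\id\otimes\omega_{\xi,\Lambda(a^*)})(f^*g(\sigma^\psi_{-i/2}(x)\otimes 1))\big)$. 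Since $\Lambda_\psi(\sigma^\psi_{-i/2}(x))=J_\psi\Lambda_\psi(x^*)$ by Lemma~\ref{lem:form_J}, the claim then amounts to identifying, for $Z=f^*g\in N\vnten M$ and $y=\sigma^\psi_{-i/2}(x)$, the operator $\overline\xi\mapsto\Lambda_\psi\big((\id\otimes\omega_{\xi,\Lambda(a^*)})(Z(y\otimes 1))\big)$ with $\Theta\big(Z(\Lambda_\psi(y)\otimes\Lambda(a^*))\big)$.

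This last identification is the crux, and it is transparent for an elementary tensor $Z=p\otimes q$: there $Z(\Lambda_\psi(y)\otimes\Lambda(a^*))=\Lambda_\psi(py)\otimes q\Lambda(a^*)$, and applying $\Theta$ and evaluating at $\overline\xi$ gives $(\xi|q\Lambda(a^*))\Lambda_\psi(py)$, which is exactly $\Lambda_\psi\big((\id\otimes\omega_{\xi,\Lambda(a^*)})((p\otimes q)(y\otimes 1))\big)$; the case $Z\in N\odot M$ follows by linearity. The main obstacle is upgrading this to a general $Z\in N\vnten M$, because $\Lambda_\psi$ is unbounded and one cannot simply invoke weak$^*$-continuity of the slice maps. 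I would resolve this as in Propositions~\ref{prop:int_coint_HS} and~\ref{prop:bounded_to_nvarphiinv}: fix $\eta\in L^2(N)$, $\xi\in L^2(M)$ and compare the matrix coefficients $(\eta\,|\,(\cdot)\overline\xi)$ of the two operators. The coefficient of $\Theta\big(Z(\Lambda_\psi(y)\otimes\Lambda(a^*))\big)$ equals $(\eta\otimes\xi\,|\,Z(\Lambda_\psi(y)\otimes\Lambda(a^*)))$, which is manifestly bounded and WOT-continuous in $Z$, hence agrees with the other side on the weak$^*$-dense subspace $N\odot M$ once one checks, for $\eta$ ranging over $\Lambda_\psi(d)$ with $d$ in the Tomita algebra of $\psi$, that the corresponding coefficient of the slice expression is likewise given by $(\eta\otimes\xi\,|\,\cdot\,)$ via the defining relation \eqref{eq:slicing_restatement}. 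The remaining domain bookkeeping---that $(\id\otimes\omega_{\xi,\Lambda(a^*)})(f^*g)\in\mf n_\psi$, that $\sigma^\psi_{-i/2}(x)$ is defined, and that all the intervening vectors are genuinely square-integrable---is precisely what the analyticity hypotheses on $x$ and the integrability $g\in\mf n_{\id\otimes\varphi}$ supply.
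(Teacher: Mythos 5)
Your reduction steps are sound, and in fact they give a mild variant of the paper's route: you apply Lemma~\ref{lem:action_J} directly to the vector $u\overline\xi=\Lambda_\psi\big((\id\otimes\omega_{\xi,\Lambda(a^*)})(f^*g)\big)$ and use Lemma~\ref{lem:form_J}, thereby avoiding Lemma~\ref{lem:sigmaiswap}, which the paper's scalar computation needs. The genuine gap is in how you resolve what you rightly call the crux. Your plan is that the two matrix coefficients agree on $N\odot M$, the $\Theta$-side is WOT-continuous in $Z$, and hence they agree at $Z=f^*g$ ``once one checks'' that the slice-side coefficient is also of the form $(\eta\otimes\xi\,|\,\cdot\,)$. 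But that deferred check \emph{is} the identity you are trying to prove (paired against a dense set of vectors $\eta=\Lambda_\psi(d)$), so the argument is circular; and without it the density step fails outright, because the slice side passes through $\Lambda_\psi$, which is closed but not weak$^*$-continuous: if $Z_i\to f^*g$ weak$^*$ (even boundedly and $\sigma$-strongly$^*$), the slices $(\id\otimes\omega_{\xi,\Lambda(a^*)})(Z_i(y\otimes1))$ converge only weak$^*$, and $\Lambda_\psi$ of them need not converge to anything. So agreement on the weak$^*$-dense subspace $N\odot M$ transfers no information to $Z=f^*g$. (You acknowledge at the outset that one ``cannot simply invoke weak$^*$-continuity of the slice maps'', but the proposed fix is a version of exactly that.)

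The step has a one-line repair, which is in essence what the paper does: do not push $y=\sigma^\psi_{-i/2}(x)$ inside the slice. Writing $c=(\id\otimes\omega_{\xi,\Lambda(a^*)})(f^*g)\in\mf n_\psi$, your first step gives $J_\psi x^*J_\psi u\overline\xi=\Lambda_\psi(cy)$; since $y\in\mf n_\psi$ and $c\in N$, this equals $c\,\Lambda_\psi(y)$ (the GNS map intertwines left multiplication), and then for every $\eta\in L^2(N)$,
\[
(\eta\,|\,c\,\Lambda_\psi(y))
=\big(\eta\otimes\xi\,\big|\,f^*g\big(\Lambda_\psi(y)\otimes\Lambda(a^*)\big)\big),
\]
which is nothing but the definition of the slice map $(\id\otimes\omega_{\xi,\Lambda(a^*)})$ and holds for \emph{every} bounded element of $N\vnten M$; no GNS image of $Z$, and hence no continuity problem, ever appears. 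This is exactly how the paper's proof closes: it pairs $v$ against $\Lambda_\psi(c)\otimes\overline{\Lambda^\op(b^\op)}$ and only ever uses the slice-map definition together with $c'\Lambda_\psi(y)=\Lambda_\psi(c'y)$. Alternatively, your operator-level formulation can be salvaged by proving $W=f^*g(y\otimes1)\in\mf n_{\psi\otimes\id}$ (which follows from $W^*W\le\|f\|^2\|g\|^2(y^*y\otimes1)$ and $\psi(y^*y)<\infty$) and then applying Lemma~\ref{lem:slice_alt} to $W$; note, however, that the relation you cite, \eqref{eq:slicing_restatement}, is the $\id\otimes\varphi$ version and slices the wrong leg for this purpose.
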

\begin{proof}
Let $v = (J_\psi x^*J_\psi\otimes 1)u$ so as an operator $L^2(M^\op)\to L^2(N)$ we have $v = J_\psi x^*J_\psi A_0 J^\op a^\op J^\op \nabla^{\op -1/2}$ which is also Hilbert--Schmidt.  For $c\in \mf n_{\psi}$ and $b\in \mf n_\varphi^*$,
\begin{align*}
\big( \Lambda_\psi(c) \otimes \overline{\Lambda^\op(b^\op)} \big| v \big)_{HS}
&= (\Lambda_\psi(c) | J_\psi x^*J_\psi u \Lambda^\op(b^\op) )
= ( \Lambda_\psi(c\sigma^\psi_{-i/2}(x^*)) | u \Lambda^\op(b^\op) ) \\
&= \psi\big( \sigma^\psi_{i/2}(x) c^* (\id\otimes\omega_{ \Lambda(b^*) , \Lambda(a^*) })(f^*g) \big)
= \psi\big( c^* (\id\otimes\omega_{ \Lambda(b^*) , \Lambda(a^*) })(f^*g) \sigma^\psi_{-i/2}(x) \big) \\
&= \big( \Lambda_\psi(c) \otimes \Lambda(b^*) \big| f^*g(J_\psi\Lambda_\psi(x^*)\otimes\Lambda(a^*)) \big).
\end{align*}
Here we used Lemma~\ref{lem:action_J}, Lemma~\ref{lem:sigmaiswap}, and Lemma~\ref{lem:form_J}.  Once again identifying $L^2(M^\op) \cong \overline{L^2(M)}$, the result follows.
\end{proof}

Suppose that $f,g \in \mf n_{\id\otimes\varphi} \cap \mf n_{\psi\otimes\id}$ so we can form $A_{f,g} \colon M^\op \to N$ and also $A_{\tau(g), \tau(f)} \colon N^\op \to M$, where here $\tau$ is the tensor swap map.  By Proposition~\ref{prop:when_A_bdd_L2}, we have $A_0 \colon L^2(M^\op) \to L^2(N)$ associated to $A_{f,g}$ as well as $B_0 \colon L^2(N^\op) \to L^2(M)$ associated to $A_{\tau(g), \tau(f)}$.

\begin{proposition}\label{prop:swap_Hilbert_op}
Identify $L^2(M)$ with $L^2(M^\op)$, see Remark~\ref{rem:other_Mop_GNS}, and similarly $L^2(N) \cong L^2(N^\op)$.  Then $B_0 = A_0^*$.
\end{proposition}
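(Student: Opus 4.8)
The plan is to verify the operator identity $B_0=A_0^*$ by testing it against a dense set of vectors. Both $A_0$ and $B_0$ are bounded by Proposition~\ref{prop:when_A_bdd_L2} (the hypotheses $f,g\in\mf n_{\id\otimes\varphi}\cap\mf n_{\psi\otimes\id}$ supply the integrability needed for \emph{both} maps simultaneously), so it suffices to check the adjoint relation on the GNS images of a convenient dense subalgebra. I would take $b\in\mf n_\varphi$ and $d\in\mf n_\psi$ whose GNS vectors lie in the respective Tomita algebras, so that all analytic continuations $\sigma_{\pm i/2}$ are available and the modular conjugations act by the clean formulae of Lemmas~\ref{lem:form_J} and~\ref{lem:action_J}. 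Under the identifications $L^2(M^\op)\cong L^2(M)$ and $L^2(N^\op)\cong L^2(N)$ of Remark~\ref{rem:other_Mop_GNS}, the vectors $\Lambda^\op(b^\op)=J\Lambda(b^*)$ lie in the domain of $A_0$ and $\Lambda^\op_N(d^\op)=J_\psi\Lambda_\psi(d^*)$ in the domain of $B_0$, and the goal becomes the scalar identity
\[ \big(A_0\,J\Lambda(b^*)\,\big|\,J_\psi\Lambda_\psi(d^*)\big)_{L^2(N)} = \big(J\Lambda(b^*)\,\big|\,B_0\,J_\psi\Lambda_\psi(d^*)\big)_{L^2(M)}. \]

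Next I would unfold each side. Using $A_0\Lambda^\op(b^\op)=\Lambda_\psi(A_{f,g}(b^\op))$ together with $J_\psi\Lambda_\psi(d^*)=\Lambda_\psi(\sigma^\psi_{-i/2}(d))$, the left-hand side becomes $\psi\big(A_{f,g}(b^\op)^*\,\sigma^\psi_{-i/2}(d)\big)$; symmetrically, via $J\Lambda(b^*)=\Lambda(\sigma_{-i/2}(b))$, the right-hand side becomes $\varphi\big(\sigma_{i/2}(b^*)\,A_{\tau(g),\tau(f)}(d^\op)\big)$. Now invoke Corollary~\ref{corr:Afg_only_fstarg} and~\eqref{eq:Afg_slice_prop}: $A_{f,g}$ depends only on $w:=f^*g$ and is a slice $(\id\otimes\lambda_b)(w)$ over the $M$-leg, while $A_{\tau(g),\tau(f)}$ depends only on $\tau(g)^*\tau(f)=\tau(g^*f)=\tau(w^*)$, and because $\tau$ is the normal $*$-isomorphism swapping the two legs, slicing its $N$-leg equals slicing the \emph{first} leg of $w^*$. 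Substituting, both sides rewrite as double slices of $w^*=g^*f$: the left as $(\nu_d\otimes\overline{\lambda_b})(w^*)$ with $\nu_d(\cdot)=\psi(\,\cdot\;\sigma^\psi_{-i/2}(d))$, and the right as $(\mu_d\otimes\rho_b)(w^*)$ with $\rho_b(\cdot)=\varphi(\sigma_{i/2}(b^*)\,\cdot\,)$. Thus the statement collapses to the two ``functional identities'' $\nu_d=\mu_d$ on $N$ and $\overline{\lambda_b}=\rho_b$ on $M$.

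These two identities are of the same type (one per algebra) and are exactly what the modular machinery delivers: the functional produced when $A_{f,g}$ is assembled from $w$ through $\pi^\op$ and the GNS map is, after $J$ and the modular operator are moved across, precisely the KMS-twisted functional $\varphi(\sigma_{i/2}(b^*)\,\cdot\,)$, and likewise for $\psi$. Concretely I would verify them by the computation underlying Lemma~\ref{lem:action_gives_HS_image}, pushing $J$, $J_\psi$ and the half-shifts $\sigma_{\pm i/2}$ through the slices with Lemmas~\ref{lem:form_J},~\ref{lem:action_J} and~\ref{lem:sigmaiswap}, and identifying $L^2(M^\op)\cong\overline{L^2(M)}$ as in Lemma~\ref{lem:L2Mop_L2M}. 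An essentially equivalent and perhaps cleaner route is to compare the Hilbert--Schmidt operators of Proposition~\ref{prop:int_coint_HS}: for $A_0$ the operator $u$ satisfies the relation in Lemma~\ref{lem:action_gives_HS_image} governed by $f^*g$, for $B_0$ the analogous operator is governed by $\tau(g^*f)$, and taking the Hilbert-space adjoint of $A_0$ corresponds exactly to the leg-swap implemented by $\tau$ together with the interchange of $J\otimes 1$ and $1\otimes J_\psi$.

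The main obstacle is not the overall strategy but the bookkeeping: keeping straight the several modular objects $\nabla,\nabla^\op,J,J_\psi$ and the two distinct identifications $L^2(M^\op)\cong L^2(M)$ and $L^2(M^\op)\cong\overline{L^2(M)}$, justifying each passage of an analytic $\sigma_{\pm i/2}$ across a slice (which is why the reduction to Tomita-algebra vectors is made first, with a final density argument using boundedness and closedness of $A_0,B_0$ to remove it), and checking that the conjugate-linear identifications do not introduce a stray complex conjugate in $\overline{\lambda_b}$. Once these are controlled, the two functional identities are routine consequences of the KMS condition.
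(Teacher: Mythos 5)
Your proposal is correct and follows essentially the same route as the paper's proof: test the adjoint identity on GNS vectors of Tomita-algebra elements, unfold both sides via the slice formulas for $A_{f,g}$ and $A_{\tau(g),\tau(f)}$ (Corollary~\ref{corr:Afg_only_fstarg}), use the leg-swap property of $\tau$ together with the modular machinery (Lemmas~\ref{lem:form_J}, \ref{lem:mod_aut_usage}, \ref{lem:sigmaiswap}) to match them, and conclude by density and the boundedness supplied by Proposition~\ref{prop:when_A_bdd_L2}. Your repackaging of the final comparison as two per-leg functional identities is a cosmetic reorganization of the same chain of equalities the paper writes out in one computation.
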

\begin{proof}
We shall write $(\sigma_t)$ for both the modular automorphism groups of $\varphi$ and $\psi$, it being clear which one from context.  Let $b_0 \in \mf n_\psi$ with $\Lambda_\psi(b_0)$ in the Tomita algebra for $\psi$, and similarly $b_1$, and similarly $a_0, a_1$ for $\varphi$.  Using Lemma~\ref{lem:form_J}, and as in Corollary~\ref{corr:Afg_only_fstarg}, we have
\begin{align*}
& \big(\Lambda(a_1) \big| A_{\tau(g), \tau(f)}(b_1^{*\op} b_0^\op) \Lambda(a_0)\big)
= \big( \tau(g) (\Lambda(a_1) \otimes J\Lambda_\psi(b_1^*)) \big| 
   \tau(f) (\Lambda(a_0) \otimes J\Lambda_\psi(b_0^*)) \big) \\
&= \big( f(\Lambda_\psi(\sigma_{-i/2}(b_0)) \otimes J\Lambda(\sigma_{-i/2}(a_0^*))) \big| g(\Lambda_\psi(\sigma_{-i/2}(b_1)) \otimes J\Lambda(\sigma_{-i/2}(a_1^*))) \big) \overline{\phantom{d}} \\
&= \big( \Lambda_\psi(\sigma_{-i/2}(b_0)) \big| A_{f,g}( \sigma_{-i/2}(a_0^*)^{\op} \sigma_{-i/2}(a_1^*)^{*\op} ) \Lambda_\psi(\sigma_{-i/2}(b_1)) \big) \overline{\phantom{d}} \\
&= \big( \Lambda_\psi(\sigma_{-i/2}(b_1)) \big| A_{f,g}( \sigma_{-i/2}(a_0^*)^{\op} \sigma_{i/2}(a_1)^{\op} )^* \Lambda_\psi(\sigma_{-i/2}(b_0)) \big).
\end{align*}
Set $a = \sigma_{i/2}(a_1) \sigma_{-i/2}(a_0^*)$ and $b=b_0 b_1^*$, so using Lemma~\ref{lem:mod_aut_usage}, we continue the calculation as
\begin{align*}
= \psi\big( \sigma_{-i/2}(b_1)^* A_{f,g}(a^\op)^* \sigma_{-i/2}(b_0) \big)
&= \psi\big( A_{f,g}(a^\op)^* \sigma_{-i/2}(b_0) \sigma_{-i/2}(b_1^*) \big)
= \big( A_0 \Lambda^\op(a^\op) \big| \Lambda_\psi(\sigma_{-i/2}(b)) \big) \\
&= \big( \Lambda^\op(a^\op) \big| A_0^*\Lambda_\psi(\sigma_{-i/2}(b)) \big).
\end{align*}
As in Remark~\ref{rem:other_Mop_GNS}, we use that $\Lambda^\op(x^\op) = J\Lambda(x^*)$ for $x\in\mf n_\varphi^*$, and so $\Lambda_\psi(\sigma_{-i/2}(b)) = \nabla_\psi^{1/2} \Lambda_\psi(b) = J\Lambda_\psi(b^*) = \Lambda_\psi^\op(b^\op)$.  Similarly, $\Lambda^\op(a^\op) = \Lambda(\sigma_{-i/2}(a)) = \Lambda(a_1 \sigma_{-i}(a_0^*))$.  By the definition of $B_0$, we also have
\begin{align*}
& \big(\Lambda(a_1) \big| A_{\tau(g), \tau(f)}(b_1^{*\op} b_0^\op) \Lambda(a_0)\big)
= \varphi\big( a_1^* A_{\tau(g), \tau(f)}(b^\op) a_0 \big)
= \varphi\big( \sigma_i(a_0) a_1^* A_{\tau(g), \tau(f)}(b^\op) \big) \\
&= \big( \Lambda(a_1 \sigma_i(a_0)^*) \big| B_0 \Lambda_\psi^\op(b^\op) \big)
= \big( \Lambda(a_1 \sigma_i(a_0)^*) \big| A_0^*\Lambda_\psi^\op(b^\op) \big),
\end{align*}
the final equality coming from the calculations above.  Notice that we can apply Lemma~\ref{lem:mod_aut_usage} as $\Lambda(a_0)$ is in the Tomita algebra, and $a_1^* A_{\tau(g), \tau(f)}(b^\op) \in \mf n_\varphi \cap \mf n_\varphi^*$, because $a_1\in\mf n_\varphi$ and $A_{\tau(g), \tau(f)}(b^\op)\in \mf n_\varphi$, by Proposition~\ref{prop:when_A_bdd_L2}.  By density of such $a,b$, we conclude that $A_0^* = B_0$, as claimed.
\end{proof}

We now apply these results to a projection $e\in M\vnten M^\op$.  If we have both $e \in \mf n_{\id\otimes\varphi^\op}$ and $e\in\mf n_{\varphi\otimes\id}$ then we can form the associated $A\in\mc{CB}^\sigma(M)$, and then $A$ induces a bounded $A_0 \in \mc B(L^2(M))$.
Proposition~\ref{prop:swap_Hilbert_op} now seems strange in light of Proposition~\ref{prop:swap_e_KMS_ad_A}, as we would expect the KMS adjoint to appear, not the usual adjoint.  However, we need to be careful about the use of $M$ vs $M^\op$, and so forth.  Indeed, let $e \in M \vnten M^\op$ be a projection with both $e \in \mf n_{\id\otimes\varphi^\op}$ and $e\in\mf n_{\varphi\otimes\id}$.  Set $A = A_{e,e} \colon M \to M$ with associated $A_0\in\mc B(L^2(M))$.  Then $\tau(e) \in M^\op\vnten M$ and so $A_{\tau(e), \tau(e)} \colon M^\op \to M^\op$ with associated $B_0 \in \mc B(L^2(M^\op))$.  That is, $A_0$ and $B_0$ actually act on different spaces.  The resolution to this problem is to notice that in Section~\ref{sec:op_sys_A}, and indeed always in the finite-dimensional setting, we regard $\tau$ as an anti-homomorphism, and so think of $\tau(e) \in M \vnten M^\op$.  It would be better to denote this by $\tau(e)^\op$, which we now do.

\begin{proposition}\label{prop:int_and_coint}
Let $e$ be a projection in $\mf n_{\id\otimes\varphi^\op} \cap \mf n_{\varphi\otimes\id}$.  Let $\tau(e)^\op \in M\vnten M^\op$ give rise to the CP map $A_\tau = A_{\tau(e)^\op, \tau(e)^\op} \colon M\to M$.  The associated operator on $L^2(M)$ is $JA_0^*J$.
\end{proposition}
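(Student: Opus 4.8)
The plan is to combine Proposition~\ref{prop:swap_Hilbert_op}, which trades the tensor swap $\tau$ for the ordinary Hilbert-space adjoint, with the observation that reinterpreting $\tau(e)\in M^\op\vnten M$ as $\tau(e)^\op\in M\vnten M^\op$ is an op-transport, implemented on $L^2(M)$ by conjugation with $J$. First I would apply Proposition~\ref{prop:swap_Hilbert_op} with its two von Neumann algebras taken to be $M^\op$ and $M$ (carrying the weights $\varphi^\op$ and $\varphi$) and with $f=g=e$. Its running hypothesis $f,g\in\mf n_{\id\otimes\varphi}\cap\mf n_{\psi\otimes\id}$ then reads exactly $e\in\mf n_{\id\otimes\varphi^\op}\cap\mf n_{\varphi\otimes\id}$, which is what we assume. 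The conclusion is that $A_{\tau(e),\tau(e)}\colon M^\op\to M^\op$ induces a bounded operator $B_0$ on $L^2(M^\op)$ satisfying $B_0=A_0^*$ once $L^2(M^\op)$ is identified with $L^2(M)$ through the unitary $U\colon L^2(M^\op)\to L^2(M)$, $U\Lambda^\op(x^\op)=J\Lambda(x^*)$, of Remark~\ref{rem:other_Mop_GNS}; that is, $UB_0U^{-1}=A_0^*$.

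Next I would establish the op-transport relation $A_\tau(x)^\op=A_{\tau(e),\tau(e)}(x^\op)$ for $x\in M$. Since $\tau(e)^\op$ and $\tau(e)$ differ only by flipping the op-structure on both tensor legs, this is a direct reading of the slice-map description in Corollary~\ref{corr:Afg_only_fstarg}: each side is a slice of a single projection against the vector functionals built from $\Lambda$, respectively $\Lambda^\op$, and these two families correspond under $U$ because the conjugation $J^\op$ of $M^\op$ becomes $J$ in the realisation of Remark~\ref{rem:other_Mop_GNS} (compare Lemma~\ref{lem:com_weight}). Granting this, and using both that $A_\tau$ is $*$-preserving (being CP) and that $\Lambda^\op(y^\op)=J\Lambda(y^*)$, I compute for $x\in\mf n_\varphi\cap\mf n_\varphi^*$ that
\[ UB_0U^{-1}\,J\Lambda(x^*)
= U\Lambda^\op\!\big(A_\tau(x)^\op\big)
= J\Lambda\big(A_\tau(x^*)\big)
= J(A_\tau)_0\,J\,\big(J\Lambda(x^*)\big). \]
As the vectors $J\Lambda(x^*)$ are dense in $L^2(M)$, this yields $UB_0U^{-1}=J(A_\tau)_0J$. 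Comparing with $UB_0U^{-1}=A_0^*$ from the first step gives $J(A_\tau)_0J=A_0^*$, and hence $(A_\tau)_0=JA_0^*J$, as claimed.

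The genuinely hard part is not analytic but organisational: keeping straight the three distinct uses of ``op'' (the passage between $M$ and $M^\op$, the swap $\tau$, and the reinterpretation $\tau(e)\mapsto\tau(e)^\op$) together with the matching GNS identifications. The delicate point within this is the op-transport relation of the second paragraph, where one must check that no residual modular factor intervenes; this is precisely what makes a single antilinear conjugation by $J$ appear, rather than a twist by some power of $\nabla$, and it rests on the coincidence of $J^\op$ with $J$ under Remark~\ref{rem:other_Mop_GNS}. Once this is in place, the adjoint is supplied for free by Proposition~\ref{prop:swap_Hilbert_op} and the conjugation by $J$ by the op-transport, and the two assemble into $JA_0^*J$ with no further modular data required.
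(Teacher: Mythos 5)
Your proposal is correct and follows essentially the same route as the paper: both rest on Proposition~\ref{prop:swap_Hilbert_op} (giving $B_0 = A_0^*$ under the identification of Remark~\ref{rem:other_Mop_GNS}), the op-transport identity $A_\tau(x)^\op = A_{\tau(e),\tau(e)}(x^\op)$, and a GNS-level computation using $J^\op = J$ together with the fact that CP maps are $*$-maps. The one point to note is that the op-transport identity, which you present as a ``direct reading'' of Corollary~\ref{corr:Afg_only_fstarg}, is exactly where the paper spends its effort: an explicit slice computation on elementary tensors $f = x\otimes y^\op$, in which the antiunitarity of $J$ reverses the order of the vectors in the functional (turning $\omega_{\Lambda(b_1),\Lambda(b_0)}$ applied to $Jx^*J$ into $\omega_{J\Lambda(b_0),J\Lambda(b_1)}$ applied to $x$) --- this reversal, which your summary glosses over, is precisely what makes the identity come out with $x^\op$ rather than $(x^*)^\op$ on the right-hand side, so it is the detail one must check to see that no modular factor (and no unwanted adjoint) intervenes.
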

\begin{proof}
By Proposition~\ref{prop:Abdd_implies_f_int}, for $b_0, b_1\in\mf n_\varphi^*$ we have
\[ A_{\tau(e),\tau(e)}(b_1^{*\op} b_0^\op) = (\omega_{J\Lambda(b_1^*), J\Lambda(b_0^*)}\otimes \id)(e) \in M^\op. \]
We again use Remark~\ref{rem:other_Mop_GNS} to identify $L^2(M^\op)$ with $L^2(M)$ via $\Lambda^\op(a^\op) = J\Lambda(a^*)$ for $a\in\mf n_\varphi^*$, which identifies $x^\op \in \mc B(L^2(M^\op))$ with $Jx^*J \in M' \subseteq \mc B(L^2(M))$.  So for $b_0, b_1\in \mf n_\varphi$, and $f = x\otimes y^\op\in M\vnten M^\op$, we have
\begin{align*}
& (\id\otimes\omega_{J^\op\Lambda^\op(b_1^{\op*}), J^\op\Lambda^\op(b_0^{\op*})})(\tau(f)^\op)
= \ip{x^\op}{\omega_{J^\op\Lambda^\op(b_1^{\op*}), J^\op\Lambda^\op(b_0^{\op*})}} y
= \ip{Jx^*J}{\omega_{\Lambda(b_1), \Lambda(b_0)}} y \\
&= (\Lambda(b_1)|Jx^*J\Lambda(b_0)) y
= (J\Lambda(b_0)|x J\Lambda(b_1)) y
= (\omega_{J\Lambda(b_0), J\Lambda(b_1)}\otimes\id)(f)^\op.
\end{align*}
It hence follows that
\[ A_\tau(b_1^*b_0) = (\id\otimes\omega_{J^\op\Lambda^\op(b_1^{\op*}), J^\op\Lambda^\op(b_0^{\op*})})(\tau(e)^\op) = (\omega_{J\Lambda(b_0), J\Lambda(b_1)}\otimes\id)(e)^\op
= A_{\tau(e),\tau(e)}(b_0^\op b_1^{*\op})^\op. \]
Consequently,
\begin{align*}
\Lambda\big( A_\tau(b_1^*b_0) \big)
&= \Lambda\big( A_{\tau(e),\tau(e)}((b_1^*b_0)^\op)^\op \big)
= J^\op\Lambda^\op\big( A_{\tau(e),\tau(e)}((b_1^*b_0)^\op)^* \big) \\
&= J^\op\Lambda^\op\big( A_{\tau(e),\tau(e)}((b_0^*b_1)^\op) \big)
= J^\op B_0 \Lambda^\op( (b_0^*b_1)^\op )
= J B_0 J \Lambda(b_1^* b_0)
\end{align*}
using that $J^\op=J$ under our identifications, and that our maps are CP, hence $*$-maps.  By the previous proposition, $B_0 = A_0^*$, and so the bounded operator associated with $A_\tau$ is $JA_0^*J$.
\end{proof}

Once we remember that $A$ is ``real'', this resolves the apparent conflict with Proposition~\ref{prop:swap_e_KMS_ad_A}.

\begin{lemma}\label{lem:real_J_ad_KMS_ad}
Let $A\colon M\to M$ be a ``real'' map, $A(x^*) = A(x)^*$ for $x\in M$, and suppose that $A(\mf n_\varphi) \subseteq \mf n_\varphi$ with the resulting map $A_0 \colon \Lambda(x) \mapsto \Lambda(A(x))$ being bounded.  Then $JA_0^*J = \nabla^{-1/2} A_0^* \nabla^{1/2}$ in the sense that $D(\nabla^{-1/2} A_0^* \nabla^{1/2}) = D(\nabla^{1/2})$ and $\nabla^{-1/2} A_0^* \nabla^{1/2}$ agrees with $JA_0^*J$ on $D(\nabla^{1/2})$.
\end{lemma}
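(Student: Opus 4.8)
The plan is to encode the ``reality'' of $A$ as a commutation relation between $A_0$ and the closed antilinear operator $S = J\nabla^{1/2}$, and then dualise. Recall that $S$ is the closure of the map $\Lambda(x)\mapsto\Lambda(x^*)$ on $\mc D_0 := \Lambda(\mf n_\varphi\cap\mf n_\varphi^*)$, so that $\mc D_0$ is a core for $S$, and that $S^* = F = J\nabla^{-1/2} = \nabla^{1/2}J$, with $D(S)=D(\nabla^{1/2})$ and $D(F)=D(\nabla^{-1/2})$.

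First I would record that $A_0$ commutes with $S$ on the core. Since $A$ is real and $A(\mf n_\varphi)\subseteq\mf n_\varphi$, for $x\in\mf n_\varphi\cap\mf n_\varphi^*$ we have $A(x)\in\mf n_\varphi$ and $A(x)^*=A(x^*)\in\mf n_\varphi$, so $A$ preserves $\mf n_\varphi\cap\mf n_\varphi^*$ and hence $A_0$ preserves $\mc D_0$. On $\mc D_0$ we compute $A_0 S\Lambda(x)=\Lambda(A(x^*))=\Lambda(A(x)^*)=SA_0\Lambda(x)$. A routine closure argument — approximating $\xi\in D(S)$ by $\xi_n\in\mc D_0$ with $S\xi_n\to S\xi$, using boundedness of $A_0$ and closedness of $S$ — upgrades this to the inclusion $A_0 S\subseteq SA_0$; in particular $A_0$ preserves $D(S)=D(\nabla^{1/2})$.

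Next I would dualise, verifying directly (rather than via general composition rules for adjoints) that $A_0^* F\subseteq FA_0^*$. For $\zeta\in D(S)$ and $\eta\in D(F)$, using $A_0\zeta\in D(S)$, the defining relation $(F\eta|\zeta')=(S\zeta'|\eta)$, and the adjoint relation $(A_0\xi|\eta)=(\xi|A_0^*\eta)$, one finds
\[ (S\zeta|A_0^*\eta) = (A_0 S\zeta|\eta) = (SA_0\zeta|\eta) = (F\eta|A_0\zeta) = (A_0^*F\eta|\zeta), \]
which says precisely that $A_0^*\eta\in D(S^*)=D(F)$ with $F(A_0^*\eta)=A_0^*F\eta$. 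Writing $F=J\nabla^{-1/2}$ and multiplying through by $J$ (using $J^2=1$) turns this into
\[ \nabla^{-1/2}A_0^*\eta = JA_0^*J\,\nabla^{-1/2}\eta \qquad (\eta\in D(\nabla^{-1/2})). \]
Finally, since $\nabla$ is injective, $\nabla^{-1/2}$ is a bijection of $D(\nabla^{-1/2})$ onto $D(\nabla^{1/2})$; setting $\zeta=\nabla^{-1/2}\eta$ (so $\eta=\nabla^{1/2}\zeta$) the identity reads, for all $\zeta\in D(\nabla^{1/2})$, that $A_0^*\nabla^{1/2}\zeta\in D(\nabla^{-1/2})$ and $\nabla^{-1/2}A_0^*\nabla^{1/2}\zeta=JA_0^*J\zeta$. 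As $D(\nabla^{-1/2}A_0^*\nabla^{1/2})\subseteq D(\nabla^{1/2})$ by construction, this yields both the claimed domain equality $D(\nabla^{-1/2}A_0^*\nabla^{1/2})=D(\nabla^{1/2})$ and the stated identity on that domain.

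The delicate point I expect to be the main obstacle is the bookkeeping in the dualisation step: getting the conjugations and the domain of the antilinear adjoint $F$ exactly right, and ensuring that the passage from the core $\mc D_0$ to all of $D(S)$ — and hence the eventual statement about domains — is a genuine equality rather than a mere inclusion. Everything else is formal manipulation of the standard modular relations $S=J\nabla^{1/2}$ and $F=J\nabla^{-1/2}$.
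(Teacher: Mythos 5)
Your proposal is correct and follows essentially the same route as the paper's proof: encode reality as the commutation $A_0S\subseteq SA_0$ via the core $\Lambda(\mf n_\varphi\cap\mf n_\varphi^*)$, dualise to obtain $A_0^*F\subseteq FA_0^*$, and then unpack $F=J\nabla^{-1/2}$ to get the stated identity and domain equality. The only (immaterial) difference is the final bookkeeping: the paper first derives the sandwiched form $FA_0^*F\eta=A_0^*\eta$ and substitutes $\eta=J\xi$, whereas you multiply the commutation relation by $J$ and change variables via the bijection $\nabla^{-1/2}\colon D(\nabla^{-1/2})\to D(\nabla^{1/2})$.
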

\begin{proof}
Let $F$ be the conjugate adjoint to the map $S$, as in \cite[Lemma~VI1.5]{TakesakiII}.  Then $\eta\in D(F)$ if and only if $D(S)\ni\xi\mapsto (\eta|S\xi)$ is bounded, and in this case, $(F\eta|\xi) = (S\xi|\eta)$ for each $\xi\in D(S)$.  Let $a\in\mf n_\varphi \cap \mf n_\varphi^*$ so $\Lambda(a)\in D(S)$, and such elements form a core for $S$.  Then $A_0S\Lambda(a) = A_0\Lambda(a^*) = \Lambda(A(a^*)) = \Lambda(A(a)^*) = S \Lambda(A(a)) = SA_0\Lambda(a)$.  For $\xi\in D(S)$ there is a sequence $(a_n)$ with $\Lambda(a_n)\to\xi, \Lambda(a_n^*)\to S\xi$, so $A_0S\xi = \lim_n A_0S\Lambda(a_n) = \lim_n SA_0\Lambda(a_n)$ and $A_0\Lambda(a_n) \to A_0\xi$, showing that $A_0\xi\in D(S)$ with $SA_0\xi = A_0S\xi$, as $S$ is closed.

For $\eta\in D(F)$ we claim that $A_0^*\eta\in D(F)$.  Indeed, for $\xi\in D(S)$ we have $(A_0^*\eta|S\xi) = (\eta|SA_0\xi) = (A_0\xi|F\eta)$ and so $|(A_0^*\eta|S\xi)| \leq \|F\eta\| \|A_0\| \|\xi\|$.  So $A_0^*\eta\in D(F)$ and $(FA_0^*\eta|\xi) = (S\xi|A_0^*\eta) = (SA_0\xi|\eta) = (F\eta|A_0\xi) = (A_0^*F\eta|\xi)$ for each $\xi\in D(S)$, showing that $FA_0^*\eta = A_0^*F\eta$.  As $S^2=1$ also $F^2=1$ and so $A_0^*F\eta = FA_0^*\eta\in D(F)$ with $FA_0^*F\eta = A_0^*\eta$.

Finally, using that $JF = \nabla^{-1/2}, FJ = \nabla^{1/2}$ and $D(\nabla^{-1/2}) = D(F)$, we see that for $\xi\in D(\nabla^{1/2})$ we have $J\xi\in D(F)$ and hence $A_0^*J\xi = F A_0^* F J\xi = FA_0^*\nabla^{1/2}\xi$.  So also $JA_0^*J\xi = \nabla^{-1/2}A_0^*\nabla^{1/2}\xi$, showing also that $A_0^*\nabla^{1/2}\xi \in D(\nabla^{-1/2})$.
\end{proof}

\subsection{From Hilbert space operators to subspaces}\label{sec:considerations_of_S}

In this section we explore generalisations of Proposition~\ref{prop:S_is_bimod_A} and the ideas of Section~\ref{sec:op_sys_A}, mostly in the setting of when $e \in \mf n_{\id\otimes\varphi^\op} \cap \mf n_{\varphi\otimes\id}$ so that there is a Hilbert space form of $A = A_{e,e}$, say $A_0$.  The following shows that $A_0$ generates $V$ in a certain sense.

\begin{theorem}\label{thm:e_int_co_int_A0_generates_S}
Let $e\in M\vnten M^\op$ be a projection in $\mf n_{\id\otimes\varphi^\op} \cap \mf n_{\varphi\otimes\id}$.  Form $A = A_{e,e}\colon M\to M$ and the associated $A_0\in\mc B(L^2(M))$, and let $V \subseteq HS(H)$ be the image of $e$.  Then $A_0$ is a member of $S = \{ \alpha\in\mf n_{\varphi^{-1}} : \im\hat\alpha \subseteq V \}$.  Furthermore, $\{ \im\hat\alpha : x,y\in M', \alpha = x A_0 y \}$ is dense in $V$, so $\lin \{ \hat\alpha : x,y\in M', \alpha = x A_0 y \}$ is weak$^*$-dense in $e E_{\varphi^{-1}} = \mc B_{M'}(H,V)$.
\end{theorem}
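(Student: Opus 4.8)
The plan is to establish the two assertions in turn, drawing on the machinery of Section~\ref{sec:hilb_ops} together with the characterisation of $\mf n_{\varphi^{-1}}$ from Section~\ref{sec:hilb_mods}. Throughout I specialise the two-algebra results to $N=M$, $f=g=e$, $\psi=\varphi$, so that the co-integrability hypothesis $e\in\mf n_{\varphi\otimes\id}$ gives $(\varphi\otimes\id)(e^*e)=(\varphi\otimes\id)(e)<\infty$, while $e\in\mf n_{\id\otimes\varphi^\op}$ is exactly the integrability condition producing $A=A_{e,e}$ and hence $A_0\in\mc B(L^2(M))$ via Proposition~\ref{prop:when_A_bdd_L2}.

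To see $A_0\in S$, recall this means $A_0\in\mf n_{\varphi^{-1}}$ and $\im\hat{A_0}\subseteq V$. For membership in $\mf n_{\varphi^{-1}}$ I would invoke Proposition~\ref{prop:int_coint_HS}, which produces Hilbert--Schmidt operators $u$ built from $A_0$ with $\|u\|_{HS}\le K\|\Lambda^\op(a^\op)\|$. Under the identifications of Remark~\ref{rem:other_Mop_GNS} and Lemma~\ref{lem:L2Mop_L2M}, these $u$ are precisely the values of the candidate operator $\hat{A_0}\colon J\Lambda(b)\mapsto A_0JbJ\nabla^{-1/2}$, so the norm bound says exactly that $\hat{A_0}$ is bounded into $HS(H)$; by Theorem~\ref{thm:nT_vs_module} this is equivalent to $A_0\in\mf n_{\varphi^{-1}}$. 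For the image condition I would use Lemma~\ref{lem:action_gives_HS_image}, which with $f^*g=e$ reads $(Jx^*J\otimes 1)u=e(J\Lambda(x^*)\otimes\Lambda(a^*))\in\im(e)$. Since $V=\im(e)$ is HS-closed and an $M'$-bimodule, I would take the net of analytic elements from Proposition~\ref{prop:tomita_nice_bai}, for which $Jx^*J\to 1$ strongly; as left multiplication by a bounded strongly convergent net converges in HS-norm on a fixed HS operator, this gives $(Jx^*J\otimes 1)u\to u$ in $HS(H)$, and each term lies in $V$, so $u\in V$. Thus every value of $\hat{A_0}$ lies in $V$, and $A_0\in S$.

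For the density statement I would exploit the same identity of Lemma~\ref{lem:action_gives_HS_image}. The left-hand sides $(Jx^*J\otimes 1)u$ are the left $M'$-translates of the values of $\hat{A_0}$, hence lie in $\lin(M'\cdot\im\hat{A_0})$; the right-hand sides $e(J\Lambda(x^*)\otimes\Lambda(a^*))$ run, as $x$ and $a$ vary over dense sets, over $e$ applied to a total subset of $H\otimes\overline H$, and so densely span $V=\im(e)$. Combined with $\im\hat{A_0}\subseteq V$ from the first part, this forces $\overline{\lin}(M'\cdot\im\hat{A_0})=V$. By Lemma~\ref{lem:alpha_hat_bimod} we have $(xA_0y)\floatinghat=x\hat{A_0}y$ for $x,y\in M'$, and since varying the right factor $y$ fills out $\im\hat{A_0}$ by non-degeneracy of the $M'$-action on $L^2(M')$, the images $\{\im(xA_0y)\floatinghat:x,y\in M'\}$ span exactly $M'\cdot\im\hat{A_0}$, whose closure is $V$. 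The final passage from density of these images in $V$ to weak$^*$-density of $\lin\{\hat\alpha:\alpha=xA_0y\}$ inside $eE_{\varphi^{-1}}=\mc B_{M'}(H,V)$ is then Proposition~\ref{prop:wstar_dense_nondeg}.

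The main obstacle is bookkeeping with the identifications rather than any new idea: one must check that the operator $u$ of Proposition~\ref{prop:int_coint_HS}, living a priori in $L^2(M)\otimes L^2(M)$, is matched under conjugation of the second tensor leg (and the attendant twist relating $\nabla^\op$ to $\nabla^{-1}$) with the value $\hat{A_0}(J\Lambda(b))\in HS(H)=H\otimes\overline H$, and that $(Jx^*J\otimes 1)$ really is the left $M'$-action on $HS(H)$. Once these matchings are pinned down, the totality of $\{J\Lambda(x^*)\otimes\Lambda(a^*)\}$ in $H\otimes\overline H$ and the HS-continuity of the limit $Jx^*J\to 1$ make both the containment $\im\hat{A_0}\subseteq V$ and the density $\overline{\lin}(M'\cdot\im\hat{A_0})=V$ routine.
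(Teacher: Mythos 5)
Your proposal is correct and follows essentially the same route as the paper's proof: Proposition~\ref{prop:int_coint_HS} plus Theorem~\ref{thm:nT_vs_module} give $A_0\in\mf n_{\varphi^{-1}}$, Lemma~\ref{lem:action_gives_HS_image} together with an approximate identity of analytic elements (Proposition~\ref{prop:tomita_nice_bai}) gives $\im\wh{A_0}\subseteq V$ and the density of the translated images in $V$, and Lemma~\ref{lem:alpha_hat_bimod} with Proposition~\ref{prop:wstar_dense_nondeg} converts this into the weak$^*$-density statement. The only cosmetic difference is that the paper gets density already from the translates $xA_0$ (taking $y=1$), whereas you argue via the span $M'\cdot\im\wh{A_0}$; the content is identical.
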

\begin{proof}
Let $\alpha = A_0$.  By Proposition~\ref{prop:int_coint_HS}, for $a\in\mf n_{\varphi}$ we have $\hat\alpha J\Lambda(a) = A_0 JaJ \nabla^{-1/2} \in HS(H)$ with $\|\hat\alpha J\Lambda(a)\|_{HS} \leq K \|\Lambda(a)\|$ for some constant $K$ (depending on $e$).  So $\alpha\in\mf n_{\varphi^{-1}}$.  By Lemma~\ref{lem:action_gives_HS_image}, for $x\in\mf n_\varphi$ with $\Lambda(x)$ in the Tomita algebra, say, we have $(Jx^*J\otimes 1) \hat\alpha J\Lambda(a) = e(J\Lambda(x^*)\otimes\overline{\Lambda(a)}) \in V$.  This applies in particular when $\Lambda(x)$ is in the Tomita algebra, so we can let $x$ run through an approximate identity (compare with the proof of Proposition~\ref{prop:tomita_nice_bai} for example) to see that $\im\hat\alpha \subseteq V$.  So $\alpha\in S$.

By Proposition~\ref{prop:V_to_S}, $S$ is an $M'$-bimodule, indeed, by Lemma~\ref{lem:alpha_hat_bimod}, for $x,y\in M'$ we have that $(x A_0 y)\floatinghat = x \wh{A_0} y$, and so if $\alpha = x A_0 y$, certainly $\im\hat\alpha \subseteq V$.  Again, for $\Lambda(c)$ in the Tomita algebra, set $x = Jc^*J \in M'$, so with $a\in\mf n_\varphi$, we have
\begin{equation}
x \wh{A_0} y J\Lambda(a) = (Jc^*J\otimes 1) A_0 y JaJ \nabla^{-1/2} = e (J\Lambda(c^*) \otimes \overline{\Lambda(JyJa)}),
\label{eq:action_xA_0y}
\end{equation}
noting that $JyJ\in M$.  Taking $y=1$ and letting $c$ vary shows that we obtain a dense subspace of $V$, so already $\{ \im\hat\alpha : x\in M', \alpha = x A_0 \}$ is dense in $V$.  The claim about weak$^*$-density is Proposition~\ref{prop:wstar_dense_nondeg}.
\end{proof}

We now explore an analogue of Section~\ref{sec:op_sys_A}.  Recall that we denote by $J_0$ the anti-linear tensor swap map on $H\otimes\overline H$; when this is identified with $HS(H)$ we have that $J_0(x)=x^*$.  By Proposition~\ref{prop:mod_Trh}, the modular automorphism group of $\tilde\varphi = \Tr_{\nabla^{-1}}$ is $\tilde\sigma_t(x) = \nabla^{-it} x \nabla^{it}$ for $x\in\mc B(H)$.  As discussed before Lemma~\ref{lem:sigma_nabla_commutation}, for $t\in\mathbb R$, $x \in D(\tilde\sigma_{it})$ if and only if $D(\nabla^t x \nabla^{-t}) = D(\nabla^{-t})$ and $\nabla^t x \nabla^{-t}$ is a bounded operator.

\begin{proposition}\label{prop:flip_to_twisted_adj}
Let $e\in M\vnten M^\op$ be a projection in $\mf n_{\id\otimes\varphi^\op} \cap \mf n_{\varphi\otimes\id}$.  Let $V$ be the image of $e$, and $V_\tau = J_0(V)$ the image of $\tau(e)^\op$.  Let $S, S_\tau \subseteq \mf n_{\varphi^{-1}}$ be associated to $V$ and $V_\tau$ respectively.  Then
\[ \{ \nabla^{-1/2} \alpha^* \nabla^{1/2} : \alpha \in S \cap D(\tilde\sigma_{i/2})\}  \cap \mf n_{\varphi^{-1}} \]
is a subset of $S_{\tau}$ which is weak$^*$-dense in $\mc B_{M'}(H, V_\tau)$.
\end{proposition}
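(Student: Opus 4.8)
The plan is to prove the two assertions separately: that every $\beta$ in the displayed set lies in $S_\tau$ (\emph{containment}), and that the set is weak$^*$-dense in $\mc B_{M'}(H,V_\tau)$ (\emph{density}). Two preliminary observations orient everything. First, by Proposition~\ref{prop:mod_Trh} the modular conjugation of $\tilde\varphi$ on $L^2(\tilde\varphi)=HS(H)$ is $\tilde J=J_0$ (the map $x\mapsto x^*$), so $V_\tau=J_0(V)=\{v^*:v\in V\}$, and $\tilde\sigma_t(x)=\nabla^{-it}x\nabla^{it}$; in particular $\nabla^{-1/2}\alpha^*\nabla^{1/2}=\tilde\sigma_{-i/2}(\alpha^*)$, so the operation in question is exactly the infinite-dimensional shadow of the KMS adjoint from Proposition~\ref{prop:swap_e_KMS_ad_A} (the condition $\alpha\in D(\tilde\sigma_{i/2})$ being what makes $\beta$ bounded). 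Second, since $S\cap D(\tilde\sigma_{i/2})$ is a linear subspace and $\alpha\mapsto\nabla^{-1/2}\alpha^*\nabla^{1/2}$ is conjugate-linear, the displayed set is a \emph{linear} subspace of $\mf n_{\varphi^{-1}}$, so ``weak$^*$-dense'' may be read as density of its image under $\alpha\mapsto\hat\alpha$.

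For containment, fix $\beta=\nabla^{-1/2}\alpha^*\nabla^{1/2}$ in the set. As $V_\tau=V^*$ and the adjoint is an $HS$-isometry, $\im\hat\beta\subseteq V_\tau$ is equivalent to $(\hat\beta J\Lambda(b))^*\in V$ for all $b\in\mf n_\varphi$ (these vectors spanning a dense subspace, cf.\ Remark~\ref{rem:L2M'_vs_L2M}). Taking $\Lambda(b)$ in the Tomita algebra and using $\hat\beta J\Lambda(b)=\beta\,JbJ\,\nabla^{-1/2}$, transposition gives $(\hat\beta J\Lambda(b))^*=\nabla^{-1/2}Jb^*J\nabla^{1/2}\cdot\alpha\nabla^{-1/2}=d\,\alpha\nabla^{-1/2}$ with $d=\nabla^{-1/2}Jb^*J\nabla^{1/2}\in M'$ by Lemma~\ref{lem:mod_aut_bcomm}. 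This whole product is Hilbert--Schmidt because $\beta\in\mf n_{\varphi^{-1}}$, even though $\alpha\nabla^{-1/2}$ alone need not be. I then insert an approximate unit: for a net $(b'_\lambda)$ of nice elements with $Jb'_\lambda J\to1$ (Proposition~\ref{prop:tomita_nice_bai}), one has $d\,\alpha\,Jb'_\lambda J\,\nabla^{-1/2}=(d\,\alpha\nabla^{-1/2})\,w_\lambda$ where $w_\lambda=\nabla^{1/2}Jb'_\lambda J\nabla^{-1/2}\in M'$ is bounded with $w_\lambda\to1$ strong$^*$. Right multiplication of a fixed Hilbert--Schmidt operator by $w_\lambda$ converges in $HS$-norm, so $d\,\alpha\,Jb'_\lambda J\,\nabla^{-1/2}\to d\,\alpha\nabla^{-1/2}$; each term equals $d\cdot\hat\alpha J\Lambda(b'_\lambda)\in M'\cdot\im\hat\alpha\subseteq M'V\subseteq V$, and $V$ is $HS$-closed, whence $(\hat\beta J\Lambda(b))^*\in V$. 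Passing from Tomita $b$ to all $b\in\mf n_\varphi$ by continuity of $\hat\beta$ yields $\beta\in S_\tau$.

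For density, I pass to the projection $\tau(e)^\op$, which inherits both integrability conditions under the swap, and to its adjacency operator $A_\tau=A_{\tau(e)^\op,\tau(e)^\op}$, whose associated operator I write $(A_\tau)_0$. By Proposition~\ref{prop:int_and_coint} one has $(A_\tau)_0=JA_0^*J$, and since $A=A_{e,e}$ is completely positive (hence real) Lemma~\ref{lem:real_J_ad_KMS_ad} gives $JA_0^*J=\nabla^{-1/2}A_0^*\nabla^{1/2}$ and, moreover, $A_0\in D(\tilde\sigma_{i/2})$; also $A_0\in S$ by Theorem~\ref{thm:e_int_co_int_A0_generates_S}. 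Thus $(A_\tau)_0=\nabla^{-1/2}A_0^*\nabla^{1/2}$ with $A_0\in S\cap D(\tilde\sigma_{i/2})$ lies in our set. Applying Theorem~\ref{thm:e_int_co_int_A0_generates_S} to $\tau(e)^\op$, the elements $\gamma=x(A_\tau)_0$ with $x=Jc^*J$, $\Lambda(c)$ in the Tomita algebra, satisfy $\lin\{\hat\gamma\}$ weak$^*$-dense in $\mc B_{M'}(H,V_\tau)$ (via Proposition~\ref{prop:wstar_dense_nondeg}). Each such $\gamma$ is again in our set: expanding, $x(A_\tau)_0=\nabla^{-1/2}\bigl(A_0\,\tilde\sigma_{-i/2}(x^*)\bigr)^*\nabla^{1/2}$, where $\alpha'=A_0\,\tilde\sigma_{-i/2}(x^*)\in S M'\subseteq S$ (Proposition~\ref{prop:V_to_S}) lies in $D(\tilde\sigma_{i/2})$ as a product of $A_0\in D(\tilde\sigma_{i/2})$ with the analytic element $\tilde\sigma_{-i/2}(x^*)\in M'$, while $x(A_\tau)_0\in M'\mf n_{\varphi^{-1}}\subseteq\mf n_{\varphi^{-1}}$ by Lemma~\ref{lem:alpha_hat_bimod}. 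Since our set is a linear subspace containing this weak$^*$-dense family, it is weak$^*$-dense.

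The main obstacle is the containment step: the naive reading ``$(\hat\beta J\Lambda(b))^*=d\,\alpha\nabla^{-1/2}$ with $\alpha\nabla^{-1/2}\in V$'' breaks down because $\alpha\nabla^{-1/2}$ need not be Hilbert--Schmidt (already for $\alpha=A_0$); the remedy is to keep $d$ attached so the product stays Hilbert--Schmidt, and to extract membership in $V$ only in the limit along the strong$^*$ approximate unit, using that right multiplication by $w_\lambda\to1$ converges in $HS$-norm on a fixed Hilbert--Schmidt operator. The remaining bookkeeping—that $\tau(e)^\op$ inherits both integrability conditions, and that $Jc^*J$ with $\Lambda(c)$ Tomita is entire analytic for $\tilde\sigma$ with $\tilde\sigma_z(Jc^*J)\in M'$—is routine from Lemma~\ref{lem:mod_aut_bcomm} and Proposition~\ref{prop:mod_Trh}.
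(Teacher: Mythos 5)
Your overall strategy is the paper's: the density half (going through $A_{\tau,0}=JA_0^*J=\nabla^{-1/2}A_0^*\nabla^{1/2}$, noting $A_0\in D(\tilde\sigma_{i/2})$ via Proposition~\ref{prop:int_and_coint} and Lemma~\ref{lem:real_J_ad_KMS_ad}, and invoking the proof of Theorem~\ref{thm:e_int_co_int_A0_generates_S} for $\tau(e)^\op$ together with Proposition~\ref{prop:wstar_dense_nondeg}) is correct and is essentially a mirror image of the paper's argument, which parametrises the same dense family as $\{\nabla^{-1/2}(xA_0y)^*\nabla^{1/2}=\sigma'_{-i/2}(y^*)A_{\tau,0}\sigma'_{-i/2}(x^*)\}$ starting from $S$ rather than from $S_\tau$.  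Your containment half is also structurally parallel (a modular-twisting identity relating $\hat\beta$ to $\hat\alpha$, an approximate identity, and HS-closedness of the bimodule), but it contains a genuine gap.

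The gap is the convergence step $(d\,\alpha\nabla^{-1/2})\,w_\lambda\to d\,\alpha\nabla^{-1/2}$ in HS-norm, where $w_\lambda=\nabla^{1/2}Jb'_\lambda J\nabla^{-1/2}=\sigma'_{i/2}(Jb'_\lambda J)$.  For a \emph{net}, strong$^*$ convergence $w_\lambda\to1$ does \emph{not} imply $Tw_\lambda\to T$ in HS-norm for a fixed infinite-rank $T\in HS(H)$: writing $T^*=\sum_k s_k\rankone{\xi_k}{\eta_k}$, one has $\|T(w_\lambda-1)\|_{HS}^2=\sum_k s_k^2\|(w_\lambda^*-1)\xi_k\|^2$, and to interchange the limit with the sum one needs $\sup_\lambda\|w_\lambda\|<\infty$.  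Proposition~\ref{prop:tomita_nice_bai} supplies unit vectors $a_i$ with $\sigma_z(a_i)\to1$ $\sigma$-strong$^*$, but gives \emph{no} uniform bound on $\|\sigma_{i/2}(a_i)\|$; indeed for the smeared nets underlying that proposition the only available estimate is $\|\sigma_{i/2}(a_{j,n})\|\leq e^{n^2/4}$, which blows up along the net.  So, as cited, the step fails.  Note how the paper's proof is arranged precisely to dodge this: it proves $JbJ\,\beta\,JaJ\nabla^{-1/2}=\big(\hat\alpha J\Lambda(\sigma_{i/2}(b)^*)\big)^*J\sigma_{i/2}(a)J\in V_\tau$, so the modular twists $\sigma_{i/2}(a),\sigma_{i/2}(b)$ appear only \emph{inside} the expression whose membership in $V_\tau$ is asserted (where their norms are irrelevant), while the approximate-identity element enters the limit only untwisted, as $JbJ$ with $\|b\|\leq1$ multiplying a fixed HS operator on the left.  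Your argument can be repaired--either by rearranging it into the paper's form, or by constructing a better net: smear an approximate identity of $\mf n_{\varphi'}\cap\mf n_{\varphi'}^*$ with a \emph{fixed} Gaussian parameter, which yields $c_j=Jb'_jJ\in\mf n_{\varphi'}$ analytic with $c_j\to1$, $\sigma'_{i/2}(c_j)\to1$ strong$^*$ and $\sup_j\|\sigma'_{i/2}(c_j)\|\leq e^{1/4}$--but some such additional input is needed; it is not routine bookkeeping as written.
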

\begin{proof}
Let $\alpha \in S \cap D(\tilde\sigma_{i/2})$ so $\alpha^* \in \tilde\sigma_{-i/2}$, and hence $\nabla^{-1/2} \alpha^* \nabla^{1/2}$ is a bounded operator.  Let $\beta = \nabla^{-1/2} \alpha^* \nabla^{1/2}$, so for $a,b\in\mf n_{\varphi} \cap D(\sigma_{i/2})$ with $\sigma_{i/2}(b)^*\in\mf n_\varphi$, we have
\begin{align*}
JbJ \beta JaJ \nabla^{-1/2}
&= JbJ \nabla^{-1/2} \alpha^* \nabla^{1/2} JaJ \nabla^{-1/2}
= Jb \nabla^{1/2}J \alpha^* J \nabla^{-1/2} a\nabla^{1/2} J 
= \nabla^{-1/2} J \sigma_{i/2}(b) J \alpha^* J \sigma_{i/2}(a) J \\
&= \big( \hat\alpha J\Lambda(\sigma_{i/2}(b)^*) \big)^* J \sigma_{i/2}(a) J.
\end{align*}
Here we used Lemma~\ref{lem:domain_S_Tr_h} to see that $\nabla^{-1/2} x = (x^*\nabla^{-1/2})^*$ for any $x$.
As $\hat\alpha$ maps into $V$, we have $( \hat\alpha J\Lambda(\sigma_{i/2}(b)^*) )^* \in V_\tau$, which is an $M'$-bimodule, and so $JbJ \beta JaJ \nabla^{-1/2} \in V_\tau$.  Letting $b$ run through an approximate identity, compare Proposition~\ref{prop:tomita_nice_bai}, we conclude that $\beta JaJ \nabla^{-1/2} \in V_\tau$.  Assuming that $\beta\in\mf n_{\varphi^{-1}}$, so $\hat\beta$ is a bounded map, by density of such $a$, it follows that $\beta \in S_\tau$.

Let $x,y\in M'$ and $\alpha = x A_0 y \in S$, as in Theorem~\ref{thm:e_int_co_int_A0_generates_S}.  Let $\xi\in D(\nabla^{1/2})$, and suppose that actually $x,y\in D(\sigma'_{i/2})$, so by Proposition~\ref{prop:int_and_coint} and Lemma~\ref{lem:real_J_ad_KMS_ad}, we have
\[ \nabla^{-1/2} \alpha^* \nabla^{1/2}\xi
= \nabla^{-1/2} y^* A_0^* x^* \nabla^{1/2} \sigma'_{-i/2}(y^*) \nabla^{-1/2} A_0^* \nabla^{1/2} \sigma'_{-i/2}(x^*) \xi
= \sigma'_{-i/2}(y^*) A_{\tau,0} \sigma'_{-i/2}(x^*)\xi \]
here also using Lemma~\ref{lem:com_weight} to see that $\nabla^{-1} = \nabla'$, and Lemma~\ref{lem:sigma_nabla_commutation}.  Hence $\nabla^{-1/2} \alpha^* \nabla^{1/2}$ is bounded, and a member of $S_\tau$.  The proof of Theorem~\ref{thm:e_int_co_int_A0_generates_S} shows that such elements of $S_\tau$ will be weak$^*$ dense in $\mc B_{M'}(H, V_\tau)$.
\end{proof}

Example~\ref{eg:BH_int_coint} shows that when $e$ is only integrable, we can have $S_\tau = \{0\}$ with $V$ (and hence $S$) non-zero.  Example~\ref{eg:S_to_V_not_nice_but_int} shows that in general, we do need to intersect with $\mf n_{\varphi^{-1}}$ in the above statement.

We now come to an analogue of Section~\ref{sec:diff}.  Define
\[ S_{i/4} = \{ \tilde\sigma_{i/4}(\alpha) : \alpha \in S \cap D(\tilde\sigma_{i/4}) \}
= \{ \nabla^{1/4} \alpha \nabla^{-1/4} : \alpha \in S \cap D(\tilde\sigma_{i/4}) \}. \]
In general, this can be $\{0\}$, see Example~\ref{eg:Si4}.  Even in the setting of the following result, $S_{i/4}$ might not have non-zero intersection with $\mf n_{\varphi^{-1}}$ (also Example~\ref{eg:Si4}) and so we think of $S_{i/4}$ as simply a subset of $\mc B(H)$.

\begin{proposition}\label{prop:Si4_when_int_coint}
When $e$, the projection onto $V$, is in $\mf n_{\id\otimes\varphi^\op} \cap \mf n_{\varphi\otimes\id}$, we have that $S_{i/4}$ is sufficiently large in the sense that
\[ \{ \nabla^{-1/4} \beta \nabla^{1/4} : \beta\in S_{i/4} \cap D(\tilde\sigma_{-i/4}) \} = S \cap D(\tilde\sigma_{i/4}) \]
is weak$^*$ dense in $\mc B_{M'}(H,V)$.
\end{proposition}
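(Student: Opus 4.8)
The plan is to separate the claim into two parts: the displayed set equality, which is a soft consequence of modular theory, and the weak$^*$-density, which carries the real content and which I would reduce to Theorem~\ref{thm:e_int_co_int_A0_generates_S}. Throughout I write $\tilde\sigma$ for the modular group of $\tilde\varphi$, so that $\tilde\sigma_{i/4}(\alpha) = \nabla^{1/4}\alpha\nabla^{-1/4}$ and $\tilde\sigma_{-i/4}(\beta) = \nabla^{-1/4}\beta\nabla^{1/4}$ on the respective analytic domains recalled before Lemma~\ref{lem:sigma_nabla_commutation}.

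For the equality, recall that by definition $S_{i/4} = \{\tilde\sigma_{i/4}(\alpha) : \alpha\in S\cap D(\tilde\sigma_{i/4})\}$. The standard fact that the analytic generators $\tilde\sigma_{i/4}$ and $\tilde\sigma_{-i/4}$ are mutually inverse --- if $\alpha\in D(\tilde\sigma_{i/4})$ then $\tilde\sigma_{i/4}(\alpha)\in D(\tilde\sigma_{-i/4})$ and $\tilde\sigma_{-i/4}(\tilde\sigma_{i/4}(\alpha))=\alpha$ --- shows at once that every $\beta\in S_{i/4}$ already lies in $D(\tilde\sigma_{-i/4})$, and that $\beta\mapsto\tilde\sigma_{-i/4}(\beta)$ carries $S_{i/4}$ bijectively back onto $S\cap D(\tilde\sigma_{i/4})$. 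This gives the displayed equality with no use of integrability.

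The substance is then the weak$^*$-density of $S\cap D(\tilde\sigma_{i/4})$ in $\mc B_{M'}(H,V)$, and here I would exhibit a single weak$^*$-dense family that is already contained in $S\cap D(\tilde\sigma_{i/4})$. By Theorem~\ref{thm:e_int_co_int_A0_generates_S} we have $A_0\in S$, and for $c$ with $\Lambda(c)$ in the Tomita algebra of $\varphi$ and $x=Jc^*J\in M'$, the images $\im\hat\alpha$ of $\alpha = xA_0$ are dense in $V$; by Proposition~\ref{prop:wstar_dense_nondeg} the span of the corresponding $\hat\alpha$ is then weak$^*$-dense in $\mc B_{M'}(H,V)$. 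So it suffices to check that each such $\alpha = Jc^*J\,A_0$ lies in $S\cap D(\tilde\sigma_{i/4})$. Membership in $S$ is immediate, as $S$ is an $M'$-bimodule (Proposition~\ref{prop:V_to_S}, Lemma~\ref{lem:alpha_hat_bimod}) and $Jc^*J\in M'$. For the analytic domain, I first use reality of $A$: Lemma~\ref{lem:real_J_ad_KMS_ad} gives that $\nabla^{-1/2}A_0^*\nabla^{1/2}=JA_0^*J$ is bounded, i.e.\ $A_0^*\in D(\tilde\sigma_{-i/2})$ and hence $A_0\in D(\tilde\sigma_{i/2})$; nesting of analytic domains (a Phragm\'en--Lindel\"of/three-lines argument) then yields $A_0\in D(\tilde\sigma_{i/4})$. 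Since $\tilde\sigma_t$ restricts on $M'$ to $\sigma'_t$ (using $\nabla'=\nabla^{-1}$, Lemma~\ref{lem:com_weight}) and $Jc^*J$ is analytic for $\sigma'$ whenever $\Lambda(c)$ is in the Tomita algebra, we also have $Jc^*J\in D(\tilde\sigma_{i/4})$. As $D(\tilde\sigma_{i/4})$ is closed under products of elements whose analytic continuations remain bounded, $\alpha = Jc^*J\,A_0\in D(\tilde\sigma_{i/4})$, and the density follows.

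The main obstacle is the domain bookkeeping around the unbounded $\nabla$: establishing $A_0\in D(\tilde\sigma_{i/4})$ (the descent from $i/2$ to $i/4$) and that the product $Jc^*J\,A_0$ stays in the analytic domain, with $\tilde\sigma_{i/4}(Jc^*J\,A_0)=\tilde\sigma_{i/4}(Jc^*J)\,\tilde\sigma_{i/4}(A_0)$. Both are standard in modular theory, but must be invoked with care since the operators $\nabla^{1/4}(\cdot)\nabla^{-1/4}$ are only densely defined a priori; I would lean on the explicit description of $D(\tilde\sigma_{it})$ recalled before Lemma~\ref{lem:sigma_nabla_commutation}, together with boundedness of all the relevant analytic continuations, to make these manipulations rigorous.
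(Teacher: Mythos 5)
Your proposal is correct and follows essentially the same route as the paper's proof: the displayed set equality comes from $\tilde\sigma_{i/4}$ and $\tilde\sigma_{-i/4}$ being mutually inverse on analytic domains, and the weak$^*$-density comes from showing that products of $\sigma'$-analytic elements of $M'$ with $A_0$ lie in $S \cap D(\tilde\sigma_{i/4})$ and have images generating $V$, using Theorem~\ref{thm:e_int_co_int_A0_generates_S}, Lemma~\ref{lem:real_J_ad_KMS_ad} (for $A_0 \in D(\tilde\sigma_{i/2}) \subseteq D(\tilde\sigma_{i/4})$) and Proposition~\ref{prop:wstar_dense_nondeg}. The only cosmetic differences are that the paper works with two-sided products $xA_0y$ with $x,y \in D(\sigma'_{i/4})$ and checks membership in $D(\tilde\sigma_{i/4})$ by the explicit identity $\nabla^{1/4} xA_0y \nabla^{-1/4} = \sigma'_{i/4}(x)\,\tilde\sigma_{i/4}(A_0)\,\sigma'_{i/4}(y)$ rather than invoking the general algebra property of analytic domains, and the one fact you attribute to the theorem --- density of the images for the restricted family $\alpha = Jc^*J A_0$ with $\Lambda(c)$ in the Tomita algebra --- is established in that theorem's \emph{proof} (equation~\eqref{eq:action_xA_0y} with $y=1$) rather than in its statement, a step your argument would need to reproduce explicitly.
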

\begin{proof}
Again form $A = A_{e,e}$ and $A_0\in\mc B(L^2(M))$, so that $M' A_0 M' \subseteq S$ is weak$^*$-dense in $e E_{\varphi^{-1}} = \mc B_{M'}(H,V)$ by Theorem~\ref{thm:e_int_co_int_A0_generates_S}.  Furthermore, by Lemma~\ref{lem:real_J_ad_KMS_ad} we see that $A_0^* \in D(\tilde\sigma_{-i/2})$ so $A_0 \in D(\tilde\sigma_{i/2}) \subseteq D(\tilde\sigma_{i/4})$.  As in the proof of Proposition~\ref{prop:flip_to_twisted_adj}, for $x,y\in D(\sigma'_{i/4})$ we have
\[ \nabla^{1/4} xA_0y \nabla^{-1/4}
= \sigma'_{i/4}(x) \nabla^{1/4} A_0 \nabla^{-1/4} \sigma'_{i/4}(y)
= \sigma'_{i/4}(x) \tilde\sigma_{i/4}(A_0) \sigma'_{i/4}(y) \]
is a bounded operator, and similarly, $D(\nabla^{1/4} xA_0y \nabla^{-1/4}) = D(\nabla^{-1/4})$.  So $xA_0y \in S \cap D(\tilde\sigma_{i/4})$.

Given any $\alpha \in S \cap D(\tilde\sigma_{i/4})$ we have that $\nabla^{1/4} \alpha \nabla^{-1/4} = \tilde\sigma_{i/4}(\alpha) \in D(\tilde\sigma_{-i/4})$ with $\tilde\sigma_{-i/4}(\tilde\sigma_{i/4}(\alpha)) = \nabla^{-1/4} \tilde\sigma_{i/4}(\alpha) \nabla^{1/4}$.  So the displayed equation in the proposition statement, that $\{ \nabla^{-1/4} \beta \nabla^{1/4} : \beta\in S_{i/4} \cap D(\tilde\sigma_{-i/4}) \} = S \cap D(\tilde\sigma_{i/4})$, always holds.  We've just shown that $\{ xA_0y : x,y\in D(\sigma'_{i/4}) \} \subseteq S \cap D(\tilde\sigma_{i/4})$, so by Proposition~\ref{prop:wstar_dense_nondeg}, to finish the proof, it suffices to show that $\lin\{ (xA_0y)\floatinghat \xi : \xi\in H, x,y\in D(\sigma'_{i/4}) \}$ is dense in $V$.  By \eqref{eq:action_xA_0y} above, we have $(Jc^*J A_0 y)\floatinghat J\Lambda(a) = e(J\Lambda(c^*) \otimes \overline{\Lambda(JyJa)} )$ when $\Lambda(c)$ is in the Tomita algebra, which certainly implies that $Jc^*J \in D(\sigma'_{i/4})$.  Letting $y$ run through an approximate identity in $D(\sigma'_{i/4})$, we obtain a dense subset of $V$, as required.
\end{proof}

We finally obtain a sort of generalisation of Proposition~\ref{prop:swap_e_KMS_ad_A}: there is at least a dense subset of $S_{i/4}$ such that the adjoint operation corresponds to the tensor flip operator on $e$ and/or $V$.  Example~\ref{eg:Si4} shows that we cannot hope for much more, in general.

\begin{proposition}\label{prop:core_of_Si4}
Let $e \in \mf n_{\id\otimes\varphi^\op} \cap \mf n_{\varphi\otimes\id}$ and form $V,V_\tau$ and $S,S_\tau$, as well as $S_{i/4}$ and $S_{\tau,i/4}$.  The space
\[ S^{(0)} = \lin \{ xA_0y : x,y \in M' \text{ analytic for } (\sigma'_t) \} \subseteq S\cap D(\tilde\sigma_{i/4}) \]
is weak$^*$-dense in $\mc B_{M'}(H,V)$.  Let $S^{(0)}_{i/4} = \tilde\sigma_{i/4}(S^{(0)})$, and analogously define $S^{(0)}_{\tau,i/4}$.  Then $(S^{(0)}_{i/4})^* = S^{(0)}_{\tau,i/4}$.
\end{proposition}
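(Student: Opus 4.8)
The plan is to verify the three assertions in turn: that $S^{(0)} \subseteq S \cap D(\tilde\sigma_{i/4})$, that the image of $S^{(0)}$ is weak$^*$-dense in $\mc B_{M'}(H,V)$, and finally the adjoint identity $(S^{(0)}_{i/4})^* = S^{(0)}_{\tau,i/4}$, the last being the only genuinely new point. Throughout I would use that $\tilde\sigma$ leaves $M'$ invariant and restricts there to $\sigma'$: indeed $\tilde\sigma_t(x) = \nabla^{-it}x\nabla^{it}$ for $x \in \mc B(H)$ by Proposition~\ref{prop:mod_Trh}, while $\nabla' = \nabla^{-1}$ by Lemma~\ref{lem:com_weight}, so $\tilde\sigma_t|_{M'} = \sigma'_t$. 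Consequently an $x\in M'$ analytic for $(\sigma'_t)$ is analytic for $(\tilde\sigma_t)$ with $\tilde\sigma_z(x) = \sigma'_z(x) \in M'$ for all $z$.

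For the first assertion, $A_0 \in S$ by Theorem~\ref{thm:e_int_co_int_A0_generates_S} and $S$ is an $M'$-bimodule by Proposition~\ref{prop:V_to_S}, so $xA_0y \in S$ for $x,y \in M'$; and the factorisation $\tilde\sigma_{i/4}(xA_0y) = \sigma'_{i/4}(x)\, \tilde\sigma_{i/4}(A_0)\, \sigma'_{i/4}(y)$, valid since $A_0 \in D(\tilde\sigma_{i/2}) \subseteq D(\tilde\sigma_{i/4})$ by Lemma~\ref{lem:real_J_ad_KMS_ad} and $x,y$ are analytic, shows $xA_0y \in D(\tilde\sigma_{i/4})$, exactly as in the proof of Proposition~\ref{prop:Si4_when_int_coint}. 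Taking the linear span gives $S^{(0)} \subseteq S \cap D(\tilde\sigma_{i/4})$. For weak$^*$-density I would again follow Proposition~\ref{prop:Si4_when_int_coint}: by \eqref{eq:action_xA_0y}, with $x = Jc^*J$ for $\Lambda(c)$ in the Tomita algebra (so $x$ is analytic for $\sigma'$) and $y$ running through an approximate identity of analytic elements (Proposition~\ref{prop:tomita_nice_bai}), the vectors $(xA_0y)\floatinghat J\Lambda(a) = e(J\Lambda(c^*) \otimes \overline{\Lambda(JyJa)})$ span a dense subspace of $V$; Proposition~\ref{prop:wstar_dense_nondeg} then upgrades this to weak$^*$-density of the image of $S^{(0)}$ in $\mc B_{M'}(H,V)$.

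The core of the argument is the identity $\tilde\sigma_{i/4}(A_{\tau,0}) = \tilde\sigma_{i/4}(A_0)^*$, where $A_{\tau,0}$ is the Hilbert-space operator of $A_\tau = A_{\tau(e)^\op,\tau(e)^\op}$. By Proposition~\ref{prop:int_and_coint} we have $A_{\tau,0} = J A_0^* J$, and since $A$ is real (being CP), Lemma~\ref{lem:real_J_ad_KMS_ad} gives $J A_0^* J = \nabla^{-1/2} A_0^* \nabla^{1/2} = \tilde\sigma_{-i/2}(A_0^*)$. Hence $\tilde\sigma_{i/4}(A_{\tau,0}) = \tilde\sigma_{-i/4}(A_0^*)$, while the $*$-compatibility of the analytic modular group, $\tilde\sigma_z(T)^* = \tilde\sigma_{\bar z}(T^*)$, yields $\tilde\sigma_{i/4}(A_0)^* = \tilde\sigma_{-i/4}(A_0^*)$; the two right-hand sides agree.

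Finally I would assemble the adjoint identity generator by generator. For analytic $x,y \in M'$, taking adjoints in the factorisation and using $\sigma'_{i/4}(x)^* = \sigma'_{-i/4}(x^*)$ gives
\[
\tilde\sigma_{i/4}(xA_0y)^* = \sigma'_{-i/4}(y^*)\, \tilde\sigma_{i/4}(A_0)^* \, \sigma'_{-i/4}(x^*) = \sigma'_{-i/4}(y^*)\, \tilde\sigma_{i/4}(A_{\tau,0})\, \sigma'_{-i/4}(x^*).
\]
Writing $\sigma'_{-i/4}(y^*) = \sigma'_{i/4}(\sigma'_{-i/2}(y^*))$ and $\sigma'_{-i/4}(x^*) = \sigma'_{i/4}(\sigma'_{-i/2}(x^*))$, with $\sigma'_{-i/2}(y^*)$ and $\sigma'_{-i/2}(x^*)$ again analytic, I recognise the right-hand side as a generator $\tilde\sigma_{i/4}(x'A_{\tau,0}y')$ of $S^{(0)}_{\tau,i/4}$. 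Since $x \mapsto \sigma'_{-i/2}(x^*)$ is a bijection of the analytic elements, every generator of $S^{(0)}_{\tau,i/4}$ arises this way, and the two linear spans coincide. The main obstacle is purely technical: keeping the analytic continuations of $(\tilde\sigma_t)$ and $(\sigma'_t)$, their domains, and the $*$- and bijectivity properties straight; once the identification $\tilde\sigma_{i/4}|_{M'} = \sigma'_{i/4}$ and the single identity $\tilde\sigma_{i/4}(A_{\tau,0}) = \tilde\sigma_{i/4}(A_0)^*$ are in hand, the rest is bookkeeping.
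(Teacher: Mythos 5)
Your proposal is correct and follows essentially the same route as the paper: inclusion and weak$^*$-density are quoted from the proof of Proposition~\ref{prop:Si4_when_int_coint}, the key identity comes from Proposition~\ref{prop:int_and_coint} together with Lemma~\ref{lem:real_J_ad_KMS_ad} (the paper phrases it as $A_{\tau,0} = \tilde\sigma_{i/2}(A_0)^*$, you as $\tilde\sigma_{i/4}(A_{\tau,0}) = \tilde\sigma_{i/4}(A_0)^*$, which is the same computation), and the generator-by-generator adjoint calculation producing $x_1 = \sigma'_{-i/2}(y^*)$, $y_1 = \sigma'_{-i/2}(x^*)$ is identical. The only cosmetic difference is that you conclude equality via bijectivity of $x \mapsto \sigma'_{-i/2}(x^*)$ on analytic elements, where the paper invokes symmetry of the argument; both are fine.
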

\begin{proof}
The proof of Proposition~\ref{prop:Si4_when_int_coint} shows already that $S^{(0)}$ is weak$^*$-dense in $\mc B_{M'}(H,V)$ and a subspace of $S\cap D(\tilde\sigma_{i/4})$.  Furthermore, for $xA_0y \in S^{(0)}$ we have that $\tilde\sigma_{i/4}(xA_0y) = \sigma'_{i/4}(x) \tilde\sigma_{i/4}(A_0) \sigma'_{i/4}(y) \in S^{(0)}_{i/4}$.
Proposition~\ref{prop:int_and_coint} and Lemma~\ref{lem:real_J_ad_KMS_ad} again show that $A_{\tau,0} = \tilde\sigma_{-i/2}(A_0^*) = \tilde\sigma_{i/2}(A_0)^*$.  Let $x,y\in M'$ be analytic for $(\sigma'_t)$ and set $\alpha = \tilde\sigma_{i/4}(xA_0y) \in S^{(0)}_{i/4}$.  Then
\[ \alpha^* = \sigma'_{i/4}(y)^* \tilde\sigma_{i/4}(A_0)^* \sigma'_{i/4}(x)^*
= \sigma'_{i/4}(y)^* \tilde\sigma_{i/4}(\tilde\sigma_{i/2}(A_0)^*) \sigma'_{i/4}(x)^*
= \tilde\sigma_{i/4}\big( \sigma'_{i/2}(y)^* A_{\tau,0} \sigma'_{i/2}(x)^* \big). \]
As $x_1 = \sigma'_{i/2}(y)^*$ is analytic for $(\sigma'_t)$, and similarly $y_1 = \sigma'_{i/2}(x)^*$, this shows that $\alpha^* = \tilde\sigma_{i/4}(x_1 A_{\tau,0} y_1) \in S^{(0)}_{\tau,i/4}$.  The exact same argument shows that $(S^{(0)}_{\tau,i/4})^* \subseteq S^{(0)}_{i/4}$, and so we have equality.
\end{proof}

\subsection{\texorpdfstring{Weak$^*$}{Weak star}-closures}\label{sec:weakstar_closures}

We finally make some comments about weak$^*$-closures.  In Proposition~\ref{prop:V_to_S} we defined $S$ from $V$, namely $S = \mc B_{M'}(H,V) \cap \mf n_{\varphi^{-1}}$, which is a (not necessarily closed) $M'$-bimodule in $\mc B(H)$.  It would then be natural to define $\mc S$ to be the weak$^*$-closure.  Example~\ref{eg:V_can_give_zero_S} shows that we can have $V$ non-zero yet $S$ (and so $\mc S$) are zero.  Proposition~\ref{prop:S_in_case_varphiinv_bdd} shows that when $\varphi^{-1}$ is bounded, then $S = \mc S$, though Example~\ref{eg:V_can_give_zero_S_varphiinv_bdd} shows that even here, we can have $\mc S=\{0\}$ with $V$ non-zero.

In Section~\ref{sec:example_BH} below we study the theory as applied to $M=\mc B(K)$.  When we give $M$ its canonical trace, Example~\ref{eg:BH_get_HS_QRs} shows how the general theory exactly captures the idea of a Hilbert--Schmidt quantum relation, as studied in Section~\ref{sec:HS}.  In Example~\ref{eg:S_to_V_not_nice_but_int} we use this to transport a counter-example from Section~\ref{sec:HS}.  This gives an example of an integrable $e$ which is symmetric ($e = \tau(e)^\op$) with $\varphi$ a trace, and yet $\mc S \cap \mf n_{\varphi^{-1}}$ is strictly larger than $S$.  So even in the situation of Section~\ref{sec:considerations_of_S}, we can lose information by passing to the weak$^*$-closure $\mc S$.

We could similarly define $\mc S_{i/4}$ to be the weak$^*$-closure of $S_{i/4}$.  Here there is a choice: we might instead form $\mc S$ first and then let $\mc S_{i/4} = \{ \tilde\sigma_{i/4}(x) : x\in \mc S \cap D(\tilde\sigma_{i/4}) \}$.  This second definition would surely lose too much information?  Similarly, given an arbitrary weak$^*$-closed bimodule $\mc T$, we would want to pass to the $V$ which ``best approximated'' this, perhaps by looking at $\tilde\sigma_{-i/4}(\mc T \cap D(\tilde\sigma_{-i/4}))$ and then intersecting with $\mf n_{\varphi^{-1}}$ to form an $S$ before looking at the self-dual completion (effectively, computing $V = \overline\lin\{ \Im(\hat\alpha) : \alpha\in S \}$)?  Proposition~\ref{prop:core_of_Si4} shows that $\mc S_{i/4}$ will at least be non-trivial for well-behaved $e$, and that, in some sense, the adjoint applied to $\mc S_{i/4}$ recovers the swap map at the level of the subspace $V$.

Nevertheless, these are tentative results, and suggest that we are missing a key detail which might lead to a more satisfactory way to pass between quantum graphs in the sense of projections $e$ (and/or subspaces $V$) and/or adjacency operators $A$, and quantum graphs in the sense of Weaver.  For example, an analogue of Proposition~\ref{thm:nice_subclass} in the non-tracial situation would be a goal.  We think that studying further classes of examples would provide some intuition here.

\subsection{Completely bounded projections}\label{sec:cb_projs}

In Section~\ref{sec:adj_proj} we found it useful to consider the map $\theta_A$, an idempotent on $\mc B(H)$.  In the literature (e.g. see after \cite[Lemma~3.13]{Wasilewski_Quantum_Cayley}) it is sometimes mentioned that $\theta_A$ is completely bounded: this is of course obvious in the finite-dimensional setting.  In this section we'll see that this viewpoint is not, as far as we see it, terribly useful in the infinite-dimensional case.

Let $\theta \in \mc{CB}^\sigma(\mc B(H))$, a completely bounded, weak$^*$-continuous map on $\mc B(H)$.  As $\mc K(H)$, the compact operators, have $\mc K(H)^{**} = \mc B(H)$, we have that $\mc{CB}^\sigma(\mc B(H)) \cong \mc{CB}(\mc K(H), \mc B(H))$.  Indeed, $T\in\mc{CB}^\sigma(\mc B(H))$ restricts to a map in $\mc{CB}(\mc K(H), \mc B(H))$.  Conversely, given $S\in \mc{CB}(\mc K(H), \mc B(H))$, let $\kappa \colon \mc B(H)_* \to \mc B(H)_*^{**} = \mc B(H)^*$ be the canonical map from a Banach (Operator) space to its bidual, and set $T = \kappa^* \circ S^{**}$.  Then $\kappa, S$ are completely bounded, so also $\kappa^*\colon\mc B(H)^{**} \to \mc B(H)$ and $S^{**}\colon\mc B(H) \to \mc B(H)^{**}$ are, and hence so is $T$.  Routine computation shows that these maps are mutual inverses.

Given $M\subseteq\mc B(H)$ a von Neumann algebra, write ${}_{M'}\mc{CB}^\sigma_{M'}(\mc B(H)) \cong {}_{M'}\mc{CB}{}_{M'}(\mc K(H), \mc B(H))$ for the subspace (in fact, subalgebra) of $M'$-bimodule maps.  By \cite[Theorem~4.2]{BS_Dual_Haagerup_TP} any $\theta \in {}_{M'}\mc{CB}^\sigma_{M'}(\mc B(H))$ is of the form
\[ \theta(x) = \sum_{i\in I} a_i^* x b_i \quad (x\in\mc B(H)), \quad\text{where}\quad
(a_i)_{i\in I}, (b_i)_{i\in I}\subseteq M \text{ with }
\sum_{i\in I} a_i^*a_i, \sum_{i\in I} b_i^*b_i < \infty. \]
This has links with the \emph{weak$^*$--Haagerup tensor product} (or \emph{extended Haagerup tensor product}, \cite[Section~5]{ER_HopfConvAlgs}).  We quickly remark why $\theta$ is completely bounded: given $(a_i)_{i\in I}$ define $\alpha \colon H \to H \otimes \ell^2(I); \xi \mapsto \sum_i a_i(\xi)\otimes\delta_i$.  Then $\alpha^*\alpha = \sum_i a_i^*a_i$, and if we also form $\beta$ from $(b_i)$ then $\theta(x) = \alpha^*(x\otimes 1)\beta$ is completely bounded.  Obviously $\theta$ is an $M'$-bimodule map.

\begin{remark}
Many results in this area were first shown by Haagerup in unpublished work.  One such result is that when every normal state on $M'$ is a vector state (as is the case here with $H=L^2(M)$) then a bounded, normal, $M'$-bimodule map $\theta$ on $\mc B(H)$ is automatically completely bounded.  See \cite[Theorem~2.5]{EK_ModuleMaps} or \cite{Smith_CB_ModMaps} for proofs.
\end{remark}

\begin{example}
From Proposition~\ref{prop:thetaA}, in the finite-dimensional case, if $A = \sum_{i=1}^n \rankone{b_i}{a_i}$ then $\theta_A(x) = \sum_i b_i x a_i^*$.  This is of the form above, with a finite index set, though it is not clear how to estimate $\sum_i b_i b_i^*$ or $\sum_i a_i a_i^*$, except to say they are finite.
\end{example}

As $\theta_A$ should be a projection onto $\mc S$, in the same way that $e$ is a projection onto $V$, we consider Proposition~\ref{prop:V_to_S} again.  So $\varphi$ is a weight on $M \subseteq \mc B(H)$ with $H=L^2(\varphi)$, and we consider how the map $\mf n_{\varphi^{-1}} = E^0_{\varphi^{-1}} \to E_{\varphi^{-1}} = \mc B_{M'}(L^2(M'), HS(H))$ interacts with the left and right actions of $M$.

\begin{lemma}\label{lem:M_action_conj}
Let $\alpha\in\mf n_{\varphi^{-1}}\subseteq \mc B(H)$ and let $x,y\in M$ with $y\in D(\sigma_{i/2})$.  Then $\beta = x \circ \alpha \circ \sigma_{i/2}(y) \in \mf n_{\varphi^{-1}}$ and $\hat\beta = (x\otimes y^\top) \circ \hat\alpha$.
\end{lemma}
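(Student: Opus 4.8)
The plan is to follow verbatim the strategy of the proof of Lemma~\ref{lem:alpha_hat_bimod}; the only new feature is the modular twist carried by the right action of $M$, as opposed to the right action of $M'$ used there. The two ingredients I would rely on are: the concrete description from Remark~\ref{rem:L2M'_vs_L2M} that for $\alpha\in\mf n_{\varphi^{-1}}$ and $b\in\mf n_\varphi$ one has $\hat\alpha J\Lambda(b) = \alpha JbJ\nabla^{-1/2}\in HS(H)$; and the identification, recorded at the start of Section~\ref{sec:HS}, of the map $T\mapsto xTy$ on $HS(H)\cong H\otimes\overline H$ with the operator $x\otimes y^\top$. Membership $\beta\in\mf n_{\varphi^{-1}}$ would then come from Theorem~\ref{thm:nT_vs_module}, by exhibiting $J\Lambda(b)\mapsto \beta JbJ\nabla^{-1/2}$ as a bounded map into $HS(H)$.

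The computation itself is short. Writing $\beta = x\alpha\sigma_{i/2}(y)$ and applying the formula above with $\beta$ in place of $\alpha$, I would compute $\beta JbJ\nabla^{-1/2} = x\alpha\,\sigma_{i/2}(y)\,JbJ\,\nabla^{-1/2}$. Since $JbJ\in M'$ commutes with $\sigma_{i/2}(y)\in M$, this equals $x\alpha\, JbJ\,\sigma_{i/2}(y)\nabla^{-1/2}$, and the single modular relation $\sigma_{i/2}(y)\nabla^{-1/2} = \nabla^{-1/2}y$ (valid because $\sigma_{i/2}(y) = \nabla^{-1/2}y\nabla^{1/2}$ for $y\in D(\sigma_{i/2})$) turns this into $x\alpha\, JbJ\,\nabla^{-1/2}y = x\big(\hat\alpha J\Lambda(b)\big)y$. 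As $\hat\alpha$ is bounded and $u\mapsto xuy$ is a bounded operation on $HS(H)$, the map $J\Lambda(b)\mapsto\beta JbJ\nabla^{-1/2}$ is bounded; hence $\beta\in\mf n_{\varphi^{-1}}$ by Theorem~\ref{thm:nT_vs_module}, and reading off the formula gives $\hat\beta J\Lambda(b) = x(\hat\alpha J\Lambda(b))y = \big((x\otimes y^\top)\circ\hat\alpha\big)J\Lambda(b)$. Since vectors $J\Lambda(b)$ with $b\in\mf n_\varphi$ span a dense subspace, I conclude $\hat\beta = (x\otimes y^\top)\circ\hat\alpha$, as required.

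The only genuine subtlety, and the step I would be most careful with, is the manipulation of the unbounded operator $\nabla^{-1/2}$: the identity $\sigma_{i/2}(y)\nabla^{-1/2}=\nabla^{-1/2}y$ must be justified with attention to domains, and this is exactly where the hypothesis $y\in D(\sigma_{i/2})$ enters, since it makes $\sigma_{i/2}(y)$ bounded and licenses the relation $\sigma_{i/2}(y)=\nabla^{-1/2}y\nabla^{1/2}$ on the appropriate domain (compare the domain bookkeeping recorded before Lemma~\ref{lem:sigma_nabla_commutation}). To sidestep any ambiguity I would, if needed, verify the operator identity by evaluating both Hilbert--Schmidt operators on a core of analytic vectors $\Lambda(c)$, where $\nabla^{-1/2}\Lambda(c)=\Lambda(\sigma_{i/2}(c))$ and $JbJ\Lambda(d)=\Lambda(d\,\sigma_{-i/2}(b^*))$ reduce everything to bounded computations in the GNS representation (exactly as in Example~\ref{eg:fd2}), and then pass to the limit by density and boundedness of the maps involved.
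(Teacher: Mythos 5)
Your proof is correct and is essentially the paper's own argument: the paper likewise commutes $\sigma_{i/2}(y)$ past the $M'$-element and invokes Lemma~\ref{lem:sigma_nabla_commutation} to replace $\sigma_{i/2}(y)\nabla^{-1/2}$ by $\nabla^{-1/2}y$ (as an \emph{inclusion} of unbounded operators, which is all the argument needs, so your flagged domain subtlety is handled exactly this way), then reads off boundedness and the formula $\hat\beta=(x\otimes y^\top)\circ\hat\alpha$. The only cosmetic difference is that the paper phrases the computation with $\Lambda'(a)$ for $a\in\mf n_{\varphi'}$ rather than $J\Lambda(b)$ for $b\in\mf n_\varphi$; these are identified via Lemma~\ref{lem:com_weight}, so the two write-ups coincide.
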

\begin{proof}
Let $a\in\mf n_{\varphi'}$ so $\hat\alpha\Lambda'(a) \in HS(H)$ is the closure of $\alpha a \nabla^{-1/2}$.  As $a\in M'$, we have then that $\beta a \nabla^{-1/2} = x \alpha a \sigma_{i/2}(y) \nabla^{-1/2}$ and then Lemma~\ref{lem:sigma_nabla_commutation} shows that this is $\subseteq x \alpha a \nabla^{-1/2} y \subseteq x(\hat\alpha\Lambda'(a))y$.  Hence $\beta a \nabla^{-1/2} \in HS(H)$ (that is, is bounded with continuous extension a Hilbert--Schmidt operator) and the calculation shows that $\hat\beta = (x\otimes y^\top)\hat\alpha$, because $x\otimes y^\top$ acting on $H\otimes\overline H$ is the same as $HS(H)\ni b \mapsto xby$.
\end{proof}

Let $e\in M\vnten M^\op$ be a projection onto $V$; we regard $e \in \mc L(E_{\varphi^{-1}})$.  Write $\Lambda_{\varphi^{-1}}$ for the map $\mf n_{\varphi^{-1}} \to E_{\varphi^{-1}} = \mc B_{M'}(H, HS(H)); \alpha\mapsto\hat\alpha$.  Formally, we seek a map $\theta$ with $\Lambda_{\varphi^{-1}} \circ \theta = e \circ \Lambda_{\varphi^{-1}}$, though it is unclear what the domain of this should be.  Here $\theta$ should be a bounded normal $M'$-bimodule map, which will hence be completely bounded, so there are families $(x_i), (y_i)$ in $M$ with $\sum x_i^*x_i, \sum y_i^*y_i < \infty$, and $\theta_A(x) = \sum_i x_i^* x y_i$.  Lemma~\ref{lem:M_action_conj} shows that the modular automorphism group is important.  However, in Section~\ref{sec:ellinfty} we show that even when $M=\ell^\infty$ is commutative and atomic, $\theta_A$ may not exist (even when $e$ is very well behaved).  In Examples~\ref{eg:no_theta_A} and~\ref{eg:no_theta_A_even_varphiinv_bdd} we show similar behaviour for $M=\mc B(K)$, even when $\varphi^{-1}$ is bounded, and Remark~\ref{rem:no_theta_A_matrix} notes that we will then have examples in the case of a direct sum of matrix algebras, in the infinite-dimensional setting.

\section{Examples}\label{sec:egs}

Throughout we have motivated our treatment of the infinite-dimensional case by reference to the finite-dimensional situation, Section~\ref{sec:fd_summary}.  Let $B$ be finite-dimensional.  The weight $\tilde\varphi = \varphi'\circ \varphi^{-1}$ on $\mc B(L^2(B))$ agrees with the inner-product defined in Section~\ref{sec:ip}, see the discussion after Proposition~\ref{prop:L2BS_is_HSH}.  Example~\ref{eg:fd1} explains how we have generalised the relations between $e, V$ and $\mc S$.  Example~\ref{eg:fd3} shows that the relation between integrable $e$ and quantum adjacency operators $A$ exactly generalises Section~\ref{sec:adj_proj}; similarly Section~\ref{sec:hilb_ops}.
Example~\ref{eg:fd2} looks at the links between the quantum adjacency operator $A$ and ``orthogonal bases'' of $V$; see also Remark~\ref{rem:same_as_W} below.

\subsection{The commutative atomic case}\label{sec:ellinfty}

We start by considering $M=\ell^\infty(I)$ for an arbitrary index set $I$.  A (faithful) weight $\varphi$ is of the form $\varphi(x) = \sum_i x_i \varphi_i$ for $x\in\ell^\infty(I)^+$, where $\varphi_i \in (0,\infty)$ for each $i$.  We identify $L^2(M)$ with $\ell^2(I)$ for the GNS map $\Lambda(x) = (\varphi_i^{1/2} x_i)$.  Let $(e_i)_{i\in I}$ be the minimal idempotents in $\ell^\infty(I)$, and let $(\delta_i)$ be the standard orthonormal basis of $\ell^2(I)$.

We identify $M\vnten M^\op$ with $\ell^\infty(I\times I)$, and identify $HS(H)$ with $\ell^2(I\times I)$ by using the basis $(\rankone{\delta_i}{\delta_j})_{i,j}$.  Then projections $e\in M\vnten M^\op$ biject with subspaces $V\subseteq \ell^2(I\times I)$ of the form $V = \overline\lin\{ \delta_{i,j} : i\sim j \}$ where $\sim$ is an arbitrary relation on $I$.  We use the notation that $[i\sim j]=1$ if $i\sim j$ is true, and $0$ otherwise.

Then $e$ is integrable, Definition~\ref{defn:integrable_e_to_cp_A}, exactly when $(\id\otimes\varphi^\op)(e) = ( \sum_{i\sim j} \varphi_j )_{i\in I}$ is in $\ell^\infty(I)$, i.e. there is $K>0$ so that $\sum_{i\sim j} \varphi_j \leq K$ for each $i$.  When $\varphi$ is the uniform measure, this explains the ``bounded degree'' terminology.  When $e$ is integrable, the associated quantum adjacency operator is $A(x) = (\sum_{i\sim j} \varphi_j x_j)_{i\in I}$.  Equivalently, $A(e_j) = ( [i\sim j] \varphi_j )_{i\in I}$, so $A$ is a ``weighted'' adjacency matrix (and the usual adjacency matrix, if $\varphi$ is the uniform measure).  Notice that if $e$ is not integrable, then $A$ is not bounded.

We compute $E_{\varphi^{-1}} = \mc B_{\ell^\infty(I)}(\ell^2(I), \ell^2(I\times I))$.  This consists of maps $t$ with $t e_i = (1\otimes e_i)t$ for each $i\in I$.  Such a $t$ is hence of the form $t(\delta_i) = t_i \otimes \delta_i$ for some family $(t_i)$ in $\ell^2(I)$, and then $t$ is bounded if and only if $t^*t = (\|t_i\|_2^2)_{i\in I} \in \ell^\infty(I)$.  Then $\alpha\in\mf n_{\varphi^{-1}}$ when the map $\hat\alpha \colon \ell^2(I) \ni \Lambda(e_i) = \varphi_i^{1/2}\delta_i \mapsto \alpha e_i \in HS(H)$ is bounded.  Identifying $\alpha$ with a matrix $(\alpha_{i,j})$, we find $t_i \otimes \delta_i = \hat\alpha(\delta_i) = \varphi_i^{-1/2} \sum_j \alpha_{j,i} \delta_j\otimes\delta_i$, so $\hat\alpha^*\hat\alpha = (\varphi_i^{-1} \sum_j |\alpha_{j,i}|^2 )_{i\in I}$.  Notice the inverse which occurs, explaining the notation $\varphi^{-1}$.

With reference to Proposition~\ref{prop:V_to_S}, consider $S = \{ \alpha\in\mf n_{\varphi^{-1}} : e\hat\alpha=\hat\alpha \}$.  Then a matrix unit $e_{i,j}$ is in $S$ if and only if $i\sim j$; more generally $\alpha\in S$ implies that $\alpha_{i,j}=0$ when $i\not\sim j$.  So, much as in Example~\ref{eg_nice_vn_1}, $\mc S = S\clos^{w^*}$ is exactly $\lin\{ e_{i,j} : i\sim j \}\clos^{w^*}$ the canonical weak$^*$-closed bimodule associated with $\sim$.  We also see that $\mc S \cap \mf n_{\varphi^{-1}} = S$ will be weak$^*$-dense in $E_{\varphi^{-1}}$.  So we obtain a bijection between $\mc S, V$ and $e$.

Finally, we consider the possibility of a CB map $\theta_A$, Section~\ref{sec:cb_projs}.  We have that $\theta_A$ is of the form $x \mapsto \sum_i a_k^* x b_k$ for families $(a_k), (b_k)$ in $\ell^\infty$ with $\sum_k |a_k|^2 < \infty$ and for $(b_k)$.  Then
\[ \theta_A(e_{i,j}) = \sum_k \overline{a^{(k)}_i} b^{(k)}_j e_{i,j} = \theta_{i,j} e_{i,j} \qquad (i,j\in I), \]
say, and so $\theta_A$ is a \emph{Schur Multiplier} given by the matrix $\theta_{i,j}$, compare for example \cite[Theorem~1.1]{DD_Norms_Schur_Mults}.  Surely we require, at least, that $\Lambda_{\varphi^{-1}}(\theta_A(e_{i,j})) = A\Lambda_{\varphi^{-1}}(e_{i,j})$, which here means that $\theta_{i,j} = [i\sim j]$.  So $\theta_A$ is idempotent, projecting onto the space of matrices in $\mc B(\ell^2)$ supported on the relation $\sim$.

Such idempotent Schur multipliers are studied in \cite{Levene_norms_idem_schur} for example; it seems hard to compute the norm of a general idempotent Schur multiplier, but we make some remarks.  Suppose each row of $\theta$ contains at most one non-zero entry: so there is $I_0\subseteq I$ and $f\colon I_0\to I$ with $\theta_{i, f(i)}=1$ for $i\in I_0$, and $\theta_{i,j}=0$ otherwise.  Then for $i,j\in I$ we have
\[ \sum_{k\in I_0} e_k e_{i,j} e_{f(k)}
= \begin{cases}
  e_{i,j} &: i\in I_0, j=f(i), \\
  0 &: \text{otherwise}.
\end{cases} \]
Notice that $\sum_{k\in I_0} e_k^2$ is the projection onto the functions supported on $I_0$, but $\sum_{k\in I_0} e_{f(k)}^2 < \infty$ only when there is a bound on $|f^{-1}(\{i\})|$.  So when $\theta$ has also uniformly finitely support columns, $\theta$ is the symbol of Schur multiplier.  By linearity, if $\theta$ has finite rows and finite columns, then $\theta$ is a Schur multiplier.  Of course, if $\theta_{i,j}=1$ for all $i,j$ then we also have a Schur multiplier.

It follows from \cite[Proposition~1.2]{KP_main_tri_proj} that if $\theta_{i,j}=1$ when $j\geq i$ and $0$ otherwise, then $\theta$ is not a Schur multiplier.  Let $i\sim j$ exactly when $j\geq i$, and suppose that $\varphi$ is finite (so $I$ is necessarily countable).  Then the projection $e$ associated to $\sim$ is integrable, and so $A$ exists, but $\theta_A$ cannot exist.

\subsection{Matrix algebras}\label{sec:eg_matrix_algs}

We make more explicit links with \cite{Wasilewski_Quantum_Cayley}.  Let $M$ be the $\ell^\infty$-direct sum of matrix blocks $\mathbb M_{n(i)}$, with $\varphi(x) = \Tr(Qx)$ for $Q = (Q_i)$ where each $Q_i$ is positive and invertible.  We do not assume that $\varphi$ is a $\delta$-form; see Remark~\ref{rem:Q_norm} below.

We fix a GNS construction for $\varphi$, with $H = L^2(\varphi) = \bigoplus HS(\mathbb C^{n(i)}) = \bigoplus HS_{n(i)} = \bigoplus \mathbb C^{n(i)} \otimes \overline{\mathbb C^{n(i)}}$ and $\Lambda(x) = (x_i Q_i^{1/2})$ for $x=(x_i) \in \mf n_\varphi$.  Then $x\in M$ acts as $x_i\otimes 1$ in each block, so we could write $M = \prod_i \mathbb M_{n(i)} \otimes 1$.  In the same notation, $M' \cong \prod_i 1 \otimes \mathbb M_{n(i)}^\op$, the opposite arising as $(x_i^\op)$ acts as $(1\otimes x_i^\top)$.
The modular conjugation $J$ is $J(x_i) = (x_i^*)$ for $(x_i)\in \bigoplus HS(\mathbb C^{n(i)})$, and the modular operator is $\nabla = (\nabla_i) = (Q_i \otimes Q_i^{-1\top})$, compare Proposition~\ref{prop:mod_Trh}.  So all the operators and actions respect the direct-sum structure.  As we shall shortly consider $HS(H)$, and we wish to avoid over-using tensor products, and so we shall adopt the notation that $x\in\mathbb M_{n(i)}$ acts on the left as $x\otimes 1$, while $x^\op$ acts on the right, as $1\otimes x^\top$.  This means that $Jx^*J = x^\op$.

We have $HS(H) = \bigoplus_{i,j} HS_{n(i)} \otimes \overline{HS_{n(j)}}$, and for each ``block'' there are natural isomorphisms
\begin{align*}
HS_{n(i)} \otimes \overline{HS_{n(j)}}  \qquad&\cong&
\mc B(HS_{n(j)}, HS_{n(i)})  \qquad&\cong&
\mc B(\mathbb C^{n(j)},\mathbb C^{n(i)}) \otimes \overline{\mc B(\mathbb C^{n(j)},\mathbb C^{n(i)})},
\\
\rankone{\xi}{\eta} \otimes \overline{\rankone{\mu}{\nu}}   \qquad&\leftrightarrow&
\rankone{\xi\otimes\overline\eta}{\mu\otimes\overline\nu}   \qquad&\leftrightarrow&
\rankone{\xi}{\mu} \otimes \rankone{\overline\eta}{\overline\nu}. \qquad\qquad
\end{align*}
In the middle column, we use our usual identification of $HS(K)$ with $K\otimes\overline K$, and in the final column identify $\overline{\mc B(K)}$ with $\mc B(\overline K)$ for the isomorphism $\overline x = x^{\top *}$, so that $\rankone{\overline\eta}{\overline\nu} = (\rankone{\nu}{\eta})^\top = \overline{\rankone{\eta}{\nu}}$.  Given $x,y\in\mathbb M_{n(i)}$ it is natural to pre-compose (or post-compose) the middle operator with $x y^\op = x\otimes y^\top$, and we have the further bijections
\begin{align}
\rankone{\xi}{\eta} \otimes \overline{x^*\rankone{\mu}{\nu}y^*}   \qquad&\leftrightarrow&
\rankone{\xi\otimes\overline\eta}{\mu\otimes \overline\nu}x y^\op   \qquad&\leftrightarrow&
\rankone{\xi}{\mu}x \otimes \rankone{\overline\eta}{\overline\nu}y^\top.
\label{eq:3_HS_views_with_actions}  \\
x\rankone{\xi}{\eta}y \otimes \overline{\rankone{\mu}{\nu}}   \qquad&\leftrightarrow&
x y^\op\rankone{\xi\otimes\overline\eta}{\mu\otimes \overline\nu}   \qquad&\leftrightarrow&
x\rankone{\xi}{\mu} \otimes y^\top\rankone{\overline\eta}{\overline\nu}.
\label{eq:3_HS_views_with_left_actions}
\end{align}

We prefer to work with the final column, as it makes certain identifications easier to study.
We consider $\mc B_{M'}(H, HS(H))$, which by Remark~\ref{rem:L2M'_vs_L2M} means that $t\in \mc B_{M'}(H, HS(H))$ if and only if $t a = (1\otimes a^{\op\top})t$ for each $a\in M$.  As in the analogous calculation for $\ell^\infty(I)$ in Section~\ref{sec:ellinfty}, such a $t$ is of the form $t(x) = (t_{i,j}(x_j))_{i,j}$ for maps $t_{i,j} \colon HS_{n(j)} \to HS_{n(i)} \otimes \overline{HS_{n(j)}}$.
Further, the $t_{i,j}$ need to satisfy
\[  (t_{i,j}(a_jx_j))_{i,j} = t(ax) = (1\otimes a^{\op\top})t(x)
= ((1\otimes a_j^{\op\top})t_{i,j}(x_j))_{i,j} \qquad (a\in M, x\in H), \]
that is, each $t_{i,j}$ is of the form $s\colon HS_n \to \mc B(\mathbb C^n, \mathbb C^m) \otimes \overline{\mc B(\mathbb C^n, \mathbb C^m)}$ with $s(ax) = (1\otimes a^{\op\top})s(x)$.  That is,
\[ s \colon \mathbb C^n \otimes \overline{\mathbb C^n} \to \mathbb C^m \otimes \overline{\mathbb C^n} \otimes \overline{\mathbb C^m} \otimes \mathbb C^n; \quad
(\eta_1|\xi_2) s( \xi_1 \otimes \overline{\eta_2} ) = (1\otimes 1\otimes 1 \otimes \rankone{\xi_1}{\eta_1}) s(\xi_2 \otimes \overline{\eta_2}), \]
for all $\xi_1,\xi_2,\eta_1,\eta_2\in \mathbb C^n$.  It follows that there is $s_0 \colon \overline{\mathbb C^n} \to \mathbb C^m \otimes \overline{\mathbb C^n} \otimes \overline{\mathbb C^m}$ with $s(\xi\otimes\overline\eta) = s_0(\overline\eta) \otimes \xi$.  So we obtain maps $t^0_{i,j} \colon \overline{\mathbb C^{n(j)}} \to \mathbb C^{n(i)} \otimes \overline{\mathbb C^{n(j)}} \otimes \overline{\mathbb C^{n(i)}}$ with $t_{i,j}(\rankone{\xi}{\eta}) = t^0_{i,j}(\overline\eta) \otimes \xi$.

Given $\alpha\in\mc B(H)$, say with block-matrix form $(\alpha_{i,j})$, we consider $\hat\alpha \colon J\Lambda(a) \mapsto \alpha JaJ \nabla^{-1/2} \in HS(H)$.  For $a\in\mf n_\varphi$ we have $J\Lambda(a) = (Q_i^{1/2}a_i^*)$ and so with $x_i = Q_i^{1/2}a_i^*$ for each $i$, we have
\[ \hat\alpha (x) = \hat\alpha J\Lambda(a) = \alpha JaJ \nabla^{-1/2}
= (\alpha_{i,j} a_j^{*\op} \nabla^{-1/2}_j)
= (\alpha_{i,j} x_j^\op Q_j^{1/2 \op} Q_j^{-1/2} Q_j^{1/2\op})
= (\alpha_{i,j} Q_j^{-1/2} x_j^\op). \]
We naturally think of $\alpha_{i,j}$ as a member of the middle column of \eqref{eq:3_HS_views_with_actions}, so under the isomorphism with the final column, we obtain $\alpha_{i,j} Q_j^{-1/2} x_j^\op \leftrightarrow \alpha_{i,j}(Q_j^{-1/2} \otimes x_j^\top)$.
Given $i,j$, suppose that $\alpha_{i,j} = y \otimes \overline z \in \mc B(\mathbb C^{n(j)},\mathbb C^{n(i)}) \otimes \overline{\mc B(\mathbb C^{n(j)},\mathbb C^{n(i)})}$, and that $x_j = \rankone{\xi}{\eta}$, so that $\alpha_{i,j}(Q_j^{-1/2} \otimes x_j^\top) = yQ_j^{-1/2} \otimes z^{*\top}x_j^\top = yQ_j^{-1/2} \otimes \rankone{\overline{z\eta}}{\overline\xi}$.  Thus the associated $t^0_{i,j}$ is $\overline\eta \mapsto yQ_j^{-1/2} \otimes \overline{z\eta}$.  If we identify $t^0_{i,j} \in\mc B(\overline{\mathbb C^{n(j)}}, \mathbb C^{n(i)} \otimes \overline{\mathbb C^{n(j)}} \otimes \overline{\mathbb C^{n(i)}}) = \mathbb C^{n(i)} \otimes \overline{\mathbb C^{n(j)}} \otimes \overline{\mathbb C^{n(i)}} \otimes \mathbb C^{n(j)}$ then $t^0_{i,j} = yQ_j^{-1/2} \otimes \overline z = \alpha_{i,j}(Q_j^{-1/2}\otimes 1)$.  However, notice that the natural norms differ; compare Remark~\ref{rem:norms_t_alpha}.

We now compute $\varphi^{-1}$.
Let $x=\rankone{\xi}{\eta}$ and $y=\rankone{\mu}{\nu}$, say $x\otimes\overline y \in HS_{n(i)} \otimes \overline{HS_{n(i)}}  \cong  \mc B(\mathbb C^{n(i)}) \otimes \overline{\mc B(\mathbb C^{n(i)})}$, so it makes sense to define $z^\top = (\Tr\otimes\id)(x\otimes\overline y) \in \mc B(\overline{\mathbb C^{n(i)}})$.  Then
\begin{align*}
z^\top = \Tr(\rankone{\xi}{\mu}) \rankone{\overline\eta}{\overline\nu}
= (\mu|\xi) \rankone{\overline\eta}{\overline\nu}
\quad\implies\quad z = (\mu|\xi) \rankone{\nu}{\eta}
= \rankone{\nu}{\mu} \rankone{\xi}{\eta} = y^*x.
\end{align*}
Now let $x,y\in \mf n_\varphi$ and set $T = \rankone{ J\Lambda(x) }{ J\Lambda(y) }$, and set $z_i^\top = (\Tr\otimes\id)(Q_i^{-1} T_{i,i}) \in \mc B(\overline{\mathbb C^{n(i)}})$, for each $i$.  As $T_{i,i} = Q_i^{1/2}x_i^* \otimes \overline{Q_i^{1/2}y_i^*}$ it follows that
\[ z_i = (Q_i^{1/2}y_i^*)^* Q_i^{-1} (Q_i^{1/2}x_i^*) = y_i x_i^*, \]
which we compare to
\[ \varphi^{-1}\big( (Q_i^{1/2}x_i^*\Tr(y_j Q_j^{1/2} \,\cdot\, ))_{i,j} \big)
= \varphi^{-1}(T)
= Jxy^*J = (yx^*)^\op
= \big( (y_i x_i^*)^\op \big)_i. \]
Hence $\varphi^{-1}(T) = z^\op = ((\Tr\otimes\id)(Q_i^{-1}T_{i,i}))^{\op\top}$.

\begin{remark}\label{rem:Q_norm}
If we choose the normalisation that \cite{Wasilewski_Quantum_Cayley} does, namely $\Tr(Q_i^{-1})=1$ for each $i$, then $\Tr(Q_i^{-1}\,\cdot\,)$ is a state, and so $\varphi^{-1}(1) = 1$.  Furthermore, then $\varphi^{-1}$ is bounded, and is a conditional expectation.
\end{remark}

As in Example~\ref{eg_nice_vn_2}, a subspace $V\subseteq HS(H)$ is an $M$'-bimodule exactly when $V$ is the closed linear span of subspaces $V_{i,j} \subseteq \mc B(HS_{n(j)}, HS_{n(i)})$ each of which is of the form $V_{i,j} = V^0_{i,j} \otimes \mc B(\overline{\mathbb C^{n(j)}}, \overline{\mathbb C^{n(i)}})$ for some subspace $V^0_{i,j} \subseteq \mc B(\mathbb C^{n(j)}, \mathbb C^{n(i)})$, which may be arbitrary.  Similar remarks apply to weak$^*$-closed $M'$-bimodules in $\mc B(H)$.

Let $t\in\mc B_{M'}(H, HS(H))$.  Then $t$ maps into $V$ if and only if $\im(t_{i,j}) \subseteq V_{i,j}$ for all $i,j$.  In turn, this is equivalent to $\im(t^0_{i,j}) \subseteq V^0_{i,j} \otimes \overline{\mathbb C^{n(i)}}$, or if we think of $t^0_{i,j}$ as a tensor, to $t^0_{i,j} \in V^0_{i,j} \otimes \mc B(\overline{\mathbb C^{n(j)}}, \overline{\mathbb C^{n(i)}})$.
Consider the formation of $S=\{ \alpha : \im(\hat\alpha)\subseteq V \}$ as in Proposition~\ref{prop:V_to_S}.  Setting $t=\hat\alpha$, we identify $t^0_{i,j}$ with $\alpha_{i,j}(Q_j^{-1/2}\otimes 1)$, and so $\alpha\in S$ if and only if $\alpha_{i,j}(Q_j^{-1/2}\otimes 1) \in V^0_{i,j} \otimes \mc B(\overline{\mathbb C^{n(j)}}, \overline{\mathbb C^{n(i)}})$ for each $i,j$.

\begin{proposition}\label{prop:V_to_S_matrix}
Given $V\subseteq HS(H)$ an $M'$-bimodule, define $V^0_{i,j}$ as above.  Set $S^0_{i,j} = \{ x Q_j^{1/2} : x\in V^0_{i,j}\}$ for each $i,j$.  Then
\[ S = \{ \alpha \in \mf n_{\varphi^{-1}} : \im(\hat\alpha) \subseteq V \}
= \{ \alpha=(\alpha_{i,j}) \in \mf n_{\varphi^{-1}} : \alpha_{i,j} \in S^0_{i,j} \otimes \mc B(\overline{\mathbb C^{n(j)}}, \overline{\mathbb C^{n(i)}}) \ (i,j\in I) \}. \]
Letting $\mc S$ be the weak$^*$-closure of this space, $\mc S$ is an $M'$-bimodule in $\mc B(H)$, and this map $V \to \mc S$ is a bijection between $M'$-bimodules.
\end{proposition}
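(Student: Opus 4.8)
The plan is to reduce the whole statement to finite-dimensional linear algebra on each pair of blocks $(i,j)$, exploiting the identifications already set up in the paragraphs preceding the statement. First I would recall the characterization obtained there: for $\alpha=(\alpha_{i,j})\in\mf n_{\varphi^{-1}}$ we have $\alpha\in S$ (that is, $\im(\hat\alpha)\subseteq V$) if and only if $t^0_{i,j}\in V^0_{i,j}\otimes\mc B(\overline{\mathbb C^{n(j)}},\overline{\mathbb C^{n(i)}})$ for every $i,j$, together with the computed identity $t^0_{i,j}=\alpha_{i,j}(Q_j^{-1/2}\otimes 1)$. Since all the spaces in sight are finite-dimensional, a routine slicing argument shows that for a subspace $W\subseteq\mc B(\mathbb C^{n(j)},\mathbb C^{n(i)})$, a tensor $t$ lies in $W\otimes\mc B(\overline{\mathbb C^{n(j)}},\overline{\mathbb C^{n(i)}})$ exactly when its first (left) leg, the span of the slices $(\id\otimes\omega)(t)$, is contained in $W$ (here the second tensor factor is the full $\overline{\mc B}$, so no condition is imposed on it).

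Next I would observe that multiplication by $Q_j^{-1/2}\otimes 1$ acts only on the first tensor leg, by right multiplication with $Q_j^{-1/2}$; this is precisely the computation $y\otimes\overline z\mapsto yQ_j^{-1/2}\otimes\overline z$ recorded above. Combining this with the slicing fact, the membership $\alpha_{i,j}(Q_j^{-1/2}\otimes 1)\in V^0_{i,j}\otimes\mc B(\overline{\mathbb C^{n(j)}},\overline{\mathbb C^{n(i)}})$ is equivalent to the first leg of $\alpha_{i,j}$ lying in $V^0_{i,j}Q_j^{1/2}=S^0_{i,j}$, i.e. $\alpha_{i,j}\in S^0_{i,j}\otimes\mc B(\overline{\mathbb C^{n(j)}},\overline{\mathbb C^{n(i)}})$. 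Intersecting these block conditions over all $i,j$ and with $\mf n_{\varphi^{-1}}$ yields exactly the claimed description of $S$.

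Finally, for the bijection I would argue as follows. Each operator supported on a single block $(i,j)$ is finite rank, and $Q_j^{-1/2}$ restricts to a bounded invertible map on that finite-dimensional block, so every such single-block element is automatically in $\mf n_{\varphi^{-1}}$; hence $S$ contains all single-block operators with $\alpha_{i,j}\in S^0_{i,j}\otimes\mc B(\overline{\mathbb C^{n(j)}},\overline{\mathbb C^{n(i)}})$. Applying the block analysis of Example~\ref{eg_nice_vn_2} to $\mc B(H)$, it follows that the weak$^*$-closure $\mc S$ is the weak$^*$-closed $M'$-bimodule whose block left-leg subspaces are precisely the (already closed, finite-dimensional) spaces $S^0_{i,j}$. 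Since $Q_j^{1/2}$ is invertible, the assignment $W\mapsto WQ_j^{1/2}$ is a linear bijection on subspaces of $\mc B(\mathbb C^{n(j)},\mathbb C^{n(i)})$, and so the chain $V\leftrightarrow(V^0_{i,j})\leftrightarrow(S^0_{i,j})\leftrightarrow\mc S$—whose outer correspondences are the block decompositions of HS quantum relations and of weak$^*$-closed $M'$-bimodules from Example~\ref{eg_nice_vn_2}—is a bijection. I expect the main obstacle to be the bookkeeping in the slicing step, where the conjugate-space bar notation must be tracked carefully; the one other point needing care is confirming that the weak$^*$-closure does not enlarge the finite-dimensional block subspaces, which however falls out cleanly from the single-block finite-rank observation.
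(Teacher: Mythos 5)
Your proposal is correct and follows essentially the same route as the paper: both rest on the block-by-block identification $t^0_{i,j}=\alpha_{i,j}(Q_j^{-1/2}\otimes 1)$ established just before the statement, convert the membership condition via the invertible right multiplication by $Q_j^{1/2}$ on the first leg, and then use the fact that finitely supported (finite-rank) block operators lie in $\mf n_{\varphi^{-1}}$ together with the block analysis of Example~\ref{eg_nice_vn_2} to get the bijection with weak$^*$-closed $M'$-bimodules. The only difference is that you spell out the slicing and weak$^*$-closure details that the paper leaves implicit.
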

\begin{proof}
We have just shown that $\im(\hat\alpha) \subseteq V$ exactly when $\alpha_{i,j} \in S^0_{i,j} \otimes \mc B(\overline{\mathbb C^{n(j)}}, \overline{\mathbb C^{n(i)}})$ for each $i,j$.  As finitely supported $\alpha$ are in $\mf n_{\varphi^{-1}}$, we hence obtain a bijection, exactly as in Example~\ref{eg_nice_vn_2}.
\end{proof}

There is also no issue to introducing the ``twist'' as in Section~\ref{sec:diff}.  That is, given a weak$^*$-closed $\mc S \subseteq \mc B(H)$ we define
\[ \mc S_z = \lin\{ \nabla^{-iz} x \nabla^{iz}\in\mc B(H) : x\in\mc S \} \clos^{w^*}. \]
Again, as $M'$-bimodules are the weak$^*$-closed span of the matrix blocks they contain, $\mc S \mapsto \mc S_z$ is a bijection, indeed, $(\mc S_z)_{-z} = \mc S$.  As in Remark~\ref{rem:Q_or_nabla_to_twist}, and using the above, we see that if $\mc T = \mc S_z$ then the associated subspaces are $T^0_{i,j} = Q_i^{-iz} S^0_{i,j} Q_j^{iz}$.

Consider now quantum adjacency operators.  As in \cite[Section~3.3]{Wasilewski_Quantum_Cayley}, we can always densely define $A$; TODO: Say more?.  Indeed, using the relation of Proposition~\ref{prop:Abdd_implies_f_int} allows us to link any projection $e\in M \vnten M^\op$ with a map $A$ defined on $c_{00}(M)$ the algebraic direct sum of the matrix blocks.
Definition~\ref{defn:integrable_e_to_cp_A}, that of $e$ being of \emph{bounded degree}, is exactly the same, though again, as in Section~\ref{sec:ellinfty}, this strongly depends on the choice of $Q$, so we prefer to say that $e$ is \emph{integrable}.  As in Theorem~\ref{thm:Kraus_rep}, a Kraus-type representation of $A$ can be obtained from $V$ using the Hilbert module formulation.  We shall not explore \cite[Definition~3.24]{Wasilewski_Quantum_Cayley}, that of being \emph{locally finite}, as this seems to depend upon both the choice of $\varphi$, and is not obviously extendable to general von Neumann algebras.

\begin{remark}
\cite{Wasilewski_Quantum_Cayley} lets $M$ act on $K = \oplus_i \mathbb C^{n(i)}$ so that $M' = \ell^\infty(I)$.  This means that $V \subseteq \mc HS(K)$ is an $M'$-bimodule exactly when $V$ is the closed linear span of subspaces $V^0_{i,j}$ as above, and similar remarks for $M'$-bimodules in $\mc B(K)$.  We say more on this below in Section~\ref{sec:change_space}.
\end{remark}

\begin{remark}\label{rem:same_as_W}
We consider finding an orthogonal basis for $e(E_\varphi^{-1}) = \mc B_{M'}(H, V)$.  We may work one matrix block at a time, and so for each $i,j$ consider $(\hat\alpha_{i,j,k})_k$ each of which is in $HS_{n(i)} \otimes \overline{HS_{n(j)}}$.  The argument leading to Proposition~\ref{prop:V_to_S_matrix} shows that $\im(\hat\alpha_{i,j,k}) \subseteq V^0_{i,j}$ if and only if $\alpha_{i,j,k} \in S^0_{i,j} \otimes \mc B(\overline{\mathbb C^{n(j)}}, \overline{\mathbb C^{n(i)}})$ for each $k$.  So an obvious way to choose such a basis is to start with an algebraic basis $(\beta^{(i,j)}_k)$ of $S^0_{i,j}$, and then to set
\[ \alpha_{i,j,k,t} = \beta^{(i,j)}_k \otimes \rankone{\overline\delta_t}{\overline\delta_1}. \]
We then check the conditions we found at the end of Remark~\ref{eg:fd2}:
\begin{enumerate}[(1)]
\item $\hat\alpha_{i,j,k,t}^* \hat\alpha_{i,j,l,s} = \varphi^{-1}(\alpha_{i,j,k,t}^* \alpha_{i,j,l,s}) = \delta_{s,t} \varphi^{-1}(\beta^{(i,j)*}_k \beta^{(i,j)}_l \otimes e_{11})
= \delta_{s,t} \Tr(Q^{-1}_j\beta^{(i,j)*}_k \beta^{(i,j)}_l) e_{11}$.  So we need $\Tr(Q^{-1}_j\beta^{(i,j)*}_k \beta^{(i,j)}_l) = \delta_{k,l}$.
\item $\im(\hat\alpha_{i,j,k,t}) = \{ \beta^{(i,j)}_k Q_j^{-1/2} \otimes \rankone{\overline\delta_t}{\overline\delta_1}y :  y\in\mathbb M_{n(j)} \}
= \{ \beta^{(i,j)}_k Q_j^{-1/2} \otimes \rankone{\overline\delta_t}{\overline\xi} : \xi\in\mathbb C^{n(j)} \}$.  So taking the linear span over $t$ and $k$ shows that we want $\lin\{ \beta^{(i,j)}_k Q_j^{-1/2} : k \} = V^0_{i,j}$, but this already holds, as we chose an algebraic basis of $S^0_{i,j}$.
\end{enumerate}
So we just need that $(\beta^{(i,j)}_k Q_j^{-1/2})$ is an orthonormal (for the Hilbert--Schmidt inner product) basis of $V^0_{i,j}$.

Then Theorem~\ref{thm:Kraus_rep} shows that for $x \in \mathbb M_{n(j)}$, we have
\[ A(x) = \sum_{i,j,k,t} \beta^{(i,j)}_k x \beta^{(i,j)*}_k \otimes (\rankone{\overline\delta_t}{\overline\delta_1})(\rankone{\overline\delta_t}{\overline\delta_1})^*
= \sum_{i,k} \beta^{(i,j)}_k x \beta^{(i,j)*}_k \otimes \overline{1_{n(j)}}. \]
We could suppress the $\otimes 1$ factor by remembering that $\mathbb M_{n(j)}$ by definition acts on the left factor of $\mathbb C^{n(j)} \otimes \overline{\mathbb C^{n(j)}}$.

We now compare this to \cite{Wasilewski_Quantum_Cayley}.
Set $X^{(i,j)}_k = Q_i^{1/4} \beta^{(i,j)}_k Q_j^{-1/4}$ for each $i,j,k$, so that
\[ \Tr( X^{(i,j)*}_k Q_i^{-1/2} X^{(i,j)}_l Q_j^{-1/2})
= \Tr( Q_j^{-1} \beta^{(i,j)*}_k \beta^{(i,j)}_l) = \delta_{k,l}, \]
and for $x\in \mathbb M_{n(j)}$ we have
\[ A(x) = \sum_{i,k} Q_i^{-1/4} X^{(i,j)}_k Q_j^{1/4} x Q_j^{1/4} X^{(i,j)*}_k Q_i^{-1/4}. \]
We compare this to \cite[Proposition~3.30]{Wasilewski_Quantum_Cayley}.  Once we note that we write an index $(i,j)$ for maps $HS_{n(j)} \to HS_{n(i)}$ which is the opposite convention to \cite{Wasilewski_Quantum_Cayley}, and remembering that we don't normalise $Q$ (see how the weight is defined in \cite[Lemma~3.3]{Wasilewski_Quantum_Cayley}), we obtain exactly the same formula.  We have that $\lin\{ X^{(i,j)}_k : k \} = Q_i^{1/4} S^0_{i,j} Q_j^{-1/4} = T^0_{i,j}$ if $\mc T = \mc S_{i/4}$, see the discussion above.  This confirms our claim in Section~\ref{sec:op_sys_A} that \cite{Wasilewski_Quantum_Cayley} builds a bijection between $e, V$ and $\mc T$ (not $\mc S$) (and that we got the sign $i/4$, not $-i/4$, correct).
\end{remark}

\begin{remark}\label{rem:norms_t_alpha}
We did not compute the norms of $t$ or $\hat\alpha$ above as they are a little tedious.
As $\|t(x)\|^2 = \sum_{j,i} \|t_{ij}(x_j)\|^2$, if we take the supremum over $\sum_j \|x_j\|^2\leq 1$ we get
\[ \|t\|^2 = \sup_j \sup\Big\{ \sum_i \|t_{ij}(x)\|^2  : x\in HS_{n(j)}, \|x\|_{HS}\leq1\Big\}. \]
For $x\in HS_{n(j)}$ let $x = \sum_k \rankone{\delta_k}{\eta_k}$ so $t_{ij}(x) = \sum_k t^0_{ij}(\overline\eta_k) \otimes \delta_k$ and so the supremum above becomes
\[ \|t\|^2 = \sup_j \sup\Big\{ \sum_{i,k} \|t_{ij}^0(\overline\eta_k)\|^2 : \sum_k \|\eta_k\|^2\leq 1 \Big\}
= \sup_j \sup\Big\{ \sum_i \|t_{ij}^0(\overline\eta)\|^2 : \|\eta\|\leq 1 \Big\}. \]
When $t = \hat\alpha$ we identify $t^0_{i,j} = \alpha_{i,j} (Q_j^{-1/2}\otimes 1)$ both in $\mathbb C^{n(i)} \otimes \overline{\mathbb C^{n(j)}} \otimes \overline{\mathbb C^{n(i)}} \otimes \mathbb C^{n(j)}$.  However, the norms are different, and furthermore, we then need to compute $\|\alpha\|$ for the block matrix entries $(\alpha_{i,j})$, which has no simple expression, in general.
\end{remark}

\begin{example}\label{eg:all_diag}
Things become easier if we suppose that $t$ and/or $\alpha$ are ``diagonal'', that is, $t_{ij}=0$ for $i\not=j$.  Then $\|t\| = \sup_i \|t^0_{ii}\|$ and $\|\alpha\| = \sup_i \|\alpha_{ii}\|$.  For each $i$ let $(u^{(i)}_k)$ be an orthonormal basis of $V^0_{i,i} \subseteq HS_{n(i)}$.  For $t\in\mc B_{M'}(H,V)$ we can write each $t^0_{i,i}$ as
\[ t^0_{i,i} = \sum_k u^{(i)}_k \otimes s^{(i)}_k \in HS_{n(i)} \otimes \mc B(\overline{\mathbb C^{n(i)}}), \]
for some (arbitrary) $s^{(i)}_k$.  We then see that $\|t^0_{i,i}(\overline\eta)\|^2 = \sum_k \|s_k^{(i)}(\overline\eta)\|^2 = (\overline\eta | \sum_k {s^{(i)}_k}^* s^{(i)}_k \overline\eta)$ and so
\begin{equation}
\|t\| = \sup_i \|t^0_{i,i}\|
= \sup_i \Big\| \sum_k {s^{(i)}_k}^* s^{(i)}_k \Big\|^{1/2}.
\label{eq:norm_t_sks}
\end{equation}
Then $t=\hat\alpha$ if and only if, defining $\alpha_{i,i} = t^0_{i,i}(Q_i^{1/2}\otimes 1)$, we have that $\sup_i \|\alpha_{i,i}\| < \infty$.  In general, $\|\alpha_{i,i}\|$ has no simple form.
\end{example}

\begin{example}\label{ex:diag_whole_space}
We continue with the previous example.
Take $Q=1$ and $V^0_{i,i} = HS_{n(i)}$ for each $i$.  We first perform a calculation: set $x = n^{-1/2} \sum_{i,j} e_{i,j} \otimes \overline{e_{i,j}} \in \mathbb M_n \otimes \overline{\mathbb M_n}$.  Then $n^{-1} \sum_{i,j} \overline{e_{i,j}}^* \overline{e_{i,j}} = \sum_j \overline{e_{j,j}} = 1$.  Also note that $x=x^*=x^2$ is a projection, so is positive.  Let $\xi = \sum_{i,j} \xi_{i,j} \delta_i \otimes \overline{\delta_j} \in \mathbb C^n \otimes \overline{\mathbb C^n}$ so
\begin{align*}
(\xi|x\xi)
&= n^{-1/2} \sum_{s,t,r,u} \overline{\xi_{s,t}} \xi_{r,u} \sum_{i,j} (\delta_s \otimes \overline{\delta_t} | e_{i,j}\delta_r \otimes \overline{e_{i,j}\delta_u})  \\
&= n^{-1/2} \sum_{s,t,r} \overline{\xi_{s,t}} \xi_{r,r} \sum_{i} (\delta_s \otimes \overline{\delta_t} | \delta_i \otimes \overline{\delta_i})
= n^{-1/2} \sum_{s,r} \overline{\xi_{s,s}} \xi_{r,r}.
\end{align*}
Thus $|(\xi|x\xi)| \leq n^{-1/2} |\sum_r \xi_{r,r}|^2 \leq n^{1/2} \sum_r |\xi_{r,r}|^2 \leq n^{1/2} \|\xi\|^2$ with equality for the case $\xi_{s,r} = \delta_{s,r} n^{-1/2}$, and so $\|x\| = n^{1/2}$.  Returning to $t$ and $\alpha$, take as an orthonormal basis the matrix units $(e^{(i)}_{j,k})_{j,k}$ and set $s^{(i)}_{j,k} = n(i)^{-1/2} \overline{e^{(i)}_{j,k}}$ for each $i,j,k$.  By the calculation, $\|t^0_{i,i}\| = 1$ while $\|\alpha_{i,i}\| = n(i)^{1/2}$.  If the dimensions $(n(i))$ are unbounded, we conclude that there are $t$ which cannot arise as $\hat\alpha$ for some $\alpha$, and so this choice does not give a self-dual $S$.  Compare with Example~\ref{eg:self-dual_changes} below.
\end{example}

We now consider the projection $e\in M\vnten M^\op$ onto $V$.  Using $M\vnten M^\op \cong \bigoplus_{i,j} \mathbb M_{n(i)} \otimes \mathbb M_{n(j)}^\op$, we naturally decompose $e$ as a matrix $e=(e_{i,j})$, noting that this matrix decomposition is slightly different to that considered above for operators.  Then $e_{i,j} \in \mathbb M_{n(i)} \otimes \mathbb M_{n(j)}^\op \cong \mc B(\mathbb C^{n(i)} \otimes \overline{\mathbb C^{n(j)}})$ is the projection of $\mathbb C^{n(i)} \otimes \overline{\mathbb C^{n(j)}} \cong \mc B(\mathbb C^{n(j)}, \mathbb C^{n(i)})$ onto $V^0_{i,j}$.

\begin{example}
We continue with Example~\ref{ex:diag_whole_space}.  Here $e_{i,i}=1 = 1\otimes\overline 1$ for each $i$, and $e_{i,j}=0$ for $i\not=j$.  Thus $(\id\otimes\Tr^\op)e_{i,i} = n(i) 1_i$ for each $i$, and hence if the dimensions $(n(i))$ are unbounded, $e$ is not integrable (as we expect!)
\end{example}

\subsection{Changing the space we act on}\label{sec:change_space}

Throughout the second part of this paper, we have always let $M$ act on $L^2(M)$.  Given Weaver's work, compare Section~\ref{sec:inv_reps}, it would be natural to consider $M \subseteq \mc B(H)$ for more general $H$.  We now make some brief comments on this, and look at some examples, but leave a fuller exploration to further work.

We can still form $\varphi^{-1}$ from $\mc B(H)$ to $M'$, where now $M' \subseteq \mc B(H)$ need not be isomorphic to $M^\op$, of course.  The ideas of Section~\ref{sec:inv_weight} still hold, allowing us to form $\mf n_{\varphi^{-1}}$ and its self-dual completion $E_{\varphi^{-1}} = \mc B_{M'}(L^2(\phi), HS(H))$ where we make some (now essentially arbitrary) choice of a weight $\phi$ on $M'$.  We carry out this procedure for matrix algebras below.  Of course, we really form $\tilde\phi = \phi\circ\varphi^{-1}$ a weight on $\mc B(H)$, and then use Section~\ref{sec:weights_bh} to find an explicit isomorphism $L^2(\tilde\phi) \cong HS(H)$.  The bijection between $e$ and $V$ remains unchanged, but now the space $S$ from Proposition~\ref{prop:V_to_S}, $S = \{ \alpha\in\mf n_{\varphi^{-1}} : e\hat\alpha = \hat\alpha \}$, is different.  Presumably the interesting question is how $S$ changes as we vary $H$.  A step in this direction are the examples below.

We shall first look at the example of $M=\mc B(K)$ acting on $H=K$, so that $M'=\mathbb C$.  Set $\varphi = \Tr_h$, as in Section~\ref{sec:weights_bh}.  Then one can check that $\varphi^{-1} = \Tr_{h^{-1}}$, that $E_{\varphi^{-1}} = \mc B(\mathbb C, HS(H)) = HS(H)$ and that $\hat\alpha = \alpha h^{-1/2}$ for $\alpha\in\mf n_{\varphi^{-1}}$.  We shall see similar formula in the case when $M$ acts on $L^2(M)$ in Section~\ref{sec:example_BH} below.

An $M'$-bimodule in $HS(K)$ is then simply a closed subspace $V\subseteq HS(K)$.  We shall be interested in when $S = \{ \alpha\in\mf n_{\varphi^{-1}} : \im \hat\alpha \subseteq V \} = \{ \alpha\in\mc B(K) : \alpha h^{-1/2} \in V \}$ is self-dual, Section~\ref{sec:self-dual_S}.  In this situation, this is equivalent to $t\in V$ implying there is $\alpha\in S$ with $\alpha h^{-1/2} = t$, that is, $th^{1/2} \in \mc B(K)$.

\begin{lemma}\label{lem:BH_eg_selfdual}
With this notation, $S$ is self-dual if and only if there is $K>0$ with $\|th^{1/2}\| \leq K\|t\|_{HS}$ for each $t\in V$.
\end{lemma}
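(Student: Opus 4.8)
The plan is to characterize self-duality of $S$ via the criterion established just before the lemma, namely that $S$ is self-dual precisely when every $t\in V$ (equivalently, every $t\in\mc B_{M'}(K,V) = V$, since $M'=\mathbb C$ makes $\mc B_{M'}(K,HS(K))=HS(K)$) arises as $\hat\alpha$ for some $\alpha\in S$. Here $\hat\alpha = \alpha h^{-1/2}$, so the condition ``$t=\hat\alpha$ for some $\alpha\in S$'' unwinds to: there exists $\alpha\in\mc B(K)$ with $\alpha h^{-1/2} = t$, i.e. $th^{1/2}$ extends to a bounded operator $\alpha = th^{1/2}\in\mc B(K)$. Thus self-duality is equivalent to the statement that $th^{1/2}$ is bounded for every $t\in V$.

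First I would make precise what ``$th^{1/2}\in\mc B(K)$'' means, since $h^{1/2}$ may be unbounded: $t\in V\subseteq HS(K)$ is an everywhere-defined Hilbert--Schmidt operator, and we ask whether the densely-defined operator $t h^{1/2}$ (defined on $D(h^{1/2})$) is bounded, hence extends to a member of $\mc B(K)$. The forward direction is immediate: if there is a uniform $K>0$ with $\|th^{1/2}\|\le K\|t\|_{HS}$ for all $t\in V$, then in particular each $th^{1/2}$ is bounded, so each $t$ lies in the range of $\alpha\mapsto\hat\alpha$ with $\alpha=th^{1/2}\in S$, giving self-duality (here I would verify $\alpha\in\mf n_{\varphi^{-1}}$, but this is exactly the content of $\hat\alpha$ being bounded, per the discussion after Proposition~\ref{prop:L2BS_is_HSH}).

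For the reverse direction, self-duality gives us a well-defined linear map $V\to\mc B(K);\ t\mapsto th^{1/2}$, and the claim is that this map is automatically bounded. The natural tool is the Closed Graph Theorem, exactly as in the proof of Theorem~\ref{thm:S_bdd_below_e_int}. I would take a sequence $t_n\to t$ in $\|\cdot\|_{HS}$ with $t_n h^{1/2}\to\beta$ in operator norm, and show $th^{1/2}=\beta$. The key point is that for $\xi\in D(h^{1/2})$ we have $t_n h^{1/2}\xi \to t h^{1/2}\xi$: indeed $t_n\to t$ in $HS$-norm forces $t_n\to t$ in operator norm, so $t_n(h^{1/2}\xi)\to t(h^{1/2}\xi)$, while also $t_nh^{1/2}\xi\to\beta\xi$; hence $\beta\xi = th^{1/2}\xi$ on the core $D(h^{1/2})$, giving $\beta=th^{1/2}$ and closing the graph. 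Boundedness of the map then yields the uniform constant $K$.

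The main obstacle I anticipate is the unbounded-operator bookkeeping: ensuring that the map $t\mapsto th^{1/2}$ really has closed graph requires handling the domain $D(h^{1/2})$ carefully, since $h^{1/2}$ is generally unbounded and the products $th^{1/2}$ are only densely defined a priori. In particular one must confirm that the target space for the Closed Graph Theorem is a genuine Banach space (operator-norm closure of the relevant bounded extensions) and that the range of $t\mapsto\hat\alpha$ is all of $V$ \emph{with} $\alpha$ uniquely determined, so that the inverse map $t\mapsto th^{1/2}$ is single-valued. These are the same technical points navigated in Theorem~\ref{thm:S_bdd_below_e_int}, so I would mirror that argument closely; the commutant being trivial ($M'=\mathbb C$) simplifies matters considerably, as there is no module structure to track and no need for the completely bounded refinement of Proposition~\ref{prop:inv_to_cb}.
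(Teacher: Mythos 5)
Your proposal is correct and follows essentially the same route as the paper: the forward direction is immediate, and the converse is the Closed Graph Theorem applied to the map $V \to \mc B(K);\ t \mapsto th^{1/2}$ (using that $V$ is closed, hence Banach), with the graph closed because $t_n \to t$ in $\|\cdot\|_{HS}$ implies operator-norm convergence, so $\beta\xi = \lim_n t_n h^{1/2}\xi = th^{1/2}\xi$ on the dense domain $D(h^{1/2})$. The technical worries you flag (single-valuedness of $t\mapsto th^{1/2}$ and the target being a Banach space) are handled exactly as you suggest, so nothing further is needed.
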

\begin{proof}
If this condition holds, then obviously $S$ is self-dual.  For the converse, we apply the Closed Graph Theorem to the map $V \to \mc B(H); t \mapsto \alpha = t h^{1/2}$, noting that by assumption $V$ is closed.  Let $t_n\to t$ in $V$, with $t_n h^{1/2} \to \beta \in \mc B(H)$.  For $\xi\in D(h^{1/2})$ we have $\beta\xi = \lim_n t_n h^{1/2}\xi = t h^{1/2} \xi$ as also $t_n\to t$ in the operator norm.  As $D(h^{1/2})$ is dense this shows that $\beta = th^{1/2}$ as required.
\end{proof}

We shall also suppose that $\varphi^{-1}$ is bounded, equivalently, $h^{-1}$ is a trace-class operator.  So we can find an orthonormal basis $(e_n)$ of $H$ (which must be separable) with $h^{-1}e_n = h_n^{-1}e_n$ for each $n$, where $\sum h_n^{-1} < \infty$ and $h_n\in (0,\infty)$.  We shall give an example of a $V$ associated to a self-dual $S$ where the projection $e$ onto $V$ is not integrable, compare Theorem~\ref{thm:S_bdd_below_e_int}.

\begin{example}\label{eg:BH_selfdual_notint}
Let $(\delta_i)$ be some orthogonal sequence in $H$ (which could just be $(e_n)$) and set $v_1 = \rankone{\delta_1}{e_1}$, and then for $n>1$ choose $0<\lambda_n<1$ and set $v_n = \lambda_n \rankone{\delta_n}{e_1} + (1-\lambda_n^2)^{1/2} \rankone{\delta_1}{e_n}$.  This gives an orthonormal family in $HS(\ell^2)$; let $V$ be the closed linear span.  With $t = \sum_{n\leq N} a_n v_n \in V$ we have
\begin{align*}
th^{1/2} = a_1 h_1^{1/2} \rankone{\delta_1}{e_1} + \sum_{n=2}^N h_1^{1/2} a_n \lambda_n \rankone{\delta_n}{e_1} + h_n^{1/2} a_n (1-\lambda_n^2)^{1/2} \rankone{\delta_1}{e_n}.
\end{align*}
Let this act on $b = \sum_k b_k e_k$ to get
\begin{align*}
\|th^{1/2}(b)\|^2 &= \Big\| a_1 h_1^{1/2} b_1 \delta_1 + \sum_{n=2}^N h_1^{1/2} a_n \lambda_n b_1 \delta_n + h_n^{1/2} a_n (1-\lambda_n^2)^{1/2} b_n \delta_1 \Big\|^2   \\
&= \big| a_1 h_1^{1/2} b_1 + \sum_{n=2}^{N} h_n^{1/2} a_n (1-\lambda_n^2)^{1/2} b_n \big|^2
   + \sum_{n=2}^N |h_1^{1/2} a_n \lambda_n b_1|^2 \\
&\leq \Big( \max\big( h_1, \max_{n\geq 2} (1-\lambda_n^2) h_n \big)
   + h_1\Big) \|b\|^2 \sum_n |a_n|^2.
\end{align*}
This shows that $t\in V$ implies that $\|th^{1/2}\| \leq K \|t\|$ for some $K$, so long as $(1-\lambda_n^2) h_n$ is bounded in $n$.  By Lemma~\ref{lem:BH_eg_selfdual}, $S$ is self-dual in this case.

Let $e$ be the projection onto $V$, so $e = \sum_n \rankone{v_n}{v_n}$.  For $\xi\in K$ and $n>1$ we have
\begin{align*}
(\id\otimes\varphi^\op)(\rankone{v_n}{v_n})
&= \lambda_n^2 h_1 \rankone{\delta_n}{\delta_n}
+ (1-\lambda_n^2) h_n \rankone{\delta_1}{\delta_1},
\end{align*}
and similarly $(\id\otimes\varphi^\op)(\rankone{v_1}{v_1}) = h_1 \rankone{\delta_1}{\delta_1}$.  So
\[ (\id\otimes\varphi^\op)(e) \geq \sum_{n\geq 2} (1-\lambda_n^2) h_n \rankone{\delta_1}{\delta_1}. \]

Set $h_n = 2^n$, so $\sum_n h_n^{-1} = \sum_n 2^{-n} < \infty$ and $h^{-1}$ is trace-class, as we assumed.  Set $1-\lambda_n^2 = 2^{-n}$ for $n\geq 2$, that is, $\lambda_n = (1-2^{-n})^{1/2}$ so $0 < \lambda_n < 1$.  Then $(1-\lambda_n^2) h_n = 1$ for all $n\geq 2$, showing that $S$ is self-dual, but $\sum_{n\geq 2} (1-\lambda_n^2) h_n = \infty$, showing that $e$ is not integrable.
\end{example}

We now concentrate upon the case of $M = \prod_i \mathbb M_{n(i)}$, now acting on $H = \bigoplus \mathbb C^{n(i)}$.  Then $M' \cong \ell^\infty(I)$ where the minimal projection $e_i \in \ell^\infty(I)$ acts as the identity on the block $\mathbb C^{n(i)}$.  We omit the calculations (which proceed exactly as in Section~\ref{sec:inv_weight}), but one may carefully check that $\varphi^{-1} \colon \mc B(H)_+ \to \wh{\ell^\infty(I)}$ has $\mf m_{\varphi^{-1}}^+ = \{ T = (T_{ij}) \in \mc B(H) : (\Tr(Q_i^{-1}T_{i,i}))_{i\in I} \in \ell^\infty(I) \}$ and $\varphi^{-1}(T) = (\Tr(Q_i^{-1}T_{i,i}))$ for each $T \in \mf m_{\varphi^{-1}}$.  Let $\phi$ be the weight on $\ell^\infty(I)$ given by the counting measure, and set $\tilde\phi = \phi\circ\varphi^{-1}$.  Then $\tilde\phi$ has density $Q^{-1}$, where $Q^{-1}$ is considered with its natural domain, $D(Q^{-1}) = \{ \xi = (\xi_i) \in H : \sum_i \|Q_i^{-1} \xi_i \|^2 < \infty \}$.

We identify $L^2(\phi)$ with $\ell^2(I)$ (denoted $\ell^2$ for ease of notation).  Then $t \in \mc B_{\ell^\infty}(\ell^2, HS(H))$ when $t(\delta_i) = (1\otimes e_i)t(\delta_i)$ for all $i$.  This is equivalent to having $t_{i,j} \in \mathbb C^{n(i)} \otimes \overline{\mathbb C^{n(j)}} \cong \mc B(\mathbb C^{n(j)}, \mathbb C^{n(i)})$ with $t(\xi) = (t_{i,j} \xi_j)$ for each $\xi = (\xi_j)\in\ell^2$.  It follows that $\|t\|^2 = \sup_j \sum_i \|t_{i,j}\|_{HS}^2$.  For $\alpha\in \mf n_{\varphi^{-1}}$ we see that the resulting $t=\hat\alpha$ has $t_{j,i} = \alpha_{j,i} Q_i^{-1/2}$.  So $t$ arises as $\hat\alpha$ for some $\alpha$ exactly when the block matrix $(t_{i,j} Q_j^{1/2})$ is bounded as an operator on $H$.

Let $V\subseteq HS(H)$ be an $M'$-bimodule.  Then $V$ is the closed span of subspaces $V^0_{i,j} \subseteq \mc B(\mathbb C^{n(j)}, \mathbb C^{n(i)})$ as before (but now we have no ``auxiliary space'' to tensor with).  Then
\[ S = \{ \alpha\in\mf n_{\varphi^{-1}} : \hat\alpha(\ell^2) \subseteq V \}
= \{ \alpha\in\mf n_{\varphi^{-1}} : \alpha_{i,j} Q_j^{-1/2} \in V^0_{i,j} \ (i,j\in I) \}. \]
We might want to also consider $\mc S = S\clos^{w^*}$.  As $S$ and $\mc S$ are determined by the block matrix entries, we see that $\mc S = \{ \alpha\in\mc B(H) : \alpha_{i,j} Q_j^{-1/2} \in V^0_{i,j} \}$ because any $\alpha\in\mc B(H)$ can be approximated by finitely-supported matrices in the weak$^*$-topology.  Again, considering $S_{i/4}$ or $\mc S_{i/4}$ does not present problems in this context.

\begin{example}\label{eg:self-dual_changes}
Let $Q=1$ and suppose that $V$ is diagonal as in Example~\ref{eg:all_diag}.  So $t\in S$ means that $t_{i,j}=0$ for $i\not=j$, and hence $\sup_i \|t_{i,i}\|_{HS} < \infty$.  As $\|s\|_{HS} \geq \|s\|$ for any $s\in\mathbb M_n$, we immediately see that setting $\alpha_{i,j} = t_{i,j}$ gives a bounded operator, so $\alpha\in S$ and $t=\hat\alpha$.  That is, $S$ is self-dual.  This should be compared with Example~\ref{ex:diag_whole_space} where, with the same $V$, we obtained a non-self-dual $S$ when representing $M$ on $L^2(M)$.
\end{example}

We now adapt the ideas of Example~\ref{eg:BH_selfdual_notint} to the matrix setting.

\begin{example}\label{eg:matrix_sd_notint}
We define each $Q_i$ to be diagonal, with entries $(h_k)_{k=1}^{n(i)}$, for some $(h_k) \subseteq (0,\infty)$.  Then $\Tr(Q_i^{-1}) = \sum_{k=1}^{n(i)} h_k^{-1}$, so it is natural to suppose that $\sum_k h_k^{-1} < \infty$, which ensures that $\varphi^{-1}$ is bounded.  Note that $\varphi^{-1}(1)\not=1$, but with some more book-keeping, we could normalise each $Q_i$ to get $\Tr(Q_i^{-1})=1$, and much the same construction would work.

We define $V$ by specifying the subspaces $V^0_{i,j} \subseteq \mathbb C^{(n(i))} \otimes \overline{ \mathbb C^{(n(j)} }$.  Set $V^0_{i,j} = \{0\}$ when $i\not=j$, and define $V^0_{i,i}$ by giving it the orthonormal basis
\[ v^{(i)}_1 = \rankone{\delta^{(i)}_1}{\delta^{(i)}_1}, \quad
v^{(i)}_k = \lambda_k \rankone{\delta^{(i)}_k}{\delta^{(i)}_1} +  (1-\lambda_k^2)^{1/2} \rankone{\delta^{(i)}_1}{\delta^{(i)}_k}
\quad  (2\leq k\leq n(i)). \]
Again here $\lambda_k \in (0,1)$ are to chosen later.

The same calculation as in Example~\ref{eg:BH_selfdual_notint} shows that if $(1-\lambda_n^2)h_n$ is bounded in $n$, there is a constant $K$ so that $\|t_{i,i} Q_i^{1/2}\| \leq K \|t_{i,i}\|_{HS}$ for any $i$ and any $t_{i,i} \in V^0_{i,i}$.  As $V$ is diagonal, it follows that any $t\in\mc B_{\ell^\infty}(\ell^2, V)$ is of the form $t = \hat\alpha$ for some $\alpha$.  So the associated $S$ is self-dual.

The projection $e=(e_{i,j})$ is also diagonal, so $(\id\otimes\varphi^\op)(e) = ((\id\otimes\Tr(Q_i\cdot))(e_{i,i}))$.  We again find that $(\id\otimes\Tr(Q_i\cdot))(e_{i,i})$ has norm at least $\sum_{k=2}^{n(i)} h_k (1-\lambda_k^2)$, and this can be large, if $n(i)$ is large.  Thus, if $(n(i))$ is unbounded, we have found an example of a self-dual $S$ with non-integrable $e$, and with $\varphi^{-1}$ bounded.  This provides a (counter-)example to \cite[Remark~3.28]{Wasilewski_Quantum_Cayley}.
\end{example}

\subsection{Bounded operators}\label{sec:example_BH}

Now set $M = \mc B(K)$ for some (infinite-dimensional) $K$, with $\varphi = \Tr_h$, as in Section~\ref{sec:weights_bh}.  We continue with some of the notation from Section~\ref{sec:eg_matrix_algs}.  We have $H = K \otimes \overline K$ with GNS map $\Lambda(x) = x h^{1/2} \in HS(K)$, and $M' = \mc B(K)^\op$ acting as $1 \otimes \mc B(K)^\top$.  We compute $\mc B_{M'}(H, HS(H))$, this being maps $t\colon H\to H\otimes\overline H$, where we identify $H\otimes\overline H$ with $K \otimes \overline K \otimes \overline K \otimes K$, with $ta = (1\otimes a^{\op\top})t$ for each $a\in\mc B(H)$.  Taking $a$ to be rank-one shows that
\[ (\nu|\mu) t(\xi\otimes\overline\eta)
= t \rankone{\xi}{\nu} (\mu\otimes\overline\eta)
= (1\otimes(1\otimes\rankone{\xi}{\nu})) t(\mu\otimes\overline\eta)
\in K\otimes\overline{K}\otimes\overline{K} \otimes \xi, \]
and so there is $t_0 \colon \overline{K} \to K\otimes\overline{K}\otimes\overline{K}$ with $t(\xi\otimes\overline\eta) = t_0(\overline\eta) \otimes \xi$.  It is now easy to see that if $t$ is of this form, then $ta = (1\otimes a^{\op\top})t$ for any $a$, and $\|t\| = \|t_0\|$.
The inner-product is $(t|s) = t^*s = 1 \otimes t_0^*s_0 = (t_0^*s_0)^{\top \op} \in \mc B(K)^\op$.

In the following, let $\tau_{23} = 1\otimes\tau\otimes 1$ be the tensor swap map applied to the 2nd and 3rd ``legs'' of $K \otimes \overline K \otimes \overline K \otimes K$.  We also regard $\mc B(H)$ as $\mc B(K) \vnten \mc B(\overline K)$ when taking slice maps.

\begin{lemma}\label{lem:BH_alpha_nvarphiinv}
We have that $\alpha\in\mf n_{\varphi^{-1}}$ if and only if, for each $\xi,\eta\in K$ we have that $(\id\otimes\omega_{\overline\xi,\overline\eta})(\alpha) h^{-1/2} \in HS(K)$, and furthermore, there is a constant $C$ so that for any orthonormal basis $(\delta_j)$ of $K$ and any $\eta$ we have $\sum_j \| (\id\otimes\omega_{\overline\delta_j,\overline\eta})(\alpha) h^{-1/2} \|^2_{HS} \leq C^2\|\eta\|^2$.  Then $\tau_{23} \hat\alpha \colon \xi\otimes\overline\eta \mapsto \sum_j (\id\otimes\omega_{\overline\delta_j,\overline\eta})(\alpha) h^{-1/2} \otimes \overline\delta_j \otimes \xi$.
\end{lemma}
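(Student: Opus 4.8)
### Proof Proposal for Lemma~\ref{lem:BH_alpha_nvarphiinv}

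The plan is to translate the abstract characterisation of $\mf n_{\varphi^{-1}}$ from Theorem~\ref{thm:nT_vs_module} into the very concrete setting $M=\mc B(K)$, $H=K\otimes\overline K$, using the explicit GNS data already established. Recall that $\alpha\in\mf n_{\varphi^{-1}}$ iff the map $\hat\alpha\colon J\Lambda(a)\mapsto \alpha JaJ\nabla^{-1/2}$ extends to a bounded operator into $HS(H)$ (see Remark~\ref{rem:L2M'_vs_L2M} and Theorem~\ref{thm:nT_vs_module}). So first I would write down $\hat\alpha$ completely explicitly. Using $\Lambda(x)=xh^{1/2}$ and the formulae for $J,\nabla$ on $HS(K)$ from Section~\ref{sec:weights_bh}, I would compute $\alpha JaJ\nabla^{-1/2}$ as an element of $HS(H)=HS(K\otimes\overline K)$, then apply the structural decomposition $t(\xi\otimes\overline\eta)=t_0(\overline\eta)\otimes\xi$ derived just above in the main text to isolate the ``core'' map $t_0\colon\overline K\to K\otimes\overline K\otimes\overline K$. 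This is the key computation that converts the module condition into a statement about slices $(\id\otimes\omega_{\overline\xi,\overline\eta})(\alpha)$.

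Second, I would recognise that boundedness of $\hat\alpha$ as a map $H\to H\otimes\overline H$ is equivalent to boundedness of $t_0$, and that $t_0$ being valued in $HS$ together with a uniform Hilbert--Schmidt bound over an orthonormal basis is exactly the square-summability condition $\sum_j\|(\id\otimes\omega_{\overline\delta_j,\overline\eta})(\alpha)h^{-1/2}\|_{HS}^2\leq C^2\|\eta\|^2$. Here the first requirement---that each $(\id\otimes\omega_{\overline\xi,\overline\eta})(\alpha)h^{-1/2}$ already lies in $HS(K)$---is the ``pointwise'' part, and the uniform estimate is the ``boundedness'' part; both are forced by, and force, membership in $\mf n_{\varphi^{-1}}$. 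I would argue the two directions by expanding $\|\hat\alpha J\Lambda(a)\|_{HS}^2$ in an orthonormal basis of the relevant leg and matching it against $\sum_j\|\cdots\|_{HS}^2$, invoking the slice-map identities $\ip{y}{\omega_{\overline\xi,\overline\eta}}=(\overline\eta|y\overline\xi)$ that are standard for $\mc B(\overline K)$.

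Third, for the final formula I would read off $\tau_{23}\hat\alpha$ directly from the explicit form of $t_0$: since $\hat\alpha(\xi\otimes\overline\eta)=t_0(\overline\eta)\otimes\xi$ with $t_0(\overline\eta)=\sum_j(\id\otimes\omega_{\overline\delta_j,\overline\eta})(\alpha)h^{-1/2}\otimes\overline\delta_j$ landing in $K\otimes\overline K\otimes\overline K$, applying $\tau_{23}=1\otimes\tau\otimes 1$ to swap the two middle conjugate legs yields exactly the stated expression $\sum_j(\id\otimes\omega_{\overline\delta_j,\overline\eta})(\alpha)h^{-1/2}\otimes\overline\delta_j\otimes\xi$. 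This last step is essentially bookkeeping on tensor legs once $t_0$ is known.

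The main obstacle I anticipate is the careful tracking of conjugate-space tensor legs and the interplay of $h^{-1/2}$ with the GNS map and modular operator: writing $\alpha JaJ\nabla^{-1/2}$ correctly as a Hilbert--Schmidt operator on $K\otimes\overline K$, and then correctly identifying which leg the slice $(\id\otimes\omega_{\overline\delta_j,\overline\eta})$ acts on, requires scrupulous attention to the identifications $\overline{\mc B(K)}\cong\mc B(\overline K)$ and $HS(H)\cong K\otimes\overline K\otimes\overline K\otimes K$ set up in Section~\ref{sec:eg_matrix_algs}. Once the explicit form of $t_0$ is correctly pinned down, the equivalence and the closed formula both follow by routine Hilbert-space manipulations; the genuine content is the translation, not any deep analytic estimate. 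The uniform bound $C$ should be taken as $\|t_0\|=\|\hat\alpha\|$, which I would note explicitly.
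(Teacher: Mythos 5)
Your proposal is correct and follows essentially the same route as the paper's proof: both use the explicit GNS data for $\Tr_h$ (Proposition~\ref{prop:Trh}, giving $\hat\alpha(b)=\alpha(h^{-1/2}\otimes b^\top)$ for $b\in HS(K)$), the structural decomposition $t(\xi\otimes\overline\eta)=t_0(\overline\eta)\otimes\xi$ established just before the lemma, identification of $t_0$ (equivalently $s_0=\tau_{23}t_0$) with the slices $(\id\otimes\omega_{\overline\delta_j,\overline\eta})(\alpha)h^{-1/2}$, and the norm identity $\sum_j\|(\id\otimes\omega_{\overline\delta_j,\overline\eta})(\alpha)h^{-1/2}\|_{HS}^2=\|s_0(\overline\eta)\|^2\leq\|s_0\|^2\|\eta\|^2$ with $C=\|s_0\|=\|\hat\alpha\|$. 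The only cosmetic difference is that the paper pins down $t_0$ by pairing against basis vectors rather than by expanding the norm, which is the same computation organised slightly differently.
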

\begin{proof}
With reference to Proposition~\ref{prop:Trh}, we have $\hat\alpha(h^{1/2}a^*) = \hat\alpha J\Lambda(a) = \alpha (1\otimes a^{*\top}) (h^{-1/2} \otimes h^{\top 1/2}) \in HS(H)$ for each $a\in\mc B(H)$ with $a h^{1/2} \in HS(K)$.  Equivalently, $\hat\alpha(b) = \alpha (h^{-1/2} \otimes b^\top)$ for $b\in HS(K)$. Then $\hat\alpha = t_0$ has the form above, so for $\mu\in D(h^{-1/2})$ we have
\begin{align*}
& \big( \delta_i \otimes \overline\delta_j \otimes \overline\mu \otimes \delta_l \big| t_0(\overline\eta)\otimes \xi \big)
= \big( \delta_i \otimes \overline\delta_j \big| \alpha(h^{-1/2} \otimes \rankone{\overline\eta}{\overline\xi}) \mu \otimes \overline\delta_l \big)   \\
\iff &
\big( \delta_i \otimes \overline\delta_j \otimes \overline\mu \big| t_0(\overline\eta) \big)
= \big( \delta_i \otimes \overline\delta_j \big| \alpha(h^{-1/2}\mu \otimes \overline\eta) \big)
= \big( \delta_i \big| (\id\otimes\omega_{\overline\delta_j, \overline\eta})(\alpha) h^{-1/2} \mu \big).
\end{align*}
Let $s_0 = \tau_{23}t_0$ so this condition is equivalent to $(\id\otimes\omega_{\overline\xi,\overline\eta})(\alpha) h^{-1/2} = (\id\otimes\id\otimes\overline\xi)s_0(\overline\eta) \in K \otimes \overline K \cong HS(K)$ for each $\xi,\eta$, and with $\sum_j \| (\id\otimes\omega_{\overline\delta_j,\overline\eta})(\alpha) h^{-1/2} \|^2_{HS} = \|s_0(\overline\eta)\|^2 \leq \|s_0\|^2 \|\eta\|^2$.
\end{proof}

$M'$-bimodules $V\subseteq HS(H)$ biject with closed subspaces $V_0 \subseteq K\otimes\overline K$ with
\[ V = (V_0)_{13} = \tau_{23}(V_0\otimes \overline K\otimes K), \]
and so $\alpha\in S = \{ \alpha\in\mf n_{\varphi^{-1}} : \im \hat\alpha \subseteq V \}$ is those $\alpha$ as in Lemma~\ref{lem:BH_alpha_nvarphiinv} with $(\id\otimes\omega_{\overline\xi,\overline\eta})(\alpha) h^{-1/2} \in V_0$ for each $\xi,\eta$.
The norm estimate in the lemma shows that $\|(\id\otimes\omega_{\overline\xi,\overline\eta})(\alpha) h^{-1/2}\|_{HS} \leq C \|\xi\| \|\eta\|$ and so as every $\omega\in\mc B(\overline K)$ is the absolutely convergent sum of such functionals, we see that $(\id\otimes\omega)(\alpha) h^{-1/2} \in V_0$ for any $\omega$.  
In particular, given $a\in\mc B(K), 0\not=x\in\mc B(\overline K)$, consider $\alpha=a\otimes x$, which by the lemma will be in $\mf n_{\varphi^{-1}}$ if and only if $ah^{-1/2}\in HS(K)$.  In this case,  $\tau_{23} \hat\alpha \colon \xi\otimes\overline\eta \mapsto ah^{-1/2} \otimes x\overline\eta \otimes \xi$.
Furthermore, $ah^{-1/2} \in V_0$ if and only if $\alpha\in S$.

\begin{example}\label{eg:V_can_give_zero_S}
Let $v_0 \in HS(K)$ and set $V_0 = \mathbb C v_0$.  Let $\alpha\in\mf n_{\varphi^{-1}}$ so by Lemma~\ref{lem:BH_alpha_nvarphiinv} for each $\xi,\eta$ we have $(\id\otimes\omega_{\overline\xi,\overline\eta})(\alpha) h^{-1/2} \in V_0$.  If $\alpha$ is non-zero we can find $\xi,\eta$ with $(\id\otimes\omega_{\overline\xi,\overline\eta})(\alpha) h^{-1/2} = v_0$, and so for $\mu \in D(h^{1/2})$ we have $v_0 h^{1/2} \mu = (\id\otimes\omega_{\overline\xi,\overline\eta})(\alpha) \mu$ showing that $v_0 h^{1/2}$ is bounded.  As we can choose $v_0$ non-zero (say, rank-one) with $v_0 h^{1/2}$ unbounded, this shows that we can have $V\not=\{0\}$ and yet $S=\{0\}$.

We note for later that if $v_0h^{1/2}$ is bounded, then $(\id\otimes\omega_{\overline\xi,\overline\eta})(\alpha) \in \mathbb C v_0h^{1/2}$ for all $\xi,\eta$, so $\alpha = v_0h^{1/2} \otimes x$ for some $x\in\mc B(\overline K)$, and $\alpha\in S$.  The converse argument is given above, and so we conclude that $S = v_0h^{1/2} \otimes \mc B(\overline K)$.
\end{example}

\begin{example}\label{eg:V_can_give_zero_S_varphiinv_bdd}
Suppose that $h^{-1/2}$ is a Hilbert-Schmidt operator (equivalently, $h^{-1}$ is trace-class).  Then for any $\alpha\in\mc B(H)$ we have that $(\id\otimes\omega_{\overline\xi,\overline\eta})(\alpha) h^{-1/2} \in HS(K)$, and indeed,
\begin{align*}
\sum_j \| (\id\otimes\omega_{\overline\delta_j,\overline\eta})(\alpha) h^{-1/2} \|^2_{HS}
&= \sum_j \Tr\big( h^{-1/2} (\id\otimes\omega_{\overline\eta,\overline\eta})(\alpha^*(1\otimes\rankone{\overline\delta_j}{\overline\delta_j}) \alpha ) h^{-1/2} \big)  \\
&= \Tr\big( h^{-1/2} (\id\otimes\omega_{\overline\eta,\overline\eta})(\alpha^*\alpha) h^{-1/2} \big)
\leq \|h^{-1/2}\|_{HS}^2 \|\alpha^*\alpha\| \|\eta\|^2.
\end{align*}
Hence $\alpha\in\mf n_{\varphi^{-1}}$, and this calculation also essentially shows that $\varphi^{-1}(\alpha^*\alpha) = (\Tr(h^{-1}\cdot) \otimes \id)(\alpha^*\alpha)$.  So $\varphi^{-1}$ is bounded.
As this can easily occur with $h$ still unbounded, combined with Example~\ref{eg:V_can_give_zero_S} this gives an example where $\varphi^{-1}$ is bounded, but we can have $V$ non-zero and yet the associated $S$ is zero.
\end{example}

\begin{example}\label{eg:BH_int_coint}
We continue with Example~\ref{eg:V_can_give_zero_S}.  The projection $e$ onto $V_0$ is $e = \rankone{v_0}{v_0}$ and so for $\xi,\eta,\nu,\mu \in K$ we have
\begin{align*}
\big( \overline\mu \big| (\omega_{\xi,\eta}\otimes\id)(e) \overline\nu \big)
&= ( \xi\otimes\overline\mu | e(\eta\otimes\overline\nu) )
= ( \rankone{\xi}{\mu} | v_0 )_{HS} ( v_0 | \rankone{\eta}{\nu} )_{HS}
= (\xi|v_0(\mu))(v_0(\nu)|\eta)
= (\overline\mu|\overline{v_0^*\xi}) (\overline{v_0^*\eta}|\overline\nu),
\end{align*}
so $(\omega_{\xi,\eta}\otimes\id)(e) = \rankone{\overline{v_0^*\xi}}{\overline{v_0^*\eta}} = v_0^\top \rankone{\overline\xi}{\overline\eta}v_0^{\top*}$.  It follows from Proposition~\ref{prop:Trh} that $\rankone{\overline{v_0^*\xi}}{\overline{v_0^*\xi}} \in \mf m^+_{\varphi^\op}$ if and only if $v_0^*\xi \in D(h^{1/2})$, and then $\varphi^\op(\rankone{\overline{v_0^*\xi}}{\overline{v_0^*\xi}}) = \|h^{1/2} v_0^*\xi\|^2$.  So $e$ is integrable, $(\id\otimes\varphi^\op)(e) < \infty$, is equivalent to there being a constant $C$ with $\|h^{1/2} v_0^* \xi\| \leq C \|\xi\|$ for each $\xi$.  By Lemma~\ref{lem:domain_S_Tr_h}, this is the same as $v_0 h^{1/2}$ being bounded.  Similarly, $(\varphi\otimes\id)(e) < \infty$ if and only if $v_0^* h^{1/2}$ is bounded.

Choose $v_0\in HS(K)$ with $v_0 h^{1/2}$ bounded, so $e$ is integrable, and as in Example~\ref{eg:V_can_give_zero_S}, $S = v_0 h^{1/2} \otimes \mc B(\overline K)$.  With reference to Section~\ref{sec:hilb_ops} consider $\tau(e)^\op$ and the associated $V_\tau = J_0(V) = \mathbb C v_0^*$.  If $v_0^* h^{1/2}$ is not bounded then $S_\tau=\{0\}$, as in Example~\ref{eg:V_can_give_zero_S}, and so we can have $S_\tau=\{0\}$ even when $e$ is integrable.

Suppose now both $e$ and $\tau(e)^\op$ are integrable.  Recall from Proposition~\ref{prop:mod_Trh} that $\nabla = h \otimes h^{-1\top}$, and so given $\alpha = v_0 h^{1/2} \otimes x \in S$ we have that $\nabla^{-1/2} \alpha^* \nabla^{1/2} = h^{-1/2} h^{1/2} v_0^* h^{1/2} \otimes h^{1/2 \top} x^* h^{-1/2 \top}$.  There is no reason why $h^{1/2 \top} x^* h^{-1/2 \top}$ should be bounded, but when it is, we obtain $v_0^*h^{1/2} \otimes h^{1/2 \top} x^* h^{-1/2 \top} \in S_\tau$.  Compare Proposition~\ref{prop:flip_to_twisted_adj}, so here $ S_{i/2} = \{ \nabla^{-1/2} \alpha^* \nabla^{1/2} : \alpha \in S \cap D(\tilde\sigma_{i/2})\}$ and we don't need to intersect with $\mf n_{\varphi^{-1}}$.  However, notice that $S_{i/2}$ will not be norm-dense in $S_\tau$ as we cannot norm approximate an arbitrary $v_0^* h^{1/2} \otimes y \in S_\tau$.

We now compute the CB operator $A$, using Theorem~\ref{thm:Kraus_rep}.  Set $\alpha = v_0h^{1/2} \otimes 1$, so $\hat\alpha(\xi\otimes\overline\eta) = v_0 \otimes \overline\eta \otimes \xi$.  We may assume that $\|v_0\|_{HS}=1$, so $\hat\alpha$ is a (partial) isometry, with image all of $V$, and hence $(\hat\alpha)$ is an orthogonal basis of $\mc B_{M'}(H,V)$.  Thus $A(x) = \alpha x \alpha^*$ for each $x\in\mc B(K)$, remembering that this formula is interpretted in $\mc B(H)$.  More directly, $A(x) = (v_0h^{1/2}) x (v_0h^{1/2})^*$ for $x\in\mc B(K)$.  Then, given $a\in\mc B(K)$ with $ah^{1/2} \in HS(K)$, we have $A_0(ah^{1/2}) = A_0 \Lambda(a) = \Lambda(A(a)) = (v_0h^{1/2}) a (v_0h^{1/2})^* h^{1/2}$.  Equivalently, for $b\in HS(K)$ we have $A_0(b) = (v_0h^{1/2}) bh^{-1/2} (v_0h^{1/2})^* h^{1/2} = (v_0h^{1/2}) b (v_0^*h^{1/2})$.  So $A_0 = (v_0h^{1/2}) \otimes (v_0^*h^{1/2})^\top \in \mc B(H)$, and we immediately see that $A_0 \in S$.
\end{example}

\begin{example}\label{eg:Si4}
We continue Example~\ref{eg:BH_int_coint} and consider $S_{i/4}$ (see before Proposition~\ref{prop:Si4_when_int_coint}).  For the moment, suppose that $v_0 \in HS(K)$ with $v_0h^{1/2}$ bounded (so $e$ is integrable).  We give an example where $h^{1/4} v_0 h^{1/4}$ is unbounded.  Set $K=\ell^2$ with $h(\delta_n) = n^2\delta_n$ for each $n$.  Let $v_0 = \sum_k k^{-1} \rankone{\delta_k}{\delta_1}$, so $v_0\in HS(K)$ and $v_0h^{1/2} = v_0$ is certainly bounded.  However, $h^{1/4} v_0 h^{1/4} = \sum_k k^{-1} k^{1/2} \rankone{\delta_k}{\delta_1}$ is not bounded, as $(k^{-1/2})\not\in\ell^2$.  As $S = v_0h^{1/2} \otimes \mc B(\overline K)$, if $\alpha = v_0h^{1/2} \otimes y \in D(\tilde\sigma_{i/4})$ then $\nabla^{1/4} \alpha \nabla^{-1/4} = h^{1/4} v_0h^{1/2} h^{-1/4} h^{\top -1/4}y h^{\top 1/4}$ is bounded, contradiction, unless $\alpha=0$.  So $S_{i/4}=\{0\}$ in this case.

Now suppose additionally that $v_0^*h^{1/2}$ is bounded.  From Lemma~\ref{lem:domain_S_Tr_h} we have that $h^{1/2}v_0$ is bounded and everywhere defined, because $v_0^*h^{1/2}$ is bounded, and so $v_0(K) \subseteq D(h^{1/2}) \subseteq D(h^{1/4})$.  So $D(h^{1/4}v_0 h^{1/4}) = D(h^{1/4})$.  We claim that $x = v_0h^{1/2} \in D(\sigma_{-i/2})$, which is equivalent to $D(h^{1/2} x h^{-1/2}) = D(h^{-1/2})$ and $h^{1/2} x h^{-1/2}$ being bounded.  As $h^{1/2} x h^{-1/2} = h^{1/2} v_0 |_{D(h^{-1/2})}$, this is clear.  So also $x\in D(\sigma_{-i/4})$, and hence $h^{1/4} x h^{-1/4} = h^{1/4} v_0 h^{1/4}$ is bounded.

We now see that $S \cap D(\tilde\sigma_{i/4}) = \{ v_0h^{1/2} \otimes y : y \in D(\sigma^\op_{i/4}) \}$ where $\sigma^\op_t(y) = h^{\top it} y h^{\top -it}$ for $t\in\mathbb R$, this being compatible with Lemma~\ref{lem:L2Mop_L2M}, and using that $\nabla = h \otimes h^{\top -1}$.  Then $\tilde\sigma_{i/4}(v_0h^{1/2}\otimes y) = h^{1/4} v_0 h^{1/4} \otimes h^{\top -1/4} y h^{\top 1/4}$, and $S_{i/4}$ is exactly the set of such operators.  Similarly, $S_{\tau,i/4} = \{ h^{1/4} v_0^* h^{1/4} \otimes h^{\top -1/4} y h^{\top 1/4} : y \in D(\sigma^\op_{i/4}) \}$, and so it is not the case that $S_{i/4} ^* = S_{\tau,i/4}$, in general.

We now show that $S_{i/4}$ need not be a subset of $\mf n_{\varphi^{-1}}$.  We would need $h^{1/4} v_0 h^{1/4} h^{-1/2} = h^{1/4} v_0 h^{-1/4} \in HS(K)$, to which we give a counter-example.
Let $K = \ell^2(\mathbb Z)$ with $h$ defined by $h(\delta_n) = e^{2n} \delta_n$ for each $n$.  Define $v_0 = \sum_{k>0} k^{-1} \rankone{\delta_0}{\delta_{-k}} \in HS(K)$.  Then $v_0 h^{1/2} = \sum_{k>0} k^{-1} e^{-k} \rankone{\delta_0}{\delta_{-k}}$ is bounded, and $v_0^*h^{1/2} = v_0^* = \sum_{k>0} k^{-1} \rankone{\delta_{-k}}{\delta_0}$ is bounded.  However, $h^{1/4} v_0 h^{-1/4} = \sum_{k>0} k^{-1} e^{k/2} \rankone{\delta_0}{\delta_{-k}}$ is not even bounded, so certainly not Hilbert--Schmidt.
\end{example}

\begin{example}\label{eg:BH_get_HS_QRs}
Suppose we are given a general $V_0$, and consider forming $S$.  Set $\mc S = S\clos^{w^*}$ and $\mc S_0 = \{ a\in\mc B(K) : ah^{-1/2} \in V_0 \} \clos^{w^*}$.  From Lemma~\ref{lem:BH_alpha_nvarphiinv} and the remarks after, we know that $\{ a\otimes x : a\in\mc B(K), x\in\mc B(\overline K), ah^{-1/2} \in V_0 \subseteq HS(K) \} \subseteq S$, and so taking weak$^*$-closures shows that $\mc S_0 \vnten \mc B(\overline K) \subseteq \mc S$.  However, for $\alpha\in S$ and any $\omega\in\mc B(\overline K)_*$, we have that $(\id\otimes\omega)(\alpha) h^{-1/2} \in V_0$ and so $(\id\otimes\omega)(\alpha)\in\mc S_0$.  By weak$^*$-continuity this also holds for $\alpha\in\mc S$, and so using \eqref{eq:Slice_against_BK}, we conclude that $\mc S_0 \vnten \mc B(\overline K) = \mc S$.

Going the other way, given some $\mc S_0$ a weak$^*$-closed subspace of $\mc B(K)$, we have that $\mc S = \mc S_0 \vnten \mc B(\overline K)$ is a weak$^*$-closed $M'$-bimodule.  Set $S = \mc S \cap \mf n_{\varphi^{-1}}$ and $V = S \otimes_{M'} L^2(M') \cong \overline\lin\{ \im\hat\alpha : \alpha\in S \} \subseteq HS(H)$ (compare Section~\ref{sec:self-dual_S}).  As $\mf n_{\varphi^{-1}}$ is an $M'$-bimodule, also $S$ is an $M'$-bimodule, and so $V$ will be an $M'$-bimodule, Lemma~\ref{lem:alpha_hat_bimod}.  So $V = (V_0)_{13}$ for some $V_0\subseteq HS(K)$.  We now see that $V_0 = \lin\{ (\id\otimes\omega)(\alpha)h^{-1/2} : \alpha\in S, \omega\in\mc B(\overline K)_* \}\closhs$.  Given $a\in\mc S_0$ with $ah^{-1/2}\in HS(K)$, and given $x\in\mc B(\overline K)$, we have that $a\otimes x \in \mc S \cap \mf n_{\varphi^{-1}} = S$, and hence $ah^{-1/2} \in V_0$.  So $V_0 = \{ ah^{-1/2} : a\in\mc S_0, ah^{-1/2}\in HS(K)\}\closhs$.

In the special case when $h=1$, starting with $V_0$ we get that $\mc S_0 = V_0\clos^{w^*}$, while starting from $\mc S_0$ we get $V_0 = (\mc S_0 \cap HS(K))\closhs$, hence recovering the theory of Section~\ref{sec:HS}.
\end{example}

\begin{example}\label{eg:S_to_V_not_nice_but_int}
Set $K=\ell^2$ and $h=1$.  By using Example~\ref{eg:1}, this gives an example of a $V$ such that the related $S$ given by Proposition~\ref{prop:V_to_S} has that $S$ is a proper subspace of $\mc S \cap \mf n_{\varphi^{-1}}$ (where $\mc S = \overline{S}^{w^*}$).  Indeed, let $\kappa = (\sum_n n^{-2})^{-1/2}$ and $\kappa_n = \kappa n^{-1}$ so $\sum_n \kappa_n^2 = \kappa^2 \sum_n n^{-2} = 1$.  Then set $V_0 = \{ \sum_n x_n e_{nn} :  \sum_n x_n \kappa_n = 0, x=(x_n)\in\ell^2 \}$, and following Example~\ref{eg:1}, we find that $\mc S = \ell^\infty \vnten\mc B(\overline K)$, and that $\mc S \cap \mf n_{\varphi^{-1}}$ is strictly larger than $S$, because the subspace of $HS(K)$ associated to this is $\ell^2$, thought of as the diagonal of $HS(K)$.

Let $e\in\mc B(K)\vnten\mc B(\overline K)$ be the projection onto $V_0$.  We shall show that $e$ is integrable.  Given $\xi\in K$ consider
\[ \Tr((\omega_\xi\otimes\id)(e)) = \sum_n (\overline\delta_n|(\omega_\xi\otimes\id)(e)\overline\delta_n)
= \sum_n \| e(\xi\otimes\delta_n) \|^2. \]
For a given $n$, we compute $e(\xi\otimes\overline\delta_n)$ which is the orthogonal projection of $\xi\otimes\overline\delta_n$ onto $V_0$, say $e(\xi\otimes\delta_n) = \sum_k x_k e_{kk} \in V_0$.  This is determined by the property that $(\sum y_ke_{kk}|\xi\otimes\overline\delta_n - \sum_k x_ke_{kk})=0$ for each $\sum_k y_k e_{kk} \in V_0$.  As $y_n = -\kappa_n^{-1}\sum_{k\not=n} y_k\kappa_k$, we have
\begin{align*}
\overline{y_n} \xi_n = \sum_k \overline{y_k}x_k  &\implies
- (\xi_n-x_n) \kappa_n^{-1} \sum_{k\not=n} \overline{y_k} \kappa_k = \sum_{k\not=n} \overline{y_k}x_k,
\end{align*}
this now being true for any choice of $(y_k)_{k\not=n}\in\ell^2$.  So $x_k = -\kappa_n^{-1}(\xi_n-x_n)\kappa_k$ for each $k\not=n$, and
\[ x_n = -\kappa_n^{-1} \sum_{k\not=n} x_k\kappa_k
= \kappa_n^{-1} \sum_{k\not=n} \kappa_k^2 \kappa_n^{-1}(\xi_n-x_n)
= \kappa_n^{-2}(\xi_n-x_n) (1-\kappa_n^2)
= (x_n - \xi_n) (1 - \kappa_n^{-2}). \]
So $x_n = \xi_n(1-\kappa_n^2)$ and hence $x_k = -\xi_n\kappa_n\kappa_k$ for $k\not=n$.  Thus
\[ \|e(\xi\otimes\overline\delta_n)\|^2 = \sum_k |x_k|^2 = |\xi_n|^2(1-\kappa_n^2)^2 + \sum_{k\not=n} |\xi_n|^2 \kappa_n^2 \kappa_k^2
= |\xi_n|^2 \big( (1-\kappa_n^2)^2 + \kappa_n^2(1-\kappa_n^2) \big)
= |\xi_n|^2 (1-\kappa_n^2), \]
and so $\Tr((\omega_\xi\otimes\id)(e)) = \sum_n |\xi_n|^2 (1-\kappa_n^2) < \sum_n |\xi_n|^2= \|\xi\|^2$.  It follows that $(\id\otimes\Tr)(e)$ is finite, so $e$ is integrable.

As $V_0$ is diagonal and self-adjoint, clearly $\tau(e)=e$, and so $e$ is not only integrable, but induces a Hilbert space operator, see Section~\ref{sec:hilb_ops}.  Let $x\in\mc B(\ell^2) \setminus HS(\ell^2)$ with $x^*(\kappa) = 0$ (for example, $x = 1-\rankone{\kappa}{\kappa}$).  Then define $\alpha\in\mc B(H)$ by
\[ (\delta_s \otimes \overline\delta_i|\alpha(\delta_t\otimes\overline\delta_j))
= \delta_{i,1} \delta_{s,t} (\delta_s|x\delta_j). \]
Then $(\id\otimes\omega_{\overline\delta_i,\overline\eta})(\alpha) = \delta_{i,1} \sum_{j,s} \overline{\eta_j} (\delta_s|x\delta_j) e_{s,s} = \delta_{i,1} \sum_s (\delta_s|x(\overline\eta)) e_{s,s}$ where we identify $\overline\eta\in\ell^2$ with the pointwise conjugation.  Then $\sum_s \kappa_s (\delta_s|x(\overline\eta)) = (\kappa|x(\overline\eta)) = 0$ showing that $(\id\otimes\omega_{\overline\delta_i,\overline\eta})(\alpha) \in V_0$.  Also
\[ \sum_i \| (\id\otimes\omega_{\overline\delta_i,\overline\eta})(\alpha) \|_{HS}^2
= \sum_{i,s} \delta_{i,1}  |(\delta_s|x(\overline\eta))|^2
= \|x(\overline\eta)\|^2 \leq \|x\|^2 \|\overline\eta\|^2, \]
so by Lemma~\ref{lem:BH_alpha_nvarphiinv} we have that $\alpha\in\mf n_{\varphi^{-1}}$, so $\alpha\in S$.  Now consider $\alpha^*$, which has $(\id\otimes\omega_{\overline\delta_j, \overline\eta})(\alpha^*) = (\id\otimes\omega_{\overline\eta, \overline\delta_j})(\alpha)^* = \overline{\eta_1} \sum_s (\delta_s|x\delta_j) e_{ss}$, and so
\[ \sum_j \| (\id\otimes\omega_{\overline\delta_j, \overline\eta})(\alpha^*) \|_{HS}^2
= |\eta_1|^2 \sum_{j,s} |(\delta_s|x\delta_j)|^2 
= |\eta_1|^2 \|x\|_{HS}^2, \]
which by assumption is not finite, when $\eta_1\not=0$.  This shows that the intersection with $\mf n_{\varphi^{-1}}$ in Proposition~\ref{prop:flip_to_twisted_adj} is necessary.
\end{example}

We now consider the possibility of a map $\theta \colon \mc B(H) \to \mc B(H)$ as in Section~\ref{sec:cb_projs}.  At minimum, $\theta$ should be bounded, normal and an $M'$-bimodule map, so $\theta((1\otimes a)x(1\otimes b)) = (1\otimes a) \theta(x) (1\otimes b)$ for $a,b\in\mc B(\overline K)$.  Taking $a,b$ to be rank-one readily leads to the conclusion that there is a bounded normal map $\theta_0 \colon \mc B(K) \to \mc B(K)$ with $\theta(x\otimes y) = \theta_0(x) \otimes y$ for each $x\in\mc B(K), y\in\mc B(\overline K)$.   We would further like a condition of the form $\theta(\alpha)\floatinghat = e \hat\alpha$ for ``suitable'' $\alpha$.  With Lemma~\ref{lem:BH_alpha_nvarphiinv} in mind, this is equivalent to $e (ah^{-1/2}) = \theta_0(a)h^{-1/2} \in HS(K)$ for each $a\in B(K)$ with $ah^{-1/2}\in HS(K) = K\otimes\overline K$.  As $\theta_0$ is normal, this will then determine $\theta_0$ on all of $\mc B(K)$.

\begin{example}\label{eg:no_theta_A}
We consider Example~\ref{eg:BH_int_coint} again, so $V_0 = \mathbb C v_0$ and hence $e = \rankone{v_0}{v_0}$.  Then $(v_0|ah^{-1/2})_{HS} v_0 = \theta_0(a)h^{-1/2}$ for each $a\in\mc B(K)$ with $ah^{-1/2}\in HS(K)$.  So for $\xi\in K, \eta\in D(h^{-1/2})$, set $a=\rankone{\xi}{\eta}$ to see that $(h^{-1/2}\eta|v_0^*(\xi)) v_0 = \theta_0(\rankone{\xi}{\eta}) h^{-1/2}$.  This implies that $v_0 h^{1/2}\in\mc B(K)$, that is, $e$ is integrable.  Then $\theta_0$ being bounded implies that $\eta \mapsto v_0h^{-1/2}\eta$ is bounded, so $v_0 h^{-1/2} \in \mc B(K)$, equivalently, $h^{-1/2}v_0^*\in\mc B(K)$, Lemma~\ref{lem:domain_S_Tr_h}.  In this case, $\theta_0(a) = \Tr(h^{-1/2}v_0^* a) v_0h^{1/2}$ for each finite-rank $a$, and so $\theta_0$ being bounded is equivalent to $h^{-1/2}v_0^*$ being trace-class (or $v_0h^{-1/2}$ trace-class).  When this holds, $\theta_0$ obviously extends to $\mc B(K)$ with the same formula.  Thus, even for integrable $e$, that $\theta_0$ exists is a much stronger condition.  Notice that this remains true even if $h=1$.  However, notice that if $\varphi^{-1}$ is bounded, that is, $h^{-1/2} \in HS(K)$, then when $e$ is integrable, $\theta_0$ will be bounded.
\end{example}

\begin{example}\label{eg:no_theta_A_even_varphiinv_bdd}
Suppose that $\varphi^{-1}$ is bounded, so $h^{-1}$ is trace-class, and we can find an orthonormal basis $(e_n)$ of $K$ with $h^{-1}(e_n) = h_n^{-1} e_n$ for some summable sequence $(h_n^{-1})$ in $(0,1)$.  Let $(f_{i,j})$ be some $0,1$-valued matrix, so there is a projection $e\in\mc B(HS(K))$ with $e(e_i \otimes \overline{e_j}) = f_{i,j} e_i \otimes \overline{e_j}$.  Suppose there is a bounded $\theta_0$ with $e(ah^{-1/2}) = \theta_0(a)h^{-1/2}$ for each $a\in\mc B(K)$ (recalling that $h^{-1/2}\in HS(K)$ here).  Then
\[ f_{i,j} \rankone{e_i}{e_j} = e(\rankone{e_i}{e_j}h^{-1/2}) h_j^{1/2} = \theta_0(\rankone{e_i}{e_j}) h^{-1/2} h_j^{1/2}
\implies \theta_0(\rankone{e_i}{e_j}) = f_{i,j} \rankone{e_i}{e_j}. \]
Thus $\theta_0$ is Schur multiplier with symbol $f$, compare Section~\ref{sec:ellinfty}, and notice that $h$ plays no role here.  In particular, the examples given in Section~\ref{sec:ellinfty} now provide examples of $e$ such that $\theta_0$ cannot exist, independent of the choice of $h$.
\end{example}

\begin{remark}\label{rem:no_theta_A_matrix}
Using the idea of Example~\ref{eg:matrix_sd_notint}, we could readily adapt this example to the case of $M = \prod_i M_{n(i)}$ when the dimensions $(n(i))$ are unbounded.
\end{remark}

\appendix

\section{Weights}\label{sec:weights}

We prove some technical results about (always assumed nfs) weights on von Neumann algebras.  These results are surely known to experts, but we have not found exact references.  We also state a few results which are quoted from the literature, for ease of reference.  Let $M$ be a von Neumann algebra and let $\varphi$ be a nfs weight on $M$.

We recall the meaning of \emph{Tomita algebra} from \cite[Section~VI.2]{TakesakiII} or \cite[Sections~10.20, 10.21]{StratilaZsido2nd}.  An element $x\in M$ is \emph{analytic} when the one-parameter map $\mathbb R \ni t \mapsto \sigma_t(x)$ can be extended to an analytic function $\mathbb C \to M$.  Recall the \emph{smearing} technique: for $x\in M$ define
\[ x_n = \frac{n}{\sqrt\pi} \int_{\mathbb R} \exp(-n^2t^2) \sigma_t(x) \ dt \qquad (n>0). \]
Then $x_n$ is analytic, and $x_n\to x$ weak$^*$ as $n\to \infty$.  This technique is used in, for example, \cite[Section~10.21]{StratilaZsido2nd} and \cite[Section~2.16]{Stratila_ModTheoryBook2}.

Let us make a few comments about analytic extensions of $(\sigma_t)$ (or more general one-parameter $*$-automorphism groups).  For $z\in\mathbb C$ define the horizontal strip $S(z) = \{ w\in\mathbb C : |\im w| \leq |\im z| \}$.  Then $x\in D(\sigma_z)$ when there is a $\sigma$-weakly continuous function $F\colon S(z) \to M$ which is analytic on the interior of $S(z)$, and with $F(t) = \sigma_t(x)$ for each $t\in\mathbb R$.  Such an $F$ is necessarily unique if it exists; we set $\sigma_z(x) = F(z)$.  As $\sigma_t(x) = \nabla^{it} x \nabla^{-it}$ for $t\in\mathbb R$, by \cite[Proposition~9.24]{StratilaZsido2nd}, see also \cite[Theorem~6.2]{CZ_Analytic_Generators}, we have the alternative description of $D(\sigma_z)$ as those $x$ for which $D(\nabla^{iz} x \nabla^{-iz}) = D(\nabla^{-iz})$ (it suffices for $D(\nabla^{iz} x \nabla^{-iz})$ to be a core for $\nabla^{-iz}$) and $\nabla^{iz} x \nabla^{-iz}$ is bounded, the continuous extension being $\sigma_z(x)$.

\begin{lemma}\label{lem:sigma_nabla_commutation}
Let $x \in D(\sigma_z)$.  Then $\nabla^{-iz}\sigma_z(x) \supseteq x \nabla^{-iz}$.
\end{lemma}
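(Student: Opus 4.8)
The statement to prove is Lemma~\ref{lem:sigma_nabla_commutation}: if $x \in D(\sigma_z)$, then the (unbounded) operators satisfy the inclusion $\nabla^{-iz}\sigma_z(x) \supseteq x\nabla^{-iz}$. Recall that this inclusion means precisely: whenever $\xi \in D(\nabla^{-iz})$, we have $\nabla^{-iz}\xi \in D(\sigma_z(x)) = H$ (so this is automatic, as $\sigma_z(x)$ is bounded) and moreover $x\xi \in D(\nabla^{-iz})$ with $\nabla^{-iz}(x\xi) = \sigma_z(x)(\nabla^{-iz}\xi)$. Since $\sigma_z(x)$ is a bounded everywhere-defined operator, the content is really the assertion that $x$ maps $D(\nabla^{-iz})$ into $D(\nabla^{-iz})$, together with the intertwining identity on that domain.

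The plan is to extract this directly from the characterisation of $D(\sigma_z)$ recalled just before the lemma statement. By that characterisation (citing \cite[Proposition~9.24]{StratilaZsido2nd} / \cite[Theorem~6.2]{CZ_Analytic_Generators}), the hypothesis $x \in D(\sigma_z)$ is equivalent to the statement that $D(\nabla^{iz}x\nabla^{-iz}) = D(\nabla^{-iz})$ and that $\nabla^{iz}x\nabla^{-iz}$ is bounded with continuous extension equal to $\sigma_z(x)$. First I would spell out what the domain equality $D(\nabla^{iz}x\nabla^{-iz}) = D(\nabla^{-iz})$ says: for $\xi \in D(\nabla^{-iz})$, the vector $\nabla^{-iz}\xi$ lies in $D(x)=H$ trivially, and then $x\nabla^{-iz}\xi$ must lie in $D(\nabla^{iz})$. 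Since $\nabla^{iz}$ and $\nabla^{-iz}$ are mutually inverse closed operators with $D(\nabla^{iz}) = \operatorname{ran}(\nabla^{-iz})$ being dense, this membership $x\nabla^{-iz}\xi \in D(\nabla^{iz})$ is exactly the kind of condition I want, but with the roles of $\nabla^{iz}$ and $\nabla^{-iz}$ apparently swapped relative to the target inclusion.

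The key manoeuvre is therefore to convert the identity $\nabla^{iz}x\nabla^{-iz} = \sigma_z(x)$ (valid as bounded operators after continuous extension, i.e.\ on $D(\nabla^{-iz})$) into the claimed inclusion by composing with $\nabla^{-iz}$ on the left. Concretely, for $\xi \in D(\nabla^{-iz})$ we have $x\nabla^{-iz}\xi \in D(\nabla^{iz})$ and $\nabla^{iz}(x\nabla^{-iz}\xi) = \sigma_z(x)\xi$. Applying the bounded (on the relevant range) operator $\nabla^{-iz}$ — more precisely, using that $\nabla^{iz}$ is injective with $\nabla^{-iz} = (\nabla^{iz})^{-1}$ as closed operators — gives $x\nabla^{-iz}\xi = \nabla^{-iz}\sigma_z(x)\xi$, provided $\sigma_z(x)\xi \in D(\nabla^{-iz})$. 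I would verify the latter by noting that $\nabla^{iz}(x\nabla^{-iz}\xi) = \sigma_z(x)\xi$ already exhibits $\sigma_z(x)\xi$ as lying in $\operatorname{ran}(\nabla^{iz}) = D(\nabla^{-iz})$, with $\nabla^{-iz}\sigma_z(x)\xi = x\nabla^{-iz}\xi$. This is exactly the desired inclusion $\nabla^{-iz}\sigma_z(x) \supseteq x\nabla^{-iz}$, since every $\eta \in D(x\nabla^{-iz})$ is of the form $\eta = \xi$ with $\xi \in D(\nabla^{-iz})$.

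The main obstacle — really the only delicate point — is bookkeeping with the domains and the injectivity/inverse relationship between $\nabla^{iz}$ and $\nabla^{-iz}$ as closed unbounded operators, making sure that "applying $\nabla^{-iz}$" is legitimate and that one does not silently assume vectors lie in domains they need not. I would handle this by working entirely on $D(\nabla^{-iz})$ and invoking the standard fact that $\nabla^{iz}$ is a closed injective operator whose inverse is $\nabla^{-iz}$ (both being functions of the positive self-adjoint $\nabla$ via Borel functional calculus), so that $\nabla^{iz}\eta = \zeta$ with $\eta \in D(\nabla^{iz})$ is equivalent to $\zeta \in D(\nabla^{-iz})$ and $\eta = \nabla^{-iz}\zeta$. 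Everything else is a short chain of substitutions, so I would keep the written proof to a few lines.
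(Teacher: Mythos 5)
Your proposal is correct and takes essentially the same route as the paper's proof: both invoke the stated characterisation of $D(\sigma_z)$ (that $\nabla^{iz}x\nabla^{-iz}$ is bounded with domain $D(\nabla^{-iz})$ and extension $\sigma_z(x)$), and then apply $\nabla^{-iz}$ as the inverse of $\nabla^{iz}$ to obtain $\nabla^{-iz}\sigma_z(x)\xi = x\nabla^{-iz}\xi$ for $\xi\in D(\nabla^{-iz})$. The only difference is that you spell out the domain/range bookkeeping (that $\sigma_z(x)\xi=\nabla^{iz}(x\nabla^{-iz}\xi)$ exhibits $\sigma_z(x)\xi\in\operatorname{ran}(\nabla^{iz})=D(\nabla^{-iz})$), which the paper leaves implicit.
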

\begin{proof}
As just stated, $D(\nabla^{iz} x \nabla^{-iz}) = D(\nabla^{-iz})$ and $\nabla^{iz} x \nabla^{-iz}$ is bounded.  So for $\xi \in D(\nabla^{-iz})$ we have that $\sigma_z(x)\xi = \nabla^{iz} x \nabla^{-iz}\xi$ and hence $\nabla^{-iz}\sigma_z(x)\xi = x \nabla^{-iz}\xi$.  Thus $\nabla^{-iz}\sigma_z(x) \supseteq x \nabla^{-iz}$.
\end{proof}

\begin{lemma}[{\cite[Lemma~VIII.3.18]{TakesakiII}}]
\label{lem:action_J}
We have that $a\in D(\sigma_{i/2})$ if and only if $\varphi(a^*xa) \leq \|\sigma_{i/2}(a)\| \varphi(x)$ for each $x\in M_+$, and when this holds, $\Lambda(xa) = J\sigma_{i/2}(a)^*J \Lambda(x)$ for $x\in \mf n_\varphi$.
\end{lemma}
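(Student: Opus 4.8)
This is \cite[Lemma~VIII.3.18]{TakesakiII}; I sketch how I would reconstruct it. The plan is to realise right multiplication by $a$ as an operator on $L^2(M)$ and, when it is bounded, to identify it with the element $J\sigma_{i/2}(a)^*J$ of $M'$. First I would define $R_a^0$ on the dense subspace $\Lambda(\mf n_\varphi)$ by $R_a^0\Lambda(x) = \Lambda(xa)$; this is well defined because $\mf n_\varphi$ is a left ideal, so $xa\in\mf n_\varphi$. Since $\|\Lambda(xa)\|^2 = \varphi(a^*x^*xa)$ and $\|\Lambda(x)\|^2 = \varphi(x^*x)$, boundedness of $R_a^0$ is equivalent to an estimate of the form $\varphi(a^*x^*xa)\le C\,\varphi(x^*x)$ for all $x\in\mf n_\varphi$; as the elements $x^*x$ span a subset of $M_+$ which is dense in the relevant sense and $\varphi$ is normal, this is in turn equivalent to the displayed estimate $\varphi(a^*xa)\le C\,\varphi(x)$ for all $x\in M_+$. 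The precise value of $C$ is then read off from $\|R_a^0\| = \|J\sigma_{i/2}(a)^*J\| = \|\sigma_{i/2}(a)\|$ once the operator identity below is in place.

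The heart of the matter is that operator identity. For $a$ entire analytic for $(\sigma_t)$ I would verify $R_a^0 = J\sigma_{i/2}(a)^*J$ directly on the left Tomita algebra, which is a core. Using $S = J\nabla^{1/2}$ and the analytic-generator description recalled before Lemma~\ref{lem:sigma_nabla_commutation} (so $\sigma_{i/2}(a) = \nabla^{-1/2}a\nabla^{1/2}$), one has $J\sigma_{i/2}(a)^*J = J\nabla^{1/2}a^*\nabla^{-1/2}J$; applying this to $\Lambda(x)$ and exploiting $S\Lambda(x^*)=\Lambda(x)$ together with the KMS/modular condition for the analytic element $a$ recovers $\Lambda(xa)$. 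A general $a\in D(\sigma_{i/2})$ is then handled by the smearing technique recalled above: the analytic approximants $a_n\to a$ give $R_{a_n}^0\to R_a^0$ on $\Lambda(\mf n_\varphi)$, while $\sigma_{i/2}(a_n)\to\sigma_{i/2}(a)$ in the sense afforded by the bounded-ness of $\nabla^{-1/2}a\nabla^{1/2}$, yielding $R_a^0 = J\sigma_{i/2}(a)^*J\in M'$, which is in particular bounded.

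For the converse, if the estimate holds then $R_a^0$ is bounded; its closure $R_a$ commutes with the left regular representation of $M$ (left and right multiplication commute on $\mf n_\varphi$), so $R_a\in M'$ and $JR_aJ\in M$. A computation on the core identifies $JR_aJ$ with the closure of $\nabla^{1/2}a^*\nabla^{-1/2} = \sigma_{i/2}(a)^*$, forcing $\nabla^{-1/2}a\nabla^{1/2}$ to be bounded; by the analytic-generator characterisation of $D(\sigma_{i/2})$ this gives $a\in D(\sigma_{i/2})$, and then the first part yields the formula. The main obstacle throughout is purely the unboundedness of $\nabla$: every manipulation of $J\nabla^{1/2}a^*\nabla^{-1/2}J$ must be justified on a common core, and the limiting argument controlled, which is exactly where the Tomita--Takesaki relations ($S=J\nabla^{1/2}$, the KMS condition) and the description of $D(\sigma_{i/2})$ do the work; the remaining ingredients are bookkeeping with the left ideal $\mf n_\varphi$ and normality of $\varphi$.
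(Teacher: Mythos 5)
Your reconstruction has a genuine gap, and it occurs at the very first step. You define $R_a^0$ on all of $\Lambda(\mf n_\varphi)$ by $R_a^0\Lambda(x)=\Lambda(xa)$ and justify $xa\in\mf n_\varphi$ ``because $\mf n_\varphi$ is a left ideal''. But a left ideal is stable under \emph{left} multiplication ($yx\in\mf n_\varphi$ for $y\in M$, $x\in\mf n_\varphi$), not right multiplication, and $\mf n_\varphi$ is not a right ideal for a non-tracial weight. Concretely, take $M=\mc B(K)$ and $\varphi=\Tr_h$ with $h$ unbounded: by Proposition~\ref{prop:Trh}, $\mf n_\varphi=\{x: xh^{1/2}\text{ extends to an element of } HS(K)\}$. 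Pick unit vectors $\eta\in D(h^{1/2})$ and $\zeta\notin D(h^{1/2})$, and set $x=\rankone{\xi}{\eta}$, $a=\rankone{\eta}{\zeta}$; then $xh^{1/2}=\rankone{\xi}{h^{1/2}\eta}$ is Hilbert--Schmidt, so $x\in\mf n_\varphi$, but $xa=\rankone{\xi}{\zeta}$ has $(xa)h^{1/2}$ unbounded, so $xa\notin\mf n_\varphi$. The inclusion $\mf n_\varphi\,a\subseteq\mf n_\varphi$ for $a\in D(\sigma_{i/2})$ is itself (half of) the content of the lemma, so assuming it at the outset makes the forward direction circular: the operator you intend to compare with $J\sigma_{i/2}(a)^*J$ is not yet defined on $\Lambda(\mf n_\varphi)$. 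The same issue resurfaces inside your core computation, where applying $S$ to $\Lambda(a^*x^*)$ to ``recover $\Lambda(xa)$'' presupposes $(a^*x^*)^*=xa\in\mf n_\varphi$.

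The repair is to reverse the logical order, which is how the argument in the cited source is organised (note the paper itself offers no proof of this lemma; it is quoted verbatim from Takesaki). Either first prove the identity for $a$ analytic and $x$ ranging over the Tomita algebra, where the manipulations with $S=J\nabla^{1/2}$ can be justified on a common core and membership $xa\in\mf n_\varphi$ is a \emph{conclusion} of the computation, and then extend to general $x\in\mf n_\varphi$ and $a\in D(\sigma_{i/2})$ using smearing together with the closedness of $\Lambda$ (its graph is $\sigma$-strong--norm closed); or first establish the estimate $\varphi(a^*xa)\le C\,\varphi(x)$ on $M_+$, from which $xa\in\mf n_\varphi$ follows for $x\in\mf n_\varphi$ since $\varphi(a^*x^*xa)\le C\varphi(x^*x)<\infty$, and only then identify the resulting bounded operator with $J\sigma_{i/2}(a)^*J$. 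Your converse direction is sound in outline (boundedness gives an element of $M'$, and the core computation plus the analytic-generator description of $D(\sigma_{i/2})$ recalled before Lemma~\ref{lem:sigma_nabla_commutation} does yield $a\in D(\sigma_{i/2})$), as is your reduction of the positive-element estimate to the quadratic one via $x^{1/2}\in\mf n_\varphi$ when $\varphi(x)<\infty$. One small further point: the operator identity gives $\|\Lambda(xa)\|\le\|\sigma_{i/2}(a)\|\,\|\Lambda(x)\|$, hence the natural constant in the estimate is $\|\sigma_{i/2}(a)\|^2$ rather than $\|\sigma_{i/2}(a)\|$; your ``read off the constant'' step should flag this.
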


\begin{lemma}[{\cite[Proposition~2.17]{Stratila_ModTheoryBook2}}]
\label{lem:mod_aut_usage}
Let $z\in\mathbb C$.  For $x\in\mf n_\varphi^* \cap D(\sigma_{z-i})$ with $\sigma_{z-i}(x)\in\mf n_\varphi$, and $y\in\mf n_\varphi\cap D(\sigma_z)$ with $\sigma_z(y)\in \mf n_\varphi^*$, we have $\varphi(xy) = \varphi(\sigma_z(y) \sigma_{z-i}(x))$.  Taking $z=i/2$ shows that $\varphi(xy) = \varphi(\sigma_{i/2}(y) \sigma_{-i/2}(x))$ for suitable $x,y$.  Taking $z=i$ shows that $\varphi(xy) = \sigma(\sigma_i(y) x)$ for suitable $x,y$.
\end{lemma}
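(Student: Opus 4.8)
The plan is to reduce the two displayed special cases to the general identity and then to prove the latter. Setting $z=i/2$ gives the first display, and $z=i$ (so that $\sigma_{z-i}=\sigma_0=\id$) gives the second; note that the ``$\sigma$'' in the last display is a typo for ``$\varphi$''. For the general statement I would first record that both sides are well-defined. Since $x\in\mf n_\varphi^*$ and $y\in\mf n_\varphi$ we have $xy\in\mf n_\varphi^*\mf n_\varphi\subseteq\mf m_\varphi$ and $\varphi(xy)=(\Lambda(x^*)|\Lambda(y))$; likewise $\sigma_z(y)\in\mf n_\varphi^*$ and $\sigma_{z-i}(x)\in\mf n_\varphi$ give $\sigma_z(y)\sigma_{z-i}(x)\in\mf m_\varphi$ and $\varphi(\sigma_z(y)\sigma_{z-i}(x))=(\Lambda(\sigma_z(y)^*)|\Lambda(\sigma_{z-i}(x)))$. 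The task is then to identify these two inner products.

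Next I would prove the identity for \emph{entire analytic} $x,y$ still subject to the ideal conditions above. Such elements lie in every $D(\sigma_w)$, in particular in $D(\sigma_{-i})$, and $\sigma_z$ acts as a homomorphism on the Tomita algebra, so $\sigma_z(y)\sigma_{z-i}(x)=\sigma_z\bigl(y\,\sigma_{-i}(x)\bigr)$. Combining the basic KMS identity $\varphi(xy)=\varphi(y\,\sigma_{-i}(x))$ with the modular invariance $\varphi\circ\sigma_z=\varphi$ (the analytic continuation of $\varphi\circ\sigma_t=\varphi$) — both standard modular theory — then yields $\varphi(\sigma_z(y)\sigma_{z-i}(x))=\varphi(y\,\sigma_{-i}(x))=\varphi(xy)$ in this entire case.

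The general case I would reach by smearing. Replacing $x,y$ by their smeared approximants $x_n,y_n$ (which are entire analytic, with $x_n\in\mf n_\varphi^*$ and $y_n\in\mf n_\varphi$), the essential point is that smearing is compatible with the partially defined maps $\sigma_z$ and $\sigma_{z-i}$: because $x\in D(\sigma_{z-i})$ one has $\sigma_{z-i}(\sigma_t(x))=\sigma_t(\sigma_{z-i}(x))$ for real $t$, whence $\sigma_{z-i}(x_n)=\bigl(\sigma_{z-i}(x)\bigr)_n$ is just the smeared version of $\sigma_{z-i}(x)\in\mf n_\varphi$, and similarly $\sigma_z(y_n)^*=\bigl(\sigma_z(y)^*\bigr)_n$, while $x_n^*=(x^*)_n$ and $y_n=(y)_n$. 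Using $\Lambda(\sigma_t(a))=\nabla^{it}\Lambda(a)$ together with the fact that the Gaussian kernels form an approximate identity for the unitary group $(\nabla^{it})$, each of the four GNS vectors $\Lambda(x_n^*)$, $\Lambda(y_n)$, $\Lambda(\sigma_z(y_n)^*)$, $\Lambda(\sigma_{z-i}(x_n))$ converges \emph{in norm} to its un-smeared counterpart, so both inner products converge and the identity for $x_n,y_n$ passes to the limit. The main obstacle is precisely this limiting step: the hypotheses are deliberately minimal — they place $x$ only in $D(\sigma_{z-i})$, not in $D(\sigma_{-i})$, and do not produce $\Lambda(x)$ at all — so one cannot manipulate the two inner products directly in $L^2(M)$, and the argument hinges on the compatibility of smearing with the analytic continuations (the mechanism recorded before Corollary~2 of \cite[Section~10.21]{StratilaZsido2nd}) and on the resulting norm-convergence of the GNS vectors. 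This reproves, by a route tailored to our conventions, the cited \cite[Proposition~2.17]{Stratila_ModTheoryBook2}.
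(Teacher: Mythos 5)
The paper offers no proof of this lemma to compare against: it is one of the results in Appendix~A quoted verbatim from the literature (\cite[Proposition~2.17]{Stratila_ModTheoryBook2}), so your argument must stand on its own. Most of it does: the reduction of the two special cases, the typo remark, the identifications $\varphi(xy)=(\Lambda(x^*)|\Lambda(y))$ and $\varphi(\sigma_z(y)\sigma_{z-i}(x))=(\Lambda(\sigma_z(y)^*)|\Lambda(\sigma_{z-i}(x)))$, and the whole smearing/limit machinery (the compatibilities $\sigma_{z-i}(x_n)=(\sigma_{z-i}(x))_n$, $\sigma_z(y_n)^*=(\sigma_z(y)^*)_n$, and the norm convergence of the four GNS vectors) are all correct. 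The genuine gap is in the ``entire analytic case'', which is exactly where you claim there is none; your closing remark that the ``main obstacle'' is the limiting step has it backwards. For $x,y$ that are merely entire analytic and satisfy the lemma's ideal conditions, the two facts you invoke as standard are not available: entire analyticity puts an element in every $D(\sigma_w)$ but does \emph{not} propagate ideal membership to complex translates. Nothing in the hypotheses places $\sigma_{-i}(x)$ in $\mf n_\varphi$, nor $y$ in $\mf n_\varphi^*$, so the middle term $\varphi(y\,\sigma_{-i}(x))$ of your chain need not even be defined ($\varphi$ is a weight, defined only on $\mf m_\varphi$, and $y\,\sigma_{-i}(x)$ need not lie there); for the same reason ``$\varphi\circ\sigma_z=\varphi$'' has no meaning on such elements, since it can only be formulated when the relevant translates all stay in $\mf m_\varphi$. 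As the entire content of the lemma is that the identity holds under precisely these minimal, asymmetric hypotheses, invoking the KMS identity and complex invariance there is close to assuming what is to be proved.

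The gap is fixable by an observation about your own approximants that you never make: smearing upgrades the two pointwise ideal conditions into ideal conditions at \emph{every} complex point. For $a\in\mf n_\varphi$ one has $\sigma_w(a_n)=\tfrac{n}{\sqrt\pi}\int e^{-n^2(t-w)^2}\sigma_t(a)\rd t$, an absolutely convergent ($\sigma$-weak) integral of elements of $\mf n_\varphi$ with $\|\Lambda(\sigma_t(a))\|=\|\Lambda(a)\|$ constant; closedness of $\Lambda$ then gives $\sigma_w(a_n)\in\mf n_\varphi$ with $\Lambda(\sigma_w(a_n))=\tfrac{n}{\sqrt\pi}\int e^{-n^2(t-w)^2}\nabla^{it}\Lambda(a)\rd t$ for every $w\in\mathbb C$. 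Applying this to $x^*\in\mf n_\varphi$, to $\sigma_{z-i}(x)\in\mf n_\varphi$ (whose smearing is $\sigma_{z-i}(x_n)$, so that $\sigma_w(x_n)=\sigma_{w-z+i}(\sigma_{z-i}(x_n))\in\mf n_\varphi$ for all $w$), to $y$, and to $\sigma_z(y)^*$ shows that every translate of $x_n$ and of $y_n$ lies in $\mf n_\varphi\cap\mf n_\varphi^*$: the approximants are honest Tomita algebra elements, not just entire analytic ones. For those, your chain becomes legitimate --- or one can argue directly, via $\varphi(\sigma_z(y_n)\sigma_{z-i}(x_n))=(\nabla^{i\bar z}\Lambda(y_n^*)\,|\,\nabla^{i(z-i)}\Lambda(x_n))=(\Lambda(y_n^*)|\nabla\Lambda(x_n))=(\Lambda(x_n^*)|\Lambda(y_n))=\varphi(x_ny_n)$, using $(\nabla^{i\bar z})^*=\nabla^{-iz}$ and $\nabla=FS$ --- and then your limiting argument, which is sound, completes the proof.
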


\begin{lemma}\label{lem:form_J}
Let $x\in \mf n_\varphi \cap D(\sigma_{i/2})$ with $\sigma_{i/2}(x) \in \mf n_\varphi^*$.  Then $J\Lambda(x) = \Lambda(\sigma_{i/2}(x)^*)$.
\end{lemma}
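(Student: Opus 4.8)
The plan is to establish the identity $J\Lambda(x) = \Lambda(\sigma_{i/2}(x)^*)$ by reducing it to Lemma~\ref{lem:action_J}, which is the natural tool already available. The key observation is that Lemma~\ref{lem:action_J} tells us how $J\sigma_{i/2}(a)^*J$ acts on $\Lambda(y)$ for $y\in\mf n_\varphi$; I want to exploit this with a well-chosen element and then unwind the appearance of $J$ on the correct side. Recall the standard Tomita--Takesaki relation $S\Lambda(x) = \Lambda(x^*)$ for $x\in\mf n_\varphi\cap\mf n_\varphi^*$, together with the polar decomposition $S = J\nabla^{1/2}$, so that $J\nabla^{1/2}\Lambda(x) = \Lambda(x^*)$. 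Since $\nabla^{1/2}\Lambda(x) = \Lambda(\sigma_{-i/2}(x))$ (the defining relation $\nabla^{it}\Lambda(x)=\Lambda(\sigma_t(x))$ analytically continued), I expect the cleanest route is to verify the hypotheses under which these analytic-continuation identities are valid for the given $x$, and then read off the claim.

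**First I would** record precisely what the hypotheses give: $x\in\mf n_\varphi$ ensures $\Lambda(x)$ is defined, while $x\in D(\sigma_{i/2})$ with $\sigma_{i/2}(x)\in\mf n_\varphi^*$ guarantees that the analytically continued vector $\nabla^{1/2}\Lambda(x)$ lies in the domain we need and equals $\Lambda(\sigma_{-i/2}(x))$ suitably interpreted. The subtle point is matching signs: the paper's convention (stated in Section~\ref{sec:ip}) is $\nabla^z\Lambda(a)=\Lambda(\sigma_{-iz}(a))$, so $\nabla^{1/2}\Lambda(x)=\Lambda(\sigma_{-i/2}(x))$ with $z=1/2$. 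I would then compute
\begin{align*}
J\Lambda(x) &= J\nabla^{-1/2}\nabla^{1/2}\Lambda(x) = JF^{-1}\cdots
\end{align*}
but a more direct approach avoids $\nabla^{-1/2}$ on $\Lambda(x)$ (whose domain membership is not assumed): instead apply $S=J\nabla^{1/2}$ to $\Lambda(\sigma_{i/2}(x)^*)$, using that $\sigma_{i/2}(x)^*\in\mf n_\varphi$ and is in the domain of $\sigma_{-i/2}$ with $\sigma_{-i/2}(\sigma_{i/2}(x)^*) = (\sigma_{-i/2}\sigma_{-i/2}(x))^{*}$ computed via $\sigma_{-i/2}(y^*)=\sigma_{i/2}(y)^*$.

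**The cleanest argument** is therefore: set $y=\sigma_{i/2}(x)^*$, so $y\in\mf n_\varphi$ by hypothesis. One checks $y\in\mf n_\varphi\cap\mf n_\varphi^*$ and $y^* = \sigma_{i/2}(x)\in D(\sigma_{-i/2})$ with $\sigma_{-i/2}(y^*)=\sigma_{-i/2}(\sigma_{i/2}(x))=x$. Then $S\Lambda(y)=\Lambda(y^*)$, i.e. $J\nabla^{1/2}\Lambda(y)=\Lambda(\sigma_{i/2}(x))$. Since $\nabla^{1/2}\Lambda(y)=\Lambda(\sigma_{-i/2}(y))$ and $\sigma_{-i/2}(y)=\sigma_{-i/2}(\sigma_{i/2}(x)^*)=\sigma_{i/2}(\sigma_{i/2}(x))^* = \sigma_i(x)^*$, some care with the modular chain is needed; an alternative that sidesteps tracking $\sigma_i$ is to apply $J$ to both sides of $\nabla^{1/2}\Lambda(x)=\Lambda(\sigma_{-i/2}(x))$ and use $S\Lambda(x)=\Lambda(x^*)$ together with $x^*=\sigma_{i/2}(\sigma_{-i/2}(x))^*$. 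I would present whichever chain makes the domains manifestly correct.

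**The main obstacle** is not the algebra but the careful bookkeeping of \emph{domains of unbounded operators}: the identity $\nabla^{1/2}\Lambda(x)=\Lambda(\sigma_{-i/2}(x))$ requires justifying that $\Lambda(x)\in D(\nabla^{1/2})$, which is exactly where the hypotheses $x\in D(\sigma_{i/2})$, $\sigma_{i/2}(x)\in\mf n_\varphi^*$ must be used, invoking the analytic-generator characterization recalled before Lemma~\ref{lem:sigma_nabla_commutation} and the relation $D(\nabla^{1/2})=D(S)$. I expect that the honest proof simply cites Lemma~\ref{lem:action_J} with a suitable substitution, or the standard fact that for $x$ analytic with $\sigma_{i/2}(x)\in\mf n_\varphi^*$ one has $J\Lambda(x)=\nabla^{1/2}\Lambda(\sigma_{i/2}(x)^*)$ collapsing to $\Lambda(\sigma_{i/2}(x)^*)$ once $F=J\nabla^{-1/2}$ is applied; making this rigorous is routine given the Tomita algebra machinery, so the statement follows, with the domain verification being the only genuinely technical step.
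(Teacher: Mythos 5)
There is a genuine gap, and it is not the "routine bookkeeping" you defer to the end: it is the whole content of the lemma. The hypotheses are deliberately one-sided — you know $x\in\mf n_\varphi$ but \emph{not} $x\in\mf n_\varphi^*$, you know $x\in D(\sigma_{i/2})$ but \emph{not} $x\in D(\sigma_{-i/2})$, and you know $\sigma_{i/2}(x)\in\mf n_\varphi^*$ but \emph{not} $\sigma_{i/2}(x)\in\mf n_\varphi$. Every route you sketch violates this asymmetry. Concretely: (i) your claim that $y=\sigma_{i/2}(x)^*$ lies in $\mf n_\varphi\cap\mf n_\varphi^*$ is unjustified, since $y\in\mf n_\varphi^*$ would mean $\sigma_{i/2}(x)\in\mf n_\varphi$, which is not assumed; hence $S\Lambda(y)=\Lambda(y^*)$ cannot be invoked, and indeed the vector $\Lambda(y^*)=\Lambda(\sigma_{i/2}(x))$ need not even be defined. (ii) Your alternative, applying $J$ to $\nabla^{1/2}\Lambda(x)=\Lambda(\sigma_{-i/2}(x))$, uses an identity that requires $x\in D(\sigma_{-i/2})$ with $\sigma_{-i/2}(x)\in\mf n_\varphi$ — neither is assumed; and with the paper's sign convention the hypothesis $x\in D(\sigma_{i/2})$ would instead point to $\nabla^{-1/2}\Lambda(x)=\Lambda(\sigma_{i/2}(x))$, which again needs the unavailable membership $\sigma_{i/2}(x)\in\mf n_\varphi$. (iii) Writing $S\Lambda(x)=\Lambda(x^*)$ is meaningless here because $x\notin\mf n_\varphi^*$ in general. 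No rearrangement of $S$, $F$, $\nabla^{\pm1/2}$ will manufacture the missing domain memberships from the stated hypotheses.

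The paper's proof succeeds precisely by never applying an unbounded operator to $\Lambda(x)$ at all. It takes a net $(a_\alpha)$ of analytic elements with each $\Lambda(a_\alpha)$ in the Tomita algebra and $\sigma_z(a_\alpha)\to 1$ $\sigma$-strong$^*$ for every $z$ (compare Proposition~\ref{prop:tomita_nice_bai}). Lemma~\ref{lem:action_J} — which you correctly suspected was the right tool — gives
\[ a_\alpha\Lambda(x)=\Lambda(a_\alpha x)=J\sigma_{-i/2}(x^*)J\Lambda(a_\alpha), \]
where $\sigma_{-i/2}(x^*)=\sigma_{i/2}(x)^*\in\mf n_\varphi$ by hypothesis. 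For the Tomita algebra elements one has the legitimate identity $J\Lambda(a_\alpha)=\nabla^{1/2}\Lambda(a_\alpha^*)=\Lambda(\sigma_{-i/2}(a_\alpha^*))$, so the right-hand side becomes $J\Lambda\bigl(\sigma_{-i/2}(x^*)\sigma_{-i/2}(a_\alpha^*)\bigr)$; a second application of Lemma~\ref{lem:action_J}, now with the $\mf n_\varphi$-element $\sigma_{-i/2}(x^*)$ in the left slot, pulls $a_\alpha$ back out to give $a_\alpha J\Lambda(\sigma_{-i/2}(x^*))$. Letting $a_\alpha\to1$ strongly yields $\Lambda(x)=J\Lambda(\sigma_{-i/2}(x^*))$, which is the claim. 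All unbounded-operator identities are applied only to the nice vectors $\Lambda(a_\alpha)$; that approximation-and-conjugation mechanism is the idea your proposal is missing.
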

\begin{proof}
We argue using the ideas of the proof of \cite[Proposition~2.17]{Stratila_ModTheoryBook2}.  As $x^* \in D(\sigma_{-i/2})$ we have $\Lambda(yx) = J \sigma_{-i/2}(x^*) J \Lambda(y)$ for each $y\in\mf n_\varphi$.  By \cite[Proposition~2.16]{StratilaZsido2nd} we can find a net of analytic elements $(a_\alpha)$ with $a_\alpha \in \mf n_\varphi$ (in fact, $\Lambda(a_\alpha)$ in the Tomita algebra) and with $\sigma_z(a_\alpha) \to 1$ $\sigma$-strong$^*$ for each $z\in\mathbb C$.

As $\Lambda(a_\alpha)$ is in the Tomita algebra, $J\Lambda(a_\alpha) = \nabla^{1/2} S \Lambda(a_\alpha) = \nabla^{1/2}\Lambda(a_\alpha^*) = \Lambda(\sigma_{-i/2}(a_\alpha^*))$; for this last point compare \cite[Section~2.15]{Stratila_ModTheoryBook2}.  So
\begin{align*}
a_\alpha \Lambda(x)
&= \Lambda(a_\alpha x)
= J \sigma_{-i/2}(x^*) J \Lambda(a_\alpha)
= J \sigma_{-i/2}(x^*) \Lambda(\sigma_{-i/2}(a_\alpha^*))
= J \Lambda( \sigma_{-i/2}(x^*) \sigma_{-i/2}(a_\alpha^*)) \\
&= J J \sigma_{-i/2}(\sigma_{i/2}(a_\alpha)) J \Lambda( \sigma_{-i/2}(x^*) )
= a_\alpha J \Lambda( \sigma_{-i/2}(x^*) ).
\end{align*}
Here we use that $x\in\mf n_\varphi$ and that $\sigma_{-i/2}(x^*) \in \mf n_\varphi$.  Taking the limit shows that $\Lambda(x) = J \Lambda( \sigma_{-i/2}(x^*) )$ as claimed.
\end{proof}

\begin{lemma}\label{lem:sigmaiswap}
Let $x\in\mf m_\varphi$ and $a\in D(\sigma_{-i})$.  Then $ax, x\sigma_{-i}(a) \in \mf m_{\varphi}$ and $\varphi(ax) = \varphi(x\sigma_{-i}(a))$.
\end{lemma}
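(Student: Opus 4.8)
The plan is to reduce to the case $x\in\mf m_\varphi^+$ by linearity (as $\mf m_\varphi=\lin\mf m_\varphi^+$), and then to write such an $x$ as $x=y^2$ with $y=x^{1/2}$; since $\varphi(y^*y)=\varphi(x)<\infty$ and $y=y^*$, we have $y\in\mf n_\varphi\cap\mf n_\varphi^*$. Everything then rests on Lemma~\ref{lem:action_J}: right multiplication of an element of $\mf n_\varphi$ by an element $c\in D(\sigma_{i/2})$ stays in $\mf n_\varphi$, with $\Lambda(yc)=J\sigma_{i/2}(c)^*J\Lambda(y)$ (the membership being a consequence, since $\Lambda(yc)$ is realised by a bounded operator applied to $\Lambda(y)$).

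First I would record two domain facts, which are the technical heart. Because the strip $S(-i)=\{w:|\im w|\le1\}$ is invariant under conjugation, $a\in D(\sigma_{-i})$ forces $a^*\in D(\sigma_{-i})\subseteq D(\sigma_{i/2})$; and, by the composition rule for analytic generators (equivalently, from the $\nabla$-description used in Lemma~\ref{lem:sigma_nabla_commutation}, writing $\sigma_{i/2}(\sigma_{-i}(a))=\nabla^{-1/2}\nabla a\nabla^{-1}\nabla^{1/2}=\nabla^{1/2}a\nabla^{-1/2}$), the element $b:=\sigma_{-i}(a)$ lies in $D(\sigma_{i/2})$ with $\sigma_{i/2}(b)=\sigma_{-i/2}(a)$. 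The membership claims then follow at once: $ax=ay^2=(ya^*)^*y$ with $ya^*\in\mf n_\varphi$ (right multiplication by $a^*\in D(\sigma_{i/2})$), so $ax\in\mf n_\varphi^*\mf n_\varphi\subseteq\mf m_\varphi$; and $x\sigma_{-i}(a)=y^2 b=y^*(yb)$ with $yb\in\mf n_\varphi$, so $x\sigma_{-i}(a)\in\mf n_\varphi^*\mf n_\varphi\subseteq\mf m_\varphi$.

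For the identity, I would compute both sides as GNS inner products. Using $\varphi(w^*v)=(\Lambda(w)|\Lambda(v))$ together with Lemma~\ref{lem:action_J}, $\varphi(ax)=\varphi((ya^*)^*y)=(\Lambda(ya^*)|\Lambda(y))=(\Lambda(y)|J\sigma_{i/2}(a^*)J\Lambda(y))$, while $\varphi(x\sigma_{-i}(a))=\varphi(y^*(yb))=(\Lambda(y)|\Lambda(yb))=(\Lambda(y)|J\sigma_{i/2}(b)^*J\Lambda(y))$. The two agree because $\sigma_{i/2}(b)^*=\sigma_{-i/2}(a)^*=\sigma_{i/2}(a^*)$, using the adjoint rule $\sigma_z(c)^*=\sigma_{\bar z}(c^*)$. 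The main obstacle is nothing more than the careful handling of these analytic domains (in particular placing $a^*$ and $\sigma_{-i}(a)$ inside $D(\sigma_{i/2})$ from the symmetric-strip hypothesis $a\in D(\sigma_{-i})$); once that is done, both membership statements and the equality drop straight out of Lemma~\ref{lem:action_J}, and no approximation or smearing argument is needed. One could alternatively deduce the equality from Lemma~\ref{lem:mod_aut_usage} with $z=0$, but only after the same domain bookkeeping, so the direct GNS computation seems cleaner.
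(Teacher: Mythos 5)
Your proof is correct and follows essentially the same route as the paper's: both arguments reduce by linearity to a product of two elements of $\mf n_\varphi$, use Lemma~\ref{lem:action_J} to realise right multiplication by $a^*$ and by $\sigma_{-i}(a)$ as the bounded operators $J\sigma_{i/2}(a^*)^*J$ and $J\sigma_{i/2}(\sigma_{-i}(a))^*J$ on the GNS space, and conclude by moving one of these across the inner product, with the same domain bookkeeping ($a^*,\sigma_{-i}(a)\in D(\sigma_{i/2})$ and $\sigma_{i/2}(\sigma_{-i}(a))=\sigma_{-i/2}(a)$). The only cosmetic difference is that you reduce to $x\in\mf m_\varphi^+$ and factor $x=x^{1/2}x^{1/2}$, whereas the paper works directly with the general decomposition $x=b^*c$, $b,c\in\mf n_\varphi$.
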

\begin{proof}
By linearity we may suppose that $x = b^*c$ for some $b,c\in\mf n_\varphi$.
As $a\in D(\sigma_{-i/2})$, from Lemma~\ref{lem:action_J}, $ba^* \in \mf n_\varphi$ with $\Lambda(ba^*) = J\sigma_{-i/2}(a)J\Lambda(b)$.  Also $d = \sigma_{-i}(a)^* = \sigma_i(a^*) \in D(\sigma_{-i/2})$ with $\sigma_{-i/2}(d) = \sigma_{i/2}(a^*) = \sigma_{-i/2}(a)^*$, so $cd^* \in \mf n_\varphi$ with $\Lambda(cd^*) = J\sigma_{-i/2}(d)J\Lambda(c)$.  That is, $\Lambda(c\sigma_{-i}(a)) = J\sigma_{-i/2}(a)^*J\Lambda(c)$.  So
\begin{align*}
\varphi(ax) &= \varphi(xb^*c) = (\Lambda(ba^*)|\Lambda(c))
= (J\sigma_{-i/2}(a)J\Lambda(b)|\Lambda(c))
= (\Lambda(b)|J\sigma_{-i/2}(a)^*J\Lambda(c)) \\
&= (\Lambda(b)|\Lambda(c\sigma_{-i}(a)))
= \varphi(b^*c\sigma_{-i}(a)) = \varphi(x\sigma_{-i}(a)),
\end{align*}
as claimed.
\end{proof}

We make some remarks about the various locally convex topologies on a von Neumann algebra.  Given a von Neumann algebra $N \subseteq \mc B(H)$ we have the strong and weak topologies, but these depend upon the choice of $H$.  The $\sigma$-strong and $\sigma$-weak topologies are independent of $H$.  However, given any $H$, if we dilate and replace $H$ by $H\otimes\ell^2$ and replace $N$ by $N\otimes 1$, then the strong and $\sigma$-strong topologies agree, and the same for the weak, and indeeed strong$^*$, topologies.  Furthermore, for $M$ acting on $L^2(\varphi)$, as every $\omega\in M_*$ is of the form $\omega_{\xi,\eta}$, the strong and $\sigma$-strong topologies agree, and so forth.

We now seek a way to find $\varphi$ using $L^2(\varphi)$, in the precise sense of the following.

\begin{proposition}\label{prop:motivation_approx_weight}
Let $(a_i)$ be a bounded net in $D(\sigma_{i/2}) \cap \mf n_\varphi$ with $a_i \to 1$ and $\sigma_{i/2}(a_i)^* \to 1$ $\sigma$-strongly.  Then
\[ \lim_i \| x^{1/2} \Lambda(a_i) \|^2 = \lim_i \varphi(a_i^* x a_i) = \varphi(x) \qquad (x\in M_+). \]
\end{proposition}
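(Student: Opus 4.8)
The plan is to establish the first equality directly from the GNS construction, and then to compute the limit by splitting according to whether $\varphi(x)$ is finite. First I would observe that $x^{1/2}\in M$ acts on $L^2(\varphi)$ by left multiplication, and since $\mf n_\varphi$ is a left ideal and $a_i\in\mf n_\varphi$, we have $x^{1/2}a_i\in\mf n_\varphi$ with $\Lambda(x^{1/2}a_i)=x^{1/2}\Lambda(a_i)$. Hence $\|x^{1/2}\Lambda(a_i)\|^2=\|\Lambda(x^{1/2}a_i)\|^2=\varphi((x^{1/2}a_i)^*(x^{1/2}a_i))=\varphi(a_i^*xa_i)$, which is the first equality.

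For the second equality, I would first treat the case $\varphi(x)<\infty$, so that $x^{1/2}\in\mf n_\varphi$. Viewing $x^{1/2}a_i$ this time as $x^{1/2}\in\mf n_\varphi$ right-multiplied by $a_i\in D(\sigma_{i/2})$, Lemma~\ref{lem:action_J} gives $\Lambda(x^{1/2}a_i)=J\sigma_{i/2}(a_i)^*J\Lambda(x^{1/2})$, so that $\varphi(a_i^*xa_i)=\|J\sigma_{i/2}(a_i)^*J\Lambda(x^{1/2})\|^2$. The hypothesis $\sigma_{i/2}(a_i)^*\to 1$ $\sigma$-strongly means $\sigma_{i/2}(a_i)^*\zeta\to\zeta$ for the single fixed vector $\zeta=J\Lambda(x^{1/2})$; applying the isometry $J$ yields $J\sigma_{i/2}(a_i)^*J\Lambda(x^{1/2})\to\Lambda(x^{1/2})$ in norm, whence $\varphi(a_i^*xa_i)\to\|\Lambda(x^{1/2})\|^2=\varphi(x)$. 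It is worth highlighting that this step uses only strong convergence of $\sigma_{i/2}(a_i)^*$ evaluated at one vector, so no norm bound on $\sigma_{i/2}(a_i)$ (which the hypotheses do not supply) is required.

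It then remains to handle $\varphi(x)=\infty$, where no direct computation is available and one must instead appeal to lower semicontinuity. I would use that $\varphi$, being normal, is $\sigma$-weakly lower semicontinuous, together with the claim that $a_i^*xa_i\to x$ $\sigma$-weakly. The latter follows because, for $\xi,\eta\in L^2(\varphi)$, one has $(a_i^*xa_i\xi|\eta)=(xa_i\xi|a_i\eta)\to(x\xi|\eta)$ using $a_i\to 1$ strongly and $x$ bounded; since $(a_i^*xa_i)$ is uniformly bounded (as $(a_i)$ is bounded) and every normal functional on $M$ in standard form is a vector functional, this upgrades convergence against the generating vector functionals to $\sigma$-weak convergence. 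Lower semicontinuity then gives $\varphi(x)\leq\liminf_i\varphi(a_i^*xa_i)$, forcing $\lim_i\varphi(a_i^*xa_i)=\infty=\varphi(x)$. Combining the two cases completes the proof. The main obstacle, and really the only genuinely non-routine point, is precisely this $\varphi(x)=\infty$ case: everything in the finite case reduces to a clean rewriting via Lemma~\ref{lem:action_J}, whereas the infinite case cannot be computed and must be routed through normality and $\sigma$-weak lower semicontinuity.
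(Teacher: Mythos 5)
Your proof is correct and follows essentially the same route as the paper: the finite case via Lemma~\ref{lem:action_J} and $\sigma$-strong convergence of $\sigma_{i/2}(a_i)^*$ applied to the single vector $J\Lambda(x^{1/2})$, and the infinite case via $\sigma$-weak convergence $a_i^*xa_i\to x$ together with $\sigma$-weak lower semicontinuity of $\varphi$. The only (harmless) difference is organisational: the paper first proves $\lim_i\|x\Lambda(a_i)\|^2=\varphi(x^*x)$ for all $x\in\mf n_\varphi$ and then substitutes $x^{1/2}$, whereas you work with $x^{1/2}$ directly and spell out the $\sigma$-weak convergence step that the paper merely asserts.
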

\begin{proof}
For $x\in\mf n_\varphi$, we see that
\begin{align*}
\| x \Lambda(a_i) \|^2 &= \| \Lambda(xa_i) \|^2 =
\| J\sigma_{i/2}(a_i)^*J \Lambda(x) \|^2,
\end{align*}
using Lemma~\ref{lem:action_J}.
By hypothesis, $\lim_i \sigma_{i/2}(a_i)^*J \Lambda(x) = J \Lambda(x)$ and so 
\[ \lim_i \| x \Lambda(a_i) \|^2 = \|J\Lambda(x)\|^2 = \|\Lambda(x)\|^2 = \varphi(x^*x). \]
We conclude that for $x\in\mf m_\varphi^+$ we have $\lim_i \| x^{1/2} \Lambda(a_i) \|^2 = \varphi(x)$.

Now let $x\in M_+$ with $\varphi(x)=\infty$.  As $a_i\to 1$ in the $\sigma$-strong topology, we have that $a_i^* x a_i \to x$ $\sigma$-weakly, and as $\varphi$ is $\sigma$-weakly lower semicontinuous, \cite[Theorem~VII.1.11]{TakesakiII}, we have
\[ \infty = \varphi(x) \leq \liminf_i \varphi(a_i^*xa_i), \]
and so $\lim_i \varphi(a_i^*xa_i) = \infty$ as desired.
\end{proof}

\begin{proposition}\label{prop:tomita_nice_bai}
There is a net $(a_i)$ of unit vectors, consisting of analytic elements, such that for each $z\in\mathbb C$, we have $\sigma_z(a_i)\in \mf n_\varphi\cap \mf n_\varphi^*$, and $\sigma_z(a_i) \to 1$ $\sigma$-strong$^*$.  Then $\lim_i \varphi(\sigma_z(a_i)^*x\sigma_z(a_i)) = \varphi(x)$ for each $x\in M_+$ or $x\in\mf m_\varphi$.
\end{proposition}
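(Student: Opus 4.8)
The plan is to construct the net $(a_i)$ explicitly by smearing a suitable bounded net and then verify the stated limit using the machinery already assembled, principally Proposition~\ref{prop:motivation_approx_weight} and Lemma~\ref{lem:sigmaiswap}. First I would invoke \cite[Proposition~2.16]{StratilaZsido2nd} (as used in the proof of Lemma~\ref{lem:form_J}) to obtain a net $(e_i)$ of analytic elements in $\mf n_\varphi \cap \mf n_\varphi^*$ with $\sigma_z(e_i) \to 1$ $\sigma$-strong$^*$ for every $z \in \mathbb C$; such a net exists by the standard theory of Tomita algebras, being essentially a bounded approximate identity living inside the Tomita algebra. Normalising (or passing to scalar multiples) to arrange unit vectors is routine. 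The key point is that analyticity of each $e_i$ guarantees $\sigma_z(e_i)$ is well-defined for all $z$, and that $\sigma_z(e_i)$ remains in $\mf n_\varphi \cap \mf n_\varphi^*$: this last fact follows because the Tomita algebra is invariant under each $\sigma_z$, so membership in $\mf n_\varphi \cap \mf n_\varphi^*$ is preserved.

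Next I would establish the limit formula. For a fixed $z \in \mathbb C$, set $b_i = \sigma_z(a_i)$, so that $(b_i)$ is a net with $b_i \to 1$ $\sigma$-strong$^*$ and $b_i \in \mf n_\varphi \cap \mf n_\varphi^*$. The goal is $\lim_i \varphi(b_i^* x b_i) = \varphi(x)$. For $x \in \mf m_\varphi$, I would write $\varphi(b_i^* x b_i)$ and use normality together with $\sigma$-weak lower semicontinuity of $\varphi$ (\cite[Theorem~VII.1.11]{TakesakiII}), exactly as in the second half of the proof of Proposition~\ref{prop:motivation_approx_weight}. Indeed, since $b_i \to 1$ $\sigma$-strongly we get $b_i^* x b_i \to x$ $\sigma$-weakly; lower semicontinuity then gives $\varphi(x) \leq \liminf_i \varphi(b_i^* x b_i)$ for $x \in M_+$. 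For the reverse inequality in the finite case $x \in \mf m_\varphi^+$, I expect to reduce to the computation $\| x^{1/2} b_i \Lambda(\,\cdot\,) \|$ or to apply Proposition~\ref{prop:motivation_approx_weight} directly, after checking that $(b_i)$ satisfies its hypotheses: $b_i \in D(\sigma_{i/2}) \cap \mf n_\varphi$ with $b_i \to 1$ and $\sigma_{i/2}(b_i)^* \to 1$ $\sigma$-strongly. The latter holds because $\sigma_{i/2}(b_i) = \sigma_{z + i/2}(a_i) \to 1$ by hypothesis on $(a_i)$, and the adjoint is $\sigma$-strong$^*$-continuous.

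The main subtlety, which is the part I would handle most carefully, is that Proposition~\ref{prop:motivation_approx_weight} is stated for a net converging to $1$ in the strong topology with a companion convergence $\sigma_{i/2}(a_i)^* \to 1$; applying it to the shifted net $(b_i) = (\sigma_z(a_i))$ requires that \emph{both} $\sigma_z(a_i) \to 1$ and $\sigma_{z+i/2}(a_i)^* \to 1$ $\sigma$-strongly. Since the net $(a_i)$ was chosen with $\sigma_w(a_i) \to 1$ $\sigma$-strong$^*$ for \emph{all} $w \in \mathbb C$ simultaneously, both required convergences are available for free, with $w = z$ and $w = z + i/2$ respectively. This is precisely why one wants the strong all-$z$ convergence in the statement, rather than merely at a single point. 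Once the hypotheses of Proposition~\ref{prop:motivation_approx_weight} are verified for $(b_i)$, the conclusion $\lim_i \varphi(b_i^* x b_i) = \varphi(x)$ for $x \in M_+$ is immediate, and the case $x \in \mf m_\varphi$ follows by writing $x$ as a linear combination of positive elements of $\mf m_\varphi^+$ and using linearity of $\varphi$ on $\mf m_\varphi$. I do not anticipate any genuinely hard analytic obstacle; the work is entirely in correctly bookkeeping the analytic-continuation indices so that the hypotheses of the earlier proposition are met by the shifted net.
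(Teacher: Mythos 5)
Your proposal is correct and takes essentially the same route as the paper: invoke \cite[Proposition~2.16]{Stratila_ModTheoryBook2} to get the net of Tomita-algebra analytic elements (unit vectors, $\sigma_z$-invariance of the Tomita algebra giving $\sigma_z(a_i)\in\mf n_\varphi\cap\mf n_\varphi^*$), check that for fixed $z$ the shifted net $(\sigma_z(a_i))$ satisfies the hypotheses of Proposition~\ref{prop:motivation_approx_weight} — your bookkeeping with $w=z$ and $w=z+i/2$ is precisely what makes this work — and handle $x\in\mf m_\varphi$ by linearity. The only slip is the reference: the approximation result used here (and in the proof of Lemma~\ref{lem:form_J}) is \cite[Proposition~2.16]{Stratila_ModTheoryBook2}, not \cite{StratilaZsido2nd}.
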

\begin{proof}
By \cite[Proposition~2.16]{Stratila_ModTheoryBook2}, there is a net $(a_i)$ of analytic elements such that $\Lambda(a_i)$ is in the Tomita algebra, for each $i$, so certainly $a_i \in \mf n_\varphi \cap \mf n_\varphi^*$, and such that $\sigma_z(a_i) \to 1$ $\sigma$-strong$^*$, for each $z$.  By the construction of that result, we may further suppose that each $a_i$ is a unit vector.  Then $(\sigma_z(a_i))$ verifies the hypotheses of Proposition~\ref{prop:motivation_approx_weight}.  When $x\in\mf m_\varphi$ we have that $x$ is in the linear span of $\mf m_\varphi^+$, and so $\lim_i \varphi(\sigma_z(a_i)^*x\sigma_z(a_i)) = \varphi(x)$ by linearity.
\end{proof}

We remark that the net $(a_i)$ is constructed by starting with an approximate identity in $\mf n_\varphi$ and smearing.

\section{Self-dual modules}\label{sec:selfdual_mods}

The results in this section go back to Rieffel's work in \cite{Rieffel_mortia_cstar_wstar} and Paschke's in \cite{paschke_inner_prod_mods}.  See also \cite{BDH_IndexCondExp}.
We follow the notation of Section~\ref{sec:hilb_mods}.

\begin{proposition}[{\cite[Theorem~6.5]{Rieffel_mortia_cstar_wstar}}]
\label{prop:std_mod_self_dual}
$\mc B_{M}(L^2(M), H)$ is self-dual.
\end{proposition}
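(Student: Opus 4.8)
The plan is to prove self-duality of $E = \mc B_M(L^2(M), H)$ directly from the definition: given a bounded module map $T \colon E \to M$, I must produce $x_0 \in E$ with $T(x) = (x_0|x) = x_0^* x$ for all $x \in E$. The essential structural fact I would lean on is that $H$ is a right $M$-module via a normal $*$-homomorphism $\rho \colon M^\op \to \mc B(H)$, so that elements of $E$ are precisely the bounded operators $L^2(M) \to H$ intertwining the right $M$-actions, and the inner product is $(\alpha|\beta) = \alpha^*\beta \in M$. The key auxiliary object is the canonical cyclic-and-separating vector structure of $L^2(M)$: I would fix the standard GNS data, with $\Omega$ playing the role of a (generalized) trace vector, and exploit that $M\Omega$ is dense and that $M'$ acts via the modular conjugation $J$.

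First I would reduce the problem to constructing a single vector in $H$. For a module map $T \colon E \to M$, and for each $\xi \in L^2(M)$, I would study the functional $\alpha \mapsto (\text{something involving } T(\alpha)\xi)$; more precisely, I would consider the sesquilinear form on $H \times L^2(M)$ (or on appropriate dense subspaces) that $T$ induces, and show it is bounded. The mechanism: rank-one-type module maps $|\zeta\rangle\langle\eta|$-style operators lie in $E$ (after correcting by $J$ to respect the right action), and feeding these into $T$ lets me recover the action of a candidate $x_0^*$. Concretely, I would define a bounded sesquilinear form $B(\zeta, \xi) = \langle$ evaluation of $T$ on the module map sending $\xi \mapsto \zeta \rangle$, use boundedness of $T$ to bound $B$, and invoke the Riesz representation to get a bounded operator $x_0 \colon L^2(M) \to H$. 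Then I would verify $x_0$ is itself a module map (hence $x_0 \in E$) by checking the intertwining relation, which follows because $T$ is an $M$-module map and the module structure is implemented by $J M J = M'$ commuting appropriately.

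The verification that $x_0$ represents $T$, i.e. $T(\alpha) = x_0^*\alpha$ for all $\alpha \in E$, would then be a density argument: both sides are module maps into $M$, they agree on a generating family of elements of $E$ (those built from rank-one operators composed with the right action), and boundedness plus normality extends the identity to all of $E$. I would use that $M\Omega$ is $\sigma$-strongly dense and that the $M'$-action via $J$ lets me separate the left and right module structures cleanly, which is exactly where the standard-form hypothesis $H \supseteq L^2(M)$ and cyclicity of $\Omega$ get used.

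The main obstacle I anticipate is the boundedness of the sesquilinear form $B$, and relatedly the proof that the candidate $x_0$ is everywhere-defined and bounded rather than merely densely defined. The module map $T$ is bounded as a map $E \to M$, but to extract a \emph{vector} representative I must convert the $M$-valued output into scalar data uniformly, and the unboundedness lurking in the modular operator $\nabla$ (and in $J\nabla^{1/2}$ being only densely defined) threatens the estimates. The clean way around this is to work with the $M$-valued inner product throughout and apply a positivity/Cauchy--Schwarz argument for Hilbert $C^*$-modules: from $\|T\| < \infty$ one deduces $T(x)^* T(x) \leq \|T\|^2 (x|x)$, and this operator inequality is precisely what produces, via a standard completion-and-representation step (essentially Paschke's argument in \cite{paschke_inner_prod_mods}), the representing element $x_0$ without ever needing to tame $\nabla$ directly. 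So I expect the real content to be this Cauchy--Schwarz-to-representability passage, with everything else being bookkeeping about the intertwining relations.
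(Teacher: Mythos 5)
Your overall plan (produce the representing element by a Riesz-representation argument) is viable, and it is in any case a different route from the paper's, which simply cites Rieffel and then notes that self-duality also follows from $E=\mc B_M(L^2(M),H)$ being weak$^*$-closed in $\mc B(L^2(M),H)$ with separately weak$^*$-continuous inner product, via \cite[Lemma~8.5.4(1)]{BlecherLeMerdy_Book}. However, as written your argument has a genuine gap at its central mechanism. The claim that ``rank-one-type module maps $|\zeta\rangle\langle\eta|$-style operators lie in $E$ (after correcting by $J$)'' is false in general: an element of $E$ must intertwine the right $M$-actions, and typically $E$ contains no nonzero finite-rank operators at all. For instance, with $M=\mc B(\ell^2)$ in standard position on $L^2(M)=HS(\ell^2)$ and $H=L^2(M)$, one has $E=\rho(M^\op)'=M$ acting by left multiplication, and no nonzero left multiplication on $HS(\ell^2)$ has finite rank. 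The elements of $E$ that play the role of rank-ones are the partial isometries $u\colon Jx^*J\xi\mapsto\eta\cdot x$ built from standard position (exactly the construction the paper gives after Theorem~\ref{thm:selfdual_is_weak_direct_sum}); these depend nonlinearly on $\eta$ (the vector $\xi$ must be chosen adapted to $\eta$), so feeding them into $T$ does not produce a sesquilinear form. The correct move is to define the candidate adjoint directly on vectors of the form $\alpha\xi$: set $s\big(\sum_i\alpha_i\xi_i\big)=\sum_i T(\alpha_i)\xi_i$ on $\lin\{\alpha\xi:\alpha\in E,\ \xi\in L^2(M)\}$, extend by $0$ on the orthogonal complement, and take $x_0=s^*$; that $x_0\in E$ then follows because each $T(\alpha)\in M$ commutes with $M'=JMJ$, as you anticipate.

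The second gap is in the boundedness step. Well-definedness and boundedness of $s$ do not follow from the scalar Cauchy--Schwarz inequality $T(x)^*T(x)\le\|T\|^2(x|x)$ alone: for sums one needs the matrix inequality $\big[T(\alpha_i)^*T(\alpha_j)\big]\le\|T\|^2\big[(\alpha_i|\alpha_j)\big]$ in $M_n(M)$. This does follow from the scalar inequality --- apply it to $x=\sum_i\alpha_i a_i$ with $a_i\in M$, and use that positivity of a self-adjoint matrix in $M_n(M)$ is detected by the compressions $\sum_{i,j}a_i^*b_{ij}a_j$ with columns over $M$ (essentially \cite[Lemma~4.1]{Lance_HilbModsBook} applied to the module $M^n$) --- but it is a step that must actually be carried out. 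Your proposed shortcut, ``apply Paschke's completion-and-representation step,'' is circular here: Paschke's self-dual completion $E'$ is \emph{by definition} the space of bounded module maps $E\to M$, so within $E'$ the functional $T$ is represented tautologically by itself. The entire content of the proposition is that $E'=E$, i.e.\ that the representing object is realized by an honest bounded operator $L^2(M)\to H$ intertwining the right actions, and that is precisely what the Riesz/matrix-Cauchy--Schwarz argument above supplies.
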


There is a tight link between Hilbert $C^*$-modules and operator space theory, which is explored in \cite{BlecherLeMerdy_Book}.  In particular, \cite[Lemma~8.5.4(1)]{BlecherLeMerdy_Book} tells us that a Hilbert $C^*$-module $E$ over $M$ is self-dual if and only if $E$ has a predual making the inner-product separately weak$^*$-continuous.  As $\mc B_{M}(L^2(M), H)$ is weak$^*$-closed in $\mc B(L^2(M), H)$ and the inner-product is easily seen to be separately weak$^*$-continuous, this gives another way to show the result.  We remark that then $\mc L(E)$ becomes a von Neumann algebra: see also \cite[Proposition~3.10]{paschke_inner_prod_mods}.

The interior tensor product $E \otimes_M L^2(M)$ is the Hilbert space formed as the separation completion of $E\odot L^2(M)$ for the pre-inner-product
\[ (\alpha\otimes \xi | \beta\otimes \eta) = (\xi|(\alpha|\beta) \eta) \qquad (\alpha,\beta\in E, \xi,\eta\in L^2(M)). \]
See \cite[Chapter~4]{Lance_HilbModsBook}.  We have that $E\odot L^2(M)$ is a right $M$-module for the right action on $L^2(M)$ (we do not see that this is obvious, but it can be shown using the techniques of \cite[Lemma~4.2 and Page~42]{Lance_HilbModsBook}).

\begin{proposition}[{\cite[Theorem~6.12]{Rieffel_mortia_cstar_wstar}}]
\label{prop:all_self_dual_mods}
Any self-dual $E$ is of the form $\mc B_{M}(L^2(M), H)$ with $H = E\otimes_M L^2(M)$.
\end{proposition}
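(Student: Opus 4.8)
The plan is to build the candidate isometric isomorphism $E\to\mc B_M(L^2(M),H)$ by hand and then use self-duality of $E$ to force surjectivity. First I would form the interior tensor product $H=E\otimes_M L^2(M)$ and, for each $\alpha\in E$, define $\hat\alpha\colon L^2(M)\to H$ by $\hat\alpha(\xi)=\alpha\otimes\xi$. This is a right $M$-module map, since in the balanced tensor product $(\alpha\cdot x)\otimes\xi=\alpha\otimes(x\xi)$ gives $\hat\alpha(\xi\cdot x)=\hat\alpha(\xi)\cdot x$, and it is bounded because
\[ \|\hat\alpha(\xi)\|^2 = (\alpha\otimes\xi\,|\,\alpha\otimes\xi) = (\xi\,|\,(\alpha|\alpha)\xi) \leq \|(\alpha|\alpha)\|\,\|\xi\|^2 = \|\alpha\|^2\|\xi\|^2. \]
Hence $\hat\alpha\in\mc B_M(L^2(M),H)$. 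A direct computation using the definition of the inner product on the interior tensor product shows, for $\xi,\eta\in L^2(M)$,
\[ (\xi\,|\,\hat\alpha^*\hat\beta\,\eta) = (\hat\alpha\xi\,|\,\hat\beta\eta) = (\alpha\otimes\xi\,|\,\beta\otimes\eta) = (\xi\,|\,(\alpha|\beta)\eta), \]
so $\hat\alpha^*\hat\beta=(\alpha|\beta)$ once $M$ is identified with its left action on $L^2(M)$. Thus $u\colon\alpha\mapsto\hat\alpha$ is an inner-product preserving, hence isometric, module map into $\mc B_M(L^2(M),H)$.

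Next I would show $u$ is surjective, which is the one place self-duality is used. Recall that the right action of $M$ on $L^2(M)$ is $\xi\cdot x=Jx^*J\xi$, so a right module map on $L^2(M)$ commutes with $JMJ=M'$ and therefore $\mc B_M(L^2(M))=(M')'=M$; in particular $t^*\hat\beta\in M$ for each $t\in\mc B_M(L^2(M),H)$ and $\beta\in E$. Fixing $t$, I would define $T\colon E\to M$ by $T(\beta)=t^*\hat\beta$. Since $\widehat{\beta\cdot x}=\hat\beta\circ L_x$, where $L_x$ denotes left multiplication by $x$, one gets $T(\beta\cdot x)=T(\beta)\,x$, so $T$ is a bounded right $M$-module map with $\|T(\beta)\|\le\|t\|\,\|\beta\|$. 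By self-duality of $E$ there is $\alpha\in E$ with $T(\beta)=(\alpha|\beta)=\hat\alpha^*\hat\beta$ for all $\beta$, whence $(t-\hat\alpha)^*\hat\beta=0$ for all $\beta\in E$.

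Finally I would conclude that $t=\hat\alpha$. Taking adjoints gives $\hat\beta^*(t-\hat\alpha)=0$ for all $\beta$, which unwinds to $(\beta\otimes\xi\,|\,(t-\hat\alpha)\eta)=0$ for all $\beta\in E$ and $\xi,\eta\in L^2(M)$. The elementary tensors $\beta\otimes\xi$ span a dense subspace of $H=E\otimes_M L^2(M)$ by construction, so $(t-\hat\alpha)\eta=0$ for every $\eta$, giving $t=\hat\alpha\in u(E)$. Combined with the isometry property, this exhibits $u$ as a unitary module isomorphism $E\cong\mc B_M(L^2(M),H)$. I expect the surjectivity step to be the main obstacle, and the delicate points there are the identification $\mc B_M(L^2(M))=M$ (so that $T$ genuinely maps into $M$ and self-duality applies), the verification that $T$ is a module map, and the density of elementary tensors used to pass from $(t-\hat\alpha)^*\hat\beta=0$ to $t=\hat\alpha$. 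An alternative would be to observe that $u^*u=\id_E$ makes $uu^*$ a projection onto $u(E)$ and argue its kernel is trivial, but that route needs the fact that bounded module maps between self-dual modules are adjointable; applying the defining property of self-duality of $E$ directly sidesteps this.
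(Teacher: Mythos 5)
Your proof is correct. Note, however, that the paper contains no proof of this proposition: it is imported verbatim from Rieffel (the cited Theorem~6.12), and the appendix it sits in only collects such facts from the literature. So the comparison is really with the standard Rieffel--Paschke argument, which your proof reconstructs in full: the isometric module embedding $u\colon\alpha\mapsto\hat\alpha$ satisfying $\hat\alpha^*\hat\beta=(\alpha|\beta)$; the identification $\mc B_M(L^2(M))=(JMJ)'=(M')'=M$, which makes $T(\beta)=t^*\hat\beta$ a bounded $M$-valued module map; self-duality of $E$ to produce $\alpha$ with $T=(\alpha|\cdot)$; and totality of elementary tensors in $E\otimes_M L^2(M)$ to upgrade $(t-\hat\alpha)^*\hat\beta=0$ to $t=\hat\alpha$. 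These are exactly the ingredients the paper itself deploys in nearby results (the computation $\alpha^*\beta Jx^*J=Jx^*J\alpha^*\beta$ in its first example, the embedding $\alpha\mapsto\hat\alpha$ after Theorem~\ref{thm:self_dual_completion}, and the density statement in Proposition~\ref{prop:wstar_dense_nondeg}), so your route is the intended one; you have simply supplied the proof the paper outsources.

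Two minor imprecisions are worth flagging, though neither is a genuine gap. First, your justification that $\hat\alpha$ is a right module map "by the balancing relation $(\alpha\cdot x)\otimes\xi=\alpha\otimes(x\xi)$" is misdirected: balancing concerns the \emph{left} action of $M$ on $L^2(M)$, whereas the right module property $\hat\alpha(\xi\cdot x)=\hat\alpha(\xi)\cdot x$ is immediate from the definition of the right action on $E\otimes_M L^2(M)$. What actually needs an argument is that this right action is well defined and bounded on the interior tensor product in the first place; the paper explicitly flags this as non-obvious and cites Lance for it, so you should too. Second, for $t^*\hat\beta$ to lie in $\mc B_M(L^2(M))$ you are implicitly using that the adjoint $t^*$ of a right module map is again a right module map; this follows by taking adjoints in the intertwining relation $tJx^*J=\rho(x^\op)t$ and using that the right actions are $*$-representations of $M^\op$, and it deserves one line in the argument.
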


If $\pi \colon N \to \mc L(E)$ is a normal unital $*$-homomorphism (for some von Neumann algebra $N$) then $\pi$ induces a normal unital $*$-homomorphism $\hat\pi \colon N \to \mc B(H) =\mc B(E\otimes_M L^2(M))$, again see \cite[Page~42]{Lance_HilbModsBook}, for example.  Thus $\mc B(H)$ is an $N$-$M$-bimodule, or a \emph{correspondence}, \cite[Chapter~IX.3]{TakesakiII} and \cite[Chapter~5, Appendix~B]{Connes_NCG_Book}.  This point of view is explored in \cite{BDH_IndexCondExp}.

For a projection $p\in M$, we have that $E = pM$ is a self-dual module, for the inner-product $(px|py) = x^*py \in M$: indeed, then $H = p L^2(M)$.  Given projections $(p_i)_{i \in I}$ in $M$, we define $\bigoplus^w_i p_i M$ to be the space of families $(p_i x_i)$ with $x_i\in M$, such that $\sum_i (p_i x_i | p_i x_i)$ converges in $M$ (notice that this is an increasing sum of positives, so it converges weak$^*$ if and only if it is bounded above).

\begin{theorem}[{\cite[Theorem~3.12]{paschke_inner_prod_mods}}]
\label{thm:selfdual_is_weak_direct_sum}
For a self-dual $E$ there is a collection $(e_i)$ in $E$ such that $(e_i|e_i) = p_i$ is a projection in $M$ for each $i$, we have $(e_i|e_j)=0$ for $i\not=j$, and $E$ is isomorphic to $\bigoplus_i^w p_i M$.
\end{theorem}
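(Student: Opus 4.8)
The statement is Theorem~\ref{thm:selfdual_is_weak_direct_sum}, asserting that any self-dual Hilbert $C^*$-module $E$ over $M$ decomposes as a weak direct sum $\bigoplus_i^w p_i M$ of cyclic modules $p_i M$ indexed by an orthogonal family with projection-valued self-inner-products.

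\textbf{Approach.} The plan is to exploit the concrete realisation $E \cong \mc B_M(L^2(M), H)$ with $H = E \otimes_M L^2(M)$ from Proposition~\ref{prop:all_self_dual_mods}, and to translate ``orthogonal basis'' for the module into a choice of well-behaved elements inside $\mc B_M(L^2(M), H)$. The key observation is that for $e \in E$ we have $(e|e) = e^* e \in M$, and this is a projection precisely when $e$ is a partial isometry. So I would build the family $(e_i)$ as a maximal orthogonal family of \emph{partial isometries} in $\mc B_M(L^2(M), H)$, where orthogonality means $e_i^* e_j = 0$ for $i \neq j$. A Zorn's lemma argument produces a maximal such family; the content is showing maximality forces $\sum_i e_i e_i^* = 1$ (the identity of $\mc L(E)$), which is what makes the family ``total''.

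\textbf{Key steps, in order.} First, I would fix the concrete picture $E = \mc B_M(L^2(M), H)$ and recall that $\mc L(E)$ is a von Neumann algebra (the remark after Proposition~\ref{prop:std_mod_self_dual}), acting as $t \mapsto s \circ t$ for $s \in \rho(M^\op)'$. Second, for a single nonzero $\alpha \in E$, take the polar decomposition $\alpha = u|\alpha|$ inside $\mc B(L^2(M), H)$; one checks $u \in E$ as well (the commutation relation defining $E$ is preserved by polar decomposition, cf.\ the use of \cite[Lemma~1.2]{Rieffel_mortia_cstar_wstar} in the proof of Proposition~\ref{prop:int_t_are_fin}), and $(u|u) = u^* u$ is the source projection, a projection in $M$. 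This shows the module is ``generated by partial isometries''. Third, I would invoke Zorn's lemma on families $(e_i)$ of nonzero partial isometries in $E$ with $e_i^* e_j = \delta_{ij} p_i$, ordered by inclusion, to extract a maximal family. Fourth — the crucial step — I would show that for this maximal family the projection $q = \sum_i e_i e_i^* \in \mc L(E)$ (an orthogonal sum of projections in the von Neumann algebra $\mc L(E)$, hence weak$^*$-convergent) equals $1$: if $1 - q \neq 0$, then $(1-q)E$ is a nonzero submodule, pick $0 \neq \beta$ in it, polar-decompose to get a new partial isometry orthogonal to all $e_i$, contradicting maximality. Finally, with $\sum_i e_i e_i^* = 1$, the map $E \to \bigoplus_i^w p_i M$ sending $\alpha \mapsto ((e_i | \alpha))_i = (e_i^* \alpha)_i$ is the claimed isometric isomorphism: injectivity follows since $\alpha = \sum_i e_i (e_i|\alpha)$, inner-product preservation is the Parseval identity $(\alpha|\beta) = \sum_i (e_i|\alpha)^* (e_i|\beta)$, and surjectivity onto the weak direct sum follows from the convergence of $\sum_i (p_i x_i | p_i x_i)$ defining membership of $\bigoplus_i^w p_i M$.

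\textbf{Main obstacle.} The hard part will be the maximality argument in the fourth step: one must be careful that the orthogonal sum $\sum_i e_i e_i^*$ genuinely converges to a projection in $\mc L(E)$ (using that $\mc L(E)$ is a von Neumann algebra and the $e_i e_i^*$ are mutually orthogonal projections, so the increasing net of finite partial sums is bounded by $1$ and converges weak$^*$), and that the complementary submodule $(1-q)E$, if nonzero, really contains a partial isometry one can append. The subtlety is that self-duality is exactly what guarantees $(1-q)E$ is again a self-dual (hence ``rich enough'') module admitting polar decompositions with projection source-supports in $M$; without self-duality the source of a partial isometry need not lie in $M$. Since this is \cite[Theorem~3.12]{paschke_inner_prod_mods}, I would ultimately cite Paschke, but the polar-decomposition-plus-Zorn strategy above is the standard route and is the one I would write out were a self-contained proof required.
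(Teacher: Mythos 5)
Your proof is correct, and its overall shape --- realise $E$ as $\mc B_M(L^2(M),H)$ via Proposition~\ref{prop:all_self_dual_mods}, run Zorn's lemma over orthogonal families of partial isometries, show that maximality forces $\sum_i e_ie_i^*=1$ in $\mc L(E)$, then assemble the isomorphism --- is the same scaffolding as the paper's sketch following the theorem statement (the complete argument being Paschke's \cite[Theorem~3.12]{paschke_inner_prod_mods}). The genuine difference is the key lemma that manufactures a new partial isometry. You polar-decompose a nonzero element $\beta\in(1-q)E$, using that the polar parts of an element of $\mc B_M(L^2(M),H)$ stay in the module (Rieffel's \cite[Lemma~1.2]{Rieffel_mortia_cstar_wstar}, which the paper itself invokes in the proof of Proposition~\ref{prop:int_t_are_fin}); note that this step needs $(1-q)E\neq 0$ whenever $q\neq 1$, i.e.\ that $\mc L(E)\cong\rho(M^\op)'$ acts faithfully on $E$, which is exactly the final assertion of Proposition~\ref{prop:adjointables_extend_sdc}, so you should cite it rather than treat it as obvious. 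The paper instead works on the Hilbert-space side: with $H_0\subseteq H$ the nonzero $M$-submodule orthogonal to the ranges of the $e_i$, pick a unit vector $\eta\in H_0$; the normal state $M^\op\ni x\mapsto(\eta|\eta\cdot x)$ is a vector state $\omega_\xi$ because $M'\cong M^\op$ is in standard position on $L^2(M)$, and then $u\colon Jx^*J\xi\mapsto\eta\cdot x$ (extended by $0$ on $(JMJ\xi)^\perp$) is a partial isometry in $\mc B_M(L^2(M),H)$ with range inside $H_0$. The vector-state route buys the non-degeneracy for free --- it produces a nonzero module element with range in any prescribed nonzero submodule, and is the point where standard position of the domain $L^2(M)$ genuinely enters --- whereas your route outsources that to Proposition~\ref{prop:adjointables_extend_sdc} and keeps the operator theory to the familiar polar decomposition. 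Both arguments differ from Paschke's original proof, which never represents $E$ concretely and instead uses spectral cut-offs of $(x|x)$ to produce elements whose self-inner-products are projections.
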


It follows that for such $(e_i)$ we have $x = \sum x_i \cdot (x_i|x)$ for $x\in E$, convergence in the weak$^*$-sense.  We hence term $(e_i)$ an \emph{orthogonal basis} for $E$.

With $E = \mc B_M(L^2(M), H)$, note that each $e_i$ from the theorem will be a partial isomerty $L^2(M) \to H$, having pairwise disjoint ranges, and such that the linear span of the ranges is dense in $H$.  One can find such a family by a maximality argument: the key step is the following.  Let $H_0 \subseteq H$ be a (right) $M$-submodule, and let $\eta\in H_0$ be a unit vector.  Then $M^\op \ni x \mapsto (\eta|\eta\cdot x)$ is a normal state, and as $M^\op$ is isomorphic to $M'$ via $x^\op \mapsto Jx^*J$, and $M'$ is in standard position on $L^2(M)$, we find a unit vector $\xi\in L^2(M)$ with $(\eta|\eta\cdot x) = (\xi |Jx^*J\xi)$ for each $x\in M$.  Then $u \colon Jx^* J\xi \mapsto \eta\cdot x$ is a partial isometry (defined to be $0$ on $(JMJ\xi)^\perp$) which is in $\mc B_M(L^2(M), H)$.

Given a Hilbert $C^*$-module $E$, Paschke gives a procedure for embedding $E$ in a larger self-dual module, the \emph{self-dual completion} $E'$, see \cite[Theorem~3.2]{paschke_inner_prod_mods}.

\begin{theorem}[{\cite[Proposition~6.10]{Rieffel_mortia_cstar_wstar}}]
\label{thm:self_dual_completion}
For any $E$, we have that $E' \cong \mc B_M(L^2(M), E\otimes_M L^2(M))$, or equivalently, $E' \otimes_M L^2(M) \cong E \otimes_M L^2(M)$.
\end{theorem}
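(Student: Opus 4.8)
The plan is to realise $E'$ concretely as $\mc B_M(L^2(M), H)$ with $H = E\otimes_M L^2(M)$, by embedding $E$ isometrically into this (already self-dual) module and then invoking the universal property of the self-dual completion. First I would form the interior tensor product $H = E\otimes_M L^2(M)$, which is available for any Hilbert $C^*$-module, and define $\iota\colon E \to \mc B_M(L^2(M), H)$ by $\iota(\alpha) = \hat\alpha$, where $\hat\alpha\,\xi = \alpha\otimes\xi$. The estimate $\|\hat\alpha\xi\|^2 = (\xi|(\alpha|\alpha)\xi) \le \|(\alpha|\alpha)\|\,\|\xi\|^2$ shows $\hat\alpha$ is bounded; it is plainly a right $M$-module map; and reading off the pre-inner-product that defines the interior tensor product gives $\hat\alpha^*\hat\beta = (\alpha|\beta)$ as an element of $M$ acting on $L^2(M)$. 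Thus $\iota$ preserves the module-valued inner-product, and is in particular isometric.

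Since $\mc B_M(L^2(M), H)$ is self-dual by Proposition~\ref{prop:std_mod_self_dual}, and the self-dual completion $E'$ is by construction the minimal self-dual module receiving $E$ through an inner-product-preserving module map, $\iota$ extends uniquely to an isometric module map $\tilde\iota\colon E' \to \mc B_M(L^2(M), H)$. Because Proposition~\ref{prop:all_self_dual_mods} shows that a self-dual module $F$ is recovered from its interior tensor product via $F \cong \mc B_M(L^2(M), F\otimes_M L^2(M))$, and $\mc B_M(L^2(M),H)\otimes_M L^2(M) \cong H$, the two operations $F\mapsto F\otimes_M L^2(M)$ and $H\mapsto\mc B_M(L^2(M),H)$ are mutually inverse on self-dual modules. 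In particular the two displayed forms of the theorem are equivalent, and $\tilde\iota$ is an isomorphism if and only if the induced map $E'\otimes_M L^2(M) \to H$ is a surjective isometry.

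It therefore remains to prove $E'\otimes_M L^2(M) \cong E\otimes_M L^2(M)$. I would do this by tracking elementary tensors: $\iota$ induces an isometry $E\otimes_M L^2(M) \to \mc B_M(L^2(M),H)\otimes_M L^2(M)$, and composing with the canonical unitary $\mc B_M(L^2(M),H)\otimes_M L^2(M) \cong H$ of Proposition~\ref{prop:all_self_dual_mods}, namely $t\otimes\xi \mapsto t\,\xi$, one finds the composite sends $\alpha\otimes\xi \mapsto \hat\alpha\otimes\xi \mapsto \hat\alpha\xi = \alpha\otimes\xi$, i.e.\ is the identity on $H$. The main obstacle is to upgrade this to the statement that the embedding $E\hookrightarrow E'$ itself induces a \emph{surjective} isometry on tensoring with $L^2(M)$: concretely, one must know that the module inner-products on $E'$ are the separately weak$^*$-continuous extensions of those on $E$ (using the predual that characterises self-duality, as recalled after Proposition~\ref{prop:std_mod_self_dual}), so that $E\odot L^2(M)$ is dense in $E'\odot L^2(M)$ for the common pre-inner-product and the two separated completions coincide. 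Granting this, $E'\otimes_M L^2(M)\cong H$, whence $E' \cong \mc B_M(L^2(M), H)$ via $\tilde\iota$.
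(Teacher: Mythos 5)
A preliminary remark: the paper contains no proof of this statement at all --- it is quoted as \cite[Proposition~6.10]{Rieffel_mortia_cstar_wstar} --- so there is no in-paper argument to compare yours against. The closest the paper comes to engaging with the proof is the remark just before Proposition~\ref{prop:wstar_dense_nondeg}, where it notes that Rieffel's argument rests on a weak$^*$-density claim made without proof, and then supplies Proposition~\ref{prop:wstar_dense_nondeg} to address it. Your overall architecture is the standard one and is sound: $\iota(\alpha)=\hat\alpha$ is a bounded, inner-product-preserving module map into the self-dual module $\mc B_M(L^2(M),H)$ (Proposition~\ref{prop:std_mod_self_dual}), and the reduction of the statement to $E'\otimes_M L^2(M)\cong E\otimes_M L^2(M)$ via Proposition~\ref{prop:all_self_dual_mods} is fine.

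The genuine gap is the step you label ``Granting this'': that is not a technical remainder but the entire content of the theorem, and it is exactly the point Rieffel glossed over. Two things are assumed there. First, that the extension $\tilde\iota\colon E'\to\mc B_M(L^2(M),H)$ exists and preserves the module inner product: the universal property is a theorem of Paschke (cf.\ \cite[Corollary~3.7]{paschke_inner_prod_mods}), not something true ``by construction'', and it produces only a bounded module map; upgrading it to an inner-product-preserving one already requires the very density you are trying to establish, together with separate weak$^*$-continuity. Second, and more seriously, the density of $E\odot L^2(M)$ in $E'\otimes_M L^2(M)$ does \emph{not} follow from the fact you propose to use, namely that the inner products on $E'$ are separately weak$^*$-continuous extensions of those on $E$: if $\alpha_i\to\tau$ weak$^*$ with $\alpha_i\in E$, then $(\alpha_i|\tau)\to(\tau|\tau)$, but $(\alpha_i|\alpha_i)$ need not converge to $(\tau|\tau)$ (separate continuity is not joint continuity), so $\|\alpha_i\otimes\xi-\tau\otimes\xi\|$ need not tend to $0$. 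What does work is: take a \emph{bounded} weak$^*$-approximating net (a Kaplansky-type density statement --- this is what Paschke's description of $E'$ provides, and in the concrete picture it is exactly what the linking-algebra argument proving Proposition~\ref{prop:wstar_dense_nondeg} yields); deduce that $\alpha_i\otimes\xi\to\tau\otimes\xi$ \emph{weakly} in the Hilbert space $E'\otimes_M L^2(M)$; and then use that the weak and norm closures of the subspace $E\odot L^2(M)$ coincide. Note finally that you cannot close the gap by appealing to Proposition~\ref{prop:wstar_dense_nondeg} alone: applied to $E\subseteq E'\cong\mc B_M(L^2(M),E'\otimes_M L^2(M))$ (using Proposition~\ref{prop:all_self_dual_mods}), it merely shows that the density you want is \emph{equivalent} to weak$^*$-density of $E$ in $E'$. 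The independent input must come either from Paschke's construction of $E'$ (\cite[Theorem~3.2, Proposition~3.8]{paschke_inner_prod_mods}), or from a direct verification that $\mc B_M(L^2(M),H)$ has the defining property of the dual module $\mc B_M(E,M)$: every bounded module map $\varphi\colon E\to M$ is represented by a unique $\tau\in\mc B_M(L^2(M),H)$, existence coming from the Cauchy--Schwarz inequality for module maps (which makes $\alpha\otimes\xi\mapsto\varphi(\alpha)\xi$ a bounded map $H\to L^2(M)$, whose adjoint is the representing element), and uniqueness from density of the elementary tensors in $H$.
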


We can embed $E$ into $\mc B_M(L^2(M), E\otimes_M L^2(M)) \cong E'$ by $E\in \alpha \mapsto \hat\alpha$ where $\hat\alpha(\xi) = \alpha\otimes\xi$ for $\xi\in L^2(M)$.

\begin{proposition}\label{prop:adjointables_extend_sdc}
For any $E$, let $H=E\otimes_ML^2(M)$ and $\rho\colon M^\op\to\mc B(H)$ the right action of $M$ on $H$.  Given $T\in\mc L(E)$ there is $t\in\rho(M^\op)'$ with $t \circ \hat\alpha = \wh{T(\alpha)}$ for $x\in E$.  The map $T\mapsto t$ is a $*$-homomorphism.  When $E$ is self-dual, this gives an isomorphism $\mc L(E) \cong \rho(M^\op)'$.
\end{proposition}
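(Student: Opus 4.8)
Looking at Proposition~\ref{prop:adjointables_extend_sdc}, I need to show that each adjointable operator $T\in\mc L(E)$ extends to an operator $t$ on the interior tensor product $H=E\otimes_M L^2(M)$ commuting with the right $M$-action, that $T\mapsto t$ is a $*$-homomorphism, and that it is an isomorphism when $E$ is self-dual.

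The plan is as follows. First I would construct $t$ from $T$ using the universal property of the interior tensor product. Given $T\in\mc L(E)$, define $t$ on elementary tensors by $t(\alpha\otimes\xi) = T(\alpha)\otimes\xi$ for $\alpha\in E, \xi\in L^2(M)$, and extend linearly. To see this is well-defined and bounded, I would use the standard fact (see \cite[Proposition~4.5]{Lance_HilbModsBook}) that an adjointable $M$-module map $T$ induces a bounded operator on the interior tensor product, with $t^* $ induced by $T^*$; boundedness follows from the inequality $(T(\alpha)|T(\beta))\leq\|T\|^2(\alpha|\beta)$ as positive elements of $M$, applied inside the pre-inner-product defining $H$. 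Because $t$ acts only on the left ($E$) factor, it manifestly commutes with the right action $\rho$ of $M^\op$ on the $L^2(M)$ factor, so $t\in\rho(M^\op)'$. The identity $t\circ\hat\alpha = \wh{T(\alpha)}$ is then immediate from $\hat\alpha(\xi)=\alpha\otimes\xi$, since $t(\alpha\otimes\xi)=T(\alpha)\otimes\xi = \wh{T(\alpha)}(\xi)$.

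Next I would verify that $T\mapsto t$ is a $*$-homomorphism. Multiplicativity $(T_1T_2)\mapsto t_1t_2$ follows on elementary tensors, since both sides send $\alpha\otimes\xi$ to $T_1T_2(\alpha)\otimes\xi$, and elementary tensors are total in $H$. For the $*$-property, I would check that the operator induced by $T^*$ is the Hilbert-space adjoint of $t$: using the inner-product formula $(\alpha\otimes\xi|\beta\otimes\eta)=(\xi|(\alpha|\beta)\eta)$, one computes
\[ (t(\alpha\otimes\xi)|\beta\otimes\eta) = (\xi|(T\alpha|\beta)\eta) = (\xi|(\alpha|T^*\beta)\eta) = (\alpha\otimes\xi|s(\beta\otimes\eta)), \]
where $s$ is induced by $T^*$, using precisely the adjointability relation $(T\alpha|\beta)=(\alpha|T^*\beta)$ in $M$. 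Hence $s=t^*$, confirming the $*$-homomorphism property and, incidentally, injectivity, since if $t=0$ then $T(\alpha)\otimes\xi=0$ for all $\xi$, which forces $(T\alpha|T\alpha)=0$ and so $T\alpha=0$.

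Finally, for the self-dual case I would show surjectivity onto $\rho(M^\op)'$. Here I would invoke the identification from Proposition~\ref{prop:all_self_dual_mods}, that $E\cong\mc B_M(L^2(M),H)$. The point is that given any $t\in\rho(M^\op)'$, the map $\alpha\mapsto t\circ\alpha$ (post-composition, viewing $\alpha$ as a bounded operator $L^2(M)\to H$) lands back in $\mc B_M(L^2(M),H)\cong E$ precisely because $t$ commutes with the right $M$-action, hence preserves the intertwining relation defining module maps; this was already observed in the second displayed Example in Section~\ref{sec:hilb_mods}. Writing $T(\alpha)=t\circ\alpha$ gives an element of $\mc L(E)$ (adjointable, with adjoint given by $t^*$) whose induced operator is $t$. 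I expect the main obstacle to be bookkeeping the two distinct ways $E$ is realized---abstractly via the tensor-product construction, and concretely as $\mc B_M(L^2(M),H)$---and checking that the map $\alpha\mapsto\hat\alpha$ intertwines these identifications so that "post-composition by $t$" on the concrete side corresponds to "$t$ acting on the left factor" on the tensor-product side. Once this compatibility is pinned down, surjectivity, and hence the isomorphism $\mc L(E)\cong\rho(M^\op)'$, follows.
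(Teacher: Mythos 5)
Your proposal is correct and follows essentially the same route as the paper: the induced operator $t(\alpha\otimes\xi)=T(\alpha)\otimes\xi$ via the interior tensor product, the $*$-homomorphism check against the inner-product formula, and, in the self-dual case, the identification $E\cong\mc B_M(L^2(M),H)$ of Proposition~\ref{prop:all_self_dual_mods} together with post-composition $\alpha\mapsto t\circ\alpha$ as the inverse map. The only (cosmetic) difference is how the bijection is closed: you verify $\Phi(\Psi(t))=t$ directly on elementary tensors, which are dense in $H=E\otimes_M L^2(M)$ by construction, whereas the paper instead shows the post-composition map $\Psi$ is injective by invoking the density of $\lin\{\alpha(\xi):\alpha\in\mc B_M(L^2(M),H),\ \xi\in L^2(M)\}$ in $H$ from the orthogonal-basis argument after Theorem~\ref{thm:selfdual_is_weak_direct_sum}; given $\Psi\circ\Phi=\id$, these two verifications are equivalent.
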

\begin{proof}
As discussed above, any $T\in\mc L(E)$ induces a map on $E\otimes_M L^2(M)$ given by $x\otimes\xi \mapsto T(x)\otimes\xi$.  Let this map be $t$, which obviously commutes with the right action of $M$, so $t\in\rho(M^\op)'$.  Then $T\mapsto t$ is a $*$-homomorphism.

When $E$ is self-dual, we have that $E\cong\mc B_M(L^2(M), H)$.  For $t\in \rho(M^\op)'$ and $\alpha\in\mc B_M(L^2(M), H)$ we have that $(t\circ \alpha)(\xi\cdot a) = t(\alpha(\xi\cdot a)) = t(\alpha(\xi)\cdot a) = (t\circ \alpha)(\xi) \cdot a$ for each $a\in M, \xi\in L^2(M)$, so that $t\circ \alpha$ is also in $\mc B_M(L^2(M), H)$.  We have for $\alpha,\beta\in \mc B_M(L^2(M), H)$ that $(\alpha|t\circ \beta) = \alpha^*t\beta = (t^*\circ \alpha|\beta)$ so the map $\alpha\mapsto t\circ \alpha$ is adjointable, with adjoint $\beta\mapsto t^*\circ \beta$.  The claim that $\mc L(E) \cong \rho(M^\op)'$ is hence equivalent to showing that if $t\not=0$ then the map $\alpha\mapsto t\circ \alpha$ is non-zero.  This follows, for example, by the argument after Theorem~\ref{thm:selfdual_is_weak_direct_sum}, which shows in particular that $\lin\{ \alpha(\xi) : \xi\in L^2(M), \alpha\in \mc B_M(L^2(M), H) \}$ is dense in $H$.
\end{proof}

In particular, this shows that any $\mc L(E)$ embeds into $\mc L(E')$, see also \cite[Corollary~3.7]{paschke_inner_prod_mods}.

It is easy to see that $\mc B_M(L^2(M), H)$ is weak$^*$-closed in $\mc B(L^2(M), H)$ and so carries a natural weak$^*$-topology, which agrees with that constructed by Paschke on self-dual modules, \cite[Proposition~3.8]{paschke_inner_prod_mods}.  Rieffel claims without proof, see before \cite[Proposition~6.10]{Rieffel_mortia_cstar_wstar}, that $E$ is weak$^*$-dense in $\mc B_M(L^2(M), E\otimes L^2(M))$, but we do not immediately see this.  Absent a suitable reference, we give a sketch proof of the next result, using ideas from \cite{Skeide_vnmods_ints}, for example.

\begin{proposition}\label{prop:wstar_dense_nondeg}
Let $E \subseteq \mc B_M(L^2(M), H)$ be a sub-module.  Then $E$ is weak$^*$-dense if and only if $\lin\{ \alpha(\xi) : \xi\in L^2(M), \alpha\in E \}$ is dense in $H$.
\end{proposition}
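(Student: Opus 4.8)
The plan is to prove this as a biconditional, with the ``only if'' direction being essentially trivial and the ``if'' direction requiring real work. For the trivial direction, suppose $E$ is weak$^*$-dense in $E' = \mc B_M(L^2(M), H)$. By the discussion after Theorem~\ref{thm:selfdual_is_weak_direct_sum}, we already know that $\lin\{ \alpha(\xi) : \xi\in L^2(M), \alpha\in E' \}$ is dense in $H$. Since the evaluation map $\alpha \mapsto \alpha(\xi)$ (for fixed $\xi$) is weak$^*$-to-weak continuous, and $E$ is weak$^*$-dense in $E'$, the set $\{ \alpha(\xi) : \alpha\in E \}$ is weakly dense in $\{ \alpha(\xi) : \alpha\in E' \}$, and hence $\lin\{ \alpha(\xi) : \xi\in L^2(M), \alpha\in E \}$ is weakly (and so norm) dense in $H$.

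For the substantive ``if'' direction, I would argue by contradiction using the weak$^*$-topology on the self-dual module $E'$, which by \cite[Proposition~3.8]{paschke_inner_prod_mods} is the natural one as a weak$^*$-closed subspace of $\mc B(L^2(M), H)$. Let $\overline E$ denote the weak$^*$-closure of $E$ in $E'$, which is itself a weak$^*$-closed submodule, hence (by the complementation theory for self-dual modules, as in the start of \cite[Chapter~3]{Lance_HilbModsBook} and Proposition~\ref{prop:adjointables_extend_sdc}) complemented: write $E' = \overline E \oplus \overline E^\perp$ with associated projection $p\in\mc L(E')$. Suppose $\overline E \neq E'$, so $\overline E^\perp \neq \{0\}$; pick a nonzero $\beta\in\overline E^\perp$. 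The key consequence of orthogonality is that $(\alpha|\beta) = \alpha^*\beta = 0$ in $M$ for every $\alpha\in E$. Translating through the interior tensor product, for $\alpha\in E$ and $\xi,\eta\in L^2(M)$ we compute
\[ (\alpha(\xi) | \beta(\eta))_H = (\xi | \alpha^*\beta\,\eta)_{L^2(M)} = (\xi | (\alpha|\beta)\eta) = 0. \]
Thus $\beta(\eta)$ is orthogonal to every vector in $\lin\{\alpha(\xi):\xi\in L^2(M),\alpha\in E\}$, which by hypothesis is dense in $H$. Hence $\beta(\eta) = 0$ for all $\eta\in L^2(M)$, so $\beta = 0$, a contradiction.

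The main obstacle I anticipate is ensuring that the inner-product formula $(\alpha(\xi)|\beta(\eta))_H = (\xi|(\alpha|\beta)\eta)$ is correctly justified in the concrete realisation $E' = \mc B_M(L^2(M), H)$. Here $(\alpha|\beta) = \alpha^*\beta$ by definition of the module inner-product in this picture, so the identity reduces to the elementary adjoint relation $(\alpha\xi|\beta\eta)_H = (\xi|\alpha^*\beta\eta)$, which holds by definition of the Hilbert-space adjoint; the only care needed is to confirm that the identification $H \cong E'\otimes_M L^2(M)$ from Theorem~\ref{thm:self_dual_completion} sends $\alpha\otimes\xi$ to $\alpha(\xi)$ compatibly, which is exactly the content of the embedding $\alpha\mapsto\hat\alpha$ described before Proposition~\ref{prop:adjointables_extend_sdc}. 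A secondary subtlety is verifying that $\overline E$ is genuinely complemented: this requires knowing that weak$^*$-closed submodules of a self-dual module are complemented, which follows because $\mc L(E')$ is a von Neumann algebra (as noted after Proposition~\ref{prop:std_mod_self_dual}) and the orthogonal projection onto a weak$^*$-closed submodule lies in it. Once these identifications are in hand, the contradiction is immediate and the proof is short.
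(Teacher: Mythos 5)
Your ``only if'' direction is correct (and is essentially the argument the paper itself uses), and your inner-product computation $(\alpha(\xi)|\beta(\eta)) = (\xi|\alpha^*\beta\,\eta)$ is fine.  The problem is the step you dismiss as a ``secondary subtlety'': the claim that the weak$^*$-closure $\overline{E}$ is orthogonally complemented in $E' = \mc B_M(L^2(M),H)$.  Your justification --- that $\mc L(E')$ is a von Neumann algebra and ``the orthogonal projection onto a weak$^*$-closed submodule lies in it'' --- is circular: the existence of an orthogonal projection onto $\overline{E}$ is exactly what complementation means, and in Hilbert $C^*$-modules closed submodules need not admit such projections (this is why ``complemented'' is a nontrivial notion at all, cf.\ \cite[page~7]{Lance_HilbModsBook}).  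Worse, this step is not peripheral but carries the entire weight of the proposition.  Since the inner product on $E'$ is separately weak$^*$-continuous, one has $\overline{E}^{\,\perp} = E^\perp$, and your (correct) computation shows that density of the images forces $E^\perp = \{0\}$; the complementation identity $E' = \overline{E}\oplus\overline{E}^{\,\perp}$ then instantly yields $\overline{E} = E'$.  So the hard direction of the proposition is \emph{equivalent} to the complementation claim for $\overline{E}$, and as written you have assumed a statement equivalent to what you are trying to prove.

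The gap is repairable, because the complementation fact is true, but it needs a real argument with two ingredients, neither of which appears in the paper's appendix: (i) $\overline{E}$, being a weak$^*$-closed subspace of the dual space $E'$, is itself a dual space whose inner product is separately weak$^*$-continuous, hence is self-dual by \cite[Lemma~8.5.4(1)]{BlecherLeMerdy_Book} (the criterion quoted after Proposition~\ref{prop:std_mod_self_dual}); (ii) by Paschke's theorem that a bounded module map whose domain is self-dual is automatically adjointable \cite{paschke_inner_prod_mods}, the inclusion $\iota\colon\overline{E}\to E'$ has an adjoint $\iota^*$, one checks $\iota^*\iota = \id_{\overline{E}}$, and then $p=\iota\iota^*$ is a projection in $\mc L(E')$ with range $\overline{E}$ and kernel $\overline{E}^{\,\perp}$, giving $E'=\overline{E}\oplus\overline{E}^{\,\perp}$.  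With (i) and (ii) supplied, your contradiction argument closes.  Note that this is then a genuinely different route from the paper's proof, which avoids complementation altogether: it realises $M$, $E$ and $\mc K(E)$ as corners of a linking algebra $L\subseteq\mc B(L^2(M)\oplus H)$, uses the density of the images to compute $L'$ and $L''$, and invokes the Kaplansky density theorem to obtain weak$^*$-density corner-wise.  Your module-theoretic argument, once repaired, is arguably more conceptual; the paper's argument has the advantage of resting only on the bicommutant machinery already available.
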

\begin{proof}
Set $F = \mc B_M(L^2(M), H)$.  We first suppose that $\lin\{ \alpha(\xi) : \xi\in L^2(M), \alpha\in E \}$ in dense in $H$.  Consider the ``linking algebra'',
\[ L = \Big\{ \begin{pmatrix} x & \beta^* \\ \alpha & t \end{pmatrix} : x\in M, \alpha,\beta\in E, t \in \mc K(E) \Big\}
\subseteq \mc B(L^2(M) \oplus H). \]
Here $\mc K(E)$ is the ``compact'' operators on $E$, the closed linear span of $\{ \alpha\beta^* : \alpha,\beta\in E \}$ in $\mc B(H)$.
It is easy to see that $L$ is a $*$-algebra.
Let $\rho \colon M^\op \to \mc B(H)$ be the $*$-homomorphism which turns $H$ into a right $M$-module.  A calculation shows that be commutant and bicommutant are
\[ L' = \Big\{ \begin{pmatrix} Jy^*J & 0 \\ 0 & \rho(y^\op) \end{pmatrix} : y^\op \in M^\op \Big\},
\qquad
L'' = \Big\{ \begin{pmatrix} x & \beta^* \\ \alpha & t \end{pmatrix} : x\in M, \alpha,\beta\in F, t \in \mc L(F) \cong \rho(M^\op)' \Big\}, \]
see Proposition~\ref{prop:adjointables_extend_sdc}.  An application of the Kaplansky Density Theorem shows that the unit ball of $L$ is weak$^*$-dense in the unit ball of $L''$, and it follows that $E$ is weak$^*$-dense in $F$.

Converserly, suppose that $E$ is weak$^*$-dense in $F$.  As $F$ is self-dual, Proposition~\ref{thm:self_dual_completion} implies that $\lin\{ \alpha(\xi) : \alpha\in F, \xi\in L^2(M) \}$ is dense in $H$.  Let $\eta\in \{ \alpha(\xi) : \alpha\in E, \xi\in L^2(M) \}^\perp$, so by weak$^*$-continuity, $(\eta|\alpha(\xi)) = 0$ for each $\alpha\in F$ as well, and hence $\eta=0$.  The claim follows.
\end{proof}

\section{Operator-valued weights}\label{sec:op_valued_weights}

We use the notation introduced in Definition~\ref{defn:module_from_opvalweight} and the discussion after, so $T$ is an operator-valued weight from $M$ to $N\subseteq M$.  While the module $E^0_T$ was defined in \cite{BDH_IndexCondExp}, our main inspiration here comes from Kustermans's treatment of the $C^*$-algebra case in \cite{Kustermans_RegCstarWeights}; we ask some related questions below.

Given $\alpha, \beta\in\mf n_T, x,y\in\mf n_\phi$ we see that
\begin{align*}
(\alpha\otimes\Lambda_\phi(x)|\beta\otimes\Lambda_\phi(y))
&= (\Lambda_\phi(x)|T(\alpha^*\beta)\Lambda_\phi(y))
= \phi(x^* T(\alpha^* \beta) y)
= \phi(T(x^*\alpha^*\beta y)) \\
&= \tilde\phi(x^*\alpha^*\beta y)
= (\Lambda_{\tilde\phi}(\alpha x)|\Lambda_{\tilde\phi}(\beta y)).
\end{align*}
Here we used the $N$-bimodule property of $T$.  In the last step, we should really first note that the calculation shows that $\tilde\phi(x^*\alpha^*\alpha x) = \|\alpha\otimes\Lambda_\phi(x)\|^2 < \infty$, so indeed $\alpha x\in \mf n_{\tilde\phi}$, and the same for $\beta y$.
Thus $\mf n_T \odot \Lambda_\phi(\mf n_\phi) \to L^2(M); x\otimes\Lambda_\phi(a) \mapsto \Lambda_{\tilde\phi}(xa)$ is an isometry, and so extends to an isometry $E_T^0 \otimes L^2(N) \to L^2(M)$.

\begin{proposition}\label{prop:HT_is_L2_tildephi}
This map is surjective, thus establishing a unitary between $E^0_T\otimes L^2(N)$ and $L^2(M)$.
\end{proposition}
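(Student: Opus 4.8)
The plan is to show that the isometry $U\colon E^0_T\otimes L^2(N)\to L^2(M)$ defined by $\alpha\otimes\Lambda_\phi(x)\mapsto\Lambda_{\tilde\phi}(\alpha x)$ has dense range, which combined with being an isometry already established gives surjectivity onto the (closed, complete) target. Since the range is automatically closed (the image of a complete space under an isometry is complete, hence closed), it suffices to prove the range is dense in $L^2(M)=L^2(\tilde\varphi,M)$.

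First I would recall that $\Lambda_{\tilde\phi}(\mf n_{\tilde\phi})$ is by definition dense in $L^2(M)$, so it is enough to approximate $\Lambda_{\tilde\phi}(z)$ for $z\in\mf n_{\tilde\phi}$ (or even for $z$ in a weak$^*$-dense, suitably bounded subset) by elements of the form $\Lambda_{\tilde\phi}(\alpha x)$ with $\alpha\in\mf n_T$ and $x\in\mf n_\phi$. The key mechanism is that $\mf n_T$ is a left ideal in $M$ and a core-type approximation: I would use an approximate identity $(u_i)$ for $\mf n_T$ (existence follows from $\mf n_T$ being a left ideal that is $\sigma$-strongly dense, since $T$ is semifinite) with $u_i\to 1$ $\sigma$-strongly and $\|u_i\|\le 1$. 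Then for $z\in\mf n_{\tilde\phi}$ one writes $u_i z$ and checks two things: that $u_i z$ factors appropriately through $\mf n_T$ and $\mf n_\phi$, and that $\Lambda_{\tilde\phi}(u_i z)\to\Lambda_{\tilde\phi}(z)$. The convergence of the GNS images is the standard fact that left multiplication by a bounded net converging $\sigma$-strongly to $1$ converges to the identity on the GNS level for elements of $\mf n_{\tilde\phi}$; this is where I would invoke the lower semicontinuity and the isometry computation already carried out in the lines preceding the proposition.

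The main obstacle is the factorization: an element $\Lambda_{\tilde\phi}(\alpha x)$ has a very specific product structure, and a general $z\in\mf n_{\tilde\phi}$ need not be of this form. The clean way around this is to argue that the range of $U$ contains $\Lambda_{\tilde\phi}(\alpha x)$ for all $\alpha\in\mf n_T$, $x\in\mf n_\phi$, and that such products are total: since $\mf n_T$ is $\sigma$-strongly dense in $M$ and $\mf n_\phi$ is likewise dense, the products $\alpha x$ with $\alpha\in\mf n_T,x\in\mf n_\phi$ have $\Lambda_{\tilde\phi}$-images dense in $\Lambda_{\tilde\phi}(\mf n_{\tilde\phi})$. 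Concretely, for fixed $x\in\mf n_\phi\cap\mf n_{\tilde\phi}$-suitable and varying $\alpha$ through an approximate identity of $\mf n_T$, one gets $\alpha x\to x$ with $\Lambda_{\tilde\phi}(\alpha x)\to\Lambda_{\tilde\phi}(x)$, and elements $\Lambda_{\tilde\phi}(x)$ for $x$ in such a dense ideal are themselves total in $L^2(M)$.

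Thus the proof reduces to two approximation lemmas: (i) $\Lambda_{\tilde\phi}(\alpha x)\to\Lambda_{\tilde\phi}(x)$ as $\alpha$ runs through a bounded approximate identity of $\mf n_T$, for $x$ in a dense subspace of $\mf n_{\tilde\phi}$; and (ii) the density of that subspace in $L^2(M)$. Step (i) is the delicate one and relies on the interplay between $T$, $\phi$, and $\tilde\phi=\phi\circ T$, using that $T$ is normal and semifinite so that $T(u_i^*\,\cdot\,u_i)\to T$ in an appropriate sense on $\mf m_T$; I would phrase this using the $\sigma$-weak lower semicontinuity of $\tilde\phi$ together with the norm identity $\|\Lambda_{\tilde\phi}(\alpha x)\|^2=\tilde\phi(x^*\alpha^*\alpha x)$ already displayed, to control $\|\Lambda_{\tilde\phi}(\alpha x)-\Lambda_{\tilde\phi}(x)\|$. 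Once both are in hand, density of the range follows and the isometry $U$ is onto, completing the identification $E^0_T\otimes L^2(N)\cong L^2(M)$.
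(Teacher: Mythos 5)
Your overall frame (isometry with dense range, hence unitary) matches the paper, and the left-multiplication facts you use are fine, but the argument collapses at the totality step. The vectors you actually place in the closure of the range are $\Lambda_{\tilde\phi}(x)$ for $x\in\mf n_\phi\cap\mf n_{\tilde\phi}$; since $\mf n_\phi\subseteq N$, all of these lie in the closure of $\Lambda_{\tilde\phi}(N\cap\mf n_{\tilde\phi})$, which is a copy of the $L^2$-space of $N$ sitting inside $L^2(M,\tilde\phi)$ --- a proper closed subspace whenever $N\subsetneq M$. Concretely, take $M=\mathbb M_2$, $N=\mathbb C1$, $T=\Tr$ and $\phi$ the identity on $\mathbb C$, so $\tilde\phi=\Tr$: then $\mf n_\phi\cap\mf n_{\tilde\phi}=\mathbb C1$ and your set spans one dimension of the four-dimensional $L^2(\mathbb M_2)$. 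So the claim that these vectors ``are themselves total in $L^2(M)$'' is false, and density of the range does not follow from your argument (even though it is true). The earlier step with $u_iz$ has the same structural problem: $\mf n_T$ is a \emph{left} ideal, so $u_iz\notin\mf n_T$ in general, and in any case $u_iz$ admits no factorization $\alpha x$ with $\alpha\in\mf n_T$ and $x\in\mf n_\phi\subseteq N$; multiplying on the left can never produce the right-hand tensor leg, which must come from $N$.

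The two difficulties your proposal sidesteps are exactly what the paper's proof addresses. First, to approximate a general $y\in\mf n_{\tilde\phi}$ in GNS norm by elements of $\mf n_T\cap\mf n_{\tilde\phi}$, the paper truncates $y$ on the \emph{right} by the spectral projections $e_n=E([0,n])$ of $T(y^*y)\in\wh N_+$: then $T((ye_n)^*(ye_n))=e_nT(y^*y)e_n$ is bounded, so $y_n=ye_n\in\mf n_T$, and because $e_n$ commutes with $T(y^*y)$ one gets $\|\Lambda_{\tilde\phi}(y)-\Lambda_{\tilde\phi}(y_n)\|^2=\tilde\phi(y^*y)-\tilde\phi(y_n^*y_n)\to0$. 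No global bounded approximate identity of $\mf n_T$ achieves this, because GNS convergence under \emph{right} multiplication is not controlled by $\sigma$-strong convergence of the multipliers. Second, having $y_n\in\mf n_T$, one must realise $\Lambda_{\tilde\phi}(y_n)$ as a limit of genuine range vectors $\Lambda_{\tilde\phi}(y_na_i)=U(y_n\otimes\Lambda_\phi(a_i))$ with $a_i\in\mf n_\phi$; this is right multiplication by elements of $N$ at the GNS level, and it needs modular theory: the identity $\Lambda_{\tilde\phi}(xa)=J\sigma_{i/2}(a)^*J\Lambda_{\tilde\phi}(x)$, the nontrivial fact that $\sigma^{\tilde\phi}_t=\sigma^{\phi}_t$ on $N$ (\cite[Lemma~IX.4.21]{TakesakiII}), and a net of analytic elements $a_i\in\mf n_\phi$ with $\sigma_{i/2}(a_i)^*\to1$ strongly, as in Proposition~\ref{prop:tomita_nice_bai}. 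Your appeal to lower semicontinuity and a $\sigma$-strong approximate identity is not a substitute for either of these steps.
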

\begin{proof}
We follow the idea of \cite[Lemma~IX.4.21]{TakesakiII}.  Let $y\in\mf n_{\tilde\phi}$ so $\phi(T(y^*y))<\infty$, where here $\phi$ has been extended to $\widehat{M}_+$, see Section~\ref{sec:ext_weights} below.  As in the proof of \cite[Lemma~IX.4.21]{TakesakiII}, $T(y^*y)$ has spectral decomposition
\[ T(y^*y) = \int_0^\infty t \rd E, \]
where $E$ is a spectral measure with values in $N$.  For each $n$ let $e_n = E([0,\infty])$, so $e_n T(y^*y) = T(y^*y) e_n = \int_0^n t \rd E \in N$ because for each Borel $A\subseteq [0,\infty)$ we have $e_n E(A) = E(A) e_n = E(A \cap [0,n])$.  Set $y_n = y e_n$ so $T(y_n^*y_n) = e_n T(y^*y) e_n$ by the bimodule property.  As the operators involve commute, we have that $T(y_n^*y_n) \leq T(y_m^*y_m)$ for $n\leq m$, and so
\[ \lim_n \phi(T(y_n^*y_n)) = \sup_n \phi(T(y_n^*y_n)) = \phi(T(y^*y)) = \tilde\phi(y^*y) < \infty. \]
Hence each $y_n \in \mf n_{\tilde\phi} \cap \mf n_T$.  Further, $\tilde\phi(y_n^*y) = \phi(T(e_n y^*y)) = \phi(e_n T(y^*y)) = \phi(e_n T(y^*y)e_n) = \tilde\phi(y_n^*y_n)$ which also equals $\tilde\phi(y^*y_n)$, and so
\[ \|\Lambda_{\tilde\phi}(y) - \Lambda_{\tilde\phi}(y_n)\|^2
= \tilde\phi(y^*y) - \tilde\phi(y_n^*y) - \tilde\phi(y^*y_n) + \tilde\phi(y_n^*y_n)
= \tilde\phi(y^*y) - \tilde\phi(y_n^*y_n) \to 0, \]
as $n\to\infty$.

Given $a\in D(\sigma^\phi_{i/2}) \subseteq N$, as $\sigma^{\tilde\phi}_t = \sigma^\phi_t$ on $N$ (see \cite[Lemma~IX.4.21]{TakesakiII}), also $a\in D(\sigma^{\tilde\phi}_{i/2}) \subseteq M$, and so
\[ \Lambda_{\tilde\phi}(xa) = J \sigma_{i/2}(a)^* J \Lambda_{\tilde\phi}(x)
\qquad (x\in\mf n_{\tilde\phi}). \]
Now let $(a_i)$ be the net from Proposition~\ref{prop:tomita_nice_bai}, so each $a_i \in D(\sigma^\phi_{i/2}) \cap \mf n_\phi$ and $\sigma_{i/2}(a_i)^* \to 1$ strongly.  Hence $\lim_i \Lambda_{\tilde\phi}(xa_i) = \Lambda_{\tilde\phi}(x)$ for each $x\in\mf n_{\tilde\phi}$.

It follows that $y_n \otimes \Lambda_\phi(a_i) \mapsto \Lambda_{\tilde\phi}(y_na_i)$ converges to $\Lambda_{\tilde\phi}(y)$ as $i\to\infty$ and then $n\to\infty$.  The result follows.
\end{proof}

For $\alpha\in\mf n_T$ we see that $\hat\alpha \Lambda_\phi(x) = \alpha \otimes \Lambda_\phi(x) \cong \Lambda_{\tilde\phi}(\alpha x)$ for $x\in\mf n_\phi$.  As discussed in Section~\ref{sec:selfdual_mods}, essentially Propositions~\ref{prop:all_self_dual_mods} and~\ref{prop:adjointables_extend_sdc}, that $E_T^0 = \mf n_T$ is an $M$-$N$-bimodule means that $H_T = L^2(M,\tilde\phi)$ becomes an  $M$-$N$-bimodule.  These actions are the natural ones:

\begin{lemma}
The left $M$ action on $E_T^0$ becomes the natural left action of $M$ on $L^2(M,\tilde\phi)$, and the right $N$ action on $H_T$ is the restriction, to $N$, of the right $M$ action on $L^2(M,\tilde\phi)$.
\end{lemma}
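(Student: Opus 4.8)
The plan is to verify each of the two claimed identifications directly from the definitions, using the explicit unitary constructed in Proposition~\ref{prop:HT_is_L2_tildephi}. Recall that this unitary $U \colon E^0_T \otimes L^2(N) \to L^2(M,\tilde\phi)$ is determined by $U(\alpha\otimes\Lambda_\phi(x)) = \Lambda_{\tilde\phi}(\alpha x)$ for $\alpha\in\mf n_T, x\in\mf n_\phi$, and that the left $M$-action on $E^0_T=\mf n_T$ is by left multiplication, $\pi(a)\alpha = a\alpha$. First I would compute how $\pi(a)$ transports across $U$. For $a\in M$, $\alpha\in\mf n_T$ and $x\in\mf n_\phi$, the left action $\pi(a)\otimes 1$ on the interior tensor product sends $\alpha\otimes\Lambda_\phi(x)$ to $(a\alpha)\otimes\Lambda_\phi(x)$, and applying $U$ gives $\Lambda_{\tilde\phi}(a\alpha x)$. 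On the other hand, the natural left action of $M$ on $L^2(M,\tilde\phi)$ sends $\Lambda_{\tilde\phi}(\alpha x) = U(\alpha\otimes\Lambda_\phi(x))$ to $\Lambda_{\tilde\phi}(a\alpha x)$, since $\alpha x\in\mf n_{\tilde\phi}$ (established in the discussion before Proposition~\ref{prop:HT_is_L2_tildephi}) and the left regular representation satisfies $a\Lambda_{\tilde\phi}(y)=\Lambda_{\tilde\phi}(ay)$ for $y\in\mf n_{\tilde\phi}, a\in M$. These agree on the dense set of such vectors, so the two actions coincide on all of $L^2(M,\tilde\phi)$.

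Next I would treat the right $N$-action. The right action of $N$ on $H_T = E^0_T\otimes_N L^2(N)$ is, by construction of the interior tensor product, the action $1\otimes\rho_N$ coming from the right action of $N$ on its own standard space $L^2(N)$; concretely, for $b\in N$ this is $\alpha\otimes\xi \mapsto \alpha\otimes(\xi\cdot b)$. Using $\xi\cdot b = J_\phi b^* J_\phi\,\xi$ on $L^2(N)$, for $x\in\mf n_\phi$ with $b\in D(\sigma^\phi_{i/2})$ we have $\Lambda_\phi(x)\cdot b = \Lambda_\phi(xb)$ (after the usual analyticity manoeuvre, via Lemma~\ref{lem:action_J}). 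Then $U(\alpha\otimes\Lambda_\phi(xb)) = \Lambda_{\tilde\phi}(\alpha x b)$. Comparing, the restriction to $N$ of the natural right $M$-action on $L^2(M,\tilde\phi)$ sends $\Lambda_{\tilde\phi}(\alpha x)\mapsto \Lambda_{\tilde\phi}(\alpha x)\cdot b = \Lambda_{\tilde\phi}(\alpha x b)$, using that $\sigma^{\tilde\phi}_t$ restricts to $\sigma^\phi_t$ on $N$ (cited from \cite[Lemma~IX.4.21]{TakesakiII} in the previous proof), so that the right-multiplication formula for $b\in N$ is consistent whether computed in $L^2(N)$ or in $L^2(M,\tilde\phi)$. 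Again the two agree on a dense set, giving the claim.

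The main obstacle I anticipate is bookkeeping with the right actions rather than any genuine difficulty: one must be careful that the formula $\xi\cdot b = J b^* J\xi$ uses the correct modular conjugation ($J_\phi$ on $L^2(N)$ versus $J_{\tilde\phi}$ on $L^2(M,\tilde\phi)$), and that the passage from the formal action on simple tensors $\Lambda_\phi(x)\cdot b$ to $\Lambda_\phi(xb)$ requires $b$ analytic, handled by smearing and density as in Appendix~\ref{sec:weights}. The compatibility $\sigma^{\tilde\phi}_t|_N = \sigma^\phi_t$ is exactly what guarantees that the right $N$-action computed inside $L^2(M,\tilde\phi)$ matches the one transported from $L^2(N)$; this is the only non-formal input, and it is already available. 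Everything else is a routine check on the dense subspace $\mf n_T\odot\Lambda_\phi(\mf n_\phi)$, followed by continuity and density to extend to all of $L^2(M,\tilde\phi)$.
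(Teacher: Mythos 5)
Your overall route is the paper's: check both identifications on the dense subspace $\mf n_T\odot\Lambda_\phi(\mf n_\phi)$ via the unitary of Proposition~\ref{prop:HT_is_L2_tildephi}, where the left action is immediate, and reduce the right action to Lemma~\ref{lem:action_J} together with the fact that $\sigma^{\tilde\phi}_t$ and $\sigma^\phi_t$ agree on $N$. The left-action half is fine. However, the formulas you write for the right action are false: on $L^2(N)$ the standard right action is $\Lambda_\phi(x)\cdot b = J_\phi b^* J_\phi \Lambda_\phi(x)$, and Lemma~\ref{lem:action_J} gives
\[ \Lambda_\phi(x)\cdot b = \Lambda_\phi\big(x\,\sigma^\phi_{-i/2}(b)\big) \qquad (x\in\mf n_\phi,\ b\in D(\sigma^\phi_{-i/2})), \]
not $\Lambda_\phi(xb)$. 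The modular twist cannot be removed by taking $b$ analytic --- analyticity is what makes $\sigma^\phi_{-i/2}(b)$ well defined, it does not make it equal to $b$; your identity holds only when $\phi$ is a trace (or $b$ is fixed by the modular group). The same untwisted formula $\Lambda_{\tilde\phi}(\alpha x)\cdot b = \Lambda_{\tilde\phi}(\alpha x b)$ is asserted on the $\tilde\phi$-side, and it is equally false.

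As it happens you dropped the same twist on both sides, so the two errors cancel and your conclusion is correct; but as written the proof asserts false identities and mis-locates the role of the one non-formal input. The compatibility $\sigma^{\tilde\phi}_t|_N = \sigma^\phi_t$ from \cite[Lemma~IX.4.21]{TakesakiII} is not what justifies $\Lambda_{\tilde\phi}(\alpha x)\cdot b = \Lambda_{\tilde\phi}(\alpha x b)$; it is what identifies the two twists. The correct comparison, which is the paper's proof, reads: transporting the $H_T$-action through the unitary gives $\hat\alpha\big(\Lambda_\phi(x)\cdot b\big) = \Lambda_{\tilde\phi}\big(\alpha x\,\sigma^\phi_{-i/2}(b)\big)$, while the right action on $L^2(M,\tilde\phi)$ gives $J_{\tilde\phi}b^*J_{\tilde\phi}\Lambda_{\tilde\phi}(\alpha x) = \Lambda_{\tilde\phi}\big(\alpha x\,\sigma^{\tilde\phi}_{-i/2}(b)\big)$; these agree precisely because $\sigma^{\tilde\phi}_{-i/2}(b) = \sigma^\phi_{-i/2}(b)$ for $b\in N$. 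With that repair, and density of such $b$ (smearing), your argument becomes the paper's.
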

\begin{proof}
For $x\in M$ we have $\pi(x)\alpha = x\alpha$ and so $\pi(x) \hat\alpha = \wh{x\alpha} \colon \Lambda_\phi(a) \mapsto \Lambda_{\tilde\phi}(x\alpha a) = x \Lambda_{\tilde\phi}(\alpha a)$.  That is, $\pi(x)$ is the natural left action of $x\in M$ on $L^2(M, \tilde\phi)$.

We now consider the right $N$ action.  By Lemma~\ref{lem:action_J}, $\Lambda_\phi(x) \cdot y = Jy^*J \Lambda_\phi(x) = \Lambda_\phi(x \sigma_{-i/2}(y))$ for $x\in\mf n_\phi, y\in D(\sigma_{-i/2}) \subseteq N$.
Again by \cite[Lemma~IX.4.21]{TakesakiII}, we know that $\sigma_t^{\tilde\phi} = \sigma_t^\phi$ when restricted to $N$, and so
\[ \hat\alpha \Lambda_\phi(x \sigma_{-i/2}(y))
= \Lambda_{\tilde\phi}(\alpha x \sigma_{-i/2}(y))
= Jb^*J \Lambda_{\tilde\phi}(\alpha x). \]
Hence the right action of $N$ on $H_T = L^2(M,\tilde\phi)$ is simply the restriction of the natural right action of $M$ on $L^2(M, \tilde\phi)$.
\end{proof}

We can recover $T$ from this construction.  We have $(\hat\alpha\Lambda_\phi(x)|\pi(z) \hat \beta\Lambda_\phi(y)) = (\Lambda_{\tilde\phi}(\alpha x)|\Lambda_{\tilde\phi}(z\beta y))
= (\Lambda_\phi(x)|T(\alpha^* z \beta)\Lambda_\phi(y))$ and so $\hat\alpha^* \pi(z) \hat\beta = T(\alpha^* z \beta)$.

\begin{question}\label{ques:op_valued_weight_from_mod}
Given nfs weights $\phi$ and $\tilde\phi$, the existence theorem for operator-valued weights, \cite[Theorem~IX.4.18]{TakesakiII}, says that there is (a unique) $T\colon M_+ \to \wh N_+$ with $\tilde\phi = T\circ \phi$ if and only if the modular automorphism groups for $\phi$ and $\tilde\phi$ agree on $N$.

Can we use the Hilbert module construction to shed light on this?  Let $\mf n$ be the collection of $\alpha\in M$ with $\hat\alpha\colon\Lambda_\phi(x) \mapsto \Lambda_{\tilde\phi}(\alpha x)$ bounded, for $x\in\mf n_\phi$.  This is a left ideal, containing $\mf n_T$, and by the result below, actually equals $\mf n_T$.

We could \emph{define} $S(\alpha^* \beta) = \hat\alpha^* \hat\beta \in M$.  Do we have $S = T$?  Starting from an abstract self-dual module $E$ and some map $\mf n \to E$, when do we get an operator-valued weight from this sort of procedure?  Very related is \cite[Section~6]{Kustermans_RegCstarWeights}.
\end{question}

\begin{theorem}\label{thm:nT_vs_module}
For $\alpha\in M$ the following are equivalent:
\begin{enumerate}[(a)]
  \item\label{thm:nT_vs_module:1}
    $\alpha\in\mf n_T$;
  \item\label{thm:nT_vs_module:2}
    $\alpha x \in \mf n_{\tilde\phi}$ for each $x\in \mf n_{\phi}$, and the resulting map $\Lambda_\phi(x) \mapsto \Lambda_{\tilde\phi}(\alpha x)$ is bounded.
\end{enumerate}
The map $\mf n_T \to E_T; \alpha \mapsto \hat\alpha$ is closed for the $\sigma$-strong topology on $\mf n_T$ and the SOT on $E_T$.  It is also closed for the weak$^*$-topologies on $\mf n_T$ and $E_T$.
\end{theorem}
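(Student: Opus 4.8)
The core is the equivalence of (a) and (b), and the two topological closure statements will then follow by repackaging the proof via the concrete description of $\hat\alpha$ that (a)$\Leftrightarrow$(b) provides. I would prove (a)$\Rightarrow$(b) first, then (b)$\Rightarrow$(a), and then extract closedness.

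For (a)$\Rightarrow$(b): if $\alpha\in\mf n_T=E^0_T$, then the element $\hat\alpha\in E_T=\mc B_N(L^2(N),H_T)$ is already available, and by Proposition~\ref{prop:HT_is_L2_tildephi} (and the discussion immediately after it) we have $\hat\alpha\Lambda_\phi(x)=\Lambda_{\tilde\phi}(\alpha x)$ for $x\in\mf n_\phi$. In particular the map $\Lambda_\phi(x)\mapsto\Lambda_{\tilde\phi}(\alpha x)$ is just (the restriction to $\Lambda_\phi(\mf n_\phi)$ of) the bounded operator $\hat\alpha$, so it is bounded, and $\alpha x\in\mf n_{\tilde\phi}$ because $\Lambda_{\tilde\phi}(\alpha x)$ is a genuine vector in $L^2(M,\tilde\phi)$. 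This direction is essentially a restatement of the identification already established.

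For (b)$\Rightarrow$(a): assume $\alpha x\in\mf n_{\tilde\phi}$ for each $x\in\mf n_\phi$ with $\Lambda_\phi(x)\mapsto\Lambda_{\tilde\phi}(\alpha x)$ bounded; call the bounded extension $t\in\mc B(L^2(N),H_T)$, where I identify $H_T=L^2(M,\tilde\phi)$. The first thing to check is that $t$ is a right $N$-module map, so that $t\in E_T$: for $y\in D(\sigma^{\tilde\phi}_{-i/2})\cap N$ one computes, using Lemma~\ref{lem:action_J} together with the fact that $\sigma^{\tilde\phi}_t=\sigma^\phi_t$ on $N$ (\cite[Lemma~IX.4.21]{TakesakiII}), that $t(\Lambda_\phi(x)\cdot y)=\Lambda_{\tilde\phi}(\alpha x\sigma_{-i/2}(y))=t(\Lambda_\phi(x))\cdot y$, and this passes to general $y\in N$ by smearing. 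Since $E_T$ is self-dual and $t\in E_T$ satisfies $t\Lambda_\phi(x)=\Lambda_{\tilde\phi}(\alpha x)$, I must now show $t$ comes from an honest element of $\mf n_T$, i.e. that $\pi(\alpha)$ acts as $t$ and that $T(\alpha^*\alpha)$ is bounded. The cleanest route is to use the recovery formula $\hat\beta^*\pi(z)\hat\gamma=T(\beta^*z\gamma)$ from the discussion preceding Question~\ref{ques:op_valued_weight_from_mod}: compute $t^*\,t$ as an element of $\mc L(L^2(N))\cong N$ and verify it equals $T(\alpha^*\alpha)$ (in particular that the latter is finite), which forces $\alpha\in\mf n_T$ and $t=\hat\alpha$. \textbf{This verification — that the abstractly-defined bounded operator $t$ is genuinely $\hat\alpha$ for an $\alpha\in\mf n_T$, rather than merely some element of the self-dual completion — is the main obstacle}, since it is exactly the subtle point flagged in Question~\ref{ques:op_valued_weight_from_mod} (that $\mf n$ equals $\mf n_T$). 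I expect to need that $\pi(\alpha)\in\mc L(E^0_T)$ already makes sense for $\alpha\in M$, and that boundedness of $t$ controls $T(\alpha^*\alpha)$ via a net $(a_i)$ as in Proposition~\ref{prop:tomita_nice_bai}, letting one approximate $\varphi$-style expressions and pass the bound to the operator-valued weight.

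For the closedness statements: suppose $\alpha_j\to\alpha$ $\sigma$-strongly in $\mf n_T$ with $\hat\alpha_j\to s$ in the SOT on $E_T$. For fixed $x\in\mf n_\phi$, $\sigma$-strong convergence gives $\alpha_j x\to\alpha x$ $\sigma$-strongly; testing $\hat\alpha_j\Lambda_\phi(x)=\Lambda_{\tilde\phi}(\alpha_j x)$ against a dense set of vectors and using lower semicontinuity of $\tilde\phi$ (\cite[Theorem~VII.1.11]{TakesakiII}) shows $s\Lambda_\phi(x)=\Lambda_{\tilde\phi}(\alpha x)$, whence by the equivalence (a)$\Leftrightarrow$(b) we get $\alpha\in\mf n_T$ and $s=\hat\alpha$. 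The weak$^*$ case is analogous but uses that both $\mf n_T\subseteq M$ and $E_T\subseteq\mc B(L^2(N),H_T)$ carry natural weak$^*$-topologies (the latter since $E_T$ is weak$^*$-closed in $\mc B(L^2(N),H_T)$, as noted in Appendix~\ref{sec:selfdual_mods}); a bounded weak$^*$-convergent net $\alpha_j\to\alpha$ gives $\alpha_j x\to\alpha x$ weak$^*$, and pairing the identity $\hat\alpha_j\Lambda_\phi(x)=\Lambda_{\tilde\phi}(\alpha_j x)$ against vectors in $H_T$ identifies the weak$^*$-limit of $\hat\alpha_j$ as $\hat\alpha$. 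I would state these two closure claims as short corollaries of the main equivalence rather than reproving from scratch.
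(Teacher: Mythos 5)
Your direction (a)$\Rightarrow$(b) is fine and coincides with the paper's (both reduce it to the identification $\hat\alpha\Lambda_\phi(x)=\Lambda_{\tilde\phi}(\alpha x)$ already established around Proposition~\ref{prop:HT_is_L2_tildephi}). The genuine gap is in (b)$\Rightarrow$(a), and it is exactly the point you flag as ``the main obstacle'' without resolving it. The recovery formula $\hat\beta^*\pi(z)\hat\gamma=T(\beta^*z\gamma)$ is only available for $\beta,\gamma\in\mf n_T$, so using it to show that $t^*t$ ``equals $T(\alpha^*\alpha)$'' presupposes $\alpha\in\mf n_T$ --- the very conclusion you are after; and your fallback (a net of analytic elements as in Proposition~\ref{prop:tomita_nice_bai}, ``passing the bound to the operator-valued weight'') is not an argument and is the wrong tool here. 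What actually closes this implication in the paper is the extended positive cone machinery of Appendix~\ref{sec:ext_weights}: $T(\alpha^*\alpha)\in\wh N_+$ makes sense for \emph{arbitrary} $\alpha\in M$, the bimodule property gives $T(x^*\alpha^*\alpha x)=x^*T(\alpha^*\alpha)x$, and Lemma~\ref{lem:conj_weight_extension} (built on Proposition~\ref{prop:weight_ext}) converts the boundedness hypothesis into $T(\alpha^*\alpha)(\omega_{\Lambda_\phi(x)})=\tilde\phi(x^*\alpha^*\alpha x)\leq\|\hat\alpha\|^2\,\|\omega_{\Lambda_\phi(x)}\|$ for all $x\in\mf n_\phi$. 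Since $N$ acts standardly on $L^2(\phi)$, every $\omega\in N_*^+$ is a vector state $\omega_\xi$, and approximating $\xi$ by $\Lambda_\phi(x_n)$ and using that elements of $\wh N_+$ are lower semicontinuous on $N_*^+$ yields $T(\alpha^*\alpha)(\omega)\leq\|\hat\alpha\|^2\|\omega\|$ for all $\omega$; hence $T(\alpha^*\alpha)$ is bounded, $\alpha\in\mf n_T$, and in fact $T(\alpha^*\alpha)=\hat\alpha^*\hat\alpha$. Without this mechanism (or an equivalent one) your proof of the central implication does not close. Note also that, on this route, your preliminary step verifying that $t$ is a right $N$-module map becomes unnecessary: once $\alpha\in\mf n_T$ is known, $t$ agrees with $\hat\alpha$ on a dense subspace and so lies in $E_T$ automatically.

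The closedness claims have a related soft spot: in both your SOT and weak$^*$ sketches you write $\Lambda_{\tilde\phi}(\alpha x)$ as the limit \emph{before} knowing $\alpha x\in\mf n_{\tilde\phi}$, which is precisely what needs proving. You must either invoke the (standard but nontrivial) closedness of the GNS map $\Lambda_{\tilde\phi}$ for the relevant topologies, or do what the paper does: for each normal $\tilde\omega\leq\tilde\phi$ use the contraction $t_{\tilde\omega}\colon L^2(\tilde\phi)\to L^2(\tilde\omega)$, $\Lambda_{\tilde\phi}(b)\mapsto\Lambda_{\tilde\omega}(b)$, pass to the GNS representation of the \emph{bounded} functional $\tilde\omega$ (where $\Lambda_{\tilde\omega}(\alpha x)$ always makes sense and $\sigma$-strong, respectively weak$^*$, convergence of $\alpha_i$ can be applied), identify $\Lambda_{\tilde\omega}(\alpha x)=t_{\tilde\omega}x_0\Lambda_\phi(x)$, and then take the supremum over $\tilde\omega$ via normality of $\tilde\phi$ to get $\alpha x\in\mf n_{\tilde\phi}$ with $\Lambda_{\tilde\phi}(\alpha x)=x_0\Lambda_\phi(x)$, at which point (b)$\Rightarrow$(a) finishes. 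Finally, your weak$^*$ argument restricts to bounded nets; that only suffices if you additionally invoke Krein--Smulian for the graph, whereas the paper's $t_{\tilde\omega}$-argument handles arbitrary nets directly.
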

\begin{proof}
We have already seen that \ref{thm:nT_vs_module:1}$\implies$\ref{thm:nT_vs_module:2}.  Let $\alpha$ be as in \ref{thm:nT_vs_module:2}, and let $\hat\alpha$ be the resulting bounded map.
For $x\in\mf n_\phi$ we have $\| \Lambda_{\tilde\phi}(\alpha x) \|^2 = \tilde\phi(x^*\alpha^*\alpha x) = \phi( x^* T(\alpha^*\alpha) x) = T(\alpha^*\alpha)(\omega)$ where $\omega = \omega_{\Lambda_\phi(x)}$, by Lemma~\ref{lem:conj_weight_extension}.  Thus $T(\alpha^*\alpha)(\omega_{\Lambda_\phi(x)}) \leq \|\hat\alpha\|^2 \|\Lambda_\phi(x)\|^2 = \|\hat\alpha\|^2\|\omega_{\Lambda_\phi(x)}\|$.  

Now let $\omega\in N_*^+$ be arbitrary, so $\omega = \omega_\xi$ for some $\xi\in L^2(\phi)$, and we can find $(x_n)\subseteq \mf n_\phi$ with $\Lambda_\phi(x_n) \to \xi$, so $\omega_{\Lambda_\phi(x_n)} \to \omega_\xi$ in norm.  As $T(\alpha^*\alpha) \in \widehat{N}_+$ is by definition lower semi-continuous, we have that $T(\alpha^*\alpha)(\omega) \leq \liminf_n T(\alpha^*\alpha)(\omega_{\Lambda_\phi(x_n)}) \leq \|\hat\alpha\|^2 \|\omega\|$.
So $T(\alpha^*\alpha) < \infty$ and $\alpha\in\mf n_T$; in fact, it now follows that $T(\alpha^*\alpha) = \hat\alpha^*\hat\alpha$.

As in \cite[Theorem~VII.1.11]{TakesakiII}, let $\Phi_{\tilde\phi} = \{ \tilde\omega \in M^+_* : \tilde\omega \leq \tilde\phi \}$, so that $\tilde\phi(x) = \sup\{ \tilde\omega(x) : \tilde\omega\in \Phi_{\tilde\phi} \}$ for each $x\in M_+$.  Let $(\alpha_i)\subseteq\mf n_T$ be a net with $\alpha_i\to \alpha$ $\sigma$-strongly in $M$, and with $\hat\alpha_i \to x_0$ strongly in $\mc B_N(L^2(N), L^2(M))$.  For $\tilde\omega \in \Phi_{\tilde\phi}$, the map $t_{\tilde\omega} \colon L^2(\tilde\phi) \to L^2(\tilde\omega); \Lambda_{\tilde\phi}(b) \mapsto \Lambda_{\tilde\omega}(b)$ is contractive and so extends to all of $L^2(\tilde\phi)$; it is clearly onto.  Then for $x\in\mf n_\phi$,
\[ t_{\tilde\omega}\Lambda_{\tilde\phi}(\alpha_ix) = \Lambda_{\tilde\omega}(\alpha_ix) = \alpha_i \Lambda_{\tilde\omega}(x) \to \alpha \Lambda_{\tilde\omega}(x) = \Lambda_{\tilde\omega}(\alpha x), \]
as $\alpha_i \to \alpha$ strongly on $L^2(\tilde\omega)$.  However, we also have
\[ t_{\tilde\omega}\Lambda_{\tilde\phi}(\alpha_i x)
= t_{\tilde\omega} \hat\alpha_i \Lambda_{\phi}(x)
\to t_{\tilde\omega} x_0 \Lambda_{\phi}(x), \]
as $\hat\alpha_i \to x_0$ strongly.  So $\Lambda_{\tilde\omega}(\alpha x ) = t_{\tilde\omega} x_0 \Lambda_{\phi}(x)$ and hence $\tilde\omega((\alpha x)^*\alpha x) \leq \|x_0\Lambda_{\phi}(x)\|^2 \leq \|x_0\|^2 \|\Lambda_{\phi}(x)\|^2$.  Taking the supremum over $\tilde\omega \in \Phi_{\tilde\phi}$ shows that $\tilde\phi((\alpha x)^*\alpha x) \leq \|x_0\|^2 \|\Lambda_{\phi}(x)\|^2$ so $\alpha x \in \mf n_{\tilde\phi}$ with $\| \Lambda_{\tilde\phi}(\alpha x)\| \leq \|x_0\| \|\Lambda_{\phi}{x}\|$.  Furthermore, $t_{\tilde\omega} \Lambda_{\tilde\phi}(\alpha x) = t_{\tilde\omega} x_0\Lambda_{\phi}(x)$ for each $\tilde\omega$, and so $\Lambda_{\tilde\phi}(\alpha x) = x_0 \Lambda_{\phi}(x)$.  This holds for all $x \in \mf n_\phi$, so by the first part, we conclude that $\alpha\in\mf n_T$, and further, $\hat\alpha \Lambda_{\phi}(x) = \Lambda_{\tilde\phi}(\alpha x) = x_0 \Lambda_{\phi}(x)$ for all $x$, so $x_0 = \hat\alpha$ as desired.

Finally, we modify the argument for the weak$^*$-topologies.  For $\xi\in L^2(\tilde\omega)$,
\[ (\xi|\Lambda_{\tilde\omega}(\alpha x))
= \lim_i (\xi | t_{\tilde\omega}\Lambda_{\tilde\phi}(\alpha_ix))
= \lim_i (t_{\tilde\omega}^*(\xi)|\hat\alpha_i\Lambda_\phi(x))
= (t_{\tilde\omega}^*(\xi)|x_0\Lambda_\phi(x)), \]
firstly using that $\alpha_i\to\alpha$ weak$^*$ in any representation, and secondly that $\hat\alpha_i \to x_0$ weak$^*$ in $E_T \subseteq \mc B(L^2(\phi), L^2(\tilde\phi))$.  This shows that $\Lambda_{\tilde\omega}(\alpha x ) = t_{\tilde\omega} x_0 \Lambda_{\phi}(x)$ and then the argument proceeds as before.
\end{proof}

\subsection{Extending weights}\label{sec:ext_weights}

We shall need some technical results about extending weights on $M$ to the extended positive part of $M$.  This is stated without proof in \cite[Corollary~IX.4.9]{TakesakiII}.  In both \cite[Proposition~11.4]{Stratila_ModTheoryBook2} and \cite[Proposition~1.10]{Haagerup_OpValuedWeightsI} the result is proved by using that any weight is a sum of normal positive functionals.  

Instead, we give an alternative proof using increasing cones.  Let $\varphi$ be an nfs weight on $M$.  Define
\[ \Phi^0_\varphi = \{ t\omega\in M_*^+ : 0<t<1, \omega(x) \leq \varphi(x) \ (x\in M_+) \}, \]
so that $\Phi^0_\varphi$ is a directed set, \cite[Theorem~2.6]{Stratila_ModTheoryBook2}.  This result is attributed to Coombes; for an accessible proof, compare \cite[Proposition~3.5]{kustermans1997kmsweightscalgebras}.  By the definition of normality for a weight, \cite[Theorem~VII.1.11]{TakesakiII}, we have that $\varphi(x) = \sup \{ \omega(x) : \omega \in \Phi^0_\varphi \}$ for $x\in M_+$.  We now give a different proof of the extension result.

\begin{proposition}\label{prop:weight_ext}
Let $\varphi$ be a normal weight on $M$, with associated $\Phi_\varphi^0$.  Define
\[ \hat\varphi(m) = \sup\{ m(\omega) : \omega\in\Phi^0_\varphi \} \qquad (m\in \widehat{M}_+). \]
Then $\hat\varphi$ agrees with $\varphi$ on $M_+$, is positive homogeneous, additive, and when $(m_i)$ is an increasing net in $\widehat{M_+}$ we have $\hat\varphi(\sup_i m_i) = \sup_i \hat\varphi(m_i)$.  Such an extension $\hat\varphi$ is unique.
\end{proposition}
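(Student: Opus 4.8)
The plan is to check each claimed property directly against the defining formula $\hat\varphi(m) = \sup\{m(\omega):\omega\in\Phi^0_\varphi\}$, drawing on two inputs recorded above: the fact that $\Phi^0_\varphi$ is upward directed and that $\varphi(x)=\sup\{\omega(x):\omega\in\Phi^0_\varphi\}$ for $x\in M_+$; and the description of $\widehat{M}_+$ as the lower semicontinuous, additive, positively homogeneous maps $M_*^+\to[0,\infty]$, together with the representation of its elements as increasing suprema of members of $M_+$, via the spectral resolution \cite[Chapter~IX.4]{TakesakiII}.

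Three of the assertions require no real work. Agreement with $\varphi$ on $M_+$ is the normality formula itself: identifying $x\in M_+$ with the element $\omega\mapsto\omega(x)$ of $\widehat{M}_+$ gives $\hat\varphi(x)=\sup\{\omega(x):\omega\in\Phi^0_\varphi\}=\varphi(x)$. Positive homogeneity is immediate, since $(\lambda m)(\omega)=\lambda\,m(\omega)$ and suprema commute with multiplication by $\lambda\geq0$ (with the convention $0\cdot\infty=0$). For the order-continuity statement, recall that the least upper bound of an increasing net $(m_i)$ in $\widehat{M}_+$ is computed pointwise, $(\sup_i m_i)(\omega)=\sup_i m_i(\omega)$; hence
\[ \hat\varphi(\sup_i m_i)=\sup_\omega\sup_i m_i(\omega)=\sup_i\sup_\omega m_i(\omega)=\sup_i\hat\varphi(m_i), \]
the interchange of suprema being always valid.

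The substantive point is additivity, and here directedness of $\Phi^0_\varphi$ is essential. First I would record that every $m\in\widehat{M}_+$ is monotone: if $\omega\geq\omega'$ in $M_*^+$ then $\omega-\omega'\in M_*^+$, so additivity of $m$ gives $m(\omega)=m(\omega')+m(\omega-\omega')\geq m(\omega')$. The inequality $\hat\varphi(m_1+m_2)\leq\hat\varphi(m_1)+\hat\varphi(m_2)$ is clear from $\sup_\omega(f+g)\leq\sup_\omega f+\sup_\omega g$. For the reverse, given $\omega_1,\omega_2\in\Phi^0_\varphi$ choose, by directedness, $\omega\in\Phi^0_\varphi$ with $\omega\geq\omega_1$ and $\omega\geq\omega_2$; monotonicity then yields
\[ m_1(\omega)+m_2(\omega)\geq m_1(\omega_1)+m_2(\omega_2). \]
Taking the supremum over $\omega$ on the left and over $\omega_1,\omega_2$ independently on the right gives $\hat\varphi(m_1+m_2)\geq\hat\varphi(m_1)+\hat\varphi(m_2)$, and hence equality.

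Finally, uniqueness rests on approximating $\widehat{M}_+$ from below by $M_+$. Using the spectral resolution of $m\in\widehat{M}_+$ \cite[Chapter~IX.4]{TakesakiII}, I would write $m=\sup_n x_n$ for an increasing sequence $(x_n)$ in $M_+$ (for instance $x_n=\int_0^n\lambda\rd E_\lambda+np$, where $p$ carries the infinite part of $m$). If $\psi$ is any extension of $\varphi$ to $\widehat{M}_+$ that is additive, positively homogeneous, and order-continuous on increasing nets, then
\[ \psi(m)=\sup_n\psi(x_n)=\sup_n\varphi(x_n)=\sup_n\hat\varphi(x_n)=\hat\varphi(m), \]
using that both $\psi$ and $\hat\varphi$ restrict to $\varphi$ on $M_+$ and are order-continuous, whence $\psi=\hat\varphi$. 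The two steps I expect to require care are this additivity argument, where directedness upgrades an inequality to an equality, and the approximation lemma underlying uniqueness; everything else is bookkeeping with the two inputs named at the outset.
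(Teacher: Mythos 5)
Your proof is correct and follows essentially the same route as the paper: directedness of $\Phi^0_\varphi$ plus monotonicity of elements of $\widehat{M}_+$ for additivity, a pointwise interchange of suprema for order-continuity, and approximation from below by $M_+$ for uniqueness. The only differences are cosmetic — you make the monotonicity lemma explicit, avoid the paper's $\epsilon$-and-case-split in the additivity step by taking suprema over pairs directly, and flesh out the uniqueness step with the spectral resolution, all of which are sound.
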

\begin{proof}
Obviously $\hat\varphi(x) = \varphi(x)$ for $x\in M_+$ and $\hat\varphi$ is positive homogeneous.  Given $m_1,m_2 \in \widehat{M_+}$ we have $\hat\varphi(m_1+m_2) \leq \hat\varphi(m_1) + \hat\varphi(m_2)$.  If $\hat\varphi(m_1)=\infty$, or $\hat\varphi(m_2)=\infty$, then also $\hat\varphi(m_1+m_2)=\infty$.  If both $\hat\varphi(m_1)<\infty, \hat\varphi(m_2)<\infty$, then given $\epsilon>0$ we can find $\omega_i \in \Phi_\varphi^0$ with $m_i(\omega_i) > \hat\varphi(m_i) - \epsilon$.  As $\Phi_\varphi^0$ is directed, there is $\omega\in \Phi_\varphi^0$ with $\omega_i\leq\omega$ for $i=1,2$, and so $\hat\varphi(m_1 + m_2) \geq m_1(\omega) + m_2(\omega) \geq m_1(\omega_1) + m_2(\omega_2) > \hat\varphi(m_1) + \hat\varphi(m_2) - 2\epsilon$.  As $\epsilon>0$ was arbitrary, $\hat\varphi(m_1 + m_2) \geq\hat\varphi(m_1) + \hat\varphi(m_2)$, and so we have equality, and conclude that $\hat\varphi$ is additive.

Given an increasing net $(m_i)$ we have
\[ \hat\varphi(\sup m_i) = \sup\{ \sup_i m_i(\omega) : \omega \in \Phi_\varphi^0 \}
= \sup_i \sup\{ m_i(\omega) : \omega \in \Phi_\varphi^0 \}
= \sup_i \hat\varphi(m_i). \]
Uniqueness follows as any $m\in\widehat{M}_+$ is the limit of an increasing family in $M_+$.
\end{proof}

We can now give our technical result.

\begin{lemma}\label{lem:conj_weight_extension}
Let $\varphi$ be a normal weight on $M$, let $a\in \mf n_\varphi$, and define $\omega_0 = \omega_{\Lambda(a)} \in M_*^+$, so $\omega_0(x) = \varphi(a^*xa)$ for $x\in M$.  Then $\hat\varphi(a^*ma) = m(\omega_0)$ for each $m\in \widehat{M}_+$.
\end{lemma}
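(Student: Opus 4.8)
The statement to prove is Lemma~\ref{lem:conj_weight_extension}: for a normal weight $\varphi$ on $M$, $a\in\mf n_\varphi$, and $\omega_0 = \omega_{\Lambda(a)}$, we have $\hat\varphi(a^*ma) = m(\omega_0)$ for each $m\in\wh M_+$.

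Let me plan the proof.

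The plan is to use the characterization of $\hat\varphi$ established in Proposition~\ref{prop:weight_ext}, namely $\hat\varphi(m) = \sup\{ m(\omega) : \omega\in\Phi_\varphi^0 \}$, together with the behaviour of the map $m\mapsto a^*ma$ on the extended positive cone. First I would record what $a^*ma$ means for $m\in\wh M_+$: writing $m$ via its lower-semicontinuous-function representation, $m(\omega) = \sup$ of an increasing net of functionals, and $a^*ma$ is the element of $\wh M_+$ determined by $(a^*ma)(\omega) = m(a\omega a^*)$ where $(a\omega a^*)(x) = \omega(axa^*)$; I would check this is the correct formula by testing on $m = x\in M_+$, where $(a^*xa)(\omega) = \omega(a^*xa)$ and indeed $\omega(a^*xa) = (axa^*)\cdot$ evaluated appropriately, so the pairing is $m\mapsto m(a^*\cdot a)$ in the dual variable. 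Concretely the key algebraic fact is that for $\omega\in M_*^+$, the functional $x\mapsto\omega(a^*xa)$ is again in $M_*^+$, and the assignment dualizes the left-multiplication-by-$a^*\cdot a$ operation on $\wh M_+$.

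The core computation then reduces to showing
\[
\sup\{ \omega(a^*ma) : \omega\in\Phi_\varphi^0 \} = m(\omega_0),
\]
where $\omega_0(x)=\varphi(a^*xa)$. The essential step is to identify the net of functionals $\{ \omega(a^*\,\cdot\,a) : \omega\in\Phi_\varphi^0 \}$ and show its supremum, as a functional on the relevant part of $\wh M_+$, reproduces $\omega_0$. Since each $\omega\in\Phi_\varphi^0$ satisfies $\omega\le\varphi$ on $M_+$, we get $\omega(a^*xa)\le\varphi(a^*xa)=\omega_0(x)$ for $x\in M_+$, giving one inequality immediately. For the reverse, I would use normality of $\varphi$: because $\varphi(x)=\sup\{\omega(x):\omega\in\Phi_\varphi^0\}$ and $\omega_0(x)=\varphi(a^*xa)$, the functionals $\omega(a^*\cdot a)$ increase (in the directed sense, using that $\Phi_\varphi^0$ is directed) up to $\omega_0$ pointwise on $M_+$. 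Then for general $m\in\wh M_+$, which is an increasing limit of elements of $M_+$, I would invoke the monotone-limit property of $\hat\varphi$ from Proposition~\ref{prop:weight_ext} together with the monotone-convergence structure of $m$ itself to interchange suprema: $\hat\varphi(a^*ma) = \sup_\omega \sup_{x\nearrow m} \omega(a^*xa) = \sup_{x\nearrow m}\omega_0(x) = m(\omega_0)$, where the last equality is the definition of how $m\in\wh M_+$ pairs with the normal functional $\omega_0$.

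The main obstacle will be handling the general $m\in\wh M_+$ cleanly rather than just $m\in M_+$, since $\wh M_+$ elements are unbounded objects represented by lower-semicontinuous affine functions on $M_*^+$, and I must justify that $a^*ma$ is well-defined in $\wh M_+$ and that the two suprema (over $\Phi_\varphi^0$ and over the defining increasing net of $m$) may be interchanged. I expect this to follow from the fact that both families are directed and that all maps in sight preserve suprema of increasing nets: the map $x\mapsto a^*xa$ on $M_+$ extends to $\wh M_+$ preserving increasing suprema (this is standard for the extended positive cone, and follows from the definition of $\wh M_+$ as normal weights, cf.\ \cite[Chapter~IX.4]{TakesakiII}), and $\hat\varphi$ preserves such suprema by Proposition~\ref{prop:weight_ext}. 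Once these monotone-convergence facts are in place, the interchange of suprema is justified by the directedness and the whole identity collapses to the definition of $m(\omega_0)$. I would close by noting uniqueness of $\hat\varphi$ (also from Proposition~\ref{prop:weight_ext}) ensures this is the extension actually being used.
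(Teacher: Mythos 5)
Your proposal is correct, but it takes a genuinely different route from the paper's proof. The paper argues topologically: it observes that the increasing net $(a\omega a^*)_{\omega\in\Phi^0_\varphi}$ converges to $\omega_0$ weakly (pointwise on $M$), upgrades this to norm convergence using convexity of $\Phi^0_\varphi$ (Mazur's theorem), and then invokes the lower semicontinuity of $m$ as a function on $M_*^+$ to obtain $m(\omega_0)\leq\hat\varphi(a^*ma)$, the reverse inequality coming from $a\omega a^*\leq\omega_0$ and monotonicity of $m$. You instead argue order-theoretically: writing $m=\sup_\alpha x_\alpha$ as an increasing supremum of elements of $M_+$, the defining formula $(a^*ma)(\omega)=m(a\omega a^*)$ gives $(a^*ma)(\omega)=\sup_\alpha\omega(a^*x_\alpha a)$, and the whole identity collapses to an interchange of two suprema:
\[
\hat\varphi(a^*ma)=\sup_{\omega\in\Phi^0_\varphi}\sup_\alpha \omega(a^*x_\alpha a)
=\sup_\alpha\sup_{\omega\in\Phi^0_\varphi}\omega(a^*x_\alpha a)
=\sup_\alpha\omega_0(x_\alpha)=m(\omega_0),
\]
using only normality of $\varphi$ (so that $\sup_{\omega\in\Phi^0_\varphi}\omega(y)=\varphi(y)$ for $y\in M_+$) and the fact that increasing suprema in $\wh M_+$ are computed pointwise on $M_*^+$. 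Your route avoids the lsc/Mazur step entirely, and as written it does not even need the "easy" inequality as a separate step; its cost is the structure theorem that every $m\in\wh M_+$ is an increasing limit of elements of $M_+$ (via the spectral resolution, \cite[Chapter~IX.4]{TakesakiII}). Since the paper itself invokes exactly this fact for the uniqueness claim in Proposition~\ref{prop:weight_ext}, this is not an extra hypothesis in context, and both proofs stand on the same foundations.

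One slip you should fix: you define $(a\omega a^*)(x)=\omega(axa^*)$, which is the wrong convention. With that definition, $(a^*ma)(\omega)=m(a\omega a^*)$ would extend the map $y\mapsto aya^*$ on $M_+$ rather than $y\mapsto a^*ya$, and the lemma would not reduce correctly to the bounded case. The correct pairing, which your subsequent computation in fact uses consistently, is $(a\omega a^*)(x)=\omega(a^*xa)$; your consistency check "testing on $m=x\in M_+$" is garbled as written and should be redone to verify precisely this compatibility.
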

\begin{proof}
Recall that $a^*ma \in \widehat{M}_+$ is defined by $(a^*ma)(\omega) = m(a\omega a^*)$ for $\omega\in M_*^+$.
For $x \in M_+$, as $\omega$ increases in $\Phi_\varphi^0$ (which is a directed set) we have that $(a\omega a^*)(x) = \omega(a^*xa) \uparrow \varphi(a^*xa) = \omega_0(x)$.  By linearity, $(a\omega a^*)(x) \to \omega_0(x)$ for each $x\in M$, and so the weak closure of $C = \{ a\omega a^* : \omega\in \Phi_\varphi^0 \}$ contains $\omega_0$.  As $C$ is a convex set (because $\Phi_\varphi^0$ is) it follows that the norm closure of $C$ contains $\omega_0$.  As $(a \omega a^*)$ is an increasing net, and as any $m\in \widehat{M}_+$ is lower semi-continuous, $m(\omega_0) \leq \liminf m(a\omega a^*) = \sup (a^*ma)(\omega) = \hat\varphi(a^*ma)$.

As any $\omega\in \Phi_\varphi^0$ satisfies $\omega(x)\leq\varphi(x)$ for $x\geq 0$, also $a\omega a^*(x) = \omega(a^*xa) \leq \varphi(a^*xa) = \omega_0(x)$, so that $a\omega a^* \leq \omega_0$.  Thus $m(a\omega a^*) \leq m(\omega_0)$ and taking the supremum shows that $\hat\varphi(a^*ma) \leq m(\omega_0)$.  We hence have equality, as claimed.
\end{proof}

\subsection{Slicing with weights}\label{sec:slicing_with_weights}

Let $N,M$ be von Neumann algebras, let $\psi$ be a weight on $N$, and define an operator-valued weight $\psi\otimes\id \colon (N\vnten M)_+ \to \wh{M}_+$ (where we identify $M\cong 1\vnten M$ as a subalgebra of $N\vnten M$) by
\[ \ip{(\psi\otimes\id)(x)}{\omega} = \psi\big( (\id\otimes\omega)(x) \big)
\qquad (\omega \in M_*^+, x\in (N\vnten M)_+). \]
It is a routine check that this is an operator-valued weight.  We see that $x \in \mf n_{\varphi\otimes\id}$ exactly when there is $z\in M^+$ with
\[ \ip{z}{\omega} = \varphi( (\id\otimes\omega)(x^*x) ) \qquad (\omega\in M_*^+). \]

Given a weight $\varphi$ on $M$, we extend $\varphi$ to $\wh M_+$ as in Section~\ref{sec:ext_weights}, and then we can make sense of $\varphi \circ (\psi\otimes\id)$.  This is one way to define the tensor product of weights.  Alternatively, it is natural to take the tensor products of the underlying Hilbert algebras: this is the approach of \cite[Section~VIII.4]{TakesakiII} and \cite[Section~8]{Stratila_ModTheoryBook2}.  It is perhaps surprising that it is not obvious that these two constructions give the same result.  Form $\Phi^0_\varphi$ and $\Phi^0_\psi$ as in Section~\ref{sec:ext_weights}.  Then, for $x\in (N\vnten M)_+$,
\begin{align*}
\varphi(\psi\otimes\id)(x)
&= \sup_{\omega\in\Phi^0_\varphi} \psi((\id\otimes\omega)(x))
= \sup_{\omega\in\Phi^0_\varphi} \sup_{\tau\in\Phi^0_\psi} (\tau\otimes\omega)(x)
= \sup_{\tau\in\Phi^0_\psi} \varphi( (\tau\otimes\id)(x) )
= \psi(\id\otimes\varphi)(x),
\end{align*}
where we define $\id\otimes\varphi$ in the obvious way.  Notice here we implicitly used the extension procedure of Proposition~\ref{prop:weight_ext}.  Thus we have a natural symmetry, and furthermore, from \cite[Proposition~8.3]{Stratila_ModTheoryBook2}, it immediately follows from this calculation that $\varphi(\psi\otimes\id) = \psi\otimes\varphi$.

We apply the construction of Section~\ref{sec:op_valued_weights} to $\psi\otimes\id$.  So we consider $L^2(N\vnten M)$ constructed from the weight $\varphi\circ(\psi\otimes\id) = \psi\otimes\varphi$, which by construction means that $L^2(N\vnten M) = L^2(N,\psi) \otimes L^2(M, \varphi)$.  The self-dual completion of $E^0_{\psi\otimes\id}$ is hence $\mc B_{M}(L^2(M), L^2(N) \otimes L^2(M))$.  As the tensor product of weights gives rise to the tensor product of the modular conjugations, the right action of $M$ on $L^2(N) \otimes L^2(M)$ is just the action on the $L^2(M)$ factor.

For $x\in \mf n_{\psi\otimes\id}$ we hence obtain $\hat x \colon L^2(M) \to L^2(N) \otimes L^2(M)$.  We'll write $\Lambda_{\psi\otimes\id}(x)$ for this map, by analogy with the GNS construction.  Then 
\[ \Lambda_{\psi\otimes\id}(x) \colon \Lambda(a) \mapsto \Lambda(x(1\otimes a))
\qquad (x\in \mf n_{\psi\otimes\id}\subseteq N\vnten M, a\in \mf n_{\varphi}\subseteq M). \]
It is easy to see that $\mf n_\psi \odot M \subseteq \mf n_{\psi\otimes\id}$.  So for $a\in\mf n_\varphi$ and $x\in\mf n_\psi$, we have $x\otimes 1 \in \mf n_{\psi\otimes\id}$ and $\Lambda_{\psi\otimes\id}(x\otimes 1)\Lambda(a) = \Lambda(x\otimes a)$, and so we clearly obtain all of $L^2(N) \otimes L^2(M)$, compare Proposition~\ref{prop:HT_is_L2_tildephi}.

This construction is an analogue of the $C^*$-algebraic construction in \cite[Section~3]{kustermans1999weighttheorycalgebraicquantum}, see in particular Proposition~3.18 in that paper.  Analogously, in our setting, we see that $\Lambda_{\varphi\otimes\id}(yx) = \pi(y) \Lambda_{\varphi\otimes\id}(x)$ for $x\in \mf n_{\varphi\otimes\id}, y\in N\vnten M$, and where $\pi$ is the natural representation of $N\vnten M$ on $L^2(N) \otimes L^2(M)$.

\begin{lemma}\label{lem:slice_alt}
Let $x\in \mf n_{\psi\otimes\id}$.  For $\xi,\eta\in L^2(M)$ we have
$y = (\id\otimes\omega_{\xi,\eta})(x^*) \in \mf n_{\psi}^*$, and
\begin{equation}
\big( \Lambda_{\psi\otimes\id}(x)\xi \big| \Lambda(c)\otimes \eta \big)
= \psi\big( (\id\otimes\omega_{\xi,\eta})(x^*) c  \big)
\qquad (c\in\mf n_{\psi}).
\label{eq:opvaluedslice}  
\end{equation}
\end{lemma}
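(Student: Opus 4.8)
The plan is to establish both assertions---that $y=(\id\otimes\omega_{\xi,\eta})(x^*)\in\mf n_\psi^*$ and the slice formula \eqref{eq:opvaluedslice}---simultaneously, by reducing to the defining property of $\Lambda_{\psi\otimes\id}$ together with the explicit description of the GNS construction for the tensor weight $\psi\otimes\varphi$. Recall from the discussion preceding the lemma that $L^2(N\vnten M)=L^2(N,\psi)\otimes L^2(M,\varphi)$, that $\Lambda_{\psi\otimes\id}(x)\Lambda(a)=\Lambda_{\psi\otimes\varphi}(x(1\otimes a))$ for $a\in\mf n_\varphi$, and that $\mf n_\psi\odot M\subseteq\mf n_{\psi\otimes\id}$ with $\Lambda_{\psi\otimes\id}(c\otimes 1)\Lambda(a)=\Lambda_{\psi\otimes\varphi}(c\otimes a)=\Lambda_\psi(c)\otimes\Lambda(a)$ for $c\in\mf n_\psi$. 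These formulae are the only inputs I expect to need.

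First I would verify that $y\in\mf n_\psi^*$, equivalently $y^*=(\id\otimes\omega_{\eta,\xi})(x)\in\mf n_\psi$. The natural route is to test $\Lambda_{\psi\otimes\id}(x)$ against the vectors $\Lambda_\psi(c)\otimes\Lambda(a)$ that span a dense subspace, using the self-duality/GNS identity above. Concretely, for $c\in\mf n_\psi$ and $a\in\mf n_\varphi$, the inner product $(\Lambda_{\psi\otimes\id}(x)\xi\,|\,\Lambda_\psi(c)\otimes\eta)$ can be rewritten, by approximating $\eta$ by vectors $\Lambda(a)$ and using $\Lambda_{\psi\otimes\id}(c^*\otimes 1)^*\Lambda_{\psi\otimes\varphi}(x(1\otimes a))=\Lambda_{\psi\otimes\id}(x)^*(\Lambda_\psi(c)\otimes\Lambda(a))$ together with the bimodule relation $\Lambda_{\psi\otimes\id}(x)^*\pi(c\otimes 1)=\ldots$, as the pairing $(\Lambda(a)\,|\,(\id\otimes\omega_{\xi,\cdot})(x^*)c\,\cdot)$. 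This is exactly the kind of computation carried out in the $C^*$-algebraic analogue \cite[Section~3]{kustermans1999weighttheorycalgebraicquantum}; the boundedness of $\Lambda_{\psi\otimes\id}(x)$ then shows the functional $c\mapsto\psi((\id\otimes\omega_{\eta,\xi})(x)\cdot)$ extends boundedly, giving $y^*\in\mf n_\psi$ and simultaneously producing \eqref{eq:opvaluedslice}.

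The cleanest organization is probably to first prove \eqref{eq:opvaluedslice} for $\xi=\eta=\Lambda(a)$ with $a$ ranging over a suitable dense subalgebra (so that $x(1\otimes a)\in\mf n_{\psi\otimes\varphi}$ is directly accessible), identify both sides via $\Lambda_{\psi\otimes\varphi}$ and the tensor-product structure of $\psi\otimes\varphi$, and then extend by sesquilinearity in $(\xi,\eta)$ and by the density of $\{\Lambda(a):a\in\mf n_\varphi\}$ in $L^2(M)$, using continuity of both sides in $\xi,\eta$. The main obstacle I anticipate is the bookkeeping required to move rigorously between the slice map $(\id\otimes\omega_{\xi,\eta})$ applied to $x^*\in N\vnten M$ and the action of $\Lambda_{\psi\otimes\id}(x)^*$ on product vectors: one must be careful that the slice lands in $N$ (not merely in a weak sense) and that the extension of \eqref{eq:opvaluedslice} from product vectors to general $\xi,\eta$ is justified by a genuine continuity/boundedness estimate rather than a formal manipulation. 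Once the boundedness of $\Lambda_{\psi\otimes\id}(x)$ is invoked to control $\|y^*\Lambda_\psi(c)\|$, the membership $y\in\mf n_\psi^*$ and the identity \eqref{eq:opvaluedslice} follow together, and the proof concludes by density.
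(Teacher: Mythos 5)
Your handling of the identity \eqref{eq:opvaluedslice} itself is essentially the paper's argument: evaluate at $\xi=\Lambda(a)$, $\eta=\Lambda(b)$ with $a,b\in\mf n_\varphi$, use $L^2(N\vnten M)=L^2(N)\otimes L^2(M)$ together with the $N$-bimodule property of $\psi\otimes\id$ (so that $(\psi\otimes\varphi)\big((1\otimes a^*)x^*(c\otimes b)\big)=\varphi\big(a^*(\psi\otimes\id)(x^*(c\otimes 1))b\big)$), and then extend by density of $\Lambda(\mf n_\varphi)$ and continuity. That part of your plan matches the paper, up to your preference for polarising from $\xi=\eta$.

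The genuine gap is in your argument that $y=(\id\otimes\omega_{\xi,\eta})(x^*)\in\mf n_\psi^*$. You propose to deduce this from boundedness of the functional $c\mapsto\psi(yc)$, but that functional is not even defined until you know $yc\in\mf m_\psi$ for $c\in\mf n_\psi$ --- and that is exactly what membership $y\in\mf n_\psi^*$ is needed for, so as stated the argument is circular. The non-circular variant --- use the left-hand side of \eqref{eq:opvaluedslice} to define a bounded functional of $\Lambda_\psi(c)$, obtain a Riesz vector $\zeta$, and then conclude $y^*\in\mf n_\psi$ with $\Lambda_\psi(y^*)=\zeta$ --- requires the left-bounded-vector characterisation of $\mf n_\psi$ from left Hilbert algebra theory (the weight analogue of Theorem~\ref{thm:nT_vs_module}); mere boundedness of a functional never by itself certifies membership in $\mf n_\psi$, and your proposal does not identify this ingredient. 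The paper avoids all of this with a one-line operator inequality:
\[ yy^* = (\id\otimes\omega_{\xi})\big(x^*(1\otimes\rankone{\eta}{\eta})x\big) \leq \|\eta\|^2 (\id\otimes\omega_{\xi})(x^*x), \]
where the right-hand side lies in $\mf m_\psi^+$ precisely because $x\in\mf n_{\psi\otimes\id}$, and heredity of the cone $\mf m_\psi^+$ gives $y^*\in\mf n_\psi$. Note moreover that this inequality yields the estimate $\|\Lambda_\psi(y^*)\|\leq\|\xi\|\,\|\eta\|\,\|(\psi\otimes\id)(x^*x)\|^{1/2}$, which is exactly what justifies the ``extend by continuity in $\xi,\eta$'' step you flag as a worry: continuity of the right-hand side of \eqref{eq:opvaluedslice} in $(\xi,\eta)$ is not formal, it is this bound. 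So the operator inequality is not merely a shortcut for the membership claim; it also supplies the estimate your final density argument is missing.
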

\begin{proof}
Firstly,
\[ yy^* = (\id\otimes\omega_{\xi,\eta})(x^*) (\id\otimes\omega_{\eta,\xi})(x)
= (\id\otimes\omega_{\xi})\big( x^*(1\otimes\rankone{\eta}{\eta})x \big)
\leq \|\eta\|^2 (\id\otimes\omega_{\xi})(x^*x) \in \mf m_\psi^+, \]
so indeed $y^* \in \mf n_\psi$.

Now let $c\in\mf n_{\psi}$, let $a,b\in\mf n_\varphi$ and set $\xi=\Lambda(a), \eta=\Lambda(b)$, so
\begin{align*}
\big( \Lambda_{\varphi\otimes\id}(x)\xi \big| \Lambda(c)\otimes \eta \big)
&= \big( \Lambda(x(1\otimes a)) \big| \Lambda(c\otimes b) \big)
= (\psi\otimes\varphi)\big( (1\otimes a^*)x^*(c\otimes b) \big) \notag\\
&= \varphi\big( a^* (\psi\otimes\id)(x^*(c\otimes 1)) b \big)
= \ip{(\psi\otimes\id)(x^*(c\otimes 1))}{\omega_{\xi,\eta}} \notag\\
&= \psi\big( (\id\otimes\omega_{\xi,\eta})(x^*(c\otimes 1))  \big)
= \psi\big( (\id\otimes\omega_{\xi,\eta})(x^*) c  \big).
\end{align*}
By continuity, this holds for all $\xi,\eta$.
\end{proof}

\section{Weights on bounded operators}\label{sec:weights_bh}

We consider weights on $\mc B(H)$.  As $\mc B(H)$ is semi-finite, every weight is of the form $\Tr_h$ where $h$ is a positive operator on $H$, \cite[Theorem~VIII.3.14]{TakesakiII}.  This is defined by
\[ \Tr_h(x) = \lim_{\epsilon\to0} \Tr(h_\epsilon^{1/2}xh_\epsilon^{1/2})
\qquad (x\in\mc B(H)_+), \]
where $h_\epsilon = h(1+\epsilon h)^{-1}$.
As we'll shortly see, $\Tr(h_\epsilon^{1/2}xh_\epsilon^{1/2})$ increases as $\epsilon\downarrow0$, so the limit exists in $[0,\infty]$.  Furthermore, $\Tr_h$ is faithful if and only if $h(\xi)\not=0$ for each $0\not=\xi\in D(h)$, see \cite[Lemma~VII.2.8]{TakesakiII}.

In this section, we wish to explictly identify $L^2(\Tr_h)$ with $HS(H)$, the Hilbert--Schmidt operators on $H$, and to explicitly describe $\mf n_{\Tr_h}$.  We're not aware of a reference for such calculations, and as they are central to our arguments, here we give the details.

We recall the spectral theory for self-adjoint, unbounded operators, see \cite[Chapter~13]{Rudin_FunctionalAnalysis} and \cite[Chapter~X]{Conway_FunctionalAnalysisBook}; we follow the latter for notation.

By \cite[Theorem~X.4.7]{Conway_FunctionalAnalysisBook} given a spectral measure $E$ on a measure space $(X,\Omega)$, and given $\phi\colon X\to\mathbb R$ a measurable function, setting
\[ D_\phi = \big\{ \xi\in H : \int |\phi|^2 \rd E_{\xi,\xi} < \infty \big\}, \]
there is a self-adjoint operator $N_\phi \colon D_\phi \to H$ satisfying
\[ (\xi|N_\phi\eta) = \int \phi \rd E_{\xi,\eta} \qquad (\eta\in D_\phi, \xi\in H), \]
noting that $\phi\in D_\phi \implies \phi \in L^1(|E_{\xi,\eta}|)$.  Furthermore, $\| N_\phi \eta \|^2 = \int |\phi|^2 \rd E_{\eta,\eta}$ for $\eta\in D_\phi$.
We have that $N_\phi^* = N_{\overline\phi}$, that $N_{\phi\psi} \supseteq N_\phi N_\psi$ with $D(N_\phi N_\psi) = D_\psi \cap D_{\phi\psi}$, and that $N_\phi^*N_\phi = N_{|\phi|^2}$.  If $\psi$ is bounded, then $N_{\phi\psi} = N_\phi N_\psi = N_\psi N_\phi$.

The Spectral Theorem states that if $T$ is self-adjoint, then there is a spectral measure on $\mathbb R$ such that $T = N_t$ where $t\colon \mathbb R\to\mathbb R$ is the identity, and that $E$ is concentrated on $\sigma(T)$.  In particular, letting $T$ be positive, $E$ is a measure on $[0,\infty)$.  Then $N_{t^{1/2}} N_{t^{1/2}} = N_t = T$ which means in particular that for $\xi\in D(T)$ we have $\xi\in D(T^{1/2})$ and $T^{1/2}\xi \in D(T^{1/2})$.

Returning to defining $\Tr_h$, we define $h_\epsilon = h(1+\epsilon h)^{-1}$ by the Spectral Theorem.  Notice that the function $\phi_\epsilon \colon \mathbb R \to \mathbb R; t \mapsto t / (1+\epsilon t)$ has $\phi_\epsilon(0)=0$, sends $(0,\infty)$ to $(0,\infty)$, satisfies $\phi_\epsilon(t) = 1 / (1/t + \epsilon) \leq 1/\epsilon$ and $\lim_{t\to\infty} \phi_\epsilon(t) = 1/\epsilon$, while for $t$ fixed, $\phi_\epsilon(t) \uparrow t$ as $\epsilon\downarrow 0$.

As $\phi_\epsilon$ is bounded, $h_\epsilon = h(1+\epsilon h)^{-1} = N_{\phi_\epsilon} \in \mc B(H)$.  As $(\phi_\epsilon)$ is (pointwise) increasing as $\epsilon\downarrow 0$, we see that $(\phi_\epsilon)$ increases, and so for $x\geq 0$ we have $\Tr(h_\epsilon^{1/2} x h_\epsilon^{1/2}) = \Tr(x^{1/2} h_\epsilon x^{1/2})$ is increasing.  In particular, the limit used to define $\Tr_h$ exists.

Let $\xi\in D(h^{1/2}) = D_{t^{1/2}}$ so $\int t \rd E_{\xi,\xi} < \infty$.  Then
\[ \Tr(h_\epsilon^{1/2} \rankone{\xi}{\xi} h_\epsilon^{1/2})
= \|h_\epsilon^{1/2}\xi\|^2 = (\xi|h_\epsilon\xi)
= \int t(1+\epsilon t)^{-1} \rd E_{\xi,\xi}. \]
As $t(1+\epsilon t)^{-1} \uparrow t$ as $\epsilon \downarrow 0$, by the Dominated Convergence Theorem, $\lim_{\epsilon\downarrow 0} \Tr(h_\epsilon^{1/2} \rankone{\xi}{\xi} h_\epsilon^{1/2}) = (\xi|h\xi) = \|h^{1/2}\xi\|^2$.
By polarisation, for $\xi,\eta\in D(h^{1/2})$ we have $\Tr_h(\rankone{\eta}{\xi}) = (h^{1/2}\xi|h^{1/2}\eta)$.

For the following, recall that $D(h^{1/2}x) = \{ \xi : x\xi \in D(h^{1/2})\}$ and by saying that $h^{1/2}x$ is bounded, we implicitly mean that $D(h^{1/2}x)$ is dense, with the closure of the operator being bounded.  

\begin{lemma}\label{lem:domain_S_Tr_h}
For any $x\in\mc B(H)$ we have that $(x^*h^{1/2})^*=h^{1/2}x$.
We have that $x^*h^{1/2}$ is bounded if and only if $h^{1/2} x$ is bounded, if and only if $D(h^{1/2}x) = H$.  As such, $x^*h^{1/2} \in HS(H)$ if and only if $h^{1/2}x\in HS(H)$.
\end{lemma}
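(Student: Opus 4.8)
The plan is to prove the statement about unbounded operators $h^{1/2}x$ and $x^*h^{1/2}$ using the theory of adjoints of (possibly unbounded) operators, together with the fact that $h^{1/2}$ is a densely-defined self-adjoint operator. The key observation is that all three conditions --- $x^*h^{1/2}$ bounded, $h^{1/2}x$ bounded, $D(h^{1/2}x)=H$ --- are linked through the adjoint relation $(x^*h^{1/2})^* = h^{1/2}x$, so the bulk of the work is establishing this identity carefully.

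First I would establish $(x^*h^{1/2})^* = h^{1/2}x$. Here $x^*h^{1/2}$ has domain $D(h^{1/2})$ (as $x^*$ is bounded and everywhere-defined), which is dense, so the adjoint is well-defined. Working from the definition of the adjoint: $\eta \in D((x^*h^{1/2})^*)$ with $(x^*h^{1/2})^*\eta = \zeta$ precisely when $(x^*h^{1/2}\xi \mid \eta) = (\xi \mid \zeta)$ for all $\xi\in D(h^{1/2})$. Rewriting the left side as $(h^{1/2}\xi \mid x\eta)$, this says exactly that $x\eta \in D((h^{1/2})^*) = D(h^{1/2})$ (using $h^{1/2}$ self-adjoint) with $h^{1/2}x\eta = \zeta$. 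That is, $\eta\in D(h^{1/2}x)$ with $h^{1/2}x\eta = \zeta$, giving the identity. The main subtlety here is keeping track of which operators are self-adjoint: the self-adjointness of $h^{1/2}$ (from the spectral theorem, as $h\geq0$) is what converts $(h^{1/2})^*$ back to $h^{1/2}$.

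Next I would derive the equivalences. If $x^*h^{1/2}$ is bounded, then being densely defined it has a bounded everywhere-defined closure, whose adjoint $h^{1/2}x = (x^*h^{1/2})^*$ is then also bounded and everywhere-defined, so in particular $D(h^{1/2}x)=H$. Conversely, if $D(h^{1/2}x)=H$, then $h^{1/2}x$ is a closed operator (being the adjoint $(x^*h^{1/2})^*$, and adjoints are always closed) defined on all of $H$, so by the Closed Graph Theorem it is bounded; then $x^*h^{1/2} \subseteq (h^{1/2}x)^*$ is bounded as well. Boundedness of $h^{1/2}x$ trivially implies $D(h^{1/2}x)=H$ once we note a bounded closed operator is everywhere-defined, closing the cycle.

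Finally, for the Hilbert--Schmidt claim, I would use that the adjoint (in the sense of bounded operators, once both sides are bounded) of a Hilbert--Schmidt operator is again Hilbert--Schmidt with the same HS-norm, since the adjoint is an isometry on $HS(H)$. As $x^*h^{1/2}$ and $h^{1/2}x$ are mutual adjoints once bounded, one lies in $HS(H)$ iff the other does. I expect the main obstacle to be the careful handling of domains and closures in the adjoint computation --- specifically ensuring the identity $(x^*h^{1/2})^* = h^{1/2}x$ holds as an equality of operators (equal domains, not just one extending the other), which hinges on correctly invoking self-adjointness of $h^{1/2}$ rather than merely symmetry.
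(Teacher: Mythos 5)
Your proposal is correct and follows essentially the same route as the paper: both establish $(x^*h^{1/2})^*=h^{1/2}x$ from the definition of the adjoint (the paper phrases this as an equality of graphs, you phrase it element-wise, using self-adjointness of $h^{1/2}$ in the same key step), and both then derive the boundedness equivalences from closedness of adjoints plus the Closed Graph Theorem, with the Hilbert--Schmidt claim following from adjoint-invariance of $HS(H)$.
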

\begin{proof}
For any $x$, we have that
\begin{align*}
G((x^*h^{1/2})^*)
&= \{ (\alpha,\beta) : (\alpha|x^*h^{1/2}\xi) = (\beta|\xi) \ (\xi\in D(h^{1/2})) \} \\
&= \{ (\alpha,\beta) : (x\alpha,\beta) \in G((h^{1/2})^*) = G(h^{1/2}) \} \\
&= \{ (\xi, h^{1/2}x\xi) : x\xi\in D(h^{1/2}) \} = G(h^{1/2}x),
\end{align*}
in the middle step using that $h^{1/2}$ is self-adjoint.  So always $(x^*h^{1/2})^* = h^{1/2}x$, we see that $h^{1/2}x$ is always closed, and $x^*h^{1/2}$ is closable if and only if $h^{1/2}x$ is densely-defined.

If $x^*h^{1/2}$ is bounded, its adjoint $h^{1/2}x$ is bounded, and if $h^{1/2}x$ is bounded, it is everywhere defined (as it's always closed).  If $D(h^{1/2}x) = H$ then $h^{1/2}x$ is closed and everywhere defined, so bounded by the Closed Graph Theorem.  Finally, if $h^{1/2}x$ is bounded, then $x^*h^{1/2}$ is closable, with closure $(x^*h^{1/2})^{**} = (h^{1/2}x)^*$ which is bounded because $h^{1/2}x$ is.
\end{proof}

\begin{proposition}\label{prop:Trh}
For $x\in\mc B(H)$ the following are equivalent:
\begin{enumerate}[(1)]
  \item\label{prop:Trh:1}
  $x\in\mf n_{\Tr_h}$;
  \item\label{prop:Trh:2}
  for some (equivalently, any) orthonormal basis $(e_n)$ we have that $x^*e_n \in D(h^{1/2})$ for each $n$, and $\sum_n \| h^{1/2} x^* e_n \|^2 < \infty$;
  \item\label{prop:Trh:3}
  $x h^{1/2}$ is bounded, with extension to all of $H$ giving an operator in $HS(H)$.
\end{enumerate}
In this case, $\Tr_h(x^*x) = \|xh^{1/2}\|_{HS}^2$ and so there is a unitary $u \colon L^2(\Tr_h) \to HS(H) = H\otimes\overline H$ satisfying $u\Lambda_{\Tr_h}(x) = xh^{1/2}$.
\end{proposition}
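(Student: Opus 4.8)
The plan is to prove the equivalence of the three conditions by first establishing the key computation $\Tr_h(x^*x) = \|xh^{1/2}\|_{HS}^2$ (suitably interpreted) and then reading off the equivalences, with Lemma~\ref{lem:domain_S_Tr_h} doing the heavy lifting to connect $xh^{1/2}$ and $h^{1/2}x^*$. First I would use the spectral-theoretic computation already set up in the excerpt: for $\xi,\eta\in D(h^{1/2})$ we have $\Tr_h(\rankone{\eta}{\xi}) = (h^{1/2}\xi|h^{1/2}\eta)$. The natural strategy for a general positive $x^*x$ is to fix an orthonormal basis $(e_n)$ and write, using that $\Tr_h$ agrees with an increasing limit of ordinary traces $\Tr(h_\epsilon^{1/2}(\cdot)h_\epsilon^{1/2})$,
\[ \Tr_h(x^*x) = \lim_{\epsilon\to 0} \sum_n (e_n | h_\epsilon^{1/2} x^*x h_\epsilon^{1/2} e_n) = \lim_{\epsilon\to 0} \sum_n \|x h_\epsilon^{1/2} e_n\|^2. \]
Since $h_\epsilon^{1/2} \uparrow$ in the appropriate sense and each summand is monotone in $\epsilon$, I would interchange the limit and the sum (monotone convergence, everything being non-negative) to obtain $\sum_n \lim_\epsilon \|x h_\epsilon^{1/2} e_n\|^2$.

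The crux is then to identify $\lim_\epsilon \|x h_\epsilon^{1/2} e_n\|^2$ with $\|x h^{1/2} e_n\|^2$ when $e_n \in D(h^{1/2})$, and to show that finiteness of the total sum forces $e_n \in D(h^{1/2})$ (or rather, the relevant domain condition) in the first place. For the direction \ref{prop:Trh:2}$\Rightarrow$\ref{prop:Trh:3}, I would argue that if $x^*e_n \in D(h^{1/2})$ with $\sum_n\|h^{1/2}x^*e_n\|^2 < \infty$, then $h^{1/2}x^*$ is a densely-defined operator whose Hilbert--Schmidt norm (computed against the basis) is finite, hence $h^{1/2}x^*$ is bounded and Hilbert--Schmidt; by Lemma~\ref{lem:domain_S_Tr_h} (applied with $x$ replaced by $x^*$, giving $(xh^{1/2})^* = h^{1/2}x^*$) this is equivalent to $xh^{1/2}$ being bounded and in $HS(H)$, with equal Hilbert--Schmidt norm. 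The symmetry provided by that lemma is exactly what lets me pass freely between the ``$x^*e_n$'' formulation of \ref{prop:Trh:2} and the ``$xh^{1/2}$'' formulation of \ref{prop:Trh:3}, and to check that the basis-independence claim in \ref{prop:Trh:2} holds.

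For \ref{prop:Trh:1}$\Leftrightarrow$\ref{prop:Trh:3}, I would combine the displayed computation with the spectral monotone-convergence argument: $\Tr_h(x^*x)$ equals $\sum_n \lim_\epsilon\|xh_\epsilon^{1/2}e_n\|^2$, and I claim this equals $\|xh^{1/2}\|_{HS}^2$ whenever the latter is finite, and equals $+\infty$ otherwise. Concretely, writing $\|xh_\epsilon^{1/2}e_n\|^2 = (e_n|h_\epsilon^{1/2}x^*xh_\epsilon^{1/2}e_n)$ and using the spectral representation of $h_\epsilon^{1/2} = N_{\phi_\epsilon^{1/2}}$, monotone convergence in $\epsilon$ gives the limit as $\|xh^{1/2}e_n\|^2$ precisely when $e_n\in D(h^{1/2})$, with divergence signalling failure of the domain condition. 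Summing over $n$ and invoking the equivalence already established between the ``basis'' condition and boundedness/Hilbert--Schmidtness of $xh^{1/2}$ then yields $\Tr_h(x^*x) = \|xh^{1/2}\|_{HS}^2$ in all cases. Finally, the unitary $u$ is obtained by noting that $\mf n_{\Tr_h}$ is dense, that $x \mapsto xh^{1/2}$ is an isometry from $\Lambda_{\Tr_h}(\mf n_{\Tr_h})$ into $HS(H)$ by the norm identity, and that its range is dense (e.g. finite-rank operators supported on the spectral subspaces of $h$ are in the range); so $u$ extends to a unitary onto $HS(H) = H\otimes\overline H$.

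The main obstacle I anticipate is the rigorous justification of the interchange of the $\epsilon\to 0$ limit with the infinite sum over the basis, together with the careful spectral-theoretic bookkeeping needed to conclude $\lim_\epsilon\|xh_\epsilon^{1/2}e_n\|^2 = \|xh^{1/2}e_n\|^2$ with the correct domain behaviour. Both monotonicities (in $\epsilon$ for fixed $n$, and the partial sums in $n$) point the same direction, so monotone convergence should apply cleanly, but one must be careful that ``$xh_\epsilon^{1/2}e_n$'' genuinely converges to ``$xh^{1/2}e_n$'' only on the spectral domain, and that the bounded-operator manipulations ($h_\epsilon^{1/2}\in\mc B(H)$ versus the unbounded $h^{1/2}$) are handled via the functional calculus rather than formal algebra. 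Lemma~\ref{lem:domain_S_Tr_h} removes the remaining friction by reducing questions about the closability and boundedness of the one-sided products to a single clean statement.
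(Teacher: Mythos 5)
There is a genuine gap, and it sits exactly at the step you flag as the crux: the claim that ``each summand is monotone in $\epsilon$''. For fixed $n$, the quantity $\|xh_\epsilon^{1/2}e_n\|^2 = (h_\epsilon^{1/2}e_n\,|\,x^*x\,h_\epsilon^{1/2}e_n)$ is \emph{not} monotone as $\epsilon\downarrow0$: although $h_\epsilon$ increases, the map $c\mapsto cac$ (fixed $a=x^*x\geq0$, increasing positive $c$) is not order preserving. Already for $2\times2$ matrices, $c=\operatorname{diag}(1,0)\leq 1=d$ and $a$ the all-ones matrix give $dad-cac=\left(\begin{smallmatrix}0&1\\1&1\end{smallmatrix}\right)\not\geq0$; note $c,d$ commute here, just as the $h_\epsilon$ do, so this is faithful to your setting. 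Only the \emph{total} sum is monotone, because cyclicity of the trace rewrites it as $\Tr((x^*x)^{1/2}h_\epsilon(x^*x)^{1/2})$, conjugation by a \emph{fixed} operator. So your monotone-convergence interchange of $\lim_\epsilon$ with $\sum_n$ is unjustified, and with it falls the identity $\Tr_h(x^*x)=\sum_n\lim_\epsilon\|xh_\epsilon^{1/2}e_n\|^2$. A second, related error: your domain bookkeeping is on the wrong side. Conditions \ref{prop:Trh:2} and \ref{prop:Trh:3} concern $x^*e_n\in D(h^{1/2})$ and the operator $xh^{1/2}$ with domain $D(h^{1/2})$; they say nothing about $e_n\in D(h^{1/2})$, and finiteness of $\Tr_h(x^*x)$ does not force it. For instance, with $h$ unbounded pick a basis containing $e_1\notin D(h^{1/2})$ and take $x=\rankone{\xi}{\eta}$ with $\eta\in D(h^{1/2})$: then $x\in\mf n_{\Tr_h}$, yet the expression $\|xh^{1/2}e_1\|$ in your formula is undefined, while $\lim_\epsilon\|xh_\epsilon^{1/2}e_1\|^2=|(h^{1/2}\eta|e_1)|^2\|\xi\|^2$ is finite — so the claimed dichotomy ``finite limit iff domain condition'' is false in both directions.

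The paper repairs exactly this with one extra move you can splice into your argument: for each \emph{fixed} $\epsilon$, use $\|xh_\epsilon^{1/2}\|_{HS}=\|h_\epsilon^{1/2}x^*\|_{HS}$ (a double-sum identity over matrix coefficients, no limits involved) to rewrite $\sum_n\|xh_\epsilon^{1/2}e_n\|^2=\sum_n\|h_\epsilon^{1/2}x^*e_n\|^2$. Now each summand equals
\[
\|h_\epsilon^{1/2}x^*e_n\|^2=\int t(1+\epsilon t)^{-1}\rd E_{x^*e_n,x^*e_n},
\]
an integral of functions increasing in $\epsilon$ against the \emph{fixed} measure $E_{x^*e_n,x^*e_n}$, so monotone convergence applies termwise and to the double limit; moreover finiteness of the supremum over $\epsilon$ is literally the statement that $x^*e_n\in D(h^{1/2})$ with $\sum_n\|h^{1/2}x^*e_n\|^2<\infty$, so the correct domain condition of \ref{prop:Trh:2} falls out automatically instead of having to be imposed. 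Your remaining steps — \ref{prop:Trh:2}$\Rightarrow$\ref{prop:Trh:3} via closedness of $h^{1/2}x^*=(xh^{1/2})^*$ from Lemma~\ref{lem:domain_S_Tr_h}, the return from \ref{prop:Trh:3} to \ref{prop:Trh:1}, and the unitary $u$ — are sound and agree with the paper; indeed your observation that the rank-one operators $\rankone{\xi}{h^{1/2}\eta}$, $\eta\in D(h^{1/2})$, lie in the range (dense since $h^{1/2}$ has dense range) usefully fleshes out the paper's terse ``the existence of $u$ now follows''.
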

\begin{proof}
Let $x\in \mf n_{\Tr_h}$ so $\Tr(h_\epsilon^{1/2}x^*xh_\epsilon^{1/2}) \uparrow K<\infty$ say.  Hence $(xh_\epsilon^{1/2})$ is a bounded family in $HS(H)$, so for any orthonormal basis $(e_n)$ (this could be uncountable) we have
\[ K \geq \|xh_\epsilon^{1/2}\|_{HS}^2 = \sum_n \| xh_\epsilon^{1/2} e_n \|^2
= \sum_{n,m} |(e_m | xh_\epsilon^{1/2} e_n)|^2
= \sum_{n,m} |(e_m | h_\epsilon^{1/2} x^* e_n)|^2
= \sum_n \| h_\epsilon^{1/2} x^* e_n\|^2. \]
Set $\xi_n = x^* e_n$ so that by the Monotone Convergence Theorem,
\[ K \geq \sum_n (\xi_n | h_\epsilon \xi_n)
= \sum_n \int t (1+\epsilon t)^{-1} \rd E_{\xi_n,\xi_n}
\xrightarrow{\epsilon\to0}
\sum_n \int t \rd E_{\xi_n,\xi_n}
= \sum_n \| h^{1/2} \xi_n \|^2. \]
Hence $x^* e_n \in D(h^{1/2})$ for each $n$ and $\sum_n \| h^{1/2} x^* e_n \|^2 \leq K$, showing \ref{prop:Trh:2}.

Now let \ref{prop:Trh:2} hold for $x$.  Then $e_n \in D(h^{1/2}x^*)$ for each $n$, so $D(h^{1/2}x^*)$ is dense, and we have $h^{1/2}x^* \in HS(H)$, so by the lemma, $xh^{1/2} \in HS(H)$ as well, showing \ref{prop:Trh:3}.

Let $x$ satisfy \ref{prop:Trh:3} so by the lemma again, also $h^{1/2}x^* \in HS(H)$, and $D(h^{1/2}x^*) = H$.  
Let $(e_n)$ be an orthonormal basis, and let $\epsilon>0$.  As $x^* e_n \in D(h^{1/2}) = \{ \xi : \int t \rd E_{\xi,\xi} < \infty \}$, we have
\[ (e_n | xh_\epsilon x^* e_n)
= \int t(1+t\epsilon)^{-1} \rd E_{x^*e_n, x^*e_n}
\xrightarrow{\epsilon\to0}
\int t \rd E_{x^*e_n, x^*e_n}
= \| h^{1/2} x^* e_n \|^2. \]
Summing over $n$ gives $\lim_{\epsilon\to 0}\Tr(xh_\epsilon x^*) = \|h^{1/2}x^*\|_{HS} < \infty$.  As $\Tr(xh_\epsilon x^*) = \Tr(h_\epsilon^{1/2}x^*xh_\epsilon^{1/2})$, this shows that $x\in\mf n_{\Tr_h}$.  Notice that this also shows that $\Tr_h(x^*x) = \|h^{1/2}x^*\|_{HS} = \|xh^{1/2}\|_{HS}$.  The existence of $u$ now follows.
\end{proof}

We henceforth identify the GNS space for $\Tr_h$ with $HS(H)$.  It is easy to see that the left action of $\mc B(H)$ is as $\mc B(H) \otimes 1$ on $H\otimes\overline{H} \cong HS(H)$; that is, left multiplication of operators on $HS(H)$.

To determine the modular operator for $\Tr_h$, we wish to consider tensor products of densely-defined operators, the theory of which is as expected, though proofs are perhaps unexpected technical, see \cite[Section~7.5]{Schmudgen_UnboundedBook}.  Consider $h^{1/2} \otimes h^{-1/2\,\top}$ acting on $H\otimes\overline H$, or equivalently $h^{1/2} \cdot h^{-1/2}$ acting on $HS(H)$.  By definition, this is the closure of $h^{1/2} \odot h^{-1/2\,\top}$ acting on $D(h^{1/2}) \otimes \overline{ D(h^{-1/2}) }$, or as $h^{1/2}$ and $h^{-1/2}$ are self-adjoint, this is the same as the adjoint of $h^{1/2} \odot h^{-1/2\,\top}$.

\begin{lemma}\label{lem:pos_tensor_unbdd}
Let $T,S$ be densely defined and closable operators on $H_1, H_2$ respectively.  Then $(T\otimes S)^*(T\otimes S) = T^*T \otimes S^*S$.
\end{lemma}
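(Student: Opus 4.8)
The plan is to prove the operator identity $(T\otimes S)^*(T\otimes S) = T^*T \otimes S^*S$ by a two-sided inclusion, using the spectral theory for unbounded self-adjoint operators recalled earlier in this appendix. First I would reduce to the positive self-adjoint case: since $T,S$ are closable, write their polar decompositions $T = u|T|$ and $S = v|S|$ with $|T| = (T^*T)^{1/2}$ and $|S| = (S^*S)^{1/2}$ positive self-adjoint. Then $(T\otimes S) = (u\otimes v)(|T|\otimes|S|)$, and since $u\otimes v$ is a partial isometry with $(u\otimes v)^*(u\otimes v)$ the projection onto $\overline{\im(|T|)}\otimes\overline{\im(|S|)}$, which contains the range of $|T|\otimes|S|$, we have $(T\otimes S)^*(T\otimes S) = (|T|\otimes|S|)^*(|T|\otimes|S|)$. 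Similarly $T^*T\otimes S^*S = |T|^2\otimes|S|^2$. So it suffices to prove the identity for positive self-adjoint operators $A = |T|$, $B = |S|$, namely $(A\otimes B)^2 = A^2\otimes B^2$ (where $A\otimes B$ denotes the closure of the algebraic tensor product).

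The heart of the argument is then a spectral computation for the positive case. Using the Spectral Theorem, write $A = N_{t}$ for a spectral measure $E^A$ on $[0,\infty)$ and $B = N_s$ for $E^B$; the product spectral measure $E^A\times E^B$ on $[0,\infty)^2$ represents $A\otimes B$ as $N_{t\cdot s}$, the operator associated to the measurable function $(t,s)\mapsto ts$. This is the key technical point that makes everything work: one must verify that the closure of $A\odot B$ on $D(A)\otimes D(B)$ really is $N_{ts}$ for the product spectral measure. Granting this, the functional calculus identities recalled before Lemma~\ref{lem:domain_S_Tr_h} — in particular $N_\phi^*N_\phi = N_{|\phi|^2}$ — immediately give $(A\otimes B)^*(A\otimes B) = N_{(ts)^2} = N_{t^2 s^2}$. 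On the other hand $A^2 = N_{t^2}$ via $N_{t^{1/2}}N_{t^{1/2}} = N_t$-type reasoning, $B^2 = N_{s^2}$, and the same product-spectral-measure identification gives $A^2\otimes B^2 = N_{t^2}\otimes N_{s^2} = N_{t^2 s^2}$. Comparing, $(A\otimes B)^*(A\otimes B) = A^2\otimes B^2$, which is the desired identity in the positive case.

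The main obstacle is the identification of $A\otimes B$ with $N_{ts}$ for the product spectral measure, since $A\otimes B$ is \emph{defined} as the closure of the algebraic tensor product rather than through a joint functional calculus. I would establish this by checking three things. First, $A\odot B$ acting on $D(A)\otimes D(B)$ agrees with $N_{ts}$ on that domain; this is a direct computation on elementary tensors $\xi\otimes\eta$ with $\xi\in D(A), \eta\in D(B)$, using Fubini for the product measure. Second, $D(A)\otimes D(B)$ is a core for $N_{ts}$: one approximates a general $\zeta\in D(N_{ts}) = \{\zeta : \int |ts|^2\, dE_{\zeta,\zeta} < \infty\}$ by the spectral truncations $E^A([0,n])\otimes E^B([0,n])\zeta$, noting these lie in $D(A)\otimes D(B)$ and converge to $\zeta$ in the graph norm of $N_{ts}$ by dominated convergence. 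Third, $N_{ts}$ is closed (being of the form $N_\phi$). Together these give $\overline{A\odot B} = N_{ts}$. The reference \cite[Section~7.5]{Schmudgen_UnboundedBook} cited just before the lemma should supply the product-spectral-measure machinery, so I expect the truncation/core argument to be the only place requiring genuine care; everything else is functional calculus bookkeeping.
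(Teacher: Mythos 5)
Your proposal is correct in its overall strategy, but it takes a genuinely different route from the paper's proof. The paper argues softly: for $\xi\in D(T^*T)$, $\eta\in D(S^*S)$ one computes $(T\otimes S)^*(T\otimes S)(\xi\otimes\eta)=T^*T\xi\otimes S^*S\eta$ using the single external input $(T\otimes S)^*=T^*\otimes S^*$ from \cite[Proposition~7.26]{Schmudgen_UnboundedBook}; this gives the containment $T^*T\odot S^*S\subseteq (T\otimes S)^*(T\otimes S)$, and then two passes of taking adjoints --- using that $(T\otimes S)^*(T\otimes S)$ is self-adjoint (von Neumann) and that the tensor product of self-adjoint operators is essentially self-adjoint, so $(T^*T\odot S^*S)^*=T^*T\otimes S^*S$ --- force equality, with no spectral theory at all. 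You instead reduce to positive operators by polar decomposition and identify $A\otimes B$ with $N_{ts}$ for the product spectral measure, finishing with the functional calculus identity $N_\phi^*N_\phi=N_{|\phi|^2}$. Your reduction step is sound: $T\otimes S=(u\otimes v)(|T|\otimes|S|)$ because $u\otimes v$ is isometric on its initial space, which contains the closure of $\operatorname{ran}(|T|\otimes|S|)$, and $(BA)^*=A^*B^*$ for bounded $B$; alternatively the paper's Lemma~\ref{lem:SSTT} (equal graph norms on a common core implies $X^*X=Y^*Y$) yields the same conclusion directly. Your route is longer and carries more domain bookkeeping, but it is self-contained modulo the product-spectral-measure machinery and produces the explicit joint spectral representation of $A\otimes B$ as a by-product, which the paper's argument does not give.

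One step needs repair. You assert that the truncations $\bigl(E^A([0,n])\otimes E^B([0,n])\bigr)\zeta$ lie in the algebraic tensor product $D(A)\otimes D(B)$. They do not in general: the range of the projection $P_n=E^A([0,n])\otimes E^B([0,n])$ is the Hilbert-space tensor product of the two spectral subspaces, whose elements are typically infinite sums of elementary tensors, so they lie only in the closure of $D(A)\otimes D(B)$. The fix is easy. The truncations do converge to $\zeta$ in the graph norm of $N_{ts}$ by dominated convergence, as you say; and on $\operatorname{ran}P_n$ the operator $N_{ts}$ is bounded (since $ts\le n^2$ there), so every vector of $\operatorname{ran}P_n$ is a graph-norm limit of elements of the norm-dense algebraic tensor product $\bigl(\operatorname{ran}E^A([0,n])\bigr)\odot\bigl(\operatorname{ran}E^B([0,n])\bigr)\subseteq D(A)\otimes D(B)$. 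Chaining the two approximations shows that $D(A)\otimes D(B)$ is indeed a core for $N_{ts}$, after which the rest of your argument goes through.
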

\begin{proof}
Recall that $T^*T$ is densely defined and self-adjoint.
For $\xi\in D(T^*T), \eta\in D(S^*S)$, we have that $\xi\otimes\eta \in D(T)\otimes D(S)$ and $(T\otimes S)(\xi\otimes\eta) = T(\xi) \otimes S(\eta)$.  Then $T(\xi) \in D(T^*), S(\eta)\in D(S^*)$ and as $(T\otimes S)^* = T^* \otimes S^*$, \cite[Proposition~7.26]{Schmudgen_UnboundedBook}, we conclude that $(T\otimes S)^*(T\otimes S)(\xi\otimes\eta) = T^*T(\xi) \otimes S^*S(\eta)$.  So $T^*T \odot S^*S \subseteq (T\otimes S)^*(T\otimes S)$ and hence $T^*T \otimes S^*S = (T^*T \odot S^*S)^* \supseteq (T\otimes S)^*(T\otimes S)$.  Taking the adjoint again shows $T^*T \otimes S^*S \subseteq (T\otimes S)^*(T\otimes S)$, and so we have equality.
\end{proof}

\begin{proposition}\label{prop:mod_Trh}
For $\Tr_h$ with GNS construction $HS(H)$, the modular operator is $\nabla = h \otimes h^{-1\,\top}$ and the modular conjugation is $J \colon H\otimes\overline H \to H\otimes\overline H; \xi\otimes\overline\eta \mapsto \eta\otimes\overline\xi$.  As such, $\sigma^{\Tr_h}(x) = h^{it} x h^{-it}$ for all $t\in\mathbb R, x\in\mc B(H)$.
\end{proposition}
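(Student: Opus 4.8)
The plan is to guess the modular data and check it against the Tomita operator in the explicit GNS picture of Proposition~\ref{prop:Trh}. Under the unitary $u\Lambda_{\Tr_h}(x)=xh^{1/2}$ the left action of $\mc B(H)$ is left multiplication on $HS(H)$, so I introduce the candidates $J_0\colon a\mapsto a^*$ and $\nabla_0=h\otimes h^{-1\top}$, the latter acting on $HS(H)$ as $a\mapsto hah^{-1}$ (since $x\otimes y^\top$ acts as $a\mapsto xay$). First I would record that these are legitimate modular objects: $J_0$ is an antiunitary involution because the adjoint is an isometric conjugate-linear involution of $HS(H)$; and $\nabla_0$ is positive, self-adjoint and nonsingular because, writing $T=h^{1/2}\otimes h^{-1/2\top}$ for the closed operator discussed before Lemma~\ref{lem:pos_tensor_unbdd}, that lemma gives $\nabla_0=T^*T$, with $\nabla_0^{1/2}=T$ acting as $a\mapsto h^{1/2}ah^{-1/2}$. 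One also checks at once that $J_0\nabla_0 J_0=\nabla_0^{-1}$, so the candidates are consistent with Tomita--Takesaki theory.

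Next I would compute the Tomita operator $S$ in this picture. By definition $S$ is the closure of the antilinear map $u\Lambda_{\Tr_h}(x)\mapsto u\Lambda_{\Tr_h}(x^*)$ on the canonical core $\mathcal D_0=\{xh^{1/2}:x\in\mf n_{\Tr_h}\cap\mf n_{\Tr_h}^*\}$, that is, of $xh^{1/2}\mapsto x^*h^{1/2}$. For such $x$ both $xh^{1/2}$ and, by Lemma~\ref{lem:domain_S_Tr_h}, $h^{1/2}x$ lie in $HS(H)$. I would then verify that $xh^{1/2}\in D(\nabla_0^{1/2})$ with $\nabla_0^{1/2}(xh^{1/2})=h^{1/2}x$, whence $J_0\nabla_0^{1/2}(xh^{1/2})=(h^{1/2}x)^*=x^*h^{1/2}=S(xh^{1/2})$. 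Thus $S$ and $J_0\nabla_0^{1/2}$ agree on $\mathcal D_0$.

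Finally I would turn this agreement into an equality of closed operators and invoke uniqueness of the polar decomposition. Since $\nabla_0^{1/2}$ is self-adjoint and $J_0$ is bounded and bijective, $J_0\nabla_0^{1/2}$ is closed, and as it extends the generator of $S$ we get $S\subseteq J_0\nabla_0^{1/2}$. For the reverse inclusion I would work with the linear span $\mathcal E$ of the rank-one operators $\rankone{\xi}{\eta}$ with $\xi,\eta$ ranging over the bounded spectral subspaces of $h$ (corresponding to intervals $[1/n,n]$): an easy computation shows $\rankone{\xi}{\eta}=xh^{1/2}$ with $x=\rankone{\xi}{h^{-1/2}\eta}\in\mf n_{\Tr_h}\cap\mf n_{\Tr_h}^*$, so $\mathcal E\subseteq\mathcal D_0$, and $\mathcal E$ is a core for $\nabla_0^{1/2}=h^{1/2}\otimes h^{-1/2\top}$ by the standard core property of algebraic tensor products of self-adjoint operators. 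Since a core for $\nabla_0^{1/2}$ is a core for $J_0\nabla_0^{1/2}$, this gives $J_0\nabla_0^{1/2}=\overline{S|_{\mathcal E}}\subseteq S$, hence equality. Comparing the valid polar form $S=J_0\nabla_0^{1/2}$ with $S=J\nabla^{1/2}$ yields $J=J_0$ and $\nabla=\nabla_0=h\otimes h^{-1\top}$. The last assertion is then immediate: $\nabla^{it}=h^{it}\otimes(h^{-it})^\top$ acts on $HS(H)$ as $a\mapsto h^{it}ah^{-it}$, so for every $a\in HS(H)$ one has $\sigma^{\Tr_h}_t(x)\,a=\nabla^{it}\bigl(x\,\nabla^{-it}a\bigr)=h^{it}xh^{-it}a$, whence $\sigma^{\Tr_h}_t(x)=h^{it}xh^{-it}$.

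The hard part will be the domain and core bookkeeping for the unbounded operators $h$ and $h^{-1}$: precisely, confirming that $\mathcal D_0\subseteq D(\nabla_0^{1/2})$ with the stated action (the inclusion $S\subseteq J_0\nabla_0^{1/2}$), and that $\mathcal E$ really is a core for $\nabla_0^{1/2}$, so that agreement on $\mathcal E$ forces the reverse inclusion rather than a mere containment. The algebraic identities are trivial; everything delicate lives in the unbounded functional calculus and the tensor-product closure, which is why Lemmas~\ref{lem:domain_S_Tr_h} and~\ref{lem:pos_tensor_unbdd} are doing the real work.
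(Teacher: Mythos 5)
Your proposal is correct and follows essentially the same route as the paper's proof: identify the Tomita operator $S$ as the closure of $xh^{1/2}\mapsto x^*h^{1/2}$ on $\mf n_{\Tr_h}\cap\mf n_{\Tr_h}^*$, verify that $J_0\nabla_0^{1/2}$ agrees with it there (the paper does this via the adjoint-pairing computation, using that $\nabla^{1/2}$ is the adjoint of $h^{1/2}\odot h^{-1/2\,\top}$), use rank-one operators of the form $\rankone{\xi}{h^{-1/2}\eta}$ to see that this domain maps onto a core for $\nabla_0^{1/2}$, and conclude by uniqueness of the modular data. The only cosmetic difference is that you take $\xi,\eta$ in bounded spectral subspaces of $h$, which then needs the (standard) fact that a tensor product of cores is a core, whereas the paper takes $\xi\in D(h^{1/2})$, $\eta\in D(h^{-1/2})$ directly so that the span is a core by the very definition of $\nabla^{1/2}$ as a closure; both work.
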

\begin{proof}
Define $\nabla^{1/2} = h^{1/2} \otimes h^{-1/2\,\top}$, so by Lemma~\ref{lem:pos_tensor_unbdd}, $(\nabla^{1/2})^2 = (\nabla^{1/2})^* \nabla^{1/2} = \nabla$, as we might hope.  Define $J$ as in the statement, and notice that on $HS(H)$, we have $J(x) = x^*$.

Let $S$ be the adjoint operator, $S \Lambda_{\Tr_h}(x) = \Lambda_{\Tr_h}(x^*)$ for $x\in \mf n_{\Tr_h} \cap \mf n_{\Tr_h}^*$, that is, $xh^{1/2}, h^{1/2}x \in HS(H)$, by Lemma~\ref{lem:domain_S_Tr_h}.  As a densely defined operator on $HS(H)$, we see that $S$ is the closure of $xh^{1/2} \mapsto x^*h^{1/2}$.

Let $x\in\mc B(H)$ with $xh^{1/2}$ and $h^{1/2}x\in HS(H)$.  We will show that $(xh^{1/2}, h^{1/2}x) \in G(\nabla^{1/2})$.  Let $\xi\in D(h^{1/2}), \eta\in D(h^{-1/2})$ so $\rankone{\xi}{\eta} \cong \xi\otimes\overline\eta \in D(h^{1/2} \odot h^{-1/2\,\top})$ with $(h^{1/2} \odot h^{-1/2\,\top})\rankone{\xi}{\eta} = \rankone{h^{1/2}\xi}{h^{-1/2}\eta}$.  Then
\begin{align*}
\big( \rankone{h^{1/2}\xi}{h^{-1/2}\eta} \big| xh^{1/2} \big)_{HS}
    - \big( \rankone{\xi}{\eta} \big| h^{1/2}x \big)_{HS}
&= ( h^{1/2}\xi | xh^{1/2} h^{-1/2}\eta ) - ( \xi | h^{1/2} x \eta ) \\
&= ( x^*h^{1/2}\xi | \eta ) - ( \xi | h^{1/2} x \eta ) = 0,
\end{align*}
using that $(x^*h^{1/2})^* = h^{1/2}x$.  As $D(h^{1/2} \odot h^{-1/2\,\top})$ is the span of such $\rankone{\xi}{\eta}$, this calculation shows that $(xh^{1/2}, h^{1/2}x) \in G((h^{1/2} \odot h^{-1/2\,\top})^*) = G(\nabla^{1/2})$.  Further, as $y = \rankone{\xi}{h^{-1/2}\eta} \in HS(H)$ with $yh^{1/2} = \rankone{\xi}{\eta}, h^{1/2}x = \rankone{h^{1/2}\xi}{h^{-1/2}\eta}$, we see that $\{ x : xh^{1/2}, h^{1/2}x\in HS(H) \}$ is a core for $\nabla^{1/2}$.

Let $y\in D(S)$ with $S(y) = z$.  Then there is a sequence $(x_n)$ in $\mf n_{\Tr_h} \cap \mf n_{\Tr_h}^*$ with $x_n h^{1/2} \to y, x_n^*h^{1/2} \to z$, equivalently, $h^{1/2} x_n \to z^*$, convergence in $HS(H)$.  Then $\nabla^{1/2}( x_nh^{1/2} ) = h^{1/2}x_n \to z^*$ so $y \in D(\nabla^{1/2})$ with $J\nabla^{1/2}(y) = J(z^*) = z$.  Hence $S \subseteq J\nabla^{1/2}$.  We showed above that $\mf n_{\Tr_h} \cap \mf n_{\Tr_h}^*$ is a core for $\nabla^{1/2}$, so we have equality $S = J\nabla^{1/2}$, and so $D(S) = D(\nabla^{1/2})$.  Obviously $J \nabla^{1/2} J = \nabla^{-1/2}$.  The uniqueness claim of \cite[Lemma~VI.1.5(vi)]{TakesakiII} shows that these properties are enough to characterise the modular operator and conjugation.

As $x\in\mc B(H)$ acts on $H\otimes\overline H$ as $x\otimes 1$, the final claim now follows.
\end{proof}

We now see that $\mc B(H)' = J\mc B(H)J = 1 \otimes \mc B(\overline H)$ as $J(x^*\otimes 1)J = 1\otimes x^\top$.

\section{Inverses of weights}\label{sec:inv_weight}

In this section we study the operator-valued weight $\varphi^{-1} \colon \mc B(H) \to M'$ for a weight $\varphi$ on $M$.  The construction is due to Connes \cite[Corollary~16]{Connes_SpatialVN}, but see also \cite[Corollary~12.12]{Stratila_ModTheoryBook2} and \cite[Corollary~IX.4.25]{TakesakiII}.

We recall some of the notation, following \cite{Connes_SpatialVN} and \cite[Section~12]{Stratila_ModTheoryBook2} (see also \cite[Section~IX.3]{TakesakiII} but be aware of differing conventions).  Let $M\subseteq\mc B(H)$ be a von Neumann algebra (for now, we assume nothing about $H$) and let $\varphi$ be a weight on $M$.  Denote by $D(H,\varphi)$ the set of $\xi\in H$ such that there is $C>0$ with $\|x\xi\| \leq C\|\Lambda(x)\|$ for each $x\in\mf n_\varphi$.  Then $D(H,\varphi)$ is dense in $H$, and for each $\xi\in D(H,\varphi)$ there is an operator $R^\varphi(\xi) \colon L^2(\varphi) \to H; \Lambda(x) \mapsto x\xi$.  For $a\in M'\subseteq\mc B(H)$ and $\xi\in D(H,\varphi)$ we have that $a\xi\in D(H,\varphi)$ with $R^\varphi(a\xi) = a R^\varphi(\xi)$.  Notice also that $y R^\varphi(\xi) = R^\varphi(\xi) y$ for $y\in M, \xi\in D(H,\varphi)$.

\begin{example}\label{eg:left_bdd}
Suppose $H=L^2(\varphi)$, and let $\mf A = \Lambda(\mf n_\varphi\cap\mf n_\varphi^*)$ be the full left Hilbert algebra associated with $\varphi$; here we use the notation of \cite[Section~VI.1]{TakesakiII}.  For $\xi\in D(H,\varphi)$ we see that $\|x\xi\| \leq C\|\Lambda(x)\|$ for each $x\in \mf n_\varphi\cap\mf n_\varphi^*$, and so $\xi$ is right bounded in the sense of \cite[Definition~VI.1.6]{TakesakiII}.  Conversely, let $\xi$ be right bounded, and let $x\in\mf n_\varphi$.  By the right Hilbert algerba version of \cite[Theorem~VI.1.26]{TakesakiII} there is a sequence $(x_n)$ in $\mf n_\varphi\cap\mf n_\varphi^*$ with $\Lambda(x_n) \to\Lambda(x)$ and $x_n\to x$ strongly.  Hence $\|x\xi\| = \lim_n \|x_n\xi\| \leq \limsup_n C\|\Lambda(x_n)\| = C\|\Lambda(x)\|$ and so $\xi\in D(H,\varphi)$.

So $D(H,\varphi) = \mf B'$, and $R^\varphi(\xi) = \pi_r(\xi)$ for each $\xi\in D(H,\varphi)$.  As $J\mf B' = \mf B = \Lambda(n_\varphi)$ we see that $D(H,\varphi) = \{ J\Lambda(a) : a\in \mf n_\varphi \}$ and $R^\varphi(J\Lambda(a)) = JaJ$.
\end{example}

Now let $\phi$ be a (normal, semifinite, maybe not faithful) weight on $M' \subseteq \mc B(H)$.  We define the \emph{spatial derivative} $d\phi / d\varphi$ to be the positive self-adjoint operator associated with the form $q$ defined by $D(H,\varphi)$ by $q(\xi) = \phi(R^\varphi(\xi)R^\varphi(\xi)^*)$.  For more about unbounded quadratic forms see \cite{Connes_SpatialVN}, \cite[Chapter~10]{Schmudgen_UnboundedBook}; we like the presentation in \cite[Section~7]{Stratila_ModTheoryBook2} and the appendix of that book.  In particular, \cite[Section~7.3]{Stratila_ModTheoryBook2} shows that $D(q) = \{ \xi \in D(H,\varphi) : q(\xi) < \infty \}$ is dense in $H$ and is a core for $(d\phi/d\varphi)^{1/2}$; see also \cite[Theorem~IX.3.8]{TakesakiII}.

The equality of positive self-adjoint operators is very tricky, depending crucially on definition domains: see for example the counter-example in \cite[Section~6.6]{Stratila_ModTheoryBook2} which shows the existence of positive self-adjoint operators $S,T$ with $D(S)\subseteq D(T)$ and $\|S\xi\| = \|T\xi\|$ for each $\xi\in D(S)$, and yet with $S\not=T$.

\begin{lemma}\label{lem:SSTT}
Let $S,T$ be closed densely defined operators on a Hilbert space $H$, and suppose that $D\subseteq H$ is a core for both $S$ and $T$, and that $\|S\xi\| = \|T\xi\|$ for $\xi\in D$.  Then $S^*S = T^*T$.
\end{lemma}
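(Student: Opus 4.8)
The plan is to use the polarization-type identity together with the density of the common core $D$. First I would observe that from the hypothesis $\|S\xi\| = \|T\xi\|$ for $\xi \in D$, polarization immediately gives $(S\xi | S\eta) = (T\xi | T\eta)$ for all $\xi,\eta \in D$. The natural next step is to relate the inner product $(S\xi|S\eta)$ to the graph inner product and then to $S^*S$; the cleanest route is to pass to the graph Hilbert space and show that the map $S\xi \mapsto T\xi$ extends to a well-defined isometry intertwining the two operators.

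Concretely, I would argue as follows. Since $\|S\xi\| = \|T\xi\|$ on $D$, the assignment $U_0 \colon (\xi, S\xi) \mapsto (\xi, T\xi)$ is a well-defined isometry from the graph $G(S|_D) \subseteq H \oplus H$ onto $G(T|_D)$; because $D$ is a core for both $S$ and $T$, taking closures gives an isometric bijection $U \colon G(S) \to G(T)$ with $U(\xi, S\xi) = (\xi, T\xi)$ for all $\xi \in D(S) = \{\xi : \exists\, S\xi\}$. In particular $D(S) = D(T)$ and $\|S\xi\| = \|T\xi\|$ for all $\xi$ in this common domain. Now recall the standard characterization: for a closed densely-defined operator $S$, the positive self-adjoint operator $S^*S$ is the unique one whose form domain is $D(S)$ with $(\,(S^*S)^{1/2}\xi \,|\, (S^*S)^{1/2}\eta\,) = (S\xi | S\eta)$. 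Applying the polarized identity $(S\xi|S\eta) = (T\xi|T\eta)$ for $\xi,\eta$ in the common domain $D(S) = D(T)$, the two closed positive forms $\xi \mapsto \|S\xi\|^2$ and $\xi \mapsto \|T\xi\|^2$ coincide, with the same form domain. By the one-to-one correspondence between densely-defined closed positive quadratic forms and positive self-adjoint operators (for which \cite[Chapter~10]{Schmudgen_UnboundedBook} or the appendix of \cite{Stratila_ModTheoryBook2} is a reference), it follows that $S^*S = T^*T$.

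The main obstacle I anticipate is precisely the domain bookkeeping flagged in the remarks just before the lemma, namely the cautionary counter-example that mere equality of norms on a domain with an inclusion of domains does \emph{not} force equality of the operators. The point of the core hypothesis is exactly to rule this out: equality of norms on a \emph{common core} (as opposed to on a domain for only one of the operators) is what upgrades to equality of the closed forms. So the delicate step is verifying that the closure argument genuinely yields $D(S) = D(T)$ as closed form domains, not just agreement on the dense subset $D$; I would be careful to check that the isometry $U$ between the graphs restricts to the identity on the first coordinate (so that it really identifies the domains and does not introduce a nontrivial unitary twist), which is immediate from $U(\xi, S\xi) = (\xi, T\xi)$ but worth stating. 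Once the closed forms are identified, the correspondence between positive closed forms and positive self-adjoint operators closes the argument without further computation.
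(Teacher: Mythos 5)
Your proof is correct, but both of its main steps run along genuinely different lines from the paper's. For the domain identification $D(S)=D(T)$, the paper approximates $\xi\in D(T)$ in the graph norm of $T$ from the core, observes that $(S\xi_n)$ is then bounded, extracts a weakly convergent subsequence, and uses convexity of $G(S)$ (so that its weak and norm closures coincide) to conclude $\xi\in D(S)$; you instead extend the isometry $(\xi,S\xi)\mapsto(\xi,T\xi)$ from $G(S|_D)$ to an isometric bijection $G(S)\to G(T)$ and use closedness of $T$ to identify its action. Your route has the advantage that it delivers $\|S\xi\|=\|T\xi\|$ on the whole common domain in the same stroke, whereas the paper's weak-limit argument only gives the one-sided inequality $\|S\xi\|\le\|T\xi\|$ per pass and leaves the resulting norm equality on $D_0$ somewhat implicit. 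For the conclusion, the paper avoids form theory and computes graphs directly: $(\xi,\eta)\in G(T^*T)$ iff $\xi\in D_0$ and $(\alpha|\eta)=(T\alpha|T\xi)=(S\alpha|S\xi)$ for all $\alpha\in D_0$, which is precisely the condition for $(\xi,\eta)\in G(S^*S)$. You instead note that $\xi\mapsto\|S\xi\|^2$ and $\xi\mapsto\|T\xi\|^2$ are closed positive forms (closedness of the form being completeness of $D(S)$, resp.\ $D(T)$, in the graph norm) with equal domains and equal values, and invoke the representation theorem identifying $S^*S$ as the self-adjoint operator of the first form; this is a heavier but entirely standard tool, consistent with the references for quadratic forms the paper itself cites, so both endgames are legitimate --- the paper's is more self-contained, yours is quicker once the representation theorems are granted. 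One presentational point: your claim that $U(\xi,S\xi)=(\xi,T\xi)$ holds for all $\xi\in D(S)$, not just $\xi\in D$, deserves its one-line justification (the first coordinates of $U_0(\xi_n,S\xi_n)=(\xi_n,T\xi_n)$ converge to $\xi$, and $G(T)$ is closed, so the limit must be $(\xi,T\xi)$); as written you offer the identity as evidence for itself.
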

\begin{proof}
We first show that $D(S)=D(T)$.  Let $\xi\in D(T)$ so as $D$ is a core, there is a sequence $(\xi_n)$ in $D$ with $\xi_n\to\xi$ and $T\xi_n \to T\xi$.  Then $(T\xi_n)$ is bounded, so by hypothesis, also $(S\xi_n)$ is bounded.  By passing to a subsequence if necessary, we may suppose that $S\xi_n \to \eta$ weakly, for some $\eta$.  Hence $(\xi,\eta)$ is in the weak closure of $G(S)$, but $G(S)$ is convex, so $(\xi,\eta)$ is in the norm closure of $G(S)$ which is $G(S)$ itself.  In particular, $\xi\in D(S)$, and so $D(T) \subseteq D(S)$.  By symmetry, also $D(S) \subseteq D(T)$ and we have equality.  Let $D_0 = D(S) = D(T)$.

By definition, we have that
\[ (\xi,\eta) \in G(T^*) \quad\iff\quad
(\eta, -\xi) \in \{ (\alpha, T\alpha) :  \alpha\in D_0 \}^\perp. \]
As $(S\xi|S\xi) = (T\xi|T\xi)$ for $\xi\in D_0$, by polarisation, also $(S\xi | S\eta) = (T\xi | T\eta)$ for $\xi,\eta\in D_0$.  Thus $(\xi,\eta) \in G(T^*T)$ if and only if $\xi\in D(T)$ and $(T\xi, \eta) \in G(T^*)$, if and only if $\xi\in D_0$ and $(\alpha|\eta) = (T\alpha|T\xi) = (S\alpha|S\xi)$ for each $\alpha\in D_0$.  We now see that this is the same as $(\xi,\eta) \in G(S^*S)$ and so $S^*S  = T^*T$ as claimed.
\end{proof}

\begin{theorem}\label{thm:varphiinv}
Let $\varphi$ be a weight on $M$, and let $H = L^2(\varphi)$.  We have that $\varphi^{-1}(\rankone{J\Lambda(a)}{J\Lambda(b)}) = Jab^*J$ for $a,b\in\mf n_\varphi$.
Let $\varphi'$ be the canonical weight on $M'\subseteq\mc B(L^2(\varphi))$ and set $\tilde\varphi = \varphi' \circ \varphi^{-1}$ a weight on $\mc B(H)$.  Then $\tilde\varphi$ has density given by $\nabla^{-1}$.
\end{theorem}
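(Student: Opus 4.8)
The plan is to treat the two assertions separately, in both cases feeding the spatial machinery of Connes through the core lemma just established. For the formula for $\varphi^{-1}$ on rank-one operators I would start from the defining property of $\varphi^{-1}$ in Connes' spatial theory \cite[Corollary~16]{Connes_SpatialVN} (equivalently \cite[Corollary~IX.4.25]{TakesakiII}): for every weight $\phi$ on $M'$ the composition $\phi\circ\varphi^{-1}$ is the weight $\Tr_{d\phi/d\varphi}$ on $\mc B(H)$, where the spatial derivative $d\phi/d\varphi$ is the positive self-adjoint operator associated with the form $q_\phi(\xi) = \phi(R^\varphi(\xi)R^\varphi(\xi)^*)$ on $D(H,\varphi)$. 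Evaluating at the positive rank-one operator $\rankone{\xi}{\xi}$ with $\xi\in D(H,\varphi)$ gives $\phi(\varphi^{-1}(\rankone{\xi}{\xi})) = (\xi|(d\phi/d\varphi)\xi) = q_\phi(\xi) = \phi(R^\varphi(\xi)R^\varphi(\xi)^*)$, and since normal states on $M'$ separate points we deduce $\varphi^{-1}(\rankone{\xi}{\xi}) = R^\varphi(\xi)R^\varphi(\xi)^*$; polarisation extends this to $\varphi^{-1}(\rankone{\xi}{\eta}) = R^\varphi(\xi)R^\varphi(\eta)^*$. Substituting $\xi=J\Lambda(a)$, $\eta=J\Lambda(b)$ and using $R^\varphi(J\Lambda(a)) = JaJ$ from Example~\ref{eg:left_bdd} yields $\varphi^{-1}(\rankone{J\Lambda(a)}{J\Lambda(b)}) = JaJ\cdot Jb^*J = Jab^*J$.

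For the density statement, the same relation with $\phi=\varphi'$ gives immediately $\tilde\varphi = \varphi'\circ\varphi^{-1} = \Tr_{d\varphi'/d\varphi}$, so the whole claim reduces to identifying $d\varphi'/d\varphi = \nabla^{-1}$. I would prove this by comparing the defining form of the spatial derivative with the form of $\nabla^{-1}$ on a common core and then applying Lemma~\ref{lem:SSTT}. Writing $q(\xi) = \varphi'(R^\varphi(\xi)R^\varphi(\xi)^*)$, for $\xi=J\Lambda(a)$ with $a\in\mf n_\varphi$ we have $R^\varphi(J\Lambda(a))R^\varphi(J\Lambda(a))^* = JaJ\,Ja^*J = Jaa^*J$, and the definition $\varphi'(Jx^*J) = \varphi(x)$ gives $q(J\Lambda(a)) = \varphi(aa^*)$. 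This is finite exactly when $a\in\mf n_\varphi\cap\mf n_\varphi^*$, so by Example~\ref{eg:left_bdd} the form domain is $D(q) = \{J\Lambda(a):a\in\mf n_\varphi\cap\mf n_\varphi^*\}$, which is a core for $(d\varphi'/d\varphi)^{1/2}$ by \cite[Section~7.3]{Stratila_ModTheoryBook2} (or \cite[Theorem~IX.3.8]{TakesakiII}). On the other side, using $\nabla^{-1/2}J = J\nabla^{1/2}$ together with $S\Lambda(a) = J\nabla^{1/2}\Lambda(a) = \Lambda(a^*)$, one finds for $a\in\mf n_\varphi\cap\mf n_\varphi^*$ that $J\Lambda(a)\in D(\nabla^{-1/2})$ with $\nabla^{-1/2}J\Lambda(a) = \Lambda(a^*)$, whence $\|\nabla^{-1/2}J\Lambda(a)\|^2 = \varphi(aa^*) = q(J\Lambda(a))$. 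Thus the two forms agree on $D(q)$.

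It then remains to verify that $D(q)$ is also a core for $\nabla^{-1/2}$, after which Lemma~\ref{lem:SSTT}, applied to $S=(d\varphi'/d\varphi)^{1/2}$ and $T=\nabla^{-1/2}$, forces $d\varphi'/d\varphi = \nabla^{-1}$ and hence $\tilde\varphi = \Tr_{\nabla^{-1}}$. I expect this core check to be the main technical obstacle. The cleanest route I see is to show that the Tomita algebra of $\varphi$ (Appendix~\ref{sec:weights}) sits inside $D(q)$: for an analytic element $c$ one has $J\Lambda(c) = \Lambda(\sigma_{-i/2}(c^*))$ with $\sigma_{-i/2}(c^*)$ again a Tomita element, so each such $\Lambda(c)$ is of the form $J\Lambda(a)$ with $a\in\mf n_\varphi\cap\mf n_\varphi^*$. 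Since the Tomita algebra is a core for $\nabla^{z}$ for every $z$, and it is contained in $D(q)$, which in turn lies in $D(\nabla^{-1/2})$ (the latter being exactly the computation $F(J\Lambda(a)) = \nabla^{1/2}\Lambda(a) = J\Lambda(a^*)$ for $a\in\mf n_\varphi\cap\mf n_\varphi^*$, where $F = \nabla^{1/2}J$), the intermediate space $D(q)$ is a core for $\nabla^{-1/2}$ as well. Throughout, the delicate point to police is bookkeeping of conventions, via Lemma~\ref{lem:com_weight}: keeping straight which of $\varphi$ and $\varphi'$ each of $J$, $\nabla$, $S$, $F$ belongs to, and confirming the identity $\nabla^{-1/2}J = J\nabla^{1/2}$ is the one consistent with the standard form used here.
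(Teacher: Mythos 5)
Your proposal is correct and follows essentially the same route as the paper: both parts rest on Connes' spatial theory (the rank-one formula $\varphi^{-1}(\rankone{\xi}{\xi}) = R^\varphi(\xi)R^\varphi(\xi)^*$ combined with Example~\ref{eg:left_bdd} and polarisation), and the density statement is proved exactly as you outline, by comparing the form $q$ of $d\varphi'/d\varphi$ with $\nabla^{-1/2}$ on $D(q) = \{J\Lambda(a) : a\in\mf n_\varphi\cap\mf n_\varphi^*\}$ and then invoking Lemma~\ref{lem:SSTT}. The only divergence is the core check for $\nabla^{-1/2}$: you argue directly that the Tomita algebra sits inside $D(q)$ (which is valid, since a subspace of $D(\nabla^{-1/2})$ containing a core is itself a core), whereas the paper just cites \cite[Lemma~VI.1.13]{TakesakiII} to see that $\mf A' = D(q)$ is a core for $F = J\nabla^{-1/2}$.
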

\begin{proof}
By \cite[Corollary~16]{Connes_SpatialVN}, we have that $\varphi^{-1}(\rankone{\xi}{\xi}) = R^\varphi(\xi)R^\varphi(\xi)^*$ for $\xi\in D(H,\varphi)$.  As observed in Example~\ref{eg:left_bdd}, $D(H,\varphi) = J\Lambda(\mf n_\varphi)$.  Thus for $a\in\mf n_\varphi$ we have $\varphi^{-1}(\rankone{J\Lambda(a)}{J\Lambda(a)}) = (JaJ) (JaJ)^* = Jaa^*J$, and the first claim follows by polarisation.

In fact, $\varphi^{-1}$ is constructed so that $\tilde\varphi = \varphi' \circ \varphi^{-1}$ has density given by $h = d\varphi' / d\varphi$.  We construct the form $q$, noting that $q(J\Lambda(a)) = \varphi'(Jaa^*J) = \varphi(aa^*)$ for $a\in\mf n_\varphi$.  So $D(q) = \{ J\Lambda(a) : a \in \mf n_\varphi \cap \mf n_\varphi^* \} = J\mf A = \mf A'$ is a core for $h^{1/2}$ and by definition, $\| h^{1/2} J\Lambda(a) \|^2 = q(J\Lambda(a)) = \varphi(aa^*) = \|\Lambda(a^*)\|^2 = \|\nabla^{-1/2} J\Lambda(a)\|$ for each $a \in \mf n_\varphi \cap \mf n_\varphi^*$.  Now, $D(\nabla^{-1/2}) = D^\flat$ in the notation of \cite[Lemma~VI.1.5]{TakesakiII}, and \cite[Lemma~VI.1.13]{TakesakiII} shows that $\mf A' = D(q)$ is a core for $F = J\nabla^{-1/2}$ and hence is a core for $\nabla^{-1/2}$.  Lemma~\ref{lem:SSTT} now shows that $\nabla^{-1} = h$.
\end{proof}

\bibliographystyle{plain}
\bibliography{qg.bib}

\end{document}